  \theoremstyle{plain}
  \newtheorem{theorem}{Theorem}[chapter]
  \newtheorem{lemma}[theorem]{Lemma}
  \newtheorem{proposition}[theorem]{Proposition}
  \newtheorem{corollary}[theorem]{Corollary}
  \newtheorem*{theorem*}{Theorem}
  \newtheorem*{lemma*}{Lemma}
  \newtheorem*{proposition*}{Proposition}
  \newtheorem*{corollary*}{Corollary}
  \newtheorem*{conjecture*}{Conjecture}
  \theoremstyle{definition}
  \newtheorem{definition}[theorem]{Definition}
  \newtheorem{example}[theorem]{Example}
  \newtheorem{remark}[theorem]{Remark}
  \newtheorem*{definition*}{Definition}
  \newtheorem*{example*}{Example}
  \newtheorem*{prob*}{Problem}
  \newtheorem*{remark*}{Remark}
  \newtheorem*{notation*}{Notation}
  \newtheorem*{exer*}{Exercise}
\newcommand\reallywidehat[1]{\arraycolsep=0pt\relax%
\begin{array}{c}
\stretchto{
  \scaleto{
    \scalerel*[\widthof{\ensuremath{#1}}]{\kern-.5pt\bigwedge\kern-.5pt}
    {\rule[-\textheight/2]{1ex}{\textheight}} 
  }{\textheight} %
}{0.5ex}\\           
#1\\                 
\rule{-1ex}{0ex}
\end{array}}
\tikzset{math3d/.style=
    {x= {(-0.353cm,-0.353cm)}, z={(0cm,1cm)},y={(1cm,0cm)}}}
\definecolor{Chocolat}{rgb}{0.36, 0.2, 0.09}
\definecolor{BleuTresFonce}{rgb}{0.215, 0.215, 0.36}
\DeclareMathAlphabet{\mathbbold}{U}{bbold}{m}{n}
\newcommand{\BCH}{\mathrm{BCH}}
\def\R{\mathfrak{R}}
\def\m{\mathfrak{m}}
\def\k{\mathbbold{k}}
\def\dd{\mathrm{d}}
\def\Ai{\calA_\infty}
\def\Li{\calL_\infty}
\def\sAi{\calS\calA_\infty}
\def\sLi{\calS\calL_\infty}
\def\dsLi{\calS^{-1}\calL_\infty}
\def\Tw{\mathrm{Tw}}
\def\at{\alpha}
\def\F{\mathrm{F}}
\def\G{\mathrm{G}}
\def\NN{\mathbb{N}}
\def\RR{\mathbb{R}}
\def\wA{\widehat{A}}
\def\wo{\widehat{\otimes}}
\newcommand{\cc}{\circledcirc}
\DeclareMathAlphabet{\pazocal}{OMS}{zplm}{m}{n}
\def\a{\mathfrak{a}}
\newcommand{\Sy}{\mathbb{S}}
\def\calA{\pazocal{A}}
\def\calB{\pazocal{B}}
\def\calC{\pazocal{C}}
\def\calF{\pazocal{F}}
\def\calG{\pazocal{G}}
\def\calL{\pazocal{L}}
\def\calP{\pazocal{P}}
\def\calQ{\pazocal{Q}}
\def\calS{\pazocal{S}}
\def\S1{\pazocal{S}^{-1}}
\def\calT{\pazocal{T}}
\def\calU{\pazocal{U}}
\DeclareMathOperator{\Hom}{Hom}
\DeclareMathOperator{\eend}{end}
\DeclareMathOperator{\End}{End}
\DeclareMathOperator{\id}{id}
\def\colim{\mathop{\mathrm{colim}}}
\def\wcolim{\mathop{\widehat{\mathrm{colim}}}}
\DeclareMathOperator{\ad}{ad}
\DeclareMathOperator{\Def}{Def}
\DeclareMathOperator{\Der}{Der}
\DeclareMathOperator{\As}{As}
\DeclareMathOperator{\Ass}{Ass}
\DeclareMathOperator{\uAs}{uAs}
\DeclareMathOperator{\BV}{BV}
\DeclareMathOperator{\ncBV}{ncBV}
\DeclareMathOperator{\Gerst}{Gerst}
\DeclareMathOperator{\ncGerst}{ncGerst}
\DeclareMathOperator{\Gra}{Gra}
\DeclareMathOperator{\HyperCom}{HyperCom}
\DeclareMathOperator{\PreLie}{PreLie}
\DeclareMathOperator{\uPreLie}{uPreLie}
\DeclareMathOperator{\Perm}{Perm}
\DeclareMathOperator{\Dias}{Dias}
\DeclareMathOperator{\Dend}{Dend}
\DeclareMathOperator{\Lie}{Lie}
\DeclareMathOperator{\Com}{Com}
\DeclareMathOperator{\D}{D}
\def\Im{\mathop{\mathrm{Im}}}
\def\C{\pazocal{C}}
\def\I{\mathrm{I}}
\def\g{\mathfrak{g}}
\newcommand{\MC}{\mathrm{MC}}
\newcommand{\ac}{\scriptstyle \text{\rm !`}}
\begin{document}

\title{Maurer--Cartan methods in deformation theory: the twisting procedure}\footnotetext{\hrule\smallskip\noindent This material will be published by Cambridge University Press \& Assessment as ‘Maurer-Cartan Methods in Deformation Theory: the twisting procedure’ by Vladimir Dotsenko, Sergey Shadrin and Bruno Vallette. This version is free to view and download for personal use only. Not for re-distribution, re-sale or use in derivative works. \copyright Cambridge University Press \& Assessment}

\author{Vladimir Dotsenko, Sergey Shadrin, and Bruno Vallette}

\bookabstract{
This monograph provides a conceptual study of the twisting procedure, which creates functorially new differential graded Lie algebras, associative algebras or operads (as well as their homotopy versions) from a  Mau\-rer--Cartan element. It relies on an explicit description of the gauge symmetries of Maurer--Cartan elements via the integration theory of complete pre-Lie algebras. We give a criterion on quadratic operads for the existence of a meaningful twisting procedure of their associated categories of algebras. We also give a concise conceptual treatment of the twisting procedure for operads \`a la Willwacher and a sufficient source of motivating examples related to graph homology, both recovering known graph complexes and introducing some new ones. Finally, we outline some of the most striking applications of the twisting procedure outside deformation theory and operad theory \emph{per se}, including a user-friendly survey of ``higher Lie theory'' of D. Robert--Nicoud and the third author and its applications in rational homotopy theory.}

\maketitle

\newpage 

\epigraph{Il me semble qu'on pourrait tirer de ce travail une confirmation des points de vue suivants : d'abord l'int\'er\^et que pr\'esente l'\'etude de groupes d\'efinis \`a partir d'autres structures poss\'edant plusieurs op\'erations (par exemple des alg\`ebres de Lie). En effet, la simplicit\'e apparente des axiomes des groupes ne fait souvent que masquer une extr\^eme complexit\'e, et d'autres structures, plus riches par le nombre de leurs axiomes, se laissent plus facilement \'etudier. Il conviendrait donc de rechercher si d'autres structures alg\'ebriques pourraient permettre la construction de nouvelles cat\'egories de groupes.}{Michel Lazard, Ph.D. Thesis \cite{Lazard50}}

\setcounter{tocdepth}{1}

\tableofcontents

\chapter*{Introduction}\footnotetext{\hrule\smallskip\noindent This material will be published by Cambridge University Press \& Assessment as ‘Maurer-Cartan Methods in Deformation Theory: the twisting procedure’ by Vladimir Dotsenko, Sergey Shadrin and Bruno Vallette. This version is free to view and download for personal use only. Not for re-distribution, re-sale or use in derivative works. \copyright Cambridge University Press \& Assessment}

The seminal works of L. Maurer \cite{Maurer88} and E. Cartan \cite{Cartan04} investigating the integrability of Lie algebras to  Lie groups effectively  introduced what differential geometers now call the Maurer--Cartan $1$-form. 
In this language, the \emph{Maurer--Cartan equation}
\[d\omega+{\tfrac12}[\omega,\omega]=0\] becomes the flatness condition for the connection defined by that form. 
In general, a flat connection in a vector bundle $E\to M$ allows one to define a twisted de Rham differential on the sheaf of $E$-valued differential forms. 
In the case of principal bundles, one actually deals with differential forms with values in the structure Lie algebra, which form a \emph{differential graded Lie algebra}, that is a Lie algebra structure in the category of chain complexes. This is the conceptual framework for the Maurer--Cartan equation. 
Its solutions are called the \emph{Maurer--Cartan elements}, in general, and they coincide with flat connections in the case of principal bundles. 
Each of them produce a \emph{twisted differential} under the formula 
\[d^\omega(\theta)=d\theta+[\omega,\theta]\ .\] 
The gauge group, consisting of the gauge transformations of a principal bundle, is the group of symmetries that acts on flat connections. 
The Maurer--Cartan equation, the \emph{twisting procedure}, and the gauge group action 
constitue the \emph{Maurer--Cartan methods}, which lie at the core of gauge theory. 
In fact, they make sense for differential graded Lie algebras of more abstract nature. \\

Around 1960, the Maurer--Cartan equation started to be understood as the structural equation in deformation theory: in the study of the 
deformations of complex manifold structures by A. Fr\"olicher and A. Nijenhuis~\cite{MR83801}, K. Kodaira, L. Nirenberg and D.C. Spencer~\cite{KodairaSpencerI, KodairaSpencerII, MR112157}, and M. Kuranishi \cite{MR141139}, and in the study of deformations of associative algebra structures by M. Gerstenhaber \cite{Gerstenhaber64}. 
A few years later, A. Nijenhuis and R.W. Richardson~\cite{MR195995} noticed the omnipresence of differential graded Lie algebras in deformation theory. 
Their work was one of inspirations behind that of V.P. Palamodov \cite{MR0508121} where deformation theories of complex structures and of commutative algebras are brought together, following the definition of the tangent complex by G. Tyurina (unpublished). 
These examples and the unifying role played by the conceptual notion of a differential Lie algebra eventually led P. Deligne~\cite{DeligneMillson} and V. Drinfeld \cite{MR3285856} to formulate the general principle of deformation theory claiming that over a field of characteristic $0$, any deformation problem can be encoded by a differential graded Lie algebra. 
To be more precise, given an underlying ``space'' (manifold, chain complex, etc.) and a type of structures, there should exist a differential graded Lie algebra such that the structures of that type on that space are in one-to-one correspondence with the Maurer--Cartan elements of that differential graded Lie algebra. 
Then, Maurer--Cartan elements in the twisted differential graded Lie algebra should correspond to deformations of the original structure.
Finally, the Maurer--Cartan elements lying in the same orbit under the gauge group action should  correspond to equivalent structures. 
This guiding principle received recently a precise statement, including a definition of what is a ``deformation problem'', and a proof by J. Lurie \cite{Lurie10} and J. Pridham \cite{Pridham10}, see also the Bourbaki seminar given by B. Toen \cite{Toen17} on the subject; this is the 
\emph{fundamental theorem of deformation theory}.  \\

In the 1970's, the Maurer--Cartan methods in differential graded Lie algebras were one of the building blocks used in the development of the rational homotopy theory by D. Quillen \cite{Quillen69} and D. Sullivan \cite{Sullivan77}, see also the works of 
K.-T. Chen \cite{Chen73, Chen77} and 
 of M. Schlessinger and J. Stasheff \cite{SchlessingerStasheff}. 
In this context, the Maurer--Cartan elements in the Lie models \cite{Neisendorfer78, Hain84, BFMT20} of a topological space correspond to its points. 
 In this case, the twisting procedure creates a Lie model of the same topological space but pointed at the given Maurer--Cartan element. 
Finally, the gauge group orbits of the Maurer--Cartan elements correspond to the connected components of the topological space. \\

Since then, deformation theory and rational homotopy theory grew up together and never ceased to interact with each other. 
This range of ideas led to groundbreaking new results. To name but a few, let us emphasize here the following two beautiful and influential achievements: the deformation theory of representations of the fundamental groups of compact K\"ahler manifolds by W.M. Goldman and J.J. Millson \cite{GoldmanMillson88} and the deformation quantisation of Poisson manifolds by M. Kontsevich \cite{Kontsevich03}.
We refer the reader to the recent exhaustive book by M. Manetti \cite{Manetti22} on Lie methods in deformation theory for a more thorough historical recollection. \\
 
Nowadays, one can notice that these conceptual ideas were successfully applied in many different research areas, which demonstrates the universality of the Maurer--Cartan equation, the twisting procedure, and the gauge group action. Let us offer a brief recollection of some of those situations, which appeared recently. 
 The construction of the Floer cohomology of Lagrangian submanifolds  in symplectic geometry described in \cite{FOOO09I} by K. Fukaya, Y.-G. Oh, H. Ohta, and K.  Ono,  is given by first  considering a curved homotopy associative algebra and then twisting it with a Maurer--Cartan element, when one exists, in order to produce a meaningful differential. 
 The higher Lie theory \cite{Hinich01, Getzler09, Robert-NicoudVallette20} which amounts to integrating  homotopically coherent generalisations of Lie algebras ($\Li$-algebras) into $\infty$-groupoids relies on sets, or rather, moduli spaces of Maurer--Cartan elements. 
  In higher algebra, twisted homotopy Lie algebras together with some notion of $\infty$-morphisms, provide us with a suitable higher categorical enrichment for the categories of homotopy algebras over an operad \cite{DolgushevRogers17, DolgushevHoffnungRogers14}.  
 Finally, in quantum algebra, the twisting procedure for operads themselves allowed T. Willwacher to reinterpret the graph complex introduced earlier by M. Kontsevich in \cite{Kontsevich97} and to prove that its degree $0$ cohomology group are given by the Grothendieck--Teichm\"uller Lie algebra \cite{Willwacher15}. 
 Going even further with this interpretation, T. Willwacher was able to prove the following conjecture of M. Kontsevich \cite{Kontsevich99}: the group of homotopy automorphisms of the little disks operad is isomorphic to the (pro-unipotent) Grothendieck--Teichm\"uller group, see also B. Fresse \cite{Fresse17II} for a more stable approach. \\

This monograph guides the reader through various versions of the twisting procedure, aiming to settle an extensive toolbox, including new properties, for various applications and an elaborate survey of the said applications. The guiding principle that we rely on is that Maurer--Cartan elements should be studied through their symmetries. Needless to say, this suggestion of P. Deligne~\cite{DeligneMillson}, first advertised in print in work of W.M. Goldman and J.J. Millson~\cite{GoldmanMillson88}, fits perfectly into the general philosophy like that of F. Klein in his Erlangen program \cite{MR1510801}: symmetries play a crucial role in studying mathematical objects. The kind of symmetries that arise in the context of the twisting procedure are called \emph{gauge symmetries}, following the original context of gauge theory. Over the recent years, we have pursued a research programme where we tried to describe ``all'' functorial procedures producing new homotopy algebra structures, from a given one, using suitable gauge symmetries: this way, it is possible to recover 
the homotopy transfer theorem \cite{DotsenkoShadrinVallette16}, 
the Koszul hierarchy \cite{Markl15, DotsenkoShadrinVallette16}, and the Givental action~\cite{DotsenkoShadrinVallette15bis, DSVV20}. 
This monograph completes that programme by including the twisting procedure into this picture for the first time. 
This agrees with the ideas of M. Lazard quoted in the epigraph: arguments that seem to rely on long and complicated calculations get a simple conceptual explanation using group theory. \\

The action of gauge symmetries is defined via flows of certain vector fields, thus, one needs to solve ordinary differential equations or, in more algebraic terms, to integrate infinitesimal Lie algebra actions to group actions. 
This is achieved by the universal Baker--Campbell--Hausdorff (BCH) formula \cite{MR1576644,MR1576434,MR1575931,zbMATH02646605,MR0021940, BF12}, which is an infinite series in the free Lie algebra on two variables. 
Moreover, the homotopically coherent version of the Maurer--Cartan equation in homotopy Lie algebras is also an infinite series itself. 
So, one finds themselves on the borderline of algebra and analysis, needing to make sense of sums of infinite series. The way to handle it, going back to the generalisation of the Lie theory to filtered Lie algebras and groups due to M. Lazard in his Ph.D. thesis \cite{Lazard50}, is to consider filtered chain complexes whose topology,  defined by a basis of open neighbourhoods of the origin consisting of decreasing sub-modules, is required to be complete. Nowadays this type of topology has become omnipresent in commutative algebra, algebraic geometry, deformation theory, rational homotopy theory, and microlocal analysis. Our approach, informed by the operad theory, makes use of the general symmetric monoidal properties of filtered and complete differential graded modules. \\

An important feature of differential graded Lie algebras arising in the deformation theory of algebras encoded by operads is that they come from pre-Lie algebras of convolution type, see \cite[Chapter~10]{LodayVallette12}. 
The notion of a pre-Lie algebra sits between those of an associative algebra and of a Lie algebra: any associative algebra is a pre-Lie algebra and the skew-symmetrisation of the pre-Lie binary product renders a Lie bracket. 
In \cite{DotsenkoShadrinVallette16}, we showed, under a strong weight graded assumption, that the integration of a Lie algebra coming from a pre-Lie algebra can be done by closed 
combinatorial formulas that are more managable than the generic BCH formula. 
The symmetric monoidal properties mentioned above ensure that most of that integration theory for pre-Lie algebras adapts without much change to the complete setting. 
This integration theory of complete pre-Lie algebras is perhaps the part of our work that has the strongest connection to the seminal work of M. Lazard \cite{Lazard50}. 
The upshot of our approach is the largest \emph{deformation gauge group} of algebraic structures modelled by operads that we are aware of. \\

The intrinsic meaning of the twisting procedure for differential graded Lie algebras admits the following bright interpretation. First, one moves from differential graded Lie algebras to homotopically coherent and more general $\Li$-algebras. The structure of an $\Li$-algebra on a chain complex $V$ is, under certain finiteness assumptions, encoded by a differential on the exterior algebra of the linear dual chain complex $V^*$. Such a differential, interpreted geometrically, is a vector field, satisfying a relation called \emph{homological}, on the supermanifold given by~$V$ with the homological degrees shifted by one. 
If that homological vector field vanishes at the origin, one obtains the usual notion of an $\Li$-algebra, otherwise, one arrives at the definition of a \emph{curved} $\Li$-algebra. The twisting procedure may be viewed as a change of coordinates moving the origin to the point where the homological vector field vanishes; this vanishing condition is encoded by the Maurer--Cartan equation. \\

We provide this geometric intuition with a precise and conceptual algebraic counterpart using the large deformation gauge group mentioned above: the action of the simplest gauge symmetries of the convolution algebra controlling curved $\Li$-algebra structures produce the twisting procedure. This allows us to reprove in a straightforward and short way its various properties, notably the crucial ones related to complete (curved) $\Li$-algebras used in deformation theory, like in \cite{Getzler09, DolgushevRogers15}. 
A similar twisting procedure is available in the case of (curved) $\Ai$-algebras, that is  algebras associative up to a infinite system of coherent homotopies. In fact, we choose to present the results for (curved) $\Ai$-algebras in more detail
since  that formalism seems less known, 
since its presentation is simpler, and 
since 
the corresponding results for (curved) $\Li$-algebras are then obtained \emph{mutatis mutandis}. 
Finally, these two examples of twisting procedures lead us to ask the following natural question: why can one twist $\Li$-algebras and $\Ai$-algebras and what about the other types of algebras? The conceptual understanding of the twisting procedure mentioned above also allows us to give a criterion explaining when a given type of algebras  admits a meaningful twisting procedure. Heuristically speaking, a category of homotopy algebras over a quadratic operad can be twisted if and only if the Koszul dual category of algebras admits a coherent notion of a unit. \\

The topic which originally motivated us to  understand the twisting procedure in detail is that of the \emph{operadic twisting} introduced by T. Willwacher in his seminal work on the Grothendieck--Teichm\"uller Lie algebra and M. Kontsevich's graph complexes \cite{Willwacher15}. It was later studied in detail by V. Dolgushev and T. Willwacher in \cite{DolgushevWillwacher15}, see also the survey by V. Dolgushev and C. Rogers~\cite{DolgushevRogers12}. This theory relies on twisting an operad by one of its Maurer--Cartan elements, and it turns out that the language developed in this monograph allows us to encode the operadic twisting procedure in a very direct and straightforward new way. We note that this approach to the operadic twisting was first suggested by J.~Chuang and A.~Lazarev in \cite{MR3004818}. 
A salient point of the operadic twisting procedure lies in the fact that it naturally gives rise to meaningful graph complexes, recovering some of the seminal ones originally introduced by M. Kontsevich in \cite{Kontsevich93, Kontsevich97}. \\

We hope that our monograph gives exactly the kind of a gentle introduction which is needed to make the theory of  operadic twisting accessible to a much wider audience: 
it offers a concise and conceptual way of thinking about the twisted operad and a sufficient source of motivating examples related to graph homology. We recover known computations of graph homology but with more direct methods and we  introduce some new ones related to the noncommutative analogues of Gerstenhaber and Batalin--Vilkovisky algebras introduced in our recent work~\cite{DotsenkoShadrinVallette15}. 
We also survey the key role played by the operadic twisting procedure in the understanding of the Grothendieck--Teichm\"uller Lie algebra, the Deligne conjecture, and its Lie version. \\

We conclude this book 
by offering the reader with short surveys of some recent applications 
 of the twisting procedure and more generally, the Maurer--Cartan methods. It is however important to emphasise that the twisting procedure appears in too many mathematical contexts to hope for an exhaustive survey of all possible applications. 
 Topics like the deformation theory of algebras over properads \cite{MerkulovVallette09I, MerkulovVallette09II}, 
the twisting morphisms (or  twisting cochains) used to produce models for the homology of fibered spaces \cite{Brown59, Cartan55general, HusemollerMooreStasheff74}, 
 the twisting procedure for homology of hairy graph complexes and its applications \cite{MerkulovWillwacher15, TW17, KW19, FTW20, CGP21, Willwacher22}, 
and many others remained outside our scope. 

\subsection*{\sc Organisation of the monograph}
Chapter~\ref{sec:MCIntro} begins with a survey on gauge theory, which is the differential-geometric origin of the Maurer--Cartan equation, and continues with basic but extensive recollections on the Maurer--Cartan elements and their symmetries in differential graded Lie algebras. 
In Chapter~\ref{sec:OptheoyFilMod}, we establish the various symmetric monoidal properties of filtered and complete differential graded modules in order to develop their operadic theory. 
We settle the integration theory for complete differential graded left-unital pre-Lie algebras in Chapter~\ref{sec:TopoDefTh}; this gives rise to a large gauge group which is shown to govern the deformation theory of homotopy algebras over an operad. Chapter \ref{sec:GaugeTwist} contains the first immediate application of the previous chapter: the action of the arity $0$ elements of the deformation gauge group is shown to give the twisting procedure for (curved homotopy) associative algebras and (curved homotopy) Lie algebras.
From this conceptual interpretation, we easily derive ``all'' the properties of the twisting procedure. In Section~\ref{subsec:TwistableAlg}, we give a criterion on a quadratic operad that ensures that the corresponding category of homotopy algebras admits a meaningful twisting procedure. In Chapter~\ref{sec:TwNsOp}, we present a concise conceptual treatment of T. Willwacher's operadic twisting procedure. Chapter~\ref{sec:Computations} discusses some examples of the operadic twisting, especially those relevant for graph homology, and their applications to the Grothendieck--Teichm\"uller Lie algebra, the Deligne conjecture, and a Lie version of this latter one. The last Chapter~\ref{sec:Applications} provides the literature with short surveys on some of the seminal applications of the twisting procedure in a wider mathematical context: fundamental theorem of deformation theory, higher Lie theory, rational homotopy theory, simplicial theory of homotopy algebras, and Floer cohomology of Lagrangian submanifolds. 

\subsection*{\sc Conventions}
Objects studied in this book are $\k$-modules; for simplicity, we work over a field $\k$ of characteristic zero though 
many results still hold over an arbitrary ring. 
The rule of thumb to use when deciphering necessary assumptions is as follows: $\k$ is a ring when working with algebras over nonsymmetric operads (for example, with $\Ai$-algebras), and $\k$ is a field of characteristic zero when working with algebras over symmetric operads (for example, with $\Li$-algebras).\\

We work in the underlying category of chain complexes, so that all differentials have degree $-1$. 
To accommodate that, we grade cohomology groups in negative degrees, when working with the cohomology of a topological space or a manifold (with an exception to the rule for the very first section~\ref{sec:MCDiffGeo} where we stick to the classical conventions). 
The linear dual of a chain complex is understood component-wise $\left(V^\vee\right)_n
\coloneqq\left(V_{-n}\right)^\vee$.
Homological degrees of elements and operations create signs when evaluating operations on their arguments, according to the usual Koszul sign rule and convention, see \cite[Section~1.5.3]{LodayVallette12}. \\

For operad theory, we use the same conventions and notations than the book \cite{LodayVallette12}. 
For instance, we use the abbreviation ``dg'' to mean ``differential graded'' and ``ns'' to mean ``nonsymmetric''. Somewhat abusing terminology, we use the term  ``cooperad'' for the algebraic structure defined by partial/infinitesimal decomposition maps $\Delta_{(1)} : \calC \to \calC\, {\circ}_{(1)}\, \calC$, see \cite[Section~6.1.4]{LodayVallette12}.
In the nonsymmetric case, the upshot of such maps is a linear combination of elements of the form $\mu \circ_i \nu$, where this convention stands for the 2-vertex tree with one internal edge linking the root of the corolla $\nu$ to the $i^{\textrm{th}}$-leaf of the corolla $\mu$. \\

Since many differentials of various types appear throughout the text, we chose the following notations to clarify the situation: 
the underlying differentials are denoted by $d$, the differentials of operads are denote by $\dd$, and the differentials of mapping spaces are denoted by $\partial$. More precisely, for two chain complexes $(A,d_A)$ and $(B, d_B)$, the differential on $\Hom(A,B)$ is given by 
\[\partial f \coloneqq d_B\circ f - (-1)^{|f|}f\circ d_A~, \]
for homogenous maps $f : A\to B$.

\subsection*{\sc Acknowledgements}
It is with great pleasure that we thank Joan Bellier-Mill\`es, Ricardo Campos, Joana Cirici, Gabriel Drummond-Cole,  
Richard Hain, 
Daniel Robert-Nicoud, Agata Smoktunowicz, Jim Sta\-sh\-eff, 
Victor Turchin 
and Tho\-mas Willwacher  for interesting discussions. 
We would like to address our appreciations to the editorial staff of the Cambridge University press, with particular emphasis to Roger Astley, Tom Harris and Anna Scriven for their patience and wise advice. 
We are grateful to 
the University of Amsterdam, 
Trinity College Dublin,
the University Nice Sophia Antipolis, the University Sorbonne Paris Nord, and the 
 Max Planck Institute for Mathematics in Bonn for the excellent working conditions during the preparation of this book. 
V. D. and B.V. were supported by the Institut Universitaire de France. 
S.S. was supported by the Netherlands Organisation for Scientific Research. 
V.D. was supported by a Fellowship of the University of Strasbourg Institute for Advanced Study through the French national program ``Investment for the future'' (IdEx-Unistra, grant USIAS-2021-061)
Preparation of the final version of the manuscript was supported by the project HighAGT ANR-20-CE40-0016.

\chapter{Maurer--Cartan methods}\footnotetext{\hrule\smallskip\noindent This material will be published by Cambridge University Press \& Assessment as ‘Maurer-Cartan Methods in Deformation Theory: the twisting procedure’ by Vladimir Dotsenko, Sergey Shadrin and Bruno Vallette. This version is free to view and download for personal use only. Not for re-distribution, re-sale or use in derivative works. \copyright Cambridge University Press \& Assessment}

\label{sec:MCIntro}

The purpose of this chapter is to give a rather exhaustive survey on the Maurer--Cartan equation and its related methods, which lie at the core of the present monograph. 
We first give a recollection of the Maurer--Cartan equation and its gauge symmetries in differential geometry.
This chapter is viewed as a motivation for the rest of the book, which consists of higher algebraic generalisations of the key notions of gauge theory. Reading it is not  mandatory to understand what follows but this might help the reader to get some concrete pictures in mind before passing to a more abstract treatment. 
Then, we establish the general theory of the Maurer--Cartan equation in differential graded Lie algebras. 
With that in hand, we discuss the philosophy of deformation theory suggesting that studying Maurer--Cartan elements of differential graded Lie algebras, as well as the symmetries of those elements, is the central question of \emph{any} deformation theory problem in characteristic $0$~. 
In the last chapter~\ref{sec:Applications}, we shall discuss more recent developments making that philosophy precise by means of higher category theory.  \\

Throughout this chapter, various infinite series arise. For simplicity, we work with the strong assumption that the various differential graded Lie algebras are nilpotent, so that all these series are actually finite once evaluated on elements. We refer to the treatment of complete algebras given in the next chapter~\ref{sec:OptheoyFilMod} 
for the correct setup in which convergence is understood in the rest of the text.

\section{Maurer--Cartan equation in differential geometry}\label{sec:MCDiffGeo}
In this section, we give a short outline of the differential geometric notions of which the subject of this book is a far reaching algebraic generalisation.   
We review the fundamental objects and the classical results  of gauge theory: vector and principal bundles, connections, and curvatures. 
Nowadays, these notions play a key role in analysis, geometry, and topology \cite{MilnorStasheff74, DK97, NaberInter, Audin04}; they also provide physicists with the suitable conceptual language to express modern theories \cite{Manin97, NaberFound, Hamilton17}. 

Throughout this section, we work over the field of real numbers and we denote by $M$ a smooth manifold. We assume the reader familiar with the basic notions of differential manifolds, as treated in \cite{Warner83}, for instance. 
For more details on this section, we refer the reader to the textbooks  \cite{KobayashiNomizu63, Tu17}. 
\\

Given a smooth vector bundle $E\to M$, one considers the space 
\[\Omega^\bullet(M,E)\coloneqq \Gamma\left(\Lambda^\bullet T^*M\otimes E\right)\cong 
\Omega^\bullet(M)\otimes_{\Omega^0 (M)} \Gamma(E)\]
of \emph{differential forms with values in $E$}.\index{differential forms}
In order to extend the de Rham differential map to $E$-valued differential forms, one is led to the following notion.

\begin{definition}[Connection]\index{connection!vector bundle}\label{def:VectBunConnection}
A \emph{connection} of a smooth vector bundle $E\to M$ 
 is an $\mathbb{R}$-linear map  
 \[\nabla\colon \Omega^0(M,E)\cong \Gamma(E) \to\Omega^1(M,E)\cong\Gamma(T^*M\otimes E)\] 
 satisfying the \emph{Leibniz rule}\index{Leibniz rule}
 \[
\nabla(fs)=df\otimes s + f\nabla s~, 
 \]
for all $f\in\Omega^0(M)$ and all $s\in\Gamma(E)$~.
\end{definition}

\begin{lemma}
For any connection $\nabla$, there is a unique $\RR$-linear operator 
\[d^\nabla \colon \Omega^\bullet(M,E) \to \Omega^{\bullet +1}(M,E)\] satisfying 
\begin{enumerate}
\item $d^\nabla=\nabla$~, for $\bullet=0$, 

\item and the generalised Leibniz rule
\begin{equation}\label{Eq:GeneLeibnizRule}\index{Leibniz rule!generalised}
d^\nabla(\alpha \wedge \omega)=d\alpha\wedge \omega + (-1)^k \alpha \wedge d^\nabla \omega ~, 
\end{equation}
for any $\alpha\in\Omega^k(M)$ and any $\omega \in\Omega^l(M,E)$~.
\end{enumerate}
\end{lemma}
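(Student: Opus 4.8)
The statement is an existence-and-uniqueness result, so the plan is to first extract the only possible formula for $d^\nabla$ from the two axioms (this yields uniqueness and tells us what to define), then to check that this formula is well defined, and finally to verify that it does satisfy (i) and (ii). I would rely throughout on the isomorphism $\Omega^\bullet(M,E)\cong \Omega^\bullet(M)\otimes_{\Omega^0(M)}\Gamma(E)$ recalled just before the statement.

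\emph{Uniqueness.} Every $E$-valued form is an $\Omega^0(M)$-linear combination of decomposable elements $\alpha\otimes s$ with $\alpha\in\Omega^k(M)$ and $s\in\Gamma(E)=\Omega^0(M,E)$. Applying axiom~(ii) to such a decomposable and then axiom~(i) (which gives $d^\nabla s=\nabla s$) forces
\[
d^\nabla(\alpha\otimes s)=d\alpha\otimes s+(-1)^k\,\alpha\wedge\nabla s~.
\]
Since $d^\nabla$ is required to be $\RR$-linear, this determines it on all of $\Omega^\bullet(M,E)$, which is exactly uniqueness. It also dictates the definition to be used in the existence part.

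\emph{Existence.} I would \emph{define} $d^\nabla$ by the displayed formula and then show it is well defined, i.e.\ compatible with the relations $f\alpha\otimes s=\alpha\otimes fs$ for $f\in\Omega^0(M)$ that present the tensor product over $\Omega^0(M)$. Expanding $d^\nabla(f\alpha\otimes s)$ with the Leibniz rule $d(f\alpha)=df\wedge\alpha+f\,d\alpha$, and expanding $d^\nabla(\alpha\otimes fs)$ with the Leibniz rule for $\nabla$ (Definition~\ref{def:VectBunConnection}) together with the sign identity $\alpha\wedge df=(-1)^k\,df\wedge\alpha$, the two results coincide. Equivalently, and perhaps more transparently, one may define $d^\nabla$ locally over a trivialising open set $U$ carrying a frame $(e_i)$ of $E$ by $d^\nabla\!\left(\sum_i\omega_i\otimes e_i\right)=\sum_i d\omega_i\otimes e_i+(-1)^{|\omega_i|}\omega_i\wedge\nabla e_i$ and then check that this is independent of the chosen frame; the uniqueness already proved guarantees that the local operators agree on overlaps and hence glue to a single global operator on $\Omega^\bullet(M,E)$.

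\emph{The Leibniz rule and the main obstacle.} Property~(i) is immediate from the formula, while property~(ii) for general $\alpha\in\Omega^k(M)$ and $\omega\in\Omega^l(M,E)$ follows by $\RR$-linearity from the decomposable case $\omega=\beta\otimes s$, where it reduces to the ordinary Leibniz rule for the de Rham differential on $\Omega^\bullet(M)$, associativity of $\wedge$, and routine sign bookkeeping. The genuinely delicate step is the well-definedness check in the existence part: it is precisely there that the Leibniz rule for $\nabla$ is used in an essential way and where the three sign contributions ($df\wedge\alpha$, the graded commutation of $\alpha$ with $df$, and the overall $(-1)^k$) must cancel exactly. I expect this sign reconciliation---equivalently, the frame-independence of the local definition---to be the main obstacle, the remaining verifications being formal.
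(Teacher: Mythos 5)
Your proposal is correct and follows essentially the same route as the paper: the paper's proof consists solely of defining $d^\nabla(\alpha\otimes s)\coloneqq d\alpha\otimes s+(-1)^k\alpha\wedge\nabla s$ on decomposables, which is exactly the formula your uniqueness argument extracts. The well-definedness over $\Omega^0(M)$ and the verification of (i) and (ii), which the paper leaves implicit, are carried out correctly in your write-up, including the sign cancellation coming from $\alpha\wedge df=(-1)^k\,df\wedge\alpha$.
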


\begin{proof}
It is given by the following definition
 \[
d^\nabla(\alpha \otimes s)\coloneqq d\alpha\otimes s + (-1)^k \alpha \wedge \nabla s~, 
 \]
for all $\alpha\in\Omega^k(M)$ and all $s\in\Gamma(E)$~.
\end{proof}

Let us now look for a condition implying that $d^\nabla$ is a differential, that is squares to zero. In this direction, we first consider the composite 
\[d^\nabla \circ \nabla \colon \Omega^0(M,E) \to \Omega^{2}(M,E)~, \]
which is actually $\Omega^0(M)$-linear. 

\begin{definition}[Curvature]\index{curvature!vector bundle}
The \emph{curvature} of a connection $\nabla$ is the $\End(E)$-valued 2-form 
\[\theta \in \Omega^2(M,\End(E))~\]
obtained from $d^\nabla \circ \nabla$ under the isomorphism 
\[
\Hom_{\Omega^0(M)}\left(\Omega^0(M,E), \Omega^2(M,E)\right)
\cong 
\Omega^2(M,\End(E))~.
\]
\end{definition}

\begin{definition}[Flat connection]\index{connection!flat}
A connection $\nabla$ is called \emph{flat} when its curvature is trivial, i.e.
$\theta=0$~.
\end{definition}

This condition is necessary to get a differential; it is actually enough. 

\begin{lemma}\label{lem:flatconSq}
The composite $d^\nabla \circ d^\nabla \colon \Omega^\bullet(M,E) \to \Omega^{\bullet +2}(M,E)$ is equal to 
\[ \left(d^\nabla\circ d^\nabla\right)(\alpha\otimes s)= \alpha \wedge \left(d^\nabla \circ \nabla\right) (s)~,\]
for $\alpha\in \Omega^\bullet(M)$ and $s\in \Gamma(E)$~.
\end{lemma}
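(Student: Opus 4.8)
The plan is to compute the composite $d^\nabla\circ d^\nabla$ directly on a generator $\alpha\otimes s$, using nothing beyond the explicit formula for $d^\nabla$ recorded in the previous lemma's proof and the generalised Leibniz rule \eqref{Eq:GeneLeibnizRule}. First I would take $\alpha\in\Omega^k(M)$ and expand the inner application via the defining formula
\[
d^\nabla(\alpha\otimes s)=d\alpha\otimes s+(-1)^k\,\alpha\wedge\nabla s~,
\]
so that the outer $d^\nabla$ now acts on a sum of two terms, which I treat separately by $\RR$-linearity.

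For the first term, since $d\alpha\in\Omega^{k+1}(M)$, the same defining formula yields $d^\nabla(d\alpha\otimes s)=d(d\alpha)\otimes s+(-1)^{k+1}d\alpha\wedge\nabla s$, and the first summand drops out because the de Rham differential squares to zero. For the second term, I would apply the generalised Leibniz rule \eqref{Eq:GeneLeibnizRule} with $\omega=\nabla s\in\Omega^1(M,E)$, giving
\[
(-1)^k d^\nabla(\alpha\wedge\nabla s)=(-1)^k d\alpha\wedge\nabla s+\alpha\wedge\big(d^\nabla\circ\nabla\big)(s)~,
\]
where the sign $(-1)^{2k}=1$ accounts for the second piece. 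Adding the two contributions, the terms $(-1)^{k+1}d\alpha\wedge\nabla s$ and $(-1)^k d\alpha\wedge\nabla s$ cancel, leaving exactly $\alpha\wedge(d^\nabla\circ\nabla)(s)$, as claimed.

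There is no real obstacle here beyond careful sign bookkeeping; the one point worth flagging is the legitimacy of invoking \eqref{Eq:GeneLeibnizRule} on $\omega=\nabla s$ rather than on a decomposable element $\beta\otimes t$. This is justified because the rule holds for arbitrary $\omega\in\Omega^1(M,E)$ and $d^\nabla$ is $\RR$-linear. I would also note that it suffices to verify the identity on generators of the form $\alpha\otimes s$, since these span $\Omega^\bullet(M,E)$ and the composite $d^\nabla\circ d^\nabla$ is linear; this is precisely the form in which the statement is phrased.
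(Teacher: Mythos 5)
Your proof is correct and follows the same route as the paper's: expand $d^\nabla(\alpha\otimes s)$ via its defining formula, apply $d^\nabla$ again using the generalised Leibniz rule \eqref{Eq:GeneLeibnizRule}, and observe that $d^2=0$ together with the cancellation of the two cross terms $(-1)^{k+1}d\alpha\wedge\nabla s$ and $(-1)^k d\alpha\wedge\nabla s$ leaves exactly $\alpha\wedge\left(d^\nabla\circ\nabla\right)(s)$. The paper compresses this computation into a single display, but the underlying argument — including the sign bookkeeping you spell out — is identical.
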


\begin{proof}
The generalised Leibniz rule \eqref{Eq:GeneLeibnizRule} gives 
\begin{align*}
\left(d^\nabla\circ d^\nabla\right)(\alpha\otimes s)=d^\nabla\left( d\alpha \otimes s + (-1)^k \alpha \wedge \nabla s\right)= 
\alpha \wedge \left(d^\nabla\circ \nabla\right)(s)~,
\end{align*}
where $k$ stands for the degree of $\alpha$.
\end{proof}

\begin{proposition}\label{cor:FlatDiff}
For any flat connection $\nabla$~, the operator $d^\nabla$ squares to zero. 
\end{proposition}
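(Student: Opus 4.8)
The plan is to deduce the statement almost immediately from Lemma~\ref{lem:flatconSq} together with the definition of the curvature. The starting point is the observation that flatness, $\theta=0$, is equivalent to the vanishing of the operator $d^\nabla\circ\nabla\colon\Omega^0(M,E)\to\Omega^2(M,E)$. Indeed, by definition the curvature $\theta$ is the image of $d^\nabla\circ\nabla$ under the isomorphism
\[
\Hom_{\Omega^0(M)}\left(\Omega^0(M,E),\Omega^2(M,E)\right)\cong\Omega^2(M,\End(E))~,
\]
so that $\theta=0$ holds if and only if $d^\nabla\circ\nabla=0$ as a map $\Gamma(E)\to\Omega^2(M,E)$.

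With this in hand, I would invoke Lemma~\ref{lem:flatconSq}, which computes the square of $d^\nabla$ on a decomposable element $\alpha\otimes s$, with $\alpha\in\Omega^\bullet(M)$ and $s\in\Gamma(E)$, as
\[
\left(d^\nabla\circ d^\nabla\right)(\alpha\otimes s)=\alpha\wedge\left(d^\nabla\circ\nabla\right)(s)~.
\]
Since $\nabla$ is flat, the right-hand side equals $\alpha\wedge 0=0$, so $d^\nabla\circ d^\nabla$ vanishes on every element of the form $\alpha\otimes s$.

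To conclude, I would note that every $E$-valued differential form is, locally, an $\Omega^\bullet(M)$-linear combination of such decomposable elements $\alpha\otimes s$, using a local frame of $E$ together with the isomorphism $\Omega^\bullet(M,E)\cong\Omega^\bullet(M)\otimes_{\Omega^0(M)}\Gamma(E)$ recalled above, and that $d^\nabla\circ d^\nabla$ is an $\mathbb{R}$-linear operator. Hence its vanishing on decomposable elements forces it to vanish identically, which gives $d^\nabla\circ d^\nabla=0$.

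There is really no hard part here: the entire computational content has already been isolated in Lemma~\ref{lem:flatconSq}, and the only point deserving a word of care is the passage from decomposable elements to arbitrary forms, which rests on the local triviality of $E$ and the $\mathbb{R}$-linearity of the operator.
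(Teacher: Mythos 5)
Your proof is correct and follows exactly the paper's route: the paper simply states that the proposition is a direct corollary of Lemma~\ref{lem:flatconSq}, and your argument spells out precisely the implicit steps (flatness $\theta=0$ means $d^\nabla\circ\nabla=0$ via the isomorphism defining the curvature, then the lemma kills the square on decomposable elements $\alpha\otimes s$, which span $\Omega^\bullet(M,E)$). Nothing to add.
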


\begin{proof}
This is a direct corollary of Lemma~\ref{lem:flatconSq}. 
\end{proof}
 
 \begin{definition}[Twisted de Rham differential/complex]\index{twisted!de Rham differential}\index{twisted!de Rham  complex}
 The differential $d^\nabla$ on the space of $E$-valued differential forms 
  associated to a flat connection $\nabla$ is called the 
  \emph{twisted de Rham differential}. The cochain complex 
$\left(\Omega^\bullet(M,E), d^\nabla \right)$
  is called the \emph{twisted de Rham complex}. 
  \end{definition}
  
\begin{remark}
  One recovers the classical de Rham differential on $\Omega^\bullet(M)$ by considering the trivial line bundle.
\end{remark}

Let us now  look at the local situation. Using the local trivialisation of the vector bundle $\pi \,\colon E\to M$ above an open subset $U\subset M$, any basis $(v_1, \ldots, v_n)$ of the typical (finite dimensional) fiber $V$ induces a collection $e=(e_1, \ldots, e_n)$  of sections, with 
$e_i\in \Gamma(E|_U)$~, such that $\left(e_1(x), \ldots, e_n(x)\right)$ is a basis of the fiber $E_x\coloneqq \pi^{-1}(x)$~. Such a collection is called a \emph{local frame over $U$}.\index{local frame} In  such a local frame, the data of the connection $\nabla$ is equivalent to a collection of local 1-forms $\omega_{ij}\in \Omega^1(U)$ such that 
\[
\nabla e_j=\sum_{i=1}^n  \omega_{ij}\otimes e_i~. 
\]

\begin{definition}[Local connection form]\index{connection!local form}
The \emph{local connection form} with respect to the frame $e=(e_1, \ldots, e_n)$ 
is the matrix $\omega_e\coloneqq \left(\omega_{ij}\right)_{i,j=1,\ldots, n}\in \mathrm{gl}_n\big(\Omega^1(U)\big)$.
\end{definition}

\begin{proposition}\label{lemma:curvatureform}
The curvature  is given locally by 
\[\theta_e\cong d \omega_e + \omega_e^2 = d\omega_e +\tfrac12 [\omega_e, \omega_e]\]
 in $\mathrm{gl}_n\big(\Omega^2(U)\big)$~.
\end{proposition}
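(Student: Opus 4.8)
The plan is to unwind the definition of the curvature by evaluating the $\Omega^0(M)$-linear operator $d^\nabla \circ \nabla$ on the frame sections $e_j$ and reading off the resulting matrix of $2$-forms. Since $\theta$ is obtained from $d^\nabla \circ \nabla$ through the isomorphism $\Hom_{\Omega^0(M)}\!\left(\Omega^0(M,E), \Omega^2(M,E)\right) \cong \Omega^2(M,\End(E))$, its local matrix $\theta_e$ is precisely the one whose $(k,j)$ entry is the coefficient of $e_k$ in $\left(d^\nabla \circ \nabla\right)(e_j)$. So the whole statement reduces to a bookkeeping computation with the already-established Leibniz rule.

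First I would write $\nabla e_j = \sum_i \omega_{ij} \otimes e_i$ by definition of the local connection form, and then apply $d^\nabla$ termwise using the generalised Leibniz rule \eqref{Eq:GeneLeibnizRule} in the case $k=1$, which gives $d^\nabla(\omega_{ij} \otimes e_i) = d\omega_{ij} \otimes e_i - \omega_{ij} \wedge \nabla e_i$. Substituting $\nabla e_i = \sum_k \omega_{ki} \otimes e_k$ and summing over $i$ produces two contributions: the linear term $\sum_k d\omega_{kj} \otimes e_k$ and the bilinear term $-\sum_{i,k}\left(\omega_{ij} \wedge \omega_{ki}\right) \otimes e_k$.

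The key step, and the one demanding the most care, is the sign bookkeeping in the bilinear term. Because the $\omega_{ij}$ are $1$-forms, reordering the wedge yields $\omega_{ij} \wedge \omega_{ki} = -\,\omega_{ki} \wedge \omega_{ij}$, so that $-\sum_i \omega_{ij} \wedge \omega_{ki} = \sum_i \omega_{ki} \wedge \omega_{ij} = (\omega_e^2)_{kj}$, where one recognises the matrix square of $\omega_e$ formed with the wedge product. Collecting the coefficients of $e_k$ then gives $\left(d^\nabla \circ \nabla\right)(e_j) = \sum_k (d\omega_e + \omega_e^2)_{kj} \otimes e_k$, which is exactly the assertion $\theta_e \cong d\omega_e + \omega_e^2$.

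Finally, the identity $\omega_e^2 = \tfrac12[\omega_e,\omega_e]$ follows from the convention for the graded commutator of matrix-valued forms: for matrices of $1$-forms $A,B$ the sign $-(-1)^{pq}$ degenerates into a plus when $p=q=1$, so $[A,B] = A\wedge B + B\wedge A$ and hence $[\omega_e,\omega_e] = 2\,\omega_e^2$. I expect the only genuine obstacle to be keeping the Koszul signs and the index conventions consistent throughout; once the reordering sign is handled correctly, everything else is a direct application of the Leibniz rule recorded earlier.
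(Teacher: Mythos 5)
Your proposal is correct and follows essentially the same route as the paper: evaluate $d^\nabla\circ\nabla$ on the frame sections, apply the generalised Leibniz rule \eqref{Eq:GeneLeibnizRule}, and use anticommutativity of $1$-forms under the wedge to turn $-\sum_i \omega_{ij}\wedge\omega_{ki}$ into the matrix square $(\omega_e^2)_{kj}$. If anything, your write-up is slightly more complete, since you also justify the identity $\omega_e^2=\tfrac12[\omega_e,\omega_e]$ via the graded commutator convention, a step the paper's proof leaves implicit.
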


\begin{proof}
It is obtained by the following straightforward computation: 
\begin{align*}
d^\nabla\circ \nabla(e_i)&=\sum_{i=1}^n d^\nabla(\omega_{ij}\otimes e_i)
=\sum_{i=1}^n d\omega_{ij}\otimes e_i - \sum_{i=1}^n \omega_{ij}\wedge \nabla e_i\\
&=
\sum_{i=1}^n \left(d\omega_{ij}  - \sum_{k=1}^n    \omega_{kj} \wedge \omega_{ik}   \right)\otimes e_j
=\sum_{i=1}^n \left(d\omega_{ij}  + \sum_{k=1}^n    \omega_{ik} \wedge \omega_{kj}   \right)\otimes e_i~.
\end{align*}
\end{proof}

In other words, the twisted map $d^\nabla$ is a differential if and only 
the local connection forms satisfy 
the following first kind of \emph{``Maurer--Cartan equation''} \index{Maurer--Cartan equation!differential geometry}
\[d \omega_e + \omega_e\cdot \omega_e = d\omega_e +\tfrac12 [\omega_e, \omega_e]=0~.\]

Such an equation does not depend of the choice of local frames as the property for a connection to be flat is global. 
Explicitly, a \emph{change of local frame} over $U$ from 
$e=\left(e_1, \ldots, e_n\right)$ to $e'=\big(e'_1, \ldots, e'_n\big)$ is an invertible matrix $A\in \mathrm{GL}_n\big(\Omega^0(U)\big)$ such that 
$e'=eA$~. 

\begin{proposition}\label{prop:LocalGaugeAction}
The local connection form with respect to the frame $e'$ 
is given by the matrix 
\[\omega_{e'}=A^{-1}d A + A^{-1} \omega_e A\]
 in $\mathrm{gl}_n\big(\Omega^1(U)\big)$~. 
\end{proposition}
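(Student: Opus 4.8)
The plan is to compute $\nabla e'_j$ directly from the data at hand—the Leibniz rule of Definition~\ref{def:VectBunConnection} and the local description $\nabla e_k=\sum_i \omega_{ik}\otimes e_i$—and then to rewrite the outcome in the new frame $e'$, so as to read off the matrix $\omega_{e'}$ from its defining relation $\nabla e'_j=\sum_l (\omega_{e'})_{lj}\otimes e'_l$. The whole argument is a single bookkeeping computation; no global input is needed, since flatness plays no role here and the statement is purely about how the local $1$-form transforms under a change of frame.

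First I would expand the new frame in coordinates: $e'=eA$ means $e'_j=\sum_k A_{kj}\,e_k$ with $A_{kj}\in\Omega^0(U)$. Applying $\nabla$ and using the Leibniz rule on each product $A_{kj}e_k$ splits the answer into two kinds of terms,
\[
\nabla e'_j=\sum_k dA_{kj}\otimes e_k+\sum_k A_{kj}\,\nabla e_k~,
\]
the first coming from differentiating the transition functions and the second from the connection acting on the old frame. Next I would substitute $\nabla e_k=\sum_i \omega_{ik}\otimes e_i$ and revert to the new frame via $e=e'A^{-1}$, that is $e_k=\sum_l (A^{-1})_{lk}\,e'_l$. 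Collecting the coefficient of each $e'_l$ gives
\[
(\omega_{e'})_{lj}=\sum_k (A^{-1})_{lk}\,dA_{kj}+\sum_{i,k}(A^{-1})_{li}\,\omega_{ik}\,A_{kj}~,
\]
which is exactly the $(l,j)$-entry of $A^{-1}dA+A^{-1}\omega_e A$.

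The computation is elementary, so the only genuine obstacle is keeping the conventions straight. The index convention $\nabla e_j=\sum_i \omega_{ij}\otimes e_i$ places the summation index as the row index of $\omega_e$, while $e'=eA$ treats the frame as a row vector of sections; both must be carried consistently in order to land on $A^{-1}\omega_e A$ rather than a transposed or reverse-ordered variant. A painless way to avoid the index juggling is to work in matrix notation throughout. Writing the defining relation as $\nabla e=e\,\omega_e$ for the row vector $e=(e_1,\ldots,e_n)$, the Leibniz rule yields $\nabla(eA)=e\,dA+(\nabla e)A=e(dA+\omega_e A)$, and substituting $e=e'A^{-1}$ turns this into $\nabla e'=e'(A^{-1}dA+A^{-1}\omega_e A)$, whence the claim is immediate by comparison with $\nabla e'=e'\,\omega_{e'}$. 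I would display the coordinate version for concreteness but keep this matrix reformulation as the conceptual skeleton that makes the ordering of the factors transparent.
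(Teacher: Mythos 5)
Your proof is correct and follows essentially the same route as the paper's: expand $e'_j=\sum_k A_{kj}e_k$, apply the Leibniz rule, substitute $\nabla e_k=\sum_i\omega_{ik}\otimes e_i$, and re-express the result in the frame $e'$ via $A^{-1}$ to read off the entries of $A^{-1}dA+A^{-1}\omega_e A$. The matrix-notation reformulation you add at the end is merely a cleaner packaging of that same computation, not a different argument.
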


\begin{proof}
This follows from  the  straightforward computation: 
\begin{align*}
\nabla\big(e'_j\big)&= \sum_{k=1}^n  \nabla\left(A_{kj}\, e_k\right)
= \sum_{k=1}^n  \left(dA_{kj}\otimes e_k+A_{kj} \nabla e_k\right)\\
&=\sum_{i=1}^n \left(\sum_{k=1}^n  A^{-1}_{ik}\left(dA_{kj}\right) 
+ \sum_{k,l=1}^n A^{-1}_{il}\, \omega_{lk}\, A_{kj} \right)e'_i~. 
\end{align*}
\end{proof}

This is the first instance of \emph{``gauge group action''} on solutions to the Maurer--Cartan equation. \\

Any vector bundle $E$ induces a linear dual bundle $E^*$ and an endomorphism bundle $\End(E)$~. 
In turn, any connection $\nabla$ on $E$ gives rise to canonical connections on $E^*$ and $\End(E)$ as follows. 
Let us first recall the non-degenerate pairing 
\[
(\; , \, ) \ \colon \Omega^i(M,E)\otimes \Omega^j(M,E^*)
\xrightarrow{\wedge}
\Omega^{i+j}(M)\otimes_{\Omega^0 (M)} \Gamma(E\otimes E^*)
\xrightarrow{\langle\,, \, \rangle} \Omega^{i+j}(M)~, 
\]
where $\langle\,,  \rangle$ stands for the linear paring, that is 
\[(\alpha\otimes s, \beta\otimes t)\coloneqq (\alpha \wedge \beta)\, \otimes \langle s,t \rangle~, \]
for $\alpha \in \Omega^i(M)$, $\beta \in \Omega^j(M)$, $s\in \Gamma(E)$, and $t\in \Gamma(E^*)$~. 
To any  connection $\nabla$ on $E$, one associates a connection $\nabla^*$ on $E^*$ characterised  by 
\[
\left(s, \nabla^* t \right)=d(s,t)-\left(\nabla s, t\right)
\]
and then a connection  $\widehat{\nabla}$ on $\End(E)\cong E \otimes E^*$ given by 
\[ 
\widehat{\nabla}(s\otimes t)\coloneqq \nabla s \otimes t +s\otimes \nabla^*t~. 
\]

\begin{proposition}
The twisted de Rham differential on the endomorphism bundle $\End(E)$ is locally  given by 
\[
d^{\widehat{\nabla}}f=df +\omega_e f - (-1)^k f\omega_e=df + [\omega_e, f]\]
 in $\mathrm{gl}_n\big(\Omega^{k+1}(U)\big)$~, for any $f\in \mathrm{gl}_n\big(\Omega^{k}(U)\big)$~~. 
\end{proposition}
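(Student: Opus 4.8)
The plan is to reduce everything to a computation in a local frame, in the same spirit as Proposition~\ref{lemma:curvatureform}. First I would pin down the local connection form of the dual connection $\nabla^*$ on $E^*$. Writing $(e^1, \ldots, e^n)$ for the frame of $E^*$ dual to $e = (e_1, \ldots, e_n)$, so that $\langle e_i, e^j\rangle = \delta_i^j$, I would set $\nabla^* e^j = \sum_i \omega^*_{ij}\otimes e^i$ and substitute $s = e_i$, $t = e^j$ into the defining relation $(s, \nabla^* t) = d(s,t) - (\nabla s, t)$. Since $(e_i, e^j) = \delta_i^j$ is constant its differential vanishes, the left-hand side collapses to $\omega^*_{ij}$, and the right-hand side to $-\omega_{ji}$, yielding $\nabla^* e^j = -\sum_i \omega_{ji}\otimes e^i$; that is, the connection matrix of $\nabla^*$ is the negative transpose of $\omega_e$.

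Next I would compute the local connection form on $\End(E)\cong E\otimes E^*$ by applying $\widehat{\nabla}(s\otimes t) = \nabla s\otimes t + s\otimes \nabla^* t$ to the basis sections $e_i\otimes e^j$, which gives
\[
\widehat{\nabla}(e_i\otimes e^j) = \sum_k \omega_{ki}\otimes (e_k\otimes e^j) - \sum_l \omega_{jl}\otimes (e_i\otimes e^l)~.
\]
Writing a general $f\in\mathrm{gl}_n\big(\Omega^k(U)\big)$ as $f = \sum_{ij} f_{ij}\otimes(e_i\otimes e^j)$ with $f_{ij}\in\Omega^k(U)$, I would then feed this into the generalised Leibniz rule \eqref{Eq:GeneLeibnizRule}, in the form $d^{\widehat{\nabla}}(f_{ij}\otimes(e_i\otimes e^j)) = df_{ij}\otimes(e_i\otimes e^j) + (-1)^k f_{ij}\wedge\widehat{\nabla}(e_i\otimes e^j)$, and collect the coefficient of each basis element $e_a\otimes e^b$.

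The final step, and the one demanding the most care, is the sign bookkeeping that converts these raw coefficients into the stated matrix formula. The coefficient of $e_a\otimes e^b$ emerges as $df_{ab} + (-1)^k\sum_i f_{ib}\wedge\omega_{ai} - (-1)^k\sum_j f_{aj}\wedge\omega_{jb}$, and the crucial move is to commute the degree-$k$ form $f_{ib}$ past the degree-one form $\omega_{ai}$ in the first sum: this produces a factor $(-1)^k$ that cancels the pre-existing one and recovers exactly the matrix entry $(\omega_e f)_{ab} = \sum_i \omega_{ai}\wedge f_{ib}$, whereas the second sum is already in the order $(f\omega_e)_{ab} = \sum_j f_{aj}\wedge\omega_{jb}$ and retains its sign $-(-1)^k$. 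Reassembling the entries gives $d^{\widehat{\nabla}}f = df + \omega_e f - (-1)^k f\omega_e$, which is precisely the graded commutator $df + [\omega_e, f]$ because $\omega_e$ has form-degree one. I expect no conceptual obstacle here; the entire difficulty lies in tracking the Koszul signs, and it is exactly the degree mismatch between $f$ and the degree-one form $\omega_e$ that forces the answer to be a \emph{graded} commutator rather than an ordinary one.
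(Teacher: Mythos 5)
Your proposal is correct and follows essentially the same route as the paper's proof: expand $f$ in the basis $e_i\otimes e_j^*$ of local sections of $\End(E)$, compute $\widehat{\nabla}(e_i\otimes e_j^*)$ using the dual connection (whose local form is indeed the negative transpose of $\omega_e$, a fact the paper uses implicitly where you derive it explicitly), apply the generalised Leibniz rule, and track the Koszul sign that turns $(-1)^k f_{ib}\wedge\omega_{ai}$ into $\omega_{ai}\wedge f_{ib}$. Your sign bookkeeping matches the paper's computation exactly, including the identification of the result as the graded commutator $df+[\omega_e,f]$.
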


\begin{proof}
Using the local frame over $U$, one can write 
\[
f=\sum_{i,j=1}^n f_{ij}\otimes \delta_{ij}~, 
\]
where $\delta_{ij}=e_i\otimes e_j^*\in \Gamma(\End(E)|_U)$ sends $e_j$ to $e_i$ and where $f_{ij}\in \Omega^k(U)$~. 
By definition, we have 
\begin{align*}
\widehat{\nabla}(\delta_{ij})&=\widehat{\nabla}\big(e_i\otimes e_j^*\big)=
\nabla e_i\otimes e_j^*+e_i\otimes \nabla^* e_j^*\\&=
\left(\sum_{l=1}^n \omega_{li}\otimes e_l\right)\otimes e^*_j -
e_i\, \otimes \left(\sum_{l=1}^n \omega_{jl}\otimes e_l^*\right)
= 
\sum_{l=1}^n \omega_{li}\otimes \delta_{lj}-
\sum_{l=1}^n \omega_{jl}\otimes \delta_{il}
~.
\end{align*}

The generalised Leibniz rule \eqref{Eq:GeneLeibnizRule} gives 
\begin{align*}
d^{\widehat{\nabla}}f&= d^{\widehat{\nabla}}\left(\sum_{i,j=1}^n f_{ij}\otimes \delta_{ij}\right)=
\sum_{i,j=1}^n df_{ij}\otimes \delta_{ij}+(-1)^k \sum_{i,j=1}^n f_{ij}\otimes \widehat{\nabla}(\delta_{ij})\\
&=\sum_{i,j=1}^n df_{ij}\otimes \delta_{ij}+
(-1)^k
\sum_{i,j,l=1}^n f_{ij}\wedge \omega_{li} \otimes \delta_{lj}
-(-1)^k\sum_{i,j,l=1}^n 
f_{ij}\wedge \omega_{jl}\otimes \delta_{il}
\\&=\sum_{i,j=1}^n 
\left(
df_{ij}
+\sum_{l=1}^n \omega_{il}\wedge f_{lj}
-(-1)^k
\sum_{l=1}^n f_{il}\wedge \omega_{lj}
\right)
\otimes \delta_{ij}
~.
\end{align*}

\end{proof}

For the first time, we encounter  a \emph{``differential twisted by a solution to the Maurer--Cartan equation''}.
\\

Let us now pass from the local picture to the global one. 
To this extend, one needs an extra action of a Lie group of the fiber bundle, leading to the notion of a principal bundle.  
Developing the notions of connection and curvature at this level will make even more noticeable the role played by the methods from Lie theory. 
The two theories of vector and principal bundles are essential equivalent: any vector bundle induces a canonical principal bundle, called the frame bundle, and one can associate vector bundles, like the adjoint bundle, to any principal bundle. 
Let $G$ be a real Lie group with Lie algebra $\g$.

\begin{definition}[Principal bundle]\index{principal bundle}
A \emph{principal $G$-bundle} is a fiber bundle $P\to M$ equipped with a smooth (right) action of $G$ which is free, transitive, and fiber preserving. 
\end{definition}

The definition implies  the  identifications $P_x\cong G$, for the fibers,  and $P/G\cong M$, for the orbit space.

\begin{example}
The toy model of principal bundle is the \emph{frame bundle}\index{frame bundle} $\mathrm{Fr}(E)\to M$ associated to any vector bundle $E\to M$: 
elements of its fibers are ordered bases of the fibers of $E$. In this case, the structure Lie group $G=\mathrm{GL}_n$ is the general linear group, where $n$ is the dimension of the fibers of $E$~. 
\end{example}

In differential geometry, a \emph{distribution} is a subbundle of the tangent bundle. \index{distribution}

\begin{definition}[Vertical distribution]\index{distribution!vertical}
The \emph{vertical distribution} $T^vP\subset TP$ of a principal bundle $P$ is defined by 
\[T^v_p P\coloneqq \ker \mathrm{D}_p \, \pi~,\quad  \text{for any} \ p\in P~,\]
where $\mathrm{D}_p\, \pi  \colon T_p P \to T_{\pi(p)}M$ stands for the derivative of the structural projection $\pi \colon P\to M$~. 
\end{definition}

One can see that each fiber of the vertical distribution is isomorphic to the tangent Lie algebra $T^v_p P\cong \g$~.

\begin{definition}[Horizontal distribution]\index{distribution!horizontal}
A \emph{horizontal distribution} $T^hP\subset TP$ of a principal bundle $P$ is a distribution complementary to the vertical distribution:
\[T^vP\oplus T^hP = TP~.\]
\end{definition}

Notice the discrepancy between these two notions: the vertical distribution is uniquely and canonically defined while there exists possibly many horizontal distributions. 
Any horizontal distribution gives rise to a $\g$-valued 1-form $\omega$ on $P$ defined by
\[ 
\omega_p \colon T_pP=T^v_pP \oplus T^h_pP \xrightarrow{\mathrm{proj}} T^v_pP \cong \g
~,\]
 for any $p\in P$~, where the first map is the projection on $T^v_pP$  along $T^h_pP$~. 
 In order to make explicit its properties, we need first  to recall the following notions. 
 \\
 
 Let us denote by $\mathrm{R}\colon P\times G \to P$ the right action of $G$ on the principal bundle $P$~. (We will  use the simpler notation $\mathrm{R}_g(-)\coloneqq \mathrm{R}(-, g)$ for the right action of an element $g\in G$.)
 The \emph{fundamental vector field $X^\xi$ associated to $\xi \in \g$}\index{fundamental vector field} is defined by 
 \[
 X^\xi_p\coloneqq \mathrm{D}_{(p,e)}\, \mathrm{R}(0, \xi)\in T^v_pP~, 
 \]
 for any $p\in P$~. 
 The \emph{adjoint representation}\index{adjoint representation} 
 \[\mathrm{Ad}_g \coloneqq \mathrm{D}_e \mathrm{C}_g \colon \g \to \g\]
 is given by the derivation at the unit $e$ of the Lie group $G$ of the conjugation map 
 $\mathrm{C}_g(x)\coloneqq gxg^{-1}$, for any $g,x\in G$~.

\begin{definition}[Connection on a principal bundle]\index{connection!principal bundle}
A \emph{connection} on a principal bundle $P$ is a $\g$-valued 1-form $\omega \in \Omega^1(P,\g)=\Omega^1(P)\otimes \g$ satisfying 
the following properties:\begin{description}
\item[\sc vertical vector field:] $\omega_p\left( X^\xi_p\right)=\xi$~, for any $p\in P$ and $\xi \in \g$~,

\item[\sc equivariance:] $\mathrm{Ad}_g \left(\mathrm{R}_g^*\, \omega\right) =\omega$~, for any $g\in G$~.
\end{description}
\end{definition}

The first conditions says that a connection restricts to the identity map of $\g$ under the identification $T^v_p P\cong \g$~.

\begin{proposition}\label{prop:Horiz=Connection}
The data of a horizontal distribution on a principal bundle is equivalent to the data of a connection. 
\end{proposition}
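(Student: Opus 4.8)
The plan is to exhibit mutually inverse constructions between connection $1$-forms and $G$-invariant horizontal distributions, the invariance being the geometric shadow of the equivariance axiom, and to check that the two defining properties of a connection translate exactly into the two requirements on the distribution. Starting from a connection $\omega\in\Omega^1(P,\g)$, I would set $T^h_pP\coloneqq\ker\omega_p$ for each $p\in P$. The \textsc{vertical vector field} axiom $\omega_p(X^\xi_p)=\xi$ says precisely that $\omega_p$ restricts along $T^v_pP\cong\g$ to the canonical isomorphism; hence $T^v_pP\cap\ker\omega_p=0$ and $\omega_p$ is onto $\g$, so a dimension count yields $T^v_pP\oplus\ker\omega_p=T_pP$, and smoothness of $\omega$ makes the assignment $p\mapsto\ker\omega_p$ a smooth distribution.

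Conversely, from a horizontal distribution I would take $\omega$ to be the $1$-form already written just before the definition of a connection, i.e. the composite of the projection $T_pP=T^v_pP\oplus T^h_pP\to T^v_pP$ with the identification $T^v_pP\cong\g$. The \textsc{vertical vector field} axiom is then immediate, since $X^\xi_p\in T^v_pP$ projects to itself and corresponds to $\xi$. These two assignments are visibly inverse to one another: the kernel of the projection along $T^hP$ is $T^hP$, and the projection attached to $\ker\omega$ recovers $\omega$, using the vertical axiom to identify $\omega_p(v)$ with $\omega_p$ of the vertical part of $v$.

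The main obstacle is to match the remaining \textsc{equivariance} axiom $\mathrm{Ad}_g(\mathrm{R}_g^*\omega)=\omega$ with $G$-invariance of the distribution, $\mathrm{D}_p\mathrm{R}_g(T^h_pP)=T^h_{pg}P$. The crucial input is the transformation law for fundamental vector fields,
\[
\mathrm{D}_p\mathrm{R}_g\big(X^\xi_p\big)=X^{\mathrm{Ad}_{g^{-1}}\xi}_{pg},
\]
which I would obtain by writing $X^\xi_p=\tfrac{d}{dt}\big|_{t=0}\mathrm{R}_{\exp(t\xi)}(p)$ and inserting $g^{-1}g$ so as to convert right translation into the conjugation defining $\mathrm{Ad}$. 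Observing that the vertical distribution is automatically $G$-invariant (from $\pi\circ\mathrm{R}_g=\pi$, whence $\mathrm{D}\mathrm{R}_g$ preserves $\ker\mathrm{D}\pi$), I would decompose an arbitrary $v\in T_pP$ as $v=X^{\omega_p(v)}_p+v^h$ and apply $\mathrm{D}_p\mathrm{R}_g$: the vertical component of $\mathrm{D}_p\mathrm{R}_g(v)$ equals $X^{\mathrm{Ad}_{g^{-1}}\omega_p(v)}_{pg}$ exactly when $\mathrm{D}_p\mathrm{R}_g(v^h)$ stays horizontal, that is, when the distribution is $G$-invariant. Evaluating $\omega_{pg}$ on this then gives $(\mathrm{R}_g^*\omega)_p=\mathrm{Ad}_{g^{-1}}\omega_p$, equivalent after applying $\mathrm{Ad}_g$ to the stated equivariance; running the same computation in reverse yields the converse implication, completing the equivalence.
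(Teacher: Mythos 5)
Your proof is correct and follows essentially the same route as the paper's: the kernel construction and the projection construction as mutually inverse assignments, with the equivariance of $\omega$ translating into $G$-invariance of the distribution via the transformation law $\mathrm{D}_p\mathrm{R}_g\big(X^\xi_p\big)=X^{\mathrm{Ad}_{g^{-1}}\xi}_{pg}$ --- these are exactly the details the paper delegates to its reference (Tu, Section~28). The one point worth highlighting is that you correctly treat $G$-invariance of the horizontal distribution as part of the data, whereas the paper's definition of horizontal distribution asks only for complementarity to the vertical distribution, so your invariance--equivariance dictionary is the substantive content that makes the stated equivalence hold.
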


\begin{proof}
The left-to-right assignment is defined above. In the other way round, given a connection $\omega$, one defines a horizontal distribution as its kernel $T^h_pP\coloneqq \ker \omega_p$~.  We refer the reader to \cite[Section~28]{Tu17} for complete details about this proof. 
\end{proof}

\begin{example}\leavevmode
\begin{enumerate}
\item The \emph{Maurer--Cartan connection}\index{connection!Maurer--Cartan } $\omega_G \in \Omega^1(G, \g)$ on the trivial principal bundle $G\to *$ 
 is defined by
 \[\omega_G\coloneqq \mathrm{D}_g \mathrm{L}_{g^{-1}}\colon T_g G \to T_eG \cong \g~,\]
 where $\mathrm{L}_{g^{-1}} \colon G\to G$ is the left multplication by $g^{-1}$~, for $g\in G$~.  

\item Given a connection $\nabla$ on a vector bundle $E\to M$, there is a canonical way \cite[Section~29]{Tu17} to define a connection $\omega$ on the associated frame bundle $\mathrm{Fr}(E)\to M$ such that the pullback along a local frame $e \colon U \to \mathrm{Fr}(E)|_U$ gives back the local connection form: $e^* \omega=\omega_e$~.
\end{enumerate}
\end{example}

The graded vector space $\Omega^\bullet(P, \g)\coloneqq \Omega^\bullet(P)\otimes \g$ of $\g$-valued differential forms on $P$ acquires a canonical differential graded Lie algebra structure (Definition~\ref{Def:DGLieAlg}) as the tensor product of a differential graded commutative algebra with a Lie algebra. This is the relevant algebraic context to express the properties of connections on principal bundle. 

\begin{definition}[Curvature]\index{curvature!principal bundle}
The \emph{curvature} of a connection $\omega$ on a principal bundle is the $\g$-valued 2-form 
defined by 
\[
\Omega\coloneqq d \omega +\tfrac12[\omega, \omega]
~.\]
\end{definition}

\begin{proposition}\label{lemm:PullbackCurvature}
For any connection $\nabla$ on a vector bundle $E\to M$, the pullback along a local frame $e \colon U \to \mathrm{Fr}(E)|_U$ of the induced curvature of the frame bundle $\mathrm{Fr}(E)\to M$  is equal to the local curvature form: 
\[e^* \Omega=\theta_e~.\]
\end{proposition}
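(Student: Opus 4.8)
The plan is to recognise the pullback $e^*$ as a morphism of differential graded Lie algebras from $\Omega^\bullet(\mathrm{Fr}(E)|_U, \g)$ to $\Omega^\bullet(U, \g)$, and then to combine this with the identity $e^*\omega = \omega_e$ recorded in the Example following Proposition~\ref{prop:Horiz=Connection}. Since the curvature on the frame bundle is defined by $\Omega = d\omega + \tfrac12[\omega,\omega]$, while the local curvature form satisfies $\theta_e = d\omega_e + \tfrac12[\omega_e,\omega_e]$ by Proposition~\ref{lemma:curvatureform}, it will suffice to show that $e^*$ intertwines these two expressions term by term.

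First I would invoke the naturality of the de Rham differential with respect to smooth maps, which gives $e^*(d\omega) = d(e^*\omega) = d\omega_e$. Next I would check that $e^*$ is compatible with the Lie bracket of $\g$-valued forms, so that $e^*[\omega,\omega] = [e^*\omega, e^*\omega] = [\omega_e,\omega_e]$. Adding these two computations would then yield
\[ e^*\Omega = e^*\!\left(d\omega + \tfrac12[\omega,\omega]\right) = d\omega_e + \tfrac12[\omega_e,\omega_e] = \theta_e, \]
which is exactly the claimed equality.

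The only step requiring genuine care, and hence the main obstacle, is the compatibility of $e^*$ with the bracket of $\g$-valued forms. Recall that the differential graded Lie algebra structure on $\Omega^\bullet(-,\g)$ is the tensor product of the de Rham differential graded commutative algebra with the fixed Lie algebra $\g$, so its bracket sends $\alpha\otimes\xi$ and $\beta\otimes\eta$ to $(\alpha\wedge\beta)\otimes[\xi,\eta]$. Because $e^*$ acts as the identity on the $\g$-factor and is multiplicative for the wedge product, one has $e^*\big((\alpha\wedge\beta)\otimes[\xi,\eta]\big) = (e^*\alpha\wedge e^*\beta)\otimes[\xi,\eta]$, which is precisely the bracket of the pullbacks. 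Once this multiplicativity is established, the proposition will follow immediately from Proposition~\ref{lemma:curvatureform} together with the identity $e^*\omega=\omega_e$, with no further computation needed.
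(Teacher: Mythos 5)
Your proof is correct and is essentially the same as the paper's: both compute $e^*\Omega = e^*(d\omega) + \tfrac12 e^*[\omega,\omega] = d(e^*\omega) + \tfrac12[e^*\omega,e^*\omega] = d\omega_e + \tfrac12[\omega_e,\omega_e]$ using that pullback commutes with the differential and the bracket, the identity $e^*\omega=\omega_e$, and Proposition~\ref{lemma:curvatureform}. Your extra paragraph justifying bracket-compatibility via the tensor-product description $\Omega^\bullet(-,\g)\cong\Omega^\bullet(-)\otimes\g$ simply spells out what the paper asserts in one line.
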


\begin{proof}
It follows from the direct computation
\begin{align*}
e^*\Omega&=e^*\left(d\omega\right)+\tfrac12 e^*[\omega, \omega]=d\left(e^*\omega\right)+\tfrac12 [e^*\omega, e^*\omega] = d \omega_e +\tfrac12[\omega_e, \omega_e]=\theta_e~,
\end{align*}
since the pullback preserves the differential and the Lie bracket (second equality), since $e^* \omega=\omega_e$ (third equality), and by Proposition~\ref{lemma:curvatureform} (forth equality). 
\end{proof}

Any connection on a principal bundle induces a decomposition $TP=T^vP\oplus T^hP$ 
of the tangent bundle 
by Proposition~\ref{prop:Horiz=Connection}. The associated vertical and horizontal components of a vector field 
$X\in \Gamma(TP)$ are respectively denoted by $X=X^v+X^h$~.

\begin{theorem}\label{prop:PropertiesofCurv}
The curvature associated to any connection on a principal bundle satisfies the following properties:
\begin{description}
\item[\sc horizontalilty:] $\Omega(X,Y) =d\omega\left(X^h, Y^h\right)$~, for any $X,Y\in \Gamma(TP)$~,

\item[\sc equivariance:] $\mathrm{Ad}_g \left(\mathrm{R}_g^*\, \Omega\right) =\Omega$~, for any $g\in G$~,

\item[\sc bianchi identity:] $d\Omega=\left[\Omega, \omega\right]$~.\index{Bianchi identity}
\end{description}
\end{theorem}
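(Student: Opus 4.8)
The plan is to verify the three properties directly, viewing $\Omega=d\omega+\tfrac12[\omega,\omega]$ as an element of the differential graded Lie algebra $\Omega^\bullet(P,\g)$ and relying throughout on two elementary inputs: the Cartan structure formula $d\alpha(X,Y)=X\bigl(\alpha(Y)\bigr)-Y\bigl(\alpha(X)\bigr)-\alpha\bigl([X,Y]\bigr)$ valid for any $\g$-valued $1$-form $\alpha$, together with the pointwise evaluation $\tfrac12[\omega,\omega](X,Y)=[\omega(X),\omega(Y)]$. I would record at the outset the one geometric fact that drives the whole argument: the horizontal distribution $T^hP=\ker\omega$ (Proposition~\ref{prop:Horiz=Connection}) is $G$-invariant, being preserved by the flow $\mathrm{R}_{\exp(t\xi)}$ of the fundamental vector field $X^\xi$; equivalently, $\omega([X^\xi,Y])=0$ whenever $Y$ is horizontal. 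This is a consequence of the equivariance axiom $\mathrm{Ad}_g(\mathrm{R}_g^*\omega)=\omega$, and I expect its use in the mixed case below to be the main obstacle of the proof.

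For \textbf{horizontality}, both sides are alternating and $\Omega^0(P)$-bilinear in $X,Y$, so it suffices to test the identity on pairs in which each of $X,Y$ is either a fundamental vector field or horizontal. If both are horizontal then $\omega(X)=\omega(Y)=0$, the bracket term drops out, and $\Omega(X,Y)=d\omega(X,Y)=d\omega(X^h,Y^h)$. If $X=X^\xi$ is vertical and $Y$ is horizontal, then $X^h=0$ so the right-hand side vanishes; on the left the bracket term is $[\xi,\omega(Y)]=0$, and the structure formula gives $d\omega(X^\xi,Y)=-\omega([X^\xi,Y])=0$ by the invariance recorded above. Finally, if $X=X^\xi$ and $Y=X^\eta$ are both vertical the right-hand side again vanishes, while $\omega(X^\xi)=\xi$ is constant and $[X^\xi,X^\eta]=X^{[\xi,\eta]}$, so the structure formula yields $d\omega(X^\xi,X^\eta)=-[\xi,\eta]$, which is cancelled exactly by the bracket term $[\xi,\eta]$.

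\textbf{Equivariance} is immediate from naturality: pullback commutes with $d$ and with the bracket of $\g$-valued forms, while the fixed linear automorphism $\mathrm{Ad}_g$ commutes with $d$ (it has constant coefficients) and is a Lie algebra map. Hence $\mathrm{Ad}_g\bigl(\mathrm{R}_g^*\Omega\bigr)=d\bigl(\mathrm{Ad}_g\mathrm{R}_g^*\omega\bigr)+\tfrac12\bigl[\mathrm{Ad}_g\mathrm{R}_g^*\omega,\mathrm{Ad}_g\mathrm{R}_g^*\omega\bigr]$, and the equivariance axiom of the connection collapses this at once to $d\omega+\tfrac12[\omega,\omega]=\Omega$.

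For the \textbf{Bianchi identity} I would compute $d\Omega=\tfrac12\,d[\omega,\omega]$, since $d^2\omega=0$. The graded Leibniz rule for the form-bracket gives $d[\omega,\omega]=[d\omega,\omega]-[\omega,d\omega]$, and the graded antisymmetry $[\omega,d\omega]=-[d\omega,\omega]$ (for degrees $1$ and $2$) turns this into $d[\omega,\omega]=2[d\omega,\omega]$, so that $d\Omega=[d\omega,\omega]$. On the other side, $[\Omega,\omega]=[d\omega,\omega]+\tfrac12[[\omega,\omega],\omega]$, and the cubic term $[[\omega,\omega],\omega]$ vanishes by the graded Jacobi identity applied to the single $1$-form $\omega$; therefore $[\Omega,\omega]=[d\omega,\omega]=d\Omega$. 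The sign bookkeeping here is routine but must be carried out consistently with the conventions fixed for the bracket and differential on $\Omega^\bullet(P,\g)$.
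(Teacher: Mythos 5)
Your proof is correct and follows essentially the same route as the paper: the Bianchi identity is obtained by the identical differential graded Lie computation $d\Omega=\tfrac12\,d[\omega,\omega]=[d\omega,\omega]=\left[\Omega-\tfrac12[\omega,\omega],\omega\right]=[\Omega,\omega]$, and equivariance by the same naturality-of-pullback argument (pullback and $\mathrm{Ad}_g$ commute with $d$ and with the bracket). The only difference is that you write out in full the tensoriality reduction and the vertical/horizontal case analysis for horizontality, including the key fact $\omega([X^\xi,Y])=0$ for horizontal $Y$, which the paper dispatches in one sentence.
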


\begin{proof}
The proof of the first point is a direct consequence of the definition of the curvature and the fact that the connection $\omega$ vanishes on horizontal vectors. 
The second point is showed by the same type of computation as in the proof of the above proposition \ref{lemm:PullbackCurvature}. 
The third point is obtained by the following ``differential graded Lie'' type computation 
\begin{align*}
d\Omega&=
\tfrac12d[\omega, \omega]=[d\omega, \omega]=\left[\Omega-\tfrac12 [\omega, \omega], \omega\right]=[\Omega, \omega]~, 
\end{align*}
since the differential is a derivation (second equality) and by the Jacobi identity (last equality).
\end{proof}

\begin{definition}[Flat connection]\index{connection!flat}
A connection $\omega$  on a principal bundle is called \emph{flat} when its curvature   is trivial: 
\[\Omega=d \omega +\tfrac12[\omega, \omega]=0~.\]
\end{definition}

This is the global form of the Maurer--Cartan equation mentioned above. Geometrically, this property is equivalent to the integrability of the horizontal vector fields. 

\begin{theorem}
A connection on a principal bundle is flat if and only if the horizontal vector fields are preserved by the Lie bracket, i.e.
\[
\left[X^h, Y^h\right]=[X,Y]^h~, \quad \text{for any} \ X,Y\in \Gamma(TP)~.
\]
\end{theorem}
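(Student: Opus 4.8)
The plan is to reduce the global flatness condition $\Omega=0$ to a purely Lie-theoretic statement about horizontal fields by means of a single structural identity for the curvature. The starting point is the horizontality of the curvature established in \cref{prop:PropertiesofCurv}, namely $\Omega(X,Y)=d\omega(X^h,Y^h)$, together with the intrinsic (Koszul) formula for the exterior derivative of a $1$-form, applied componentwise to the $\g$-valued form $\omega$:
\[
d\omega(U,V) = U(\omega(V)) - V(\omega(U)) - \omega([U,V]).
\]
This turns the flatness condition into a statement about the bracket $[X^h,Y^h]$.

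First I would evaluate this formula on the horizontal components $U=X^h$ and $V=Y^h$. By \cref{prop:Horiz=Connection}, the horizontal distribution is exactly $\ker\omega$, so the form $\omega$ annihilates every horizontal vector field; in particular $\omega(X^h)$ and $\omega(Y^h)$ vanish \emph{identically} as functions on $P$, whence their directional derivatives along $Y^h$ and $X^h$ are zero. The first two terms therefore disappear and one is left with the clean identity
\[
\Omega(X,Y) = -\,\omega([X^h,Y^h]).
\]
This is the heart of the argument: it exhibits the curvature as the precise obstruction to the horizontal distribution being closed under the Lie bracket.

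From here both implications are immediate. Since $\omega$ restricts to the identity on $T^vP\cong\g$ and vanishes on $T^hP$, the element $\omega([X^h,Y^h])$ is exactly the vertical component of $[X^h,Y^h]$; hence $\omega([X^h,Y^h])=0$ if and only if $[X^h,Y^h]$ is horizontal. Consequently $\Omega=0$ holds if and only if $[X^h,Y^h]$ is horizontal for all $X,Y\in\Gamma(TP)$. Because this condition only constrains the horizontal components $X^h,Y^h$, which exhaust $\Gamma(T^hP)$ as $X,Y$ range over $\Gamma(TP)$, it is equivalent to saying that the bracket of any two horizontal fields is again horizontal; taking $X,Y$ horizontal (so that $X^h=X$, $Y^h=Y$) this reads precisely $[X,Y]=[X,Y]^h$, i.e. the asserted identity $[X^h,Y^h]=[X,Y]^h$. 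By the Frobenius theorem this is the same as the integrability of the horizontal distribution.

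The only genuinely delicate point is the passage $d\omega(X^h,Y^h)=-\omega([X^h,Y^h])$: one must use that $\omega(X^h)=0$ as a \emph{function} on $P$, not merely at a single point, so that its derivative along $Y^h$ truly vanishes, and one must invoke $\ker\omega=T^hP$ in order to read off horizontality from the vanishing of $\omega([X^h,Y^h])$. Both facts are built into the very definition of a principal connection, so no further input is required and the remaining verifications are routine bookkeeping.
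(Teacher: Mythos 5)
Your proof is correct and takes essentially the same route as the paper: the paper's own proof simply quotes the identity $\Omega(X,Y)=-\omega\left([X,Y]\right)$ for horizontal vector fields $X,Y$ and refers to \cite[Section~3.1]{Morgan98} for the details, and your argument supplies exactly those details, deriving that identity from the Koszul formula for $d\omega$ together with the horizontality of $\Omega$ and $\ker\omega=T^hP$, and then reading off the equivalence of $\Omega=0$ with the involutivity of the horizontal distribution. Your interpretation of the displayed identity as a statement about horizontal fields (it is only on horizontal $X,Y$ that it carries content) is the same reading the paper implicitly adopts.
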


\begin{proof}
This is a straightforward consequence of the formula 
\[\Omega(X,Y)=-\omega\left([X,Y]\right)~,\]
for horizontal vector fields $X, Y\in \Gamma\left(T^hP\right)$~. We refer the reader to \cite[Section~3.1]{Morgan98} for complete details. 
\end{proof}

The following construction allows us to come back  to vector bundles.

\begin{definition}[Associated vector bundle]\index{associated vector bundle}
The \emph{vector bundle associated to a principal bundle $P$ and to a (finite-dimensional) representation 
$\rho \colon G \to \mathrm{GL}(V)$} is defined by the coequalizer $P\times_\rho V$, that is the quotient of $P\times V$ under the equivalence relation $(p\cdot g,v)\sim (p, \rho_{g}(v))$, for any $p\in P$, $v\in V$, and $g\in G$~.
\end{definition}

It is straightforward to check that the associated bundle $P\times_\rho V$ defines a vector bundle over $M$ with fibers isomorphic to $V$~. 

\begin{example}\leavevmode
\begin{enumerate}
\item The vector bundle $P\times_\mathrm{Ad} \g$ associated to the adjoint representation $\mathrm{Ad}\colon \G \to \mathrm{GL}(\g)$ is called the \emph{adjoint bundle}.\index{adjoint bundle}

\item The vector bundle associated to the frame bundle $\mathrm{Fr}(E)$ and  the identity representation 
is isomorphic to the original vector bundle $E$~. This shows that any vector bundle is a vector bundle associated to a principal bundle. 
\end{enumerate}
\end{example}

The $V$-valued differential forms on $P$, that is $\Omega^\bullet(P, V)\coloneqq \Omega^\bullet(P)\otimes  V$~,  coincide with the differential forms on $P$ with values in the trivial vector bundle $P\times V$~, that is $\Omega^\bullet(P, P\times V)$ under the notation introduced at the beginning of this section. Let us now make explicit the differential forms on $M$ with values in the associated bundle $E\coloneqq P\times_\rho V$ in terms of the $V$-valued differential forms on $P$~. 

\begin{definition}[Tensorial differential forms]\index{differential forms!tensorial}
A $V$-valued differential form $\alpha\in \Omega^k (P,V)$ on a principal bundle $P$ is called \emph{tensorial} when it is: 
\begin{description}
\item[\sc horizontal:] for any $p\in P$, we have $\alpha_p(u_1, \ldots, u_k)=0$, when at least one tangent vector is vertical, i.e. 
$u_i\in T^v_pP$, for some $1\leqslant i\leqslant k$~,
\item[\sc equivariant:] $\rho_g \left(\mathrm{R}_g^*\, \alpha\right) =\alpha$~, for any $g\in G$~.
\end{description}
We denote the graded vector space of tensorial differential forms by $\Omega^\bullet_{\rho} (P,V)$~.
\end{definition}

\begin{example}\leavevmode
\begin{enumerate}
\item
Theorem~\ref{prop:PropertiesofCurv} shows that the curvature of a connection on a principal bundle is tensorial with respect to the adjoint representation, i.e. $\Omega\in \Omega^2_{\mathrm{Ad}}(P, \allowbreak \g)$~. 
\item The set of connections on a principal bundle forms an affine space with $\Omega^1_\mathrm{Ad}(P, \g)$ as associated vector space.
\end{enumerate}
\end{example}

\begin{proposition}\label{prop:IsoTensValuedForms}
The graded vector space of tensorial $V$-valued differential forms on $P$ 
is isomorphic to the graded vector space of $E$-valued differential forms on $M$:
\[\Omega^\bullet_{\rho} (P,V)\cong \Omega^\bullet (M,E)~,\]
where $E=P\times_\rho V$ is the associated vector bundle. 
\end{proposition}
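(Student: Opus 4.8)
The plan is to exhibit an explicit, mutually inverse pair of $\k$-linear degree-preserving maps between $\Omega^\bullet_\rho(P,V)$ and $\Omega^\bullet(M,E)$, built from the tautological identification of $V$ with the fibers of the associated bundle $E=P\times_\rho V$. The single algebraic input I would isolate first is this: a point $p\in P$ over $x=\pi(p)$ determines a linear isomorphism $\bar p\colon V\xrightarrow{\cong}E_x$, $v\mapsto[p,v]$, where $[p,v]$ denotes the class of $(p,v)$ in the quotient $P\times_\rho V$. The defining relation $(p\cdot g,v)\sim(p,\rho_g(v))$ then reads as the cocycle identity
\[\overline{p\cdot g}=\bar p\circ\rho_g~,\]
which is the only property of the associated bundle used in what follows.

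Next I would define the forward map $\Phi\colon\Omega^k_\rho(P,V)\to\Omega^k(M,E)$ by pushing forward along $\pi$: for a tensorial $\alpha$ and tangent vectors $X_1,\dots,X_k\in T_xM$, set
\[\Phi(\alpha)_x(X_1,\dots,X_k)\coloneqq\bar p\big(\alpha_p(\tilde X_1,\dots,\tilde X_k)\big)~,\]
where $p$ is any point over $x$ and $\tilde X_i\in T_pP$ are any lifts, i.e. $\mathrm{D}_p\pi(\tilde X_i)=X_i$ (which exist since $\pi$ is a submersion). The two independences needed for well-definedness correspond exactly to the two clauses of tensoriality. Changing a lift by a vertical tangent vector does not alter the value, by horizontality of $\alpha$. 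Replacing $p$ by $p\cdot g$ and the lifts by $\mathrm{D}_p\mathrm{R}_g(\tilde X_i)$ multiplies $\alpha_p(\dots)$ by $\rho_{g^{-1}}$, because equivariance $\rho_g(\mathrm{R}_g^*\alpha)=\alpha$ gives $\mathrm{R}_g^*\alpha=\rho_{g^{-1}}\alpha$; simultaneously $\bar p$ is replaced by $\bar p\circ\rho_g$ by the cocycle identity, so the two factors cancel and the value is unchanged.

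Conversely I would define the backward map $\Psi\colon\Omega^k(M,E)\to\Omega^k(P,V)$ by pulling back and undoing the frame identification:
\[\Psi(\beta)_p(\tilde X_1,\dots,\tilde X_k)\coloneqq\bar p^{-1}\big(\beta_{\pi(p)}(\mathrm{D}_p\pi(\tilde X_1),\dots,\mathrm{D}_p\pi(\tilde X_k))\big)~.\]
This is horizontal because $\mathrm{D}_p\pi$ annihilates vertical vectors, and equivariant again by $\overline{p\cdot g}=\bar p\circ\rho_g$; hence $\Psi(\beta)$ lands in $\Omega^k_\rho(P,V)$. A direct unwinding of the two definitions shows $\Phi\circ\Psi=\mathrm{id}$ and $\Psi\circ\Phi=\mathrm{id}$, and both maps are visibly $\k$-linear and preserve the form degree, giving the claimed isomorphism.

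The only genuinely analytic point left is that $\Phi(\alpha)$ and $\Psi(\beta)$ are smooth forms, and I expect this to be the main obstacle, since every other step is pure pointwise (multi)linear algebra. I would settle it over a trivializing open set $U\subset M$ equipped with a local section $s\colon U\to P|_U$: the section identifies $E|_U\cong U\times V$, and under this identification $\Phi(\alpha)|_U$ corresponds to the pullback $s^*\alpha\in\Omega^k(U)\otimes V$, which is smooth because $s$ and $\alpha$ are; smoothness of $\Psi(\beta)$ follows dually, and independence of the chosen section is precisely the well-definedness already established. For the full verification of these standard computations I would refer to \cite[Section~29]{Tu17}.
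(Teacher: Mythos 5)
Your proposal is correct and follows essentially the same route as the paper's proof: both construct the forward map by choosing a point $p$ in the fiber and lifts of the tangent vectors, identifying $V$ with $E_x$ via $v\mapsto\overline{(p,v)}$, and both invoke equivariance for independence of $p$ and horizontality for independence of the lifts, with the inverse map defined by pulling back along $\mathrm{D}_p\pi$. The only difference is that you spell out the cocycle identity $\overline{p\cdot g}=\bar p\circ\rho_g$, the mutual-inverse check, and the smoothness verification, all of which the paper leaves implicit.
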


\begin{proof}
The isomorphism from left to right is given as follows. Let $\alpha\in \Omega^k_{\rho} (P, \allowbreak V)$~, $x\in M$~, and $v_1, \ldots, v_k\in T_x M$~. 
We choose a point $p\in P_x$ in the fiber above $x$ and we choose lifts $u_1, \ldots, u_k\in T_pP$ for the vectors $v_1, \ldots, v_k$~, that is 
$\mathrm{D}_p\pi (u_i)=v_i$~, for $1\leqslant i \leqslant k$~. We denote by $f_p \colon V \to E_x$ the linear isomorphisms defined by 
$v \mapsto \overline{(p, v)}$~. The image of the tensorial differential form $\alpha$ is given by 
\[
(v_1, \ldots, v_k) \mapsto f_p\left(\alpha_p(u_1, \ldots, u_k)\right)~, 
\]
which lives in $\Omega^k(M, E)$~. Since $\alpha$ is equivariant, this definition does not depend on the choice of the point $p\in P_x$ and since $\alpha$ is horizontal, this definition does not depend on the choices of the lifts $u_1, \ldots, u_k\in T_pP$~. \\

In this other way round, let $\beta\in \Omega^k(M, E)$~, $p\in P$~,  and $u_1, \ldots, u_k\in T_pP$~. The image of the $E$-valued differential form $\beta$ on $M$ under the reverse isomorphism is given by 
\[
f_p^{-1}\left(\beta_{\pi(p)}\left(
\mathrm{D}_p\pi (u_1), \ldots, \mathrm{D}_p\pi (u_k)
\right)
\right)~,
\]
which is clearly a  tensorial $V$-valued differential form on $P$~.
\end{proof}

Since the differential $d$ fails to preserve horizontal  $V$-valued differential forms on $P$, one is led to  the following definition. 

\begin{definition}[Covariant derivative]\index{covariant derivative}
The \emph{covariant derivative} associated to a connection $\omega$ on a principal bundle is defined by 
\[
d^\omega(\alpha)(X_1, \ldots, X_{k+1})\coloneqq (d \alpha) \left(X_1^h, \ldots, X_{k+1}^h\right)
~, 
\]
for $\alpha \in \Omega^k(P,V)$ and $X_1, \ldots, X_{k+1}\in \Gamma{(TP)}$~. 
\end{definition}

\begin{example}
In terms of the covariant derivative, the first point of Theorem~\ref{prop:PropertiesofCurv} asserts that $\Omega=d^\omega(\omega)$ ~.
\end{example}

\begin{lemma}
The covariant derivative restricts to tensorial $V$-valued differential forms on $P$: 
\[d^\omega\colon \Omega^\bullet_{\rho} (P,V) \to \Omega^{\bullet+1}_{\rho} (P,V)~.\]
\end{lemma}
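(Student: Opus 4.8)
The plan is to verify the two defining conditions of a tensorial form—horizontality and equivariance—for $d^\omega\alpha$, assuming that $\alpha$ itself satisfies them. Horizontality will be essentially immediate from the definition, whereas equivariance is where the real work lies, resting on a single geometric fact: the right $G$-action preserves the horizontal distribution, so the horizontal projection $X\mapsto X^h$ is $G$-equivariant.

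First I would establish this compatibility. Since $\pi\circ\mathrm{R}_g=\pi$, the derivative $\mathrm{D}_p\mathrm{R}_g$ carries $T^v_pP=\ker\mathrm{D}_p\pi$ into $T^v_{pg}P$, so the vertical distribution is $G$-invariant. For the horizontal distribution I would invoke the equivariance axiom $\mathrm{Ad}_g(\mathrm{R}_g^*\omega)=\omega$ of the connection: rewriting it as $\omega(\mathrm{D}_p\mathrm{R}_g\,X)=\mathrm{Ad}_{g^{-1}}\omega(X)$ shows that $\omega_p(X)=0$ forces $\omega_{pg}(\mathrm{D}_p\mathrm{R}_g\,X)=0$, i.e. $\mathrm{D}_p\mathrm{R}_g$ maps $T^h_pP=\ker\omega_p$ into $T^h_{pg}P$. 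Decomposing $X=X^v+X^h$ and applying $\mathrm{D}_p\mathrm{R}_g$ then yields $(\mathrm{D}_p\mathrm{R}_g\,X)^h=\mathrm{D}_p\mathrm{R}_g\,X^h$, the desired equivariance of the horizontal projection.

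With this in hand, horizontality of $d^\omega\alpha$ follows at once: if some argument $X_i$ is vertical then $X_i^h=0$, and $(d\alpha)(X_1^h,\ldots,X_{k+1}^h)$ vanishes by multilinearity. For equivariance I would compute $\mathrm{R}_g^*(d^\omega\alpha)$ directly. Unfolding the definition and using that $d$ commutes with pullback gives $\bigl(\mathrm{R}_g^*\,d^\omega\alpha\bigr)(X_1,\ldots,X_{k+1})=\bigl(d\,\mathrm{R}_g^*\alpha\bigr)(X_1^h,\ldots,X_{k+1}^h)$, where I have already replaced $(\mathrm{D}_p\mathrm{R}_g\,X_i)^h$ by $\mathrm{D}_p\mathrm{R}_g\,X_i^h$ via the previous step. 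Since $\rho_g$ is a point-independent linear automorphism it commutes with $d$, so the equivariance $\mathrm{R}_g^*\alpha=\rho_g^{-1}\alpha$ of $\alpha$ propagates to $d\,\mathrm{R}_g^*\alpha=\rho_g^{-1}\,d\alpha$. Substituting back gives $\mathrm{R}_g^*\,d^\omega\alpha=\rho_g^{-1}\,d^\omega\alpha$, that is $\rho_g\bigl(\mathrm{R}_g^*\,d^\omega\alpha\bigr)=d^\omega\alpha$, as required.

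The main obstacle is the first step—the $G$-equivariance of the horizontal projection—since it is the only place where the defining equivariance property of the connection $\omega$ is genuinely used; once it is secured, both conditions on $d^\omega\alpha$ reduce to formal manipulations with pullbacks, multilinearity, and the constancy of $\rho_g$.
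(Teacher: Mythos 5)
Your proof is correct and takes essentially the same route as the paper, whose entire proof is the one-line remark that it suffices to check that the differential $d$ preserves equivariant $V$-valued forms (horizontality of $d^\omega\alpha$ being automatic from the definition). The only difference is that you make explicit the $G$-equivariance of the horizontal projection $X\mapsto X^h$, deduced from the equivariance axiom $\mathrm{Ad}_g\left(\mathrm{R}_g^*\,\omega\right)=\omega$, a step the paper leaves implicit; this fills in a genuine detail rather than changing the argument.
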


\begin{proof}
It is enough to check that the differential $d$ preserves equivariant $V$-valued differential forms on $P$~. 
\end{proof}

Proposition~\ref{prop:IsoTensValuedForms} implies that the covariant derivative induces a degree $1$ linear operator on $\Omega^\bullet (M,E)$~. 
From the explicit isomorphisms given in the above proof, one can see that the image of $d^\omega \colon \Omega^0_{\rho} (P,V) \to \Omega^1_{\rho} (P,V)$ gives a map $\nabla_\rho\colon \Omega^0 (M,E) \to \Omega^1 (M,E)$ which  satisfies the Leibniz rule (Definition~\ref{def:VectBunConnection}); so it defines a connection on the associated vector bundle. 
Furthermore, one can see that the covariant derivative $d^\omega$ corresponds to the twisted de Rham differential $d^{\nabla_\rho}$~. These two differentials can be expressed in Lie theoretical terms as follows. 
\\

The infinitesimal version of the group representation $\rho\colon G \to \mathrm{GL}(V)$ produces a Lie algebra representation 
$\mathrm{D}_e \,\rho \colon \g \to \mathrm{gl}(V)$~. This latter one defines an action of the differential graded Lie algebra 
$\Omega^\bullet(P, \g)$ on the graded vector space $\Omega^\bullet(P, V)$ under the formula: 
\begin{multline*}
\left(\tau\cdot \alpha \right)_p\left(v_1, \ldots, v_{k+l}\right)\coloneqq \\
\frac{1}{k!l!}\sum_{\sigma\in \mathbb{S}_{k+l}}\mathrm{sgn}(\sigma)\,
\mathrm{D}_e \,\rho\left(\tau_p\left(v_{\sigma(1)}, \ldots, v_{\sigma(k)}\right)\right)
\left(
\alpha_p\left(v_{\sigma(k+1)}, \ldots, v_{\sigma(k+l)}\right)
\right)
~,
\end{multline*}
for $\tau\in \Omega^k(P, \g)$~, $\alpha\in \Omega^l(P, V)$~, $p\in P$~, and $v_1, \ldots, v_{k+l}\in T_pP$~.

\begin{proposition}
On tensorial $V$-valued differential forms $\alpha \in \Omega^\bullet_{\rho} (P,V)$ on $P$, the covariant derivative is equal to 
\[d^\omega(\alpha)=d \alpha + \omega \cdot \alpha ~. \]
\end{proposition}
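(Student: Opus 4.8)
The plan is to verify the identity $d^\omega(\alpha)=d\alpha+\omega\cdot\alpha$ as $V$-valued $(k+1)$-forms on $P$, for $\alpha\in\Omega^k_\rho(P,V)$, by evaluating both sides on tangent vectors. Since all three terms are alternating $C^\infty(P)$-multilinear in their arguments, and since at every point $p$ the tangent space splits as $T_pP=T^h_pP\oplus T^v_pP$ with the vertical part spanned by the fundamental vector fields $X^\xi$, it suffices to test the equality on tuples of vector fields each of which is either horizontal or of the form $X^\xi$ for some $\xi\in\g$. I would organise the computation into three cases according to how many of the arguments are vertical, always extending a chosen tangent vector either to a horizontal field or to the global fundamental field $X^\xi$; the tensoriality of all three expressions guarantees the choice of extension is immaterial.

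First I observe that the left-hand side $d^\omega(\alpha)(X_1,\ldots,X_{k+1})=(d\alpha)(X_1^h,\ldots,X_{k+1}^h)$ vanishes as soon as one argument is vertical, since then its horizontal component is zero. Thus the left side is nonzero only when all arguments are horizontal, where it equals $d\alpha(X_1,\ldots,X_{k+1})$; and there $\omega\cdot\alpha$ also vanishes because $\omega$ kills horizontal vectors (the \textsc{vertical vector field} axiom gives $\omega(X^\xi)=\xi$, and $\omega$ vanishes on $T^hP=\ker\omega$). So in this case the identity is immediate, and it remains to show that the right-hand side vanishes whenever at least one argument is vertical.

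The crucial input is a Lie-derivative computation. Since $\alpha$ is horizontal, $\iota_{X^\xi}\alpha=0$, so Cartan's formula gives $\iota_{X^\xi}(d\alpha)=\mathcal{L}_{X^\xi}\alpha$. Because $X^\xi$ integrates to the flow $\mathrm{R}_{\exp(t\xi)}$ and $\alpha$ is equivariant, so that $\mathrm{R}_g^*\alpha=\rho_{g^{-1}}\alpha$, differentiating $\mathrm{R}_{\exp(t\xi)}^*\alpha=\rho_{\exp(-t\xi)}\alpha$ at $t=0$ yields the pointwise identity $\mathcal{L}_{X^\xi}\alpha=-\mathrm{D}_e\rho(\xi)\cdot\alpha$ of $V$-valued forms. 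I would then treat the case of exactly one vertical argument: placing it first by antisymmetry and taking it to be $X^\xi$ with the other arguments horizontal, this gives $d\alpha(X^\xi,X_2,\ldots)=-\mathrm{D}_e\rho(\xi)\big(\alpha(X_2,\ldots)\big)$, while a direct evaluation of the action formula — in which only the single summand feeding $X^\xi$ into the $\omega$-slot survives, and the $k!$ permutations of the horizontal slots reassemble $\alpha$ and cancel the normalisation $\tfrac1{1!\,k!}$ — gives $(\omega\cdot\alpha)(X^\xi,X_2,\ldots)=+\mathrm{D}_e\rho(\xi)\big(\alpha(X_2,\ldots)\big)$. The two cancel. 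For two or more vertical arguments, $\omega\cdot\alpha$ vanishes because $\omega$ absorbs only one vertical vector while $\alpha$, being horizontal, kills the remaining one; and $d\alpha$ vanishes by the same Cartan/Lie-derivative identity applied to one vertical slot, the resulting $\alpha(X^\eta,\ldots)$ again being zero by horizontality.

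I expect the main obstacle to be bookkeeping rather than conceptual: matching the sign produced by the Lie-derivative computation, namely $-\mathrm{D}_e\rho(\xi)$, against the sign and combinatorial normalisation $\tfrac1{1!\,k!}$ in the definition of $\omega\cdot\alpha$, and carefully justifying that testing on horizontal and fundamental-vertical fields suffices. Once these points are pinned down, the three cases together close the argument.
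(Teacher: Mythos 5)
Your proof is correct. The paper gives no argument of its own for this proposition --- it simply refers the reader to the proof of Theorem~31.19 in Tu's textbook --- and your argument (reduction by tensoriality to tuples of horizontal and fundamental vertical fields, the trivial agreement on all-horizontal tuples, Cartan's formula $\iota_{X^\xi}d\alpha=\mathcal{L}_{X^\xi}\alpha$ combined with the infinitesimal equivariance $\mathcal{L}_{X^\xi}\alpha=-\mathrm{D}_e\rho(\xi)\circ\alpha$, and the cancellation against the $k!$ surviving terms of the action formula, which give $+\mathrm{D}_e\rho(\xi)\bigl(\alpha(\cdots)\bigr)$, with both terms vanishing separately when two or more arguments are vertical) is precisely that standard proof, with the signs and the $\tfrac{1}{1!\,k!}$ normalisation handled correctly.
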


\begin{proof}
We refer the reader to the proof of \cite[Theorem~31.19]{Tu17}. 
\end{proof}

In the case of the adjoint bundle, the infinitesimal Lie action is given by 
\[ 
\begin{array}{llll}
\mathrm{ad}\coloneqq \mathrm{D}_e \mathrm{Ad}\colon &\g &\to & \mathrm{gl(\g)}\\
&x &\mapsto &[x,-]~.
\end{array}
\]
So we get the formula 
\[d^\omega(\alpha)=d \alpha +[\omega, \alpha]~, \]
which is the global form of the example of a differential twisted by a Maurer--Cartan element. 

\begin{example}
Using this property, the Bianchi identity of Theorem~\ref{prop:PropertiesofCurv} amounts to $d^\omega(\Omega)=0$~. 
\end{example}

\begin{corollary}\label{lem:PrincipleCurv0Diff}
When the connection $\omega$ is flat, the covariant derivative $d^\omega$ squares to zero. 
\end{corollary}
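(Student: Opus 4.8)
The plan is to compute the composite $d^\omega \circ d^\omega$ directly on a tensorial form $\alpha \in \Omega^\bullet_{\mathrm{Ad}}(P,\g)$, exploiting the formula $d^\omega(\alpha) = d\alpha + [\omega,\alpha]$ just established and the differential graded Lie algebra structure on $\Omega^\bullet(P,\g)$, and to recognise the outcome as the bracket $[\Omega,\alpha]$ with the curvature, which vanishes exactly under the flatness hypothesis.

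First I would expand, using the formula twice,
\[
d^\omega\big(d^\omega(\alpha)\big) = d\big(d\alpha + [\omega,\alpha]\big) + \big[\omega,\, d\alpha + [\omega,\alpha]\big].
\]
Since $d^2 = 0$ and the bracket is bilinear, this reduces to $d[\omega,\alpha] + [\omega, d\alpha] + [\omega,[\omega,\alpha]]$. The differential is a derivation of the Lie bracket, and $\omega$ carries the odd degree of a $1$-form, so $d[\omega,\alpha] = [d\omega,\alpha] - [\omega, d\alpha]$; the two contributions containing $d\alpha$ therefore cancel, leaving $[d\omega,\alpha] + [\omega,[\omega,\alpha]]$.

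The decisive algebraic step is to rewrite the iterated bracket. Because $\omega$ is odd, $[\omega,\omega]$ need not vanish, and the graded Jacobi identity --- equivalently, the statement that $\ad_\omega = [\omega,-]$ is a graded derivation of the bracket --- gives $[\omega,[\omega,\alpha]] = \tfrac12[[\omega,\omega],\alpha]$. Substituting this back yields
\[
d^\omega\big(d^\omega(\alpha)\big) = \Big[\, d\omega + \tfrac12[\omega,\omega],\ \alpha\,\Big] = [\Omega, \alpha],
\]
and flatness, $\Omega = 0$, forces $d^\omega \circ d^\omega = 0$.

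I expect the only delicate point to be the sign bookkeeping around the odd element $\omega$: it is precisely the degree of $\omega$ that makes $[\omega,\omega]$ survive and produces the factor $\tfrac12$ matching the curvature. This computation runs in parallel to the Bianchi identity computation in Theorem~\ref{prop:PropertiesofCurv}, where the same derivation-plus-Jacobi manipulations occur, so I would reuse that bookkeeping here.
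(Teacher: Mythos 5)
Your computation is correct, and it is essentially the computation the paper performs: expand $d^\omega\circ d^\omega$, cancel the two $d\alpha$ terms by the Leibniz rule for the odd element $\omega$, convert the iterated bracket via the graded Jacobi identity into $\tfrac12[[\omega,\omega],\alpha]$, and recognise the curvature. The one genuine issue is scope. You prove the statement only for tensorial forms with values in the adjoint representation, $\alpha\in\Omega^\bullet_{\mathrm{Ad}}(P,\g)$, where $d^\omega=d+[\omega,-]$. The corollary, however, concerns the covariant derivative on tensorial $V$-valued forms for an \emph{arbitrary} representation $\rho\colon G\to \mathrm{GL}(V)$, and it is used immediately afterwards to conclude that a flat principal connection induces a flat connection $\nabla_\rho$ on \emph{every} associated vector bundle, so the general case is the one that matters. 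The paper's proof is your computation verbatim with $[\omega,\alpha]$ replaced by $\omega\cdot\alpha$, where $\cdot$ denotes the action of the differential graded Lie algebra $\Omega^\bullet(P,\g)$ on the graded vector space $\Omega^\bullet(P,V)$ induced by $\mathrm{D}_e\,\rho$; the two identities you invoke, namely $d(\omega\cdot\alpha)=d\omega\cdot\alpha-\omega\cdot d\alpha$ and $\omega\cdot(\omega\cdot\alpha)=\tfrac12[\omega,\omega]\cdot\alpha$, are precisely the axioms saying that $\cdot$ is an action of a dg Lie algebra (the second being the action analogue of your Jacobi step, since for odd $\omega$ one has $[\omega,\omega]\cdot\alpha=2\,\omega\cdot(\omega\cdot\alpha)$). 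So your argument is not wrong, and the sign bookkeeping you flag is unchanged, but to establish the corollary as stated you should run it for the general action rather than the adjoint bracket.
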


\begin{proof}
This follows from the following formula:
\begin{align*}
\left(d^\omega \circ d^\omega\right)(\alpha)&= 
d^\omega\left(d \alpha + \omega \cdot \alpha\right)=d(\omega\cdot \alpha) + \omega\cdot (d\alpha +\omega \cdot \alpha)\\&
= d \omega \cdot \alpha - \omega \cdot d \alpha + \omega\cdot d\alpha + \tfrac12 [\omega, \omega] \cdot \alpha
\\&=
(d \omega) \cdot \alpha + \tfrac12 [\omega, \omega] \cdot \alpha  \\
&=
\Omega \cdot \alpha
~,
\end{align*}
since the action $\cdot$ is by a differential graded Lie algebra (second line). 
\end{proof}

Corollary~\ref{lem:PrincipleCurv0Diff} is the exact analogue for the covariant derivative of principal bundles of Proposition~\ref{cor:FlatDiff} for the twisted de Rham differential of vector bundles. 
It implies the following result: when a  connection $\omega$ of a principal bundle is flat, then so is the  connection $\nabla_\rho$ on any associated vector bundle.\\ 

Let us now study the group of symmetries of a principal bundle and its action on the above-mentioned notions. 
Its name is motivated by its applications in physics. 

\begin{definition}[Gauge group]\index{gauge!group}
The \emph{gauge group} $\mathscr{G}(P)$ of a principal bundle $\pi \colon P \to M$ is the group consisting of all fiber-preserving and equivariant diffeomorphisms of $P$, called \emph{gauge transformations}\index{gauge!transformation}: 
\[
\mathscr{G}(P)\coloneqq \left\{
f\colon P \xrightarrow{\cong} P \ | \ f \circ \pi =\pi \ ; \ f\left(\mathrm{R}_g(p)\right)=\mathrm{R}_g(f(p))~, \forall p\in P~, \forall g\in G
\right\}~. 
\]
\end{definition}

\begin{remark}
In the physics literature, the structure group $G$ is called the \emph{gauge group}. In the mathematical literature, the above mentioned group $\mathscr{G}(P)$ is also called the \emph{group of gauge transformations}. We chose the present terminology in order to match with the general definition of the gauge group given in the next section~\ref{sec:MCdgLie}.
\end{remark}

\begin{proposition}
The gauge group is isomorphic to the group of equivariant $G$-valued functions on $P$: 
\[\mathscr{G}(P)\cong \mathcal{C}^\infty(P, G)^G~.\]
\end{proposition}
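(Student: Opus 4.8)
The plan is to set up an explicit, mutually inverse pair of assignments between $\mathscr{G}(P)$ and the group $\mathcal{C}^\infty(P,G)^G$ of smooth maps $\tau\colon P\to G$ that are equivariant for the right action $\mathrm{R}$ on the source and the conjugation action on the target, i.e.\ $\tau(\mathrm{R}_g(p))=g^{-1}\tau(p)g$ for all $p\in P$ and $g\in G$. First, starting from a gauge transformation $f\in\mathscr{G}(P)$, I would use that $f$ is fiber-preserving, so that $\pi(f(p))=\pi(p)$ forces $f(p)$ to lie in the same fiber as $p$; since $G$ acts freely and transitively on each fiber, there is a unique element $\tau_f(p)\in G$ characterised by
\[f(p)=\mathrm{R}_{\tau_f(p)}(p)=p\cdot\tau_f(p)~.\]
This produces the candidate map $\tau_f\colon P\to G$, and the whole argument will rest on analysing this one formula.

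Second, I would read off the equivariance of $\tau_f$ directly from that of $f$: comparing $f(p\cdot g)=\mathrm{R}_g(f(p))=p\cdot(\tau_f(p)g)$ with $f(p\cdot g)=(p\cdot g)\cdot\tau_f(p\cdot g)=p\cdot(g\,\tau_f(p\cdot g))$ and cancelling the free action yields $\tau_f(p\cdot g)=g^{-1}\tau_f(p)g$, so $\tau_f\in\mathcal{C}^\infty(P,G)^G$. Conversely, for an equivariant $\tau$ I would define $f_\tau(p)\coloneqq p\cdot\tau(p)$ and check that it is fiber-preserving (immediate, since the action preserves fibers), equivariant (the same one-line computation run backwards), and a diffeomorphism, its inverse being $p\mapsto p\cdot\tau(p)^{-1}$; here one uses the identity $\tau(p\cdot\tau(p))=\tau(p)^{-1}\tau(p)\tau(p)=\tau(p)$, itself a consequence of equivariance. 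By construction these two assignments are inverse to one another.

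Third, to upgrade this bijection to a group isomorphism I would transport the composition law. For $f_1,f_2\in\mathscr{G}(P)$ with associated functions $\tau_1,\tau_2$, the equivariance of $\tau_1$ gives
\[(f_1\circ f_2)(p)=f_1\big(p\cdot\tau_2(p)\big)=p\cdot\tau_2(p)\cdot\tau_1\big(p\cdot\tau_2(p)\big)=p\cdot\big(\tau_1(p)\tau_2(p)\big)~,\]
so composition of gauge transformations corresponds to the pointwise product in $G$. Since the pointwise product of two conjugation-equivariant maps is again equivariant, $\mathcal{C}^\infty(P,G)^G$ is a group under pointwise multiplication and $f\mapsto\tau_f$ is a group homomorphism, hence an isomorphism.

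The one genuinely non-formal point, and the step I expect to be the main obstacle, is the smoothness of $\tau_f$: the pointwise recipe above only produces a set-theoretic map a priori. I would settle it using the standard structural fact that for a principal $G$-bundle the shear map $P\times G\to P\times_M P$, $(p,g)\mapsto(p,p\cdot g)$, is a diffeomorphism, whose inverse furnishes a smooth division map $\delta\colon P\times_M P\to G$ with $q=p\cdot\delta(p,q)$ whenever $p,q$ lie in a common fiber. As $p\mapsto(p,f(p))$ is smooth and lands in $P\times_M P$, one obtains $\tau_f(p)=\delta(p,f(p))$ as a composite of smooth maps; smoothness of $f_\tau$ and of its inverse is then immediate from smoothness of the action. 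All remaining verifications are the routine cancellations indicated above.
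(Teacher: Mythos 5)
Your proposal is correct and follows the same route as the paper: the paper's (very terse) proof consists precisely of the assignment $f\mapsto\sigma_f$ characterised by $f(p)=\mathrm{R}_{\sigma_f(p)}(p)$, which is exactly your $\tau_f$, with all verifications left to the reader. You simply carry out those verifications explicitly — equivariance, the inverse assignment $\tau\mapsto f_\tau$, the identification of composition with the pointwise product, and (the one point the paper is silent on) the smoothness of $\tau_f$ via the division map coming from the shear diffeomorphism $P\times G\cong P\times_M P$ — all of which are correct.
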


\begin{proof}
We consider here the conjugation action on $G$, so an equivariant $G$-valued function on $P$ is a smooth map 
$\sigma\colon P \to G$ such that 
\[\sigma(\mathrm{R}_g(p))=\mathrm{C}_{g^{-1}}(\sigma(p))=g^{-1}\sigma(p)g~,\]
for any $p\in P$ and $g\in G$~. 
Given a gauge transformation $f \colon P \to P$, one considers the equivariant $G$-valued function $\sigma_f \colon P \to G$ defined by 
\[F(p)=\mathrm{R}_{\sigma_F(p)}(p)~.\]
\end{proof}

The gauge group is in general infinite dimensional and it acts on $\g$-valued differential forms $\alpha \in \Omega^\bullet(P, \g)$ on the left by pullback: 
\[f.\, \alpha\coloneqq \left(f^{-1}\right)^*\alpha~.\]
This action restricts naturally to (flat) connections. Let us give it a more explicit description. 

\begin{theorem}
The action of a gauge transformation $f \colon P \to P$ on a (flat) connection $\omega$  is given by 
\[f.\,\omega=\mathrm{Ad}_{\sigma_f}\circ \omega + \left(\sigma_f^{-1}\right)^*\omega_G
 ~.\]
\end{theorem}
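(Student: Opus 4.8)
The plan is to compute the pullback $(f^{-1})^*\omega$ directly using the identification of the gauge transformation $f$ with its equivariant function $\sigma_f\colon P\to G$ satisfying $f(p)=\mathrm{R}_{\sigma_f(p)}(p)$. The starting point is to differentiate this defining relation: for a tangent vector $u\in T_pP$, the derivative $\mathrm{D}_p f(u)$ decomposes into two contributions by the Leibniz/product rule applied to the two slots of $\mathrm{R}(p,\sigma_f(p))$—one coming from moving the base point $p$ with $\sigma_f(p)$ held fixed, and one coming from varying $\sigma_f$ along $u$ with the point held fixed. The second contribution lands in the vertical distribution and is precisely the fundamental vector field $X^\xi$ for $\xi=\big(\mathrm{D}_p\sigma_f(u)\big)\cdot\sigma_f(p)^{-1}$, i.e.\ the left-logarithmic derivative of $\sigma_f$, which is exactly what the Maurer--Cartan form $\omega_G$ extracts. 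This is where the term $(\sigma_f^{-1})^*\omega_G$ will come from.

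The key steps, in order, are: first, evaluate $\big((f^{-1})^*\omega\big)_p(u)=\omega_{f(p)}\big(\mathrm{D}_pf(u)\big)$; second, substitute the two-term expansion of $\mathrm{D}_pf$ described above; third, apply the two defining axioms of a connection. On the first term one uses equivariance $\mathrm{R}_g^*\,\omega=\mathrm{Ad}_{g^{-1}}\circ\omega$, which converts the contribution of the base-point derivative into $\mathrm{Ad}_{\sigma_f(p)}\big(\omega_p(u)\big)$; on the second term one uses the vertical-vector-field axiom $\omega_p(X^\xi_p)=\xi$, which returns exactly $\big(\mathrm{D}_p\sigma_f(u)\big)\cdot\sigma_f(p)^{-1}$, and one recognises this as $\big((\sigma_f^{-1})^*\omega_G\big)_p(u)$ using the definition of the Maurer--Cartan connection $\omega_G=\mathrm{D}_g\mathrm{L}_{g^{-1}}$ from the example above. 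Assembling the two pieces yields
\[
f.\,\omega=\mathrm{Ad}_{\sigma_f}\circ\omega+\big(\sigma_f^{-1}\big)^*\omega_G~,
\]
as claimed, and the restriction to flat connections follows since the gauge action preserves flatness.

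The main obstacle I anticipate is bookkeeping the derivative of the two-variable action map $\mathrm{R}\colon P\times G\to P$ correctly and matching the conventions, in particular sorting out whether the logarithmic derivative that appears is left-invariant or right-invariant and hence whether one gets $\omega_G$ or a right-translated variant; this is precisely the point where a sign or an $\mathrm{Ad}$-twist can creep in. The cleanest way to handle it is to fix a smooth curve $t\mapsto p(t)$ through $p$ with $\dot p(0)=u$, write $f(p(t))=\mathrm{R}_{\sigma_f(p(t))}(p(t))$, and differentiate at $t=0$ using the chain rule on both slots, keeping the fundamental-vector-field identification explicit. Given the two connection axioms and the definition of $\omega_G$ already recorded in the excerpt, the remaining computation is routine, so I would state the differentiation lemma for $\mathrm{R}$ and then simply read off the two terms, rather than belabour the curve computation. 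One should also note the left-action convention $f.\,\alpha=(f^{-1})^*\alpha$, which is why it is $\sigma_f$ rather than $\sigma_f^{-1}$ that appears in the adjoint term.
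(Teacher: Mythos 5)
Your overall strategy---differentiate the defining relation $f(p)=\mathrm{R}_{\sigma_f(p)}(p)$ along a curve, split $\mathrm{D}_pf$ into a base-point term and a vertical term, and apply the two connection axioms---is sound, and it is essentially the computation the paper delegates to Kobayashi--Nomizu (the paper's ``proof'' is only a citation). But as written your computation proves the formula for the wrong transformation, and the bookkeeping worries you flag at the end are exactly where it goes wrong. First, your opening identity $\big((f^{-1})^*\omega\big)_p(u)=\omega_{f(p)}\big(\mathrm{D}_pf(u)\big)$ is false: the right-hand side is $(f^*\omega)_p(u)$, not $\big((f^{-1})^*\omega\big)_p(u)$. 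Second, in the decomposition
\[
\mathrm{D}_pf(u)=\mathrm{D}_p\mathrm{R}_{\sigma_f(p)}(u)+X^{\xi}_{f(p)}\ ,
\]
the element $\xi$ is the \emph{left}-logarithmic derivative $\sigma_f(p)^{-1}\cdot\mathrm{D}_p\sigma_f(u)=(\sigma_f^*\omega_G)_p(u)$, not the right-logarithmic derivative $\big(\mathrm{D}_p\sigma_f(u)\big)\cdot\sigma_f(p)^{-1}$ that you wrote; this is forced by factoring $p\cdot\sigma_f(p(t))=f(p)\cdot\big(\sigma_f(p)^{-1}\sigma_f(p(t))\big)$, since the fundamental vector field must sit at the point $f(p)$. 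Third, the equivariance axiom $\mathrm{Ad}_g\left(\mathrm{R}_g^*\,\omega\right)=\omega$ gives $\mathrm{R}_g^*\,\omega=\mathrm{Ad}_{g^{-1}}\circ\omega$, so the base-point term contributes $\mathrm{Ad}_{\sigma_f(p)^{-1}}\big(\omega_p(u)\big)$, not $\mathrm{Ad}_{\sigma_f(p)}\big(\omega_p(u)\big)$. Assembling what your computation actually yields, one gets
\[
f^*\omega=\mathrm{Ad}_{\sigma_f^{-1}}\circ\omega+\sigma_f^*\,\omega_G\ ,
\]
which is the transformation law for $f^{-1}.\,\omega$, not for $f.\,\omega$.

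The missing ingredient that repairs this---and which your appeal to ``the left-action convention'' does not by itself supply---is the lemma that the equivariant function of the inverse gauge transformation is the pointwise inverse: $\sigma_{f^{-1}}=\sigma_f^{-1}$. This follows from equivariance of $\sigma_f$ evaluated at $g=\sigma_f(p)$, which gives $\sigma_f(f(p))=\sigma_f(p)^{-1}\sigma_f(p)\sigma_f(p)=\sigma_f(p)$, i.e.\ $\sigma_f\circ f=\sigma_f$, whence $\sigma_{f^{-1}}(q)=\sigma_f\big(f^{-1}(q)\big)^{-1}=\sigma_f(q)^{-1}$. Applying the (corrected) pullback formula to $h=f^{-1}$ then yields
\[
f.\,\omega=(f^{-1})^*\omega=\mathrm{Ad}_{\sigma_{f^{-1}}^{-1}}\circ\omega+\sigma_{f^{-1}}^*\,\omega_G
=\mathrm{Ad}_{\sigma_f}\circ\omega+\big(\sigma_f^{-1}\big)^*\omega_G\ ,
\]
as stated. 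So your plan is salvageable, but you must (a) compute the pullback along the correct map, (b) get the left-logarithmic derivative and the direction of the $\mathrm{Ad}$-twist right, and (c) prove and use $\sigma_{f^{-1}}=\sigma_f^{-1}$; without (c) the argument cannot produce the stated formula.
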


\begin{proof}
We refer the reader to \cite[Chapter~II]{KobayashiNomizu63} for details.
\end{proof}

This is the global form of the ``gauge action'' formula given in Proposition~\ref{prop:LocalGaugeAction}.\\

Let us denote the affine space of connections on the principal bundle $P$ by $\mathrm{C}(P)$ and the set of flat connections by 
$\mathrm{MC}(P)$~. 
The associated moduli spaces 
\[\mathscr{C}(P)\coloneqq \mathrm{C}(P)/\mathscr{G}(P)\quad \text{and} \quad \mathscr{MC}(P)\coloneqq \mathrm{MC}(P)/\mathscr{G}(P)\]\index{connection!moduli spaces}
of (flat) connections under the gauge group are of fundamental importance in mathematics \cite{AB83, Donaldson83, DK97, Audin04} and in physics \cite{Manin97, Hamilton17}.
In mathematics, the moduli space of flat connections is isomorphic to the character variety 
$\mathrm{Hom}(\pi_1(M), \allowbreak G)/G$~, where $\pi_1(M)$ stands for the fundamental group of $M$~.
In physics terminology, connections are called \emph{gauge fields} and their moduli spaces represent the configuration spaces of quantum field theories on which Feynman path integrals are ``defined'' and ``evaluated''. 

\section{Maurer--Cartan equation in differential graded Lie algebras}\label{sec:MCdgLie}

The expression $d\omega+\tfrac12[\omega,\omega]$ for the curvature of the $\mathfrak{g}$-valued $1$-form defining a connection $\omega$  in a principal bundle $P$ makes sense since the space $\Omega^\bullet(P, \g)$ of all $\mathfrak{g}$-valued forms has a richer structure than merely a chain complex: it is a differential graded Lie algebra. In this section, we recall the general formalism for studying the Maurer--Cartan equation and the symmetries of its solutions in differential graded Lie algebras. 
From now on, we switch from the cohomological degree convention to the homological degree convention, and work over a ground field $\k$ of characteristic $0$. 

\begin{definition}[Differential graded Lie algebra]\index{differential graded Lie algebra}\label{Def:DGLieAlg}
A \emph{differential graded (dg) Lie algebra}  is the data $\g=(A, {d}, [\, , ])$ of a chain complex, that is a collection of vector spaces $A=\{A_n\}_{n\in \mathbb{Z}}$ with linear maps $d \colon A_n \to A_{n-1}$ of degree $-1$ satisfying $d^2=0$, equipped with a degree-preserving linear map 
$[\, ,] : A^{\otimes 2} \to A$, called the \emph{Lie bracket}, satisfying the following properties 
\begin{description}
\item[\sc derivation:] \qquad
$\rule{0pt}{12pt}d([x,y])=[dx, y] + (-1)^{|x|}[x,dy]$~,
\item[\sc skew-symmetry:]
$\rule{0pt}{12pt}[x,y]=-(-1)^{|x||y|}[y,x]$~,
\item[\sc Jacobi identity:]\index{Jacobi identity}
\ $\rule{0pt}{12pt}
[[x,y],z]+
(-1)^{|x|(|y|+|z|)}[[y,z],x]+
(-1)^{|z|(|x|+|y|)}[[z,x],y]
=0~,$
\end{description}
where the notation $|x|$ stands for the degree of homogeneous elements $x\in A_{|x|}$~.
\end{definition}

\begin{definition}[Maurer--Cartan element/equation]\index{Maurer--Cartan equation}\index{Maurer--Cartan element}\label{def:MCEl}
Let $\g=(A, {d}, [\, , ])$ be a dg Lie algebra. 
A \emph{Maurer--Cartan element} $\omega$ if an element of $A_{-1}$ that is a solution to the \emph{Maurer--Cartan equation}
\begin{equation}\label{eq:MCeqLie}
d\omega+\tfrac12[\omega,\omega]=0~.
\end{equation}
The set of  Maurer--Cartan elements in $\g$ is denoted by $\mathrm{MC}(\g)$.
\end{definition}

The intuition behind the formulas that we are about to write comes from the situation where the dg Lie algebra $\g$ is finite-dimensional, so that the Maurer--Cartan equation is actually a finite collection of actual polynomial equations in a finite-dimensional vector space, and the Maurer--Cartan set is a variety (intersection of quadrics). 

\begin{lemma}\label{lemm:TangentMC}
In a (finite dimensional) dg Lie algebra $\g$, the tangent space 
$T_\omega(\mathrm{MC}(\g))$ 
of the Maurer--Cartan variety $\mathrm{MC}(\g)$ at a point $\omega$ consists of elements $\eta\in A_{-1}$ satisfying
\begin{equation}\label{eq:tangentspace}
d^\omega(\eta)\coloneqq d\eta+[\omega,\eta]=0~.
\end{equation}
\end{lemma}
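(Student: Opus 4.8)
The plan is to treat $\mathrm{MC}(\g)$ as the zero locus of an explicit polynomial map and to identify its tangent space with the kernel of the differential of that map. Since the bracket is degree-preserving and $d$ has degree $-1$, the assignment
\[
F\colon A_{-1}\to A_{-2},\qquad F(\omega)\coloneqq d\omega+\tfrac12[\omega,\omega]
\]
is well defined, and in any linear basis of the finite-dimensional spaces $A_{-1}$ and $A_{-2}$ its components are polynomial (in fact affine-quadratic) functions of the coordinates of $\omega$. By definition $\mathrm{MC}(\g)=F^{-1}(0)$, and the (Zariski) tangent space at a point $\omega\in\mathrm{MC}(\g)$ is the kernel of the Jacobian $\D F_\omega\colon A_{-1}\to A_{-2}$. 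I would phrase everything through this Jacobian rather than through local charts precisely because this description needs no smoothness of the variety, which is the one conceptual point deserving care: the intersection of quadrics cut out by $F$ may well be singular at $\omega$, yet its Zariski tangent space is still the kernel of the differential of the defining equations.

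First I would compute $\D F_\omega$ by differentiating $F$ along the affine line $t\mapsto \omega+t\eta$ and reading off the linear coefficient. Using linearity of $d$ and bilinearity of the bracket,
\[
F(\omega+t\eta)=F(\omega)+t\Bigl(d\eta+\tfrac12\bigl([\omega,\eta]+[\eta,\omega]\bigr)\Bigr)+\tfrac{t^2}{2}[\eta,\eta],
\]
so that $\D F_\omega(\eta)=d\eta+\tfrac12\bigl([\omega,\eta]+[\eta,\omega]\bigr)$.

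The remaining step, which I expect to be the only (and very mild) computational subtlety, is the sign bookkeeping in the two cross terms. Both $\omega$ and $\eta$ lie in $A_{-1}$, so $|\omega|=|\eta|=-1$ and $|\omega|\,|\eta|=1$; the skew-symmetry axiom then gives
\[
[\eta,\omega]=-(-1)^{|\omega||\eta|}[\omega,\eta]=[\omega,\eta].
\]
Hence $[\omega,\eta]+[\eta,\omega]=2[\omega,\eta]$ and therefore $\D F_\omega(\eta)=d\eta+[\omega,\eta]=d^\omega(\eta)$. Concluding, $T_\omega(\mathrm{MC}(\g))=\ker \D F_\omega=\{\eta\in A_{-1}\mid d\eta+[\omega,\eta]=0\}$, which is exactly the asserted description. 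Note that the hypothesis that $\omega$ be a Maurer--Cartan element enters only to guarantee $\omega\in\mathrm{MC}(\g)$, so that the tangent space is taken at a genuine point of the variety; the formula for $\D F_\omega$ itself is valid at every $\omega\in A_{-1}$.
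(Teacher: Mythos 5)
Your proof is correct and follows essentially the same route as the paper: the paper also realises $\mathrm{MC}(\g)$ as the zero locus of the curvature function $\omega\mapsto d\omega+\tfrac12[\omega,\omega]$, computes its derivative at $\omega$ to be $\eta\mapsto d\eta+[\omega,\eta]$, and identifies the tangent space with the kernel of that derivative. Your explicit expansion along $\omega+t\eta$ and the sign check $[\eta,\omega]=[\omega,\eta]$ for $|\omega|=|\eta|=-1$ simply make precise the computation the paper leaves implicit.
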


\begin{proof}
The Maurer--Cartan variety is the zero locus of the curvature function 
\[\Omega \colon \omega \in A_{-1} \mapsto d\omega+\tfrac12[\omega,\omega]\in A_{-2}~.\]
Its derivative at $\omega$ is equal to $\mathrm{D}_\omega\,  \Omega\, (\eta)=d\eta+[\omega,\eta]$~.
Finally, the tangent space of $\mathrm{MC}(\g)$ at $\omega$ is the zero locus of this derivative. 
\end{proof}

Considering the \emph{adjoint operator}\index{adjoint operator}
\[\ad_\omega\coloneqq [\omega,-]~,\]
the abovementioned map is equal to the sum $d^\omega=d +\ad_\omega$~.

\begin{proposition}\label{prop:dtwSq0}
For any Maurer--Cartan element $\omega \in \MC(\g)$ of a dg Lie algebra $\g$, the map $d^\omega$ is a derivation satisfying 
\[d^\omega \circ d^\omega=0~.\]
\end{proposition}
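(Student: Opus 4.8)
The plan is to exploit the decomposition $d^\omega = d + \ad_\omega$ and the fact that each summand is a graded derivation of the bracket, so that the Maurer--Cartan equation enters only at the very last step.

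First I would verify that $d^\omega$ is a derivation of degree $-1$. The differential $d$ is such a derivation by the \textbf{derivation} axiom of Definition~\ref{Def:DGLieAlg}. The operator $\ad_\omega = [\omega,-]$ also has degree $|\omega| = -1$, and the claim that it is a graded derivation,
\[
\ad_\omega([x,y]) = [\ad_\omega x, y] + (-1)^{|x|}[x, \ad_\omega y]~,
\]
is precisely a rewriting of the \textbf{Jacobi identity} (using \textbf{skew-symmetry} to bring $\omega$ to the front of each iterated bracket). Since a sum of two derivations of the same degree is again a derivation, $d^\omega$ is a derivation of degree $-1$.

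Next I would compute the square by expanding
\[
d^\omega \circ d^\omega = d^2 + (d\,\ad_\omega + \ad_\omega\, d) + \ad_\omega \circ \ad_\omega~,
\]
and handling the three groups of terms separately. The first vanishes since $d^2 = 0$. For the cross term, applying the derivation axiom to $d[\omega,x]$ and using $|\omega| = -1$ gives the cancellation $d[\omega,x] + [\omega,dx] = [d\omega,x]$, so that $d\,\ad_\omega + \ad_\omega\,d = \ad_{d\omega}$. For the last term, I would apply the derivation property of $\ad_\omega$ established above to the pair $(\omega, x)$, which yields $2[\omega,[\omega,x]] = [[\omega,\omega],x]$, hence $\ad_\omega \circ \ad_\omega = \tfrac12\, \ad_{[\omega,\omega]}$. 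Assembling these identities and using that $\eta \mapsto \ad_\eta$ is linear, the three contributions combine into
\[
d^\omega \circ d^\omega = \ad_{d\omega} + \tfrac12\, \ad_{[\omega,\omega]} = \ad_{\,d\omega + \frac12 [\omega,\omega]}~,
\]
which is exactly $\ad$ of the curvature $\Omega(\omega)$ from Lemma~\ref{lemm:TangentMC}. The Maurer--Cartan equation~\eqref{eq:MCeqLie} forces $d\omega + \tfrac12[\omega,\omega] = 0$, whence $d^\omega \circ d^\omega = \ad_0 = 0$.

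I expect the only delicate point to be the sign bookkeeping caused by $\omega$ being odd. In particular, the factor $\tfrac12$ in $\ad_\omega \circ \ad_\omega = \tfrac12\, \ad_{[\omega,\omega]}$ is exactly what matches the $\tfrac12[\omega,\omega]$ term in the Maurer--Cartan equation; getting this coefficient right, together with the cancellation $d[\omega,x] + [\omega,dx] = [d\omega,x]$, is where the graded Jacobi identity and the oddness of $\omega$ must be handled carefully.
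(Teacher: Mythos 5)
Your proposal is correct and follows essentially the same route as the paper's own proof: both establish that $\ad_\omega$ is a derivation via the Jacobi identity, expand $d^\omega\circ d^\omega$ into the three terms $d^2$, $d\,\ad_\omega+\ad_\omega\,d=\ad_{d\omega}$, and $\ad_\omega\circ\ad_\omega=\tfrac12\ad_{[\omega,\omega]}$, and then invoke the Maurer--Cartan equation to conclude. The only difference is cosmetic, namely your packaging of the final expression as $\ad$ of the curvature.
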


\begin{proof}
The Jacobi identity is equivalent to the fact that the adjoint operator  
is a derivation:
\begin{align*}
\ad_\omega([x,y])=[\omega, [x,y]]&=(-1)^{1+|y|(|x|+1)}[y,[\omega, x] + (-1)^{1+|x|+|y|}[x, [y,\omega]]\\
&=[\ad_\omega x, y]+(-1)^{|x|} [x, \ad_\omega y]~. 
\end{align*}
The linear map $d^\omega=d + \ad_\omega$ is a derivation as a sum of derivations. 
Then, the image of  any element $\eta$ of $A$ under the composite $d^\omega \circ d^\omega$ is given by 
\[d^\omega \circ d^\omega(\eta)=\underbrace{d^2(\eta)}_{=0} + \underbrace{d([\omega, \eta]) + [\omega, d\eta]}_{=[d\omega, \eta]}+\underbrace{[\omega, [\omega, \eta]]}_{=\tfrac12 [[\omega, \omega], \eta]} 
 = \Big[ \underbrace{d\omega+\tfrac12 [\omega, \omega]}_{=0}, \eta\Big]=0~,\]
 using the properties of a dg Lie algebra (differential, derivation and Jacobi identity) and the Maurer--Cartan equation. 
\end{proof}

\begin{definition}[Twisted differential and twisted dg Lie algebra]\index{twisted!differential}\index{twisted!dg Lie algebra}
For any dg Lie algebra $\g$ and any Maurer--Cartan element $\omega \in \MC(\g)$, the differential
\[d^\omega=d +\ad_\omega=d+[\omega,  -]\]
 is called the \emph{twisted differential}. The associated dg Lie algebra 
 \[
\g^\omega\coloneqq \left(A, d^\omega, [\, ,]\right)~. 
 \]
is called the \emph{twisted} dg Lie algebra.
\end{definition}

\begin{remark}
The terminology was chosen by analogy with the twisted de Rham differential mentioned in Section \ref{sec:MCDiffGeo}. 
\end{remark}

Lemma~\ref{lemm:TangentMC} and Proposition~\ref{prop:dtwSq0} show that, for each $\omega\in\mathrm{MC}(\g)$ and each $\lambda\in A_0$, the element $d^\omega (\lambda)=d\lambda+[\omega,\lambda]\in T_\omega(\mathrm{MC}(\g))$ lives in the tangent space at $\omega$. In other words, any element $\lambda \in A_0$ induces to a vector field 
 \[
\Upsilon_\lambda\in \Gamma(T(\mathrm{MC}(\g)))
 \] given by 
 \[
\Upsilon_\lambda(\omega) := d\lambda + [\omega,\lambda] ~.
 \]

\begin{definition}[Gauge symmetries of Maurer--Cartan elements]\index{gauge!symmetry}\index{gauge!equivalence}
The flows associated to  vector fields $\Upsilon_\lambda$ for $\lambda\in A_0$ are called the \emph{gauge symmetries} of Maurer--Cartan elements. Two Maurer--Cartan elements $\alpha$ and $\beta$ are said to be \emph{gauge equivalent} if there exists an element $\lambda\in A_0$ for which the flow of the vector field $\Upsilon_\lambda$ relates $\alpha$ to $\beta$ in finite time.
\end{definition}

We shall now introduce a useful method for simplifying calculations in dg Lie algebras, like the explicit expression for the integration of the gauge flow. The differential $d$ of a dg Lie algebra $\g=(A, {d}, [\, , ])$ is said to be \emph{internal} if there exists an element $\delta$ in $A_{-1}$ such that $d=\ad_\delta$~.\index{internal differential}

\begin{definition}[Differential trick]\index{differential trick}
Let $\g=(A, {d}, [\, , ])$ be a dg Lie algebra. The \emph{differential trick}  amounts to considering the one-dimensional extension 
 \[
\g^+:=(A\oplus \k \delta, d,  [\, , ])~,
 \]
 \noindent
\[\text{where}\quad  |\delta|\coloneqq -1~,\quad  d(\delta)\coloneqq 0~, \quad  [\delta, x]\coloneqq d x~, \quad \text{and} \quad [\delta, \delta]\coloneqq 0~.\]
\end{definition}

The dg Lie algebra $\g^+$ is the universal extension of $\g$ which makes its differential an inner derivation. The canonical map 
\[\xymatrix@C=20pt@R=-8pt@M=6pt{
\iota \colon \g \ar@{^{(}->}[r] & \g^+\\
\ \ \ \, x \ar@{|->}[r] &
\left\{
{\begin{array}{ll}
\delta+x\ , & \text{for}\  \ |x|= -1~, \\
x & \text{for} \ \ |x|\neq -1~,
\end{array}}
\right.
}\]
embeds $\g$ inside $\g^+$. The fact that this map is not a morphism of dg Lie algebras is surprisingly useful: as we shall now see, as a consequence, it alters and simplifies various equations and formulas. The main simplification is that an element $\omega\in A_{-1}$ is a Maurer--Cartan element in $\g$ if and only if $\delta +\omega$ is a \emph{square-zero element}\index{square-zero element} 
 \[
[\delta+\omega,\delta+\omega]=0 
 \]
 in the extension $\g^+$. Let us denote by 
 $\mathrm{Sq}(\g^+)$ the set of degree $-1$ square-zero elements in $\g^+$ of the form $\delta+\omega$.
 The above embedding provides us with a canonical identification of the sets (or varieties in the finite dimensional case)
 \[\MC(\g)\cong \mathrm{Sq}(\g^+)\]
 and thus of tangent spaces. 
However the formulas in the latter case are much simpler: 
the arguments given above show that $\ad_{\delta+\omega}=[\delta+\omega, -]$ is a square-zero derivation of $\g^+$, for any $\delta+\omega\in \mathrm{Sq}(\g^+)$, and that the tangent space at this point is given by 
\[T_\omega \MC(\g)\cong T_{\delta+\omega} \mathrm{Sq}(\g^+)=\left\{\eta \in A_{-1}\ | \ [\delta+\omega, \eta]=0\right\}~.\]
In the extension $\g^+$, the formula for the vector field induced by any $\lambda \in A_0$ is 
\[\Upsilon^+_\lambda(\delta+\omega) :=  [\delta+\omega,\lambda]=\ad_{-\lambda}(\delta+\omega)~.\]

\begin{definition}[Nilpotent dg Lie algebra]\index{nilpotent Lie algebra}
A dg Lie algebra $\g=(A, d, [\, , ])$ is called \emph{nilpotent} when there exists an integer $n\in \NN$ such that 
 \[
[x_1,[x_2, \ldots ,[x_{n-1},x_n]\ldots]]=0
 \]
for any $x_1, \ldots, x_n\in A$~.
\end{definition}

This nilpotency condition is enough to ensure that all the infinite series of brackets we will now consider make sense. Such a condition is too strong to cover all the examples that we have in mind: in the next chapter \ref{sec:OptheoyFilMod}, we will settle the more general framework of complete dg Lie algebras inside which fit all our examples and for which all the results below also hold. 

\begin{proposition}
Let $\g=(A, d, [\, ,])$ be a nilpotent dg Lie algebra. The integration of the flow associated to the 
 vector fields $\Upsilon_{-\lambda}$~, for $\lambda\in A_0$, starting at $\alpha\in \MC(\g)$ gives at time $t$: 
 \[\frac{\id-\exp(t\ad_{\lambda})}{t\ad_\lambda}(t\,d\lambda)+\exp(t\ad_{\lambda})(\alpha)~.\]
\end{proposition}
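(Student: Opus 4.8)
The plan is to \emph{linearise} the gauge flow by means of the differential trick, which converts the affine vector field $\Upsilon_{-\lambda}$ into a genuinely linear one. First I would pass to the one-dimensional extension $\g^+$ and set $u(t)\coloneqq \delta+\omega(t)$. The identity $\Upsilon^+_{-\lambda}(\delta+\omega)=[\delta+\omega,-\lambda]=\ad_\lambda(\delta+\omega)$ recorded just above the statement shows that the defining flow equation $\dot\omega(t)=\Upsilon_{-\lambda}(\omega(t))$, $\omega(0)=\alpha$, becomes the linear autonomous equation
\[
\dot u(t)=\ad_\lambda\big(u(t)\big),\qquad u(0)=\delta+\alpha.
\]

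Second, I would solve this linear ODE. Since $\lambda\in A_0$, the operator $\ad_\lambda$ preserves the homological degree; moreover it lands in $\g$ after one step (as $\ad_\lambda(\delta)=[\lambda,\delta]=-d\lambda\in A$), so nilpotency of $\g$ forces $\ad_\lambda$ to be nilpotent on $\g^+$ and $\exp(t\ad_\lambda)=\sum_{n\geq 0}\tfrac{t^n}{n!}\ad_\lambda^n$ is a finite sum (a polynomial in $t$), removing any convergence issue. Term-wise differentiation then verifies that $u(t)=\exp(t\ad_\lambda)(\delta+\alpha)$ solves the equation, and uniqueness is the standard fact for linear ODEs. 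Two bookkeeping points must be checked here: because $\ad_\lambda(\g^+)\subseteq A$, every term of $\exp(t\ad_\lambda)(\delta+\alpha)$ with $n\geq 1$ lies in $A$, so the coefficient of $\delta$ stays equal to $1$; and since $\ad_\lambda$ is degree-preserving, $\omega(t)=u(t)-\delta$ remains in $A_{-1}$. Thus the flow genuinely stays within $\mathrm{Sq}(\g^+)\cong\MC(\g)$, as it must.

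Third, I would extract the closed formula by splitting $u(t)=\exp(t\ad_\lambda)(\delta)+\exp(t\ad_\lambda)(\alpha)$ and subtracting $\delta$. Using $\ad_\lambda(\delta)=-d\lambda$, each iterate satisfies $\ad_\lambda^{n}(\delta)=-\ad_\lambda^{n-1}(d\lambda)$ for $n\geq 1$, so the series telescopes:
\[
\exp(t\ad_\lambda)(\delta)-\delta=-\sum_{n\geq 1}\frac{t^n}{n!}\,\ad_\lambda^{n-1}(d\lambda)=-t\sum_{m\geq 0}\frac{(t\ad_\lambda)^m}{(m+1)!}(d\lambda).
\]
Recognising the entire power series $\sum_{m\geq 0}\frac{x^m}{(m+1)!}=\frac{e^x-1}{x}$, this equals $\frac{\id-\exp(t\ad_\lambda)}{t\ad_\lambda}(t\,d\lambda)$, the operator being understood through that series; adding back $\exp(t\ad_\lambda)(\alpha)$ yields exactly the claimed expression.

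The computation itself is routine; the conceptual heart, and the only place where care is genuinely required, is the first step — trusting that the differential trick faithfully turns the inhomogeneous $d\lambda$ term of $\Upsilon_{-\lambda}$ into the action of $\ad_\lambda$ on the constant $\delta$, so that an affine flow otherwise requiring the full nonlinear integration is replaced by a plain exponential. The sign discipline in $\ad_\lambda(\delta)=-d\lambda$, coming from skew-symmetry in degrees $0$ and $-1$, is what produces the factor $\id-\exp$ rather than $\exp-\id$, and is the easiest thing to get wrong.
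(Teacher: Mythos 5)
Your proof is correct and follows essentially the same route as the paper's: pass to $\g^+$ via the differential trick, observe that $\Upsilon^+_{-\lambda}=\ad_\lambda$ is linear, solve the ODE by the exponential, and rewrite $(\exp(t\ad_\lambda)-\id)(\delta)$ using $\ad_\lambda(\delta)=-d\lambda$ to obtain the stated formula. The extra points you verify (nilpotency of $\ad_\lambda$ on $\g^+$, preservation of the coefficient of $\delta$ and of the degree, and the explicit telescoping of the series) are details the paper leaves implicit, but they do not change the argument.
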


\begin{proof}
We use the differential trick and work in the extension $\g^+$. The differential equation 
\[\frac{d(\delta+\gamma(t))}{dt}=\Upsilon^+_{-\lambda}(\delta+\gamma(t))=
\ad_{\lambda}(\delta+\gamma(t))\]
associated to the flow $\Upsilon^+_{-\lambda}$ is then easy to solve since there is now no more constant term. The adjoint operator being a linear map, the solution to this differential equation is given by the following exponential: 
\begin{align*}
\exp(t\ad_{\lambda})(\delta+ \alpha) &=
\exp(t\ad_{\lambda})(\delta)+\exp(t\ad_{\lambda})(\alpha)\\
&=\delta+(\exp(t\ad_{\lambda})-\id)(\delta) +\exp(t\ad_{\lambda})(\alpha)\\
& =\delta + 
\frac{\id-\exp(t\ad_{\lambda})}{t\ad_{\lambda}}(t\, d\lambda)+\exp(t\ad_{\lambda})(\alpha)~.
\end{align*}

\end{proof}

\begin{remark}
This formula is the algebraic counterpart of the formula for the action of the gauge transformations on connections of principal bundles given in the previous section~\ref{sec:MCDiffGeo}. 
\end{remark}

The universal formula underlying the integration of finite dimensional real Lie algebras into Lie groups is the following one. 

\begin{definition}[Baker--Campbell--Hausdorff formula]\index{Baker--Campbell--Hausdorff formula}
The \emph{Baker--Campbell--Hausdorff (BCH) formula} is the element in the associative algebra of formal power series on two variables $x$ and $y$ given by 
\[\BCH(x,y)\coloneqq \ln\left(e^xe^y\right)~.\]
\end{definition}
It is straightforward to notice that the BCH formula is associative and unital:
\[\BCH(\BCH(x,y),z)=\BCH(x,\BCH(y,z)) \quad \text{and} \quad
\BCH(x,0)=x=\BCH(0,x)~.
\]
The celebrated theorem of Baker, Campbell, and Hausdorff, see \cite{BF12}, states that this formula can be written using only the commutators $[a,b]=a\otimes b - b\otimes a$:
\[
\BCH(x,y)=x+y+\tfrac12[x,y]+\tfrac{1}{12}[x,[x,y]]+\tfrac{1}{12}[y,[x,y]]+\cdots~.\\
\]
It can thus be applied to any nilpotent Lie algebra.

\begin{definition}[Gauge group]\index{gauge!group}
The \emph{gauge group} associated to a nilpotent dg Lie algebra $\g=(A, {d}, [\, , ])$ is the group obtained from $A_0$ via the Baker--Campbell--Hausdorff formula:
 \[
\Gamma:=\left(
A_0, x\cdot y := \BCH(x,y), 0 .
\right)\ ,
 \] 
\end{definition}

The name ``gauge group'' is  justified by the following proposition.

\begin{theorem}\label{prop:GaugeActionMC}
Let $\g=(A, {d}, [\, , ])$ be a nilpotent dg Lie algebra. The formula 
 \[
\lambda.\alpha \coloneqq \frac{\id-\exp(\ad_{\lambda})}{\ad_{\lambda}}(d\lambda)+\exp(\ad_{\lambda})(\alpha)
 \]
for the gauge action defines a left action of the gauge group $\Gamma$ on $\mathrm{MC}(\g)$~.
\end{theorem}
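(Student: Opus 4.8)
The plan is to exploit the differential trick and translate the whole statement into the extension $\g^+$, where the gauge action takes the conjugation-free form
\[\delta+\lambda.\alpha=\exp(\ad_\lambda)(\delta+\alpha)~.\]
First I would record this reformulation. Evaluating the flow computed in the preceding proposition at time $t=1$, and using $\ad_\lambda(\delta)=[\lambda,\delta]=-d\lambda$ together with the resulting identity $\exp(\ad_\lambda)(\delta)=\delta+\tfrac{\id-\exp(\ad_\lambda)}{\ad_\lambda}(d\lambda)$, one obtains $\exp(\ad_\lambda)(\delta+\alpha)=\delta+\lambda.\alpha$ on the nose. Since $\lambda\in A_0$, the operator $\ad_\lambda$ has degree $0$ and sends $\g^+$ into $A$ (indeed $\ad_\lambda(\delta)=-d\lambda\in A$ and $\ad_\lambda(A)\subseteq A$), so $\exp(\ad_\lambda)$ fixes the coefficient of $\delta$ and $\lambda.\alpha$ genuinely lies in $A_{-1}$.

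Next I would check that the action is well defined, i.e. that $\lambda.\alpha\in\MC(\g)$. By the Jacobi identity, $\ad_\lambda$ is a derivation of the Lie bracket of $\g^+$, so its exponential $\exp(\ad_\lambda)$ — a finite sum by nilpotency — is an automorphism of the graded Lie algebra $\g^+$. Under the identification $\MC(\g)\cong\mathrm{Sq}(\g^+)$ supplied by the differential trick, it therefore carries the degree $-1$ square-zero element $\delta+\alpha$ to another square-zero element $\exp(\ad_\lambda)(\delta+\alpha)=\delta+\lambda.\alpha$, which is exactly the statement that $\lambda.\alpha$ is a Maurer--Cartan element.

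It then remains to verify the two axioms of a left action. The unit axiom $0.\alpha=\alpha$ is immediate, as $\exp(\ad_0)=\id$. For compatibility I must show $\mu.(\lambda.\alpha)=\BCH(\mu,\lambda).\alpha$ for all $\lambda,\mu\in A_0$; in the extension this amounts to the operator identity $\exp(\ad_\mu)\exp(\ad_\lambda)=\exp\bigl(\ad_{\BCH(\mu,\lambda)}\bigr)$ applied to $\delta+\alpha$. This is where the universal nature of the BCH formula enters: the map $\ad\colon(A_0,[\,,])\to(\End(\g^+),[\,,])$ is a morphism of Lie algebras, so writing $\BCH$ through iterated brackets gives $\ad_{\BCH(\mu,\lambda)}=\BCH(\ad_\mu,\ad_\lambda)$, the latter computed in the associative algebra $\End(\g^+)$. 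The defining identity $\BCH(a,b)=\ln(e^ae^b)$ valid in any associative algebra then yields $\exp\bigl(\BCH(\ad_\mu,\ad_\lambda)\bigr)=\exp(\ad_\mu)\exp(\ad_\lambda)$, which is precisely what is needed.

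The main obstacle, and the only genuinely delicate point, is this last step: one must justify passing the BCH series — defined by brackets in the Lie algebra — through the morphism $\ad$ and then re-interpreting it as $\ln(e^ae^b)$ inside the associative endomorphism algebra. All the infinite series involved converge because $\g$ is nilpotent, so $\ad_\lambda$ and $\ad_\mu$ are nilpotent operators and the exponentials and logarithms are honest finite sums; this is exactly the hypothesis that lets us treat these formal identities as genuine equalities of operators on $\g^+$. Everything else is formal bookkeeping with degrees and the already-established flow computation.
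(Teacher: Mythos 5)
Your proposal is correct, and its skeleton is the same as the paper's: pass to $\g^+$ by the differential trick, note that $\exp(\ad_{\lambda})$ is an automorphism of the graded Lie algebra $\g^+$ and hence carries the square-zero element $\delta+\alpha$ to a square-zero element of the form $\delta+\lambda.\alpha$, observe that $0$ acts trivially, and reduce the action axiom to the identity $\exp\bigl(\ad_{\BCH(\mu,\lambda)}\bigr)=\exp(\ad_{\mu})\circ\exp(\ad_{\lambda})$. Where you genuinely differ is in how this last identity is proved. The paper establishes it as a universal formal statement: in the associative algebra of formal power series in three variables it writes $\ad_a=l_a-r_a$, uses that left and right multiplications commute to get $\exp(\ad_a)(b)=e^a b e^{-a}$, and concludes by $\exp(\ad_{\BCH(x,y)})(z)=(e^xe^y)\,z\,(e^xe^y)^{-1}=\exp(\ad_x)\circ\exp(\ad_y)(z)$. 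You instead factor through the Lie algebra morphism $\ad\colon A_0\to\End(\g^+)$: functoriality of the commutator-form BCH series under Lie morphisms gives $\ad_{\BCH(\mu,\lambda)}=\BCH(\ad_{\mu},\ad_{\lambda})$, and the associative identity $\exp(\BCH(a,b))=e^ae^b$, evaluated in $\End(\g^+)$, finishes the proof. Both arguments rest on the same inputs (the theorem that $\ln(e^xe^y)$ is a Lie series, plus specialization of formal identities using nilpotency), so neither is more general; yours has the virtue of never leaving $\End(\g^+)$, while the paper's isolates a reusable universal identity, namely the one it records as characterising the BCH formula.

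One step deserves a more careful justification than the one you give. To specialize $\exp(\BCH(a,b))=e^ae^b$ to $a=\ad_{\mu}$, $b=\ad_{\lambda}$, it is not enough that ``$\ad_{\lambda}$ and $\ad_{\mu}$ are nilpotent operators'': two individually nilpotent operators can generate a non-nilpotent associative algebra (already for $2\times 2$ matrices, take $a=E_{12}$, $b=E_{21}$, so that $ab$ is idempotent), and the formal identity involves arbitrary mixed words in $a$ and $b$ as well as products of the homogeneous components of $\BCH(a,b)$. What makes the substitution legitimate here is \emph{joint} nilpotency: by the paper's very definition of a nilpotent dg Lie algebra, any composition $\ad_{x_1}\circ\cdots\circ\ad_{x_{n-1}}$ of adjoint operators vanishes on $A$ (and one extra factor handles $\delta$), so every word of length at least $n$ in operators of the form $\ad_x$ vanishes on $\g^+$. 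Since each homogeneous component of $\BCH(\ad_{\mu},\ad_{\lambda})$ is itself of the form $\ad_z$ for some $z\in A_0$, all the series you manipulate are then finite sums, each homogeneous component of the formal identity is an identity of noncommutative polynomials, and the equality of the total sums follows. This is a gap of stated justification only, not of substance, and it is filled in one line from the paper's definition of nilpotency.
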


\begin{proof}
We use the differential trick to simplify the calculations: we have to show that the assignment 
\[\lambda.(\delta+\alpha)\coloneqq \exp(\ad_{\lambda})(\delta+\alpha)\]
defines an action of the gauge group $\Gamma$ on $\mathrm{Sq}(\g^+)$~. Since $\ad_{\lambda}$ is a derivation, then $\exp(\ad_{\lambda})$ is a morphism of graded Lie algebras. This implies that $\lambda.(\delta+\alpha)$ is again a square-zero element:
\begin{align*}
[\lambda.(\delta+\alpha),\lambda.(\delta+\alpha)]&=
\left[ \exp(\ad_{\lambda})(\delta+\alpha), \exp(\ad_{\lambda})(\delta+\alpha)\right]\\&=
\exp(\ad_{\lambda})([\delta+\alpha,\delta+\alpha])=0~.
\end{align*}
It is straightforward to  check that the action of $0$ is trivial 
 \[
0.(\delta+\alpha) =  \exp(\ad_0)(\delta+\alpha)=\id(\delta+\alpha)=\delta+\alpha\ .
 \]
The BCH formula satisfies the relation 
\[
\exp(\ad_{\BCH(x, y)})=\exp(\ad_{x})\circ \exp(\ad_{y})~.
\]
(The BCH formula is actually characterised by this relation, see \cite[Proposition~5.14]{Robert-NicoudVallette20}.)
To prove it, let us work in the associative algebra of formal power series in three variables $x$, $y$, $z$.
Notice that for any element $a$, the adjoint operator $\ad_a=l_a-r_a$ is equal to the difference of the left multiplication by $a$ with the right multiplication by $a$. Since the underlying algebra is associative, these two linear maps commute and thus 
\[\exp(\ad_a)(b)=\exp(l_a)\circ\exp(r_{-a})(b)= e^{a} \, b \, e^{-a}~.\]
Using this, we get 
\begin{align*}
\exp(\ad_{\BCH(x, y)})(z)&=e^{\BCH(x, y)} \,z\,  e^{-\BCH(x, y)}\\
&=(e^xe^y)\, z\, (e^xe^y)^{-1}=
e^{x}(e^{y}\, z\, e^{-y})e^{-x}\\
&=\exp(\ad_{x})\circ \exp(\ad_{y})(z)~. 
\end{align*}
Finally, with this relation implies 
\begin{align*} 
\lambda.(\mu.(\delta+\alpha))&= \exp(\ad_{\lambda})(\exp(\ad_{\mu})(\delta+\alpha)) 
\\ & = (\exp(\ad_{\lambda}) \circ \exp(\ad_{\mu}))(\delta+\alpha)
\\ & =
\exp(\ad_{\BCH(\lambda, \mu)})(\delta+\alpha)
\\ & =(\lambda\cdot \mu).(\delta+\alpha)~.
\end{align*}
\end{proof}

\begin{definition}[Moduli space of Maurer--Cartan elements]\index{Maurer--Cartan element!moduli space}\label{def:ModuliSpacedgLie}
The \emph{moduli space of Maurer--Cartan elements} is the set of equivalence classes of Maurer--Cartan elements under the gauge group action:
\[ \mathscr{MC}(\g)\coloneqq \mathrm{MC}(\g)/\Gamma~. \]
\end{definition}

This moduli space loses the data provided by the gauge symmetries themselves. One may instead consider the following main protagonist of deformation theory.

\begin{definition}[Deligne groupoid]\index{Deligne!groupoid} \label{def:DeligneGroupoid}
Let $\g=(A, {d}, [\, , ])$ be a nilpotent dg Lie algebra. The \emph{Deligne groupoid} associated to $\g$ has the Maurer--Cartan set $\mathrm{MC}(\g)$ as its set of objects, and the gauge symmetries $\lambda$ such that $\lambda.\alpha=\beta$ as the set of (iso)morphisms from $\alpha$ to $\beta$. 
\end{definition}

\section{Deformation theory with differential graded Lie algebras}\index{deformation theory}

The purpose of this section is to explain how one can study deformation theory with dg Lie algebras using Maurer--Cartan elements and their gauge symmetries. This will provide us with a transition from this chapter to the next chapter as infinitesimal deformations are controlled by nilpotent objects while formal deformations are controlled by complete objects. \\

As mentioned in the previous section, one can twist a dg Lie algebra $\g= (A, d, [\, ,])$ with any Maurer--Cartan element
 \[
\varphi\in \MC(\g)\coloneqq 
\left\{
\varphi\in A_{-1}\ |\ d\varphi +\tfrac12 [\varphi, \varphi]=0 
\right\} 
 \]
to produce a twisted dg Lie algebra
 \[
\g^\varphi\coloneqq \left(A, d^\varphi\coloneqq d+[\varphi, \, ], [\, ,]\right)~. 
 \]
The relationship between dg Lie algebras and deformation theory relies ultimately on the following key property, which says that deformations of a Maurer--Cartan element coincide with Maurer--Cartan elements of the twisted dg Lie algebra. 

\begin{lemma}\label{lem:DefTwist}
Let $\g$ be a dg Lie algebra and let  $\varphi\in \MC(\g)$ be a Maurer--Cartan element. The following equivalence holds 
\[
\alpha \in \MC\big(\g^\varphi\big) \iff \varphi+\alpha \in \MC(\g)~.
\]
\end{lemma}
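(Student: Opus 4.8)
The plan is to prove the equivalence by a direct computation, simply expanding the Maurer--Cartan equation for $\varphi + \alpha$ in the original dg Lie algebra $\g$ and comparing it with the Maurer--Cartan equation for $\alpha$ in the twisted dg Lie algebra $\g^\varphi$. The key observation is that the twisted differential $d^\varphi = d + [\varphi, -]$ and the twist term $\tfrac12[\varphi,\varphi]$ should recombine to produce exactly the full curvature of $\varphi + \alpha$.

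First I would write out the left-hand side. By definition, $\alpha \in \MC(\g^\varphi)$ means
\[
d^\varphi \alpha + \tfrac12 [\alpha, \alpha] = 0~,
\]
which, upon substituting $d^\varphi = d + [\varphi, -]$, reads
\[
d\alpha + [\varphi, \alpha] + \tfrac12 [\alpha, \alpha] = 0~.
\]
Next I would expand the right-hand side. Using the bilinearity of the bracket and the derivation property of $d$, the curvature of $\varphi + \alpha$ in $\g$ expands as
\[
d(\varphi+\alpha) + \tfrac12 [\varphi+\alpha, \varphi+\alpha]
= d\varphi + d\alpha + \tfrac12[\varphi,\varphi] + [\varphi,\alpha] + \tfrac12[\alpha,\alpha]~,
\]
where I have used skew-symmetry to combine the two cross terms $\tfrac12[\varphi,\alpha]$ and $\tfrac12[\alpha,\varphi]$ into the single term $[\varphi,\alpha]$.

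Finally I would compare the two expressions. Since $\varphi \in \MC(\g)$, we have $d\varphi + \tfrac12[\varphi,\varphi] = 0$, so that summand in the expansion of the right-hand side vanishes. What remains of the curvature of $\varphi + \alpha$ is precisely $d\alpha + [\varphi,\alpha] + \tfrac12[\alpha,\alpha]$, which is exactly the left-hand side, the Maurer--Cartan expression for $\alpha$ in $\g^\varphi$. Hence $\varphi + \alpha \in \MC(\g)$ if and only if $\alpha \in \MC(\g^\varphi)$. There is no real obstacle here: the only thing to be mildly careful about is the correct sign and coefficient bookkeeping when combining the cross terms $[\varphi,\alpha]$ via the (graded) skew-symmetry of the bracket, but since $\varphi$ and $\alpha$ both live in degree $-1$, the sign $-(-1)^{|\varphi||\alpha|} = -(-1) = 1$ works out cleanly and the two half-brackets genuinely add.
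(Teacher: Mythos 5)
Your proof is correct and is precisely the direct computation the paper sketches: both conditions are shown to be equivalent to $d\alpha+[\varphi,\alpha]+\tfrac12[\alpha,\alpha]=0$, using $\varphi\in\MC(\g)$ to cancel the $d\varphi+\tfrac12[\varphi,\varphi]$ term and graded skew-symmetry (harmless here since $|\varphi|=|\alpha|=-1$) to merge the cross terms. The paper only adds that a conceptual explanation follows later from the gauge-group interpretation (Lemma~\ref{lem:subGaugeGroup} and Corollary~\ref{cor:twa+b}), but its direct argument matches yours.
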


\begin{proof}
While we shall see a conceptual explanation in Lemma~\ref{lem:subGaugeGroup},  see also Corollary~\ref{cor:twa+b}, it is  easy to prove this result directly by showing that both conditions are equivalent to 
\[
d\alpha+[\varphi, \alpha]+\tfrac12 [\alpha, \alpha]=0~.
\]
\end{proof}

The fundamental theorem of deformation theory recently proved by J.~P.~Pridham \cite{Pridham10} and J.~Lurie \cite{Lurie10} claims that any deformation problem over a field $\k$ of characteristic zero can be  encoded by a dg Lie algebra. We shall give its precise statement in Section~\ref{sec:Lurie}, but let us now explain what it means heuristically.  Given an underlying ``space'' $V$, a type of structure $\calP$ that it can support, and an equivalence relation on $\calP$-structures on $V$, there should exist a dg Lie algebra $\g=(A, d , [\, , ])$ such that its  set of Maurer--Cartan elements is in one-to-one correspondence with the set of $\calP$-structures on $V$ and such that the action of the gauge group $\Gamma$ on $\MC(\g)$ coincides with the equivalence relation considered on $\calP$-structures. 

\begin{example}\label{ex:HochschildCochainCx}
When $V$ is a finite dimensional vector space and when $\calP$ stands for associative algebra structures up to isomorphisms, the deformation dg Lie algebra is given by the Hochschild cochain complex 
\[
C^\bullet(V,V)\coloneqq\left(
\prod_{n\ge 1} \Hom\big(V^{\otimes n}, V\big),  0, [\, , ]
\right)~,\]
where the Lie bracket is the one defined by Gerstenhaber~\cite{Gerstenhaber64} (see Section~\ref{sec:CurvedAiAlg}) 
 and  where the homological degree of the factor $\Hom\big(V^{\otimes n}, V\big)$ is equal to $1-n$~. In this case, a Maurer--Cartan element is precisely a binary associative product, and gauge equivalence is an isomorphism.
\end{example}

There are two possible viewpoints one can adopt here: one can either ensure convergence of deformations by working \emph{locally} with complete ``spaces'' $V$ and complete dg Lie algebras~$\g$ (as will be done in  Chapters~\ref{sec:TopoDefTh} and \ref{sec:GaugeTwist}) or work \emph{globally}, inspired by the functors of points in algebraic geometry, as follows. Let $\R$ be a local (commutative) ring\index{local ring} with the maximal ideal $\m$ and with the residue field $\k$, that is $\R\cong \k \oplus \m$~. 
Given any dg Lie algebra $\g=(A, d , [\, , ])$, one can consider its $\R$-extension defined by 
\[\g\otimes \R \coloneqq \left(
A\otimes \R, d, [\, , ]
\right)~,\]
where $d(\zeta\otimes x)\coloneqq d \zeta\otimes x$ and 
where $[\zeta \otimes x, \xi \otimes y]\coloneqq [\zeta, \xi]\otimes xy$. The other way round, one recovers the original dg Lie algebra $\g$ from its $\R$-extension $\g \otimes \R\cong \g \oplus \g\otimes \m$ by reducing modulo $\m$~. 

\begin{definition}[$\R$-deformation]\index{$\R$-deformation}
An \emph{$\R$-deformation} of a Maurer--Cartan element $\varphi \in \MC(\g)$ is a Maurer--Cartan element $\Phi \in \MC(\g \otimes \R)$ of the $\R$-extension of $\g$ such that its reduction modulo $\m$ is equal to $\varphi$~. The set of such deformations is denoted by $\Def_\varphi(\R)$. 
\end{definition}

\begin{proposition}
The set of $\R$-deformations of a Maurer--Cartan $\varphi\in \MC(\g)$ is in natural bijection with the set of Maurer--Cartan elements of the twisted dg Lie algebra $\g^\varphi \otimes m$~:
\[ \mathrm{Def}_\varphi(\R)\cong \MC(\g^\varphi\otimes \m)~.\]
\end{proposition}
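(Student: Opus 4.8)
The plan is to exhibit the bijection explicitly and then recognise it as an instance of Lemma~\ref{lem:DefTwist} applied to the $\R$-extension. First I would use the splitting $\R \cong \k \oplus \m$ to decompose $A_{-1} \otimes \R \cong A_{-1} \oplus (A_{-1} \otimes \m)$. Reduction modulo $\m$ is exactly the projection onto the first summand, so a Maurer--Cartan element $\Phi \in \MC(\g \otimes \R)$ reduces to $\varphi$ precisely when it has the form $\Phi = \varphi \otimes 1 + \alpha$ with $\alpha \in A_{-1} \otimes \m$. This suggests the candidate map $\Phi \mapsto \alpha = \Phi - \varphi \otimes 1$, and the whole task reduces to checking that $\Phi \in \MC(\g \otimes \R)$ if and only if $\alpha \in \MC(\g^\varphi \otimes \m)$.

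To verify this, I would first observe that $\varphi \otimes 1$ is a Maurer--Cartan element of $\g \otimes \R$: the unit inclusion $\k \hookrightarrow \R$ induces a morphism of dg Lie algebras $\g \hookrightarrow \g \otimes \R$, and such morphisms preserve Maurer--Cartan elements. Applying Lemma~\ref{lem:DefTwist} to the dg Lie algebra $\g \otimes \R$ with this Maurer--Cartan element then gives, for any $\alpha \in A_{-1} \otimes \R$,
\[ \alpha \in \MC\big((\g \otimes \R)^{\varphi \otimes 1}\big) \iff \varphi \otimes 1 + \alpha \in \MC(\g \otimes \R)~. \]
The next step is the identification $(\g \otimes \R)^{\varphi \otimes 1} = \g^\varphi \otimes \R$: twisting by $\varphi \otimes 1$ replaces the differential by $d + [\varphi \otimes 1, -]$, which is exactly $d^\varphi$ extended $\R$-linearly, while the bracket is unchanged. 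Restricting attention to $\alpha \in A_{-1} \otimes \m$, the twisted Maurer--Cartan equation $d^\varphi \alpha + \tfrac12 [\alpha, \alpha] = 0$ has all of its terms in $A \otimes \m$ — here one uses that $\m$ is an ideal, so in particular $\m \cdot \m \subseteq \m$ — hence $\alpha$ solves it in $\g^\varphi \otimes \R$ if and only if it solves it in the sub-dg-Lie-algebra $\g^\varphi \otimes \m$. Combining these equivalences yields $\Phi \in \Def_\varphi(\R) \iff \alpha \in \MC(\g^\varphi \otimes \m)$, so $\Phi \mapsto \alpha$ is the claimed bijection.

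Alternatively, and perhaps more transparently, one can run the computation by hand: expanding $d\Phi + \tfrac12[\Phi,\Phi] = 0$ for $\Phi = \varphi \otimes 1 + \alpha$, using skew-symmetry to merge the two cross terms into $[\varphi, \alpha]$ and cancelling $d\varphi + \tfrac12[\varphi,\varphi] = 0$, leaves precisely $d\alpha + [\varphi,\alpha] + \tfrac12[\alpha,\alpha] = 0$, which is the Maurer--Cartan equation of $\g^\varphi \otimes \m$. Finally I would check that the bijection is natural in $\R$, since for a morphism of local rings it is compatible with the induced maps on $\R$-extensions and commutes with reduction modulo the maximal ideals. The only genuinely delicate points — and the ones I would be most careful about — are purely bookkeeping: confirming that $\g^\varphi \otimes \m$ is a well-defined dg Lie algebra even though $\m$ has no unit (it is a non-unital commutative $\k$-algebra, which is all the tensor construction requires), and confirming the twist-commutes-with-extension identity $(\g \otimes \R)^{\varphi \otimes 1} = \g^\varphi \otimes \R$. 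Neither presents any real difficulty, so I expect no substantive obstacle beyond this bookkeeping.
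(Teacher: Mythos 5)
Your proposal is correct and follows essentially the same route as the paper: both rest on the identification $(\g\otimes \R)^{\varphi}\cong \g^\varphi\otimes \R\cong \g^\varphi\oplus\,\g^\varphi\otimes\m$ together with an application of Lemma~\ref{lem:DefTwist} to split off the summand supported on $\m$. Your version merely spells out the details the paper leaves implicit (that $\varphi\otimes 1$ is Maurer--Cartan in $\g\otimes\R$, that the twisted equation for $\alpha\in A_{-1}\otimes\m$ stays inside $A\otimes\m$ because $\m$ is an ideal, and the non-unital bookkeeping for $\g^\varphi\otimes\m$), which is fine.
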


\begin{proof}
Notice first that, for any Maurer--Cartan element $\varphi \in \MC(\g)$, the $\R$-extension of $\g$ twisted by $\varphi$ is isomorphic to 
\[(\g\otimes \R)^\varphi \cong \g^\varphi \otimes \R\cong \g^\varphi \oplus \g^\varphi\otimes \m~.\]
The result is then a direct application of Lemma~\ref{lem:DefTwist}: any element $$\Phi=\varphi+\bar{\Phi}\in \g \oplus \g \otimes \m$$ is an $\R$-deformation of $\varphi$ if and only if $\bar{\Phi}$ is Maurer--Cartan element of the twisted dg Lie algebra $\g^\varphi\otimes \m$~. 
\end{proof}

\begin{example}
An $\R$-deformation of an associative algebra structure $\varphi$ on $V$ is an $\R$-linear associative algebra structure $\Phi$ on $V\otimes \R$ whose reduction modulo $\m$ is equal to~$\varphi$. 
This comes from the formula 
\[\Hom_\k(V^{\otimes n}, V)\otimes \R\cong \Hom_\R\big((V\otimes \R)^{\otimes n}, V\otimes \R\big)~.\]
\end{example}

For the case when the ring $\R$ is Artinian\index{local Artinian ring}, that is when there exists $K\in \NN$ such that $\m^K=0$, all series we ever consider will converge automatically since the dg Lie algebra $\g \otimes \m$ is nilpotent. 
The case of complete algebras considered in Chapter~\ref{sec:TopoDefTh} corresponds to the case when $\R$ is \emph{complete} with respect to its $\m$-adic topology\index{local ring!complete}; it is then a limit of local Artinian rings. In both cases, the BCH formula produces a convergent series, and the gauge group $\bar{\Gamma}\coloneqq (A_0 \otimes \m, \BCH, 0)$ is well defined. Its action 
\[\lambda. \Phi\coloneqq\frac{\id-\exp(\ad_\lambda)}{\ad_\lambda}(d\lambda)+\exp(\ad_\lambda)(\Phi)\]
on $\R$-deformations $\Phi=\varphi+\bar{\Phi}$ is also well defined. 

\begin{definition}[Moduli space of $\R$-deformations]\index{$\R$-deformation!moduli space}
The \emph{moduli space of $\R$-deformations} is the set of classes
\[\mathscr{D}ef_\varphi(\R)\coloneqq {\Def}_\varphi(\R)/\bar{\Gamma}\]
of $\R$-deformations modulo the action of the gauge group $\bar{\Gamma}$~.
\end{definition}

The first seminal example is given by the \emph{algebra of dual numbers}\index{algebra of dual numbers}
\[\R=\k[t]/\big(t^2\big)\]
which is an Artinian local ring. 

\begin{definition}[Infinitesimal deformation]\index{deformation!infinitesimal}
An \emph{infinitesimal deformation} of a Maurer--Cartan element $\varphi \in \MC(\g)$ 
is a Maurer--Cartan element of the form 
\[\Phi=\varphi+\bar{\Phi}t \in \MC\big(\g \otimes \k[t]/\big(t^2\big)\big)~.\]
\end{definition}

Infinitesimal deformations are related to the homology group of degree $-1$ of the twisted Lie algebra as follows. 

\begin{theorem}\label{thm:InfDefGeneral}
There are canonical bijections 
\[
\Def_\varphi\big(\k[t]/\big(t^2\big)\big)\cong Z_{-1}\big(\g^\varphi\big) \quad \text{and} \quad 
\mathscr{D}ef_\varphi\big(\k[t]/\big(t^2\big)\big)\cong H_{-1}\big(\g^\varphi\big)~.
\]
\end{theorem}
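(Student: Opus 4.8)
The plan is to reduce everything to the previous proposition, which already identifies $\Def_\varphi(\R)$ with $\MC(\g^\varphi\otimes\m)$ for any local ring $\R=\k\oplus\m$. Here I specialise to $\R=\k[t]/(t^2)$, so that $\m=\k t$ is one-dimensional and satisfies $\m^2=0$; a degree $-1$ element of $\g^\varphi\otimes\m$ is thus of the form $\bar\Phi\, t$ with $\bar\Phi\in A_{-1}$. First I would write out the Maurer--Cartan equation for such an element in the twisted dg Lie algebra $\g^\varphi\otimes\m$: since $d^\varphi(\bar\Phi\, t)=(d^\varphi\bar\Phi)\,t$ while the quadratic term $\tfrac12[\bar\Phi\, t,\bar\Phi\, t]=\tfrac12[\bar\Phi,\bar\Phi]\,t^2$ vanishes because $t^2=0$, the equation collapses to $d^\varphi\bar\Phi=0$. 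Hence the assignment $\bar\Phi\mapsto\bar\Phi\, t$ identifies $Z_{-1}(\g^\varphi)=\Ker\big(d^\varphi\colon A_{-1}\to A_{-2}\big)$ with $\MC(\g^\varphi\otimes\m)$, and composing with the previous proposition gives the first bijection. Its naturality is immediate, since every identification used is functorial in $\g$.

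For the second bijection I would analyse the gauge group $\bar\Gamma=(A_0\otimes\m,\BCH,0)$ and its action. Because $\m^2=0$, every bracket of two elements of $A_0\otimes\m$ vanishes, so the $\BCH$ formula truncates after its linear part and $\bar\Gamma$ is simply the additive group $(A_0\, t,+)\cong(A_0,+)$. Next I would compute the gauge action of $\lambda\, t$ on an infinitesimal deformation $\bar\Phi\, t$ using the formula of Theorem~\ref{prop:GaugeActionMC}. Since $\ad_{\lambda t}$ sends $\g^\varphi\otimes\m$ into $\g^\varphi\otimes\m^2=0$, one has $\exp(\ad_{\lambda t})(\bar\Phi\, t)=\bar\Phi\, t$, and the operator
\[
\frac{\id-\exp(\ad_{\lambda t})}{\ad_{\lambda t}}
\]
reduces to its constant term $-\id$ when applied to $(d^\varphi\lambda)\,t$. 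The action therefore reads $(\lambda\, t).(\bar\Phi\, t)=(\bar\Phi-d^\varphi\lambda)\,t$, i.e. under the identification above it is the translation $\bar\Phi\mapsto\bar\Phi-d^\varphi\lambda$.

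Finally I would read off the moduli space. The orbit of $\bar\Phi$ is the coset $\bar\Phi+d^\varphi(A_0)$, so two infinitesimal deformations are gauge equivalent precisely when their difference is a boundary $d^\varphi\lambda$ with $\lambda\in A_0$, that is when they differ by an element of $B_{-1}(\g^\varphi)=\Im\big(d^\varphi\colon A_0\to A_{-1}\big)$. Quotienting the first bijection by $\bar\Gamma$ then yields $\mathscr{D}ef_\varphi(\R)\cong Z_{-1}(\g^\varphi)/B_{-1}(\g^\varphi)=H_{-1}(\g^\varphi)$.

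I expect the only delicate points to be bookkeeping ones: correctly truncating the two infinite series (the $\BCH$ series and the exponential in the gauge action) using $\m^2=0$, and keeping track of the sign so that the gauge orbits come out as exactly the coboundary cosets. Everything else is a direct consequence of the preceding proposition together with the vanishing of $\m^2$.
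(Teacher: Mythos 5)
Your proposal is correct and takes essentially the same route as the paper: both parts come from truncating the Maurer--Cartan equation and the gauge-action formula using $t^2=0$, the only cosmetic difference being that you transport everything to $\MC\big(\g^\varphi\otimes\m\big)$ via the preceding proposition and compute with the twisted differential $d^\varphi$, whereas the paper expands $\varphi+\bar{\Phi}t$ directly in $\g\otimes\k[t]/\big(t^2\big)$ and lets the term $[\lambda t,\varphi]$ reassemble $d^\varphi$ --- the two computations yield the identical translation action $\bar{\Phi}\mapsto\bar{\Phi}-d^\varphi(\lambda)$, so your implicit claim that the bijection is gauge-equivariant does hold. (The paper's displayed sign $\bar{\Phi}_2-\bar{\Phi}_1=+\,d^\varphi(\lambda)$ differs from yours only by the substitution $\lambda\mapsto-\lambda$, which is immaterial for the equivalence relation; your sign is the one the formula of the gauge action actually produces.)
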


\begin{proof}
Any degree $-1$ element $\Phi=\varphi+\bar{\Phi}t \in \g \otimes \k[t]/\big(t^2\big)$ is a Maurer--Cartan element if and only if is satisfies the equation 
\begin{align*}
\underbrace{d\varphi  +\tfrac12 [\varphi, \varphi]}_{=0} + \Big(\underbrace{d\big(\bar{\Phi}\big)+ \big[\varphi,  \bar{\Phi}\big]}_{=d^\varphi(\bar{\Phi})}   \Big)t=0~. 
\end{align*}
So infinitesimal deformations coincide with  cycles of degree $-1$ in the twisted dg Lie algebra. 

Two infinitesimal deformations $\Phi_1=\varphi+\bar{\Phi}_1t$ and $\Phi_2=\varphi+\bar{\Phi}_2t$ are equivalent if there exists an element $\lambda\in A_0$ such that 
\begin{align*}
\lambda t . \left(\varphi+\bar{\Phi}_1t\right)=
\varphi+\left(d\lambda+[\varphi, \lambda]+\bar{\Phi}_1\right)t= \varphi+\bar{\Phi}_2t~. 
\end{align*}
This latter equation is equivalent to 
$\bar{\Phi}_2 -\bar{\Phi}_1=d^\varphi(\lambda)$~. This proves that two infinitesimal deformations are equivalent if and only if they are homologous in the twisted dg Lie algebra.
\end{proof}

The other seminal example is given by the \emph{algebra of formal power series}\index{algebra of formal power series}
\[\R=\k[[t]]\]
which is a complete local ring. 

\begin{definition}[Formal deformation]\index{deformation!formal}
A \emph{formal deformation} of a Maurer--Cartan element $\varphi \in \MC(\g)$ 
is a Maurer--Cartan element of the form 
\[\Phi=\varphi+{\Phi}_1t+{\Phi}_2t^2+\cdots \in \MC\big(\g \otimes \k[[t]]\big)~.\]
\end{definition}

The obstructions to formal deformations are related to the homology group of degree $-2$ of the twisted dg Lie algebra as follows. 

\begin{theorem}\label{thm:Obstruction}
If $H_{-2}\big(\g^\varphi\big)=0$, then any cycle of degree $-1$ of $g^\varphi$ extends to a formal deformation of $\varphi$~. 
\end{theorem}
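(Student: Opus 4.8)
The plan is to build a formal deformation $\Phi = \varphi + \sum_{n\geq 1}\Phi_n t^n \in \MC(\g\otimes\k[[t]])$ coefficient by coefficient, by induction on $n$, taking the prescribed degree $-1$ cycle of $\g^\varphi$ as $\Phi_1$. First I would expand the Maurer--Cartan equation $d\Phi + \tfrac12[\Phi,\Phi]=0$ in powers of $t$. The coefficient of $t^0$ is $d\varphi + \tfrac12[\varphi,\varphi]=0$, which holds since $\varphi\in\MC(\g)$; the coefficient of $t^1$ is $d^\varphi(\Phi_1)=0$, which is exactly the hypothesis that $\Phi_1$ is a cycle of $\g^\varphi$; and for $n\geq 2$ the coefficient of $t^n$ reads
\[
d^\varphi(\Phi_n) = -\tfrac12\sum_{\substack{i+j=n\\ i,j\geq 1}}[\Phi_i,\Phi_j] =: R_n~.
\]
Thus the task reduces to solving this equation for $\Phi_n\in A_{-1}$ at each stage, where $R_n$ is a degree $-2$ element depending only on the previously constructed $\Phi_1,\ldots,\Phi_{n-1}$.

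In the inductive step, assuming $\Phi_1,\ldots,\Phi_{n-1}$ have been found so that the Maurer--Cartan equation holds modulo $t^n$, I would solve $d^\varphi(\Phi_n)=R_n$. A solution exists precisely when $R_n$ lies in the image of $d^\varphi\colon A_{-1}\to A_{-2}$; since $H_{-2}(\g^\varphi)=0$ by hypothesis, every degree $-2$ cycle is a boundary, so it is enough to verify that $R_n$ is a $d^\varphi$-cycle. Granting this, I obtain $\Phi_n$ and the induction closes, producing a series $\Phi$ that satisfies the Maurer--Cartan equation to all orders, i.e. a formal deformation extending the given cycle.

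The crux, and the main obstacle, is therefore checking that $d^\varphi(R_n)=0$. I would establish this conceptually through a Bianchi-type identity rather than a bare Jacobi manipulation. Set $\Psi \coloneqq \varphi + \sum_{i=1}^{n-1}\Phi_i t^i$ and consider its untwisted curvature $F \coloneqq d\Psi + \tfrac12[\Psi,\Psi]\in\g\otimes\k[[t]]$. Using that $d$ is a derivation, together with graded skew-symmetry and the Jacobi identity (which yields $[\Psi,[\Psi,\Psi]]=0$), one checks $dF + [\Psi,F]=0$, exactly in the spirit of the Bianchi identity of Theorem~\ref{prop:PropertiesofCurv}. By the induction hypothesis $F$ vanishes modulo $t^n$, and its coefficient of $t^n$ is $-R_n$, so $F = -R_n\, t^n + \sum_{m>n}F_m t^m$. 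Rewriting $dF + [\Psi,F] = d^\varphi F + [\sum_{i\geq 1}\Phi_i t^i, F]$ and extracting the coefficient of $t^n$ — in which the second summand contributes only from order $t^{n+1}$ onwards — gives $d^\varphi(-R_n)=0$, as desired.

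The remaining points are routine: the degree count guaranteeing that $R_n$ lands in degree $-2$, where the vanishing-cohomology hypothesis applies, and the sign bookkeeping in the Bianchi identity (with $|\Psi|=-1$ and $|d\Psi|=-2$). I expect no further difficulty, since once $R_n$ is known to be a cycle, the hypothesis $H_{-2}(\g^\varphi)=0$ supplies the required primitive $\Phi_n$ immediately.
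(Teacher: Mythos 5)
Your proof is correct, and its overall skeleton --- inducting on the power of $t$, identifying the obstruction $R_n$ at each stage, and using $H_{-2}\big(\g^\varphi\big)=0$ to find a primitive $\Phi_n$ --- coincides with the paper's. Where you genuinely differ is in the crux, the verification that $R_n$ is a $d^\varphi$-cycle. The paper does this by a bare-hands computation: it applies $d^\varphi$ to $\sum_{k=1}^{n-1}[\Phi_k,\Phi_{n-k}]$, uses the derivation property, substitutes the lower-order equations, and re-indexes the resulting double sum into $\sum_{a+b+c=n}\big[[\Phi_a,\Phi_b],\Phi_c\big]$, which vanishes by the Jacobi identity. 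You instead package the same ingredients (derivation property plus Jacobi) once and for all into the Bianchi identity $dF+[\Psi,F]=0$ for the curvature $F$ of the truncated series $\Psi=\varphi+\sum_{i=1}^{n-1}\Phi_i t^i$, and then simply read off the coefficient of $t^n$: since $F\equiv -R_n t^n \pmod{t^{n+1}}$ and the term $\big[\sum_{i\geq 1}\Phi_i t^i, F\big]$ only contributes from order $t^{n+1}$ onwards, the identity forces $d^\varphi(R_n)=0$. Your route buys conceptual clarity and sign-robustness --- the Bianchi identity is a one-line consequence of the Jacobi identity for a degree $-1$ element, exactly as in Theorem~\ref{prop:PropertiesofCurv}, and no re-indexing of triple brackets is needed --- while the paper's computation has the merit of staying entirely at the level of the coefficients $\Phi_k$ and displaying explicitly the symmetric triple-bracket sum killed by Jacobi. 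The two arguments are the same mathematics in different clothing: your curvature $F$ is precisely the generating series of the paper's obstructions, and your extraction of its $t^n$-coefficient reproduces their chain of equalities in one stroke.
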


\begin{proof}
In the present case, the Maurer--Cartan equation $d\Phi+\tfrac12 [\Phi, \Phi]=0$ splits with respect to the power of $t$ as 
\begin{equation}\tag{$\ast$}\label{eq:MCformal}
d\Phi_n+[\varphi, \Phi_n]+\tfrac12 \sum_{k=1}^{n-1} [\Phi_k, \Phi_{n-k}]=0
\end{equation}
for any $n\geqslant 1$~. For $n=1$, the equation~\eqref{eq:MCformal} gives 
\[d\Phi_1+[\varphi, \Phi_1]=d^\varphi(\Phi_1)=0~,\] 
so the first term of a formal deformation coincides with a cycle of degree $-1$ of the twisted dg Lie algebra. 

Let us now consider such a degree $-1$ cycle $\Phi_1$ and let us assume that we have $H_{-2}\big(\g^\varphi\big)=0$~. We show, by induction on $n\geqslant 1$, that there exist elements $\Phi_1, \ldots, \Phi_{n}\in A_{-1}$ satisfying the equations~\eqref{eq:MCformal} up to $n$~. The case $n=1$ obviously holds true. Assume that this statement holds true up to $n-1$. The first two terms of Equation~\eqref{eq:MCformal} at $n$ are equal to $d^\varphi(\Phi_n)$; let us show that the third term is degree $-2$ cycle with respect to the twisted differential:
\begin{align*}
d^\varphi\left(\sum_{k=1}^{n-1} [\Phi_k, \Phi_{n-k}]\right)
& =
\sum_{k=1}^{n-1} \left(\left[d^\varphi(\Phi_k), \Phi_{n-k}\right] - \left[\Phi_k, d^\varphi(\Phi_{n-k})\right]\right)\\
& =2\sum_{k=1}^{n-1} \left[d^\varphi(\Phi_k), \Phi_{n-k}\right]
\\
& =-\sum_{k=1}^{n-1}\sum_{l=1}^{k-1} \big[[\Phi_l, \Phi_{k-l}], \Phi_{n-k}\big]\\
& =-\sum_{\substack{a+b+c=n\\ a,b,c\geqslant 1}}\big[[\Phi_a, \Phi_b], \Phi_c\big]
=0~,
\end{align*}
by the Jacobi identity. 
Since $H_{-2}\big(\g^\varphi\big)=0$, there exist $\Phi_n\in A_{-1}$ satisfying Equation~\eqref{eq:MCformal} at $n$, which concludes the proof. 
\end{proof}

The  homology group of degree $-1$ of the twisted dg Lie algebra detects the Maurer--Cartan elements that are rigid, that is the ones that cannot be deformed nontrivially. 

\begin{theorem}
If $H_{-1}\big(\g^\varphi\big)=0$, then any formal deformation of $\varphi$ is equivalent to the trivial one. 
\end{theorem}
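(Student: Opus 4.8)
The plan is to trivialise the formal deformation one power of $t$ at a time, using the hypothesis $H_{-1}\big(\g^\varphi\big)=0$ to gauge away the leading coefficient at each stage, and then to assemble the infinitely many gauge transformations into a single one by passing to the $t$-adic limit. Throughout I work in the complete $\k[[t]]$-extension, where $\m=t\k[[t]]$ and the gauge group $\bar{\Gamma}=\big(A_0\otimes\m,\BCH,0\big)$ is well defined and acts on formal deformations by the formula of Theorem~\ref{prop:GaugeActionMC}. Since every $\lambda\in A_0\otimes\m$ reduces to $0$ modulo $\m$, this action sends formal deformations of $\varphi$ to formal deformations of $\varphi$.

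For the inductive step, suppose $\Phi=\varphi+\Phi_n t^n+\Phi_{n+1}t^{n+1}+\cdots$ is a formal deformation whose first nonzero coefficient occurs in order $n\ge 1$. Extracting the coefficient of $t^n$ in the split Maurer--Cartan equation~\eqref{eq:MCformal}, the quadratic sum $\tfrac12\sum_{k=1}^{n-1}[\Phi_k,\Phi_{n-k}]$ vanishes, since every summand contains a factor $\Phi_k$ with $1\le k\le n-1$, all of which are zero. Hence $d^\varphi(\Phi_n)=0$, so $\Phi_n$ is a cycle of degree $-1$ in the twisted dg Lie algebra. The assumption $H_{-1}\big(\g^\varphi\big)=0$ then provides $\lambda_n\in A_0$ with $\Phi_n=d^\varphi(\lambda_n)=d\lambda_n+[\varphi,\lambda_n]$.

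Next I apply the gauge transformation $\lambda_n t^n\in A_0\otimes\m$ and track the result to first order. Because $\ad_{\lambda_n t^n}=t^n\ad_{\lambda_n}$ raises the $t$-order by $n$, the gauge formula of Theorem~\ref{prop:GaugeActionMC} gives, using $d\lambda_n+[\varphi,\lambda_n]=d^\varphi(\lambda_n)=\Phi_n$,
\[
(\lambda_n t^n).\Phi=\varphi+\Phi_n t^n-t^n\big(d\lambda_n+[\varphi,\lambda_n]\big)+O\big(t^{n+1}\big)=\varphi+O\big(t^{n+1}\big).
\]
Thus the coefficients of $t^1,\dots,t^n$ all vanish, while those of lower order are untouched (the correction has $t$-order at least $n$). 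Each step therefore strictly increases the order of the first nonzero coefficient, and the output is again a formal deformation of $\varphi$.

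The main obstacle is to turn this sequence of partial gauge transformations into a single honest gauge equivalence. Writing $\Phi^{(1)}=\Phi$ and $\Phi^{(N+1)}=(\lambda_N t^N).\Phi^{(N)}$, where $\lambda_N$ is chosen as above for $\Phi^{(N)}$, the left-action property collapses the first $N$ steps into a single element $\lambda^{(N)}:=\BCH\big(\lambda_N t^N,\BCH(\dots,\lambda_1 t)\big)$ of $\bar{\Gamma}$ with $\lambda^{(N)}.\Phi\equiv\varphi\pmod{t^{N+1}}$. Since each new factor has $t$-order $N$ and $\BCH$ only introduces terms of order at least that of its arguments, one gets $\lambda^{(N)}\equiv\lambda^{(N-1)}\pmod{t^{N}}$, so $\big(\lambda^{(N)}\big)_N$ is Cauchy for the $t$-adic filtration on the complete module $A_0\otimes\m$ and converges to some $\lambda_\infty$. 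As $\lambda_\infty\equiv\lambda^{(N)}\pmod{t^{N+1}}$ for all $N$, continuity of the gauge action in the $t$-adic topology yields $\lambda_\infty.\Phi\equiv\lambda^{(N)}.\Phi\equiv\varphi\pmod{t^{N+1}}$ for every $N$, whence $\lambda_\infty.\Phi=\varphi$. This exhibits $\Phi$ as gauge equivalent to the trivial deformation and completes the proof.
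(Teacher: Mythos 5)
Your proof is correct, and it takes a noticeably different route from the one in the paper. The paper also works order by order in $t$, but it uses the differential trick and constructs a \emph{single} gauge element $\lambda=\lambda_1t+\lambda_2t^2+\cdots$ whose coefficients are determined inductively so that $\exp(\ad_{\lambda})(\delta+\varphi)=\delta+\Phi$ holds exactly; the price is that at stage $n$ one must check that the obstruction $\Phi_n-\left(\exp(\ad_{\widetilde{\lambda}})(\delta+\varphi)\right)_n$ is a $d^\varphi$-cycle, which requires a genuine computation comparing $d^\varphi(\Phi_n)=-\tfrac12\left(\big[\widetilde{\Phi},\widetilde{\Phi}\big]\right)_n$ with the twisted differential of the partial exponential, using the split Maurer--Cartan equation and the fact that $\exp(\ad_{\widetilde{\lambda}})$ is a morphism of Lie algebras. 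You instead normalise the \emph{deformation} step by step, which makes the cycle condition immediate: once the coefficients below order $n$ vanish, equation~\eqref{eq:MCformal} degenerates to $d^\varphi(\Phi_n)=0$ with no quadratic terms to control. What you pay for this is the assembly step: composing infinitely many elementary gauges via $\BCH$, checking the composites are $t$-adically Cauchy, and invoking continuity of the gauge action to pass to the limit — a limiting process the paper's construction avoids entirely, since its $\lambda$ is defined directly as a formal series. (Your sign conventions are consistent: you gauge $\Phi$ to $\varphi$ rather than $\varphi$ to $\Phi$ as the paper does, which is immaterial for a group action, and your choice $\Phi_n=d^\varphi(\lambda_n)$ matches the sign $\lambda.\alpha\approx\alpha-d^\alpha(\lambda)$ coming from the formula of Theorem~\ref{prop:GaugeActionMC}.) One small point worth a line in your write-up: the continuity you invoke is stated in the paper (Proposition~\ref{prop:GaugeGroupAction}) for complete pre-Lie algebras, whereas here the relevant object is the complete dg Lie algebra $\g\otimes\k[[t]]$; the property you need follows by the same direct check, namely that replacing $\lambda'$ by $\lambda'+\mu$ with $\mu\in t^{N+1}\big(A_0\otimes\k[[t]]\big)$ perturbs every term of the gauge formula by elements of $t^{N+1}\big(A\otimes\k[[t]]\big)$.
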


\begin{proof}
We once again use the differential trick and work in the extension 
\[\big(\g \otimes \k[[t]]\big)^+= 
\g \otimes \k[[t]] \oplus  \k \delta~.\]
Given a  formal deformation $\Phi=\varphi+\sum_{n\geqslant 1}{\Phi}_ n t^n$ of $\varphi$,  let us try to find, by induction, an element 
\[\lambda=\lambda_1 t +\lambda_2  t^2+\cdots \in A_0\otimes \k[[t]]\]
satisfying 
\begin{equation}\tag{$\ast\ast$}\label{eq:GaugeFormal}
\exp(\ad_{\lambda})(\delta+\varphi)=\delta+\Phi
\end{equation}
in $\big(\g \otimes \k[[t]]\big)^+$~. For $n=1$, the relation satisfied by the coefficients of $t$ in Equation~\eqref{eq:GaugeFormal} is 
\[
\ad_{\lambda_1}(\delta+\varphi)=-d^\varphi(\lambda_1) =\Phi_1~.
\]
Recall that Equation~\eqref{eq:MCformal} in the proof of \cref{thm:Obstruction} shows that $\Phi_1$ is cycle of degree $-1$ for the twisted differential; 
since $H_{-1}\big(\g^\varphi\big)=0$, it is also a boundary, so we may find 
such an element $\lambda_1\in A_0$~. Suppose now that there exist elements $\lambda_1, \ldots, \lambda_{n-1}\in A_0$ satisfying Equation~\eqref{eq:GaugeFormal} modulo $t^n$ and let us look for an element $\lambda_n\in A_0$ satisfying it modulo $t^{n+1}$. Under the notations
\[\widetilde{\lambda}\coloneqq \lambda_1 t + \cdots+ \lambda_{n-1}t^{n-1}
\quad \text{and} \quad 
\exp(\ad_{\widetilde{\lambda}})(\delta+\varphi)=\sum_{n=0}^{\infty} \left(\exp(\ad_{\widetilde{\lambda}})(\delta+\varphi)\right)_n t^n~, \]
the relation satisfied by the coefficients of $t^n$ in Equation~\eqref{eq:GaugeFormal} is 
\[
\ad_{\lambda_n}(\delta+\varphi)+
\left(\exp(\ad_{\widetilde{\lambda}})(\delta+\varphi)\right)_n=
-d^\varphi(\lambda_n)+
\left(\exp(\ad_{\widetilde{\lambda}})(\delta+\varphi)\right)_n=\Phi_n~.
\]
The assumption $H_{-1}\big(\g^\varphi\big)=0$ ensures that such an element $\lambda_n$ exists provided that 
$\Phi_n$ and $\left(\exp(\ad_{\widetilde{\lambda}})(\delta+\varphi)\right)_n$ have the same image under the twisted differential $d^\varphi$. We consider the element $\widetilde{\Phi}\coloneqq {\Phi}_1t+\cdots+{\Phi}_{n-1} t^{n-1}$~.
Equation~\eqref{eq:MCformal} guarantees that 
\[
d^\varphi(\Phi_n)=-\tfrac12 \left(\left[\widetilde{\Phi},\widetilde{\Phi} \right]\right)_n~.
\]
On the other hand, Equation~\eqref{eq:GaugeFormal} implies 
\begin{align*}
 \left[\widetilde{\Phi},\widetilde{\Phi} \right]
&\equiv 
\left[\exp(\ad_{\widetilde{\lambda}})(\delta+\varphi) - (\delta+\varphi), \exp(\ad_{\widetilde{\lambda}})(\delta+\varphi) - (\delta+\varphi)\right]
 \pmod{t^{n+1}}\\
 &\equiv -2
\left[(\delta+\varphi), \exp(\ad_{\widetilde{\lambda}})(\delta+\varphi) \right]
 \pmod{t^{n+1}}\\
  &\equiv -2\,
d^\varphi\left(\exp(\ad_{\widetilde{\lambda}})(\delta+\varphi) \right)
 \pmod{t^{n+1}}~,
\end{align*}
since $\exp(\ad_{\widetilde{\lambda}})$ is a morphism of Lie algebras. 
In the end, we get 
\begin{align*}
d^\varphi(\Phi_n)=-\tfrac12 \left(\left[\widetilde{\Phi},\widetilde{\Phi} \right]\right)_n=
\left(d^\varphi\left(\exp(\ad_{\widetilde{\lambda}})(\delta+\varphi) \right)\right)_n=
d^\varphi\left(\left(\exp(\ad_{\widetilde{\lambda}})(\delta+\varphi) \right)_n\right)~, 
\end{align*}
which concludes the proof
\end{proof}

The present proof shows how the differential trick works heuristically: it states that ``if a property holds true in a graded Lie algebra with trivial differential, then it holds true in any dg Lie algebras".

\chapter{Operad theory for filtered and complete modules}\footnotetext{\hrule\smallskip\noindent This material will be published by Cambridge University Press \& Assessment as ‘Maurer-Cartan Methods in Deformation Theory: the twisting procedure’ by Vladimir Dotsenko, Sergey Shadrin and Bruno Vallette. This version is free to view and download for personal use only. Not for re-distribution, re-sale or use in derivative works. \copyright Cambridge University Press \& Assessment}
\label{sec:OptheoyFilMod}

In algebra, one  has to consider infinite series on many occasions. In order to make sense, these formulas require an extra topological assumption on the underlying  module. 
In this chapter, we first recall the notion of filtered and then complete modules which provides us with such a complete topology. 
This type of topology considered in many research areas such as algebraic geometry, deformation theory, rational homotopy theory and microlocal analysis,  finds its source in the generalisation of the Lie theory to filtered Lie algebras and groups due to M. Lazard in his Ph.D. thesis \cite{Lazard50}. It became later omnipresent in commutative algebra, see N. Bourbaki \cite{Bourbaki61}.\\

In this chapter, we establish the various properties for the monoidal structures on the categories of filtered modules and complete modules and for their associated monoidal functors. The main goal is to develop the  theory of  operads and operadic algebras in this context. 
First, this allows us to compare the various categories of filtered and complete algebras and recover conceptually the various  known definitions of filtered complete algebras that one can find in the literature. Then, as we will see in the next chapters, this allows us to get for free operadic results on the complete setting since the previously performed operadic calculus still hold in this generalised context, as it is in any monoidal category satisfying the monoidal properties mentioned above. \\

This present exposition shares common points with that of 
P. Deligne \cite[Section~1]{Deligne71}, and that of M. Markl \cite[Chapter~$1$]{Markl12}; it  is close to the treatment of B. Fresse \cite[Section~$7.3$]{Fresse17I}, though one does not  find there all the results  needed in the present treatment. 
For simplicity, we work over a ground field $\k$ though many results hold in a more general context. 

\section{Filtered algebras}

\begin{definition}[Filtered module]\index{filtered!module}
A \emph{filtered  module} $(A,\F)$ is a $\k$-module $A$ equipped with a filtration of $\k$-submodules
$$A=\F_0 A \supset \F_1 A \supset \F_2 A \supset \cdots \supset \F_k A \supset F_{k+1}A \supset \cdots$$
\end{definition}

This condition implies that the subsets $\{x+\F_k A\, | \,  x\in A, k\in \NN\}$
form  a neighbourhood basis of a first-countable topology on $A$, which is thus a Fr\'echet--Urysohn space and so a sequential space. Since this topology is induced by submodules, any filtered module is trivially a topological module, that is  the scalar multiplication and the sum of elements are continuous maps, when one considers the discrete topology on the ground field $\k$.

\begin{example}
Let $I$ be an ideal of a $\k$-algebra $A$~. The submodules $\F_k A\coloneqq I^k A$, for $k\ge 0$, form a filtration of $A$ and the associated topology is called the \emph{$I$-adic topology} of $A$. 
\end{example}

\begin{lemma}\label{lem:Fkclosed}
The subsets $\F_k A$, for $k\in \NN$, are closed with respect to this topology. 
\end{lemma}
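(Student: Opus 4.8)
The plan is to prove that $\F_k A$ is closed by showing its complement $A \setminus \F_k A$ is open in the given topology. The only structural input I would use is that $\F_k A$ is a $\k$-submodule, hence in particular an additive subgroup of $A$; the topological content then reduces entirely to how cosets of a subgroup interact with that subgroup.

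First I would fix $k \in \NN$ and take an arbitrary point $y \in A \setminus \F_k A$. I would then consider the basic open neighborhood $y + \F_k A$ of $y$ (which is a member of the prescribed neighborhood basis) and claim that it is contained in the complement, that is, $(y + \F_k A) \cap \F_k A = \emptyset$. Indeed, if some $z$ lay in this intersection, then $z = y + w$ with $w \in \F_k A$ and simultaneously $z \in \F_k A$; since $\F_k A$ is closed under subtraction, this would force $y = z - w \in \F_k A$, contradicting the choice of $y$. Hence every point of $A \setminus \F_k A$ admits a basic open neighborhood contained in $A \setminus \F_k A$, so the complement is open and $\F_k A$ is closed. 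Equivalently, one may observe that the cosets $y + \F_k A$ partition $A$, so that $A \setminus \F_k A = \bigcup_{y \notin \F_k A}(y + \F_k A)$ is visibly a union of basic open sets.

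I expect no genuine obstacle here: the whole argument rests on the elementary fact that a coset of a subgroup is either contained in, or disjoint from, that subgroup. The one point worth emphasizing is that it is precisely the submodule (additive subgroup) structure of $\F_k A$ — rather than any finer property of the filtration — that makes the basic neighborhoods of a point outside $\F_k A$ avoid $\F_k A$ altogether.
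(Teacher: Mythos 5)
Your proof is correct, and it takes a genuinely different route from the paper's. The paper argues via sequences: it takes a sequence $\{x_n\}$ in $\F_k A$ converging to some $x\in A$, notes that eventually $x_n - x \in \F_k A$, and concludes $x = x_N - (x_N - x) \in \F_k A$ since $\F_k A$ is a submodule. That argument is legitimate precisely because the paper has just observed that the topology is first-countable (hence sequential), so sequential closedness implies closedness. Your argument bypasses that topological input entirely: you show the complement is open by the coset dichotomy, which amounts to observing that $\F_k A$ is itself open (it is a basic neighbourhood of each of its points, since $z + \F_k A = \F_k A$ for $z \in \F_k A$) and that an open subgroup of a topological abelian group is automatically closed, its complement being a union of cosets. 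What each approach buys: yours is more elementary and more general --- it needs only the submodule structure and the form of the neighbourhood basis, and would work verbatim for filtrations indexed by any set, with no countability or sequentiality hypothesis; the paper's fits the sequence-based toolkit it sets up for the surrounding material (convergence of series, Cauchy sequences, completeness), where the same style of argument is reused, for instance in Lemma~\ref{lem:Conv} and in the proof that the internal hom of complete modules is complete.
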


\begin{proof}
Let $\left\{x_n\in \F_k A\right\}_{n\in \NN}$ be a sequence converging to $x\in A$. There exists $N\in \NN$ such that, for all $n\ge N$, we have $x_n-x\in \F_k A$. 
Since the element $x_N$ lives in $\F_k A$, which is a submodule of $A$, this implies that $x$ lives in $\F_k A$ too. 
\end{proof}

Let $(A, \F)$ and $(B, \G)$ be two filtered modules. We consider the following induced filtration on the mapping space $\Hom(A,B)$: 
$$
 \calF_k \Hom(A, B)\coloneqq\left\{
f :A \to B\ | \ f(\F_n A)\subset \G_{n+k} B\ , \ \forall n\in \NN
\right\}\ .
$$
This filtration endows $\hom(A,B)\coloneqq\calF_0 \Hom(A, B)$ with a filtered module structure. We invite the reader to check that while any map in $\calF_k \Hom(A, B)$ is continuous with respect to the associated topologies, not every continuous map is of this form. 

\begin{definition}[Filtered map]\index{filtered!map}
A \emph{filtered map} $f : (A, \F) \to (B, \G)$ between two filtered modules is an element $f\in \hom(A,B)$, that is a linear  map  preserving the respective filtrations: $f(\F_n A)\subset \G_{n} B$, for all $n\in\NN$. 
\end{definition}

The induced filtration on the tensor product of two filtered modules is given by the formula
\[\calF_k (A\otimes B)\coloneqq\sum_{n+m=k} \mathrm{Im} \big(\F_n A \otimes \mathrm{G}_m B\to A\otimes B\big) \ .\]

\begin{lemma}\label{lem:FilModSymCat}
The category of filtered modules with filtered maps, equipped with the internal hom and the filtration of tensor products, forms a bicomplete closed symmetric  monoidal  category, whose monoidal product preserves colimits. 
\end{lemma}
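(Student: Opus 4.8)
The plan is to transport the closed symmetric monoidal structure and the (co)completeness of the category of $\k$-modules along the forgetful functor $U$ to $\k$-modules, checking at each stage that the relevant structure maps are filtered and that the induced filtrations behave correctly. The organising observation is that $U$ admits both a left adjoint (equip a module $M$ with the filtration $\F_0 M = M$, $\F_k M = 0$ for $k\geq 1$) and a right adjoint (equip $M$ with the constant filtration $\F_k M = M$ for all $k$); hence $U$ preserves all limits and colimits. Consequently every (co)limit of filtered modules has as underlying $\k$-module the corresponding (co)limit in $\k$-modules, and it remains only to specify the filtration and verify the universal property among filtered maps.

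For bicompleteness, I would build limits by endowing the underlying limit with the intersection filtration: $\calF_k$ of a product is the product of the $\F_k$'s, and $\calF_k$ of an equalizer is its intersection with $\F_k$ of the source. Dually, I would build colimits with the image filtration: $\calF_k$ of a coproduct is the direct sum of the $\F_k$'s, and $\calF_k$ of a coequalizer $B\twoheadrightarrow Q$ is the image of $\F_k B$. Since $\calF_0$ of each such construction recovers the whole underlying module, these are genuine filtered modules, and a routine check shows that the structure maps are filtered and satisfy the universal properties with respect to filtered maps.

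For the symmetric monoidal structure, I would take the unit to be $\k$ with $\F_0\k=\k$ and $\F_k\k=0$ for $k\geq 1$, together with the tensor filtration as stated; note that $\calF_0(A\otimes B)=\mathrm{Im}(\F_0 A\otimes\G_0 B)=A\otimes B$, so it is a legitimate filtration. The substance here is that the associativity, unit, and symmetry isomorphisms of $\k$-modules are isomorphisms of filtered modules. For associativity this reduces to checking that both $\calF_k\big((A\otimes B)\otimes C\big)$ and $\calF_k\big(A\otimes(B\otimes C)\big)$ coincide, under the canonical identification, with $\sum_{n+m+p=k}\mathrm{Im}\big(\F_n A\otimes\G_m B\otimes\mathrm{H}_p C\to A\otimes B\otimes C\big)$, where $\mathrm{H}$ denotes the filtration of $C$; the unit and symmetry constraints are handled similarly and more easily.

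The heart of the argument — and the step I expect to require the most care — is the closed structure, namely the adjunction $-\otimes B\dashv\hom(B,-)$: I must produce a natural bijection between filtered maps $A\otimes B\to C$ and filtered maps $A\to\hom(B,C)$. Starting from the $\k$-module adjunction $f\mapsto\widetilde f$ with $\widetilde f(a)(b)=f(a\otimes b)$, one shows that the condition $f\big(\calF_k(A\otimes B)\big)\subset\F_k C$ for all $k$ is \emph{equivalent} to $\widetilde f(\F_n A)\subset\calF_n\Hom(B,C)$ for all $n$: indeed $\widetilde f(a)(\G_m B)=f(a\otimes\G_m B)$, so for $a\in\F_n A$ the target lands in $\F_{n+m}C$ precisely when $f$ respects the tensor filtration. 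This is exactly the bookkeeping that matches the ``$\sum_{n+m=k}$'' in the tensor filtration with the shift-by-$k$ in the hom filtration, and the crux is to verify it at every filtration level, not merely at level $0$. Naturality is inherited from the $\k$-module adjunction, so $\hom(B,-)$ is right adjoint to $-\otimes B$, which furnishes the closed structure. Finally, being a left adjoint, $-\otimes B$ preserves all colimits, and by the symmetry constraint so does $A\otimes-$; as colimits exist by bicompleteness, the monoidal product preserves colimits, completing the proof.
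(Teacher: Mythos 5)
Your proof is correct, and its core coincides with the paper's: you equip products and equalizers with the product, respectively intersection, filtrations, and coproducts and coequalizers with the direct-sum, respectively image, filtrations, exactly as in the paper (which phrases kernels/cokernels of differences via preadditivity, but the constructions are identical). The differences lie at the two ends. At the start, your observation that the forgetful functor has both a left adjoint (discrete filtration) and a right adjoint (constant filtration $\F_k M=M$) is not in the paper's proof — the left adjoint only appears afterwards, in Proposition~\ref{prop:1Adj}, and the right adjoint not at all; it is a clean way to pin down the underlying modules of (co)limits before choosing filtrations, though of course it does not by itself give existence, which your explicit constructions then supply. At the end, you derive colimit preservation of $\otimes$ formally from the adjunction $-\otimes B\dashv\hom(B,-)$, whereas the paper inspects directly that the tensor filtration is compatible with the coproduct and cokernel filtrations. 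Your route is more conceptual but forces you to actually verify the closed structure — the level-by-level equivalence $f\big(\calF_k(A\otimes B)\big)\subset\F_k C$ for all $k$ if and only if $\widetilde f(\F_n A)\subset\calF_n\Hom(B,C)$ for all $n$ — which is precisely the point the paper dismisses as ``straightforward to check''; your bookkeeping there is correct and is arguably the most valuable part of the write-up. The paper's direct inspection, by contrast, gives colimit preservation without needing the internal hom at all, which is why it can afford to be terse about the closed structure.
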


\begin{proof}
Given a collection $\left(A^i, \F^i\right)_{i\in \mathcal{I}}$ of filtered modules, their coproduct is given by $A\coloneqq\bigoplus_{i\in \mathcal{I}} A^i$  with filtration 
\[ \F_k A\coloneqq\left\{a_{i_1}+\cdots+a_{i_n}\ | \ a_{i_j}\in \F^{i_j}_k A^{i_j}, \ \forall j\in \{1, \ldots, n\}\right\} 
\]
and their product is given by $B\coloneqq\prod_{i\in \mathcal{I}} A^i$ with filtration 
\[
\G_k B\coloneqq\left\{(a_{i})_{i\in \mathcal{I}}\ | \ a_{i}\in \F^i_k A^{i}, \ \forall i\in \mathcal{I}\right\} \ .
\]
Cokernels of filtered maps $f : (A,\F) \to (B, \G)$ are given by $p : B \twoheadrightarrow B/\Im f$ equipped with the  filtration $p(\G_k)\cong\G_k B/(\Im f \cap \G_k B)$.
Since this category is (pre)additive, all coequalizers of pairs $(f,g)$ are given by cokernels of differences $f-g$. So this category admits all colimits. 
In the same way, kernels of filtered maps $f : (A,\F) \to (B, \G)$ are given by $f^{-1}(0)$ with filtration $f^{-1}(0) \cap \F_k B$.
Since this category is (pre)additive, all equalizers of pairs $(f,g)$ are given by kernels of differences $f-g$. So this category admits all limits. 
Notice that this category, though additive, fails to be abelian: in general, maps do not have categorical images. 

Finally, it is straightforward to check that the various structure maps of the (strong) symmetric closed monoidal  category of modules are filtered. From the above description of coproducts and cokernels, it is easily seen that the monoidal product preserves them, so it preserves all colimits.
\end{proof}

The properties of Lemma~\ref{lem:FilModSymCat} ensures that one can develop the operadic calculus in the symmetric mo\-no\-id\-al category of filtered modules, see \cite[Chapter~$5$]{LodayVallette12}. An operad in this context will be referred to as a  \emph{filtered operad}. 

\begin{example}
For a filtered module $(A, \F)$, the associated \emph{filtered endormorphism operad}\index{filtered!endormorphism operad} is the part of the endomorphism operad of $A$ consisting of filtered maps, that is 
$$\mathrm{end}_A\coloneqq\left\{\hom(A^{\otimes n}, A)\right\}_{n\in \NN}\ .$$
An element of the filtered endomorphism operad is thus a linear map $$f : A^{\otimes n} \to A$$ satisfying 
$$      
f\left(\F_{k_1} A \otimes \cdots \otimes \F_{k_n} A\right) \subset \F_{k_1+\cdots+k_n} A
\ . $$
The full induced filtration on its underlying collection is given by
$$f\in \calF_k \eend_A(n)\ \   \text{when} \ \      
f\left(\F_{k_1} A \otimes \cdots \otimes \F_{k_n} A\right) \subset \F_{k_1+\cdots+k_n+k} A
\ .$$
\end{example}

\begin{definition}[Filtered $\calP$-algebra]\label{def:FilAlgebras}\index{filtered!$\calP$-algebra}
Let $\calP$ be a filtered operad and let $(A, \F)$ be a filtered module. A \emph{filtered $\calP$-algebra structure} on $(A,\F)$ amounts to the data of a filtered morphism of operads 
$$\calP \to \eend_A\ .$$
\end{definition}

We denote the category of modules by $\mathsf{Mod}$ and the category of filtered modules by~$\mathsf{FilMod}$. 

\begin{proposition}\label{prop:1Adj}
The  functor $\sqcup \, : \,  \mathsf{FilMod}\to \mathsf{Mod}$, which forgets the filtration, admits a left adjoint full and faithful functor 
$$\vcenter{\hbox{
\begin{tikzcd}[column sep=1.2cm]
\mathrm{Dis}  \colon 
\mathsf{Mod} 
\arrow[r, harpoon, shift left=1ex, "\perp"']
&
\arrow[l, harpoon,  shift left=1ex]
\mathsf{FilMod} 
\ : \ \sqcup 
\end{tikzcd}
}}$$
given by the trivial filtration: 
$$\mathrm{Dis}(A)\coloneqq(A, A=\F^{\mathrm{tr}}_0 A\supset 0=\F_1 ^{\mathrm{tr}}A=\F_2^{\mathrm{tr}} A= \cdots) \ ,$$
which induces the discrete topology.
These two functors are strictly symmetric monoidal. 
\end{proposition}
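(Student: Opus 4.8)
The plan is to establish the three assertions—the adjunction, full faithfulness, and strict monoidality—in turn, each by directly unwinding the relevant definitions. First I would check that $\mathrm{Dis}$ is a well-defined functor: on a linear map $g \colon A \to A'$ it returns the same underlying map, which automatically preserves the trivial filtrations since $\F^{\mathrm{tr}}_0 = \mathrm{id}$ on objects and all higher pieces vanish. The heart of the adjunction is the observation that the filtration-preservation condition defining a filtered map \emph{out of} a discretely filtered module is vacuous: a filtered map $f \colon \mathrm{Dis}(A) \to (B, \G)$ must satisfy $f(\F^{\mathrm{tr}}_n A) \subset \G_n B$ for all $n$, but this reads $f(A) \subset \G_0 B = B$ when $n = 0$ and $f(0) \subset \G_n B$ when $n \geq 1$, both of which hold for every linear map. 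Hence $\hom(\mathrm{Dis}(A), (B,\G))$ coincides as a set with $\Hom(A, \sqcup(B, \G))$, and I would check that this identification is natural in both variables, yielding the adjunction $\mathrm{Dis} \dashv \sqcup$.

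For full faithfulness, I would note that $\sqcup \circ \mathrm{Dis} = \mathrm{id}_{\mathsf{Mod}}$ on the nose, so that the unit of the adjunction is the identity natural transformation; a left adjoint whose unit is invertible is fully faithful. Equivalently, one can argue directly: the same vacuity computation shows $\hom(\mathrm{Dis}(A), \mathrm{Dis}(A')) = \Hom(A, A')$, so that $\mathrm{Dis}$ is bijective on hom-sets.

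The remaining, and most computational, point is strict symmetric monoidality. Here I would compute the tensor filtration of $\mathrm{Dis}(A) \otimes \mathrm{Dis}(B)$ from the formula $\calF_k(A \otimes B) = \sum_{n+m=k} \mathrm{Im}(\F_n A \otimes \G_m B \to A \otimes B)$. For $k = 0$ only the term $n = m = 0$ survives and yields all of $A \otimes B$, while for $k \geq 1$ every summand contains a factor $\F^{\mathrm{tr}}_n A = 0$ or $\F^{\mathrm{tr}}_m B = 0$ and hence vanishes; thus $\calF_\bullet(\mathrm{Dis}(A) \otimes \mathrm{Dis}(B))$ is exactly the trivial filtration on $A \otimes B$, giving the strict equality $\mathrm{Dis}(A) \otimes \mathrm{Dis}(B) = \mathrm{Dis}(A \otimes B)$. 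The monoidal unit of $\mathsf{FilMod}$ is $\mathrm{Dis}(\k)$—as the same formula shows, since $\calF_k(\mathrm{Dis}(\k) \otimes A) = \F_k A$—so $\mathrm{Dis}$ preserves units strictly and visibly commutes with the symmetry isomorphisms, which are the underlying swaps. Finally, $\sqcup$ is strictly monoidal because the underlying module of a filtered tensor product is by construction the ordinary tensor product and $\sqcup(\mathrm{Dis}(\k)) = \k$. I do not expect a genuine obstacle: the only place demanding care is the tensor-filtration computation, where one must track precisely which summands vanish, together with the routine verifications that the hom-set bijection is natural and that it is compatible with the monoidal coherence data.
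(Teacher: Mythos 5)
Your proof is correct and fills in exactly the verifications that the paper leaves implicit — its own proof of this proposition reads only ``It is straightforward to check the various properties.'' The key points you identify (vacuity of the filtration condition out of a discretely filtered module, the identity $\sqcup \circ \mathrm{Dis} = \mathrm{id}$ making the unit invertible, and the collapse of the tensor filtration formula to the trivial filtration) are precisely the checks intended.
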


\begin{proof}
It is straightforward to check the various properties.
\end{proof}

The monoidal part of this proposition ensures that the underlying collection, without the filtration, of any filtered operad is an operad. For instance, the filtered endomorphism operad $\eend_A$ is a strict suboperad of the endomorphism operad $\End_A$. In the other way round, any  operad can be seen as a filtered operad equipped with the trivial filtration, that is with discrete topology. 

\begin{example}
Definition~\ref{def:FilAlgebras}, applied to the ns operad $\As$ of associative algebras, produces the classical notion of filtered associative algebra, see \cite[Section~I.3]{Lazard50}, \cite[Chapter~$3$]{Bourbaki61}, or \cite[Appendix~A.1]{Quillen69}. In the case of the operad $\Lie$, we recover the notion of filtered Lie algebras of M. Lazard~\cite{Lazard50}. 
All the operadic constructions therefore hold in this setting. For instance, the morphism of symmetric operads $\Lie \to \Ass$, viewed as a morphism of filtered operads produces automatically the universal enveloping Lie algebra in the filtered world by \cite[Section~$5.2.12$]{LodayVallette12}.
\end{example}

\begin{definition}[Filtered differential graded module]\index{filtered!differential graded module}
A \emph{filtered differential graded module} is a differential graded module in the category of filtered modules. Such a data amounts to a collection $\{A_n\}_{n\in \mathbb{Z}}$ of filtered modules equipped with a square-zero degree $-1$ filtered map $d$.
\end{definition}

All the results above hold \textit{mutatis mutandis} for filtered dg modules. For instance, this operadic definition allows us to recover naturally the following notions of filtered homotopy algebras structures present in the literature. 

\begin{example}
A \emph{filtered curved $\Ai$-algebra}
\index{$\Ai$-algebra!filtered curved} structure on a filtered graded module $(A, \F)$ amounts to the data of curved $\Ai$-algebra structure $$(m_0, m_1, m_2, \allowbreak \cdots)$$ on $A$ according to Definition~\ref{def:CurvedAinfty} below such that the various structure maps satisfy 
$$m_n(\F_{k_1} A, \ldots, \F_{k_n} A)\subset \F_{k_1+\cdots+k_n} A\ . $$
This definition corresponds to  the one given by Fukaya--Oh--Ohta--Ono   in \cite{FOOO09I}, with the only difference that  these authors consider modules with the \emph{energy filtration}, indexed by non-negative real numbers $\RR^+$, see Section~\ref{sec:CurvedAiAlg} for further operadic details.

The present operadic definition of a filtered (shifted) curved $\Li$-algebra given in Definition~\ref{def:csLiAlg} recovers the usual one, which is used for instance by Dolgushev--Rogers in \cite{DolgushevRogers15, DolgushevRogers17} (with the further constraint $\F_0 A=\F_1 A$).
\end{example}

\section{Complete algebras}

Any decreasing filtration $A=\F_0 A \supset \F_1 A \supset \cdots$ 
induces a sequence of surjective maps, 
$$ \xymatrix{
0=A/\F_0 A & \ar@{->>}[l]_(0.4){p_0}
A/\F_1 A& \ar@{->>}[l]_(0.45){p_1}
A/\F_2 A & \ar@{->>}[l]_(0.45){p_2} 
A/\F_3 A&  \ar@{->>}[l]\cdots} \ ,$$
where $p_k$ is the reduction modulo $\F_k A$. Its limit, denoted by 
$$\wA\coloneqq\lim_{k\in \NN} A/\F_k A \ ,$$
is defined by the formula
$$ \wA=\big\{(x_0,  x_1, x_2, \ldots)\ | \ x_k\in A/\F_k, \ p_k(x_{k+1})=x_k\big\}\ .$$
If we consider the structure maps 
 \[
q_k : \wA \twoheadrightarrow A/\F_k A, (x_0,  x_1, x_2, \ldots)\mapsto x_k,
 \]
 then the limit module $\wA$ is endowed with the canonical filtration 
 \[\widehat{\F}_k \wA \coloneqq\ker q_k=\{
 (0, \ldots, 0, x_{k+1}, x_{k+2}, \ldots)\}.\] 
 With the associated topology, it forms a complete Hausdorff space. 
 
Let us denote by $\pi_k : A \twoheadrightarrow A/\F_k A$ the canonical projections. The canonical map $\pi : A \to \wA$, $x\mapsto (\pi_0(x), \pi_1(x), \pi_2(x),\ldots)$, associated to them, is filtered and thus continuous. 

\begin{definition}[Complete module]\index{complete!module}
A \emph{complete module} is a filtered module $(A, \F)$
 such that the canonial morphism
$$\pi\colon A \stackrel{\cong}{\longrightarrow} \wA=\lim_{k\in \NN} A/\F_k A $$
is an isomorphism. 
\end{definition}

The canonical map $\pi$ is an epimorphism if and only if the  associated topological space is complete, which explains the terminology chosen here.  When the canonical map is an isomorphism, it is an homeomorphism since $\pi^{-1}$ is filtered and thus continuous. We note that the kernel $\pi$ is equal to the intersection of all the sub-modules $\F_k A$. Therefore, it is a monomorphism if and only if 
$$\bigcap_{k\in\NN} \F_k A=\{0\}\ ;$$ 
this condition is equivalent  for the associated topology on $A$ to be Hausdorff. In that case, the topology on $A$ is in fact metrisable: one may define the \emph{valuation} of an element $x\in A$ by putting 
$$
\nu(x)\coloneqq 
\begin{cases}
+\infty\quad \text{if} \quad x=0\\ 
\quad k \quad \text{if} \quad x\in  \F_k A\backslash \F_{k+1} A\ ,
\end{cases}
$$
and the distance  
$$d(x,y)=\frac{1}{\nu(y-x)+1}\ .$$
As a consequence, filtered maps are uniformly continuous and the canonical map $\pi : A \hookrightarrow \wA$ makes $\wA$ into  the completion of $A$: the module $\wA$ is complete, contains $A$ as a dense subset, and is unique up to isometry for this property.

\begin{example}
The toy model here is the algebra of polynomials $\k[X]$ with its $X$-adic filtration $\F_k\, \k[X]\coloneqq X^k \, \k[X]$. Its topology is Hausdorff but not complete. Its completion is the algebra of formal power series $\widehat{\k[X]}\cong\k[[X]]$.
\end{example}

In any complete module, convergent series have the following simple form. 

\begin{lemma}\label{lem:Conv}
Let $(A, \F)$ be a complete module. The series $\sum_{n\in \NN} x_n$ associated to a sequence of elements $\{x_n\}_{n\in \NN}$ is convergent if and only if the sequence $x_n$ converges to $0$. 
\end{lemma}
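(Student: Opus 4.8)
The plan is to reduce convergence of the series to a Cauchy-type condition on its partial sums and then to exploit completeness to produce the limit. Throughout, write $s_N \coloneqq \sum_{n=0}^{N} x_n$ for the partial sums, and recall that a sequence converges to $a\in A$ precisely when, for every $k\in\NN$, one has $s_N - a \in \F_k A$ for all sufficiently large $N$, since the sets $a + \F_k A$ form a neighbourhood basis at $a$.

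First I would dispatch the easy implication, which uses only that $(A,\F)$ is a filtered module. Suppose the series converges, say $s_N \to s$. Fix $k$ and choose $N_k$ with $s_N - s \in \F_k A$ for all $N\geq N_k$. Then for $N > N_k$ we have
\[
x_N = s_N - s_{N-1} = (s_N - s) - (s_{N-1} - s) \in \F_k A,
\]
because $\F_k A$ is a submodule. As $k$ is arbitrary, this shows $x_N \to 0$.

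For the converse, assume $x_n \to 0$. Fix $k$; there is $N_k$ such that $x_n \in \F_k A$ for all $n\geq N_k$. Then for $M > N \geq N_k$,
\[
s_M - s_N = \sum_{n=N+1}^{M} x_n \in \F_k A,
\]
again since $\F_k A$ is a submodule. In particular the image $\pi_k(s_N)\in A/\F_k A$ is constant for $N\geq N_k$; call this common value $y_k$. Since the reduction maps satisfy $p_k\circ \pi_{k+1}=\pi_k$, taking $N\geq \max(N_k, N_{k+1})$ gives $p_k(y_{k+1}) = p_k(\pi_{k+1}(s_N)) = \pi_k(s_N) = y_k$. Hence the compatible family $(y_k)_{k\in\NN}$ defines an element $y\in\wA$.

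Finally I would invoke completeness: since $\pi\colon A \to \wA$ is an isomorphism, there is $s\in A$ with $\pi(s)=y$, that is $\pi_k(s)=y_k$ for every $k$. For each $k$ and every $N\geq N_k$ we then have $\pi_k(s_N) = y_k = \pi_k(s)$, so $s - s_N \in \ker \pi_k = \F_k A$ and therefore $s_N \in s + \F_k A$. This is exactly the statement that $s_N \to s$, so the series converges. The only genuinely nontrivial point, and the one step where completeness is indispensable, is the passage from the compatible family $(y_k)$ to an honest element $s$ of $A$; everything else is bookkeeping with the submodules $\F_k A$.
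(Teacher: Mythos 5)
Your proof is correct, and its first half (convergence of the series forces the terms to tend to $0$, by taking differences of partial sums inside the submodules $\F_k A$) is the same as the paper's. The two arguments part ways in the converse direction. The paper shows, exactly as you do, that the partial sums $s_N$ satisfy $s_M - s_N \in \F_k A$ for $M > N$ large, and then simply concludes ``Cauchy, thus convergent,'' leaning on the identification, made earlier in that section, of algebraic completeness (the map $\pi\colon A \to \wA$ being an isomorphism) with topological completeness of the associated metrisable topology. You instead work directly from the definition of completeness: you observe that the images $\pi_k(s_N)$ in $A/\F_k A$ stabilise, assemble the stable values $y_k$ into a compatible family, i.e.\ an element $y$ of the inverse limit $\wA = \lim_{k} A/\F_k A$, use surjectivity of $\pi$ to lift $y$ to an element $s \in A$, and verify $s_N \to s$. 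What your route buys is self-containedness: it proves, in this instance, precisely the implication ``Cauchy implies convergent'' that the paper cites from the surrounding discussion, and it pinpoints the one place where surjectivity of $\pi$ is indispensable. What the paper's route buys is brevity, at the cost of resting on an equivalence that is asserted rather than proved in the text.
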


\begin{proof}
We classically consider the sequence $X_n\coloneqq\sum_{k=0}^n x_k$, for $n\in \NN$. If the sequence $\{X_n\}$ converges, then it is a Cauchy sequence and so $x_n=X_n-X_{n-1}$ tends to $0$. In the other way round, if the sequence $\{x_n\}_{n\in \NN}$ tends to $0$, this means 
$$\forall k\in \NN,\,  \exists N\in \NN, \, \forall n\ge N, \ x_n\in \F_k A\ . $$
Since $\F_k A$ is a submodule of $A$, we have 
$$\forall k\in \NN,\, \exists N\in \NN,\, \forall m\ge n\ge N, \ X_m-X_n=x_m+ \cdots+x_{n+1}\in \F_k A\ , $$
that is the sequence $\{X_n\}$ is Cauchy is thus convergent. 
\end{proof}

We can consider the following first definition of a complete algebra over an operad. We will see later on at Theorem~\ref{thm:CompleteAlg} that it actually coincides with the conceptual one. 

\begin{definition}[Complete $\calP$-algebra]\label{Def:CompleteAlg}\index{complete!$\calP$-algebra}
A \emph{complete algebra} over a filtered operad~$\calP$ is a complete module endowed with a filtered $\calP$-algebra structure. 
\end{definition}

\begin{example}
The above example of formal power series $\k[[X]]$ is a complete algebra over the operad $\Ass$ (respectively, $\Com$), that is a complete associative (respectively, commutative associative) algebra.
\end{example}

We consider the full subcategory of filtered modules whose objects are the complete modules, and we denote it by $\mathsf{CompMod}$.

\begin{proposition}\label{prop:2Adj}
The completion of a filtered module defines a functor which is left adjoint to the forgetful functor: 
$$\vcenter{\hbox{
\begin{tikzcd}[column sep=1.2cm]
\widehat{} \ \ \ \ \colon \ 
\mathsf{FilMod} 
\arrow[r, harpoon, shift left=1ex, "\perp"']
&
\arrow[l, harpoon,  shift left=1ex]
\mathsf{CompMod} 
\ : \ \sqcup \ .
\end{tikzcd}
}}$$
\end{proposition}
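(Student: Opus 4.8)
The plan is to recognise this statement as asserting that $\mathsf{CompMod}$ is a reflective subcategory of $\mathsf{FilMod}$, with the reflector given by completion, exactly parallel to the adjunction $\mathrm{Dis}\dashv\sqcup$ of Proposition~\ref{prop:1Adj}. The unit of the adjunction will be the canonical filtered map $\pi\colon A\to\wA$ already constructed above. Concretely, for every filtered module $(A,\F)$ and every complete module $(B,\G)$ I would produce a natural bijection
\[
\Hom_{\mathsf{CompMod}}(\wA,B)\;\cong\;\Hom_{\mathsf{FilMod}}(A,B),\qquad g\longmapsto g\circ\pi.
\]
Since $\mathsf{CompMod}$ is a full subcategory of $\mathsf{FilMod}$, the left-hand side is simply the set of filtered maps $\wA\to B$, so the entire content is that precomposition with $\pi$ is a bijection onto the filtered maps $A\to B$ whenever the target is complete.

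First I would make completion functorial. A filtered map $f\colon A\to B$ satisfies $f(\F_nA)\subset\G_nB$, hence descends to maps $A/\F_nA\to B/\G_nB$ compatible with the reduction maps of the two towers; passing to the limit yields a filtered map $\widehat f\colon\wA\to\widehat B$, and applying this to identities and composites shows $\widehat{\phantom{x}}$ is a functor for which $\pi$ is natural. When $B$ is complete, the isomorphism $\pi_B\colon B\xrightarrow{\cong}\widehat B$ lets me regard $\pi_B^{-1}\circ\widehat f$ as a filtered map $\wA\to B$; tracing components shows $(\pi_B^{-1}\circ\widehat f)\circ\pi=f$, which already proves that precomposition with $\pi$ is surjective onto filtered maps $A\to B$.

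The crux is injectivity, that is, the uniqueness half of the universal property, and this is precisely where completeness of $B$ is used. Given $\xi\in\wA$ represented by a compatible sequence $(x_k)$, choose lifts $a_k\in A$ with $\pi_k(a_k)=x_k$; compatibility gives $\pi_j(a_k)=x_j$ for all $j\leqslant k$, so $\xi-\pi(a_k)\in\widehat{\F}_k\wA=\Ker q_k$. For any filtered map $g\colon\wA\to B$ one then has $g(\xi)-g(\pi(a_k))=g(\xi-\pi(a_k))\in\G_kB$, whence $g(\xi)\equiv(g\circ\pi)(a_k)\pmod{\G_kB}$ for every $k$. Because $B$ is complete it is Hausdorff, i.e. $\bigcap_k\G_kB=\{0\}$, so $g(\xi)$ is completely determined by $g\circ\pi$; this gives injectivity. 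The same congruence also exhibits the inverse construction explicitly by $g(\xi)=\lim_k f(a_k)$, the limit existing since $\{f(a_k)\}$ is Cauchy in the complete module $B$ (for $m\geqslant k$ one has $a_m-a_k\in\F_kA$, so $f(a_m)-f(a_k)\in\G_kB$), and independence of the chosen lifts follows once more from $\bigcap_k\G_kB=\{0\}$.

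Finally I would record naturality of the bijection in both $A$ and $B$, which is a routine diagram chase given the functoriality of $\widehat{\phantom{x}}$ and the naturality of $\pi$. The only genuinely delicate point, and the step I expect to be the main obstacle, is the interplay between algebra and topology in the uniqueness argument: one must invoke that a complete module is automatically Hausdorff (the monomorphism criterion for $\pi$) in order to upgrade ``determined modulo every $\G_kB$'' to ``determined''. Everything else reduces to the universal property of the limit defining $\wA$ together with the fact, already established above, that $\pi$ and all filtered maps are continuous.
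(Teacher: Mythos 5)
Your proposal is correct and takes essentially the same route as the paper: the paper's proof consists of asserting that every filtered map $f\colon A\to B$ with complete target factors uniquely through $\pi\colon A\to\wA$, citing this as ``the universal property of the limit $\wA$''. Your argument simply fills in the details that the paper leaves implicit --- existence via functoriality of completion together with $\pi_B^{-1}$, and uniqueness via the Hausdorff property $\bigcap_k\G_kB=\{0\}$ of complete modules --- so it is an expansion of, not a departure from, the paper's proof.
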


\begin{proof}
This statement amounts basically to the following property: 
any filtered map $f : A \to B$, 
with target a complete module, 
factors uniquely through the canonical map 
$$\vcenter{\hbox{
\begin{tikzcd}[column sep=1cm, row sep=1cm]
A 
\arrow[r,"f"]
\arrow[d,"\pi"']
&
B \\ 
\wA 
\arrow[ur,"\exists ! \bar{f}"']
& ,
\end{tikzcd}
}}$$
which  is nothing but the universal property of the limit $\widehat{A}$. 
\end{proof}

In order to endow the category $\mathsf{CompMod}$ of complete modules with a mo\-no\-idal structure, one could first think of the underlying tensor product of filtered modules. But this one fails to preserve complete modules, as the following example shows 
$$\xymatrix{
\k[[X]]\otimes \k[[Y]] \ \ \ar@{^{(}->}[r]^(0.41){\neq} &\ \  \widehat{\k[X]\otimes \k[Y]}\cong \k[[X,Y]]} \ .$$

\begin{definition}[Complete tensor product]\index{complete!tensor product}
The \emph{complete tensor product} of two complete modules $(A, \F)$ and $(B, \G)$ is defined by the completion of their filtered tensor product:
$$ A\wo B\coloneqq\widehat{A\otimes B}\ . $$
\end{definition}

We leave it as an exercise to the reader to check that in the case when two filtered modules are not necessarily complete, the completion of their tensor product is equal to 
$\widehat{A\otimes B}\, \cong\, \wA \; \wo\;  \widehat{B}$ \ .

\begin{lemma}
The category $\left(\mathsf{CompMod}, \wo\right)$ of complete modules equipped the complete tensor product is a bicomplete closed symmetric  monoidal  category wh\-ose monoidal product preserves colimits. The same is true for complete differential graded modules.
\end{lemma}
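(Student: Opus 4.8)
The plan is to exhibit $\mathsf{CompMod}$ as a reflective monoidal localization of $\mathsf{FilMod}$ and to transport the structure of Lemma~\ref{lem:FilModSymCat} along the reflector. By Proposition~\ref{prop:2Adj}, completion $\widehat{(-)}$ is left adjoint to the fully faithful inclusion $\sqcup\colon\mathsf{CompMod}\hookrightarrow\mathsf{FilMod}$, so $\mathsf{CompMod}$ is a full reflective subcategory and $L\coloneqq\sqcup\circ\widehat{(-)}$ is the associated idempotent monad. The definition $A\wo B=\widehat{A\otimes B}$ is precisely $L$ applied to the monoidal product of $\mathsf{FilMod}$, so the natural candidate closed symmetric monoidal structure on $\mathsf{CompMod}$ is $(\wo,\hom,\mathrm{Dis}(\k))$, with the same internal hom and the same unit as in $\mathsf{FilMod}$ (note that $\mathrm{Dis}(\k)$ is already complete). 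I would then invoke Day's reflection theorem for closed categories: a reflective localization of a closed symmetric monoidal category inherits a closed symmetric monoidal structure given by $L\circ\otimes$, provided one of several equivalent compatibility conditions holds; this yields all associativity, symmetry and unit coherences for free.

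The key verification, and the main obstacle, is this compatibility, checkable in either of two equivalent forms. The first is the \emph{exponential ideal} condition: for every filtered module $A$ and every complete module $B$, the internal hom $\hom(A,B)$ is again complete. I would prove it by a direct limit argument. It is Hausdorff because any $f\in\bigcap_{k\ge 0}\calF_k\Hom(A,B)$ satisfies $f(A)\subset\bigcap_m\G_m B=0$, hence $f=0$. For completeness, given a Cauchy sequence $\{f_m\}$ in $\hom(A,B)$, each evaluation $\{f_m(a)\}$ is Cauchy in $B$ and so converges to some $f(a)$; since the submodules $\G_k B$ are closed (Lemma~\ref{lem:Fkclosed}), the resulting linear map $f$ is filtered and satisfies $f_m\to f$. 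The second, more convenient, form is the compatibility $\widehat{A\otimes B}\cong\widehat{A}\wo\widehat{B}$ recorded as an exercise just before the statement, which is exactly Day's condition $L(\eta_A\otimes\eta_B)\colon L(A\otimes B)\xrightarrow{\cong}L(LA\otimes LB)$. Either form lets Day's theorem conclude that $(\mathsf{CompMod},\wo,\hom)$ is closed symmetric monoidal.

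It remains to obtain bicompleteness and preservation of colimits, both now formal. As a reflective subcategory of the bicomplete category $\mathsf{FilMod}$, $\mathsf{CompMod}$ is itself bicomplete: limits are created by the inclusion, where one checks from the explicit formulas of Lemma~\ref{lem:FilModSymCat} that products and kernels of complete modules stay complete (for instance $\lim_k\prod_i(A^i/\F^i_k A^i)\cong\prod_i\widehat{A^i}$, and kernels of filtered maps between complete modules are closed, hence complete), while colimits are computed by applying the reflector $\widehat{(-)}$ to the colimit taken in $\mathsf{FilMod}$. Preservation of colimits by $\wo$ is then immediate from closedness: for fixed $B$ the functor $-\wo B$ admits the right adjoint $\hom(B,-)$, now valued in $\mathsf{CompMod}$ by the exponential ideal property, so it is a left adjoint and preserves all colimits. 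Finally, the differential graded case follows \emph{mutatis mutandis}: the remarks after Definition~\ref{def:FilAlgebras} supply the closed symmetric monoidal bicomplete structure on filtered dg modules, completion extends componentwise to a reflection onto complete dg modules (the internal differential $\partial$ being a filtered map passes to the completion), and the very same exponential ideal / Day compatibility, checked degreewise, lets Day's theorem run again verbatim.
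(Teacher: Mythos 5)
Your proposal is correct, and it reaches the same destination as the paper through a noticeably different organisation. The technical heart is identical: both arguments reduce everything to the single analytic verification that the internal filtered hom with complete target is complete, proved in exactly the same way (Hausdorffness from $\bigcap_k \calF_k\hom(A,B)=\hom\big(A,\bigcap_k \G_k B\big)=0$, then pointwise convergence of a Cauchy sequence of filtered maps plus Lemma~\ref{lem:Fkclosed} to see that the limit map is filtered and is the limit in the hom filtration); and both treat bicompleteness the same way, with limits created by the inclusion and colimits obtained by completing colimits of $\mathsf{FilMod}$. Where you diverge is in the packaging: the paper checks the symmetric monoidal coherence axioms for $\wo$ by hand and proves that $\wo$ preserves colimits by an explicit chain of isomorphisms (completion preserves colimits, $\otimes$ preserves colimits in $\mathsf{FilMod}$), whereas you invoke Day's reflection theorem, using the exponential-ideal property (or equivalently $\widehat{A\otimes B}\cong \wA\,\wo\,\widehat{B}$) as the compatibility condition, and then deduce colimit preservation formally from the adjunction $-\wo B\dashv \hom(B,-)$. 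Your route is more conceptual: it makes transparent that the one genuine input is the exponential-ideal property, and it delivers all coherence isomorphisms rigorously rather than by inspection; the trade-off is reliance on an external theorem and on having closedness in hand before colimit preservation, while the paper's computation of $\big(\wcolim_i A^i\big)\wo B\cong \wcolim_i\big(A^i\wo B\big)$ is elementary, self-contained, and independent of closedness. One small remark: your exponential-ideal statement is for arbitrary filtered source $A$, which is indeed what Day's theorem wants, and your proof delivers it since completeness of $A$ is never used — the paper states the corresponding claim only for complete $A$, so your formulation is in fact the sharper one needed for the localization argument.
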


\begin{proof}
Let  $\left(A^i, \F^i\right)_{i\in \mathcal{D}}$ be a diagram of complete modules. 
Since the completion functor is a left adjoint functor, it  would preserve colimits if these latter ones exist. Therefore we define colimits in the category of complete modules by the formula 
\[
\displaystyle \wcolim_{i\in\mathcal{D}} A^i \coloneqq \reallywidehat{\displaystyle \colim_{i\in\mathcal{D}} A^i} \ .
\]
It is straightforward to check that they satisfy the universal property of colimits from the property of the completion functor. 
For instance, coproducts of complete modules are given by 
\[\widehat{\bigoplus_{i\in \mathcal{I}}} A^i\cong\widehat{\bigoplus_{i\in \mathcal{I}} A^i}\ , \]
and finite coproducts of complete modules are simply given by the finite direct sums of their underlying filtered module structure, since this latter one is already complete. 
In the other way round, since the forgetful functor from complete modules to filtered modules  is a right adjoint functor, it  would preserve limits if these latter ones exist. One can actually see that the formulas in the category of filtered modules (Lemma~\ref{lem:FilModSymCat}) for products and kernels once applied to complete modules render complete modules. Therefore the category of complete modules admits limits since it is a (pre)additive category. 
Like the category of filtered modules and for the exact same reasons, the category of complete modules is additive but fails to be abelian. 

The various axioms of a strong monoidal category are straightforward to check. 
The preservation of the colimits by the complete tensor product is automatic from its definition, the above characterisation of colimits and Lemma~\ref{lem:FilModSymCat}: 
\begin{multline*}
\left(\wcolim_{i\in\mathcal{D}} A^i\right) \wo B \cong 
\left(\reallywidehat{\displaystyle \colim_{i\in\mathcal{D}} A^i}\right)\wo B \cong 
\reallywidehat{\left(\displaystyle \colim_{i\in\mathcal{D}} A^i\right)\otimes B} \\ \cong 
\reallywidehat{{\displaystyle \colim_{i\in\mathcal{D}}} \left(A^i\otimes B\right)} \cong 
\wcolim_{i\in\mathcal{D}} \left(A^i\wo B\right)\ .
\end{multline*}
It remains to prove that this symmetric monoidal category is closed. To this end, it is enough to prove that the internal filtered hom of complete modules $A,B$ is complete. One first notices that 
$$\bigcap_{k\in\NN} \calF_k \hom(A,B)=  \hom(A, \cap_{k\in\NN} \G_k B)=\{0\}\ .$$
Now let $\left\{f^n : A \to B\right\}_{n\in \NN}$ be a Cauchy sequence of filtered maps. This means that 
$$\forall k\in \NN, \exists N\in \NN, \forall m,n\ge N, \ f^m-f^n\in  \calF_k \hom(A,B)\ . $$
Therefore, the sequence $\left\{f^n(a)\right\}_{n\in \NN}$ in $B$  is Cauchy for any $a\in A$ and thus converges since $B$ is complete. We denote by $f(a)$ its limit. Considering the discrete topology on the ground field $\k$, the scalar multiplication and the sum of elements are continuous, so this assignment defines a linear map $f : A \to B$. When $a\in \F_l A$, the Cauchy sequence $\left\{f^n(a)\right\}_{n\in \NN}$ lives in $\G_l B$, which is closed by Lemma~\ref{lem:Fkclosed}. Hence, we have $f(a)\in G_l B$ and the whole map $f$ is filtered, that is $f\in \hom(A,B)$. Using again the argument that the $\G_{l+k} B$ are closed, one can see, after a passage to the limit, that 
$$\forall k\in \NN, \exists N\in \NN, \forall n\ge N, \ f^n-f\in  \calF_k \hom(A,B)\ , $$
since this means that $\left(f^n-f\right)(\F_l A)\subset \G_{l+k} B$.

The same proof works \textit{mutatis mutandis} for  complete differential graded modules. 
\end{proof}

So one can develop the operadic calculus in this setting. This produces automatically a notion of  \emph{complete dg  operads}\index{complete!dg  operad} together with their categories of complete dg algebras. The next proposition shows that there is nothing to change from the filtered case for the endomorphism operad. 

\begin{proposition}\label{prop:EndComplete}\index{complete!endomorphism operad}
The complete endomorphism operad of a complete dg module $A$ is canonically isomorphic to the filtered endormorphism operad:
\[\left\{\hom\big(A^{\widehat{\otimes} n}, A\big)\right\}_{n\in \NN}\cong \left\{\hom\big(A^{{\otimes} n}, A\big)\right\}_{n\in \NN}
=\eend_A \ .\]
\end{proposition}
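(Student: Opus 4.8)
The plan is to realise the comparison through the canonical dense maps to the completion. For each arity $n$, write $\pi \colon A^{\otimes n} \to A^{\wo n} = \widehat{A^{\otimes n}}$ for the universal map to the completion, and consider the precomposition morphism
\[\pi^* \colon \hom\big(A^{\wo n}, A\big) \longrightarrow \hom\big(A^{\otimes n}, A\big)~.\]
Since both the complete and the filtered endomorphism operads are built from the \emph{same} internal-hom functor $\hom$ of complete modules (in the proof of the previous lemma this internal hom is shown to be complete whenever the target is), the only difference between the two collections lies in the source, $A^{\wo n}$ versus $A^{\otimes n}$. It therefore suffices to prove that each $\pi^*$ is an isomorphism of filtered dg modules and that these isomorphisms are compatible with the operadic structure maps; the canonicity of the maps $\pi$ then yields the canonicity of the resulting operad isomorphism.

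The bijectivity of $\pi^*$ on filtered maps is exactly the universal property of completion. As $A$ is complete, Proposition~\ref{prop:2Adj} asserts that every filtered map $g \colon A^{\otimes n} \to A$ factors uniquely as $g = \bar g \circ \pi$ through a filtered map $\bar g \colon A^{\wo n} \to A$. Existence of $\bar g$ gives surjectivity of $\pi^*$, while uniqueness gives injectivity: if $\pi^* f = f \circ \pi = 0$, then $f$ vanishes on the dense submodule $\pi\big(A^{\otimes n}\big)$, and since $f$ is continuous and $A$, being complete, is Hausdorff, we conclude $f = 0$.

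It remains to upgrade this bijection to an isomorphism of filtered dg modules and to check operadic compatibility. Since $\pi$ is filtered, $\pi^*$ carries $\calF_k \Hom\big(A^{\wo n}, A\big)$ into $\calF_k \Hom\big(A^{\otimes n}, A\big)$. Conversely, let $g \in \calF_k \Hom\big(A^{\otimes n}, A\big)$ and let $\bar g$ be its unique extension; for $x \in \widehat{\calF}_j A^{\wo n}$ choose $y_i \in \calF_j A^{\otimes n}$ with $\pi(y_i) \to x$, which is possible because $\pi\big(\calF_j A^{\otimes n}\big)$ is dense in $\widehat{\calF}_j A^{\wo n}$. Then $\bar g(\pi(y_i)) = g(y_i) \in \F_{j+k} A$, and as $\F_{j+k} A$ is closed by Lemma~\ref{lem:Fkclosed}, continuity of $\bar g$ forces $\bar g(x) \in \F_{j+k} A$; hence $\bar g \in \calF_k$, and $\pi^*$ is an isomorphism of filtered modules. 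The differential $\partial$ on each mapping space is assembled by pre- and post-composition with the underlying differentials, and $\pi$ is a chain map, so $\pi^*$ commutes with $\partial$. Finally, the completion functor is strong symmetric monoidal, with $\widehat{M \otimes N} \cong \widehat M \wo \widehat N$, so the maps $\pi$ intertwine the partial compositions, units, and symmetric group actions of the filtered endomorphism operad with those of the complete one; thus the family $\{\pi^*\}_n$ is an isomorphism of operads.

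The one point that is not purely formal is the filtration-preservation of the extension $\bar g$, and I expect this to be the main (though minor) obstacle: it rests on the density of $\pi\big(\calF_j A^{\otimes n}\big)$ inside the $j$-th layer $\widehat{\calF}_j A^{\wo n}$ of the completion, which, combined with the closedness of $\F_{j+k} A$ provided by Lemma~\ref{lem:Fkclosed}, is precisely what confines the extended map to the correct filtration level.
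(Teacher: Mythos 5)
Your proof is correct and takes essentially the same route as the paper's: both rest on the universal property of the completion (Proposition~\ref{prop:2Adj}) applied to $A^{\wo n}=\widehat{A^{\otimes n}}$, identifying $\hom\big(\widehat{A^{\otimes n}},A\big)$ with $\hom\big(A^{\otimes n},A\big)$ by precomposition with the canonical map $\pi$, as a bijection of filtered modules. The paper leaves that filtered-bijection check implicit (``one can check''); your density-plus-closedness argument for the filtration levels, the injectivity via the Hausdorff property of $A$, and the verification of compatibility with differentials and operadic compositions simply supply those details.
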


\begin{proof}
The proof relies entirely on the universal property of the completion functor as described in the proof of Proposition~\ref{prop:2Adj}. Under the same notations, one can check that the bijection 
\[
\begin{array}{clc}
\hom\big(\wA, B\big) &\to&  \hom(A, B) \\
\bar{f} & \mapsto & \bar{f} \circ \pi
\end{array}
\]
is  a bijection of filtered modules, whenever the filtered module $B$ is complete. In the present case, this induces 
\[ \hom\big(A^{\widehat{\otimes} n}, A\big)\cong \hom\big(\widehat{A^{{\otimes} n}}, A\big)\cong \hom\big(A^{{\otimes} n}, A\big)\ ,\]
for any $n\in \NN$. 
\end{proof}

\begin{proposition}\label{prop:strict-laxmonoidal}
The completion functor 
\[\ {\widehat{ }} \ \, : \, (\mathsf{FilMod}, \otimes) \to  (\mathsf{CompMod}, \wo)\] 
is strong symmetric mo\-no\-idal and the forgetful functor 
\[\sqcup \, : \,  (\mathsf{CompMod}, \wo) \allowbreak \to \allowbreak  (\mathsf{FilMod}, \otimes)\] 
is lax symmetric monoidal.
\end{proposition}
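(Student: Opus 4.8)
The plan is to route everything through the adjunction $\widehat{\ } \dashv \sqcup$ of Proposition~\ref{prop:2Adj} together with the natural isomorphism $\widehat{A\otimes B}\cong \widehat{A}\wo\widehat{B}$ recorded in the exercise just above, and to organise the verification by means of doctrinal adjunction. Concretely, I would first equip the forgetful functor $\sqcup$ with its lax structure and then transport it to $\widehat{\ }$ by the mate correspondence, checking at the end that the resulting structure maps are invertible.

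First I would endow $\sqcup$ with a lax symmetric monoidal structure. For complete modules $A$ and $B$ the structure morphism $\sqcup A \otimes \sqcup B = A\otimes B \to A\wo B = \sqcup(A\wo B)$ is simply the canonical completion map $\pi_{A\otimes B}\colon A\otimes B \to \widehat{A\otimes B}$, and the unit constraint $\k \to \sqcup(\k)=\k$ is the identity, the monoidal unit $\k$ with its discrete filtration being already complete. Naturality is immediate from the functoriality of $\pi$. The associativity and unit coherence diagrams assert the equality of two filtered maps out of $A\otimes B\otimes C$ (respectively out of $A$); in each case both composites are canonical completion maps, hence coincide, and the braiding compatibility holds because the symmetry on $\wo$ is by definition the completion of the symmetry on $\otimes$. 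This makes $\sqcup$ lax symmetric monoidal.

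Next, since $\widehat{\ }$ is left adjoint to this lax symmetric monoidal functor $\sqcup$, the mate construction endows $\widehat{\ }$ with an oplax symmetric monoidal structure whose structure morphism is the map $\widehat{A\otimes B} \to \widehat{\widehat{A}\otimes\widehat{B}}=\widehat{A}\wo\widehat{B}$ induced by $\pi_A\otimes\pi_B$, the oplax unit constraint being the identity of $\k\cong\widehat{\k}$. To upgrade this to a \emph{strong} structure I would invoke the exercise: the stated isomorphism $\widehat{A\otimes B}\cong\widehat{A}\wo\widehat{B}$ is exactly this oplax structure morphism, so it is invertible. Consequently $\widehat{\ }$ is strong symmetric monoidal and the adjunction is a monoidal adjunction, which in turn re-confirms that $\sqcup$ is lax symmetric monoidal. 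The only genuinely nontrivial analytic input in this whole argument is the exercise that $\widehat{A\otimes B}\cong\widehat{A}\wo\widehat{B}$; once it is granted, I expect the main obstacle to be purely bookkeeping, namely matching the abstract mate produced by doctrinal adjunction with the concrete map induced by $\pi_A\otimes\pi_B$ so as to identify it with the exercise isomorphism.

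If one prefers to avoid the formalism of mates, the same conclusion can be reached by hand: take $\phi_{A,B}\colon \widehat{A}\wo\widehat{B}\xrightarrow{\cong}\widehat{A\otimes B}$ from the exercise and $\k\xrightarrow{\cong}\widehat{\k}$, and verify the pentagon, the two unit triangles, and the symmetry hexagon directly. In every such diagram the two competing maps out of a threefold (or unit) complete tensor product agree after precomposition with the canonical map from the uncompleted tensor product; since that map has dense image and all maps in sight are continuous (indeed filtered), the universal property of the completion forces the two maps to coincide. Thus the decisive mechanism throughout is uniqueness of filtered maps out of a completion, and the lax structure on $\sqcup$ then follows from the strong structure on $\widehat{\ }$ via the adjunction.
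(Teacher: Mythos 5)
Your proposal is correct and takes essentially the same approach as the paper: the paper's proof consists precisely of naming the same two structure maps, namely the canonical map $A\otimes B \to A \wo B$ as the lax structure on $\sqcup$ and the isomorphism $\wA \wo \widehat{B} \xrightarrow{\cong} \widehat{A\otimes B}$ as the strong structure on the completion functor, leaving all coherences implicit. Your doctrinal-adjunction packaging and the uniqueness-of-filtered-maps-out-of-a-completion argument are exactly the bookkeeping the paper omits, and they check out.
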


\begin{proof}
The structure map for the monoidal structure of the completion functor is the isomorphism 
$\wA \; \wo\;  \widehat{B} \xrightarrow{\cong} \widehat{A\otimes B}$. 
The structure map for the monoidal structure of the forgetful  functor is the canonical map 
$A\otimes B \to A \wo B$. 
\end{proof}

One can iterate the above functors $\widehat{\mathrm{Dis}}  : \mathsf{Mod} \to \mathsf{FilMod} \to \mathsf{CompMod}$. Since the discrete topology is already complete, this composition of functors does not change the underlying module; it just  provides it with the trivial filtration.   

\begin{corollary}\label{coro:12Adj}
The following pair of functors are adjoint 
$$\vcenter{\hbox{
\begin{tikzcd}[column sep=1.2cm]
\widehat{\mathrm{Dis}} \colon 
\mathsf{Mod} 
\arrow[r, harpoon, shift left=1ex, "\perp"']
&
\arrow[l, harpoon,  shift left=1ex]
\mathsf{CompMod} 
\ : \ \sqcup \ ,
\end{tikzcd}
}}$$
where $\widehat{\mathrm{Dis}}(A)\coloneqq(A, \F^{\mathrm{tr}})$.
The functor $\widehat{\mathrm{Dis}}$ is a strict monoidal functor and the functor $\sqcup$ is a lax monoidal functor. 
\end{corollary}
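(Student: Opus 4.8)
The plan is to derive both the adjunction and the monoidal statements by composing the two adjunctions already established, namely $\mathrm{Dis}\dashv\sqcup$ from \cref{prop:1Adj} and $\widehat{}\dashv\sqcup$ from \cref{prop:2Adj}, together with their monoidal refinements. First I would use the standard fact that a composite of left adjoints is left adjoint to the composite of the corresponding right adjoints. Applied here, this shows at once that $\widehat{\mathrm{Dis}}=\widehat{}\circ\mathrm{Dis}$ is left adjoint to the composite of the two forgetful functors $\mathsf{CompMod}\to\mathsf{FilMod}\to\mathsf{Mod}$, which is exactly the functor forgetting both completeness and filtration, again denoted $\sqcup$. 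Since the discrete topology is already complete, $\widehat{\mathrm{Dis}}$ sends a module $A$ to $(A,\F^{\mathrm{tr}})$, matching the statement.

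For the monoidal claims I would invoke that monoidal functors compose: a lax (resp. strong) symmetric monoidal functor followed by a lax (resp. strong) one is again lax (resp. strong) symmetric monoidal. As the forgetful functor $\mathsf{CompMod}\to\mathsf{FilMod}$ is lax symmetric monoidal by \cref{prop:strict-laxmonoidal} and the forgetful functor $\mathsf{FilMod}\to\mathsf{Mod}$ is strictly monoidal by \cref{prop:1Adj}, their composite $\sqcup$ is lax symmetric monoidal, as asserted.

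The only point that is genuinely stronger than a formal composition — and hence the main, if minor, obstacle — is the claim that $\widehat{\mathrm{Dis}}$ is \emph{strictly} monoidal, whereas the completion functor is a priori only strong monoidal. Here I would compute directly on trivially filtered modules. Using the tensor filtration formula $\calF_k(A\otimes B)=\sum_{n+m=k}\Im(\F_n A\otimes\G_m B\to A\otimes B)$, one sees that $\mathrm{Dis}(A)\otimes\mathrm{Dis}(B)$ has $\calF_0=A\otimes B$ and $\calF_k=0$ for $k\geqslant 1$, so it equals $\mathrm{Dis}(A\otimes B)$ on the nose; and a discretely filtered module is already complete, so completion acts as the identity on it. Consequently the strong monoidal structure map degenerates to an equality
\[
\widehat{\mathrm{Dis}}(A)\wo\widehat{\mathrm{Dis}}(B)\cong\widehat{\mathrm{Dis}(A)\otimes\mathrm{Dis}(B)}=\mathrm{Dis}(A\otimes B)=\widehat{\mathrm{Dis}}(A\otimes B),
\]
which is precisely strictness. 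The remaining care is purely bookkeeping: confirming that the two differently-typed forgetful functors compose to the single functor denoted $\sqcup$ in the statement, and that the coherence isomorphisms inherited from \cref{prop:strict-laxmonoidal} are the expected ones.
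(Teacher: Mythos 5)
Your proposal is correct and follows essentially the same route as the paper: both obtain the adjunction by composing those of Proposition~\ref{prop:1Adj} and Proposition~\ref{prop:2Adj}, and the monoidal claims by composing the monoidal structures of Proposition~\ref{prop:1Adj} and Proposition~\ref{prop:strict-laxmonoidal}. Your explicit verification that strictness of $\widehat{\mathrm{Dis}}$ survives the composition (since completion acts as the identity on trivially filtered modules) is a detail the paper leaves implicit, and it is a worthwhile addition since strict composed with strong is a priori only strong.
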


\begin{proof}
This adjunction is actually obtained as the composite of the two adjunctions of Proposition~\ref{prop:1Adj} and 
Proposition~\ref{prop:2Adj}. The monoidal structures on the two functors are obtained as composite of two monoidal structures from Proposition~\ref{prop:1Adj} and Proposition~\ref{prop:strict-laxmonoidal}.
\end{proof}

The main point for  these six functors to be monoidal is that each of them sends an operad in the source category to an operad in the target category and similarly for their associated notion of algebras. 

\begin{proposition}
Any dg  operad $\calP$ is a filtered (respectively complete) dg operad $\calP$ when equipped with the trivial filtration. Any $\calP$-algebra $A$ is a filtered (respectively complete) $\calP$-algebra when equipped with the trivial filtration. The category of discrete (respectively filtered) $\calP$-algebra is a full subcategory of the category of complete $\calP$-algebras. 
\end{proposition}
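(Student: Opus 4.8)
The plan is to deduce all three assertions from the formal properties of the six functors recorded in Corollary~\ref{coro:12Adj}, Proposition~\ref{prop:1Adj}, and Proposition~\ref{prop:2Adj}, relying throughout on the single principle stated just before the proposition: a strict symmetric monoidal functor sends operads to operads and transports algebras over an operad to algebras over its image. Thus no explicit computation with filtrations is really needed; the work is to set up the right functors and to check full faithfulness on the relevant categories of algebras.

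First I would recall the monoidal packaging: an operad in a symmetric monoidal category is a monoid for the composition product of symmetric sequences, while a $\calP$-algebra structure on an object $A$ is equivalently a morphism of operads $\calP \to \eend_A$ or a family of structure maps $\calP(n)\otimes A^{\otimes n}\to A$ obeying the usual axioms. A \emph{strict} symmetric monoidal functor $F$ commutes on the nose with $\otimes$ and with the symmetric sequence structure, so it carries monoids to monoids and an $F$-image of the structure maps endows $F(A)$ with an $F(\calP)$-algebra structure. I would apply this to $F=\mathrm{Dis}$ and $F=\widehat{\mathrm{Dis}}$, which are strict symmetric monoidal by Proposition~\ref{prop:1Adj} and Corollary~\ref{coro:12Adj}. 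Since $\mathrm{Dis}(\calP)$ and $\widehat{\mathrm{Dis}}(\calP)$ are precisely $\calP$ equipped with the trivial filtration $\F^{\mathrm{tr}}$, this yields at once the first two sentences: $\calP$ with $\F^{\mathrm{tr}}$ is a filtered (resp.\ complete) dg operad, and $A$ with $\F^{\mathrm{tr}}$ is a filtered (resp.\ complete) $\calP$-algebra over it.

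For the last assertion I would invoke full faithfulness. The functor $\mathrm{Dis}$ is full and faithful by Proposition~\ref{prop:1Adj}, and $\widehat{\mathrm{Dis}}$ is full and faithful as well: a trivially filtered module is already complete, so $\widehat{\mathrm{Dis}}(A)=(A,\F^{\mathrm{tr}})$ in $\mathsf{CompMod}$, and a filtered map between two trivially filtered modules is an arbitrary linear map, whence $\Hom_{\mathsf{CompMod}}\big(\widehat{\mathrm{Dis}}(A),\widehat{\mathrm{Dis}}(B)\big)\cong\Hom_{\mathsf{Mod}}(A,B)$. A strict symmetric monoidal full and faithful functor induces a full and faithful functor on categories of algebras over a fixed operad, so the category of discrete $\calP$-algebras is realised as a full subcategory of both the filtered and the complete $\calP$-algebras. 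Finally, Definition~\ref{Def:CompleteAlg} says that a complete $\calP$-algebra is exactly a filtered $\calP$-algebra whose underlying module is complete, and $\mathsf{CompMod}$ is by construction a full subcategory of $\mathsf{FilMod}$; hence the forgetful functor $\sqcup$ also exhibits the complete $\calP$-algebras as a full subcategory of the filtered ones, closing the chain of full embeddings relating the three categories.

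The only genuinely delicate step is the passage from full faithfulness on underlying modules to full faithfulness on algebra categories. Faithfulness is immediate, but for fullness one must check that every $F(\calP)$-algebra morphism between objects lying in the image of $F$ is itself the image of a $\calP$-algebra morphism: concretely, one lifts the underlying map by fullness of $F$ and then uses faithfulness of $F$ to deduce its compatibility with the structure maps from the compatibility of the image. I expect this compatibility-lifting to be the one point deserving care; everything else is a direct unwinding of the definitions together with the monoidal coherence already established in the cited results.
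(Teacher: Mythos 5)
Your proof is correct and takes essentially the same approach as the paper: the paper's own proof is a one-line appeal to the (strict) monoidal structure of $\mathrm{Dis}$ and $\widehat{\mathrm{Dis}}$ from Proposition~\ref{prop:1Adj} and Corollary~\ref{coro:12Adj}, combined with the observation that these functors do not modify the underlying module. Your expanded argument for full faithfulness on the categories of algebras is exactly the content the paper compresses into that final observation, so the two proofs differ only in the level of detail.
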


\begin{proof}
This is a direct consequence of the monoidal structure of the functor 
${\mathrm{Dis}}$ from Proposition~\ref{prop:1Adj} (respectively 
$\widehat{\mathrm{Dis}}$ from Corollary~\ref{coro:12Adj}),  since this latter one  does not modify the underlying module.
\end{proof}

As a consequence, we will now work in the larger category of complete $\calP$-algebras and extend the various operadic results to that level. The key property that the completion functor is left adjoint to the forgetful functor allows us to get the following  simple descriptions for the notions of complete operads and complete $\calP$-algebras. 

\begin{theorem}\label{thm:CompleteAlg}\leavevmode
\begin{enumerate}
\item The structure of a complete dg operad $\calP$ is equivalent to the structure of a filtered dg operad on a complete dg $\Sy$-module $\calP$. 

\item
Let $\calP$ be  a complete dg operad. The data of a dg $\calP$-algebra structure in the monoidal category of complete dg modules is equivalent to a complete dg $\calP$-algebra structure, as defined above,  that is a filtered dg $\calP$-algebra structure on an underlying  complete dg module.

\item Let $\calP$ be a filtered dg operad. The data of a complete dg $\calP$-algebra structure, as defined above, is equivalent to the data of a complete dg $\widehat{\calP}$-algebra structure.

\end{enumerate}
\end{theorem}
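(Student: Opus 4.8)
The plan is to derive all three equivalences from a single principle: the universal property of completion recorded in Proposition~\ref{prop:2Adj}, which says that for a complete target $C$ every filtered map out of $A\otimes B$ factors uniquely through $A\wo B=\widehat{A\otimes B}$, so that $\hom(A\wo B,C)\cong\hom(A\otimes B,C)$ naturally. Combined with the strong monoidality of the completion functor (Proposition~\ref{prop:strict-laxmonoidal}) and the identification of endomorphism operads (Proposition~\ref{prop:EndComplete}), this turns each statement into a matter of bookkeeping.

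For part (i), I would keep the underlying complete dg $\Sy$-module $\calP$ fixed and observe that the only difference between a filtered and a complete operad structure on it lies in whether the partial composition maps $\circ_i$ are read as $\calP(m)\otimes\calP(n)\to\calP(m+n-1)$ or as $\calP(m)\wo\calP(n)\to\calP(m+n-1)$. Since each $\calP(k)$ is complete, the adjunction bijection above identifies these two families of structure maps. It then remains to check that the operad axioms — associativity, unitality, and $\Sy$-equivariance — match up; each such axiom is a commuting diagram whose vertices are iterated tensor products of the $\calP(k)$ mapping into some complete $\calP(\ell)$, and because $\widehat{\ }$ is strong symmetric monoidal and the factorisations are natural, the diagram commutes in $(\mathsf{FilMod},\otimes)$ if and only if its completion commutes in $(\mathsf{CompMod},\wo)$.

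For part (ii), using part (i) I would regard $\calP$ as a filtered operad on a complete $\Sy$-module. A dg $\calP$-algebra structure on the complete module $A$ in $(\mathsf{CompMod},\wo)$ is a morphism of complete operads $\calP\to\{\hom(A^{\wo n},A)\}_{n}$; by Proposition~\ref{prop:EndComplete} the target is canonically isomorphic, as an operad, to the filtered endomorphism operad $\eend_A=\{\hom(A^{\otimes n},A)\}_n$. Hence such a morphism is the same datum as a filtered morphism of operads $\calP\to\eend_A$, which is exactly a filtered $\calP$-algebra structure on the complete module $A$, that is a complete $\calP$-algebra in the sense of Definition~\ref{Def:CompleteAlg}. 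For part (iii), I would first note that $\eend_A$ is itself a complete operad, its components being the internal homs $\hom(A^{\otimes n},A)\cong\hom(A^{\wo n},A)$ of $\mathsf{CompMod}$, which are complete. A complete $\calP$-algebra structure is then a filtered operad morphism $\calP\to\eend_A$ into a complete operad, so by the universal property of completion it factors uniquely through the canonical operad morphism $\calP\to\widehat{\calP}$, giving a morphism of complete operads $\widehat{\calP}\to\eend_A$; precomposition with $\calP\to\widehat{\calP}$ is the inverse assignment, and this is precisely the equivalence with complete $\widehat{\calP}$-algebra structures.

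The hard part is not any single computation but making sure the bijections of structure maps respect \emph{all} the operad- and algebra-axioms rather than merely the underlying linear maps. This is, however, purely formal: every axiom is a commuting diagram of tensor products landing in a complete object, and the strong-monoidal adjunction $\widehat{\ }\dashv\sqcup$ transports such diagrams faithfully in both directions. Thus the genuine content already sits in Propositions~\ref{prop:2Adj}, \ref{prop:strict-laxmonoidal}, and~\ref{prop:EndComplete}, and the theorem is their organised consequence.
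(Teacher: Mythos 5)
Your proposal is correct, and parts (ii) and (iii) coincide with the paper's own proof: identify the complete endomorphism operad with the filtered one via Proposition~\ref{prop:EndComplete}, use that complete modules form a full subcategory of filtered modules so the two kinds of operad morphisms into $\eend_A$ agree, and invoke the universal property of completion to exchange filtered morphisms $\calP\to\eend_A$ for complete morphisms $\widehat{\calP}\to\eend_A$. The only divergence is in part (i). The paper phrases operad structures monadically, as full composition maps $\gamma_n\colon \calP\circ\calP(n)\to\calP(n)$, and therefore needs the fact that completion, being a left adjoint, preserves the (infinite) coproducts inside the composite product, yielding $\widehat{\calP\circ\calP}\cong\calP\,\widehat{\circ}\,\calP$; the bijection of structures then comes from pulling back along $\pi\colon\calP\circ\calP\to\calP\,\widehat{\circ}\,\calP$. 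You instead work with the partial compositions $\circ_i$, so you only ever need the binary adjunction $\hom(A\otimes B,C)\cong\hom(A\,\wo\,B,C)$ for complete $C$, which is marginally more elementary. The price is that your argument tacitly uses the equivalence between the partial-composition and monadic presentations of an operad in \emph{both} the filtered and the complete symmetric monoidal categories; this equivalence is standard and does hold here (each full composition map is assembled from iterated partial compositions summand by summand, using the universal property of the completed coproduct in the complete case), but strictly speaking it is a lemma you are relying on, and it is exactly the point the paper's choice of the monadic presentation sidesteps. With that caveat made explicit, your transport-of-axioms argument — every axiom is a commuting diagram of tensor powers landing in a complete object, hence commutes before completion if and only if it commutes after — is sound and completes the proof.
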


\begin{proof}\leavevmode
\begin{enumerate}
\item 
Since the forgetful functor $\sqcup : \mathsf{CompMod} \to \mathsf{FilMod}$ is lax symmetric monoidal according to Proposition~\ref{prop:strict-laxmonoidal}, it sends any complete operad structure on $\calP$ to a filtered operad structure on the underlying complete dg $\Sy$-module $\calP$.  In details, recall that a filtered operad structure amounts to a collection of filtered maps $\gamma_n$ from
\[\calP\circ \calP(n)\coloneqq\bigoplus_{k\in \NN}
\calP(k)\otimes_{\Sy_k}
\left(
\bigoplus_{i_1+\cdots+i_k=n}
\mathrm{Ind}_{\Sy_{i_1}\times \cdots \times \Sy_{i_k}}^{\Sy_n}\big(
\calP(i_1)\otimes \cdots \otimes \calP(i_k)
\big)
\right)\]
to $\calP(n)$ that satisfy certain relations, see \cite[Section~$5.2.1$]{LodayVallette12}.
Similarly, a complete dg operad structure amounts to a collection of filtered maps $\widehat{\gamma}_n$ from
\[\calP\, \widehat{\circ}\, \calP(n)\coloneqq\widehat{\bigoplus}_{k\in \NN}
\calP(k)\wo_{\Sy_k}
\left(
\widehat{\bigoplus_{i_1+\cdots+i_k=n}}
\mathrm{Ind}_{\Sy_{i_1}\times \cdots \times \Sy_{i_k}}^{\Sy_n}\big(
\calP(i_1)\wo \cdots \wo \calP(i_k)
\big)
\right)\]
to $\calP(n)$ that satisfy the same type of relations. 
Since  the completion functor is left adjoint,  it preserves colimits and thus coproducts, which implies that 
$\widehat{\calP\circ \calP}$ is isomorphic to $\calP \, \widehat{\circ}\,  \calP$. By pulling back along the canonical completion map 
$$\pi : \calP\circ \calP \to \calP\, \widehat{\circ}\,  \calP,$$  any 
dg operad structure $\widehat{\gamma}$
 in complete dg modules induces a filtered dg operad structure  ${\gamma}=\pi\, \widehat{\gamma}$.
 In the other way round, any filtered dg operad structure  $\gamma$ factors through $\widehat{\calP\circ \calP}\cong \calP \, \widehat{\circ}\,  \calP$, that is through an operad structure  $\widehat{\gamma}$ in complete dg modules. 

\item For the second point, the arguments are similar. The lax symmetric mo\-no\-id\-al  functor $\sqcup : \mathsf{CompMod} \to \mathsf{FilMod}$ sends  
 any  $\calP$-algebra structure in the monoidal category of complete dg modules to a 
$\calP$-algebra structure in the monoidal category of filtered dg modules. This latter structure amounts to a morphism of filtered dg operads 
$\rho\colon \calP
\to \eend_A$,
under point $(1)$. 
Since the filtered dg operad $\calP$ is complete and since the endomorphism operad associated to a complete dg module $A$ is the same in the filtered and the complete case, after Proposition~\ref{prop:EndComplete}, a complete dg $\calP$-algebra structure on $A$ amounts to a morphism of complete dg operads 
$\widehat{\rho}\colon \calP
\to \eend_A$, 
which thus coincides with the above type of maps since the morphisms in the category of complete modules are that of the category of filtered modules. 

\item The arguments are again the same: by the universal property of the completion, any morphism of filtered dg operads $\rho \colon \calP\to \eend_A$ is equivalent to a morphism of complete dg operads $\widehat{\rho}\colon  \widehat{\calP}\to \eend_A$, when $A$ is complete.
 \end{enumerate}
 \end{proof}

This result shows that the terminology ``complete algebra'' chosen in Definition~\ref{Def:CompleteAlg} does not bring any ambiguity since the two possible notions are actually equivalent. Notice that this theorem applies to discrete dg operads and discrete dg $\calP$-algebras, when equipped with the trivial filtration. 

\begin{example}
The free complete $\calP$-algebra\index{free complete $\calP$-algebra} of a complete module $V$ over a complete operad $\calP$ is given by 
$$ \calP\, \widehat{\circ}\,  V=\widehat{\bigoplus_{n\in \NN}}
\calP(n)\wo_{\Sy_n} V ^{\wo n}\ .$$
When the operad $\calP$ and the  module $V$ are discrete ones endowed with the trivial filtration, we recover  the free $\calP$-algebra $\calP\circ V$. 
But, when the filtration arises from the weight grading for which $V$ is concentrated in weight $1$, so that $\F_0 V= \F_1 V=V$ and $\F_n V=0$, for $n\ge 2$, 
the free complete $\calP$-algebra on $V$ is equal to 
$$ \calP\, \widehat{\circ}\,  V\cong \prod_{n\in \NN}
\calP(n)\otimes_{\Sy_n} V ^{\otimes n} \ ,$$
since its  underlying filtration is given by 
\[
\calF_k\left(\bigoplus_{n\in \NN}
\calP(n)\otimes_{\Sy_n} V^{\otimes n}\right)=\bigoplus_{n\ge k}
\calP(n)\otimes_{\Sy_n} V^{\otimes n}\ .
\]
In this way, we recover the notions of free complete associative algebra present in 
\cite[Section~I.4]{Lazard50} or free complete Lie algebras present in  \cite[Section~II.1]{Lazard50} and \cite{LawrenceSullivan14}. This allows us to get automatically, that is operadically, the universal enveloping algebra in the complete case. 
\end{example}

\begin{proposition}
Let $\calP$ be a dg operad. The forgetful functor embeds the category of complete dg $\calP$-algebras as a full subcategory of filtered dg $\calP$-algebras. The completion functors sends a filtered dg $\calP$-algebra to a complete dg $\calP$-algebra. 
These two functors again form a pair of adjoint functors, where the completion functor is left adjoint.  
\end{proposition}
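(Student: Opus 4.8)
The plan is to obtain all three assertions by lifting the monoidal adjunction $\widehat{\phantom{x}}\dashv\sqcup$ between $(\mathsf{FilMod},\otimes)$ and $(\mathsf{CompMod},\wo)$, established in \cref{prop:2Adj,prop:strict-laxmonoidal}, along the operad $\calP$ equipped with the trivial filtration; I view $\calP$ as a complete dg operad via $\widehat{\mathrm{Dis}}$, so that $\widehat{\calP}=\calP$ by \cref{coro:12Adj}. For the full-subcategory claim, I would recall that by \cref{Def:CompleteAlg} a complete dg $\calP$-algebra is exactly a filtered dg $\calP$-algebra whose underlying filtered module happens to be complete, with the same notion of morphism. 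Since $\mathsf{CompMod}$ is a full subcategory of $\mathsf{FilMod}$, the forgetful functor on $\calP$-algebras is injective on objects and bijective on hom-sets, hence a full embedding; equivalently, the counit of $\widehat{\phantom{x}}\dashv\sqcup$ is invertible on complete modules ($\widehat{B}\cong B$), and this invertibility lifts to algebras.

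To see that completion sends a filtered $\calP$-algebra to a complete one, I would use that the completion functor is strong symmetric monoidal (\cref{prop:strict-laxmonoidal}): a strong monoidal functor carries a $\calP$-algebra in $(\mathsf{FilMod},\otimes)$ to a $\widehat{\calP}=\calP$-algebra in $(\mathsf{CompMod},\wo)$, which by \cref{thm:CompleteAlg} is precisely a complete dg $\calP$-algebra structure on $\wA$. Concretely, I would complete each structure map $\calP(n)\otimes A^{\otimes n}\to A$ and use the isomorphism $\widehat{\calP(n)\otimes A^{\otimes n}}\cong\calP(n)\wo\wA^{\wo n}$, which follows from strong monoidality together with preservation of colimits by the left adjoint $\widehat{\phantom{x}}$, to obtain structure maps $\calP(n)\wo\wA^{\wo n}\to\wA$. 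The operadic relations persist because they are equalities of morphisms and are preserved by any functor.

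For the adjunction, the unit $\pi_A\colon A\to\sqcup\wA$ is a filtered $\calP$-algebra morphism, since the structure on $\wA$ was defined by completing that of $A$ and $\pi$ is the corresponding component of the monoidal natural transformation. Given a complete dg $\calP$-algebra $B$ and a filtered $\calP$-algebra morphism $f\colon A\to\sqcup B$, \cref{prop:2Adj} supplies a unique filtered map $\bar f\colon\wA\to B$ with $\bar f\circ\pi_A=f$; the remaining task is to check that $\bar f$ is a morphism of $\calP$-algebras, which then gives the natural bijection $\Hom(\wA,B)\cong\Hom(A,\sqcup B)$ of algebra morphisms, that is, the adjunction with completion as left adjoint.

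\textbf{Main obstacle.} The one genuinely nontrivial point is this last verification, namely upgrading the module-level universal property of the completion to the level of $\calP$-algebra morphisms. I expect to settle it by a density argument: both composites $\bar f\circ(\text{structure of }\wA)$ and $(\text{structure of }B)\circ(\id\wo\bar f^{\wo n})$ are filtered maps $\calP(n)\wo\wA^{\wo n}\to B$ into a complete target, and precomposing with the canonical dense map from $\calP(n)\otimes A^{\otimes n}$ (using again $\widehat{\calP(n)\otimes A^{\otimes n}}\cong\calP(n)\wo\wA^{\wo n}$) reduces both of them to $f\circ(\text{structure of }A)$, which agree because $f$ is a morphism. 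Continuity together with density of the image then forces the two composites to coincide on all of $\wA$, so $\bar f$ is indeed a morphism of complete $\calP$-algebras.
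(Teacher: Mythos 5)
Your proposal is correct and follows essentially the same route as the paper, whose proof consists of the single sentence that the statement is a direct corollary of Proposition~\ref{prop:strict-laxmonoidal} and Theorem~\ref{thm:CompleteAlg}; your argument is a careful unpacking of exactly those ingredients, lifting the monoidal adjunction of Proposition~\ref{prop:2Adj} to the categories of algebras. The density/uniqueness verification you single out as the main obstacle is precisely what the paper leaves implicit, and your handling of it (agreement on the dense image of $\calP(n)\otimes A^{\otimes n}$ plus continuity into a complete, hence Hausdorff, target, or equivalently the uniqueness clause of the universal property) is sound.
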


\begin{proof}
This is a direct corollary of Proposition~\ref{prop:strict-laxmonoidal} and Theorem~\ref{thm:CompleteAlg}.
\end{proof}

One immediate upshot of the theory  developed here is that one can now deal with the question of convergence mentioned in the previous chapter. Namely, to make the results of that chapter precise, one simply has to work in a complete dg Lie algebra $\g=(A, \F, {d}, [\, , ])$, and to define the gauge action only for elements $\lambda\in\F_1A_0$. The gauge group in this situation is the group 
  \[
\Gamma=\left(\F_1 A_0, \BCH, 0 \right)~.
  \]

\chapter{Pre-Lie algebras and the gauge group}\footnotetext{\hrule\smallskip\noindent This material will be published by Cambridge University Press \& Assessment as ‘Maurer-Cartan Methods in Deformation Theory: the twisting procedure’ by Vladimir Dotsenko, Sergey Shadrin and Bruno Vallette. This version is free to view and download for personal use only. Not for re-distribution, re-sale or use in derivative works. \copyright Cambridge University Press \& Assessment}\label{sec:TopoDefTh}

In this chapter, we recall the necessary definitions and results concerning pre-Lie algebras and the symmetries of their Maurer--Cartan elements, and we generalise M. Lazard's treatment of Lie theory to develop the integration theory of complete pre-Lie algebras. The reason to care about pre-Lie algebras in the context of deformation theory is two-fold. First, a great number of examples relevant for deformation theory, including in particular the deformation complexes of maps from Koszul operads, fit into this formalism. Second, the Baker--Campbell--Hausdorff formula integrating the degree zero component to the corresponding gauge group simplifies significantly when we work with a dg Lie algebra coming from a pre-Lie algebra; so using pre-Lie algebras comes from some obvious benefits. The class of pre-Lie algebras to which our results are applied are operadic convolution algebras, leading to tools that allow us to work efficiently with the main notion of algebraic deformation theory: \emph{the deformation gauge group}. \\

This part of the book uses extensively the groundwork from our previous paper~\cite{DotsenkoShadrinVallette16}; in fact, results of that paper extend to complete pre-Lie algebras \emph{without any changes} thanks to the general formalism of complete algebras presented in the previous chapter~\ref{sec:OptheoyFilMod}. To make the exposition self-contained, we however include some of the key relevant results of \cite{DotsenkoShadrinVallette16}. An important consequence for homotopical algebra is that the degree of generality in which our results hold allows us to define a suitable notion of $\infty$-morphism of homotopy algebras encoded by non-necessarily coaugmented cooperads, like curved $\Ai$-algebras or curved $\Li$-algebras. 

\section{Gauge symmetries in complete pre-Lie algebras}\label{subsec:CompConvAlg}

In \cite{DotsenkoShadrinVallette16}, we developed the integration theory of pre-Lie algebras under a strong weight grading assumption. Unfortunately, the deformation theory of many algebraic structures, like the ones that we will study in Chapter~\ref{sec:GaugeTwist}, requires to use curved Koszul dual cooperads which do not satisfy that assumption, and one needs to work with complete algebras instead. In this section, we explain how to extend the integration theory of weight graded left-unital dg pre-Lie algebras to the complete setting. This section can also be seen as the generalisation of the integration theory of complete Lie algebras of M. Lazard's Ph.D. thesis \cite{Lazard50} to left-unital complete pre-Lie algebras.

\begin{definition}[Complete left-unital differential graded pre-Lie algebra]\index{pre-Lie algebra!complete}\index{pre-Lie algebra!left-unital}\index{pre-Lie algebra}
A \emph{complete dg pre-Lie algebra} is a datum  
$\a=\left(
A,  \F, d, \star 
\right)$ of a dg complete module equipped with a 
filtration preserving the bilinear product whose associator is right-symmetric 
$$(a\star b)\star c- a\star(b\star c)  = (-1)^{|b||c|}\big( (a\star c)\star b-a\star(c\star b)\big)\ ,$$
such that the differential $d$ is a derivation 
$$d(a\star b)=d(a) \star b + (-1)^{|a|} a \star d(b) . $$
Such an algebra is called \emph{left-unital} if it is equipped with an element $1\in\F_0 A_0$ which is a closed element $d(1)=0$ and a left unit 
$$ 1 \star a = a\ .$$ 
\end{definition}

\begin{remark}
There is an equivalent left-symmetric version of that definition which naturally arises in the study of homogeneous spaces, going back to pioneering work of J.-L. Koszul \cite{MR145559} and E.B. Vinberg \cite{MR0158414}. The natural definition of a complete (right-symmetric) pre-Lie algebra arising in that domain would be slightly more general: one requires the map $\id+l_\lambda\colon A\to A$ to be invertible for any $\lambda$, see \cite{MR1170527}. Throughout this book, we only use algebras that are complete in the sense of Chapter~\ref{sec:OptheoyFilMod}.    
\end{remark}

The following classical result crucial for this book holds for complete algebras as well. 

\begin{proposition}\label{prop:PreLieToLie}
Let $\a=\left(
A,  \F, d, \star 
\right)$ be a complete  dg pre-Lie algebra. Then $\g=\left(
A,  \F, d, [\, , ] 
\right)$ with 
 $$[a,b]\coloneqq a\star b - (-1)^{|a||b|} b\star a$$
is a complete dg Lie algebra.
\end{proposition}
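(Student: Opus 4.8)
The plan is to note first that the completeness and filtration structure require no separate verification. The bracket $[a,b] = a\star b - (-1)^{|a||b|} b\star a$ is assembled from the product $\star$ together with the symmetry isomorphism of the closed symmetric monoidal category $(\mathsf{CompMod}, \wo)$, both of which are filtration preserving; hence $[\,,]$ is again a filtration-preserving bilinear map on the \emph{same} underlying complete dg module $(A,\F,d)$. It therefore remains only to check the purely algebraic axioms of a dg Lie algebra listed in Definition~\ref{Def:DGLieAlg}: skew-symmetry, the derivation property of $d$, and the graded Jacobi identity.

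Skew-symmetry is immediate from the definition, since interchanging $a$ and $b$ gives $[b,a] = b\star a - (-1)^{|a||b|} a\star b = -(-1)^{|a||b|}[a,b]$. The derivation property, $d[a,b] = [da,b] + (-1)^{|a|}[a,db]$, I would obtain by applying the Leibniz rule $d(a\star b) = da\star b + (-1)^{|a|} a\star db$ to each of the two summands $a\star b$ and $b\star a$ and then reorganising; the only point to watch is that $d$ has degree $-1$, so that $|da| = |a|-1$ enters the Koszul signs on the right-hand side, and a short sign check confirms that the two sides agree.

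The sole genuine obstacle is the graded Jacobi identity, for which the key device is the graded associator
\[
\mathrm{as}(a,b,c)\coloneqq (a\star b)\star c - a\star(b\star c),
\]
in terms of which the right-symmetry hypothesis reads $\mathrm{as}(a,b,c) = (-1)^{|b||c|}\mathrm{as}(a,c,b)$. I would expand the Jacobiator
\[
[[a,b],c] + (-1)^{|a|(|b|+|c|)}[[b,c],a] + (-1)^{|c|(|a|+|b|)}[[c,a],b]
\]
fully in terms of $\star$, producing twelve signed monomials, and then recombine them. The twelve monomials pair off into six associators, which in turn group, with appropriate overall signs, into three differences of the shape
\[
\mathrm{as}(x,y,z) - (-1)^{|y||z|}\mathrm{as}(x,z,y),
\]
one for each of $x\in\{a,b,c\}$. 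Every such difference vanishes by the right-symmetry hypothesis, so the Jacobiator is zero.

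The bookkeeping of the Koszul signs in this last step is the delicate part and the only place where real care is needed; apart from the sign tracking the computation is entirely formal. In fact, since $\star$, the symmetry, and $d$ all live in the monoidal category of complete dg modules, the argument is word-for-word the classical one showing that the skew-symmetrisation of a graded right pre-Lie algebra is a graded Lie algebra, with completeness entering only to guarantee that every product and sum occurring stays within $\mathsf{CompMod}$. One could therefore alternatively invoke that classical fact together with the monoidal formalism developed in Chapter~\ref{sec:OptheoyFilMod}.
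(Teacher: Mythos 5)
Your proof is correct, but it takes a different route from the paper. The paper disposes of this proposition in one line: the skew-symmetrisation formula defines a morphism of operads $\Lie \to \PreLie$, and since the operadic calculus works in any suitable symmetric monoidal category, acting on complete dg modules rather than on plain vector spaces changes nothing — the result follows functorially with no computation at all. You instead carry out the direct verification: filtration-compatibility of the bracket, skew-symmetry, the derivation property, and the Jacobi identity via the decomposition of the twelve-term Jacobiator into three differences $\mathrm{as}(x,y,z)-(-1)^{|y||z|}\mathrm{as}(x,z,y)$, each killed by right-symmetry. That decomposition is exactly right (it is the classical pre-Lie-to-Lie computation, and it survives the Koszul signs), so your argument is sound, though you state the sign bookkeeping as a plan rather than executing it. What each approach buys: yours is self-contained and makes the mechanism visible — a reader sees precisely where right-symmetry enters — whereas the paper's argument is shorter, avoids all sign-chasing, and exploits the machinery of Chapter~\ref{sec:OptheoyFilMod} (the symmetric monoidal structure on $\mathsf{CompMod}$) built exactly so that statements like this one come for free. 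Notably, your closing remark — that one could invoke the classical fact together with the monoidal formalism — is essentially the paper's entire proof; had you led with that observation, the two arguments would coincide.
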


\begin{proof}
This follows from the fact that there is a morphism of operads from $\Lie$ to $\PreLie$, and it does not matter on which symmetric monoidal category we act, vector spaces or complete $\k$-modules. 
\end{proof}

Using the complete dg Lie algebra structure on a complete dg pre-Lie algebra, we can talk about Maurer--Cartan elements. The following definition is simply a translation of Definition~\ref{def:MCEl} to dg Lie algebras arising from dg pre-Lie algebras.

\begin{definition}[Maurer--Cartan equation]
\index{Maurer--Cartan equation!pre-Lie algebra}
\index{Maurer--Cartan element!pre-Lie algebra}
Let $\a=\left(
A,  \F, d, \star 
\right)$ be a complete  dg pre-Lie algebra. The condition
\begin{equation}\label{eq:MCeq}
d\alpha+\alpha\star\alpha=0
\end{equation}
is called the \emph{Maurer--Cartan equation} of $\a$, and any solution $\alpha\in A_{-1}$ is called a \emph{Maurer--Cartan element}. 
\end{definition}

Complete left-unital dg pre-Lie algebras that are relevant for our purposes arises via the convolution product construction. Let us recall how it works. 

\begin{definition}[Pre-Lie algebra associated to an operad]\index{pre-Lie algebra!associated to an operad}
Let $(\calP,\F,d,\{\circ_i\},\I)$ be a complete dg operad. The \emph{complete dg pre-Lie algebra associated to $\calP$} is
 \[
\left(\prod_{n\in\mathbb{N}}\calP(n),\G,d,\star, 1\right)~, 
 \]
where the pre-Lie product $\star$ is given by the sum of the partial composition maps $\circ_i$ and the left unit is $\id\in\I$.
\end{definition}

Recall that in any closed symmetric monoidal category the mapping space $\hom\left(\calC, \calP\right)$ from a cooperad $\calC$ to an operad $\calP$ forms a \emph{convolution operad}\index{convolution operad}, see \cite[Section~$6.4.1$]{LodayVallette12}. 
When $\calC$ is a filtered dg cooperad and when $\calP$ is a complete dg operad, the above construction associates a complete dg pre-Lie algebra structure to the complete convolution dg operad $\hom\left(\calC, \calP\right)$, with the  pre-Lie product  equal to 
\[\xymatrix@C=30pt{f\star g =\calC \ar[r]^(0.55){\Delta_{(1)}} & \calC\; \widehat{\circ}_{(1)}\, \calC 
\ar[r]^(0.46){f\; \widehat{\circ}_{(1)}\; g} & 
\calP\; \widehat{\circ}_{(1)}\, \calP  \ar[r]^(0,6){\gamma_{(1)}} & \calP \ ,}\]
where the various infinitesimal notions \cite[Section~$6.1$]{LodayVallette12} are now considered in complete setting. 

\begin{proposition}
The space of equivariant maps from a filtered dg cooperad $\calC$ to a complete dg operad~$\calP$
$$\hom_\Sy\big(\calC, \calP\big)\coloneqq{\prod}_{n\in \NN} \hom_{\Sy_n}\big(\calC(n), \calP(n)\big)$$
forms a complete left-unital dg pre-Lie algebra 
$$\left(\hom_\Sy\big(\calC, \calP\big), \partial, \star, 1 \right)$$
and thus a complete dg Lie algebra 
$$\left(\hom_\Sy\big(\calC, \calP\big), \partial, [\;,\,]\right)\ .$$
\end{proposition}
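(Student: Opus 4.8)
The plan is to obtain the pre-Lie structure from the general construction of the pre-Lie algebra associated to a complete dg operad, applied to the convolution operad, and then to pass to its equivariant part; the Lie algebra structure is then a formal consequence of Proposition~\ref{prop:PreLieToLie}. First I would record that, since $\calC$ is a filtered dg cooperad and $\calP$ is a complete dg operad, the convolution construction of \cite[Section~$6.4.1$]{LodayVallette12} carried out in the closed symmetric monoidal category $(\mathsf{CompMod}, \wo)$ produces a complete dg operad $\hom(\calC,\calP)$ with arity-$n$ component $\hom(\calC(n),\calP(n))$ (here one uses that the internal hom of complete modules is again complete, and that the complete tensor product is a closed symmetric monoidal product, both established in the previous chapter). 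Applying the construction of the pre-Lie algebra associated to a complete dg operad to $\hom(\calC,\calP)$ then endows $\prod_{n\in\NN}\hom(\calC(n),\calP(n))$ with a complete left-unital dg pre-Lie algebra structure, whose product is the sum of the partial composition maps and whose left unit is the operadic unit $\id\in\I$; concretely this product is the composite $\gamma_{(1)}\circ(f\,\widehat{\circ}_{(1)}\,g)\circ\Delta_{(1)}$ displayed above.

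The main step is to check that the equivariant maps $\hom_\Sy(\calC,\calP)=\prod_{n\in\NN}\hom_{\Sy_n}(\calC(n),\calP(n))$ form a complete dg pre-Lie subalgebra. Closure under $\star$ is exactly where the displayed formula is useful: the infinitesimal decomposition $\Delta_{(1)}\colon\calC\to\calC\,\widehat{\circ}_{(1)}\,\calC$ and the infinitesimal composition $\gamma_{(1)}\colon\calP\,\widehat{\circ}_{(1)}\,\calP\to\calP$ are morphisms of $\Sy$-modules, and the infinitesimal composite $f\,\widehat{\circ}_{(1)}\,g$ of two $\Sy$-equivariant maps is again a morphism of $\Sy$-modules; hence $f\star g$ is a composite of $\Sy$-module morphisms and is therefore equivariant. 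The differential $\partial f=d_\calP\circ f-(-1)^{|f|}f\circ d_\calC$ preserves equivariance because $d_\calC$ and $d_\calP$ are themselves morphisms of $\Sy$-modules, and the left unit lives in arity $1$, where the $\Sy_1$-action is trivial, so it is automatically equivariant. Thus the pre-Lie product, the differential and the unit all restrict to the equivariant part, and the pre-Lie identity, the derivation property and the left-unit axiom are inherited from the ambient pre-Lie algebra.

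It remains to verify completeness of $\hom_\Sy(\calC,\calP)$ and to deduce the Lie algebra. Working over a field $\k$ of characteristic zero, the $\Sy_n$-invariants $\hom_{\Sy_n}(\calC(n),\calP(n))$ are cut out of the complete module $\hom(\calC(n),\calP(n))$ by the Reynolds averaging operator $\tfrac{1}{|\Sy_n|}\sum_{\sigma\in\Sy_n}\sigma$, which is a filtered idempotent; hence the invariants are a retract, and in particular a closed submodule, of a complete module, so they are complete, and the product over $n$ of complete modules is again complete. Finally, Proposition~\ref{prop:PreLieToLie} applied to the complete left-unital dg pre-Lie algebra $\hom_\Sy(\calC,\calP)$ produces the complete dg Lie algebra $(\hom_\Sy(\calC,\calP),\partial,[\,,\,])$ with bracket $[f,g]=f\star g-(-1)^{|f||g|}g\star f$, as claimed. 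The only genuinely non-formal point is the preservation of equivariance under $\star$; everything else is an application of the machinery of the previous chapter together with the general passage from pre-Lie to Lie algebras.
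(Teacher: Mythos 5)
Your proof is correct and follows essentially the same route as the paper: the paper's one-line proof ("as in the classical case, one can see that the space of equivariant maps is stable under the pre-Lie product") relies on exactly the structure you spell out, namely the complete convolution operad already set up before the proposition, restriction of $\star$, $\partial$, and the unit to the equivariant part, and Proposition~\ref{prop:PreLieToLie} to pass to the Lie algebra. Your additional verifications (equivariance via the $\Sy$-module functoriality of $\Delta_{(1)}$ and $\gamma_{(1)}$, and completeness of the invariants via the averaging idempotent in characteristic zero) are precisely the details the paper leaves implicit.
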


\begin{proof}
As in the classical case, one can see that the space of equivariant maps is stable under the pre-Lie product. 
\end{proof}

\begin{definition}[Complete convolution pre-Lie algebra]\index{pre-Lie algebra!convolution}
The complete dg pre-Lie algebra $\hom_\Sy\big(\calC, \calP\big)$ is called the \emph{complete convolution pre-Lie algebra}. 
\end{definition}

The central object in the study of the deformation theory of algebraic structures \cite{LodayVallette12, DotsenkoShadrinVallette16} is the complete convolution algebra associated to a filtered  cooperad $\calC$ and the complete endomorphism operad of a complete module $A$: 
\[
\a_{\, \calC, A}\coloneqq
\big(
\hom_\Sy\big(\calC, {\eend}_A\big), \partial, \star, 1 
\big)\ .
\]
The present version of this definition in the complete setting is the most general that we are aware of; it will be used in Chapter~\ref{sec:GaugeTwist} where we give a gauge group interpretation to the twisting procedure of algebraic structures. However, already the discrete case is of interest to deformation theory: if the cooperad $\calC$ is the Koszul dual $\calP^{\ac}$ of a Koszul operad $\calP$, the set of $\calP_\infty$-algebra structures on $A$ is in one-to-one correspondence with the Maurer--Cartan set of the convolution algebra $\mathrm{MC}(\a_{\, \calP^{\ac}, A})$, see~\cite[Chapter~10]{LodayVallette12}. \\

To discuss symmetries of Maurer--Cartan elements in complete pre-Lie algebras, we shall recall the notions of a filtered group and a complete group \cite[Section~I.2]{Lazard50} defined in a  way similar way to that of a filtered/complete algebra. 

\begin{definition}[Filtered/complete group]\index{filtered!group} \index{complete!group}
The datum of a \emph{filtered group} is a quadruple $(G, \F, \cdot, e)$ where $(G,  \cdot, e)$ is a group and $\F$ is a filtration of $G$ by subgroups
$$G=\F_1 G \supset \F_2 G \supset  \cdots \supset \F_k G  \supset \cdots \ ,$$
such that the commutator satisfies $xyx^{-1}y^{-1}\in \F_{i+j} G$ for $x\in \F_i G$ and $y\in \F_j G$. 
A filtered group is called \emph{complete} when the underlying topology is Hausdorff and complete. 
\end{definition}

Since by Proposition \ref{prop:PreLieToLie}, a complete pre-Lie algebra $\a=\left(
A,  \F, d, \star 
\right)$ has the associated dg Lie algebra $\g=\left(
A,  \F, d, [\, , ] \right)$, one may define the associated gauge group in the following way alluded to in the end of the previous chapter. 

\begin{definition}[Gauge group]\index{gauge!group}
The \emph{gauge group} associated to a complete left-unital dg pre-Lie algebra $\a$ is defined by 
$$\Gamma\coloneqq\big(\F_1 A_0,  \BCH(\; ,\,), 0\big)\ .$$
\end{definition}

The gauge group is well defined since the BCH product converges for elements in $\F_1 A_0$: it is a sum of iterated commutators  
$$\BCH(x,y)=\underbrace{x+y}_{\in \F_1 A_0}+\underbrace{\frac{1}{2}[x,y]}_{\in \F_2 A_0} + \underbrace{\frac {1}{12}\big([[x,y], y]+[[y,x], x]\big)}_{\in \F_3 A_0}+\cdots \ ,$$
which is thus  convergent by Lemma~\ref{lem:Conv}.

\begin{proposition}
The gauge group is a complete group, with respect to the following filtration:  
$$\F_1 \Gamma\coloneqq\F_1 A_0 \supset \F_2\Gamma\coloneqq \F_2 A_0 \supset  \cdots \supset \F_k\Gamma\coloneqq\F_k A_0  \supset \cdots \ .$$
\end{proposition}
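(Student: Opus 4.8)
The plan is to verify the three conditions in the definition of a complete filtered group: that each $\F_k A_0$ is a subgroup of $\Gamma$, that the commutator condition holds, and that the underlying topology is Hausdorff and complete. The single technical input I would isolate first is a filtration estimate for iterated brackets. Since the pre-Lie product is filtration preserving, $\F_m A \star \F_n A \subseteq \F_{m+n} A$, so the associated Lie bracket of Proposition~\ref{prop:PreLieToLie} satisfies $[\F_m A, \F_n A]\subseteq \F_{m+n} A$. Iterating, any Lie monomial (nested bracket) with $p$ entries taken from $\F_i A_0$ and $q$ entries taken from $\F_j A_0$ lands in $\F_{pi+qj} A_0$. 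Everything below reduces to this estimate together with the convergence criterion of Lemma~\ref{lem:Conv} and the closedness of the submodules $\F_k A_0$ from Lemma~\ref{lem:Fkclosed}.

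First I would show each $\F_k A_0$ is a subgroup. Decomposing the $\BCH$ series into its total-bracket-degree components $\BCH(x,y)=\sum_{n\geq 1} B_n(x,y)$, where $B_1(x,y)=x+y$ and $B_n$ is a sum of Lie monomials of total degree $n$, the estimate above shows that for $x,y\in \F_k A_0$ every term satisfies $B_n(x,y)\in \F_{nk} A_0 \subseteq \F_k A_0$. The series converges by Lemma~\ref{lem:Conv}, and since $\F_k A_0$ is closed by Lemma~\ref{lem:Fkclosed}, its sum $\BCH(x,y)$ again lies in $\F_k A_0$. The neutral element $0$ belongs to $\F_k A_0$, and the inverse of $x$ is $-x$ since $\BCH(x,-x)=0$; hence each $\F_k A_0$ is a subgroup, and they form a decreasing filtration with $\F_1\Gamma=\F_1 A_0=\Gamma$.

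The step I expect to be the crux is the commutator condition. I would compute the group commutator $\BCH(\BCH(\BCH(x,y),-x),-y)$ universally in the completed free Lie algebra on two generators $x,y$, and split it into its bihomogeneous components $m_{p,q}$ of bidegree $(p,q)$. Specialising one generator to $0$ trivialises the commutator, since $\BCH(\BCH(x,-x),0)=0$ and $\BCH(\BCH(y,0),-y)=\BCH(y,-y)=0$; therefore all components $m_{p,0}$ and $m_{0,q}$ vanish, and only bidegrees with $p,q\geq 1$ survive. Substituting $x\in \F_i A_0$ and $y\in \F_j A_0$, the bracket estimate places each surviving $m_{p,q}(x,y)$ in $\F_{pi+qj} A_0\subseteq \F_{i+j} A_0$; by Lemma~\ref{lem:Conv} and the closedness of $\F_{i+j} A_0$ the whole commutator lies in $\F_{i+j} A_0$, which is exactly the required condition. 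The delicate point is the free-Lie-algebra bookkeeping, but it reduces entirely to the two one-variable specialisations above.

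Finally I would treat completeness. Applying the commutator condition with $y\in \F_k A_0$ shows that each $\F_k A_0$ is normal in $\Gamma$, so the quotient groups $\Gamma/\F_k A_0$ and the tower $\lim_k \Gamma/\F_k A_0$ are defined; as a filtered set this limit is $\lim_k \F_1 A_0/\F_k A_0$. Since $\a$ is complete, the module $A_0$ satisfies $A_0\cong \lim_k A_0/\F_k A_0$, and because $\F_1 A_0$ is a closed submodule (Lemma~\ref{lem:Fkclosed}) the natural map $\F_1 A_0\to \lim_k \F_1 A_0/\F_k A_0$ is a bijection; it is moreover a group homomorphism by construction, so $\Gamma\cong \lim_k \Gamma/\F_k A_0$, which is the completeness half. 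Hausdorffness is the injectivity half, equivalent to $\bigcap_{k\in\NN}\F_k A_0=\{0\}$, which holds because $A$ is Hausdorff as a complete module. Together these establish that $\Gamma$ is a complete filtered group with the stated filtration.
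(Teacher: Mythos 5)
Your proof is correct and follows essentially the same route as the paper's: the subgroup property is extracted from the bracket form of the $\BCH$ series, and the commutator condition from the fact that every surviving term of the group commutator $\BCH(\BCH(\BCH(x,y),-x),-y)$ has degree at least one in each variable, hence lands in $\F_{i+j}A_0$. The only difference is one of detail: the paper asserts the commutator claim as ``known and very easy to check'' and leaves the Hausdorff/completeness verification implicit, whereas you actually prove both (via the bihomogeneous specialisation argument in the free complete Lie algebra, and via the inverse-limit argument using closedness of $\F_1 A_0$), which are precisely the steps the paper's terse proof takes for granted.
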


\begin{proof}
The form of the BCH product mentioned above shows that each $\F_k\Gamma$ is a subgroup of the gauge group. It is known (and very easy to check) that the leading term in group commutator is the Lie algebra commutator, that is, 
\[
\BCH(\BCH(\BCH(x,y),-x), -y)=[x,y]+\cdots\ , 
\]
where the higher terms are iteration of brackets of at least one $x$ and one $y$ each time. Therefore, the commutators satisfy $\BCH(\BCH(\BCH(x,y),-x), -y)\in \F_{i+j} \Gamma$ for $x\in \F_i \Gamma$ and $y\in \F_j \Gamma$.
\end{proof}

Of course, the results on the gauge group action remain valid in the complete case.

\begin{proposition}\label{prop:GaugeGroupAction}
Let $\a=\left(A,  \F, d, \star, 1 \right)$ be a complete left-unital dg pre-Lie algebra. The gauge group $\Gamma$ acts on the set of Maurer--Cartan elements by the formula
 \[
\lambda.\alpha = \frac{\id-\exp(\ad_\lambda)}{\ad_\lambda}(d\lambda)+\exp(\ad_\lambda)(\alpha).
 \]
This left action is continuous, that is  the map 
\[
\begin{array}{clc}
\Gamma\times \mathrm{MC}(\a)& \to&\mathrm{MC}(\a)\\
(\lambda,  {\alpha}) & \mapsto &\lambda.{\alpha} 
\end{array}\]
is continuous in both variables. 
\end{proposition}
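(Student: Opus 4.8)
The plan is to treat the two assertions separately: that the formula defines a left action of $\Gamma$ preserving Maurer--Cartan elements, and that this action is jointly continuous. For the first assertion, I note that the right-hand side of the formula involves only the bracket $[\,,]$ and the differential $d$, hence only the associated complete dg Lie algebra $\g=(A,\F,d,[\,,])$ furnished by Proposition~\ref{prop:PreLieToLie}. Consequently the claim that the formula yields a well-defined left action of $\Gamma$ on $\MC(\a)=\MC(\g)$ is exactly the content of Theorem~\ref{prop:GaugeActionMC}, and its proof transfers verbatim to the complete setting. Concretely, I would re-run the differential trick, reformulating the action as $\lambda.(\delta+\alpha)=\exp(\ad_\lambda)(\delta+\alpha)$ in $\g^+$; the only new point to verify is convergence of the series involved. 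Since $\lambda\in\F_1A_0$ and the bracket is filtered, the operator $\ad_\lambda$ strictly raises the filtration weight, so $\exp(\ad_\lambda)$ and $\frac{\id-\exp(\ad_\lambda)}{\ad_\lambda}$ are well defined by Lemma~\ref{lem:Conv}; the identities $\exp(\ad_\lambda)([u,v])=[\exp(\ad_\lambda)(u),\exp(\ad_\lambda)(v)]$ and $\exp(\ad_{\BCH(\lambda,\mu)})=\exp(\ad_\lambda)\circ\exp(\ad_\mu)$ then follow from the same formal manipulations, now justified by convergence, and yield the action axioms precisely as before.

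For continuity, the key observation is that the topology on the complete module $A_{-1}\cong\lim_k A_{-1}/\F_k A_{-1}$ is the limit topology, in which each quotient $A_{-1}/\F_k A_{-1}$ carries the discrete topology; a map into $A_{-1}$ is therefore continuous if and only if its composite with each projection $\pi_k$ is. Hence it suffices to prove, for every $k\in\NN$, that $(\lambda,\alpha)\mapsto \pi_k(\lambda.\alpha)$ is continuous into the discrete space $A_{-1}/\F_k A_{-1}$. The crucial estimate is that $\ad_\lambda$ raises filtration by at least one for $\lambda\in\F_1A_0$: since $d$ is filtration-preserving we have $d\lambda\in\F_1$, whence $(\ad_\lambda)^n(d\lambda)\in\F_{n+1}$ and $(\ad_\lambda)^n(\alpha)\in\F_n$. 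Therefore, modulo $\F_k$, both series truncate, and the action agrees with the single finite polynomial expression
\[
P_k(\lambda,\alpha)\coloneqq-\sum_{n=0}^{k-2}\frac{1}{(n+1)!}(\ad_\lambda)^n(d\lambda)+\sum_{n=0}^{k-1}\frac{1}{n!}(\ad_\lambda)^n(\alpha)~,
\]
in the sense that $\lambda.\alpha-P_k(\lambda,\alpha)\in\F_kA_{-1}$, so that $\pi_k(\lambda.\alpha)=\pi_k(P_k(\lambda,\alpha))$.

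It then remains to check that $P_k$ is continuous, which I would deduce from the elementary fact that any filtered bilinear map $b$ is jointly continuous for the product topology: if $b(\F_i,\F_j)\subset\F_{i+j}$, then for $u,v\in\F_k$ one has $b(a_0+u,b_0+v)-b(a_0,b_0)=b(a_0,v)+b(u,b_0)+b(u,v)\in\F_k$, so $a_0+\F_k$ and $b_0+\F_k$ are the required neighbourhoods. Since $P_k$ is a finite combination built from the continuous bilinear bracket, the filtered linear map $d$, scalar multiplication and addition, it is a continuous map $\Gamma\times\MC(\a)\to A_{-1}$; composing with the continuous projection $\pi_k$ onto the discrete quotient gives the continuity of $(\lambda,\alpha)\mapsto\pi_k(\lambda.\alpha)$. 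As this holds for all $k$, the action is continuous in both variables (and its corestriction to $\MC(\a)$ with the subspace topology is continuous as well). The main point requiring care is the truncation bookkeeping: one must track the differing initial filtration weights of $d\lambda$, which lies in $\F_1$, and of $\alpha$, which lies only in $\F_0$, in order to confirm that the tails of both series genuinely lie in $\F_k$, so that the action indeed coincides modulo $\F_k$ with the continuous finite map $P_k$.
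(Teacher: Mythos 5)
Your proposal is correct. For the first assertion you do essentially what the paper does: reduce to the nilpotent statement (Theorem~\ref{prop:GaugeActionMC}) via Proposition~\ref{prop:PreLieToLie} and the differential trick, with Lemma~\ref{lem:Conv} supplying convergence because $\ad_\lambda$ raises the filtration when $\lambda\in\F_1 A_0$; the paper disposes of this part in one sentence.

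Your continuity argument, however, takes a genuinely different route. The paper also works in the extension $\g^+$, fixes the Maurer--Cartan element $\alpha$, takes a sequence $\lambda_n\to\lambda$ in $\Gamma$, and uses the telescoping identity $\ad_{\lambda_n}^{m}-\ad_{\lambda}^{m}=\sum_{l=1}^{m}\ad_{\lambda}^{\,l-1}\circ\ad_{\lambda_n-\lambda}\circ\ad_{\lambda_n}^{\,m-l}$ together with the filtration to place each summand of $e^{\ad_{\lambda_n}}(\alpha)-e^{\ad_\lambda}(\alpha)$ in $\F_{k+m-1}A_{-1}$, concluding sequential continuity in the first variable and asserting that the second variable is handled similarly; this establishes \emph{separate} continuity (which is legitimate, the topology being first-countable). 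You instead observe that a complete module carries the limit topology of its discrete quotients $A/\F_k A$, so continuity may be tested after composing with each projection $\pi_k$; you then truncate both series modulo $\F_k$ --- correctly distinguishing that $d\lambda\in\F_1$ while $\alpha$ lies only in $\F_0$ --- so that the action agrees with the finite polynomial map $P_k$ modulo $\F_k$, and you finish with the elementary fact that filtered bilinear maps are \emph{jointly} continuous. The trade-off is clear: the paper's estimate is shorter and stays entirely inside the exponential formulas, but yields only separate continuity; your argument needs the quotient-topology characterisation and some truncation bookkeeping, but it replaces the analytic estimate by purely algebraic finiteness and delivers continuity for the product topology on $\Gamma\times\MC(\a)$, which is strictly stronger. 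One small point you pass over: to assert that each tail lies in $\F_k A_{-1}$ you should note that its partial sums lie in the submodule $\F_k A_{-1}$, which is closed by Lemma~\ref{lem:Fkclosed}, so the limit does too; with that remark your proof is complete.
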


\begin{proof}
The first part is essentially Proposition \ref{prop:GaugeActionMC}. 
To show the continuity, we shall once again use the differential trick and work with the action 
 \[
\lambda. (\delta+\alpha)\coloneqq e^{\ad_\lambda}({\delta+\alpha})
 \] 
on solutions to the square-zero equation. Let us fix a Maurer--Cartan element $\alpha \in \mathrm{MC}(\a)$ and let us consider a  sequence $\{\lambda_n\}_{n\in \NN}$ of elements of the gauge group $\Gamma$ which tends to an element $\lambda$, that is 
\[\forall k\in \NN,\,  \exists N\in \NN, \, \forall n\ge N,\  \lambda_n-\lambda\in \F_k A_0\ . \]
So, we have $\forall k\in \NN,\,  \exists N\in \NN, \, \forall n\ge N, $
\[\begin{aligned}
e^{\ad_{\lambda_n}}({\alpha})- e^{\ad_\lambda}({\alpha})
&=\sum_{m=1}^\infty \frac{1}{m!}\left(\ad_{\lambda_n}^m -\ad_{\lambda}^m\right)(\alpha)\\
&=\sum_{m=1}^\infty \underbrace{\frac{1}{m!}
\left(
\sum_{l=1}^m \ad_{\lambda}^{l-1} \circ \ad_{\lambda_n-\lambda}\circ \ad_{\lambda_n}^{m-l} \right)(\alpha)}_{\in \F_{k+m-1} A_{-1}}\in \F_{k} A_{-1} \ .
\end{aligned}
\]
The continuity in the second variable is proved similarly.
\end{proof}

\section{Circle product formula for the gauge action}\label{subsec:CompleteLieInt}

The pre-Lie identity is weaker than the associativity identity but stronger than the Jacobi identity. We shall now recall formulas of \cite{DotsenkoShadrinVallette16} which unravel the combinatorics that enters formulas for the gauge group associated to a pre-Lie algebra. As a toy model, let us first consider a more classical situation where the complete dg Lie algebra $\g=(A, d, [\, , ])$ comes from a complete unital dg associative algebra $\mathfrak{a}=(A, d, \cdot, 1)$. For $\lambda\in \F_1 A_0$, the exponential series 
 \[
e^\lambda:=1 +\lambda + \frac{\lambda^{\star 2}}{2!} + \frac{\lambda^{\star 3}}{3!} +\cdots \ 
 \]
converges, and we have a group   
 \[
\mathfrak{G}:=\left(\left\{ e^\lambda, \lambda \in\F_1A_0\right\},
\cdot, 1
\right)\ .  
 \]
The adjoint action $\ad_\lambda$ can be written as a difference $l_\lambda-r_\lambda$ of the left and the right multiplication by $\lambda$. In an associative algebra, these two endomorphisms commute, so  
 \[
\exp(\ad_\lambda) (\delta+\alpha)= \exp(l_{\lambda})\circ \exp(-r_{\lambda})(\delta+\alpha)=e^{\lambda}(\delta+\alpha) e^{-\lambda}  \ .
 \]
Of course, the group law on elements like that is still given by the BCH formula. However, the exponential map is immediately seen to be a bijection onto the set $1+\F_1A_0$, and the group structure on that set is much simpler, being just the associative product $(1+x)\cdot(1+y)$ that we already have. As we shall see now, in the case of complete pre-Lie algebras there is also an elegant way to express the group law. All proofs of the statements below are exactly the same as proofs of the quoted results we proved in~\cite{DotsenkoShadrinVallette16}. In \emph{op. cit.}, the results were proved for weight graded algebras; we shall state them for complete algebras instead, and note that the general formalism developed in Chapter~\ref{sec:OptheoyFilMod} ensures that all proofs from the weight graded case are valid \emph{mutatis mutandis}. \\

For any element $\lambda\in \F_1 A$, we consider the following right iteration of the pre-Lie product 
$$\lambda^{\star n}\coloneqq\underbrace{(\cdots((\lambda \star \lambda) \star \lambda)\cdots )\star \lambda}_{n\  \text{times}}\in \F_n A\ . $$

\begin{definition}[Pre-Lie exponential]\index{pre-Lie exponential}
The \emph{pre-Lie exponential} of an element $\lambda \in \F_1 A$ is 
defined  by the following convergent  series 
$$e^\lambda\coloneqq1 +\lambda + \frac{\lambda^{\star 2}}{2!} + \frac{\lambda^{\star 3}}{3!} +\cdots \ . $$
\end{definition}

The choice of right-normed products here conceptually corresponds to the fact that the pre-Lie identity can be rewritten in the form
 \[
(a\star b)\star c-(-1)^{|b||c|} (a\star c)\star b= a\star(b\star c) - (-1)^{|b||c|} a\star(c\star b)\big) ,
 \]
or 
 \[
[r_c, r_b]=r_{[b,c]} ,
 \]
meaning that operators of right multiplication $r_{-\lambda}$ give a representation of the corresponding Lie algebra. The exponential we wrote appears in many different situations, including the formula for solutions to the flow differential equation for vector fields on a manifold with a flat and torsion free connection. Of course, that is not a mere coincidence: the Lie bracket of vector fields in this context arises as anti-symmetrisation of a pre-Lie product, see \cite{AgrachevGamkrelidze80}. 

\begin{proposition}\label{prop:preLieExpLog}
The exponential map is an isomorphism onto the set of \emph{group-like elements} defined by 
\[G\coloneqq1+\F_1 A_0=\left\{1+x, \ x\in  \F_1 A_0\right\}\ ,\]
with the basis of open sets at $1$ defined by $\{1+\F_k A_0\}_{k \ge 1}$.  $1+\F_1A_0$.
\end{proposition}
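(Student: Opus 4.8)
The plan is to invert the pre-Lie exponential by a successive-approximation argument inside the complete module, rather than by matching formal power series, since the non-associativity of $\star$ prevents a clean closed-form logarithm.

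First I would verify that $\exp$ is well defined and takes values in $G$. For $\lambda\in\F_1A_0$ the right iterates satisfy $\lambda^{\star n}\in\F_nA_0$, so $\lambda^{\star n}/n!\to 0$ and the series $e^\lambda$ converges by Lemma~\ref{lem:Conv}; moreover $e^\lambda-1=\sum_{n\ge1}\lambda^{\star n}/n!\in\F_1A_0$, whence $e^\lambda\in G$. The structural point, which drives everything, is that setting $F(\lambda)\coloneqq e^\lambda-1$ gives $F(\lambda)=\lambda+\sum_{n\ge2}\lambda^{\star n}/n!$, that is, $F$ equals the identity modulo $\F_2A_0$.

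Next I would construct the inverse. Given $y\in\F_1A_0$, I solve $F(\lambda)=y$ as the fixed-point equation $\lambda=\Phi(\lambda)$, where $\Phi(\lambda)\coloneqq y-\sum_{n\ge2}\lambda^{\star n}/n!$, starting from $\lambda_0\coloneqq y$ and iterating $\lambda_{j+1}\coloneqq\Phi(\lambda_j)$. The essential estimate is that $\Phi$ strictly raises the filtration degree of differences: from the telescoping identity
\[
\lambda^{\star n}-\mu^{\star n}=\big(\lambda^{\star(n-1)}-\mu^{\star(n-1)}\big)\star\lambda+\mu^{\star(n-1)}\star(\lambda-\mu)
\]
and induction on $n$ one gets $\lambda^{\star n}-\mu^{\star n}\in\F_{n-1+k}A$ whenever $\lambda,\mu\in\F_1A$ and $\lambda-\mu\in\F_kA$; summing over $n\ge2$ yields $\Phi(\lambda)-\Phi(\mu)\in\F_{k+1}A_0$. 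Hence the successive differences $\lambda_{j+1}-\lambda_j$ lie in $\F_{j+2}A_0$ and tend to $0$, so $(\lambda_j)$ converges by Lemma~\ref{lem:Conv} to a fixed point $\lambda=:\log(1+y)$; the same estimate forces uniqueness of the fixed point, since any two solutions differ by an element of $\bigcap_k\F_kA_0=0$. Thus $\log$ is a genuine two-sided inverse of $\exp$.

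Finally, both $F$ and its inverse $\log$ map $\F_kA_0$ into $\F_kA_0$ and alter differences only by terms of strictly higher filtration, so they are filtration preserving in both directions and therefore continuous for the topology with basis $\{1+\F_kA_0\}_{k\ge1}$; this makes $\exp$ the asserted homeomorphism (isomorphism) onto $G$. The one real obstacle is precisely the lack of associativity: there is no formal identity certifying that a candidate series inverts $\exp$, so invertibility must be obtained abstractly from completeness through the contraction/fixed-point scheme above, and this is where the hypotheses of Chapter~\ref{sec:OptheoyFilMod} are genuinely used.
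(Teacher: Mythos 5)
Your proof is correct, and it reaches the statement by a route that differs from the paper's. The paper disposes of this proposition in one line, citing \cite[Lemma~1]{DotsenkoShadrinVallette16}, where the result is proved for weight-graded algebras: there the exponential is the identity plus terms of strictly higher weight, so it is inverted degree-by-degree, with the inverse identified explicitly as the pre-Lie Magnus expansion; the formalism of Chapter~\ref{sec:OptheoyFilMod} is then invoked to transfer the statement verbatim to the complete setting. You instead work directly in the complete filtered algebra and produce the inverse abstractly, as the unique fixed point of $\Phi(\lambda)=y-\sum_{n\ge 2}\lambda^{\star n}/n!$, your key estimate (that $\Phi$ raises filtration degree of differences by one, via the telescoping identity and multiplicativity of the filtration) being exactly the complete-setting replacement for weight-graded triangularity. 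What your approach buys is self-containedness and transparency about where the hypotheses enter: completeness gives convergence of the iteration (Lemma~\ref{lem:Conv}), the Hausdorff property $\bigcap_k\F_k A=\{0\}$ (built into the paper's definition of a complete module) gives uniqueness of the fixed point and hence injectivity, and closedness of the submodules $\F_k A$ (Lemma~\ref{lem:Fkclosed}) keeps the infinite sums in the correct filtration stage. What it does not give, and what the paper's route provides, is the identification of the inverse as the pre-Lie Magnus expansion $\Omega(a)=a-\tfrac12 a\star a+\cdots$; relatedly, your opening claim that non-associativity ``prevents a clean closed-form logarithm'' overstates matters, since that expansion is precisely such a formula (defined by an implicit functional equation). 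Neither point affects the validity of your argument, which establishes the bijection, its two-sidedness, and the compatibility with the filtrations exactly as the proposition requires.
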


\begin{proof}
This is essentially proved in \cite[Lemma~1]{DotsenkoShadrinVallette16}.
\end{proof}

The inverse map, which one should think of as a pre-Lie version of logarithm, sends the element $1+a\in1+\F_1A_0$ to a series $\Omega(a)$ called the ``pre-Lie Magnus expansion'', which begins by 
$$a - \frac12 a\star a + \frac14 a\star (a \star a)
+ \frac{1}{12}(a\star a)\star a+\cdots \ , $$ see \cite{AgrachevGamkrelidze80, Manchon11} for more details.

To write a combinatorial formula for the gauge group law, we recall the following definition.

\begin{definition}[Symmetric braces]\index{pre-Lie algebra!symmetric brace}\label{def:SymBrace}
The \emph{symmetric braces} \[\{ a; b_1,\ldots, b_n\}\colon A\otimes A^{\otimes n}\to A, \quad n\ge 0 ,\] are defined recursively by setting $\{a; \}=a$ and 
\begin{multline*}
\{ a; b_1,\ldots, b_n\}:=  \{ \{a; b_1,\ldots, b_{n-1}\}; b_n\} \\ 
-  \sum_{i=1}^{n-1} (-1)^{|b_n|(|b_{i+1}|+\cdots+|b_{n-1}|)}\{a; b_1, \ldots, b_{i-1}, \{b_i; b_n\}, b_{i+1}, \ldots, b_{n-1}\}   .
\end{multline*}
\end{definition}

It is possible to show that these operations are {(graded)} symmetric with respect to $b_1,\ldots, b_n$. They satisfy certain relations which we are not going to use explicitly; the interested reader is referred to \cite[Section~$13.11.4$]{LodayVallette12} for details. 

\begin{definition}[Circle product]\index{pre-Lie algebra!circle product}
Let $\a=\left(
A,  \F, d, \star, 1 
\right)$ be a complete left unital dg pre-Lie algebra, and let $a\in A$, $b\in \F_1A_0$. The \emph{circle product} $a\circledcirc(1+b)$ is defined by the convergent series 
 \[
a \circledcirc (1+b) := \sum_{n\ge 0} \frac{1}{n!}  \{a; \underbrace{b, \ldots, b}_{n}\}\ .
 \]
\end{definition}

We are now ready to state a key result allowing one to do effective computations with gauge symmetries in pre-Lie algebras. 

\begin{theorem}\label{thm:GaugeGrpBis}
The datum 
 \[
\mathfrak{G}:=( 1+\F_1 A_0, \circledcirc, 1)  
 \]
is a complete group. The pre-Lie exponential map is a filtered isomorphism, and therefore a homeomorphism, between the gauge group 
$$
\Gamma=\left(\F_1 A_0, \BCH, 0 \right)
$$
and $\mathfrak{G}$~.
\end{theorem}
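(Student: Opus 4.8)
The plan is to reduce everything to the single algebraic identity
\[
e^\lambda \circledcirc e^\mu = e^{\BCH(\lambda,\mu)}, \qquad \lambda,\mu\in\F_1 A_0,
\]
and then to conclude by transport of structure. The groundwork is already in place: Proposition~\ref{prop:preLieExpLog} tells us that the pre-Lie exponential is a filtered bijection, hence a homeomorphism, from $\F_1 A_0$ onto $1+\F_1 A_0$, carrying the subfiltration $\F_k A_0$ onto $1+\F_k A_0$. So the only genuinely new ingredients are the well-definedness of $\circledcirc$ and the displayed identity. First I would check convergence: for $b\in\F_1 A_0$ the symmetric brace $\{a;b,\ldots,b\}$ with $n$ entries lies in $\F_n A$, exactly as $\lambda^{\star n}\in\F_n A$, so the defining series of $a\circledcirc(1+b)$ converges by Lemma~\ref{lem:Conv}. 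The displayed identity then shows that the circle product of two group-like elements is again group-like (it equals $e^{\BCH(\lambda,\mu)}$ with $\BCH(\lambda,\mu)\in\F_1 A_0$), so that $\circledcirc$ indeed restricts to a binary operation $\mathfrak{G}\times\mathfrak{G}\to\mathfrak{G}$.

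Granting the identity, the rest is formal. Since $\exp\colon(\F_1 A_0,\BCH)\to(1+\F_1 A_0,\circledcirc)$ is a bijection intertwining $\BCH$ with $\circledcirc$ and sending the neutral element $0$ to $e^0=1$, the group axioms already known for $(\F_1 A_0,\BCH,0)$ transport verbatim: $\circledcirc$ is associative and unital with unit $1$, and the inverse of $1+b=e^\lambda$ is $e^{-\lambda}$, where $\lambda=\Omega(b)$ is the pre-Lie Magnus expansion. Hence $\mathfrak{G}$ is a group and $\exp$ is a group isomorphism. Because $\exp$ maps $\F_k A_0$ onto $1+\F_k A_0$ with filtered inverse, it is a filtered isomorphism and therefore a homeomorphism; the subgroup structure and the commutator estimate of the filtration $\F_k\Gamma=\F_k A_0$ transport to the filtration $\F_k\mathfrak{G}\coloneqq 1+\F_k A_0$, and the Hausdorff and completeness properties of $\Gamma$ transport along the homeomorphism. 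Thus $\mathfrak{G}$ is a complete group, which is the full assertion.

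The real content, and the main obstacle, is therefore the identity $e^\lambda\circledcirc e^\mu=e^{\BCH(\lambda,\mu)}$, where the combinatorics of the symmetric braces of Definition~\ref{def:SymBrace} enters. This is the complete-algebra incarnation of the circle-product formula of \cite{DotsenkoShadrinVallette16}, and by the symmetric monoidal formalism of Chapter~\ref{sec:OptheoyFilMod} the proof given there for weight graded algebras applies \emph{mutatis mutandis}. Concretely, I would route the computation through the right-multiplication representation: the pre-Lie identity gives $[r_c,r_b]=r_{[b,c]}$, so $\lambda\mapsto r_{-\lambda}$ is a morphism of Lie algebras, and one verifies that $(-)\circledcirc(1+b)$ encodes the action of the corresponding exponentiated operators on group-like elements, thereby reducing the identity to the characterising relation $\exp(\ad_{\BCH(\lambda,\mu)})=\exp(\ad_\lambda)\circ\exp(\ad_\mu)$ of the BCH formula already exploited in the proof of Theorem~\ref{prop:GaugeActionMC}. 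The delicate part is purely bookkeeping: matching the exponential generating series of symmetric braces on the left-hand side with the Lie-theoretic exponential on the right, for which the graded symmetry of the braces in $b_1,\ldots,b_n$ is essential.
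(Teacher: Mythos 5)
Your proposal is correct and follows essentially the same route as the paper: the paper's entire proof consists of the observation that the arguments of \cite[Theorem~2]{DotsenkoShadrinVallette16} carry over to the complete setting without change (thanks to the formalism of Chapter~\ref{sec:OptheoyFilMod}) together with Proposition~\ref{prop:preLieExpLog}, which is exactly your deferral of the identity $e^\lambda\circledcirc e^\mu=e^{\BCH(\lambda,\mu)}$ to the weight-graded case plus transport of structure along the filtered exponential bijection. Your additional material (the convergence check via Lemma~\ref{lem:Conv} and the sketch through the right-multiplication representation $[r_c,r_b]=r_{[b,c]}$) merely unpacks what that citation contains.
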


\begin{proof}
The arguments of \cite[Theorem~2]{DotsenkoShadrinVallette16} adapt to the complete case without changes. Proposition~\ref{prop:preLieExpLog} implies that $\mathfrak{G}$ is a complete group, which is filtered isomorphic, thus homeomorphic, to the gauge group.
\end{proof}

Using the circle product, it is also possible to write down an elegant formula for the gauge action.

\begin{theorem}\label{thm:GaugeActionBis}
Let $\a=\left(
A,  \F, d, \star, 1 
\right)$ be a complete left unital dg pre-Lie algebra. The gauge group $\mathfrak{G}$ acts on the set of Maurer--Cartan elements by the formula: 
$$e^\lambda\cdot\at\coloneqq\lambda.\alpha= \left(e^\lambda \star \at\right) \circledcirc e^{-\lambda}\ .$$
\end{theorem}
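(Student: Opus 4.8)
The plan is to follow the argument of \cite{DotsenkoShadrinVallette16}, which adapts to the complete setting \emph{mutatis mutandis} thanks to Chapter~\ref{sec:OptheoyFilMod}, and to reduce the statement to a single one-parameter flow computation. First I would apply the differential trick in its pre-Lie refinement: extend $\a$ by a formal element $\delta$ of degree $-1$ with $\delta\star x:=dx$, $x\star\delta:=0$ and $\delta\star\delta:=0$, obtaining a complete left-unital dg pre-Lie algebra $\a^+$ whose associated Lie algebra (Proposition~\ref{prop:PreLieToLie}) is the extension $\g^+$ in which $d=\ad_\delta$ is internal. This requires only a sign check verifying that the right-symmetry of the associator encodes the derivation property of $d$. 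Since $e^\lambda\star\delta=\delta$, one has $e^\lambda\star(\delta+\alpha)=\delta+e^\lambda\star\alpha$, so that, under the identification of Maurer--Cartan elements of $\a$ with square-zero elements $\delta+\alpha$, the claimed formula becomes the equality $(e^\lambda\star(\delta+\alpha))\circledcirc e^{-\lambda}=\exp(\ad_\lambda)(\delta+\alpha)$ in $\a^+$. This reformulation is exactly what produces the term $\tfrac{\id-\exp(\ad_\lambda)}{\ad_\lambda}(d\lambda)$ of Proposition~\ref{prop:GaugeGroupAction}, which reappears through $[\lambda,\delta]=-d\lambda$.

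The key tool I would record is that \emph{right circle multiplication by a pre-Lie exponential is the exponential of right multiplication}: writing $r_\mu(a):=a\star\mu$, one has $a\circledcirc e^{t\mu}=\exp(t\,r_\mu)(a)$. This follows from Theorem~\ref{thm:GaugeGrpBis} (the circle product is a right action of $\mathfrak{G}$ and $e^{(s+t)\mu}=e^{t\mu}\circledcirc e^{s\mu}$) together with the elementary identity $\tfrac{d}{dt}e^{t\mu}=e^{t\mu}\star\mu$ coming from the right-normed convention for $\lambda^{\star n}$; differentiating the action relation in $s$ at $s=0$ gives the ordinary differential equation $\tfrac{d}{dt}(a\circledcirc e^{t\mu})=(a\circledcirc e^{t\mu})\star\mu$, whose unique solution through $a$ is $\exp(t\,r_\mu)(a)$. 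All series converge by completeness (Lemma~\ref{lem:Conv}), and this is the concrete incarnation of the remark that the pre-Lie exponential solves the flow equation of a flat torsion-free connection.

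With these in hand I would run a flow argument. Set $\gamma_y(t):=(e^{t\lambda}\star y)\circledcirc e^{-t\lambda}=\exp(-t\,r_\lambda)\big(e^{t\lambda}\star y\big)$, which is linear in $y$ and satisfies $\gamma_y(0)=y$. Differentiating and using $\tfrac{d}{dt}(e^{t\lambda}\star y)=(e^{t\lambda}\star\lambda)\star y$, I would invoke the pre-Lie identity in the form $(e^{t\lambda}\star\lambda)\star y=e^{t\lambda}\star[\lambda,y]+(e^{t\lambda}\star y)\star\lambda$ (valid since $|\lambda|=0$). The decisive point is that the extra term $(e^{t\lambda}\star y)\star\lambda=r_\lambda(e^{t\lambda}\star y)$, once pushed through $\exp(-t\,r_\lambda)$, cancels against the derivative $-r_\lambda\gamma_y$ of the circle factor, leaving the clean relation $\dot\gamma_y(t)=\gamma_{[\lambda,y]}(t)$. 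Viewing $\Phi_t\colon y\mapsto\gamma_y(t)$ as a family of operators, this reads $\dot\Phi_t=\Phi_t\circ\ad_\lambda$ with $\Phi_0=\id$; by uniqueness of solutions of this filtered linear ODE one gets $\Phi_t=\exp(t\,\ad_\lambda)$. Evaluating at $t=1$ and $y=\delta+\alpha$ yields $(e^\lambda\star(\delta+\alpha))\circledcirc e^{-\lambda}=\exp(\ad_\lambda)(\delta+\alpha)$, which is the desired formula.

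The main obstacle is precisely the cancellation in the third paragraph: in the associative toy model of this section it is trivial because $l_\lambda$ and $r_\lambda$ commute, whereas in a genuine pre-Lie algebra they do not, and it is the pre-Lie relation that salvages the argument by converting the associator term into a right multiplication that the circle product absorbs. This is exactly why the circle product, rather than an ordinary product, is forced upon us. The remaining work is routine and borrowed from \cite{DotsenkoShadrinVallette16}: checking that $\a^+$ is a pre-Lie algebra, that $\circledcirc$ defines a right action of $\mathfrak{G}$ on all of $A$ (not merely on group-like elements), and that every exponential series, flow, and uniqueness statement makes sense in the complete filtered topology — all of which transfer verbatim from the weight-graded case by the formalism of Chapter~\ref{sec:OptheoyFilMod}.
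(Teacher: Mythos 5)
Your Paragraphs 2 and 3 are correct, and they are in substance the argument that the paper itself outsources to \cite{DotsenkoShadrinVallette16} (the paper's own ``proof'' is a three-reference citation): the lemma $x\circledcirc e^{t\mu}=\exp(t\,r_\mu)(x)$, the flow computation, and the cancellation produced by the pre-Lie relation $(e^{t\lambda}\star\lambda)\star y=e^{t\lambda}\star[\lambda,y]+r_\lambda\big(e^{t\lambda}\star y\big)$ together prove
\[
\big(e^{\lambda}\star y\big)\circledcirc e^{-\lambda}=\exp(\ad_\lambda)(y)
\]
for every $y$ in a complete left-unital pre-Lie algebra \emph{with zero differential}. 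This settles the theorem in exactly the setting where the book applies it (the convolution algebras of Section~\ref{subsec:TwistGrp} are graded, the ``differential'' $m_1$ being part of the Maurer--Cartan element), and it is a genuine self-contained alternative to the citation. The gap is entirely in Paragraph 1, your treatment of a nonzero differential $d$, and it is not a fixable sign issue: the extension $\delta\star x\coloneqq dx$, $x\star\delta\coloneqq 0$ is \emph{not} a pre-Lie algebra. For a triple $(a,\delta,c)$ with $a,c\in A$ the associator is $(a\star\delta)\star c-a\star(\delta\star c)=-a\star dc$, while its right-symmetrised partner $(-1)^{|\delta||c|}\big((a\star c)\star\delta-a\star(c\star\delta)\big)$ vanishes, so the pre-Lie identity would force $a\star dc=0$; it fails already for dg associative algebras. (Your claim $e^\lambda\star\delta=\delta$ is also internally inconsistent: it needs $1\star\delta=\delta$, while your definition gives $1\star\delta=0$.) Consequently neither Theorem~\ref{thm:GaugeGrpBis} nor the brace calculus applies to your $\a^+$, and your flow argument — whose decisive step is an instance of the pre-Lie identity with $\delta+\alpha$ in the third slot — collapses precisely there.

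The failure is structural and instructive. The pre-Lie relation governs \emph{right} multiplications, $[r_b,r_c]=r_{[c,b]}$, so a differential can be made internal only as a right multiplication: $x\star\delta\coloneqq-(-1)^{|x|}dx$, $\delta\star x\coloneqq 0$ does define a complete pre-Lie extension with $\ad_\delta=d$. But in that extension $1\star\delta=0$ and $e^\lambda\star\delta=-d\big(e^{\lambda}\big)$, not $\delta$, and running your own flow argument through it (the initial condition becomes left multiplication by $1$, which kills $\delta$) yields
\[
\lambda.\alpha=\Big(e^{\lambda}\star\alpha-d\big(e^{\lambda}\big)\Big)\circledcirc e^{-\lambda}\ ,
\qquad\text{with}\qquad
\Big({-d\big(e^{\lambda}\big)}\Big)\circledcirc e^{-\lambda}
=\frac{\id-\exp(\ad_{\lambda})}{\ad_{\lambda}}(d\lambda)\ ,
\]
so the correction term is exactly the first summand of the gauge action in Proposition~\ref{prop:GaugeGroupAction}. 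Your reduction erases this term by fiat — that is what the false identity $e^\lambda\star\delta=\delta$ does — and as written it would therefore prove a formula contradicting Proposition~\ref{prop:GaugeGroupAction} whenever $d\lambda\neq 0$: in a complete dg associative algebra all higher braces vanish, your right-hand side is $e^{\lambda}\alpha e^{-\lambda}$, whereas the gauge action is $e^{\lambda}\alpha e^{-\lambda}+e^{\lambda}d\big(e^{-\lambda}\big)$. So you must either restrict the statement to the zero-differential case, where Paragraphs 2--3 alone finish the proof and match every use of the theorem in the book, or carry the term $-d\big(e^{\lambda}\big)$ through; the displayed formula of the theorem and the corrected one agree exactly when $d\lambda=0$.
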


\begin{proof}
This is a direct corollary of Theorem~\ref{thm:GaugeGrpBis}, Proposition~\ref{prop:GaugeGroupAction} and \cite[Proposition~$5$]{DotsenkoShadrinVallette16}.
\end{proof}

\section{Operadic deformation theory in the complete setting}\label{sec:ComOpDefTh}
We aim to apply the results of the previous section  to the complete convolution algebra associated to a filtered dg cooperad $\calC$ and the complete endomorphism operad of a complete dg module $A$: 
$$\a_{\, \calC, A}\coloneqq
\big(
\hom_\Sy\big(\calC, {\eend}_A\big), \partial, \star, 1 
\big)\ .
$$
Notice that, in this case, the internal differential element is given by 
$$\delta\colon \calC \to \I\to \k \partial_A,$$ 
where the map $\calC \to \I$ is the counit of the cooperad $\calC$.
The interest in this example of application lies in the following interpretation of the Maurer--Cartan elements of this convolution algebra.
 
\begin{proposition}\label{prop:MCOmegaC}
Let $\calC$ be a filtered  dg cooperad and let $A$ be a complete dg module. The set of Maurer--Cartan elements of the complete convolution algebra $\a_{\, \calC, A}$ is in natural one-to-one correspondence with the complete $\Omega \calC$-algebra structures on $A$. 
\end{proposition}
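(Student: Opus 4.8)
\textit{Proof plan.} The plan is to identify a complete $\Omega\calC$-algebra structure on $A$ with a morphism of complete dg operads $\rho\colon \Omega\calC\to \eend_A$ (Definition~\ref{def:FilAlgebras} together with Theorem~\ref{thm:CompleteAlg}, and Proposition~\ref{prop:EndComplete}, which guarantees that the endomorphism operad is the same object in the filtered and in the complete worlds), and then to peel off this datum in two steps: first the underlying morphism of graded operads, then the compatibility with the differentials. Recall that the cobar construction $\Omega\calC=\big(\widehat{\calT}(s^{-1}\calC),d_\Omega\big)$ is, as a complete graded operad, freely generated by the desuspension $s^{-1}\calC$ of the underlying $\Sy$-module of $\calC$, and that its differential is the unique operadic derivation $d_\Omega=d_1+d_2$ whose value on generators is the sum of a linear part $d_1$ induced by the internal differential of $\calC$ and a quadratic part $d_2$ dual to the infinitesimal decomposition map $\Delta_{(1)}\colon \calC\to \calC\,\widehat{\circ}_{(1)}\,\calC$.

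First I would invoke the universal property of the free complete operad. The operadic calculus of Chapter~\ref{sec:OptheoyFilMod} applies verbatim in the symmetric monoidal category $(\mathsf{CompMod},\wo)$, and since the completion functor is strong symmetric monoidal and left adjoint (Proposition~\ref{prop:strict-laxmonoidal} and Corollary~\ref{coro:12Adj}), the free complete operad functor is left adjoint to the forgetful functor to complete graded $\Sy$-modules. Consequently a morphism of complete graded operads $\widehat{\calT}(s^{-1}\calC)\to \eend_A$ is the very same datum as a morphism of complete graded $\Sy$-modules $s^{-1}\calC\to \eend_A$, that is a degree $-1$ equivariant map $\alpha\colon \calC\to \eend_A$, in other words an element $\alpha\in \hom_\Sy(\calC,\eend_A)_{-1}$. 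This already sets up a bijection between underlying graded-operad morphisms $\rho_\alpha$ and degree $-1$ elements of the convolution algebra $\a_{\,\calC,A}$, manifestly natural in $A$ and compatible with the filtrations.

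It then remains to show that $\rho_\alpha$ commutes with the differentials if and only if $\alpha$ satisfies the Maurer--Cartan equation~\eqref{eq:MCeq}. Since $\rho_\alpha$ is determined by its restriction to generators and both $d_\Omega$ and the differential of $\eend_A$ are operadic derivations, the identity $\rho_\alpha\circ d_\Omega=\partial_{\eend_A}\circ \rho_\alpha$ need only be verified on $s^{-1}\calC$. Evaluating on a generator $s^{-1}c$, the contribution coming from the linear part $d_1$ and from the differential of $\eend_A$ induced by $\partial_A$ (the latter recorded through the internal differential element $\delta\colon \calC\to \I\to \k\partial_A$) reassembles precisely into the convolution differential $\partial\alpha$ evaluated on $c$; the contribution of the quadratic part $d_2$, being dual to $\Delta_{(1)}$, equals $\gamma_{(1)}\circ(\alpha\,\widehat{\circ}_{(1)}\,\alpha)\circ\Delta_{(1)}$, which by definition of the convolution pre-Lie product is $\alpha\star\alpha$. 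Thus the equation on generators reads $\partial\alpha+\alpha\star\alpha=0$, which is exactly~\eqref{eq:MCeq}; conversely any Maurer--Cartan element yields a derivation-compatible morphism. As all the identifications above are natural in $A$, this produces the asserted natural one-to-one correspondence.

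I expect the main obstacle to be a matter of careful bookkeeping rather than of conceptual depth: one must track the Koszul signs introduced by the desuspension $s^{-1}$ so that the quadratic term lands on the nose as $\alpha\star\alpha$, with neither spurious sign nor factor, and one must account correctly for the internal differential element $\delta$ so that in the possibly non-coaugmented setting no additional admissibility constraint on $\alpha$ survives. In contrast to the classical treatment, convergence of every series involved is automatic here, being guaranteed by the complete framework of Chapter~\ref{sec:OptheoyFilMod}, so no nilpotence hypothesis is needed.
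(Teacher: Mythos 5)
Your proof is correct and follows essentially the same route as the paper's: identify complete $\Omega\calC$-algebra structures with morphisms of complete dg operads via Theorem~\ref{thm:CompleteAlg}, use the freeness of the cobar construction to reduce such morphisms to degree $-1$ elements of the convolution algebra, and check that compatibility with the differentials on generators is exactly the Maurer--Cartan equation. The only difference is presentational: the paper first treats the coaugmented case by citing the classical discrete result and then remarks on the non-coaugmented case, whereas you handle both uniformly in the non-coaugmented convention $\Omega\calC=\widehat{\calT}\big(s^{-1}\calC\big)$ and spell out the generator-level computation that the paper leaves to the reader.
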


\begin{proof}
Let us start with the case when the filtered dg cooperad $\calC$ is coaugmented. 
In this case,  the result holds true in the classical discrete setting \cite[Theorem~$6.5.7$]{LodayVallette12} and we use the same arguments and computations here. By Theorem~\ref{thm:CompleteAlg}, a complete $\Omega \calC$-algebra structures on $A$ is a morphism \[\Omega \calC \to {\eend}_A\] of filtered dg operads. The underlying operad of the cobar construction of $\calC$ is a free operad generated by the desuspension of $\overline{\calC}$. Since $\calC$ is filtered, this free operad is equal to the filtered free operad on the same space of generators; this can be seen for instance through the formula \cite[Proposition~$5.6.3$]{LodayVallette12}. Therefore we get:
\[
\Hom_{\textsf{CompOp}}\left(\widehat{\Omega \calC}, {\eend}_A\right)\cong
\Hom_{\textsf{FilOp}}\left(\Omega \calC, {\eend}_A\right)\cong \hom_{\Sy}\left(\overline{\calC}, {\eend}_A\right)_{-1}\ , 
\]
where the compatibility with the differentials on the left-hand side corresponds to the Maurer--Cartan equation on the right-hand side, by the same computation as in the classical case. 

When the cooperad $\calC$ is not coaugmented, like in the examples of Section~\ref{sec:GaugeTwist}, we consider the cobar construction $\Omega \calC \coloneqq \calT(s^{-1}\calC)$ with similar differential induced by the internal differential of $\calC$ and its partial decomposition maps. In this case,  Maurer--Cartan elements in the convolution algebra associated to a complete graded module $A$ correspond to complete $\Omega \calC$-algebra structures. 
\end{proof}

\begin{remark}
This proposition applies to the Koszul dual cooperad $\calC\coloneqq\calP^{\ac}$ of a Koszul operad $\calP$. In this case, Maurer--Cartan elements of the convolution algebra $\a_{\, \calP^{\ac},\, A}$ are nothing but complete $\calP_\infty$-algebra structures on $A$. This way, we can develop the deformation theory of complete $\Ai$-algebras or complete $\Li$-algebras, see the next section. 
\end{remark}

The gauge group obtained by integrating the underlying complete pre-Lie algebra indeed acts of the set of Maurer--Cartan elements, but the remarkable feature brought by the operadic nature of this construction is that we can include the usual gauge group in a bigger group. Namely, let us consider the subset 
\begin{align*}
U_0:=&\calF_1 \hom\big(\calC(0), {\eend}_A(0)\big)_0\times 
\calF_1 \hom\big(\calC(1), {\eend}_A(1)\big)_0\times\\ & {\displaystyle{\prod}_{n\ge 2}} \hom_{\Sy_n}\big(\calC(n), {\eend}_A(n)\big)_0
\end{align*}
of the complete convolution algebra $\a_{\, \calC, A}$, and the set of group-like elements 
\begin{align*}
1+U_0:=&\calF_1 \hom\big(\calC(0), {\eend}_A(0)\big)_0\times 
\left(\id+\calF_1 \hom\big(\calC(1), {\eend}_A(1)\big)_0\right)\times\\ &{\displaystyle{\prod}_{n\ge 2}} \hom_{\Sy_n}\big(\calC(n), {\eend}_A(n)\big)_0 .
\end{align*}

\begin{lemma}\label{lem:DefGaugeConv}\leavevmode
\begin{itemize}
\item[(i)] The BCH series converges for any two elements $x,y\in U_0$~.
\item[(ii)] The series defining the product $\circledcirc$ converges for any two elements $x,y\in 1+U_0$~.
\item[(iii)] The pre-Lie exponential is an isomorphism between $U_0$ and $1+U_0$; this isomorphism identifies the group law on $U_0$ given by the BCH series with the group law on $1+U_0$ given by the product $\circledcirc$~.
\end{itemize}
\end{lemma}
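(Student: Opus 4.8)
The plan is to endow the convolution algebra $\a_{\calC,A}$ with a \emph{second} grading coming from the arity and to fuse it with the given filtration $\calF$ into a single complete filtration for which $U_0$ is precisely the set of elements of positive degree. Concretely, to a map $\phi$ supported in arity $n$ and lying in $\calF_q\hom_{\Sy_n}\!\big(\calC(n),\eend_A(n)\big)$ I would assign the weight $w(\phi):=q+\max(0,n-1)$, and set $\calG_N:=\prod_{n}\calF_{N-\max(0,n-1)}\hom_{\Sy_n}\!\big(\calC(n),\eend_A(n)\big)$. One reads off directly from the definition that $U_0=\calG_1$ (in arities $0$ and $1$ the shift is $0$, forcing $\calF_1$; in arities $n\ge 2$ the shift $1-(n-1)\le 0$ imposes nothing) and that $\bigcap_N\calG_N=0$. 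Since each arity component $\hom_{\Sy_n}(\calC(n),\eend_A(n))$ is a complete filtered module (internal hom of complete modules, by Chapter~\ref{sec:OptheoyFilMod}) and a product of complete modules is complete, $\a_{\calC,A}$ is complete for the $w$-filtration.

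The computational heart is a single growth estimate. Because $\Delta_{(1)}$ only produces trees $\calC(m)\circ_i\calC(n)$ with $m\ge 1$, the left factor of a product $f\star g$ contributes only through its arity-$\ge 1$ part, and $\mathrm{ar}(f\star g)=\mathrm{ar}(f)+\mathrm{ar}(g)-1$; thus arity is additive along $\star$, the $\calF$-grade is super-additive, and in $U_0$ every factor of arity $\le 1$ has $\calF$-grade at least $1$. I would combine these to prove that \emph{any nonzero iterated $\star$-product of $k$ homogeneous factors drawn from $U_0$ has weight at least $k/2$.} Indeed, writing $a$ for the number of factors of arity $\le 1$ and $b=k-a$ for those of arity $\ge 2$, such factors contribute at least $a$ to the total $\calF$-grade, while the surviving product has arity $\sum_i n_i-(k-1)\ge 2b-(k-1)$, so $\max(0,\mathrm{ar}-1)\ge\max(0,b-a)$; hence $w\ge a+\max(0,b-a)$, which is $\ge k/2$ whether $b\ge a$ or $b<a$. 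Consequently each homogeneous component of such a product lies in $\calG_{\lceil k/2\rceil}$, and any series indexed by the number of factors, whose terms therefore have weight tending to infinity, converges by the analogue of Lemma~\ref{lem:Conv} for the complete $w$-filtration.

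This estimate immediately yields (i) and (ii): the length-$k$ part of $\BCH(x,y)$ is a finite combination of $\star$-products of $k$ factors from $\{x,y\}\subset U_0$, hence of weight $\ge k/2$, so $\BCH$ converges; likewise the symmetric brace $\{a;b,\ldots,b\}$ with $n$ entries is a combination of products of $n+1$ factors from $U_0$, so $\circledcirc$ converges on $1+U_0$. Moreover every product of at least one $U_0$-factor has weight $\ge 1$, i.e.\ lands in $\calG_1=U_0$, so both operations are closed on $U_0$, resp.\ $1+U_0$. The same bound shows that the pre-Lie exponential $e^\lambda=\sum_n \lambda^{\star n}/n!$ and the pre-Lie Magnus expansion $\Omega$ converge on $U_0$, carrying it into $1+U_0$ and back. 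For (iii) it then remains to invoke the formal identities of \cite{DotsenkoShadrinVallette16} — that $e^{(-)}$ and $\Omega$ are mutually inverse and that $e^{\BCH(x,y)}=e^{x}\circledcirc e^{y}$ — which are consequences of the pre-Lie relations holding termwise and so remain valid here as soon as all the series in sight converge, giving the group isomorphism.

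The main obstacle, and the reason this cannot be reduced to a mere relabelling, is that $w$ is \emph{not} multiplicative: the super-additivity $w(f\star g)\ge w(f)+w(g)$ fails, by exactly one unit, precisely when an arity-$0$ factor is plugged into a factor of arity $\ge 2$ (the arity drop costs one unit of weight). Hence one cannot simply transport the filtration and cite Theorem~\ref{thm:GaugeGrpBis} and Proposition~\ref{prop:preLieExpLog} verbatim; instead the convergence in (i)--(iii) must be extracted through the explicit linear lower bound above, in which the hypothesis that the arity-$0$ and arity-$1$ parts of $U_0$ sit in $\calF_1$ is exactly what compensates the unit of weight lost at each arity-lowering composition and keeps the overall growth linear in the number of factors. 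Once convergence is secured, the group-theoretic content of (iii) is purely formal.
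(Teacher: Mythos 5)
Your proof is correct, but it takes a genuinely different route from the paper's. The paper keeps the two gradings separate: it assigns the \emph{additive} weight $w(x)=n-1$ to elements supported in arity $n$ (additive because arity minus one is additive under $\star$), notes that the part of $U_0$ supported in arity at least two has positive weight, so the $\BCH$ and $\circledcirc$ series have finitely many terms in each weight, and then views $U_0$ as an extension of this positively weighted quotient by the ideal $\calF_1\hom_\Sy\big(\calC, {\eend}_A\big)_0$, on which the usual filtration argument applies; convergence follows by combining the two arguments, and part (iii) is ``established analogously''. You instead fuse the filtration grade and the arity into a single descending filtration $\calG_N$ with $U_0=\calG_1$, and replace the paper's qualitative combination step by the quantitative lemma that any nonzero $k$-fold $\star$-product of factors from $U_0$ lies in $\calG_{\lceil k/2\rceil}$, after which completeness of $\calG$ and the analogue of Lemma~\ref{lem:Conv} give convergence of all series in sight. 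Both proofs ultimately rest on the same two mechanisms --- additivity of arity minus one under $\star$, and the hypothesis that the arity $0$ and $1$ parts of $U_0$ lie in $\calF_1$ --- but your write-up makes explicit what the paper leaves implicit: the fused weight fails to be super-additive by exactly one unit when an arity-$0$ factor is composed into an arity-$\geqslant 2$ factor, which is precisely why neither Theorem~\ref{thm:GaugeGrpBis} nor a pure grading argument can be cited verbatim, and why the $\calF_1$ condition in low arities is what restores a linear lower bound. The trade-off: the paper's extension argument is shorter and more structural (ideal plus graded quotient), while your single estimate treats the mixed terms uniformly and gives an effective rate of decay in the number of factors; for part (iii), once convergence is secured, both you and the paper defer to the formal pre-Lie identities of \cite{DotsenkoShadrinVallette16}, which hold termwise and pass to the limit.
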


\begin{proof}
The only difference from the ``standard'' proofs is that we do not have a literal filtration ensuring convergence, and hence an extra argument is needed. Let us note that the elements of $U_0$ that do not belong to the ``usual'' gauge group $\calF_1\hom_\Sy\big(\calC, {\eend}_A\big)_0$ come from the part of the complete convolution algebra supported on arity at least two, that is
 \[
{\displaystyle{\prod}_{n\ge 2}} \hom_{\Sy_n}\big(\calC(n), {\eend}_A(n)\big)_0 .
 \]
The special feature of the complete convolution algebra is that, if one assigns to each element $x\in \hom_{\Sy_n}\big(\calC(n), {\eend}_A(n)\big)_0$ extra weight grading $w(x)=n-1$, the algebra is graded: $w(x\star y)=w(x)+w(y)$ for all homogeneous elements $x,y$. We note that for elements supported on arity at least two, their weight grading is positive, and therefore for such elements one has convergence of the BCH series and the series for $\circledcirc$ since one has a finite sum in each weight graded component. As a whole, the complete pre-Lie subalgebra $U_0$ is an extension: it has an ideal $\calF_1\hom_\Sy\big(\calC, {\eend}_A\big)_0$, and the quotient by this ideal consists of (some) elements of positive weight. As a consequence, one may combine the filtration argument for $\calF_1\hom_\Sy\big(\calC, {\eend}_A\big)_0$ with the weight grading argument for elements of positive weight to conclude convergence of the two series we consider. The last statement is established analogously.
\end{proof}

The result we proved lays the groundwork for the following definition.

\begin{definition}[Deformation gauge group]\index{deformation gauge group}
The \emph{deformation gauge group} associated 
to a filtered dg cooperad $\calC$ and a complete dg module $A$ is defined by 
\begin{align*}
\widetilde{\Gamma}\coloneqq&\left(
\calF_1 \hom\big(\calC(0), {\eend}_A(0)\big)_0\times 
\calF_1 \hom\big(\calC(1), {\eend}_A(1)\big)_0\times \right.\\
&\left.{\displaystyle{\prod}_{n\ge 2}} \hom_{\Sy_n}\big(\calC(n), {\eend}_A(n)\big)_0
,  \BCH(\; ,\,), 0
\right)\ .
\end{align*}
\end{definition}

The group $\widetilde{\Gamma}$ does not appear to have a complete group structure, yet it contains two important complete groups (which have slightly incompatible filtrations). The first of them is the ``obvious'' gauge group $\calF_1\hom_\Sy\big(\calC, {\eend}_A\big)_0$ with the hom-set filtration on it. The second one, also useful in some applications, is the group on the set
\begin{align*}
\Gamma^\circ:= &
\calF_2 \hom\big(\calC(0), {\eend}_A(0)\big)_0\times \calF_1 \hom\big(\calC(1), {\eend}_A(1)\big)_0\times\\
& {\displaystyle{\prod}_{n\ge 2}} \hom_{\Sy_n}\big(\calC(n), {\eend}_A(n)\big)_0
\end{align*}
equipped with the filtration $\F_\bullet$ with
\begin{align*}
\F_k\Gamma^\circ:= &
\calF_{k+1}\hom\big(\calC(0), {\eend}_A(0)\big)_0\times
\calF_{k} \hom\big(\calC(1), {\eend}_A(1)\big)_0\times\\ 
&{\displaystyle{\prod}_{n\ge 2}} \calF_{k+1-n}\hom_{\Sy_n}\big(\calC(n), {\eend}_A(n)\big)_0 \ ,
\end{align*}
where we formally put $\calF_k B=B$ for all filtered modules $B$ and all $k\le 0$~.

\medskip

Even though the group $\widetilde{\Gamma}$ is not complete, arguments identical to those of Lemma \ref{lem:DefGaugeConv} prove the following result.

\begin{proposition}\label{prop:Extension}
All the results of Sections \ref{subsec:CompConvAlg} and~\ref{subsec:CompleteLieInt} hold true for the deformation gauge group $\widetilde{\Gamma}$ and its avatar $\widetilde{\mathfrak{G}}$.
\end{proposition}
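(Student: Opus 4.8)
The plan is to exploit the fact that \emph{every} statement in Sections~\ref{subsec:CompConvAlg} and~\ref{subsec:CompleteLieInt} is proved by purely formal manipulation of infinite series in a complete left-unital dg pre-Lie algebra, combined with the differential trick; the only place where completeness enters is to guarantee convergence of those series via Lemma~\ref{lem:Conv}. Consequently it suffices to verify convergence, for elements of $\widetilde{\Gamma}$ and its avatar $\widetilde{\mathfrak{G}}=(1+U_0,\circledcirc,1)$, of each series that occurs: the BCH series, the pre-Lie exponential $e^\lambda$ and its inverse the pre-Lie Magnus expansion $\Omega$, the circle product $\circledcirc$, the operator $\exp(\ad_\lambda)$, and the gauge-action series of Proposition~\ref{prop:GaugeGroupAction}, checking in addition that the group operations send $U_0$ to $U_0$ (resp. $1+U_0$ to $1+U_0$). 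Once convergence and stability are in place, the algebraic identities --- associativity and unitality of $\BCH$, the mutual inversion of $e^{(-)}$ and $\Omega$, the group axioms for $\circledcirc$, and the action axioms for the gauge formula --- follow verbatim from the proofs already given, since none of them uses anything beyond the pre-Lie relations and the legitimacy of rearranging convergent sums.

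First I would recall the extension structure underlying the proof of Lemma~\ref{lem:DefGaugeConv}: the complete pre-Lie subalgebra $U_0$ is an extension of the positively weight-graded piece $\prod_{n\ge 2}\hom_{\Sy_n}\big(\calC(n),\eend_A(n)\big)_0$ by the complete ideal $\calF_1\hom_\Sy\big(\calC,\eend_A\big)_0$, where the extra grading $w(x)=n-1$ on arity-$n$ elements satisfies $w(x\star y)=w(x)+w(y)$. Convergence of each of the series above then splits into two complementary arguments, exactly as for $\BCH$ and $\circledcirc$ in that lemma: on the ideal, the filtration $\calF_\bullet$ together with Lemma~\ref{lem:Conv} yields convergence precisely as in the ordinary gauge group; on the positive-weight quotient, each series reduces to a finite sum in every weight-graded component, since only finitely many products of positive-weight elements contribute to a fixed total weight. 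Combining the two handles $e^\lambda$, $\Omega$, $\exp(\ad_\lambda)$, and the action series (via the differential trick, as $\exp(\ad_\lambda)(\delta+\alpha)$), thereby establishing the analogues of Propositions~\ref{prop:GaugeGroupAction} and~\ref{prop:preLieExpLog} and of Theorems~\ref{thm:GaugeGrpBis} and~\ref{thm:GaugeActionBis} for $\widetilde{\Gamma}$ and $\widetilde{\mathfrak{G}}$.

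The main obstacle is that $\widetilde{\Gamma}$ is genuinely \emph{not} a complete group --- its natural filtration does not by itself force convergence --- so one cannot simply quote the completeness-based statements of the preceding sections. The care required is bookkeeping: for each operation one must check that the prescribed form is preserved, namely that the arity-$0$ and arity-$1$ components of the output lie in the correct filtration layers while the arity-$\ge 2$ tail remains arbitrary of positive weight, so that the mixed filtration/weight argument keeps applying at every stage, including under the iterations needed to verify the group and action axioms. Once this stability is confirmed, the two genuinely complete subgroups $\calF_1\hom_\Sy\big(\calC,\eend_A\big)_0$ and $\Gamma^\circ$ embed compatibly in $\widetilde{\Gamma}$, and all the quoted results transfer without further change.
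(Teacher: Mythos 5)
Your proposal is correct and takes essentially the same route as the paper: the paper's proof consists of the single remark that ``arguments identical to those of Lemma~\ref{lem:DefGaugeConv} prove the result,'' i.e.\ precisely the combination you describe of the filtration argument on the ideal $\calF_1\hom_\Sy\big(\calC, \eend_A\big)_0$ with the weight-grading argument (with $w(x)=n-1$ additive under $\star$) on the positive-weight, arity~$\geq 2$ part, applied to each series arising in Sections~\ref{subsec:CompConvAlg} and~\ref{subsec:CompleteLieInt}. You merely make explicit the stability bookkeeping that the paper leaves implicit.
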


We shall now give a definition of an $\infty$-morphism of two $\Omega \calC$-algebras. To do that, we shall encode $\Omega \calC$-algebra structures by Maurer--Cartan elements. Without loss of generality, we may consider two Maurer--Cartan elements $\alpha$ and $\beta$ of the complete convolution algebra $\a_{\, \calC, A}$ corresponding to two $\Omega \calC$-algebra structures on the same complete module $A$. Indeed, the data of an $\Omega \calC$-algebra structure on a complete dg module $(A,\F)$ and of another $\Omega \calC$-algebra structure~$\beta$ on a complete dg module $(B,\G)$ may be encoded by two Maurer--Cartan elements $\alpha$ and~$\beta$ of the convolution algebra $\hom_\Sy\big(\calC, {\eend}_{A\oplus B}\big)$ taking values, respectively, on $\eend_A$ and $\eend_B$. 
 
\begin{definition}[$\infty$-morphism]\label{def:InftyMorph}\index{$\infty$-morphism}\index{$\infty$-morphism!composition}
An  \emph{$\infty$-morphism} $\alpha \rightsquigarrow \beta$ between $\alpha$ and $\beta$ is a  degree $0$ element 
$f : \calC \to {\eend}_A$, such that $f_{0}\in \calF_1 \hom\big(\calC(0), {\eend}_A(0)\big)_0$ and satisfying the  equation
\begin{eqnarray}\label{eqn=InftyMor}
f \star \at = \beta \cc f\ . 
\end{eqnarray}
The \emph{composition} of two infinity-morphisms $f\colon\alpha \rightsquigarrow \beta$ and $g\colon\beta\rightsquigarrow \gamma$ is defined by the circle product $g \cc f$. 
\end{definition}

The assumption $f_{0}\in \calF_1 \hom\big(\calC(0), {\eend}_A(0)\big)_0$ is needed for the right-hand side $\beta \cc f$ to be well-defined. In this book, we only consider cooperads defined by the partial definition $\Delta_{(1)} : \calC \to \calC\, \widehat{\circ}_{(1)}\, \calC$, which splits operations into two. When the cooperad $\calC$ has trivial arity $0$ part, i.e. $\calC(0)=0$, it admits a ``full'' coassociative decomposition map  $$\Delta : \calC \to \calC\, \widehat{\circ} \, \calC$$ which splits operations into two levels, see \cite[Section~$5.8.2$]{LodayVallette12}. 
In this case, the associative product $\beta \cc f$ has the following simple description: 
\[
\beta\cc f : \calC
 \xrightarrow{\Delta} \calC\, \widehat{\circ} \, \calC
  \xrightarrow{\beta \, \widehat{\circ} f}  \eend_A \widehat{\circ} \eend_A
  \xrightarrow{\gamma}  \eend_A \ .
  \]
However, the full decomposition map for the cooperad $\calC$ fails to be  well-defined in the general case, but thanks to the assumption $$f_{0}\in \calF_1 \hom\big(\calC(0), {\eend}_A(0)\big)_0,$$ the product $\cc$ is well-defined, by a convergent series. In our setup, the cooperads have elements of arity zero, and in order to define a suitable notion of an $\infty$-morphism for the homotopy algebras like curved $\Ai$-algebras and curved $\Li$-algebras, the formalism of this section is unavoidable. In fact, the theory of complete curved $\Ai$-algebras over a complete local ring developed by L.~Positselski in \cite{Positselski12} has its own notion of $\infty$-morphism which we recover via Definition~\ref{def:InftyMorph}.

\begin{definition}[$\infty$-isotopy]\index{$\infty$-isotopy}
An \emph{$\infty$-isotopy} between the Maurer--Cartan elements $\alpha$ and $\beta$ is an $\infty$-morphism $f : \calC \to {\eend}_A$ satisfying  
\[f_1\in 1+\calF_1 \hom\big(\calC(1), {\eend}_A(1)\big)_0\ .\]
\end{definition} 

The following result is the generalisation of the main result of \cite[Section~$5$]{DotsenkoShadrinVallette16} to the complete case; the results of \textit{loc. cit. } concern the case of a weight graded dg cooperad $\calC$ and a dg module $A$, both viewed as complete objects with respectively the weight filtration and the trivial filtration.

\begin{theorem}\label{thm:DeligneGroupoidII}
For a filtered dg cooperad $\calC$ and for any complete dg module $A$, the set of  $\infty$-isotopies 
forms a subgroup of the group of  invertible $\infty$-morphisms of $\Omega \calC$-algebra structures on $A$, which is isomorphic to the deformation gauge group under the pre-Lie exponential map
$$ \widetilde{\Gamma} \cong \widetilde{\mathfrak{G}}= (\infty\textsf{-}\mathsf{iso}, \cc, \id_A)  \ . $$ 
The Deligne groupoid associated to the deformation gauge group  is isomorphic to the groupoid whose objects are $\Omega \calC$-algebras and whose 
morphisms are  $\infty$-isotopies
$$\mathsf{Deligne}\left(\a_{\calC,A}\right)\coloneqq\left(\mathrm{MC}(\a_{\calC,A}), \widetilde{\Gamma}
\right)\cong
\left(\Omega\calC\textsf{-}\mathsf{Alg}, \infty\textsf{-}\mathsf{iso}\right)
\ .$$
\end{theorem}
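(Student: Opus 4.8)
The plan is to reduce the whole statement to the single closed formula for the gauge action recorded in Theorem~\ref{thm:GaugeActionBis}, extended to the deformation gauge group by Proposition~\ref{prop:Extension}. On objects there is nothing to do: by Proposition~\ref{prop:MCOmegaC} the Maurer--Cartan elements of $\a_{\calC,A}$ are precisely the complete $\Omega\calC$-algebra structures on $A$, so $\mathsf{Deligne}(\a_{\calC,A})$ and $(\Omega\calC\text{-}\mathsf{Alg},\infty\text{-}\mathsf{iso})$ share the same objects. The task is therefore to match morphisms, i.e.\ to show that the pre-Lie exponential sets up a bijection between gauge symmetries $\lambda\in\widetilde{\Gamma}$ and $\infty$-isotopies, compatibly with source, target, and composition.

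The crux is a short computation. Fix $\alpha\in\MC(\a_{\calC,A})$, let $g=e^\lambda\in\widetilde{\mathfrak{G}}$ with $\lambda\in\widetilde{\Gamma}$, and set $\beta\coloneqq\lambda.\alpha$. By Theorem~\ref{thm:GaugeActionBis} one has $\beta=(g\star\alpha)\cc g^{-1}$, where $g^{-1}$ is the $\cc$-inverse in the group $\widetilde{\mathfrak{G}}$. Composing on the right with $g$ and using that the circle product makes the convolution algebra a right module over $\widetilde{\mathfrak{G}}$ (so that $(x\cc g^{-1})\cc g=x\cc(g^{-1}\cc g)=x\cc 1=x$, all valid for $\widetilde{\mathfrak{G}}$ by Proposition~\ref{prop:Extension}), I would obtain
\[
\beta\cc g=\big((g\star\alpha)\cc g^{-1}\big)\cc g=g\star\alpha ,
\]
which is exactly the defining equation~\eqref{eqn=InftyMor} of an $\infty$-morphism $g\colon\alpha\rightsquigarrow\beta$. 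Since $g\in 1+U_0$, its arity $0$ component lies in $\calF_1\hom(\calC(0),\eend_A(0))_0$ and its arity $1$ component lies in $\id+\calF_1\hom(\calC(1),\eend_A(1))_0$, so $g$ is in fact an $\infty$-isotopy. Reading the same equalities backwards, any $\infty$-isotopy $f\colon\alpha\rightsquigarrow\beta$ (whose underlying map lies in $1+U_0$ by definition) satisfies $\beta=(f\star\alpha)\cc f^{-1}=\lambda.\alpha$ for the unique $\lambda\in\widetilde{\Gamma}$ with $e^\lambda=f$ (Theorem~\ref{thm:GaugeGrpBis}); hence $\beta$ is the gauge transform of $\alpha$ and the correspondence is a bijection of hom-sets respecting source and target.

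To finish, I would verify that this bijection is a functor of groupoids. Composition of $\infty$-morphisms is the circle product by Definition~\ref{def:InftyMorph}, while the group law of $\widetilde{\mathfrak{G}}$ is also $\cc$; the pre-Lie exponential intertwines the $\BCH$ product on $\widetilde{\Gamma}$ with $\cc$ on $\widetilde{\mathfrak{G}}$ (Theorem~\ref{thm:GaugeGrpBis}, Proposition~\ref{prop:Extension}), and the identity gauge symmetry $\lambda=0$ maps to $e^{0}=\id_A$, the identity $\infty$-isotopy. This simultaneously yields the first assertion---that $\infty$-isotopies are invertible $\infty$-morphisms forming a subgroup isomorphic to $\widetilde{\mathfrak{G}}\cong\widetilde{\Gamma}$---and the claimed groupoid isomorphism $\mathsf{Deligne}(\a_{\calC,A})\cong(\Omega\calC\text{-}\mathsf{Alg},\infty\text{-}\mathsf{iso})$.

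The main obstacle is not the formal manipulation above but the bookkeeping that makes every term meaningful: one must confirm that the underlying maps of $\infty$-isotopies are exactly the group-like elements $1+U_0$ (matching the two arity conditions $f_0\in\calF_1\hom(\calC(0),\eend_A(0))_0$ and $f_1\in\id+\calF_1\hom(\calC(1),\eend_A(1))_0$ against the definition of $U_0$), and that the circle products and inverses entering $\beta\cc g$ and $(g\star\alpha)\cc g^{-1}$ actually converge even though $\widetilde{\Gamma}$ carries no genuine filtration. Both points are precisely what the mixed filtration/weight-grading argument of Lemma~\ref{lem:DefGaugeConv} and Proposition~\ref{prop:Extension} supply, so once they are invoked the proof collapses to the one-line computation displayed above.
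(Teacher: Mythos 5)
Your proof is correct and takes essentially the same route as the paper: the paper's own proof is a one-line citation of Proposition~\ref{prop:MCOmegaC}, Theorem~\ref{thm:GaugeGrpBis}, and Theorem~\ref{thm:GaugeActionBis} via their extensions in Proposition~\ref{prop:Extension}, which are exactly the ingredients you invoke. Your displayed computation $\beta \cc g = \big((g\star\alpha)\cc g^{-1}\big)\cc g = g\star\alpha$, together with the identification of $\infty$-isotopies with the group-like elements $1+U_0$, simply spells out the details that the paper compresses into the phrase ``direct corollary''.
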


\begin{proof}
This is a direct corollary of Proposition~\ref{prop:MCOmegaC}, Theorem~\ref{thm:GaugeGrpBis} and Theorem~\ref{thm:GaugeActionBis}, via their generalisations given at Proposition~\ref{prop:Extension}. 
\end{proof}

\chapter{The gauge origin of the twisting procedure}\footnotetext{\hrule\smallskip\noindent This material will be published by Cambridge University Press \& Assessment as ‘Maurer-Cartan Methods in Deformation Theory: the twisting procedure’ by Vladimir Dotsenko, Sergey Shadrin and Bruno Vallette. This version is free to view and download for personal use only. Not for re-distribution, re-sale or use in derivative works. \copyright Cambridge University Press \& Assessment}\label{sec:GaugeTwist}

In this chapter, we provide a first application of the complete operadic deformation theory developed in the previous section. Namely, we deal with the easiest example of gauge action: that when the gauge element is just an element of the underlying dg module, that is, concentrated in arity $0$. In this way, we get a conceptual interpretation of  the  twisting procedure of  a complete $\Ai$-algebra (or a complete $\Li$-algebra) by a Maurer--Cartan element. \\

The procedure of twisting a complete $\Ai$-algebra with a Maurer--Cartan element is a non-commutative analogue of the twisting procedure  for complete $\Li$-algebras. The former one plays a seminal role in the construction of the Floer cohomology of Lagrangian submanifolds  \cite{FOOO09I} and the latter one is used in crucial ways in deformation theory, rational homotopy theory, higher algebra, and quantum algebra; we shall discuss these applications in detail in the last chapter~\ref{sec:Applications}. \\

We start with the simpler $\Ai$-case, encoding it conceptually in the operadic deformation theory language, and then move to the analogous case of $\Li$-algebras. From the conceptual gauge action interpretation of the twisting procedure, we derive automatically ``all'' its known useful properties. We conclude this chapter with a criterion on quadratic operads which allows one to determine if the associated category of homotopy algebras admits a meaningful twisting procedure.

\section{Curved \texorpdfstring{$\Ai$}{Ai}-algebras}\label{sec:CurvedAiAlg}

In this section, we recall the classical definition of an $\Ai$-algebra and its variations (with a possible shift and possibly nonzero curvature), and explain how to use the Maurer--Cartan calculus in complete pre-Lie algebras to recover those notions. We state the results in the discrete case; to get the same notions in the filtered or complete setting, one just has to replace everywhere the endomorphism operad $\End_A$ by the complete endomorphism suboperad $\eend_A$ consisting of filtered maps.\\

Let us start with one of the most simple ns operads: the ns operad $\uAs$, which encodes unital associative algebras. It is one-dimensional in each arity $\uAs(n)=\k \upsilon_n$, for $n\ge 0$, and concentrated in degree $0$; its operadic structure is given by the formulas $\upsilon_k\circ_i\upsilon_l=\upsilon_{k+l-1}$\ . Let us now consider the linear dual ns cooperad $\uAs^*$, where we denote the dual basis by $\nu_n\coloneqq\upsilon_n^*$~. The infinitesimal decomposition coproduct, which is the dual of the partial composition products,
 is equal to  
$$\Delta_{(1)}(\nu_n)=\sum_{\substack{p+q+r=n\\ p,q,r \ge 0}} \nu_{p+1+r} \circ_{p+1} \nu_q\ .$$

We now consider the endomorphism ns operad $\End_A\coloneqq\left\{\Hom(A^{\otimes n}, \allowbreak A)\right\}_{n\ge 0}$ associated to any graded module $A$. Let us begin with noting that the convolution ns operad $\Hom(\uAs^*, \End_A)$  is canonically isomorphic to the operad $\End_A$ itself. Therefore the induced left unital pre-Lie algebra 
 \[\mathfrak{a}\coloneqq\left(\prod_{n\ge 0} \Hom(A^{\otimes n}, A),  \star, 1\right)\] is given by 
$$1=\id_A \quad \text{and} \quad f\star g = \sum_{i=1}^n f\circ_i g  \ ,$$
for $f\in \Hom(A^{\otimes n}, A)$. The Maurer-Cartan elements of this algebra are elements $\alpha$ of degree $-1$ satisfying the Maurer--Cartan equation 
$$\alpha\star \alpha=0\ . $$ 
Let us unfold this definition for an element $\alpha=(m_0, m_1, \ldots, m_n,\ldots)$. 
To ensure that we recover the ``classical'' notion of a shifted curved $\Ai$-algebra from \cite{CD01}, we shall denote $\theta\coloneqq m_0(1)\in A_{-1}$, $d\coloneqq m_1$, and 
\[\partial f \coloneqq d\circ_1 f - (-1)^{|f|} \sum_{i=1}^{n} f\circ_i d~,\]
for any degree-wise homogenous element $f\in  \Hom(A^{\otimes n}, A)$. 
With all that in mind, the datum of an element $\alpha$ is  a sequence of degree $-1$ operations $m_n\colon A^{\otimes n} \to A$, $n\ge 0$, and the Maurer--Cartan equation for $\alpha$ amounts to the conditions
\begin{eqnarray*}
&\text{arity}\ 0:& d \theta=0\ , \\
&\text{arity}\  1:& d^2=- m_2(\theta, -)-m_2(-, \theta)\ , \\
&\text{arity}\  2:& \partial m_2= 
-m_3(\theta, -,-)-m_3(-,\theta, -) -m_3(-,-,\theta)\ . \\
&\text{arity}\  3:& \partial m_3=-m_2(m_2(-,-), -)-m_2(-,m_2(-,-))-m_4(\theta, -, -,-)\\&&\qquad\qquad\ 
-m_4(-,\theta,  -,-)
-m_4(-,-,\theta, -)
-m_4(-,-,-,\theta)\ ,
\\
&\text{arity}\ n:& \partial m_n= -\sum_{\substack{p+q+r=n\\ 2\leq q \leq n-1}}m_{p+1+r}\circ_{p+1} m_q
-\sum_{i=1}^{n+1} m_{n+1}\circ_i \theta\ .
\end{eqnarray*}

\begin{definition}[Shifted curved $\Ai$-algebra]\index{$\Ai$-algebra!shifted curved }
The data  $(A, \theta, d, m_2, m_3, \ldots)$ of a graded module $A$ equip\-ped with a degree $-1$ element $\theta$ and degree $-1$ maps $d, m_2, m_3,\ldots$ satisfying the equations above is called a \emph{shifted curved $\Ai$-algebra}. The element $\theta$ is called the \emph{curvature}. \index{curvature}
\end{definition}

When the curvature $\theta$ vanishes, the operator $d$ squares to zero and thus gives rise to a differential, which is a derivation with respect to the binary operation $m_2$. The higher operations $m_n$, for $n\ge 3$, are then homotopies for the relation $$-\sum_{\substack{p+q+r=n\\ 2\leq q \leq n-1}}m_{p+1+r}\circ_{p+1} m_q=0.$$
This corresponds to replacing the operad $\uAs$ by the operad $\As$, which encodes (non-necessarily unital) associative algebras; that operad is isomorphic to $\overline{\End}_\k$, the suboperad of $\End_\k$ with trivial arity $0$ component: $\overline{\End}_\k(0)=0$~.

\begin{definition}[Shifted $\Ai$-algebra]\index{$\Ai$-algebra!}
A chain complex $(A, d, m_2, m_3, \ldots)$ eq\-ui\-pp\-ed with  degree $-1$ maps $ m_2, m_3,\ldots$ satisfying the equations above with $\theta=0$ is called a \emph{shifted $\Ai$-algebra}. 
\end{definition}

Given a shifted curved $\Ai$-algebra structure $(A, \theta', d', m'_2, m'_3, \ldots)$, one can choose to work on the desuspension $s^{-1}A$ of the underlying graded module. The degree of the operations and the signs involved in their relations will thus be modified. The curvature $\theta\coloneqq s^{-1}\theta'$ now has degree $-2$, the unary operator $d : s^{-1}A \to s^{-1}A$ still has degree $-1$, and the operations $m_n : (s^{-1}A)^{\otimes n} \to s^{-1}A$ now have degree $n-2$. They satisfy the following signed relations: 
\begin{eqnarray*}
&\text{arity}\ 0:& d\theta=0\ , \\
&\text{arity}\  1:& d^2=m_2(\theta, -)-m_2(-, \theta)\ , \\
&\text{arity}\  2:& \partial m_2=
-m_3(\theta, -,-)+m_3(-,\theta, -)-m_3(-,-,\theta)\ . \\
&\text{arity}\  3:& \partial m_3=m_2(m_2(-,-), -)-m_2(-,m_2(-,-))+m_4(\theta, -, -,-) \\&&\qquad\quad\ 
-m_4(-,\theta,  -,-)
+m_4(-,-,\theta, -)
-m_4(-,-,-,\theta)\ ,
\\
&\text{arity} \ n:& \partial m_n= \sum_{\substack{p+q+r=n\\ 2\leq q \leq n-1}}(-1)^{pq+r+1} m_{p+1+r}\circ_{p+1} m_q+\sum_{i=1}^{n+1} (-1)^{n-i} m_{n+1}\circ_i\theta\ .
\end{eqnarray*}
This corresponds to replacing the operad $\uAs\cong\End_\k$ by the ns endomorphism operad $\End_{\k s}$ associated to a degree $1$ dimension $1$ module $\k s$.

\begin{definition}[Curved $\Ai$-algebra]\label{def:CurvedAinfty}\index{$\Ai$-algebra!curved}
The data of a graded module $(A, \theta, \allowbreak d, \allowbreak m_2,\allowbreak m_3, \ldots)$ equipped with a degree $-2$ element $\theta$, a degree $-1$ map $d: A\to A$  and degree $n-2$ maps $m_n : A^{\otimes n}\to A$ satisfying the equations above is called a \emph{curved $\Ai$-algebra}. 
\end{definition}

If in a curved $\Ai$-algebra, the curvature vanishes, then the operator $d$ becomes a differential and the higher operations can be interpreted as homotopies for the signed relations, and we arrive at a classical notion of an $\Ai$-algebra \cite{Stasheff63}. This corresponds to replacing the operad $\As\cong\overline{\End}_\k$ by the ns suboperad $\overline{\End}_{\k s}\subset \End_{\k s}$~.

\begin{definition}[$\Ai$-algebra]\index{$\Ai$-algebra}
A chain complex $(A, d, m_2, m_3, \ldots)$ equipped with maps $m_n : A^{\otimes n}\to A$ degree $n-2$, for $n\ge 2$, satisfying the   equations above with $\theta=0$ is called an \emph{$\Ai$-algebra}. 
\end{definition}

To summarise, the Maurer--Cartan calculus allows us to encode all the four existing notions of $\Ai$-algebras using Maurer--Cartan elements of the convolution algebras associated the following linear dual cooperads: 
\[
\begin{array}{|l|c|c|}
\hline
& \text{curved} & \text{uncurved} \\
\hline
\rule{0pt}{11pt} \text{shifted} & {\End}_{\k} &\overline{\End}_{\k}\\
\hline
\rule{0pt}{11pt} \text{classical} &   {\End}_{\k s}& \overline{\End}_{\k s}\\
\hline
\end{array}
\]
The convolution pre-Lie algebra  $\Hom(\As^*, \End_A)\cong \prod_{n \ge 1}\lbrace\Hom(A^{\otimes n}, A)\rbrace$ encoding shifted $\Ai$-algebras is a pre-Lie subalgebra of the algebra \[\Hom(\uAs^*, \End_A)\cong \allowbreak \prod_{n \ge 0}\lbrace\Hom(A^{\otimes n}, A)\rbrace \] encoding shifted curved $\Ai$-algebras. But, as we will show in Section~\ref{subsec:TwistGrp}, it is crucial to encode the former notion in the latter bigger pre-Lie algebra, since there, the gauge group of symmetries is big enough to host the twisting procedure. We also remark that all these definitions, whether curved or uncurved, acquire a conceptual explanation via the Koszul duality theory~\cite{Positselski11, HirshMilles12}. 

\section{The twisting procedure as gauge group action}\label{subsec:TwistGrp}
In this section, we pass to the complete setting and we  apply the general theory developed in Chapter~\ref{sec:TopoDefTh} to the discrete (and therefore complete) ns cooperad  $\C=\End^c_{\k s^{-1}}\coloneqq\End_{\k s}^*$. This will allow us to  treat the 
deformation theory of curved $\Ai$-algebras. We chose this particular case, since the sign issue is a complicated problem in operad theory; the reader interested in the shifted case  has just to remove ``all'' the signs. 
The  cooperad $\calC$  is spanned by  one element 
$$\nu_n : \left(s^{-1}\right)^n\mapsto (-1)^{\frac{n(n-1)}{2}} s^{-1}$$ 
of degree $n-1$, in each arity $n\ge 0$. Its infinitesimal decomposition coproduct is given by 
$$\Delta_{(1)}\left(\nu_n\right)
=\sum_{\substack{p+q+r=n\\  p,q,r \ge 0}}(-1)^{p(q+1)} \nu_{p+1+r}\circ_{p+1} \nu_q
\ . $$

\begin{remark}
With the sign convention $\nu_n : \left(s^{-1}\right)^n\mapsto  s^{-1}$, we get the same signs as the ones of  \cite[Chapter~9]{LodayVallette12}. With the present convention, we actually get the signs which are more common in the existing literature. 
\end{remark}

To any complete graded module $A$, we  associate the complete graded left-unital convolution pre-Lie algebra 
$$\a_{\, \calC, A}\coloneqq
\big(
\hom_{\mathbb{N}}\big(\calC, {\eend}_A\big), \star, 1 
\big)\ ,$$
whose underling complete graded module is isomorphic to the product $$\prod_{n\in \NN}s^{ 1-n} \hom \big(A^{{\otimes} n}, A\big).$$
As a consequence of Section~\ref{sec:CurvedAiAlg} and Proposition~\ref{prop:MCOmegaC}, its Maurer--Cartan elements, that is degree $-1$ maps $\alpha : \calC \to {\eend}_A$ satisfying~$\alpha\star \alpha =0$,
are in one-to-one correspondence with 
complete curved $\Ai$-algebra structures on $A$, under the assignment $m_n\coloneqq\alpha(\mu_n)$, for $n\ge 0$.\\

The deformation gauge group associated to $\a$ is equal to 
\[\widetilde{\Gamma}\cong\left(
\F_1 A_{-1} \times 
\calF_1 \hom(A,A)_0\times 
{\displaystyle{\prod}_{n\ge 2}} \hom \big(A^{{\otimes} n}, A\big)_{n-1}
,  \BCH(\; ,\,), 0
\right)
\]
and is filtered isomorphic to 
\[
\widetilde{\mathfrak{G}}\cong 
\left(
\F_1 A_{-1} \times 
\left(1+ \calF_1 \hom(A,A)_0\right)
\times 
{\displaystyle{\prod}_{n\ge 2}} \hom \big(A^{{\otimes} n}, A\big)_{n-1}
,  \circledcirc, 1
\right)\]
under the pre-Lie exponential and pre-Lie logarithm maps, 
by Section~\ref{sec:ComOpDefTh}. In the complete left-unital  pre-Lie algebra $\a$, this deformation  gauge group acts on Maurer--Cartan elements via the following formula of  Theorem~\ref{thm:GaugeActionBis}: 
$$ e^\lambda\cdot\at=\left(e^\lambda \star \at\right) \circledcirc e^{-\lambda}  \ ,$$
as long as $\lambda_0$ and $\lambda_1$ live in the first layer of the filtration, that is $\lambda_0\in \F_1 A_{-1}$ and $\lambda_1\in \calF_1 \hom(A,A)_0$~.

\begin{remark}
The comprehensive deformation theory treatment in the monograph \cite{Markl12} by M.~Markl  treats  the case of free complete modules over a complete local ring.  A gauge group in this context corresponds to the group structure on $1+ \calF_1 \hom(A,A)_0$, see Chapter~$4$ of \textit{loc. cit.}. This is enough to describe the moduli spaces  of associative algebra structures up to isomorphism, in the complete setting.  The  deformation gauge group we use describes faithfully the moduli spaces of complete curved $\Ai$-algebras up to their $\infty$-isotopies.
\end{remark}

Let us now study the first and easiest example of a gauge action on Maurer--Cartan elements: 
we consider elements of the deformation gauge group $\widetilde{\Gamma}$ supported on arity $0$. Let $a\in \F_1 A_{-1}$, for brevity, we still denote by $a$ the element 
$(a, 0, \ldots)$
of $\widetilde{\Gamma}$ and by $1+a=e^a=(a, 1, 0, \ldots)$ the element of 
$\widetilde{\mathfrak{G}}$. 
By the general theory developed above, the action $e^a \cdot \alpha$ on the complete curved $\calA_\infty$-algebra structure encoded by the Maurer--Cartan element $\alpha$  gives us automatically a new complete curved $\calA_\infty$-algebra structure. 
We note that our strategy exhibits an important change of paradigm: first, we get a new complete curved $\Ai$-algebra structure by a conceptual argument (gauge group action) and then we make it explicit. Usually, in the literature like in \cite[Chapter~3]{FOOO09I}, the explicit form of the twisted operations  is given first and then proved (by direct computations) to satisfy the relations of a complete curved $\Ai$-algebra. 

\begin{proposition}\label{prop:TwCurvedGauge}
The formula for the generating operations $m_n^a$ of the complete curved $\calA_\infty$-algebra $e^a\cdot\alpha$ is
$$m_n^a=\sum_{r_0, \ldots, r_n\ge 0} (-1)^{\sum_{k=0}^n kr_k} m_{n+r_0+\cdots+r_n}\big(a^{r_0}, -, a^{r_1}, -, \ldots,  - , a^{r_{n-1}}, -,a^{r_n}  \big)\ , $$
for $n\ge 0$, where the notation $a^r$ stands  for $a^{\otimes r}$~. 
\end{proposition}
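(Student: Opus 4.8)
The plan is to specialise the closed formula for the gauge action from Theorem~\ref{thm:GaugeActionBis}, namely $e^\lambda\cdot\alpha=(e^\lambda\star\alpha)\cc e^{-\lambda}$, to the case $\lambda=a$ supported in arity $0$, and then to unfold the circle product combinatorially. First I would record two simplifications coming from arity bookkeeping. In the product $f\star g$ the element $f$ is applied to the outer term $\nu_{p+1+r}$ of $\Delta_{(1)}$, which always has arity $p+1+r\ge 1$; hence $a$, being supported in arity $0$, satisfies $a\star a=0$ and $a\star\alpha=0$. Consequently the pre-Lie exponentials degenerate to $e^{a}=1+a$ and $e^{-a}=1-a$, and $e^{a}\star\alpha=1\star\alpha+a\star\alpha=\alpha$ since $1$ is the left unit. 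The gauge action therefore collapses to a single circle product
\[
e^{a}\cdot\alpha=\alpha\cc(1-a)=\sum_{k\ge 0}\frac{(-1)^k}{k!}\,\{\alpha;\underbrace{a,\dots,a}_{k}\},
\]
which converges since $-a\in\F_1A_{-1}$ lies in $1+U_0$, by Proposition~\ref{prop:Extension}.

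Next I would identify each symmetric brace with an operadic multi-insertion. In the convolution pre-Lie algebra the product is $f\star g=\sum_i f\circ_i g$, and the recursion of Definition~\ref{def:SymBrace} exactly cancels the nested terms, so that $\{\alpha;a,\dots,a\}$ equals the sum over all insertions of the $k$ arity-$0$ elements $a$ into $k$ distinct inputs of $\alpha$. Here $a$ is even as an element of the convolution algebra — its degree in $A$ is $-1$, but the arity-$0$ component carries the shift $s^{1-0}=s$ — so permuting the $k$ copies produces no sign, and, the braces being graded symmetric, each unordered choice of occupied slots is counted $k!$ times, cancelling the prefactor $1/k!$. Splitting $\{\alpha;a,\dots,a\}$ according to the arity $n$ of the output, the contributions come from $m_{n+k}$; recording, for a fixed insertion pattern, the number $r_j$ of copies of $a$ placed in the $j$-th of the $n+1$ gaps between the $n$ surviving inputs ($j=0,\dots,n$) yields a bijection between insertion patterns and tuples $(r_0,\dots,r_n)$ with $r_0+\cdots+r_n=k$. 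This reproduces the arguments $\big(a^{r_0},-,a^{r_1},-,\dots,-,a^{r_n}\big)$ of $m_{n+r_0+\cdots+r_n}$ in the claimed formula.

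The remaining, and most delicate, step is the sign, which I expect to be the main obstacle. The brace must be transported from the shifted convolution algebra, built on the spaces $s^{1-n}\hom(A^{\otimes n},A)$ with the sign convention $\nu_n\colon(s^{-1})^n\mapsto(-1)^{n(n-1)/2}s^{-1}$, back to the honest maps $m_{n+k}\colon A^{\otimes(n+k)}\to A$. Commuting the (de)suspensions past each inserted arity-$0$ element according to the Koszul rule produces a sign depending on the gap $j$ in which that element sits; combined with the global factor $(-1)^k=(-1)^{\sum_j r_j}$ coming from $1-a$, these contributions must amalgamate into the single sign $(-1)^{\sum_{j=0}^{n} j\,r_j}$. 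This is the only place where genuine computation, rather than formal manipulation, is required: once the sign attached to a single insertion is pinned down from the compatibility of the suspension conventions with $\Delta_{(1)}$, the general case follows by multiplicativity over the inserted copies, completing the identification of $m_n^a$ with the stated series.
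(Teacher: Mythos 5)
Your outline reproduces the paper's proof in all of its formal steps: you specialise Theorem~\ref{thm:GaugeActionBis}, observe that $a\star x=0$ for every $x$ because $a$ is concentrated in arity $0$, so that the pre-Lie exponentials truncate to $1\pm a$ and the gauge action collapses to a single circle product, and you then unfold that circle product as a sum over insertions of copies of $a$ into the inputs of the operations $m_{n+k}$, with the $1/k!$ cancelling against the symmetry of the braces and with no Koszul signs from permuting the $a$'s since $a$ has degree $0$ as an element of the convolution algebra. All of this is correct and is essentially how the paper proceeds.

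The gap is at the one step you defer. Under the paper's conventions, the structure signs of the decomposition map of $\calC=\End^c_{\k s^{-1}}$ already \emph{are} the signs of the statement: the component of $\Delta(\nu_n)$ with upper level $\big(\nu_0^{r_0},\nu_1,\nu_0^{r_1},\ldots,\nu_1,\nu_0^{r_n}\big)$ carries the sign $(-1)^{\sum_k kr_k}$, as one checks from $\Delta_{(1)}(\nu_n)=\sum(-1)^{p(q+1)}\,\nu_{p+1+r}\circ_{p+1}\nu_q$ with $q=0$. Hence $\alpha\cc(1+a)$ unfolds to the displayed formula with nothing left to ``amalgamate'', because $1+a$ has degree $0$ and evaluation produces no further Koszul signs; this is precisely why the paper's proof computes $e^{-a}\cdot\alpha=\big((1-a)\star\alpha\big)\cc(1+a)=\alpha\cc(1+a)$. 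Your starting expression is instead $e^{a}\cdot\alpha=\alpha\cc(1-a)$, which carries one extra factor $(-1)$ per inserted copy of $a$ on top of those same cooperad signs, so it unfolds to the sign $(-1)^{\sum_k(k+1)r_k}$ in place of $(-1)^{\sum_k kr_k}$, i.e.\ to the stated formula with $a$ replaced by $-a$. The computation you postpone therefore cannot produce the claimed sign; carried out honestly, it contradicts it. To be fair, this mismatch is inherited from the paper itself, whose statement reads $e^{a}\cdot\alpha$ while its proof computes $e^{-a}\cdot\alpha$; but a complete argument must resolve it one way or the other --- either compute the action of $e^{-a}$, as the paper does, or change $a$ to $-a$ in the conclusion --- rather than assert that the suspension signs for $\alpha\cc(1-a)$ come out to $(-1)^{\sum_k kr_k}$, which they do not.
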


\begin{proof}
In the present case, the inverse of the pre-Lie exponential $e^a$ is equal to~$e^{-a}=1-a$. Therefore, the formula of the gauge action given in Theorem~\ref{thm:GaugeActionBis} is 
\begin{eqnarray*}e^{-a}\cdot\at=\left(e^{-a} \star \at\right) \circledcirc e^{a} =
\big((1-a) \star \alpha \big) \circledcirc (1+a)=\alpha\circledcirc (1+a)\ ,
\end{eqnarray*}
since $\lambda\star\rho=0$ for any $\rho\in \a$ and since $\star$ is linear on the left-hand side.
The image of the element $\nu_n$ under the $\alpha\circledcirc (1+a)$ is equal to 
$\big(\alpha\circ (1+a)\big)\big(\Delta(\nu_n)\big)$. One can easily see that the part of the image of the element $\nu_n$ under the decomposition map $\Delta$ of the cooperad $\calC$ with only $\nu_0$ and $\nu_1$ on the right-hand side is equal to 
$$
\sum_{r_0, \ldots, r_n\ge 0} (-1)^{\sum_{k=0}^n kr_k} \nu_{n+r_0+\cdots+r_n}\circ \big(\nu_0^{r_0}, \nu_1, \nu_0^{r_1}, \nu_1, \ldots,  \nu_1 , \nu_0^{r_{n-1}}, \nu_1,\nu_0^{r_n}  \big)
$$
Finally, the sign appearing in the formula for the $m_n^a$ is the same since the element $1+\lambda$ has degree~$0$. 
\end{proof}

Explicitly, the first of these twisted operations are:
\begin{eqnarray*}
&\text{arity} \ 0\ :& \theta^a\coloneqq m_0^a=\theta+d(a)+m_2(a,a)+m_3(a,a,a)+\cdots\ , \\
&\text{arity} \ 1\ :& d^a\coloneqq m_1^a=d(-)+m_2(a,-)-m_2(-,a)+m_3(a,a,-)-m_3(a,-,a)\\ && \quad \quad\ \   +m_3(-,a,a,)+\cdots\ , \\
&\text{arity} \ 2\ :& m_2^a=m_2(-,-)+m_3(a,-,-)-m_3(-,a,-)+m_3(-,-,a)+\cdots \ .
\end{eqnarray*}

We immediately recover the following result going back to \cite{FOOO09I}.

\begin{theorem}\label{thm:TwProcGp}
\index{$\Ai$-algebra!twisted complete curved}
Under the formula of Proposition~\ref{prop:TwCurvedGauge}, any element $a\in \F_1 A_{-1}$ of 
a complete curved $\calA_\infty$-algebra $(A, \theta, d, m_2, m_3, \ldots)$ induces a (twisted) complete
curved $\calA_\infty$-algebra 
 \[
(A, \theta^a, d^a, m_2^a, m_3^a,\ldots)
 \]
This twisted complete curved $\calA_\infty$-algebra has a trivial curvature $\theta^a=0$, that is produces an  $\calA_\infty$-algebra, if and only if the element $a$ satisfies the Maurer--Cartan equation: 
\begin{eqnarray}\label{eqn:MC}
\theta+da +\sum_{n\ge 2}m_n(a, \ldots, a)=0\ .
\end{eqnarray}\index{Maurer--Cartan element!complete curved $\calA_\infty$-algebra}
\end{theorem}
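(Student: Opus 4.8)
The plan is to obtain both assertions as essentially immediate consequences of the conceptual framework set up in Chapter~\ref{sec:TopoDefTh} and in the present one, rather than by a direct verification of the curved $\Ai$-relations. The whole point of the gauge-theoretic viewpoint is precisely that the relations hold automatically once the twisted structure is exhibited as a gauge transform of a Maurer--Cartan element.

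First I would recall that the given complete curved $\Ai$-algebra $(A,\theta,d,m_2,m_3,\ldots)$ corresponds, under the dictionary of Section~\ref{sec:CurvedAiAlg} together with Proposition~\ref{prop:MCOmegaC}, to a Maurer--Cartan element $\alpha$ of the complete convolution pre-Lie algebra $\a_{\calC,A}$ with $\calC=\End^c_{\k s^{-1}}$, via the assignment $m_n=\alpha(\nu_n)$. An element $a\in\F_1 A_{-1}$ determines the arity-$0$ element $(a,0,0,\ldots)$ of the deformation gauge group $\widetilde{\Gamma}$; since its arity-$0$ and arity-$1$ components $\lambda_0=a$ and $\lambda_1=0$ lie respectively in the required layers $\F_1 A_{-1}$ and $\calF_1\hom(A,A)_0$, the gauge action is well-defined. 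By Proposition~\ref{prop:Extension}, which extends the gauge-action results of Sections~\ref{subsec:CompConvAlg} and~\ref{subsec:CompleteLieInt} to $\widetilde{\Gamma}$, the set of Maurer--Cartan elements is preserved by this action. Hence $e^a\cdot\alpha$ is again a Maurer--Cartan element, and reading the correspondence backwards produces a complete curved $\Ai$-algebra structure on $A$ whose generating operations are exactly the $m_n^a$ of Proposition~\ref{prop:TwCurvedGauge}. This settles the first assertion with no further computation.

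For the statement about the curvature I would specialise the formula of Proposition~\ref{prop:TwCurvedGauge} to $n=0$. In this case there are no internal slots for the remaining inputs, the exponent $\sum_{k=0}^{0}kr_k=0$ trivialises all signs, and the formula collapses to
\[
\theta^a=m_0^a=\sum_{r\ge 0} m_r(a,\ldots,a)=\theta+da+\sum_{n\ge 2}m_n(a,\ldots,a),
\]
using $\theta=m_0$ and $d=m_1$. Therefore $\theta^a$ vanishes precisely when $a$ satisfies the Maurer--Cartan equation~\eqref{eqn:MC}. Finally, a complete curved $\Ai$-algebra with vanishing curvature is exactly a complete $\Ai$-algebra, as noted after Definition~\ref{def:CurvedAinfty}: the twisted unary map $d^a=m_1^a$ then squares to zero and the higher $m_n^a$ play the role of the usual homotopies. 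This gives the last clause.

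I do not expect a genuine obstacle here: once the gauge action on Maurer--Cartan elements and its extension to $\widetilde{\Gamma}$ are in place, the theorem is close to a corollary. The only points requiring care are bookkeeping ones, namely checking that the filtration hypotheses on $\lambda_0$ and $\lambda_1$ are met so that the action is defined, and confirming the sign and indexing conventions when reading off $\theta^a$ from the arity-$0$ part of Proposition~\ref{prop:TwCurvedGauge}.
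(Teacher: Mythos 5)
Your proposal is correct and follows essentially the same route as the paper: the first assertion is obtained conceptually from the fact that the arity-$0$ gauge group element $(a,0,0,\ldots)\in\widetilde{\Gamma}$ acts on Maurer--Cartan elements of $\a_{\calC,A}$ (with the filtration hypotheses $\lambda_0\in\F_1A_{-1}$, $\lambda_1=0$ satisfied), and the second by reading off the arity-$0$ component $\theta^a=\theta+da+\sum_{n\ge 2}m_n(a,\ldots,a)$ of Proposition~\ref{prop:TwCurvedGauge}. This is precisely how the paper treats the theorem as an immediate corollary of the gauge-action interpretation.
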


This conceptually explains why the twisting procedure on associative algebras or complete $\calA_\infty$-algebras requires the twisting element to satisfy the Maurer--Cartan equation. Without this condition, one would   \emph{a priori}  get a complete curved $\calA_\infty$-algebra. In other words, the (left-hand side of the) Maurer--Cartan equation of an element $a$ is the curvature of the complete curved $\calA_\infty$-algebra twisted by the element $a$. 

\begin{proposition}\label{lem:subGaugeGroup}
The following assignment defines a monomorphism of gr\-ou\-ps 
\begin{eqnarray*}
\left(\F_1 A_{-1}, +, 0\right) &\rightarrowtail& \widetilde{\Gamma}=\big(
\F_1 A_{-1} \times 
\calF_1 \hom(A,A)_0\times 
{\displaystyle{\prod}_{n\ge 2}} \hom \big(A^{{\otimes} n}, A\big)_{n-1}
,
\\ && \qquad\  \mathrm{BCH}(-,-), 0
\big)\\
a &\mapsto& (a, 0, 0, \ldots)\ .
\end{eqnarray*}

\end{proposition}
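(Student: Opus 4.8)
The plan is to reduce the statement to a single computation showing that the Baker--Campbell--Hausdorff product degenerates to ordinary addition on the arity-$0$ part of the deformation gauge group, after which both the homomorphism property and injectivity become immediate. First I would record what the image elements look like: for $a\in\F_1 A_{-1}$ the element $(a,0,0,\ldots)$ is the map $\lambda^a\colon\calC\to\eend_A$ which equals $a$ on the generator $\nu_0$ and vanishes on every $\nu_n$ with $n\ge1$; in particular it genuinely lies in $\widetilde{\Gamma}$, since its arity-$0$ component sits in $\F_1 A_{-1}$ as required, while its arity-$1$ component is zero, hence \emph{a fortiori} in $\calF_1$. The assignment $a\mapsto\lambda^a$ is visibly additive and injective as a map of underlying sets, so everything comes down to comparing the group law $\BCH(\lambda^a,\lambda^b)$ with $\lambda^{a+b}=\lambda^a+\lambda^b$.

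The heart of the argument is the vanishing of the pre-Lie products $\lambda^a\star\lambda^b=\lambda^b\star\lambda^a=0$. I would prove this directly from the convolution formula
\[
\lambda^a\star\lambda^b \;=\; \gamma_{(1)}\circ\bigl(\lambda^a\,\widehat{\circ}_{(1)}\,\lambda^b\bigr)\circ\Delta_{(1)},
\]
together with the explicit coproduct $\Delta_{(1)}(\nu_n)=\sum_{p+q+r=n}(-1)^{p(q+1)}\,\nu_{p+1+r}\circ_{p+1}\nu_q$ of the cooperad $\calC=\End^c_{\k s^{-1}}$. In every summand the \emph{outer} corolla, on which the left factor $\lambda^a$ acts, is $\nu_{p+1+r}$ with $p,r\ge0$, hence of arity $p+1+r\ge1$; since $\lambda^a$ is supported on $\nu_0$ alone, the map $\lambda^a\,\widehat{\circ}_{(1)}\,\lambda^b$ annihilates each such term, giving $\lambda^a\star\lambda^b=0$. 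The identical reasoning with the roles reversed yields $\lambda^b\star\lambda^a=0$. Consequently the Lie bracket $[\lambda^a,\lambda^b]=\lambda^a\star\lambda^b-(-1)^{|\lambda^a||\lambda^b|}\lambda^b\star\lambda^a$ vanishes.

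Since the Baker--Campbell--Hausdorff series $\BCH(x,y)=x+y+\tfrac12[x,y]+\cdots$ is, beyond the linear term, an infinite sum of iterated brackets of $x$ and $y$, the vanishing of $[\lambda^a,\lambda^b]$ forces every higher term to vanish, leaving $\BCH(\lambda^a,\lambda^b)=\lambda^a+\lambda^b=\lambda^{a+b}$. This is precisely the homomorphism identity; the unit $0\in\F_1 A_{-1}$ maps to $(0,0,\ldots)$, the identity of $\widetilde{\Gamma}$, and injectivity holds because $\lambda^a=0$ forces $a=0$ upon reading off the arity-$0$ component. No convergence issue arises, as the product is literally finite. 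I do not anticipate a genuine obstacle here: the only point demanding care is the bookkeeping of $\Delta_{(1)}$ and the observation that an arity-$0$ map can never occupy the outer slot of a nontrivial infinitesimal decomposition, which is exactly what kills the bracket and makes the subgroup abelian (recovering, via Lemma~\ref{lem:DefTwist} and Theorem~\ref{thm:TwProcGp}, that successive twistings by $a$ and $b$ amount to a single twisting by $a+b$).
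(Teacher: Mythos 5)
Your proof is correct and follows essentially the same route as the paper: the paper's own argument is precisely that $\BCH(a,b)=a+b$ because elements supported in arity $0$ have vanishing brackets, which is what you establish (the paper leaves the vanishing implicit, while you verify it explicitly from the convolution formula and the coproduct $\Delta_{(1)}$, noting that the outer corolla always has arity at least one). The additional checks you include — well-definedness of the image in $\widetilde{\Gamma}$, preservation of the unit, and injectivity via the arity-$0$ component — are exactly the points the paper dismisses as routine.
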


\begin{proof}
It is enough to check the compatibility  with respect to the group products, that is 
$$\mathrm{BCH}(a, b)=a+b\ ,$$
which holds true since $a$ and $b$ are supported on arity $0$: their brackets appearing in the Baker--Campbell--Hausdorff formula vanish. 
\end{proof}

\begin{corollary}\label{cor:twa+b}
Twisting a complete curved $\calA_\infty$-algebra $(A, \theta, d, m_2, m_3, \ldots)$ first by an element $a$ and then by an element $b$ amounts to twisting it by $a+b$~. 
\end{corollary}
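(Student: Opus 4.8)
The plan is to read this off from the identification of the twisting procedure with the gauge group action established in this section, so that the corollary becomes a manifestation of the associativity of a group action combined with the fact that the group law on arity-$0$ elements is simply addition. By the discussion preceding Proposition~\ref{prop:TwCurvedGauge} and by Theorem~\ref{thm:TwProcGp}, twisting the Maurer--Cartan element $\alpha$ that encodes the complete curved $\calA_\infty$-algebra $(A,\theta,d,m_2,m_3,\ldots)$ by an element $a\in\F_1 A_{-1}$ is precisely the action on $\alpha$ of the arity-$0$ group element corresponding to $a$ under the monomorphism of Proposition~\ref{lem:subGaugeGroup}.

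First I would invoke Proposition~\ref{prop:Extension}, which extends to the deformation gauge group $\widetilde{\Gamma}$ all the results of Section~\ref{subsec:CompConvAlg}, and in particular the assertion of Theorem~\ref{prop:GaugeActionMC} that the gauge formula defines a \emph{left} action of the group on the set of Maurer--Cartan elements $\mathrm{MC}(\a_{\calC,A})$. For a left action the composite of the actions of two group elements is the action of their product, so, viewing $a$ and $b$ inside $\widetilde{\Gamma}$, twisting first by $a$ and then by $b$ gives
\[
b.(a.\alpha)=\big(\mathrm{BCH}(b,a)\big).\alpha~.
\]

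Then I would appeal to Proposition~\ref{lem:subGaugeGroup}: since $a$ and $b$ are supported in arity $0$, every iterated bracket occurring in the Baker--Campbell--Hausdorff series vanishes, whence $\mathrm{BCH}(b,a)=b+a=a+b$. Substituting this back into the displayed identity shows that the composite of the two twists coincides with the action of $a+b$, that is, with the twisting of $\alpha$ by $a+b$, which is exactly the claim.

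Because the statement is a purely formal consequence of the group-action interpretation of the twisting procedure, there is no genuine computational difficulty to overcome. The only point to watch is the bookkeeping of the composition order against the left-action convention $b.(a.\alpha)=\mathrm{BCH}(b,a).\alpha$; this is harmless here precisely because arity-$0$ elements commute, so that $a+b=b+a$ and the order in which the two twists are performed is irrelevant.
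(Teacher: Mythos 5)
Your proof is correct and is essentially the paper's own argument: the paper proves the corollary as a direct consequence of Proposition~\ref{lem:subGaugeGroup}, i.e.\ of the fact that $a\mapsto(a,0,0,\ldots)$ is a group monomorphism $(\F_1 A_{-1},+,0)\rightarrowtail\widetilde{\Gamma}$, which is exactly the combination you spell out of the left-action property of the gauge group on Maurer--Cartan elements with the vanishing of all brackets of arity-$0$ elements in the BCH series. Your version merely makes explicit the intermediate identity $b.(a.\alpha)=\mathrm{BCH}(b,a).\alpha$ that the paper leaves implicit.
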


\begin{proof}
This is a direct corollary of Proposition~\ref{lem:subGaugeGroup}. 
\end{proof}

The presence of the curvature element plays another peculiar role in the deformation theory of curved $\Ai$-algebras, in a sense trivialising their homotopy properties, as we shall now see. This behaviour is sometimes called the \emph{Kontsevich--Positselski vanishing phenomenon}\index{Kontsevich--Positselski vanishing phenomenon} in the literature, similar results hold when computing the Hochschild (co)homology in the curved case, see~\cite{MR3084707}. Note that this trivialisation always occurs over a field, but is typically unavailable in many interesting cases like the curved $\Ai$-algebras used when working with matrix factorisations, as argued in~\cite{ArmstrongClarke15}. 

\begin{proposition}[{\cite[Section~7.3]{Positselski11}\cite[Theorem~5.4]{ArmstrongClarke15}}]\label{prop:KPphenom}
Any  curved $\calA_\infty$-al\-ge\-bra $(A, \theta, d, m_2, m_3, \ldots)$, for which there exists a linear map 
$A \to \k$ sending $\theta$ to $1$,
is gauge equivalent to its  truncated  curved $\calA_\infty$-algebra $(A, \theta, 0, 0, \ldots, )$~.
\end{proposition}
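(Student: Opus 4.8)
The plan is to realise both structures as Maurer--Cartan elements of the complete convolution pre-Lie algebra $\a_{\,\calC,A}$ with $\calC=\End_{\k s}^*$ studied in Section~\ref{subsec:TwistGrp}, and to exhibit an element of the deformation gauge group $\widetilde{\Gamma}$ carrying one to the other. By Proposition~\ref{prop:MCOmegaC} the given curved $\Ai$-algebra corresponds to a Maurer--Cartan element $\alpha=(\theta,d,m_2,m_3,\ldots)$ and its truncation to $\alpha_0=(\theta,0,0,\ldots)$; the latter is Maurer--Cartan since $\alpha_0\star\alpha_0=0$ (the arity-$0$ element $\theta$ cannot be plugged into anything). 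Both elements share the same arity-$0$ component, so by Lemma~\ref{lem:DefTwist} it suffices to trivialise the difference $\beta\coloneqq\alpha-\alpha_0=(0,d,m_2,m_3,\ldots)$, a Maurer--Cartan element of the twisted algebra $\a_{\,\calC,A}^{\alpha_0}$ supported in arities $\geq 1$, by a gauge transformation; translating back via $\alpha=\alpha_0+\beta$ then gives the desired equivalence. The twisted differential is $D=[\theta,-]$, and because $\theta$ has arity $0$ one finds $D(f)=\pm\sum_i f\circ_i\theta$, an operator lowering arity by one.

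The key point is that the hypothesis furnishes a contracting homotopy for $D$. Using the degree $+2$ linear form $\phi\colon A\to\k$ with $\phi(\theta)=1$, I would set $h(f)(x_1,\ldots,x_{n+1})\coloneqq\pm\,\phi(x_1)\,f(x_2,\ldots,x_{n+1})$, an operator raising arity by one and of convolution-degree $+1$ (so that $h(\beta_k)$ lands in the degree-$0$ gauge group). Separating the insertion of $\theta$ into the first slot, which produces $\phi(\theta)f=f$, from the remaining slots, a direct computation gives the homotopy identity $Dh+hD=\id$; this is exactly where $\phi(\theta)=1$ is used. Hence $D$ is acyclic with an explicit null-homotopy.

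With this in hand I would trivialise $\beta$ by induction on the arity, equivalently on the weight $w(f)=n-1$ of Lemma~\ref{lem:DefGaugeConv}. If the lowest nonzero component of $\beta$ sits in arity $k\geq 1$, then comparing arities in $D\beta+\beta\star\beta=0$ shows that the quadratic term is supported in arities $\geq 2k-1>k-1$, so the arity-$(k-1)$ equation forces $D(\beta_k)=0$; thus $\beta_k$ is a $D$-cycle and $\beta_k=D(h(\beta_k))$. Gauging by $\lambda\coloneqq\pm h(\beta_k)$, which has arity $k+1\geq 2$ and therefore lies in the positive-weight part of $\widetilde{\Gamma}$, and using the explicit formula of Theorem~\ref{thm:GaugeActionBis}, removes the arity-$k$ component while modifying only strictly higher arities and leaving the arity-$0$ part untouched. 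Iterating yields a sequence of gauge transformations whose composite converges: in each fixed arity only finitely many steps contribute, so convergence is guaranteed by the weight grading exactly as in Lemma~\ref{lem:DefGaugeConv} and Proposition~\ref{prop:Extension} (in the complete setting one simply combines this weight argument with the filtration, all the $\lambda$'s having arity $\geq 2$). The limit sends $\beta$ to $0$, hence $\alpha$ to $\alpha_0$.

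The main obstacle is bookkeeping rather than conceptual. One must pin down the Koszul signs so that the chosen sign in $h$ makes $Dh+hD$ equal to $+\id$, and one must verify that each gauge step genuinely fixes all lower arities as well as the arity-$0$ curvature, so that the limit is the truncated structure $(A,\theta,0,0,\ldots)$ and not merely some structure with the same curvature. Checking the compatibility of the gauge action with twisting (so that triviality of $\beta$ in $\a_{\,\calC,A}^{\alpha_0}$ really yields gauge equivalence of $\alpha$ and $\alpha_0$), together with the fact that the whole construction remains inside $\widetilde{\Gamma}$ as controlled by Lemma~\ref{lem:DefGaugeConv}, then completes the argument.
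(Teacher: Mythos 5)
Your proposal is correct and is in essence the paper's own argument: both realise the two structures as Maurer--Cartan elements of the convolution pre-Lie algebra $\a_{\,\calC,A}$, both identify the operator $f\mapsto f\star\alpha_0$ (insertion of the curvature $\theta$ into all inputs, lowering arity by one) as a square-zero differential, and both establish its acyclicity with the very same contracting homotopy $h(f)=\phi\otimes f$ built from the functional $\phi$ with $\phi(\theta)=1$ --- which is precisely where the hypothesis enters. The only divergence is organisational: the paper solves $(1+\lambda)\star\alpha=\alpha_0$ for a single gauge element $\lambda$ by an obstruction-theoretic induction on arity, checking by a direct pre-Lie computation that each obstruction $\delta_n$ is a cycle for $-\star\alpha_0$, whereas you kill $\beta=\alpha-\alpha_0$ one arity at a time by successive gauge transformations in the twisted algebra (where the cycle condition for the lowest arity is automatic) and then conclude via the weight-graded convergence argument of Lemma~\ref{lem:DefGaugeConv} for the infinite composite --- an equivalent repackaging of the same induction.
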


\begin{proof}
For the sake of completeness, we provide here a proof in the language of the pre-Lie deformation theory. Notice that the computations of the following obstruction argument are very close to the arguments of \cite[Section~$7.3$]{Positselski11} and \cite[Theorem~5.4]{ArmstrongClarke15}. As above, we denote the Maurer--Cartan element encoding the curved $\calA_\infty$-algebra $(A, \theta, d, m_2, m_3, \ldots)$ in the convolution pre-Lie algebra $\a_{\, \calC, A}$ by $\alpha$, that is $m_n=\alpha(\nu_n)$, for $n\ge 0$~. 
Let us denote by $\alpha_{n}$ the element $$(0, \ldots, 0, \nu_{n}\mapsto m_n, 0, \ldots)_{n\ge 0}~,$$ so that $\alpha=\sum_{n=0}^\infty \alpha_{n}$~. 
Let us show that there exists an element $\lambda\in \widetilde{\Gamma}$,  such that $\lambda_0=0$ and $\lambda_1=0$, and which satisfies $(1+\lambda).\alpha=\alpha_0$. 
Since 
 \[
(1+\lambda).\alpha=\left((1+\lambda) \star \alpha \right) \circledcirc (1+\lambda)^{-1}=\alpha_0~,
 \] 
we have to demonstrate that $$(1+\lambda) \star \alpha =\alpha_0\circledcirc (1+\lambda)=\alpha_0~,$$ that is 
$(1+\lambda_2+\lambda_3+\cdots)\star (\alpha_0+\alpha_1+\cdots)=\alpha_0$~. This gives in arity $0$: $\alpha_0=\alpha_0$ and in arity $n\ge 1$
\begin{eqnarray}\label{Eqn:OBs}
\alpha_n+\lambda_2\star \alpha_{n-1}+\cdots +\lambda_n\star \alpha_1+\lambda_{n+1}\star \alpha_0=0\ .      
\end{eqnarray}
Let us denote by $\delta_n$ the element $\alpha_n+\lambda_2\star \alpha_{n-1}+\cdots +\lambda_n\star \alpha_1$; our goal is to prove that $\delta_n$ belongs to the image of the operator $-\star\alpha_0$~. To do that, we equip the convolution algebra $\a_{\, \calC, A}$ with the map $\delta$ defined by $$\delta(f)\coloneqq f\star\alpha_0~.$$ Note that $\delta^2(f)=(f\star\alpha_0)\star \alpha_0$, and since $\at_0$ is a degree $-1$ element, the pre-Lie identity implies that 
 \[
(f\star\alpha_0)\star \alpha_0-f\star(\alpha_0\star \alpha_0)=0,
 \] 
 which in turn implies $(f\star\alpha_0)\star \alpha_0=0$. Thus, $\delta^2(f)=0$, and the convolution algebra becomes a chain complex.

We shall first show by induction that the element $\delta_n\in C(n)$ is a cycle. For that, we write\begin{eqnarray*}
\delta_n\star \alpha_0&=&
\at_n \star \at_0+\sum_{k=2}^n(\lambda_k\star \at_{n-k+1})\star\at_0\\
&=&
\at_n \star \at_0-\sum_{k=2}^n(\lambda_k\star \at_0)\star\at_{n-k+1}+\sum_{k=2}^n\lambda_k\star (\at_{n-k+1}\star\at_0)
\end{eqnarray*}
which is equal to the following quantity using the induction hypothesis:
\begin{eqnarray*}
\delta_n\star \alpha_0&=&
\at_n \star \at_0
+\sum_{k=2}^n\at_{k-1}\star\at_{n-k+1}
+\sum_{k=3}^n\sum_{l=2}^{k-1}(\lambda_l\star \at_{k-l})\star\at_{n-k+1}
\\&&+\sum_{k=2}^n\lambda_k\star (\at_{n-k+1}\star\at_0)\\
&=&\sum_{k=1}^{n}\at_{k}\star\at_{n-k}+\sum_{l=2}^n
\sum_{k=l+1}^{n+1}\lambda_l\star(\at_{k-l}\star\at_{n-k+1})=0\ ,
\end{eqnarray*}
by the Maurer--Cartan equation satisfied by $\at$.

To complete to proof, it is now enough to show that our chain complex is acyclic. Let us denote by $\theta^*\colon A \to \k$ the $\k$-linear map which sends  $\theta$ to $1$. With this map at hand, we construct the following contracting homotopy $h$. Let $\varphi_n\in \Hom\left(\calC(n),
\Hom(A^{\otimes n}, A)\right)$ be an element of the convolution algebra~$\a_{\, \calC, A}$; it is completely defined by its value $\phi_n(\nu_n)=: f_n$. The image $h(\varphi_n)$ under the map $h$ of the element $\varphi_n$ is an element $\psi_{n+1}\in \Hom\left(\calC(n),
\Hom(A^{\otimes n}, A)\right)$ given by 
$$\psi_{n+1}(\nu_{n+1})=\theta^*\otimes f_n\in\Hom(A,\k)\otimes\Hom(A^{\otimes n}, A)\cong \Hom(A^{\otimes (n+1)}, A)~.$$  It is then straightforward to check that $hd+dh=\id_{\a_{\, \calC, A}}$.

It follows that $\delta_n$ belongs to the image of the differential, thus there exists $\lambda_{n+1}\in C(n+1)$ such that $-\lambda_{n+1}\star \alpha_0=\delta_n$, which is Equation~(\ref{Eqn:OBs}). 
\end{proof}

The proof we just presented is, in a sense, ``dual'' to the twisting procedure. Indeed, according to Theorem \ref{thm:TwProcGp}, the action of the arity $0$ elements of the gauge group gives us the twisting procedure; this was used in \cite{FOOO09I} to get a new structure with trivial curvature and thus an underlying chain complex. On the other hand, Proposition \ref{prop:KPphenom} shows that one can act with the elements of the gauge group supported in arity $\ge 2$ in order to trivialise all the operations except for the curvature. Interestingly enough, in this latter case the arity condition ensures convergence, so we do not need the completeness assumption, and thus the Kontsevich--Positselski vanishing phenomenon always occurs over a field. 

\section{Curved \texorpdfstring{$\Li$}{Li}-algebras}
In the case of symmetric operads, we can start with the same kind of simple operad, that is one-dimensional in each arity with trivial symmetric group action. This operad $\mathrm{uCom}=\End_{\k}$ encodes the category of unital commutative (associative) algebras. Its linear dual $\mathrm{uCom}^*$ produces the following complete left unital convolution pre-Lie algebra 
\[\a=\hom_\Sy( \mathrm{uCom}^*, \End_A)\cong \left(\prod_{n\ge 0} \hom\big(A^{\odot n}, A\big),  \star, 1\right) \ ,\]
where $A^{\odot n}\coloneqq A^{\otimes n}/\Sy_n$ stands for the space of coinvariants with respect to the symmetric group action; in order words, we are led to consider symmetric maps from $A^{\otimes n}$ to $A$. 
This way, we obtain the definition of a complete \emph{shifted} (possibly curved) $\Li$-algebra, notion which appeared first in \cite{ZWIEBACH199333}. In the same way as for $\Ai$-algebras, the truncation and suspension versions of the endormorphism cooperad on a one-dimensional module give rise to possibly non-shifted and possibly non-curved $\Li$-algebra structures. 
 As strange as it may seem, the notion of \emph{shifted} Lie algebra structure seems to appear ``more naturally'' than its classical notion: the shifted Schouten--Nijenhuis bracket on polyvector fields \cite{MR3324,MR0063750,MR0074879}, the shifted Lie algebra \cite{MR68218,MR0091473} corresponding to the Whitehead product on homotopy groups \cite{MR4123,MR0016672}, the shifted Gerstenhaber's Lie bracket on the Hochschild cochain complex \cite{Gerstenhaber63}, the Nijenhuis--Richardson bracket arising in deformation theory of Lie algebras \cite{MR178041,MR195995}, the shifted $\Li$-algebra formed by the Koszul hierarchy \cite{DotsenkoShadrinVallette11},  the ``antibracket'' of the Batalin--Vilkovisky formalism~\cite{MR616572} etc. However, if one uses homological grading, most ``natural'' Lie brackets are of degree $1$, while the below definition suggests to study the bracket of degree $-1$. This choice is motivated by aesthetic reasons and does not affect any sign in the arising formulas; we invite the reader intended to apply the formalism of this book to concrete problems to consult the introduction of Section~\ref{sec:SymTw} for another discussion of this matter.

\begin{definition}[Complete shifted curved $\Li$-algebra]\label{def:csLiAlg}
\index{$\Li$-algebra!complete shifted curved}
A \emph{complete shift\-ed curved $\Li$-algebra structure} on a complete graded module $(A, \F)$ is a Maurer--Cartan element 
$\alpha=(\ell_0, \ell_1, \ldots, \ell_n,\ldots)$ in the complete left unital pre-Lie convolution algebra $\a\cong \left(\prod_{n\ge 0} \hom\big(A^{\odot n}, A\big),  \star, 1\right)$. Such a data amounts to a collection of filtered maps 
$l_n\colon A^{\odot n} \to A$, 
of degree $-1$, for $n\ge 0$, satisfying the following relations, under the usual convention $\theta\coloneqq\ell_0(1)$ and $d\coloneqq\ell_1$:
\begin{eqnarray*}
&\text{arity}\ 0:& d\theta=0\ , \\
&\text{arity}\  1:& d^2=- \ell_2(\theta, -)\ , \\
&\text{arity}\  2:& \partial \ell_2=
-\ell_3(\theta, -,-)\ . \\
&\text{arity}\  3:& \partial \ell_3=-\ell_2(\ell_2(-,-), -)-\ell_2(\ell_2(-,-), -)^{(23)}-\ell_2(\ell_2(-,-), -))^{(132)}
\\&&
\qquad \quad -\ell_4(\theta, -, -,-)\ ,
\\
&\text{arity}\ n:& \partial\ell_n= -\sum_{\substack{p+q=n+1\\ 2\leq p,q \leq n-1}}
\sum_{\sigma\in \mathrm{Sh}_{p,q}^{-1}}
 (\ell_{p+1}\circ_{1} \ell_q)^{\sigma}
-\ell_{n+1}(\theta, -, \cdots, -)\ ,
\end{eqnarray*}
where $ \mathrm{Sh}_{p,q}^{-1}$ denotes the set of the inverses of $(p,q)$-shuffles.
\end{definition}

Different versions of the ns operad of $\Ai$-algebras admit symmetric versions where one considers the symmetric operad $\mathrm{uAss}$ whose components are the regular representations of the symmetric groups, that is $\mathrm{uAss}(n)=\upsilon_n\k[\Sy_n]$. Then the associated complete convolution pre-Lie algebra is isomorphic to the one in the ns case since 
$\hom_\Sy( \mathrm{uAss}^*, \End_A)\cong \hom( \mathrm{uAs}^*, \End_A)$. 
Let us denote by $\upsilon_n'$ and by $\nu_n'$ respectively the basis elements of $\mathrm{uCom}(n)$ and $\mathrm{uCom}^*(n)$. 
The morphism of operads 
$$ \mathrm{uAss}\to \mathrm{uCom} \ ,\qquad \upsilon_n^\sigma\mapsto \upsilon'_n$$
induces the following morphism of cooperads 
$$ \varsigma\colon \mathrm{uCom}^*\to \mathrm{uAss}^* \ ,\qquad \nu_n' \mapsto \sum_{\sigma \in \Sy_n}\nu_n^\sigma\ ,$$
which,  by pulling back, induces the following morphism of pre-Lie algebras 
$$ \varsigma^*\colon \hom_\Sy( \mathrm{uAss}^*, \End_A) \to \hom_\Sy( \mathrm{uCom}^*, \End_A) \ , \quad \alpha \mapsto \alpha\circ\varsigma\ . 
$$
In the case of the endormophism operad on a suspended module, which gives rise to the non-shifted versions, the morphism of operads is defined similarly, but its linear dual produces the signs $\mathrm{sgn}(\sigma)$ . \\

Since a morphism of pre-Lie algebras preserves Maurer--Cartan elements, we obtain a conceptual explanation of the following known result. 

\begin{proposition}[\cite{LadaMarkl95, FOOO09I}]\label{prop:SymmAiLi}
The symmetrisation $$\ell_n\coloneqq\sum_{\sigma\in \Sy_n} \, m_n^\sigma $$
of a complete shifted (curved) $\Ai$-algebra produces a complete shifted (curved) $\Li$-algebra. 

The antisymmetrisation $$\ell_n\coloneqq\sum_{\sigma\in \Sy_n}\mathrm{sgn}(\sigma) \, m_n^\sigma $$
of a complete (curved) $\Ai$-algebra produces a complete (curved) $\Li$-algebra. 
\end{proposition}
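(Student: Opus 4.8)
The plan is to deduce both statements at once from the single structural fact, established just above, that $\varsigma^*$ is a morphism of complete dg pre-Lie algebras. The guiding principle is that any degree-preserving morphism $\phi$ of complete dg pre-Lie algebras sends Maurer--Cartan elements to Maurer--Cartan elements: if $d\alpha + \alpha\star\alpha = 0$, then applying $\phi$ and using $\phi(d\alpha) = d\phi(\alpha)$ together with $\phi(\alpha\star\alpha) = \phi(\alpha)\star\phi(\alpha)$ yields $d\phi(\alpha) + \phi(\alpha)\star\phi(\alpha) = 0$, so $\phi(\alpha)$ solves the pre-Lie Maurer--Cartan equation~\eqref{eq:MCeq} as well. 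This is the only general principle required, and it reduces the proposition to a bookkeeping computation of the image of a Maurer--Cartan element under $\varsigma^*$.

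First I would record the two dictionaries. By the discussion of Section~\ref{sec:CurvedAiAlg} together with Proposition~\ref{prop:MCOmegaC}, a complete shifted curved $\Ai$-algebra on $A$ is precisely a Maurer--Cartan element $\alpha$ of $\hom_\Sy(\mathrm{uAss}^*, \End_A) \cong \hom(\uAs^*, \End_A)$, with structure operations recovered by $m_n = \alpha(\nu_n)$. On the other side, by Definition~\ref{def:csLiAlg}, a complete shifted curved $\Li$-algebra on $A$ is precisely a Maurer--Cartan element $\beta$ of $\a \cong \hom_\Sy(\mathrm{uCom}^*, \End_A)$, with $\ell_n = \beta(\nu_n')$.

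Then I would apply $\varsigma^*\colon \hom_\Sy(\mathrm{uAss}^*, \End_A) \to \hom_\Sy(\mathrm{uCom}^*, \End_A)$ to such an $\alpha$. Since $\varsigma^*$ preserves Maurer--Cartan elements by the principle above, $\beta := \varsigma^*(\alpha) = \alpha \circ \varsigma$ is a complete shifted curved $\Li$-algebra structure, and its operations are read off directly from the definition of $\varsigma$:
\[
\ell_n = \beta(\nu_n') = \alpha\big(\varsigma(\nu_n')\big) = \alpha\Big(\sum_{\sigma\in\Sy_n}\nu_n^\sigma\Big) = \sum_{\sigma\in\Sy_n}\alpha(\nu_n^\sigma) = \sum_{\sigma\in\Sy_n} m_n^\sigma,
\]
which is exactly the symmetrisation formula. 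The antisymmetrisation statement follows verbatim by replacing the endomorphism operad $\End_\k$ on a one-dimensional module by the suspended version $\End_{\k s}$; as noted before the statement, the linear dual of the corresponding operad morphism acquires the signs $\mathrm{sgn}(\sigma)$, so the identical computation gives $\ell_n = \sum_{\sigma\in\Sy_n}\mathrm{sgn}(\sigma)\, m_n^\sigma$.

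The only genuine content beyond this bookkeeping is the verification that $\varsigma$ is indeed a morphism of cooperads --- equivalently, that $\varsigma^*$ respects the pre-Lie products $\star$ and the differentials --- which is precisely what was asserted in constructing $\varsigma$. The hardest point there is the compatibility of $\varsigma$ with the infinitesimal decomposition coproducts, along with keeping track of the Koszul signs in the suspended case. Once that compatibility is granted, no further analytic or convergence argument is needed, since everything takes place inside the fixed complete convolution pre-Lie algebras and the map $\varsigma^*$ is manifestly filtered.
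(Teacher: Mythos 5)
Your proposal is correct and follows essentially the same route as the paper: the statement is obtained there exactly by constructing the cooperad morphism $\varsigma\colon \mathrm{uCom}^*\to \mathrm{uAss}^*$ from the operad morphism $\mathrm{uAss}\to\mathrm{uCom}$, pulling back to the morphism of complete convolution pre-Lie algebras $\varsigma^*$, and invoking the fact that pre-Lie algebra morphisms preserve Maurer--Cartan elements, with the signs $\mathrm{sgn}(\sigma)$ arising from the suspended endomorphism operad in the non-shifted case. Your added unwinding of $\ell_n=\alpha(\varsigma(\nu_n'))=\sum_{\sigma\in\Sy_n} m_n^\sigma$ just makes explicit what the paper leaves implicit.
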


\section{Gauge action and twisting of curved \texorpdfstring{$\Li$}{Li}-algebras}\label{subsec:TwLii}
The above notions of complete curved $\Li$-algebra, both shifted and not, play seminal roles in deformation theory, rational homotopy theory, and higher algebra. For instance, this leads to a suitable source category for the Deligne--Hinich $\infty$-groupoid and the Getzler $\infty$-groupoid \cite{Getzler09, Henriques08, DolgushevRogers15}, rational models for spaces of maps \cite{BuijsMurillo11, Lazarev13, Berglund15}, and provides us with a suitable higher categorical enrichment to the categories of homotopy algebras \cite{DolgushevRogers17, DolgushevHoffnungRogers14}.  In each case, the twisting procedure, together with its various properties, constitutes the main toolbox. In this section, we show that these properties are actually straightforward consequences of the  above gauge group interpretation. This also allows us to get the most general  version of all these results. \\

Let $(A, \F)$ be a complete graded module and let $\calC$ be the coooperad $\coloneqq \mathrm{uCom}^*$. The deformation gauge group associated to $\a$ is equal to 
\[\widetilde{\Gamma}\cong\left(
\F_1 A_{0} \times 
\calF_1 \hom(A,A)_0\times 
{\displaystyle{\prod}_{n\ge 2}} \hom \big(A^{{\odot} n}, A\big)_0
,  \BCH(\; ,\,), 0
\right)
\]
and is isomorphic to 
\[
\widetilde{\mathfrak{G}}\cong 
\left(
\F_1 A_{0} \times 
\left(1+ \calF_1 \hom(A,A)_0\right)
\times 
{\displaystyle{\prod}_{n\ge 2}} \hom \big(A^{{\odot} n}, A\big)_0
,  \circledcirc, 1
\right)\]
under the pre-Lie exponential and pre-Lie logarithm maps.

\begin{proposition}\label{prop:TwCurvedGaugeLie}

The gauge action of an element $a\in \F_1 A_0$ on a complete shifted curved $\Li$-algebra $(A,\F, \theta=\ell_0, d=\ell_1, \ell_2, \ell_3, \ldots)$ produces the following \emph{twisted} shifted curved $\Li$-algebra structure on $A$:
$$\ell_n^a=\sum_{k\ge 0} {\textstyle \frac{1}{k!}} \ell_{k+n}\big(a^k, -,  \ldots,  -   \big)\ , \ \  \text{for}\  n\ge 0\ . $$
\end{proposition}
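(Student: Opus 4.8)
The plan is to run the computation in complete parallel with the proof of Proposition~\ref{prop:TwCurvedGauge}, simply replacing the non-symmetric cooperad $\End_{\k s}^*$ by the cocommutative cooperad $\calC=\mathrm{uCom}^*$ and the convolution algebra by $\a\cong\left(\prod_{n\ge 0}\hom\big(A^{\odot n},A\big),\star,1\right)$. First I would record the $\Li$-analogue of Proposition~\ref{lem:subGaugeGroup}: the element $a\in\F_1 A_0$ is supported in arity $0$, being represented by the map $\nu_0'\mapsto a\in\eend_A(0)$, and since a $0$-ary operation has no input into which another operation can be grafted, one has $a\star\rho=0$ for every $\rho\in\a$. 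Consequently the pre-Lie exponential truncates to $e^a=1+a$, with inverse $e^{-a}=1-a$.

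Next I would feed this into the gauge action formula of Theorem~\ref{thm:GaugeActionBis}. Exactly as in the $\Ai$-case, using left-linearity of $\star$ together with $a\star\alpha=0$, this reduces to
\[
e^{-a}\cdot\alpha=\bigl((1-a)\star\alpha\bigr)\circledcirc(1+a)=\alpha\circledcirc(1+a),
\]
so that the twisted structure is encoded by the Maurer--Cartan element $\alpha\circledcirc(1+a)$, which by Proposition~\ref{prop:MCOmegaC} is again a complete shifted curved $\Li$-algebra structure on $A$. It then remains to read off its generating operations $\ell_n^a=\bigl(\alpha\circledcirc(1+a)\bigr)(\nu_n')$.

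The core of the argument is to expand the circle product via symmetric braces (Definition~\ref{def:SymBrace}),
\[
\alpha\circledcirc(1+a)=\sum_{k\ge 0}\tfrac{1}{k!}\{\alpha;\underbrace{a,\dots,a}_{k}\},
\]
and to evaluate each brace on $\nu_n'$. Here I would use that $\{\alpha;a,\dots,a\}$ is computed by decomposing $\nu_n'$ into a root operation hit by $\alpha$ and $k$ grafted leaves hit by the copies of $a$; since $a$ is $0$-ary, the root must be $\nu_{n+k}'$, with the $k$ copies of $a$ filling $k$ of its slots while the remaining $n$ stay open. Because $\mathrm{uCom}^*$ is cocommutative, this decomposition produces a single unsigned term — in contrast with the ordered sum $\sum_{r_0,\dots,r_n}$ and the signs $(-1)^{\sum kr_k}$ of the $\Ai$-case — giving $\{\alpha;a^k\}(\nu_n')=\ell_{n+k}(a^k,-,\dots,-)$. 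Summing over $k$ yields the claimed formula, and the output manifestly lands in $\hom\big(A^{\odot n},A\big)$, since $\ell_{n+k}$ is symmetric and plugging $a$ into $k$ slots preserves symmetry in the remaining $n$ inputs.

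The main obstacle I anticipate is the bookkeeping in this last step: one must justify carefully that the symmetric braces paired against the cocommutative decomposition of $\nu_n'$ contribute exactly the factor $\tfrac{1}{k!}$ coming from the circle product, with no residual combinatorial multiplicity and no sign, the degree-$0$ evenness of $a$ together with the absence of a suspension in the shifted ($\mathrm{uCom}^*$) convention being what suppresses the signs present in Proposition~\ref{prop:TwCurvedGauge}. Since this coefficient is a universal operadic identity independent of the particular algebra, I would cross-check it by symmetrising the already-established $\Ai$-formula of Proposition~\ref{prop:TwCurvedGauge} through the pre-Lie morphism $\varsigma^*$, which intertwines the gauge actions and hence the twisting procedures, as in Proposition~\ref{prop:SymmAiLi}.
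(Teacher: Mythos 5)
Your proposal is correct and follows essentially the same route as the paper, whose entire proof is the observation that, since $a$ is supported in arity $0$ (so $a\star\rho=0$ and $e^{-a}=1-a$), the gauge action of Theorem~\ref{thm:GaugeActionBis} collapses to $e^{-a}\cdot\alpha=\alpha\circledcirc(1+a)$, ``identical to the one of Proposition~\ref{prop:TwCurvedGauge}.'' Your expansion of the circle product via symmetric braces and the symmetrisation cross-check through $\varsigma^*$ are just worked-out details of that same evaluation step, which the paper leaves implicit.
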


\begin{proof}
The proof is easy and identical to the one of Proposition~\ref{prop:TwCurvedGauge}: the new Maurer--Cartan element is equal to $e^{-a}\cdot \alpha=\alpha \circledcirc (1+a)$.
\end{proof}

For instance, the formula for the twisted curvature and the twisted (pre)dif\-fe\-ren\-tial are respectively 
\[\theta^a=\sum_{k\ge 0} {\textstyle \frac{1}{k!}} \ell_{k}\big(a^k \big) \qquad \text{and} \qquad
d^a=\sum_{k\ge 0} {\textstyle \frac{1}{k!}} \ell_{k+1}\big(a^k, - \big)\ .
 \]
An element $a\in \F_1 A_{0}$ is called a \emph{Maurer--Cartan} element 
\index{Maurer--Cartan element!complete shifted curved $\Li$-algebra}
in the complete shifted curved $\Li$-algebra $(A, \F, \theta, d, \ell_2, \ell_3, \cdots)$ when $\ell^a_0=0$. The same results as in the above case of complete (shifted) curved $\Ai$-algebras hold here \textit{mutatis mutandis}. Let us mention the following ones, which are heavily used in \textit{loc. cit.}

\begin{corollary}[\cite{DolgushevRogers15, DolgushevRogers17, Getzler09}]
Let $(A,\F, \theta, d, \ell_2, \ell_3, \ldots)$  be a complete shifted curved $\Li$-algebra and let 
$a, b\in \F_1 A_0$. The following formulas hold: 
\begin{enumerate}\itemsep3pt
\item $d \theta^a+\sum_{k\ge 1}{\textstyle \frac{1}{k!}} \ell_{k+1}\big(a^k, \theta^a \big)=0$\ , 
\item $d^a \circ d^a=-\ell_2^a(\theta^a,-)$\ ,
\item $\theta^{a+b}=\theta^a+d^a(b)+ \sum_{k\ge 2} {\textstyle \frac{1}{k!}} \ell_{k}^a\big(b^k)$ , 
\end{enumerate}
\end{corollary}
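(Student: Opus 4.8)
The plan is to deduce all three identities from a single structural observation: the twisting procedure is an instance of the gauge action, so the twisted datum $(A,\F,\theta^a,d^a,\ell_2^a,\ell_3^a,\ldots)$ is \emph{again} a complete shifted curved $\Li$-algebra, and therefore satisfies verbatim the relations of Definition~\ref{def:csLiAlg}. Indeed, by Proposition~\ref{prop:TwCurvedGaugeLie} the collection $(\ell_n^a)_{n\ge 0}$ is the image of the Maurer--Cartan element $\alpha$ under the gauge element $e^{-a}$, namely $\alpha^a\coloneqq\alpha\circledcirc(1+a)=e^{-a}\cdot\alpha$; since the (deformation) gauge group acts on the set of Maurer--Cartan elements by Proposition~\ref{prop:GaugeGroupAction} and Proposition~\ref{prop:Extension}, the element $\alpha^a$ is again Maurer--Cartan, hence corresponds via Proposition~\ref{prop:MCOmegaC} to a complete shifted curved $\Li$-algebra structure on $A$.

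With this dictionary in place, formula~(2) is simply the arity~$1$ relation $d^a\circ d^a=-\ell_2^a(\theta^a,-)$ of Definition~\ref{def:csLiAlg} read off for the twisted structure. Formula~(1) is, likewise, the arity~$0$ relation $d^a(\theta^a)=0$ of the twisted structure; here the only thing to verify is the bookkeeping identity $d^a(\theta^a)=\ell_1^a(\theta^a)=\sum_{k\ge 0}\tfrac1{k!}\ell_{k+1}(a^k,\theta^a)=d\theta^a+\sum_{k\ge 1}\tfrac1{k!}\ell_{k+1}(a^k,\theta^a)$, obtained by isolating the $k=0$ summand $\ell_1(\theta^a)=d\theta^a$ in the formula $d^a=\ell_1^a=\sum_{k\ge 0}\tfrac1{k!}\ell_{k+1}(a^k,-)$ of Proposition~\ref{prop:TwCurvedGaugeLie}.

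Formula~(3) requires the additivity of the twisting procedure, the $\Li$-analogue of Proposition~\ref{lem:subGaugeGroup} and Corollary~\ref{cor:twa+b}. First I would check that $a\mapsto(a,0,0,\ldots)$ embeds $(\F_1A_0,+,0)$ as a subgroup of $\widetilde{\Gamma}$: two elements supported in arity $0$ satisfy $a\star b=b\star a=0$, since an arity $0$ operation has no inputs into which another operation could be inserted under the pre-Lie product, so their Lie bracket vanishes and $\BCH(a,b)=a+b$. Consequently $e^{-b}\circledcirc e^{-a}=e^{-(a+b)}$ in $\widetilde{\mathfrak{G}}$, whence $(\alpha^a)^b=e^{-b}\cdot(e^{-a}\cdot\alpha)=e^{-(a+b)}\cdot\alpha=\alpha^{a+b}$. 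Taking arity $0$ components gives $\theta^{a+b}=(\theta^a)^b$; applying the twisting formula of Proposition~\ref{prop:TwCurvedGaugeLie} to the structure $(\ell_n^a)$ with twisting element $b$ then yields $(\theta^a)^b=\sum_{k\ge 0}\tfrac1{k!}\ell_k^a(b^k)=\theta^a+d^a(b)+\sum_{k\ge 2}\tfrac1{k!}\ell_k^a(b^k)$, which is exactly formula~(3).

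The computations are all routine once this framework is in place; the genuinely load-bearing step is the identification of the twisted structure as a gauge transform of a Maurer--Cartan element, which is the content of the preceding sections, together with the additivity $\BCH(a,b)=a+b$ on arity $0$ elements. I expect the only (minor) subtlety to be keeping track of the exact correspondence between the defining relations of Definition~\ref{def:csLiAlg} and the displayed identities, in particular the re-expansion of $d^a(\theta^a)$ needed to recognise formula~(1).
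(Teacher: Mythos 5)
Your proposal is correct and follows essentially the same route as the paper: identities (1) and (2) are read off as the arity $0$ and arity $1$ relations of the twisted structure (which is again a complete shifted curved $\Li$-algebra by the gauge-action interpretation of Proposition~\ref{prop:TwCurvedGaugeLie}), and identity (3) is the arity $0$ part of the additivity of twisting, i.e.\ the $\Li$-analogue of Corollary~\ref{cor:twa+b}, resting on $\BCH(a,b)=a+b$ for arity $0$ elements as in Proposition~\ref{lem:subGaugeGroup}. The paper's proof is just a terser statement of exactly these three observations.
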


\begin{proof}\leavevmode
\begin{enumerate}
\item The left-hand side is nothing but $d^a\left(\theta^a\right)$, which vanishes because of the arity zero relation of the twisted shifted curved $\Li$-algebra.
\item This formula is the  arity $1$ relation of the twisted shifted curved $\Li$-algebra.
\item The right-hand side is nothing but $\left(\theta^a\right)^b$; so the formula is the arity $0$ part of  Corollary~\ref{cor:twa+b}, in the $\Li$-algebra case.
\end{enumerate}
\end{proof}

Let $(A,\F)$ and $(B, \G)$ be a two complete dg modules. Let $\alpha, \beta \in \MC(\a_{\calC, A\oplus B})$ be two Maurer--Cartan elements  corresponding to two complete shifted curved $\Li$-algebra structures on $(A, \F)$ and $(B, \G)$ respectively. Let $f=(f_0, f_1, \cdots)$ be an $\infty$-morphism from $(A,\alpha)$ to $(B, \beta)$, that is $f\star \alpha=\beta \circledcirc f$. Let $a\in \F_1A_0$ and let us denote by $f^a$ the element $f\circledcirc(1+a)$, that is the sequence
 \[ 
\left(\sum_{k\ge 0} {\textstyle \frac{1}{k!}} f_k(a^k), 
\sum_{k\ge 0} {\textstyle \frac{1}{k!}} f_{k+1}\big(a^k, - \big), \cdots , \sum_{k\ge 0} {\textstyle \frac{1}{k!}} f_{k+n}\big(a^k, -,  \ldots,  - \big),   
 \cdots\big)\right)\ . \]
Notice that $f(a):=\sum_{k\ge 0} {\textstyle \frac{1}{k!}} f_k(a^k)\in \F_1 B_0$. We  consider the  following two twisted complete shifted curved $\Li$-algebras on $A$ and $B$ respectively:
\[
\alpha^a\coloneqq(1-a)\cdot \alpha=\alpha\cc(1+a) \quad \text{and}\quad 
\beta^{f(a)}\coloneqq\big(1-{f(a)}\big)\cdot \beta=\beta\cc\big(1+{f(a)}\big)\ .
\]

The following proposition is usually formulated in the $\Li$-case \cite{Getzler09, DolgushevRogers15, DolgushevRogers17}; our very short proof shows that is holds in the curved case as well. 

\begin{proposition}\label{prop:PropertyCurvInfMorph}\leavevmode
\begin{enumerate}
\item The element 
\[ \big(1-{f(a)}\big) \cc f \cc (1+{a})=f^a-{f(a)}\]
is an $\infty$-morphism from $\alpha^a$ to $\beta^{f(a)}$. 

\item The curvatures of the two twisted complete shifted curved $\Li$-algebra structures are related by the following formula:
\[
\beta^{f(a)}_0=f_1^a\big(\alpha^a_0\big)=\sum_{k\ge 0} {\textstyle \frac{1}{k!}} f_{k+1}\big(a^k, \alpha^a_0 \big)
 \ .\] 

\item If the element $a$ is a Maurer--Cartan element in the complete shifted curved $\Li$-algebra $\alpha$, then so is its ``image'' $f(a)$ in the complete shifted curved $\Li$-algebra $\beta$. In this case, $f^a-f(a)=\big(0, f_1^a, f_2^a, \ldots\big)$ is a (non-curved) $\infty$-morphism between the two complete shifted $\Li$-algebras $\alpha^a$ and $\beta^{f(a)}$ respectively. 
\end{enumerate}
\end{proposition}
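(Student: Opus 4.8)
The plan is to deduce all three statements from the single structural fact, established in Chapter~\ref{sec:TopoDefTh}, that twisting a Maurer--Cartan element by an arity-$0$ gauge parameter is realized by \emph{composing} with an $\infty$-isotopy. Granting this, part (1) becomes the composition of three $\infty$-morphisms, and parts (2) and (3) reduce to reading off the arity-$0$ component of one $\infty$-morphism equation.

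For part (1), I would first recall from Theorem~\ref{thm:GaugeActionBis} and Theorem~\ref{thm:DeligneGroupoidII} that, since $a$ is supported in arity $0$, one has $\alpha^a=(1-a)\cdot\alpha=\alpha\cc(1+a)$ and that the group-like element $1-a=e^{-a}$ is exactly the $\infty$-isotopy realizing this gauge transformation, i.e. $1-a\colon\alpha\rightsquigarrow\alpha^a$; its inverse therefore gives $1+a\colon\alpha^a\rightsquigarrow\alpha$. The same reasoning applied to $f(a)\in\F_1 B_0$ yields $1-f(a)\colon\beta\rightsquigarrow\beta^{f(a)}$. Composing the three $\infty$-morphisms
\[
\alpha^a\xrightarrow{\ 1+a\ }\alpha\xrightarrow{\ f\ }\beta\xrightarrow{\ 1-f(a)\ }\beta^{f(a)}
\]
and using that a composite of $\infty$-morphisms is again an $\infty$-morphism (Definition~\ref{def:InftyMorph}), the element $(1-f(a))\cc f\cc(1+a)$ is an $\infty$-morphism from $\alpha^a$ to $\beta^{f(a)}$. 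It then remains to identify this composite with $f^a-f(a)$: by associativity of $\cc$ it equals $(1-f(a))\cc f^a$, and since $f(a)$ is supported in arity $0$, the element $1-f(a)$ differs from the identity $\infty$-isotopy only in arity $0$. I expect the main obstacle to be the bookkeeping that makes this precise: unwinding the symmetric-brace series for $(1-c)\cc f^a$ with $c\coloneqq f(a)$, one checks that the only surviving contributions come from the arity-$1$ identity part of $1-c$, which reproduces every component of $f^a$, and from the arity-$0$ part $-c$, which contributes $-c$ in arity $0$ only (all higher braces vanish for arity reasons). This gives $(1-f(a))\cc f^a=f^a-f(a)$.

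I would then observe that $g\coloneqq f^a-f(a)$ has vanishing arity-$0$ component: by the explicit form of $f^a=f\cc(1+a)$ recorded above, $f^a_0=\sum_{k\ge0}\tfrac1{k!}f_k(a^k)=f(a)$, hence $g_0=0$ and $g=(0,f_1^a,f_2^a,\ldots)$. For part (2) I would extract the arity-$0$ component of the defining equation $g\star\alpha^a=\beta^{f(a)}\cc g$ of this $\infty$-morphism. On the left, $\Delta_{(1)}(\nu_0)=\nu_1\circ_1\nu_0$ forces $(g\star\alpha^a)_0=g_1(\alpha^a_0)=f_1^a(\alpha^a_0)$; on the right, any nontrivial brace would feed the vanishing $g_0$ into $\beta^{f(a)}$, so only the top term survives and $(\beta^{f(a)}\cc g)_0=\beta^{f(a)}_0$. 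Equating the two and substituting $f_1^a=\sum_{k\ge0}\tfrac1{k!}f_{k+1}(a^k,-)$ yields $\beta^{f(a)}_0=\sum_{k\ge0}\tfrac1{k!}f_{k+1}(a^k,\alpha^a_0)$, which is the asserted formula.

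Finally, part (3) is a formal consequence. By definition $a$ is a Maurer--Cartan element of $\alpha$ precisely when $\alpha^a_0=0$, and the formula of part (2) then gives $\beta^{f(a)}_0=f_1^a(0)=0$, i.e. $f(a)$ is a Maurer--Cartan element of $\beta$. In that case both $\alpha^a$ and $\beta^{f(a)}$ have trivial curvature and are honest (non-curved) shifted $\Li$-algebras, and the $\infty$-morphism $g=(0,f_1^a,f_2^a,\ldots)$ produced in part (1), having no arity-$0$ component, is precisely a non-curved $\infty$-morphism between them.
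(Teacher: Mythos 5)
Your proposal is correct. Parts (2) and (3) coincide with the paper's own argument: the paper likewise extracts the arity-$0$ component of the equation $\beta^{f(a)}\cc\big(f^a-f(a)\big)=\big(f^a-f(a)\big)\star\alpha^a$, using that $f^a-f(a)$ has no constant term so that only $\beta^{f(a)}_0$ survives on one side, and then obtains (3) by specialising to $\alpha^a_0=0$. The genuine difference is in part (1). The paper does not route the argument through Theorem~\ref{thm:DeligneGroupoidII}; it verifies the defining equation of an $\infty$-morphism directly, expanding both sides with the pre-Lie calculus — on one side using the compatibility $\big(f\cc(1+a)\big)\star\big(\alpha\cc(1+a)\big)=(f\star\alpha)\cc(1+a)$, on the other using $(1+f(a))\cc(1-f(a))=1$ together with the hypothesis $f\star\alpha=\beta\cc f$ — until both sides reach the common value $f(a)+(f\star\alpha)\cc(1+a)$. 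You instead invoke the gauge--isotopy dictionary of Theorems~\ref{thm:GaugeActionBis} and~\ref{thm:DeligneGroupoidII} to recognise $1-a\colon\alpha\rightsquigarrow\alpha^a$, $1+a\colon\alpha^a\rightsquigarrow\alpha$ and $1-f(a)\colon\beta\rightsquigarrow\beta^{f(a)}$ as $\infty$-isotopies, appeal to closure of $\infty$-morphisms under $\cc$-composition, and then only compute the identification $(1-f(a))\cc f^a=f^a-f(a)$. What your version buys is brevity and conceptual clarity (the three-arrow composite is manifestly an $\infty$-morphism); what it costs is reliance on Theorem~\ref{thm:DeligneGroupoidII} and on the fact, implicit in Definition~\ref{def:InftyMorph}, that the composite of $\infty$-morphisms satisfies the $\infty$-morphism equation — precisely the identities that the paper's two-sided computation re-derives by hand in this special case, making it self-contained at the level of the $\star$/$\cc$ calculus. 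Your supporting bookkeeping is accurate throughout: $c\cc X=c$ and $1\cc X=X$ for an arity-$0$ element $c$, hence $(1-c)\cc X=X-c$; $f^a_0=f(a)$; and $\Delta_{(1)}(\nu_0)=\nu_1\circ_1\nu_0$, which is exactly why only $g_1(\alpha^a_0)$ survives on the left of the arity-$0$ equation.
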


\begin{proof}\leavevmode
\begin{enumerate}
\item The first assertion amounts to proving 
\begin{eqnarray*}
\left(\big(1-{f(a)}\big) \cc f \cc (1+{a})\right)\star \alpha^a=\beta^{f(a)} \cc \big(1-{f(a)}\big) \cc f \cc (1+{a})\ . 
\end{eqnarray*}
The left-hand term is equal to 
\begin{multline*}
\left(\big(1-{f(a)}\big) \cc f \cc (1+{a})\right)\star \alpha^a\\ 
= f(a)+ \left(f \cc (1+{a})\right)\star \big( \alpha\cc(1+a)\big)=  f(a)+ (f\star \alpha)\cc  (1+{a})\ .
\end{multline*}
The right-hand term is equal to 
\begin{multline*}
\beta^{f(a)} \cc \big(1-{f(a)}\big) \cc f \cc (1+{a})\\
= \big(1-f(a)\big)\cc\beta \cc \big(1+{f(a)}\big) \cc \big(1-{f(a)}\big) \cc f \cc (1+{a})\\
=f(a)+\beta  \cc f \cc (1+{a})=f(a)+ (f\star \alpha) \cc (1+a)
\ . 
\end{multline*}

\item The second assertion is the part of the above relation for the $\infty$-morphism in arity $0$. This latter one is equal to $f^a-f(a)=\big(0, f^a_1, f^a_2, \ldots\big)$ and so it has no constant term. The part of arity $0$ of the equation $\beta^{f(a)}\cc \big(f^a-f(a)\big)=\big(f^a-f(a)\big)\star \alpha^a$ is 
\[
\beta^{f(a)}_0=f_1^a\big(\alpha^a_0\big)=\sum_{k\ge 0} {\textstyle \frac{1}{k!}} f_{k+1}\big(a^k, \alpha^a_0 \big)
 \ .\] 

\item The last assertion is a direct corollary of the previous one: if $\alpha^a_0=0$, then so is $\beta^{f(a)}=0$\ . 
\end{enumerate}
\end{proof}

Let us continue with $\infty$-morphisms between two complete shifted curved $\Li$-algebras $(A,\F, \alpha)$ and $(B,\G,  \beta)$. Such a map is a collection $(f_0, f_1, f_2, \ldots)$, where $b:=f_0\in \G_1 B_0$. Let us denote it by $b+f$, where $f\coloneqq(0, f_1, f_2, \ldots)$.

\begin{lemma}\label{lem:EquivMorph}
Under this convention, with the constant term split apart, a data $b+f$ is an $\infty$-morphism from $\alpha$ to $\beta$ if and only if the data $f$ is an $\infty$-morphism from $\alpha$ to the twisted structure $\beta^b$:
\[ 
b+f\colon \alpha \rightsquigarrow \beta \quad \Longleftrightarrow \quad f\colon  \alpha \rightsquigarrow \beta^b\ .
\]
\end{lemma}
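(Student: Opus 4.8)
The plan is to recognise that the two $\infty$-morphism equations to be compared share a common left-hand side and to match their right-hand sides by factoring the arity-zero part $b$ through the circle product. Writing $g:=b+f$, the datum $b+f$ is an $\infty$-morphism $\alpha\rightsquigarrow\beta$ precisely when $g\star\alpha=\beta\cc g$; this right-hand side is well defined since $g_0=b\in\G_1 B_0$ lies in the first layer of the filtration. On the other hand $f$ is an $\infty$-morphism $\alpha\rightsquigarrow\beta^b$ precisely when $f\star\alpha=\beta^b\cc f$, which is well defined because $f_0=0\in\calF_1$ and because $b\in\G_1 B_0$ makes $1+b$ group-like, so that $\beta^b=\beta\cc(1+b)$ is defined. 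Thus it suffices to establish the two identities $g\star\alpha=f\star\alpha$ and $\beta\cc g=\beta^b\cc f$.

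The first identity is immediate from the fact that $b$ is supported in arity $0$: the pre-Lie product $b\star\alpha=\sum_i b\circ_i\alpha$ is an empty sum, since $b$ has no inputs into which $\alpha$ could be inserted, so $b\star\alpha=0$ and hence $g\star\alpha=(b+f)\star\alpha=f\star\alpha$ by left linearity of $\star$.

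The heart of the argument is the identity $(1+b)\cc f=b+f$. The group-like element $1+b$ has only two nonzero components, its arity-$0$ component $b$ and its arity-$1$ component $\id$, all higher components vanishing; and since $f_0=0$, every term in the convergent series defining $(1+b)\cc f$ which would insert an arity-$0$ output $f_0$ vanishes as well. The only surviving contributions are therefore $b$ in arity $0$ and $\id\circ f_n=f_n$ in each arity $n\ge 1$, so that $(1+b)\cc f=(b,f_1,f_2,\ldots)=b+f$. Combining this with $\beta^b=\beta\cc(1+b)$ and the associativity of the circle product, I obtain
\[
\beta\cc g=\beta\cc(b+f)=\beta\cc\big((1+b)\cc f\big)=\big(\beta\cc(1+b)\big)\cc f=\beta^b\cc f,
\]
which is the second identity. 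Together with the first, this gives $g\star\alpha=\beta\cc g\iff f\star\alpha=\beta^b\cc f$, namely the desired equivalence $b+f\colon\alpha\rightsquigarrow\beta\iff f\colon\alpha\rightsquigarrow\beta^b$.

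I expect the main obstacle to be the careful justification of $(1+b)\cc f=b+f$ for a \emph{general} $\infty$-morphism $f$ that need not be group-like, together with the corresponding instance of associativity $\beta\cc\big((1+b)\cc f\big)=\big(\beta\cc(1+b)\big)\cc f$ outside the group $\widetilde{\mathfrak{G}}$. Both rest on the substitution/operadic nature of $\cc$ developed in Chapter~\ref{sec:TopoDefTh} (underlying the group law of Theorem~\ref{thm:GaugeGrpBis} and the composition of $\infty$-morphisms of Definition~\ref{def:InftyMorph}) and on the convergence guaranteed by $f_0\in\calF_1$, rather than on any filtration estimate.
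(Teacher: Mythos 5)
Your proof is correct and takes essentially the same route as the paper's: both reduce the claim to the two identities $(b+f)\star\alpha=f\star\alpha$ and $\beta\cc(b+f)=\beta\cc(1+b)\cc f=\beta^b\cc f$, the latter resting on $\beta^b=\beta\cc(1+b)$, the identity $(1+b)\cc f=b+f$, and associativity of $\cc$. Your write-up merely makes explicit what the paper leaves implicit, namely the arity-zero vanishing $b\star\alpha=0$ and the term-by-term computation of $(1+b)\cc f$.
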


\begin{proof}
The data $b+f$ in an $\infty$-morphism from $\alpha$ to $\beta$ if and only if it satisfies 
\[(b+f)\star \alpha= f\star \alpha= \beta\cc (b+f) \ .\]
The data $f$ in an $\infty$-morphism from $\alpha$ to $\beta^b$ if and only if it satisfies 
\[ 
f\star \alpha= \beta^b\cc f=\beta \cc (1+b)\cc f=\beta \cc (b+f)
\ ,\]
which concludes the proof.
\end{proof}

V.~Dolgushev and C.~Rogers introduced in \cite{DolgushevRogers15} a category whose objects are complete shifted $\Li$-algebras and whose morphisms from $(A,\F, \alpha)$ to $(B,\G, \beta)$  amount to the data of a Maurer--Cartan element $b\in \G_1 B_0$ and an $\infty$-morphism $f : \alpha \rightsquigarrow \beta^b$ without constant term. Let $g : \beta \rightsquigarrow \gamma^c$ be another such morphism; 
they define the composite of morphisms by the formula:
\[
\big(g^b-g(b)\big)\cc f \colon \alpha \rightsquigarrow \gamma^{c+g(b)}
\ . \]
This  category is denoted  by $\mathfrak{S}\mathsf{Lie}_\infty^{\text{MC}}$ in \emph{loc. cit.}

\begin{proposition}
The category $\mathfrak{S}\mathsf{Lie}_\infty^{\mathrm{MC}}$ is the sub-category of the category of complete shifted curved $\Li$-algebras with $\infty$-morphisms whose objects are 
complete shifted $\Li$-algebras and whose morphisms are $\infty$-morphisms such that the constant term is a Maurer--Cartan element in the target algebra.
\end{proposition}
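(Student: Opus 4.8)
The plan is to identify the two categories in three steps --- objects, morphisms, and composition --- the last being the only step that is not purely formal. Throughout I keep the notation of the excerpt: a morphism datum is split as $h=b+f$ with $b:=h_0$ its constant term and $f:=(0,f_1,f_2,\ldots)$, and $g(b):=\sum_{k\ge 0}\tfrac{1}{k!}g_k(b^k)$ denotes the constant term of the source-twist $g^b=g\cc(1+b)$.

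On objects there is nothing to prove: by definition both $\mathfrak{S}\mathsf{Lie}_\infty^{\mathrm{MC}}$ and the subcategory under consideration have as objects the complete shifted $\Li$-algebras, i.e. the Maurer--Cartan elements $\alpha$ with vanishing curvature $\alpha_0=\theta=0$. For morphisms I would invoke Lemma~\ref{lem:EquivMorph} directly. Given an $\infty$-morphism $h\colon\alpha\rightsquigarrow\beta$ between two such objects, Lemma~\ref{lem:EquivMorph} says that $h=b+f$ is an $\infty$-morphism $\alpha\rightsquigarrow\beta$ if and only if $f\colon\alpha\rightsquigarrow\beta^b$ is one without constant term. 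Since $\alpha$ is uncurved, the arity $0$ component of the defining equation $f\star\alpha=\beta^b\cc f$ reads $f_1(\alpha_0)=(\beta^b)_0$, that is $0=\theta^b_\beta$, so that the constant term $b$ is automatically a Maurer--Cartan element of $\beta$ (in particular the subcategory is full). Hence $h\mapsto(b,f)$ is a bijection between the morphisms $\alpha\to\beta$ of the subcategory (the $\infty$-morphisms whose constant term lies in $\MC(\beta)$) and those of $\mathfrak{S}\mathsf{Lie}_\infty^{\mathrm{MC}}$ (the pairs $(b,f)$ with $b\in\MC(\beta)$ and $f\colon\alpha\rightsquigarrow\beta^b$ without constant term), with inverse $(b,f)\mapsto b+f$.

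The substance of the proof, and the step I expect to be the main obstacle, is the compatibility of the two composition laws. In the subcategory, composition is the circle product $\cc$ of $\infty$-morphisms (Definition~\ref{def:InftyMorph}); in $\mathfrak{S}\mathsf{Lie}_\infty^{\mathrm{MC}}$, the composite of $(b,f)\colon\alpha\to\beta$ and $(c,g)\colon\beta\to\gamma$ is declared to be $\big(c+g(b),\,(g^b-g(b))\cc f\big)$. To match them I would first record the two elementary identities $(1+x)\cc\psi=x+\psi$ for any $\psi$ without constant term and $g\cc(1+b)=g^b$, both immediate from the definition of $\cc$ and the twisting formula. Writing $h=(1+b)\cc f$ and $k=(1+c)\cc g$ and using associativity of $\cc$, one computes
\begin{align*}
k\cc h &= (1+c)\cc g\cc(1+b)\cc f = (1+c)\cc g^b\cc f \\
&= (1+c)\cc\big(1+g(b)\big)\cc\big(g^b-g(b)\big)\cc f = \big(1+(c+g(b))\big)\cc\Big(\big(g^b-g(b)\big)\cc f\Big),
\end{align*}
where the third equality uses $g^b=(1+g(b))\cc(g^b-g(b))$ and the last uses $(1+c)\cc(1+g(b))=1+(c+g(b))$. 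As $(g^b-g(b))\cc f$ has no constant term, a final application of $(1+x)\cc\psi=x+\psi$ gives $k\cc h=(c+g(b))+(g^b-g(b))\cc f$. Reading off the constant term and the remaining part shows that, under the bijection above, $k\cc h$ corresponds precisely to the pair $\big(c+g(b),\,(g^b-g(b))\cc f\big)$, i.e. to the composite in $\mathfrak{S}\mathsf{Lie}_\infty^{\mathrm{MC}}$; Proposition~\ref{prop:PropertyCurvInfMorph} and Corollary~\ref{cor:twa+b} guarantee that the target of $(g^b-g(b))\cc f$ is indeed $\gamma^{c+g(b)}$. Finally, the identity of $\alpha$ in both categories is $\id_A$, whose constant term $0$ lies in $\MC(\alpha)$ because $\alpha$ is uncurved, so the bijection preserves units.

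The main obstacle is thus the bookkeeping of the arity $0$ contributions inside the circle product: everything must be reduced to the identities $(1+x)\cc\psi=x+\psi$ and $g\cc(1+b)=g^b$ together with associativity of $\cc$, and one must keep track that the constant terms produced at each stage are the correct Maurer--Cartan elements, which is exactly what the arity $0$ computation of the second paragraph secures.
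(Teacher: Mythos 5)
Your proof is correct and follows essentially the same route as the paper's: both reduce the identification of morphisms to Lemma~\ref{lem:EquivMorph} and then match the two composition laws via the formula $(c+g)\cc(b+f)=\big(c+g(b)\big)+\big(g^b-g(b)\big)\cc f$, which the paper merely asserts and you derive explicitly from the identities $(1+x)\cc\psi=x+\psi$, $g\cc(1+b)=g^b$, and associativity of $\cc$. Your extra arity-$0$ observation---that the constant term of any $\infty$-morphism between uncurved objects is automatically a Maurer--Cartan element of the target, so the subcategory is in fact full---is correct and goes slightly beyond what the paper records, but does not change the argument.
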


\begin{proof}
Lemma~\ref{lem:EquivMorph} establishes the equivalence between the two notions of morphisms. 
Under the above convention, the formula for the composition of two $\infty$-morphisms $c+g$ and $b+f$ is given by 
 \[
 (c+g)\cc (b+f)=\big(c+g(b)\big) + \big(g^b- g(b)\big) \cc f \ ,
\]
which coincides with the Dolgushev--Rogers definition.
\end{proof}

Besides giving a conceptual explanation for the Dolgushev--Rogers category, this result also allows us to prove easily the various properties of the composite of morphisms, like the associativity for instance. Notice that this category was used in a crucial way in \cite{DolgushevHoffnungRogers14} to provide an $\infty$-categorical enrichment of the category of homotopy algebras which encodes faithfully their higher homotopy theory. 

\section{Twistable types of algebras}\label{subsec:TwistableAlg}
The purpose of this section is to describe on the level of the encoding operads which categories of algebras admit a meaningful twisting procedure. To be more precise, our aim is to characterise the (quadratic) operads $\calP$ for which any $\calP_\infty$-algebra can be twisted by any element satisfying a Maurer--Cartan type equation.This explains conceptually the particular form of the Maurer--Cartan equation. \\

Let $\calU\coloneqq(\k u, 0, \ldots)$ be the operad generated by an element $u$ of degree~$0$ and arity~$0$; it encodes the data of a degree $0$ elements in  graded modules.  Let $(E,R)$ be an operadic quadratic-linear data, that is $R\subset E\oplus \calT(E)^{(2)}$, and let $\chi : E(2) \to \k$ be an $\Sy_2$-equivariant linear map of degree $0$, where $\k$ receives the trivial $\Sy_2$-action. We consider the space of relations $R_\chi\subset \calT(E\oplus \k u)$ generated by
\[\mu \circ_1 u-\chi(\mu)\id\quad \text{and}\quad \mu \circ_2 u-\chi(\mu)\id\ ,\]
with $\mu\in E(2)$, and all the other composites of elements of $E(n)$ with at least one $u$, for $n\neq 2$.

\begin{definition}[Unital extension]\index{unital extension}
The \emph{unital extension} of $\calP\coloneqq\calP(E,R)$ by~$\chi$ is the following  operad 
\[u_\chi\calP\coloneqq
\calP(E\oplus \k u, R\oplus R_\chi)=
\frac{\calP(E,R) \vee \calU}{\left(
R_\chi
\right)}\ ,
\]
where $\vee$ stands for the coproduct of operads. 
\end{definition}

The category of $u_\chi\calP$-algebras is the category of $\calP$-algebras with a distinguished degree $0$ element which acts as a unit (with coefficients) for the generating operations of degree $0$ and arity $2$ and which vanishes once composed with any other generating operation. 
Notice that in the trivial case $\chi=0$, the unital extension amounts to  
$u_0 \calP\cong \k u \oplus \calP$
and that, in the general case,  
the underlying graded $\Sy$-module of $u_\chi \calP$ is a quotient of $\k u \oplus \calP$. The ``maximal'' case is covered by the following definition. 

\begin{definition}[Extendable quadratic-linear operad]\index{extendable quadratic-linear operad}
A quadratic-linear presentation of an operad $\calP=\calP(E,R)$ is called \emph{extendable} when there exists a non-trivial map $\chi : E(2) \to \k$ such that the canonical map 
$
\calP \hookrightarrow u_\chi\calP
$ 
is a monomorphism. 
\end{definition}

This happens if and only if the underlying graded $\Sy$-module of $u_\chi \calP$ is isomorphic to $\k u \oplus \calP$. In other words, an operad is extendable when it admits a (non-trivial) ``unitary extension'' in the terminology of \cite[Section~2.2]{Fresse17I}. As a consequence, an extendable  operad $\calP$ carries a richer structure of a $\Lambda$-operad,  crucial notion in the recognition of iterated loop spaces \cite{May72}, and its underlying $\Sy$-module carries an $\mathrm{FI}$-module structure, where $\mathrm{FI}$ stands for the category of finite sets and injections; this  notion plays a seminal role in representation theory \cite{CEF15}. Our definition may be viewed as a way to produce concrete unitary extensions of operads in the quadratic case. 

\medskip 

We shall now discuss extendability of some classical operads.

\begin{proposition}\label{prop:Extendable}\leavevmode
\begin{enumerate}
\item The quadratic operads $\Com$, $\Gerst$, $\HyperCom$, $\PreLie$, and the  qua\-dra\-tic-linear operad 
 $\BV$ are extendable. 
\item The quadratic ns operad $\As$ is extendable. 
\item The quadratic operads $\Lie$ and $\Perm$ are not extendable. 
\item The quadratic ns operads $\Dias$ and $\Dend$ are not extendable. 
\end{enumerate}
\end{proposition}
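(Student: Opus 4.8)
The plan is to reduce extendability, in each case, to a finite computation with the quadratic data. First I would record a general reduction mechanism: using the relations $R_\chi$, any tree monomial of $\calP\vee\calU$ carrying at least one copy of $u$ can be rewritten, step by step, either as a scalar multiple of a monomial with one fewer $u$ (when $u$ is an input of a binary generator $\mu$, via the factor $\chi(\mu)$) or as zero (when $u$ is an input of a generator of arity $\neq 2$). Iterating, every element of $u_\chi\calP$ is a combination of images of $\calP$-operations in arities $\geq 1$ and of $u$ in arity $0$. Since all the operads considered have trivial arity $0$ component, this yields a canonical surjection of $\Sy$-modules $\k u\oplus\calP\twoheadrightarrow u_\chi\calP$, and by the criterion recalled before the statement, $\calP$ is extendable precisely when, for some non-trivial $\chi$, this surjection is an isomorphism, i.e. no relation among the operations of $\calP$ is created. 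Extendability can therefore fail in two ways: no non-trivial equivariant $\chi$ exists, or every such $\chi$ induces a non-trivial relation.

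For the negative cases I would argue directly. For $\Lie$ the space $E(2)$ is the signature representation $\mathrm{sgn}$ of $\Sy_2$, so every $\Sy_2$-equivariant map $\chi\colon E(2)\to\k$ to the trivial representation vanishes over a field of characteristic zero; as no non-trivial $\chi$ exists, $\Lie$ is not extendable. For $\Perm$, $\Dias$ and $\Dend$ a non-trivial $\chi$ does exist, and I would feed $u$ into the defining quadratic relations. In $\Perm$, inserting $u$ in the anchoring slot of the permutation relation $a\cdot b\cdot c=a\cdot c\cdot b$ yields $x\cdot y=y\cdot x$, forcing commutativity; as $\Perm$ is not commutative, the map is not injective. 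In $\Dias$, feeding $u$ into the relations linking $\dashv$ and $\vdash$ forces, for every non-trivial $\chi$, either one product to vanish or the identification $\dashv=\vdash$; in $\Dend$, the same procedure forces $\prec$ or $\succ$ to vanish. In each case a genuine relation of $\calP$ is created, so these operads are not extendable.

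For the positive cases I would first exhibit $\chi$. In each of $\Com$, $\Gerst$, $\HyperCom$, $\PreLie$, $\BV$ and the ns operad $\As$, the degree $0$, arity $2$ product is the only binary generator not annihilated for degree reasons, and I set $\chi=1$ on it; any bracket is killed by degree, and all other generators (the BV operator, the higher hypercommutative operations, etc.) compose with $u$ to zero by degree or arity. Substituting $u$ into each generating relation then produces only tautologies of the form $\chi(\mu)\,(\text{op})=\chi(\mu)\,(\text{op})$, so no relation is imposed at the quadratic level. To promote this to the isomorphism $u_\chi\calP\cong\k u\oplus\calP$ in every arity, I would either invoke the theory of unitary (that is, $\Lambda$-) extensions of Koszul operads, in which quadratic consistency of the unit axioms guarantees the absence of collapse, or exhibit, case by case, a faithful unital $\calP$-algebra detecting all operations: the polynomial algebra for $\Com$, the tensor algebra for $\As$, polyvector fields with the Schouten bracket for $\Gerst$ and $\BV$, a free unital pre-Lie algebra for $\PreLie$, and the flat fundamental-class unit on the moduli spaces $\overline{M}_{0,n+1}$ for $\HyperCom$.

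The main obstacle is exactly this last step of the positive direction: checking that quadratic consistency of the unit axioms propagates to all arities, equivalently that $\dim u_\chi\calP(n)=\dim\calP(n)$ for every $n$. For $\Com$ and $\As$ this is classical, but for $\PreLie$, $\Gerst$, $\BV$ and especially $\HyperCom$ one needs either a genuinely faithful model or the Koszul/PBW structure of the unitary extension; carrying this out uniformly, rather than by ad hoc dimension counts, is where the real effort goes.
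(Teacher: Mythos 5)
Your negative cases are correct and coincide with the paper's own proof: the sign-representation obstruction for $\Lie$, and substitution of $u$ into the defining relations of $\Perm$, $\Dias$ and $\Dend$ to show that a non-trivial $\chi$ creates a genuine relation. Your positive cases $\Com$ and $\As$ are also fine, since the polynomial and tensor algebras are faithful unital models. The genuine gap is exactly where you locate it yourself: nothing in your proposal proves $u_\chi\calP\cong\k u\oplus\calP$ for $\Gerst$, $\BV$, $\HyperCom$ and $\PreLie$, and neither of your two suggested routes closes it. Route one, ``quadratic consistency of the unit axioms guarantees the absence of collapse,'' is not a theorem you can invoke: checking that substitution of $u$ into the generating relations yields only tautologies controls the weight-two part of the ideal $(R\oplus R_\chi)$, but says nothing about its intersection with $\calT(E)$ in higher arities, which is precisely the point at issue. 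Route two (a faithful unital $\calP$-algebra whose unit acts according to $\chi$) is sound in principle, but you never verify faithfulness of polyvector fields as a $\Gerst$- or $\BV$-algebra, your ``free unital pre-Lie algebra'' is circular (its underlying $\Sy$-module is exactly what is being determined), and for $\HyperCom$ the ``fundamental-class unit on $\overline{M}_{0,n+1}$'' is not an algebra over the operad at all, but the operad itself.

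For comparison, the paper closes this gap by two different arguments, either of which would repair your proof. For $\Gerst$, $\BV$ and $\HyperCom$ it uses the topological origin of these operads: they are the homology of topological operads (little disks, framed little disks, Deligne--Mumford compactifications of genus-zero curves) which admit units topologically, given by forgetting a disk or a marked point; hence the unitary extension exists already at the space level, and degree reasons force the unit to annihilate every generator except the binary commutative product. For $\PreLie$ it gives a direct ideal-theoretic computation: insertion of $u$ into any of the three slots of the pre-Lie identity gives zero, and insertion of $u$ into an element obtained from the pre-Lie identity by pre- and post-compositions either vanishes or is again such an element, so the operadic ideal generated by $R\oplus R_\chi$ meets $\calT(E)$ exactly in the ideal generated by $R$. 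Until you supply one of these arguments (or a genuinely verified faithful unital model) for each of $\Gerst$, $\BV$, $\HyperCom$ and $\PreLie$, part (i) of the statement remains unproven.
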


\begin{proof}\leavevmode
\begin{enumerate}
\item In the case of commutative associative algebras, the classical definition of the unit $a\cdot 1=1\cdot a =a$, that is $\chi(\cdot)=1$, works to create such an extension. 

In the cases of operads of Gerstenhaber algebras, Batalin--Vilkovisky algebras, and hypercommutative algebras, we note that those are homology operads of topological operads (of little $2$-disks, framed little $2$-disks, and Deligne--Mumford compactifications of genus zero curves with marked points respectively) that admit units topologically (action of the unit corresponds to forgetting one of the little disks, or one of the marked points), hence the unitality remains on the algebraic level. The action of the unit on the generators is forced by degree reasons: all generators except for the binary generator of the commutative suboperad must be annihilated by the unit.

Let us consider the operad of pre-Lie algebras. We shall now show that the assignment $\chi(\star)=1$, that is $1\star a=a\star 1=a$, leads to a unital extension of the maximal possible size. It suffices to show that the insertion of $u$ into any element of the operadic ideal generated by the pre-Lie identity and applying the defining relations of the unital extension that we are considering produces an element of the same ideal. Let us note that insertion of $u$ in any slot of the pre-Lie identity gives zero: 
\begin{gather*}
(1\star b)\star c-1\star(b\star c)-(1\star c)\star b+1\star(c\star b)=0\ ,\\
(a\star1)\star c-a\star(1\star c)-(a\star c)\star 1+a\star(c\star 1)=0\ ,\\
(a\star b)\star 1-a\star(b\star 1)-(a\star 1)\star b+a\star(1\star b)=0\ .
\end{gather*}
Every element of the operadic ideal generated by the pre-Lie identity is a combination of elements which are obtained from the pre-Lie identity by pre- and post-compositions. Consider one such element $\nu$, and look at $\nu\circ_i u$. If the argument $i$ of $\nu$ is one of the arguments of the pre-Lie identity, then the above computation shows that $\nu\circ_i u=0$. Otherwise, the element $\alpha\circ_i u$ is still obtained from the pre-Lie identity by pre- and post-compositions, proving our claim.

\item The proof is the same as in the case  of the operad $\Com$. 

\item In the case of Lie algebras, there is no nontrivial equivariant map $\chi$ from the space of binary operations to the ground field regarded as the trivial module.

In the case of permutative algebras, we can substitute $x_1=u$ in the structural identity 
\[x_1\cdot (x_2\cdot x_3)=x_1\cdot (x_3\cdot x_2)~,\] 
and note that it becomes 
\[\chi(\cdot) x_2\cdot x_3=\chi(\cdot) x_3\cdot x_2~,\] 
so if the canonical map is a monomorphism, we must have $\chi(\cdot)=0$, and the extension is trivial.

\item Recall the defining relations of the ns operad of dendriform algebras:
\begin{gather*}
(x_1\prec x_2)\prec x_3 = x_1\prec (x_2\prec x_3+x_2\succ x_3)\ ,\label{eq:Dend1}\\
(x_1\succ x_2)\prec x_3 = x_1\succ (x_2\prec x_3)\ , \label{eq:Dend2}\\
(x_1\succ x_2+x_1\prec x_2)\succ x_3 = x_1\succ (x_2\succ x_3)\ .\label{eq:Dend3}
\end{gather*}
Suppose that we consider the unital extension corresponding to the linear function $\chi$. Substituting $x_1=u$ in the first dendriform axiom and then setting $x_3=u$ in the third one, we get
\begin{gather*}
\chi(\prec) x_2\prec x_3 = \chi(\prec) (x_2\prec x_3+x_2\succ x_3)~,\\
\chi(\succ) (x_1\succ x_2+x_1\prec x_2)=\chi(\succ) (x_1\succ x_2)~,
\end{gather*}
so if the canonical map is a monomorphism, we must have $\chi(\prec)=\chi(\succ)=0$, and the extension is trivial. 

Recall the defining relations of the ns operad of diassociative algebras: 
\begin{gather*}
(x_1\vdash x_2) \vdash x_3 = x_1\vdash (x_2\vdash x_3)\ ,\label{eq:Dias1}\\
(x_1\vdash x_2) \vdash x_3 = (x_1\dashv x_2)\vdash x_3\ ,\label{eq:Dias2}\\
(x_1\vdash x_2) \dashv x_3 = x_1\vdash (x_2\dashv x_3)\ ,\label{eq:Dias3}\\
x_1\dashv (x_2 \vdash x_3) = x_1\dashv (x_2\dashv x_3)\ ,\label{eq:Dias4}\\
(x_1\dashv x_2) \dashv x_3 = x_1\dashv (x_2\dashv x_3)\ .\label{eq:Dias5}
\end{gather*}
Suppose that we consider the unital extension corresponding to the linear function $\chi$. Substituting $x_3=u$ in the second axiom of diassociative algebras and then setting $x_1=u$ in the fourth one, we get
\begin{gather*}
\chi(\vdash) x_1\vdash x_2 = \chi(\vdash) x_1\dashv x_2~,\\
\chi(\dashv) x_2\vdash x_3 = \chi(\dashv) x_2\dashv x_3~,
\end{gather*}
so if the canonical map is a monomorphism, we must have $\chi(\vdash)=\chi(\dashv)=0$, and the extension is trivial.
\end{enumerate}
\end{proof}

Let us now work over a field $\k$ of characteristic $0$ and suppose that the component $E(n)$ is finite dimensional for any $n\ge 1$ and that $E(0)=0$. Recall that the Koszul dual operad of a quadratic-linear opead $\calP=\calP(E,R)$ admits the following quadratic presentation $\calP^!=\calP\big(E^\vee, (qR)^\perp\big)$, with $E^\vee\coloneqq s^{-1}{\End}_{\k s^{-1}}\otimes_{\mathrm{H}} E^*$, see \cite[Section~$7.2.3$]{LodayVallette12}.

\begin{lemma}\label{lem:ExtConvAlg}
Under the abovementioned assumptions, when the Koszul dual operad $\calP^!$ is extendable,  
the (complete)  convolution pre-Lie algebra associated to $\left(u_\chi \calP^!\right)^*$ is isomorphic to 
\[ \hom_\Sy\left(\left(u_\chi \calP^!\right)^*, \eend_A\right)\cong  
A \times  \hom_\Sy\left( \calP^{\ac} , \eend_A\right)\ ,
\]
where the pre-Lie product $\bigstar$ on the right-hand side is given by 
\[
(a,f){\bigstar} (b,g) = 
\left(f(\id)(b), f \star g + f\ast b
\right)\ ,
\]
with $\star$ being the pre-Lie product on the convolution algebra \[\a_{\calP^{\ac}, A}=\left(\hom_\Sy\left( \calP^{\ac} , \eend_A\right), \star\right)\] and $f\ast b$ denoting an element depending only on $f$ and $b$.
\end{lemma}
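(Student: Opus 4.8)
The plan is to compute directly in the convolution algebra, using the extendability of $\calP^!$ to pin down the underlying $\Sy$-module and the infinitesimal decomposition map of the dual cooperad, and then to read off the pre-Lie product component by component. First I would set up the module decomposition. By extendability, the canonical map $\calP^!\hookrightarrow u_\chi\calP^!$ is a monomorphism, so the underlying $\Sy$-module of $u_\chi\calP^!$ is isomorphic to $\k u\oplus\calP^!$, with $u$ the adjoined arity-$0$ generator. Dualising arity by arity yields a cooperad $\calC\coloneqq\left(u_\chi\calP^!\right)^*$ whose underlying collection is $\k u^*\oplus\left(\calP^!\right)^*$, with $u^*\in\calC(0)$ and $\left(\calP^!\right)^*$ supported in arities $\ge 1$. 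Splitting the convolution algebra by arity, the arity-$0$ factor is $\hom\big(\k u^*,\eend_A(0)\big)\cong\eend_A(0)=A$, while the arity-$\ge 1$ factors assemble into $\hom_\Sy\big(\left(\calP^!\right)^*,\eend_A\big)\cong\hom_\Sy\big(\calP^{\ac},\eend_A\big)$ through the standard suspension--dualisation identification of the Koszul dual operad with the Koszul dual cooperad; the same splitting is valid in the complete setting since it is performed arity by arity. This produces the claimed isomorphism of graded modules, sending an equivariant map to the pair $(a,f)$ of its value on $u^*$ and its restriction to arities $\ge 1$. Since $\bigstar$ is merely the transport of the convolution pre-Lie product through this linear isomorphism, the pre-Lie identity for $\bigstar$ is automatic, and only the product formula has to be verified.

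The core of the argument is the computation of the infinitesimal decomposition $\Delta_{(1)}$ of $\calC$, obtained by dualising the infinitesimal composition $\gamma_{(1)}$ of $u_\chi\calP^!$. I would classify the two-vertex terms $\xi\circ_{(1)}\eta$ appearing in $\Delta_{(1)}$ by the arities of the two factors, noting that the left factor $\xi$ necessarily has arity $\ge 1$. The relations $R_\chi$ defining $u_\chi\calP^!$ say that $u$ composes nontrivially only through $\mu\circ_i u=\chi(\mu)\id$ for binary $\mu$ and through $\id\circ_1 u=u$, all other composites involving $u$ vanishing; consequently the only way to produce $u$ by an infinitesimal composition is $\id\circ_1 u=u$, so the total-arity-$0$ part of $\Delta_{(1)}(u^*)$ is exactly $\id^*\circ_{(1)}u^*$. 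For $\omega$ of arity $\ge 1$, the terms split into those with both factors of arity $\ge 1$ and those with $\eta=u^*$ of arity $0$; crucially, extendability guarantees that the compositions among arity-$\ge 1$ elements are precisely those of the suboperad $\calP^!$, so the both-factors-$\ge 1$ part of $\Delta_{(1)}$ restricts to the infinitesimal decomposition of $\calP^{\ac}$ and introduces no spurious mixing.

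With $\Delta_{(1)}$ in hand, evaluating $(a,f)\bigstar(b,g)=\gamma_{(1)}\circ\big((a,f)\circ_{(1)}(b,g)\big)\circ\Delta_{(1)}$ finishes the computation. The arity-$0$ output is the value on $u^*$, which receives only the term $\id^*\circ_{(1)}u^*$: the left slot evaluates the arity-$1$ part of the first argument $(a,f)$, namely $f(\id)\in\Hom(A,A)$, the right slot evaluates the arity-$0$ part of $(b,g)$, namely $b\in A$, and $\gamma_{(1)}$ composes them into $f(\id)(b)$. The arity-$\ge 1$ output is the sum of the both-factors-$\ge 1$ contributions, which by the previous paragraph is exactly the convolution pre-Lie product $f\star g$ of $\a_{\calP^{\ac},A}$, together with the contributions where $\eta=u^*$, in which the left slot evaluates $f$ and the right slot evaluates the arity-$0$ part $b$ of the second argument; the latter therefore define an element $f\ast b$ depending only on $f$ and $b$. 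Since the left factor $\xi$ of every infinitesimal term has arity $\ge 1$, the arity-$0$ component $a$ of the first argument never enters, consistently with the stated formula.

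The main obstacle I anticipate is bookkeeping rather than conceptual: one must track the suspension and Koszul signs through the identification $\left(\calP^!\right)^*\cong\calP^{\ac}$ to be sure that the both-factors-$\ge 1$ part of $\bigstar$ matches $\star$ exactly, and one must check that the $u^*$-terms genuinely close up into a well-defined map $f\ast b$ independent of $g$ and $a$. Both points rely essentially on extendability: without the monomorphism $\calP^!\hookrightarrow u_\chi\calP^!$, the arity-$\ge 1$ part of $u_\chi\calP^!$ would be a proper quotient of $\calP^!$, the decomposition of the convolution algebra would fail, and the clean splitting of the product as $f\star g+f\ast b$ would not hold.
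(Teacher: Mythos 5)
Your proposal is correct and follows essentially the same route as the paper: split the convolution algebra via the extendability isomorphism $u_\chi\calP^!\cong\k u\oplus\calP^!$, then read off the product from the ways an element of $u_\chi\calP^!$ arises as a partial composition of two elements (equivalently, from the dual infinitesimal decomposition map, which is your phrasing), with $u=\id\circ_1 u$ giving the arity-$0$ component $f(\id)(b)$, and the two cases "both factors in $\calP^!$" versus "$u$ on top" giving $f\star g+f\ast b$. The one loose end you flag—well-definedness of $f\ast b$—is settled in the paper not by extendability but by the standing finiteness assumptions $E(0)=0$ and $E(n)$ finite dimensional, which ensure that the sum over decompositions $\mu=\nu\circ_i u$ is finite in each arity.
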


\begin{proof}
When the Koszul dual operad $\calP^!$ is extendable, we use the underlying isomorphism 
$u_\chi \calP^! \cong\k u \oplus \calP^!$ to get 
\[ \hom_\Sy\left(\left(u_\chi \calP^!\right)^*, \eend_A\right)\cong  \hom_\Sy\left(u^* \oplus \calP^{\ac} , \eend_A\right)\cong 
A \times  \hom_\Sy\left( \calP^{\ac} , \eend_A\right). 
\]
The arity $0$ part the pre-Lie product $\bigstar$ comes from the  unique way to obtain the element $u$ in the operad $u_\chi \calP^!$ as a partial composition of two elements, that is $u=\id \circ_1 u$. Similarly for its other part, given any element $\mu\in \calP^!$, there are two ways to get it as a partial composition of two elements: either with two elements coming from $\calP^!$ or with one (bottom) coming from $\calP^!$ and one (top) which is $u$. Notice that in this latter case, we get a finite sum in each arity since $E(0)=0$ and $E(n)$ is finite dimensional for any $n\ge 1$.
\end{proof}

\begin{theorem}\label{thm:TwCatAlg}
Let $\alpha \in \MC\left(\a_{\calP^{\ac}, A}\right)$ be a complete $\calP_\infty$-algebra structure on $A$ and let $a\in \F_1A_0$. The Maurer--Cartan element $a.\alpha$ in the convolution algebra $\hom_\Sy\left(\left(u_\chi \calP^!\right)^*, \eend_A\right)$ is a $\calP_\infty$-algebra structure if and only if its arity $0$ part $(a.\alpha)(u^*)=0$ vanishes. 
\end{theorem}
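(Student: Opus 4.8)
The plan is to realise $a.\alpha$ as a Maurer--Cartan element of the large convolution algebra $\hom_\Sy\big((u_\chi\calP^!)^*, \eend_A\big)$ and then to read off the Maurer--Cartan equation through the splitting of Lemma~\ref{lem:ExtConvAlg}. First I would record that the subspace $\{0\}\times\hom_\Sy(\calP^{\ac},\eend_A)$ is a sub-dg-pre-Lie algebra: from the formula $(a,f)\bigstar(b,g)=\big(f(\id)(b),\, f\star g+f\ast b\big)$ one has $(0,f)\bigstar(0,g)=(0,f\star g)$, since $f(\id)(0)=0$ and $f\ast 0=0$ by linearity in the second slot, and the differential $\partial$ respects the decomposition; hence this subalgebra is isomorphic, as a complete dg pre-Lie algebra, to $\a_{\calP^{\ac},A}$. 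In particular a $\calP_\infty$-structure $\alpha$ corresponds to the element $(0,\alpha)$, and it is a Maurer--Cartan element of the large algebra if and only if it is one of $\a_{\calP^{\ac},A}$.

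Since $a\in\F_1A_0$ is supported in arity $0$, it is an element of the deformation gauge group $\widetilde{\Gamma}$ attached to the cooperad $(u_\chi\calP^!)^*$, and by Proposition~\ref{prop:Extension} (the deformation gauge group version of Proposition~\ref{prop:GaugeGroupAction}) the gauge action sends the Maurer--Cartan element $(0,\alpha)$ to a Maurer--Cartan element $a.\alpha$ of the large algebra. Writing $a.\alpha=(c,\beta)$ with $c=(a.\alpha)(u^*)\in A$ its arity $0$ part and $\beta\in\hom_\Sy(\calP^{\ac},\eend_A)$, I would then decompose the Maurer--Cartan equation $\partial(c,\beta)+(c,\beta)\bigstar(c,\beta)=0$ along the two factors. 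Using the product formula of Lemma~\ref{lem:ExtConvAlg}, the component in $\hom_\Sy(\calP^{\ac},\eend_A)$ reads $\partial\beta+\beta\star\beta+\beta\ast c=0$. As the cross term $\beta\ast c$ depends linearly on $c$, the hypothesis $c=0$ forces $\partial\beta+\beta\star\beta=0$, which is exactly the Maurer--Cartan equation of $\a_{\calP^{\ac},A}$; thus $\beta$, and hence $a.\alpha=(0,\beta)$, is a genuine $\calP_\infty$-structure. The converse is immediate: a $\calP_\infty$-structure lies in the subalgebra $\{0\}\times\hom_\Sy(\calP^{\ac},\eend_A)$, so its arity $0$ part $(a.\alpha)(u^*)$ vanishes.

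The point requiring care, and the step I expect to be the crux, is the clean separation of the large Maurer--Cartan equation into its two components and the verification that $\beta\ast c$ is the sole obstruction, vanishing precisely because $\beta\ast 0=0$; this is where the structural information of Lemma~\ref{lem:ExtConvAlg} (the explicit shape of $\bigstar$, together with $E(0)=0$ ensuring a finite sum in each arity) is used in an essential way. It is also worth observing, to connect with the earlier results, that the arity $0$ component $(a.\alpha)(u^*)$ is nothing but the generalised Maurer--Cartan function of $a$ with respect to $\alpha$, so that the present statement is the exact analogue, for an arbitrary extendable Koszul dual $\calP^!$, of Theorem~\ref{thm:TwProcGp} for $\Ai$-algebras, where the vanishing of the twisted curvature $\theta^a$ is the Maurer--Cartan equation~\eqref{eqn:MC}.
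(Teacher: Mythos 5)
Your proof is correct and follows essentially the same route as the paper: the paper's proof is a one-line appeal to Lemma~\ref{lem:ExtConvAlg}, observing that Maurer--Cartan elements of $\a_{\calP^{\ac},A}$ correspond exactly to Maurer--Cartan elements of $\hom_\Sy\big((u_\chi\calP^!)^*,\eend_A\big)$ with vanishing arity $0$ part, which is precisely the correspondence you establish. Your write-up merely makes explicit the steps the paper leaves implicit --- that $\{0\}\times\hom_\Sy(\calP^{\ac},\eend_A)$ is a sub-dg-pre-Lie algebra, that the gauge action of $a\in\F_1A_0$ preserves Maurer--Cartan elements, and that the cross-term $\beta\ast c$ vanishes by linearity when $c=0$ --- so it is a faithful elaboration rather than a different argument.
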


\begin{proof}
This is a direct corollary of Lemma \ref{lem:ExtConvAlg} which implies that Maurer--Cartan elements in the convolution algebra $\a_{\calP^{\ac}, A}= \hom_\Sy\left( \calP^{\ac} , \eend_A\right)$ are in one-to-one correspondence with Maurer--Cartan elements in the extended convolution algebra $\hom_\Sy\left(\left(u_\chi \calP^!\right)^*, \eend_A\right)$ whose arity $0$ part vanishes. 
\end{proof}

This result prompts the following definition.

\begin{definition}[Twistable homotopy algebras]\label{def:TwHoAlg}\index{twistable homotopy algebras}
Let $\calP=\calP(E,R)$ be an arity-wise finitely generated quadratic-linear Koszul operad. We say that the category of homotopy $\calP$-algebras, that is algebras over an operad $\calP_\infty=\Omega \calP^{\ac}$, is \emph{twistable} when the Koszul dual operad $\calP^!$ is extendable. The corresponding equation $(a.\alpha)(u^*)=0$ is then naturally dubbed the \emph{Maurer--Cartan equation}. \index{Maurer--Cartan equation!twistable homotopy algebras}
\end{definition}

In plain words, when a category of $\calP_\infty$-algebras is twistable, this means that the ``dual'' category of $\calP^!$-algebras admits a meaningful extension of \emph{unital} $\calP^!$-algebras and thus the category of $\calP_\infty$-algebras admits a meaningful extension of \emph{curved} $\calP_\infty$-algebras. 
In this case, the twisting procedure works as in the case of homotopy associative or homotopy Lie algebras: any $\calP_\infty$-algebra can be twisted by an element to produce a curved $\calP_\infty$-algebra, which turns out to be  a $\calP_\infty$-algebra if and only if its twisted curvature vanishes, that is satisfises the Maurer--Cartan equation. 

\begin{proposition}\label{prop:TwistableAlg}\leavevmode
\begin{enumerate}
\item The categories of homotopy Lie algebras, homotopy Gerstenhaber algebras, homotopy gravity algebras, and homotopy permutative algebras are twistable.
\item The category of homotopy associative algebras is twistable. 
\item The categories of homotopy commutative algebras and homotopy pre-Lie algebras are not twistable. 
\item The categories of homotopy dendriform algebras and homotopy diassociative algebras are not twistable. 
\end{enumerate}
\end{proposition}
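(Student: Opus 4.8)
The plan is to reduce the entire statement to Definition~\ref{def:TwHoAlg} together with Proposition~\ref{prop:Extendable}. By Definition~\ref{def:TwHoAlg}, the category of homotopy $\calP$-algebras is twistable precisely when the Koszul dual operad $\calP^!$ is extendable; so for each operad appearing in the statement I would first identify its Koszul dual and then read off extendability, or its failure, directly from Proposition~\ref{prop:Extendable}. Before doing so, one must check that Definition~\ref{def:TwHoAlg} applies at all, namely that each $\calP$ in question is arity-wise finitely generated, quadratic-linear, Koszul, with $E(0)=0$; all the operads involved ($\Lie$, $\Com$, $\Gerst$, $\Grav$, $\HyperCom$, $\Perm$, $\PreLie$, $\As$, $\Dias$, $\Dend$) are classically known to be Koszul, and I would cite the relevant references.

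The second step is the bookkeeping of Koszul duals. For the twistable cases I would use the identifications $\Lie^!\cong\Com$, $\Gerst^!\cong\Gerst$ (the operad of Gerstenhaber algebras being Koszul self-dual up to operadic suspension), $\Grav^!\cong\HyperCom$, $\Perm^!\cong\PreLie$, and, in the nonsymmetric setting, $\As^!\cong\As$. For the non-twistable cases I would use $\Com^!\cong\Lie$, $\PreLie^!\cong\Perm$, and the nonsymmetric dualities $\Dend^!\cong\Dias$ and $\Dias^!\cong\Dend$.

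The final step is to invoke Proposition~\ref{prop:Extendable}. For item (1), the duals $\Com$, $\Gerst$, $\HyperCom$, and $\PreLie$ are all extendable by part (1) of that proposition, hence homotopy Lie, Gerstenhaber, gravity, and permutative algebras are twistable. For item (2), $\As$ is extendable by part (2), whence homotopy associative algebras are twistable. For item (3), $\Lie$ and $\Perm$ fail to be extendable by part (3), so homotopy commutative and homotopy pre-Lie algebras are not twistable. For item (4), $\Dias$ and $\Dend$ fail to be extendable by part (4), so homotopy dendriform and homotopy diassociative algebras are not twistable.

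The main thing to get right is not any computation but the conventions surrounding Koszul duality: one must match the operadic (de)suspension conventions used in Proposition~\ref{prop:Extendable} and in the formula for $E^\vee$ recalled before Lemma~\ref{lem:ExtConvAlg}, so that the self-duality of $\Gerst$ and the $\Grav$--$\HyperCom$ duality are stated with the correct shifts, and one must confirm $E(0)=0$ together with arity-wise finite-dimensionality of the generators in each case so that Definition~\ref{def:TwHoAlg} is genuinely applicable. Once these conventions are fixed, the proposition is an immediate corollary of the already-established extendability results of Proposition~\ref{prop:Extendable}.
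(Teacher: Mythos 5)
Your proposal is correct and is essentially the paper's own proof: the paper disposes of this proposition in one line as "a direct corollary of Proposition~\ref{prop:Extendable} and Theorem~\ref{thm:TwCatAlg}," which is exactly your reduction via Definition~\ref{def:TwHoAlg}, with the Koszul-dual identifications ($\Lie^!\cong\Com$, $\Gerst^!\cong\Gerst$, $\Grav^!\cong\HyperCom$, $\Perm^!\cong\PreLie$, $\As^!\cong\As$, and their inverses for the negative cases) left implicit. Your explicit bookkeeping of the duals and of the hypotheses of Definition~\ref{def:TwHoAlg} is a faithful, slightly more detailed rendering of the same argument.
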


\begin{proof}
This is a direct corollary of  Proposition~\ref{prop:Extendable} and Theorem~\ref{thm:TwCatAlg}. 
\end{proof}

One can check that the maps $\chi$ introduced in Proposition~\ref{prop:Extendable} for the unital extensions of the 
ns operad $\As$  and for the 
operad $\Com$
produce the exact same twisting formulas than the ones given in Section~\ref{subsec:TwistGrp} for (curved)  $\Ai$-algebras and in Section~\ref{subsec:TwLii} for (curved)  $\Li$-algebras. The other cases are new. Below we make the case of (curved)  homotopy permutative algebras explicit and we leave the other cases to the interested reader. \\

We already recalled the definition of permutative algebras\index{permutative algebras} in Proposition~\ref{prop:Extendable} and its proof. The operad $\Perm$ is known to be Koszul. Its Koszul dual operad is the operad $\PreLie$ encoding pre-Lie algebras \cite[Section~13.4.3]{LodayVallette12}. This latter operad is isomorphic to eh of rooted trees operad $\mathrm{RT}$:  \index{operad!$\mathrm{RT}$}\index{rooted trees operad}
$$\vcenter{\hbox{
\xymatrix@M=5pt@R=10pt@C=0pt{
 & & *+[o][F-]{1}\ar@{-}[d]\\
*+[o][F-]{3}\ar@{-}[dr] &&  *+[o][F-]{4}\ar@{-}[dl] \\
& *+[o][F-]{2} & ,
}}}$$
where the lowest vertex is the root, see \cite[Theorem~1.9]{ChapotonLivernet01}. The partial composition products $\tau\circ_i \upsilon$ is given by the insertion of the tree $\upsilon$ at the $i^{\mathrm{th}}$ vertex of the tree $\tau$. The sub-trees attached above the $i^{\mathrm{th}}$ vertex of the tree $\tau$ are then  grafted in all possible onto the vertices of the tree $\upsilon$; for example,  the composition
$$
\vcenter{
\xymatrix@M=5pt@R=10pt@C=0pt{
*+[o][F-]{1}\ar@{-}[dr] &&  *+[o][F-]{3}\ar@{-}[dl] \\
& *+[o][F]{2} & 
}}
\ \circ_2 \ 
\vcenter{
\xymatrix@M=5pt@R=10pt@C=0pt{
*+[o][F-]{1}\ar@{-}[d]  \\
*+[o][F]{2} 
}}$$ 
is given by the formula
 \[
\vcenter{
\xymatrix@M=5pt@R=10pt@C=0pt{
*+[o][F-]{1}\ar@{-}[drr] &&*+[o][F-]{2}\ar@{-}[d] && *+[o][F-]{4}\ar@{-}[dll]\\
&& *+[o][F]{3} & &
}}+
\vcenter{
\xymatrix@M=5pt@R=10pt@C=0pt{
*+[o][F-]{1}\ar@{-}[dr] & & *+[o][F-]{4}\ar@{-}[dl]\\
 &  *+[o][F-]{2}\ar@{-}[d]& \\
& *+[o][F]{3} & 
}}
+
\vcenter{
\xymatrix@M=5pt@R=10pt@C=0pt{
& & *+[o][F-]{4}\ar@{-}[d]\\
*+[o][F-]{1}\ar@{-}[dr]  && *+[o][F-]{2}\ar@{-}[dl] \\
& *+[o][F]{3} & 
}}+
\vcenter{
\xymatrix@M=5pt@R=10pt@C=0pt{
*+[o][F-]{1}\ar@{-}[d] & &  \\
  *+[o][F-]{2}\ar@{-}[dr]&&  *+[o][F-]{4}\ar@{-}[dl]\\
& *+[o][F]{3} & 
}}\ .
 \]

 As explained in Proposition~\ref{prop:MCOmegaC}, the data of a shifted $\Perm_\infty$-algebra on a (complete) dg module $A$ amounts to a Maurer--Cartan element in the convolution algebra $\a_{\PreLie^*, A}$. Given a rooted tree $\tau$ and a sub-tree $\upsilon\subset \tau$ (forgetting the labels), we denote by $(\tau/\upsilon, i, \sigma)$ respectively the rooted tree $\tau/\upsilon$ obtained by contracting $\upsilon$ in $\tau$ and by relabelling the vertices,  the label $i$ of the corresponding new vertex, and the overall permutation $\sigma\in\Sy_{|\tau|}$ which produces the labelling of the tree $\tau$ after  the composite $\left(\tau/\upsilon \circ_i \upsilon\right)^\sigma=\tau$. Such a decomposition is not unique, we choose the one for which the associated planar tree 
 \[\vcenter{\hbox{
\begin{tikzpicture}[yscale=0.8,xscale=0.8]

\draw[thick] (0,-1)--(0,0) -- (0,1) -- (0, 3);
\draw[thick] (1,1) -- (0,0) -- (-1,1);
\draw[thick] (-2,1) -- (0,0) ;
\draw[thick] (2,1) -- (0,0) ;
\draw[thick] (-3,1) -- (0,0) ;
\draw[thick] (1,3) -- (0,2) -- (-1,3);

\node at (-3,1.3) {$\scriptstyle\sigma^{-1}(1)$};
\node at (-2,1.3) {$\scriptstyle\sigma^{-1}(2)$};
\node at (-1,1.3) {$\scriptstyle\cdots$};
\node at (-1,3.3) {$\scriptstyle\sigma^{-1}(i)$};
\node at (0,3.3) {$\scriptstyle\sigma^{-1}(i+1)$};
\node at (1,3.3) {$\scriptstyle\cdots$};
\node at (1,1.3) {$\scriptstyle\cdots$};
\node at (2,1.3) {$\scriptstyle\sigma^{-1}(|\tau|)$};
\node at (0.2,1.3) {$\scriptstyle i$};
\end{tikzpicture}}}\ ,\]
where the bottom corolla has arity $|\tau/\upsilon|$ and where the top corolla has arity $|\upsilon|$, and  is a shuffle tree \cite[Section~8.2.2]{LodayVallette12}. 

\begin{multline*}
\text{For }\ 
\tau\coloneqq\vcenter{\hbox{\scalebox{1}{
\xymatrix@M=5pt@R=10pt@C=0pt{
*+[o][F-]{1}\ar@{-}[dr] & & *+[o][F-]{4}\ar@{-}[dl]\\
 &  *+[o][F-]{2}\ar@{-}[d]& \\
& *+[o][F-]{3} & 
}}}}
\text{ and } 
\upsilon\coloneqq
\vcenter{\hbox{
\scalebox{1}{\xymatrix@M=5pt@R=10pt@C=0pt{
*+[o][F-]{1}\ar@{-}[dr] & & *+[o][F-]{3}\ar@{-}[dl]\\
 &  *+[o][F-]{2}
}}}}\ ,\\
  \text{ we get }
\tau/\upsilon=
\vcenter{\hbox{
\scalebox{1}{\xymatrix@M=5pt@R=10pt@C=10pt{
  *+[o][F-]{1}\ar@{-}[d] \\
*+[o][F-]{2}  
}}}}\ ,\  i=1, \text{ and }\sigma=(34).
\end{multline*}
 
 \begin{lemma}\index{shifted homotopy permutative algebra}
 A shifted homotopy permutative algebra is a graded module $A$ equipped with a collection of degree $-1$ operations 
\[ \left\{m_\tau : A^{\otimes |\tau|}\to A\right\}_{\tau \in \mathrm{RT}}\ ,
\]
where $|\tau|$ stands for the number of vertices of a rooted tree. These generating operations are required to satisfy the relations 
\[
\sum_{\upsilon \subset \tau} \left(m_{\tau/\upsilon} \circ_i m_\upsilon \right)^\sigma=0\ ,
\]
for any tree $\tau\in \mathrm{RT}$.
 \end{lemma}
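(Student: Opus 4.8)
The plan is to recognise the asserted structure as the arity-by-arity expansion of a single Maurer--Cartan equation in a convolution pre-Lie algebra. First I would record that, by Proposition~\ref{prop:MCOmegaC} together with the Koszulness of $\Perm$ and the identifications $\Perm^{!}\cong\PreLie\cong\mathrm{RT}$, a shifted homotopy permutative algebra structure on $A$ is exactly a degree $-1$ map $\alpha\colon\PreLie^{*}\to\eend_{A}$ satisfying the Maurer--Cartan equation of $\a_{\PreLie^{*},A}$; since $A$ is a bare graded module the internal differential is absent and this equation reads simply $\alpha\star\alpha=0$. Because $\{\tau\}_{\tau\in\mathrm{RT}}$ is a linear basis of $\PreLie$, its dual basis $\{\tau^{*}\}$ is a basis of $\PreLie^{*}$, and setting $m_{\tau}:=\alpha(\tau^{*})$ recovers precisely a collection of degree $-1$ operations $m_{\tau}\colon A^{\otimes|\tau|}\to A$ indexed by rooted trees. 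This settles the identification of the underlying data, so that the content of the lemma lies entirely in rewriting $\alpha\star\alpha=0$ componentwise.

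The heart of the argument is the computation of the infinitesimal decomposition coproduct $\Delta_{(1)}\colon\PreLie^{*}\to\PreLie^{*}\,\widehat{\circ}_{(1)}\,\PreLie^{*}$, dual to the partial composition product $\gamma_{(1)}$ of $\mathrm{RT}$ recalled just above the statement. Here I would dualise the grafting formula: a rooted tree $\tau$ occurs in a partial composite $\rho\circ_{i}\upsilon$ exactly when $\tau$ contains a connected subtree isomorphic to $\upsilon$ whose contraction yields $\rho$ with $\upsilon$ collapsed to the vertex $i$, the subtrees hanging above that vertex in $\rho$ having been redistributed onto the vertices of $\upsilon$. Reading this correspondence backwards, each connected subtree $\upsilon\subset\tau$ determines a unique triple $(\tau/\upsilon,i,\sigma)$ — precisely the data fixed by the shuffle-tree convention introduced before the lemma — and conversely every term of $\gamma_{(1)}$ producing $\tau$ arises in this way. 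Dually one obtains
\[
\Delta_{(1)}(\tau^{*})=\sum_{\upsilon\subset\tau}\bigl((\tau/\upsilon)^{*}\circ_{i}\upsilon^{*}\bigr)^{\sigma},
\]
the sum ranging over connected subtrees, each entering with coefficient one.

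With this coproduct in hand the last step is routine: by the definition of the convolution pre-Lie product one has $(\alpha\star\alpha)(\tau^{*})=\gamma_{(1)}\circ(\alpha\,\widehat{\circ}_{(1)}\,\alpha)\circ\Delta_{(1)}(\tau^{*})$, and substituting the formula above together with $m_{\tau}=\alpha(\tau^{*})$ turns $\alpha\star\alpha=0$, evaluated on $\tau^{*}$, into exactly
\[
\sum_{\upsilon\subset\tau}\bigl(m_{\tau/\upsilon}\circ_{i}m_{\upsilon}\bigr)^{\sigma}=0 .
\]
I would remark that we are in the shifted, i.e.\ $\End_{\k}$-based rather than $\End_{\k s}$-based, situation, so the structure maps of $\PreLie^{*}$ carry no Koszul signs and the relations are therefore sign-free; the extreme subtrees $\upsilon$ (a single vertex, or all of $\tau$) contribute the terms built from the unary operation attached to the one-vertex tree, so no separate differential need be displayed. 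The one genuinely delicate point is the combinatorial bijection underpinning the coproduct formula: one must verify that distinct connected subtrees give distinct composite terms and that the multiplicity of $\tau$ in each $\rho\circ_{i}\upsilon$ is exactly one, so that no coefficients other than $1$ appear. Everything else is bookkeeping through the dictionary already established in the excerpt.
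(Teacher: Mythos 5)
Your proposal is correct and follows essentially the same route as the paper: both identify shifted homotopy permutative algebras with Maurer--Cartan elements of the convolution pre-Lie algebra $\a_{\,\PreLie^*, A}$ and unfold the equation $\alpha\star\alpha=0$ against the cooperad structure dual to the grafting composition of rooted trees, with the sum over connected subtrees $\upsilon\subset\tau$ arising from the dualisation. The paper's own (terser) proof records only one point that you handle implicitly through the shuffle-tree convention: the composite $\left(m_{\tau/\upsilon}\circ_i m_\upsilon\right)^\sigma$ is independent of the chosen decomposition $\left(\tau/\upsilon\circ_i\upsilon\right)^\sigma=\tau$, by the equivariance properties of the endomorphism operad $\End_A$.
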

 
 \begin{proof}
 This is a direct corollary of the definition of the convolution algebra $\a_{\PreLie^*, A}$ and the cooperad structure on 
 $\PreLie^*$ obtained by linear dualisation of the operad structure on $\PreLie$ described above. The choice of decompositions $\left(\tau/\upsilon \circ_i \upsilon\right)^\sigma=\tau$ along shuffle trees does not alter the result: any choice of decomposition would produce the same result in the end, by the properties of the endomorphism operad $\End_A$. In other words, the composite $\left(m_{\tau/\upsilon} \circ_i m_\upsilon \right)^\sigma$ does not depend of the choice of decomposition.
 \end{proof}

Here are the first relations, where we use the  notation $d\coloneqq m_{\vcenter{
\xymatrix@M=3pt@R=8pt@C=8pt{
  *+[o][F-]{\scriptstyle 1}
}}}$\ .
\begin{align*}
\tau& =
\vcenter{
	\xymatrix@M=5pt@R=10pt@C=10pt{
		*+[o][F-]{1}
}}
&\quad & d^2=0 \phantom{\vcenter{
		\xymatrix@M=5pt@R=10pt@C=10pt{
			*+[o][F-]{2}\ar@{-}[d] \\
			*+[o][F-]{1} 
}}}
\\
\tau& =
\vcenter{
	\xymatrix@M=5pt@R=10pt@C=10pt{
		*+[o][F-]{2}\ar@{-}[d] \\
		*+[o][F-]{1} 
}}
& \ &
\partial m_{\vcenter{
		\xymatrix@M=3pt@R=5pt@C=5pt{
			*+[o][F-]{\scriptstyle 2}\ar@{-}[d] \\
			*+[o][F-]{\scriptstyle 1} 
}}} \coloneqq
d m_{\vcenter{
		\xymatrix@M=3pt@R=5pt@C=5pt{
			*+[o][F-]{\scriptstyle 2}\ar@{-}[d] \\
			*+[o][F-]{\scriptstyle 1} 
}}}(-,-)+m_{\vcenter{
		\xymatrix@M=3pt@R=5pt@C=5pt{
			*+[o][F-]{\scriptstyle 2}\ar@{-}[d] \\
			*+[o][F-]{\scriptstyle 1} 
}}}(d(-), -)+m_{\vcenter{
		\xymatrix@M=3pt@R=5pt@C=5pt{
			*+[o][F-]{\scriptstyle 2}\ar@{-}[d] \\
			*+[o][F-]{\scriptstyle 1} 
}}}(-, d(-))=0;
\\
\tau& =
\vcenter{
	\xymatrix@M=5pt@R=10pt@C=10pt{
		*+[o][F-]{3}\ar@{-}[d] \\
		*+[o][F-]{2}\ar@{-}[d] \\
		*+[o][F-]{1} 
}}
& \ & \partial m_{\vcenter{
		\xymatrix@M=3pt@R=5pt@C=5pt{
			*+[o][F-]{\scriptstyle 3}\ar@{-}[d] \\
			*+[o][F-]{\scriptstyle 2}\ar@{-}[d] \\
			*+[o][F-]{\scriptstyle 1}
}}}  =-
m_{\vcenter{
		\xymatrix@M=3pt@R=5pt@C=5pt{
			*+[o][F-]{\scriptstyle 2}\ar@{-}[d] \\
			*+[o][F-]{\scriptstyle 1} 
}}}\circ_1 
m_{\vcenter{
		\xymatrix@M=3pt@R=5pt@C=5pt{
			*+[o][F-]{\scriptstyle 2}\ar@{-}[d] \\
			*+[o][F-]{\scriptstyle 1} 
}}}
-
m_{\vcenter{
		\xymatrix@M=3pt@R=5pt@C=5pt{
			*+[o][F-]{\scriptstyle 2}\ar@{-}[d] \\
			*+[o][F-]{\scriptstyle 1} 
}}}\circ_2
m_{\vcenter{
		\xymatrix@M=3pt@R=5pt@C=5pt{
			*+[o][F-]{\scriptstyle 2}\ar@{-}[d] \\
			*+[o][F-]{\scriptstyle 1} 
}}}\ ;
\\
\tau& =
\vcenter{
\xymatrix@M=5pt@R=10pt@C=0pt{
*+[o][F-]{2}\ar@{-}[dr] &&  *+[o][F-]{3}\ar@{-}[dl] \\
& *+[o][F-]{1} & 
}}
& \ & \partial m_{\vcenter{
\xymatrix@M=3pt@R=5pt@C=0pt{
*+[o][F-]{\scriptstyle 2}\ar@{-}[dr] &&  *+[o][F-]{\scriptstyle 3}\ar@{-}[dl] \\
& *+[o][F-]{\scriptstyle 1} & 
}}}=-
m_{\vcenter{
\xymatrix@M=3pt@R=5pt@C=5pt{
  *+[o][F-]{\scriptstyle 2}\ar@{-}[d] \\
*+[o][F-]{\scriptstyle 1} 
}}}\circ_1 
m_{\vcenter{
\xymatrix@M=3pt@R=5pt@C=5pt{
  *+[o][F-]{\scriptstyle 2}\ar@{-}[d] \\
*+[o][F-]{\scriptstyle 1} 
}}}
-
\big(m_{\vcenter{
\xymatrix@M=3pt@R=5pt@C=5pt{
  *+[o][F-]{\scriptstyle 2}\ar@{-}[d] \\
*+[o][F-]{\scriptstyle 1} 
}}}\circ_1 
m_{\vcenter{
\xymatrix@M=3pt@R=5pt@C=5pt{
  *+[o][F-]{\scriptstyle 2}\ar@{-}[d] \\
*+[o][F-]{\scriptstyle 1} 
}}}\big)^{(23)}\ . 
\end{align*}


This shows that $m_{\vcenter{
\xymatrix@M=3pt@R=5pt@C=5pt{
 *+[o][F-]{\scriptstyle 3}\ar@{-}[d] \\
  *+[o][F-]{\scriptstyle 2}\ar@{-}[d] \\
*+[o][F-]{\scriptstyle 1}
}}}$\ , respectively $m_{\vcenter{
\xymatrix@M=3pt@R=5pt@C=0pt{
*+[o][F-]{\scriptstyle 2}\ar@{-}[dr] &&  *+[o][F-]{\scriptstyle 3}\ar@{-}[dl] \\
& *+[o][F-]{\scriptstyle 1} & 
}}}$\ ,  is a homotopy for the first relation (anti-associativity), respectively for the second relation (partial skew-symmetry),  defining a shifted permutative algebra.\\

We consider the unital extension of operad $\PreLie$ introduced in the proof of Proposition~\ref{prop:Extendable} that 
we simply denote here by $\uPreLie$. This operad encodes pre-Lie algebras $(A, \star)$ equipped with a degree $0$ element $1$ satisfying 
\[1\star x=x=x\star 1\ ,\]
for any $x\in A$. 

\begin{lemma}\label{lem:uCompPreLie}
In the operad $\uPreLie$, the composite of the arity $0$ element $u$ at a vertex of a rooted tree $\tau$ 
depends on the  position of this latter one as follows:
\begin{description}
\item[\sc At a leaf] if the number of inputs of the vertex supporting the leaf is equal to $n$, then the resulting rooted tree is obtained by removing the leaf, relabelling the other vertices, and multiplying by $2-n$,

\item[\sc At the root or at an internal vertex] when the vertex has just one input, then it is deleted and the remaining vertices relabelled accordingly, otherwise when the vertex has at least two inputs, the upshot is equal to~$0$. 
\end{description}
\end{lemma}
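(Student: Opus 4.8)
The plan is to reduce the computation of $\tau \circ_i u$ in $\uPreLie$ to two elementary identities describing how the unit $1 = u$ interacts with the symmetric braces of Definition~\ref{def:SymBrace}. Under the isomorphism $\PreLie \cong \mathrm{RT}$, a rooted tree is encoded by a nested symmetric-brace monomial in which every vertex $w$ occurs exactly once, as the base of the brace $\{w; T_1, \ldots, T_k\}$ formed by $w$ and the subtrees $T_1, \ldots, T_k$ grafted immediately above it; here $k$ is the number of inputs (children) of $w$, and a leaf is the base of the empty brace $\{w;\} = w$. Since the base is always a single vertex, each such brace is a single tree, and composing $\tau \circ_i u$ replaces, in this expression, the base of the brace attached to the chosen vertex $v$ by the unit $1$. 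As the braces are multilinear, it suffices to simplify a single such substitution and propagate the result outward.

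First I would record the base--unit identity: for $k \geq 1$,
\[
\{1; b_1, \ldots, b_k\} = \begin{cases} b_1, & k = 1,\\ 0, & k \geq 2,\end{cases}
\]
together with $\{1;\} = 1$. The case $k = 1$ is left unitality $\{1; b_1\} = 1 \star b_1 = b_1$; the case $k \geq 2$ is an immediate induction on $k$ from the recursion of Definition~\ref{def:SymBrace} (for $k = 2$ one has $\{1; b_1, b_2\} = \{1 \star b_1; b_2\} - \{1; b_1 \star b_2\} = b_1 \star b_2 - b_1 \star b_2 = 0$, and for $k \geq 3$ both the leading term and every correction term contain an inner brace $\{1; \dots\}$ with at least two arguments, which vanishes by induction). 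When $v$ is the root or an internal vertex it carries $k \geq 1$ subtrees, and this identity applied to $\{1; T_1, \ldots, T_k\}$ yields exactly the stated behaviour: $v$ is deleted and its unique child reconnected when $k = 1$, and the whole tree vanishes when $k \geq 2$.

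Next I would record the argument--unit identity
\[
\{a; b_1, \ldots, b_{n-1}, 1\} = (2-n)\,\{a; b_1, \ldots, b_{n-1}\}\ ,
\]
valid for the degree-$0$ unit $1$: all signs in the recursion of Definition~\ref{def:SymBrace} are trivial since $|1| = 0$, the leading term equals $\{a; b_1, \ldots, b_{n-1}\}$ by right unitality $x \star 1 = x$, and each of the $n-1$ correction terms equals $\{a; b_1, \ldots, b_{n-1}\}$ because $\{b_i; 1\} = b_i \star 1 = b_i$; by the (graded) symmetry of the braces the same holds for substitution into any argument slot. When $v$ is a leaf, replacing its base by $1$ turns the one-vertex subtree $\{v;\}$ into the unit, which then occupies an argument slot of the brace $\{p; \ldots\}$ based at the parent $p$ of $v$. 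Applying this identity with $n$ the number of inputs of $p$ removes the leaf and produces the factor $2 - n$, which is precisely the ``at a leaf'' case.

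The relabelling of the surviving vertices is the routine bookkeeping carried by the operadic partial composition, and, as noted, every sign is trivial because $u$ has degree $0$. The only genuinely delicate point is the opening dictionary between the position of $v$ in the tree and its role in the brace monomial --- namely that inserting $u$ always amounts to setting the base of a brace equal to the unit, the leaf case being distinguished by the fact that the resulting unit is then reabsorbed as an argument of the parent brace (so that identity (i) applies at $p$), while a vertex carrying children is simplified on the spot by identity (ii). Once this dictionary is established, the statement follows from the two displayed identities.
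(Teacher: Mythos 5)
Your proposal is correct and takes essentially the same route as the paper's proof: both reduce the statement to the two symmetric-brace identities $\{1;b_1,\ldots,b_k\}=0$ for $k\ge 2$ (together with left unitality for $k=1$) and $\{a;b_1,\ldots,b_{n-1},1\}=(2-n)\{a;b_1,\ldots,b_{n-1}\}$, and then invoke the Guin--Oudom correspondence between rooted trees and iterated symmetric braces built from corollas. The only difference is one of detail: you carry out the inductive verifications of the two identities explicitly, whereas the paper states them with ``one can show that'' and cites Guin--Oudom for the dictionary.
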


\begin{proof}
To demonstrate that, we shall use symmetric braces introduced in Definition \ref{def:SymBrace}. Using the recursive formula for braces, one can show that
 \[
\{1;x_1,\ldots,x_n\}=0 \text{  for  } n\ge 2~,
 \] 
and that
 \[
\{x_0;x_1,\ldots,x_{i-1},1,x_{i+1},\ldots,x_n\}=(2-n)\{x_0;x_1,\ldots,x_{i-1},x_{i+1},\ldots,x_n\}~. 
 \] 
Since it is known \cite{GuinOudom08} that symmetric braces corresponds to the ``naive'' way of building a rooted tree from corollas, the claim follows. 
\end{proof}

\begin{eqnarray*}
&&\text{For}\quad \tau\coloneqq
\vcenter{\xymatrix@M=5pt@R=10pt@C=10pt{
 & & *+[o][F-]{1}\ar@{-}[d]\\
*+[o][F-]{3}\ar@{-}[dr] &*+[o][F-]{5}\ar@{-}[d]&  *+[o][F-]{4}\ar@{-}[dl] \\
& *+[o][F-]{2} & }}\quad \text{we have} \quad 
\vcenter{\xymatrix@M=5pt@R=10pt@C=10pt{
 & & *+[o][F-]{1}\ar@{-}[d]\\
*+[o][F-]{3}\ar@{-}[dr] &*+[o][F-]{5}\ar@{-}[d]&  *+[o][F-]{4}\ar@{-}[dl] \\
& *+[o][F**]{\cdot}  & 
}}=0\ , \\
&&  
\vcenter{\xymatrix@M=5pt@R=10pt@C=10pt{
 & & *+[o][F-]{1}\ar@{-}[d]\\
*+[o][F**]{\cdot}\ar@{-}[dr] &*+[o][F-]{5}\ar@{-}[d]&  *+[o][F-]{4}\ar@{-}[dl] \\
& *+[o][F-]{2} & }}=
-\ \vcenter{\xymatrix@M=5pt@R=10pt@C=10pt{
 & *+[o][F-]{1}\ar@{-}[d]\\
*+[o][F-]{4}\ar@{-}[d]&  *+[o][F-]{3}\ar@{-}[dl] \\
 *+[o][F-]{2} & }}\ , \  \text{and} \ \, 
 \vcenter{\xymatrix@M=5pt@R=10pt@C=10pt{
 & & *+[o][F-]{1}\ar@{-}[d]\\
*+[o][F-]{3}\ar@{-}[dr] &*+[o][F-]{5}\ar@{-}[d]&  *+[o][F**]{\cdot}\ar@{-}[dl] \\
& *+[o][F-]{2} & }}=
 \vcenter{\xymatrix@M=5pt@R=10pt@C=10pt{
*+[o][F-]{3}\ar@{-}[dr] &*+[o][F-]{4}\ar@{-}[d]&  *+[o][F-]{1}\ar@{-}[dl] \\
& *+[o][F-]{2} &}}\ .
\end{eqnarray*}
The black vertex represents where the element $u$ is plugged. \\

Given a rooted tree $\tau$, we consider its \emph{unital expansions} which are rooted trees $\widetilde{\tau}$ with two types of vertices: the ``white'' ones which are bijectively labeled by $\{1, \ldots, |\tau|\}$ and the ``black'' ones which receive no label. Every such black and white rooted trees $\widetilde{\tau}$ are moreover required to give $\tau$ under the rule given in Lemma~\ref{lem:uCompPreLie}.
We denote by $c_{\widetilde{\tau}}$ the coefficient of $\tau$ in the operad $\uPreLie$ obtained by replacing each black vertex of $\widetilde{\tau}$ by the element~$u$. For example, in the case 
\[
\widetilde{\tau}\coloneqq
\vcenter{\xymatrix@M=5pt@R=10pt@C=10pt{
 *+[o][F-]{4}\ar@{-}[dr]&*+[o][F-]{5}\ar@{-}[d]& *+[o][F**]{\cdot}\ar@{-}[dl]& *+[o][F**]{\cdot}\ar@{-}[d]\\
*+[o][F-]{1}\ar@{-}[drr] &*+[o][F-]{3}\ar@{-}[dr] &*+[o][F**]{\cdot}\ar@{-}[d]&  *+[o][F**]{\cdot}\ar@{-}[dl] \\
&& *+[o][F**]{\cdot} \ar@{-}[d]&\\
&& *+[o][F-]{2} & }}\ ,
\] 
we have $c_{\widetilde{\tau}}=-2$ and
 \[ 
\tau=\vcenter{\xymatrix@M=5pt@R=10pt@C=0pt{
 &*+[o][F-]{4}\ar@{-}[dr]&&*+[o][F-]{5}\ar@{-}[dl] \\
*+[o][F-]{1}\ar@{-}[dr] &&*+[o][F-]{3}\ar@{-}[dl]&   \\
& *+[o][F-]{2}& }}\ .
\]

\begin{proposition}
Let $\left(A, \F, \{m_\tau\}_{\tau \in \mathrm{RT}}\right)$ be a complete shifted $\Perm_\infty$-algebra and let $a\in \F_1A_0$.
The twisted operations 
\[ m_\tau^a\coloneqq \sum_{\widetilde{\tau}} {\tfrac{1}{c_{\widetilde{\tau}}}} m_{\widetilde{\tau}}(a, \ldots, a, - ,  \cdots, -)\ ,
\]
where the sum runs over unital expansions of the rooted tree $\tau$ and where the elements $a$ are inserted at the black vertices of $\widetilde{\tau}$, 
define a complete shifted $\Perm_\infty$-algebra if and only if the element $a$ satisfies the following Maurer--Cartan equation 
\[
\sum_{n\ge 1} m_{{\vcenter{
\xymatrix@M=3pt@R=5pt@C=5pt{
  *+[o][F-]{\scriptstyle n}\ar@{..}[d] \\
*+[o][F-]{\scriptstyle 2} \ar@{-}[d]\\  
*+[o][F-]{\scriptstyle 1} 
}}}}(a, \ldots, a)=0
\ .\]
\end{proposition}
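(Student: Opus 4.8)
The statement is the explicit incarnation, for $\calP=\Perm$, of the general twisting result of Theorem~\ref{thm:TwCatAlg}. Indeed, the Koszul dual operad $\Perm^!=\PreLie$ is extendable (Proposition~\ref{prop:Extendable}), the relevant unital extension being $u_\chi\PreLie=\uPreLie$ for $\chi(\star)=1$, so the category of shifted homotopy permutative algebras is twistable (Proposition~\ref{prop:TwistableAlg}). Encoding a shifted $\Perm_\infty$-structure as a Maurer--Cartan element $\alpha\in\MC(\a_{\PreLie^*,A})$ via Proposition~\ref{prop:MCOmegaC}, the plan is to realise the twist by $a$ as the gauge action of $a\in\F_1A_0$ inside the extended convolution algebra $\hom_\Sy(\uPreLie^*,\eend_A)\cong A\times\a_{\PreLie^*,A}$ of Lemma~\ref{lem:ExtConvAlg}. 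Exactly as in Propositions~\ref{prop:TwCurvedGauge} and~\ref{prop:TwCurvedGaugeLie}, since $a$ is supported in arity $0$ the identity $e^{-a}\star\alpha=\alpha$ holds, and Theorem~\ref{thm:GaugeActionBis} then yields the twisted Maurer--Cartan element in the clean form $\alpha\circledcirc(1+a)$.

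To extract the explicit twisted operations, I would evaluate $\alpha\circledcirc(1+a)$ on the tree basis of $\uPreLie^*$, applying the decomposition map dual to the partial compositions of $\uPreLie$ recorded in Lemma~\ref{lem:uCompPreLie}. Each way of decomposing a white tree $\tau$ so that its upper part is populated by copies of the arity-$0$ generator $u$ (with the identity on the remaining inputs) corresponds to a unital expansion $\widetilde{\tau}$; contracting those copies of $u$ produces the scalar $c_{\widetilde{\tau}}$ of Lemma~\ref{lem:uCompPreLie}, and dualisation turns it into the prefactor $1/c_{\widetilde{\tau}}$. Summing over unital expansions and inserting the element $a$ at each black vertex then yields precisely $m_\tau^a=\sum_{\widetilde{\tau}}\tfrac{1}{c_{\widetilde{\tau}}}\,m_{\widetilde{\tau}}(a,\ldots,a,-,\ldots,-)$, the computation being \emph{mutatis mutandis} that of Proposition~\ref{prop:TwCurvedGauge}.

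By Theorem~\ref{thm:TwCatAlg}, this twisted datum is a genuine shifted $\Perm_\infty$-structure if and only if its arity-$0$ component $\bigl(\alpha\circledcirc(1+a)\bigr)(u^*)$ vanishes. This component is of the form $\sum_{\tau}\kappa_\tau\,m_\tau(a,\ldots,a)$, where $\kappa_\tau$ is the coefficient of $u$ in the total insertion $\gamma(\tau;u,\ldots,u)$ obtained by filling every vertex of $\tau$ with $u$. Here Lemma~\ref{lem:uCompPreLie} does the decisive work: deleting the root, which must carry a single input (otherwise the composite is zero), sends the linear tree on $n$ vertices to the one on $n-1$ vertices with coefficient $1$, whereas any branching vertex is eventually hit by the factor $2-2=0$. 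Hence $\kappa_\tau\neq0$ exactly for the linear trees (ladders), with value $1$, so that $\bigl(\alpha\circledcirc(1+a)\bigr)(u^*)=\sum_{n\ge1}m_{L_n}(a,\ldots,a)$, with $L_n$ the linear tree on $n$ vertices; its vanishing is the displayed Maurer--Cartan equation.

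The main obstacle is the combinatorial bookkeeping of the last two steps: proving, via Lemma~\ref{lem:uCompPreLie}, that the total insertion $\gamma(\tau;u,\ldots,u)$ is nonzero precisely on ladders, and checking simultaneously that the dual coefficients organise into the factors $1/c_{\widetilde{\tau}}$, all while tracking the signs inherited from the suspension conventions of Section~\ref{subsec:TwistGrp}.
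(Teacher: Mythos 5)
Your proposal is correct and follows essentially the same route as the paper's own proof: encode the structure as a Maurer--Cartan element $\alpha$ via Theorem~\ref{thm:TwCatAlg}, identify the twist with $\alpha\circledcirc(1+a)$ using the arity-$0$ vanishing $a\star\alpha=0$ as in Proposition~\ref{prop:TwCurvedGauge}, and invoke Lemma~\ref{lem:uCompPreLie} both to read off the coefficients $1/c_{\widetilde{\tau}}$ of the twisted operations and to reduce the arity-$0$ obstruction $\bigl(\alpha\circledcirc(1+a)\bigr)(u^*)=0$ to the ladder trees. Your treatment of the combinatorial step (non-ladders killed by the factor $2-n$ or the multi-input clause of Lemma~\ref{lem:uCompPreLie}) is in fact slightly more explicit than the paper's, which simply cites the lemma.
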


\begin{proof}
We apply here Theorem~\ref{thm:TwCatAlg}: let us denote by $\alpha$ the Maurer--Cartan element in the convolution 
algebra $\a_{\PreLie^*, A}$ corresponding to the complete shifted $\Perm_\infty$-algebra structure, that is $\alpha(\tau^*)\coloneqq m_\tau$~. 

First, the  the Maurer--Cartan equation  
 is equal to 
\[(a.\alpha)(u^*)=\sum_{\tau \in \mathrm{RT}} m_\tau (a, \ldots, a)=0\ .\]
But Lemma~\ref{lem:uCompPreLie} shows that, the composite of  elements $u$ at all the vertices of a rooted tree $\tau$ in the operad $\uPreLie$ is equal to $u$ for ladders and vanishes otherwise. 

Then, the twisted operation $m_\tau^a$ is equal to $\big(\alpha\circledcirc (1+a)\big)(\tau^*)$ in the convolution algebra 
$\a_{\uPreLie^*, A}$, which is thus equal to $m_\tau^a\coloneqq \sum_{\widetilde{\tau}} {\textstyle \frac{1}{c_{\widetilde{\tau}}}} m_{\widetilde{\tau}}(a, \ldots, a, - ,  \cdots, -)$ again by Lemma~\ref{lem:uCompPreLie}. 
\end{proof}

\begin{remark}
Under the notation $m_n\coloneqq m_{{\vcenter{
\xymatrix@M=3pt@R=5pt@C=5pt{
  *+[o][F-]{\scriptstyle n}\ar@{..}[d] \\
*+[o][F-]{\scriptstyle 2} \ar@{-}[d]\\  
*+[o][F-]{\scriptstyle 1} 
}}}}$\ , one can see that $\big(A, \{m_n\}_{n\ge 1}\big)$ forms a shifted  $\Ai$-algebra. The twisting procedure for shifted $\Perm_\infty$-algebras produces the exact same twisting procedure seen in Section~\ref{subsec:TwistGrp} for this underlying $\Ai$-algebra, ie 
$m_n^a= m^a_{{\vcenter{
\xymatrix@M=3pt@R=5pt@C=5pt{
  *+[o][F-]{\scriptstyle n}\ar@{..}[d] \\
*+[o][F-]{\scriptstyle 2} \ar@{-}[d]\\  
*+[o][F-]{\scriptstyle 1} 
}}}}$ and the same Maurer--Cartan equation: 
\[\sum_{n\ge 1} m_n(a,\allowbreak \ldots, a)=0\ .\] This is not a surprise, since the forgetful functor from permutative algebras to associative algebras is actually produced by pulling back along the epimorphism $\Ass \twoheadrightarrow \Perm$ of operads obtained by sending the usual generator of $\Ass$ to the usual generator of $\Perm$.
Since this latter one comes from a morphism of operadic quadratic data,  it induces a monomorphism of cooperads between the Koszul dual cooperads 
$\Ass^{\ac} \hookrightarrow \Perm^{\ac}$ and thus a monomorphism between the Koszul resolution 
$\Ass_\infty \hookrightarrow \Perm_\infty$, which sends precisely $\mu_n$ to the rooted tree 
\[\vcenter{
\xymatrix@M=5pt@R=10pt@C=10pt{
  *+[o][F-]{ n}\ar@{..}[d] \\
*+[o][F-]{ 2} \ar@{-}[d]\\  
*+[o][F-]{ 1} 
}}\ \ . \] 
The twisting procedure for shifted $\Perm_\infty$-algebras extends the twisting procedure for shifted $\Ai$-algebras since the morphism of cooperads 
$\Ass^{\ac} \hookrightarrow \Perm^{\ac}$ extends to the morphism of cooperads 
$\mathrm{uAss}^* \hookrightarrow \uPreLie^*$.
\end{remark}

\chapter{The twisting procedure for operads}\footnotetext{\hrule\smallskip\noindent This material will be published by Cambridge University Press \& Assessment as ‘Maurer-Cartan Methods in Deformation Theory: the twisting procedure’ by Vladimir Dotsenko, Sergey Shadrin and Bruno Vallette. This version is free to view and download for personal use only. Not for re-distribution, re-sale or use in derivative works. \copyright Cambridge University Press \& Assessment}\label{sec:TwNsOp}

In this chapter, we lift the twisting procedure to the level of complete differential graded operads and we apply it in detail to the example of  multiplicative nonsymmetric operads. In the symmetric case, this is the seminal theory of T. Willwacher \cite{Willwacher15} which was developed further by V. Dolgushev, C. Rogers, and T. Willwacher \cite{DolgushevRogers12, DolgushevWillwacher15} but treated in a different way, first suggested by J.~Chuang and A.~Lazarev in \cite{MR3004818}. \\

For pedagogical reasons, we give a detailed presentation in the case of ns operads, and only touch the symmetric case briefly, since our approach works \textit{mutatis mutandis} for complete symmetric dg operads; we hope that this will make the theory of operadic twisting more accessible. 
We use the categorical notion of the \emph{coproduct of operads}, 
insisting that 
all the properties of the operadic twisting follow in a straightforward way from the universal property of this algebraic notion. 

\section{Twisting of nonsymmetric operads}
One can try to twist  any dg pre-Lie algebra $(A, d, \star)$ by Maurer--Cartan elements $d\mu+\mu\star\mu=0$ using the usual formulas $(A, d^\mu, \star)$, where the twisted differential is given by 
$$d^\mu(\nu)\coloneqq d\nu+\ad_\mu(\nu)\coloneqq d\nu+\mu\star \nu - (-1)^{|\nu|} \nu \star \mu\ .$$
From Proposition~\ref{prop:TwistableAlg}, we know that the general twisting procedure does not work for general pre-Lie algebras since the Koszul dual operad $\PreLie^!\cong \Perm$ for permutative algebras does not admit an extension by a unit. Let us illustrate that by exhibiting an explicit constraint for the pre-Lie identity of the twisted algebra. Remarkably, that constraint arises from an algebraic notion used, for example, in the classification of pre-Lie algebra structures on $\mathfrak{gl}_n$ in \cite{MR1608273}.

\begin{definition}[Left nucleus of a pre-Lie algebra]\index{pre-Lie algebra!left nucleus}
Let $\a=(A,\star)$ be a pre-Lie algebra. The  \emph{left nucleus} of $\a$ is the set of  elements $x$ such that  
\begin{equation}\label{Eq:MCspecial}
x\star(a_1\star a_2)=(x\star a_1)\star a_2\ , \  \text{ for all }\ \  a_1, a_2\in A\ . 
\end{equation}
\end{definition}

\begin{proposition}\label{prop:Twpre-Lie}
The twisted data $(A, d^\mu, \star)$ forms a dg pre-Lie algebra if and only if the Maurer--Cartan element $\mu$ belongs to the nucleus of $\a$. 
\end{proposition}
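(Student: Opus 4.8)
The plan is to observe that the passage from $(A,d,\star)$ to $(A,d^\mu,\star)$ changes only the differential, leaving the product $\star$ — and hence the right pre-Lie identity — untouched. Consequently $(A,d^\mu,\star)$ is a dg pre-Lie algebra if and only if two conditions hold: that $d^\mu$ squares to zero, and that $d^\mu$ is a derivation of $\star$. The first condition is automatic and requires no hypothesis on $\mu$: by Proposition~\ref{prop:PreLieToLie} the element $\mu$ is a Maurer--Cartan element of the associated complete dg Lie algebra (the pre-Lie equation $d\mu+\mu\star\mu=0$ is equivalent to $d\mu+\tfrac12[\mu,\mu]=0$, since $[\mu,\mu]=2\,\mu\star\mu$ for the degree $-1$ element $\mu$), so $d^\mu=d+\ad_\mu$ satisfies $d^\mu\circ d^\mu=0$ by Proposition~\ref{prop:dtwSq0}. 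Thus the whole statement reduces to the assertion that $d^\mu$ is a derivation of $\star$ if and only if $\mu$ lies in the left nucleus. Since $d$ is already a derivation of $\star$, this in turn reduces to the operator $\ad_\mu$ being a derivation of $\star$.

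The heart of the argument is then a direct computation of the defect
$$D(\nu_1,\nu_2):=\ad_\mu(\nu_1\star\nu_2)-\ad_\mu(\nu_1)\star\nu_2-(-1)^{|\nu_1|}\nu_1\star\ad_\mu(\nu_2),$$
where the Koszul sign $(-1)^{|\nu_1|}$ is the one attached to the degree $-1$ operator $\ad_\mu$. Writing $\mathrm{as}(a,b,c):=(a\star b)\star c-a\star(b\star c)$ for the associator, I would expand $\ad_\mu(x)=\mu\star x-(-1)^{|x|}x\star\mu$ everywhere and collect the six resulting terms; using $|\mu|=-1$ throughout, they regroup exactly into three associators,
$$D(\nu_1,\nu_2)=-\mathrm{as}(\mu,\nu_1,\nu_2)+(-1)^{|\nu_1|}\mathrm{as}(\nu_1,\mu,\nu_2)-(-1)^{|\nu_1|+|\nu_2|}\mathrm{as}(\nu_1,\nu_2,\mu).$$
Now the right-symmetry of the associator (the pre-Lie identity) gives $\mathrm{as}(\nu_1,\mu,\nu_2)=(-1)^{|\mu||\nu_2|}\mathrm{as}(\nu_1,\nu_2,\mu)=(-1)^{|\nu_2|}\mathrm{as}(\nu_1,\nu_2,\mu)$, so the second and third terms cancel, leaving
$$D(\nu_1,\nu_2)=-\mathrm{as}(\mu,\nu_1,\nu_2).$$

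To finish, I would read off both implications from this identity. The operator $\ad_\mu$ is a derivation of $\star$ precisely when $D(\nu_1,\nu_2)=0$ for all $\nu_1,\nu_2$, that is, precisely when $\mathrm{as}(\mu,\nu_1,\nu_2)=0$ for all $\nu_1,\nu_2$; but this last condition is exactly Equation~\eqref{Eq:MCspecial}, the defining condition for $\mu$ to belong to the left nucleus. Combined with the first paragraph, this yields both directions: if $\mu$ is in the left nucleus then $d^\mu$ is both square-zero and a $\star$-derivation, hence $(A,d^\mu,\star)$ is a dg pre-Lie algebra; conversely, if $(A,d^\mu,\star)$ is a dg pre-Lie algebra then $d^\mu$, and therefore $\ad_\mu$, is a derivation, forcing $\mu$ into the nucleus. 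I expect the only delicate point to be the careful bookkeeping of the Koszul signs in the regrouping into associators and in the application of right-symmetry; everything else is formal once the defect has been expressed through the single associator $\mathrm{as}(\mu,-,-)$.
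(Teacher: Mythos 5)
Your proposal is correct and follows essentially the same route as the paper: both reduce the statement to the fact that $d^\mu$ automatically squares to zero (by passing to the associated dg Lie algebra and invoking the Lie-algebra twisting), and then identify the derivation defect of $\ad_\mu$ with the single associator $\mu\star(\nu_1\star\nu_2)-(\mu\star\nu_1)\star\nu_2$, whose vanishing for all $\nu_1,\nu_2$ is exactly the left nucleus condition~\eqref{Eq:MCspecial}. The only difference is presentational: the paper states this computation in one line, while you spell out the expansion into three associators and the cancellation of two of them via the right-symmetry of the pre-Lie identity, which is precisely the step left implicit in the paper.
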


\begin{proof}
Clearly, $(A, d^\mu, \star)$ forms a dg pre-Lie algebra if and only if the twisted operator $d^\mu$ is a derivation which squares to zero. The latter property is always true since it can be formulated using only the dg Lie algebra associated to our dg pre-Lie algebra, and so follows from the properties of twisting of dg Lie algebras. To examine the former property, one performs the following computation:
\begin{eqnarray*}
d^\mu(\nu\star \omega)-d^\mu(\nu)\star \omega-(-1)^{|\nu|}\nu\star d^\mu(\omega)&=&
\mu\star(\nu\star \omega)-(\mu\star \nu)\star \omega\ ,
\end{eqnarray*}
which concludes the proof. 
\end{proof}

Recall that for any ns operad $\calP$, one defines a Lie bracket by anti-sym\-me\-tri\-zing the pre-Lie product 
$$\mu\star \nu \coloneqq \sum_{i=1}^n \mu \circ_i \nu\  , $$
where $\mu$ lives in $\calP(n)$.

\begin{definition}[Operadic Maurer--Cartan element]\index{Maurer--Cartan element!operadic}
Let $\left(\calP, \dd\right)$ be a dg ns operad. An \emph{operadic Maurer--Cartan} in $\calP$ is a 
 degree $-1$ and arity $1$ element 
$\mu\in \calP(1)_{-1}$ satisfying 
$$ \dd\mu+\mu\star \mu=\dd\mu+\mu\circ_1 \mu=0\ .$$
\end{definition}
The purpose of the arity $1$ constraint in this definition is to ensure that operadic Maurer--Cartan elements satisfy Equation~(\ref{Eq:MCspecial}). Therefore, one can twist the pre-Lie algebra associated to an ns operad by such elements. This result actually lifts to the level of the dg ns operad itself. 

\begin{example}
For the endomorphism operad~$\End_{(A,d)}$~, the Maurer--Cartan elements are the degree $-1$ maps $ m : A \to A$ satisfying the equation $dm+md+m^2=0$, i.e. perturbations of the differential. 
\end{example}

\begin{proposition}[Twisted operad]\index{twisted operad}
Let $\left(\calP, \dd, \left\{ \circ_i \right\}\right)$ be a dg ns operad and let $\mu$ be one of its Maurer--Cartan elements. 
The degree $-1$ operator  
$$\dd^\mu(\nu)\coloneqq\dd\nu+\ad_\mu(\nu)\coloneqq\dd\nu+\mu\star \nu - (-1)^{|\nu|} \nu \star \mu$$
is a  square-zero derivation.
Therefore, the data $\calP^\mu\coloneqq\left(\calP,   \dd^\mu,  \left\{ \circ_i \right\}\right)$ defines a dg ns operad, called 
the \emph{ns operad twisted  by the Maurer-Cartan element $\mu$}. 
\end{proposition}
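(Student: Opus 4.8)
The statement asserts two things about the degree $-1$ operator $\dd^\mu=\dd+\ad_\mu$: that it is a derivation for the partial compositions $\{\circ_i\}$, and that it squares to zero. The plan is to dispose of the square-zero property by a soft appeal to the Lie-theoretic result already established, and then to treat the derivation property, which is the real content, by a direct computation with the operad axioms in which the arity-$1$ hypothesis on $\mu$ plays the decisive role.

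For the square-zero property, I would work inside the complete dg Lie algebra $\g=\big(\prod_n\calP(n),\dd,[\,,]\big)$ associated to the total pre-Lie algebra of $\calP$ by Proposition~\ref{prop:PreLieToLie}. Since $\mu$ has arity $1$ and degree $-1$, one has $\mu\star\mu=\mu\circ_1\mu$ and $\tfrac12[\mu,\mu]=\mu\star\mu$, so the operadic Maurer--Cartan equation $\dd\mu+\mu\circ_1\mu=0$ is exactly the dg Lie Maurer--Cartan equation $\dd\mu+\tfrac12[\mu,\mu]=0$. Proposition~\ref{prop:dtwSq0} then gives at once that $\dd^\mu=\dd+\ad_\mu$ is a square-zero operator on $\g$; as this coincides with the operator in the statement, nothing further is needed here.

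For the derivation property, since $\dd$ is already a derivation of $\{\circ_i\}$ it suffices to prove that $\ad_\mu$ is one, that is, $\ad_\mu(\nu\circ_i\omega)=\ad_\mu(\nu)\circ_i\omega+(-1)^{|\nu|}\nu\circ_i\ad_\mu(\omega)$ for $\nu\in\calP(m)$, $\omega\in\calP(n)$. I would split $\ad_\mu(\nu)=\lambda_\mu(\nu)-(-1)^{|\nu|}\rho_\mu(\nu)$ into the root insertion $\lambda_\mu(\nu):=\mu\circ_1\nu$ and the leaf insertions $\rho_\mu(\nu):=\sum_{k}\nu\circ_k\mu$, and establish two identities. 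First, because $\mu$ has arity $1$, the sequential (nested) composition axiom, see \cite[Section~5.3]{LodayVallette12}, gives $\lambda_\mu(\nu\circ_i\omega)=\mu\circ_1(\nu\circ_i\omega)=(\mu\circ_1\nu)\circ_i\omega=\lambda_\mu(\nu)\circ_i\omega$. Second, for $\rho_\mu$ I would split the sum over the inputs of $\nu\circ_i\omega$ into those coming from $\omega$ and those coming from $\nu$: the former are rewritten by the sequential axiom as $\nu\circ_i(\omega\circ_l\mu)$, contributing $\nu\circ_i\rho_\mu(\omega)$, while the latter are rewritten by the \emph{parallel} composition axiom, which moves the arity-$1$ element $\mu$ past $\omega$ at the cost of a Koszul sign $(-1)^{|\omega||\mu|}=(-1)^{|\omega|}$. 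Collecting the diagonal term that appears when an input of $\nu$ coincides with slot $i$ (it yields $\nu\circ_i\lambda_\mu(\omega)$ via the sequential axiom), one obtains $\rho_\mu(\nu\circ_i\omega)=(-1)^{|\omega|}\big(\rho_\mu(\nu)\circ_i\omega-\nu\circ_i\lambda_\mu(\omega)\big)+\nu\circ_i\rho_\mu(\omega)$. Substituting these two identities into $\ad_\mu(\nu\circ_i\omega)=\lambda_\mu(\nu\circ_i\omega)-(-1)^{|\nu|+|\omega|}\rho_\mu(\nu\circ_i\omega)$ and simplifying the signs (using $(-1)^{|\nu|+2|\omega|}=(-1)^{|\nu|}$) reproduces exactly $\ad_\mu(\nu)\circ_i\omega+(-1)^{|\nu|}\nu\circ_i\ad_\mu(\omega)$.

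The main obstacle is the sign bookkeeping in this last step, and in particular the Koszul sign $(-1)^{|\omega||\mu|}$ produced by the parallel composition axiom: it is precisely this sign that converts $(-1)^{|\nu|+|\omega|}$ into $(-1)^{|\nu|}$ in the two ``$\rho_\mu(\nu)\circ_i\omega$'' and ``$\nu\circ_i\lambda_\mu(\omega)$'' terms, so that both sides agree. I would also stress conceptually where arity $1$ is used and cannot be dropped: it is what makes $\mu\circ_1(\nu\circ_i\omega)=(\mu\circ_1\nu)\circ_i\omega$ hold (a higher-arity $\mu$ would have further slots landing inside $\omega$), that is, what places $\mu$ in the left nucleus of the total pre-Lie algebra in the sense of Equation~\eqref{Eq:MCspecial}; without it the twisted operator fails to be an operadic derivation. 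Finally, having shown $\dd^\mu$ is a square-zero derivation of degree $-1$, the data $\calP^\mu=(\calP,\dd^\mu,\{\circ_i\})$ satisfies the axioms of a dg ns operad, which completes the proof.
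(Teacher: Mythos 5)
Your proposal is correct and follows essentially the same route as the paper: the square-zero property is delegated to the twisting theory of the associated dg Lie algebra (Proposition~\ref{prop:dtwSq0}), and the derivation property is reduced to the fact that the arity-$1$ hypothesis forces $\mu\star(\nu\circ_i\omega)=(\mu\star\nu)\circ_i\omega$, i.e.\ places $\mu$ in the left nucleus as in Proposition~\ref{prop:Twpre-Lie}. The only difference is one of explicitness: you carry out in full the $\lambda_\mu$/$\rho_\mu$ decomposition and the Koszul-sign cancellations that the paper compresses into the single identity above, which is a faithful expansion rather than a different argument.
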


\begin{proof}
The computations are similar to that of Proposition~\ref{prop:Twpre-Lie}. The fact that the twisted operator 
$\ad_\mu$ is a derivation with respect to  the partial composition products $\circ_i$ is equivalent to the equations 
\begin{eqnarray*}
\mu\star(\nu\circ_i \omega)=(\mu\star \nu)\circ_i \omega\ , 
\end{eqnarray*}
which holds true since the Maurer--Cartan element $\mu$ is supported in arity $1$. 
\end{proof}

\begin{example}
The endomorphism operad $\End_{(A,d)}$ of a chain complex $(A,d)$ twisted by a Maurer--Cartan element $m$ is actually the endomorphism operad 
$$\End_{(A,d)}^m=\End_{(A,d+m)} $$
of the twisted chain complex $(A,d+m)$. 
\end{example}

\begin{example}
One can also twist the ns operad $\mathrm{ncBV}$ of non-commutative Batalin--Vilkovisky algebras \cite{DotsenkoShadrinVallette19} by its square-zero element $\Delta$, under cohomological degree convention. 
\end{example}

The operadic twisting operation satifises the following usual functorial property of the twisting procedure. 

\begin{proposition}\label{prop:MorphMCOperadic}
Let $\rho \colon \calP \to \calQ$ be a morphism of dg ns operads. The image $\rho(\mu)$ of a Maurer--Cartan $\mu$ of the dg ns operad $\calP$ is a Maurer--Cartan element in the dg ns operad $\calQ$. The morphism $\rho$ of ns operads commutes with the twisted differential, that is yields a morphism of dg ns operads 
$$ 
\widetilde\rho\  :\ \calP^\mu \to \calQ^{\rho(\mu)}
\ . $$
\end{proposition}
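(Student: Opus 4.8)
The statement to prove is Proposition~\ref{prop:MorphMCOperadic}: given a morphism $\rho\colon\calP\to\calQ$ of dg ns operads and a Maurer--Cartan element $\mu\in\calP(1)_{-1}$, then $\rho(\mu)$ is a Maurer--Cartan element of $\calQ$, and $\rho$ descends to a morphism of dg ns operads $\widetilde\rho\colon\calP^\mu\to\calQ^{\rho(\mu)}$.

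The plan is to exploit that $\rho$ is, by definition of a morphism of dg ns operads, compatible with both the internal differentials and the partial composition products; hence it is compatible with all derived operations built from these. First I would verify that $\rho(\mu)$ is a Maurer--Cartan element. Since $\rho$ commutes with the differentials, $\rho(\dd_\calP\mu)=\dd_\calQ\rho(\mu)$, and since $\rho$ preserves the partial composition $\circ_1$ (and is degree- and arity-preserving, so $\rho(\mu)\in\calQ(1)_{-1}$), we get $\rho(\mu\star\mu)=\rho(\mu\circ_1\mu)=\rho(\mu)\circ_1\rho(\mu)=\rho(\mu)\star\rho(\mu)$. Applying $\rho$ to the Maurer--Cartan equation $\dd_\calP\mu+\mu\star\mu=0$ therefore yields $\dd_\calQ\rho(\mu)+\rho(\mu)\star\rho(\mu)=0$, which is exactly the operadic Maurer--Cartan equation in $\calQ$. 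This guarantees that the twisted operad $\calQ^{\rho(\mu)}$ is well-defined.

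Next I would check that $\rho$, viewed as a map of the underlying graded ns operads $\calP^\mu\to\calQ^{\rho(\mu)}$, commutes with the twisted differentials $\dd^\mu_\calP=\dd_\calP+\ad_\mu$ and $\dd^{\rho(\mu)}_\calQ=\dd_\calQ+\ad_{\rho(\mu)}$. For $\nu\in\calP(n)$ this amounts to showing
\[
\rho\big(\dd_\calP\nu+\mu\star\nu-(-1)^{|\nu|}\nu\star\mu\big)
=\dd_\calQ\rho(\nu)+\rho(\mu)\star\rho(\nu)-(-1)^{|\nu|}\rho(\nu)\star\rho(\mu).
\]
The differential term matches because $\rho$ is a chain map. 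For the adjoint terms, I would use that $\star$ on each side is the sum of partial compositions $\sum_i(-)\circ_i(-)$, together with the fact that $\rho$ preserves every $\circ_i$ and preserves the grading (so the Koszul signs are identical on both sides); consequently $\rho(\mu\star\nu)=\rho(\mu)\star\rho(\nu)$ and $\rho(\nu\star\mu)=\rho(\nu)\star\rho(\mu)$. Adding these gives the desired equality, so $\widetilde\rho\coloneqq\rho$ is a morphism of dg ns operads $\calP^\mu\to\calQ^{\rho(\mu)}$, since twisting changes only the differential and not the composition maps which $\rho$ already respects.

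I do not anticipate a serious obstacle here: the entire content is that $\rho$ respects the two structure operations ($\dd$ and the $\circ_i$) out of which both the Maurer--Cartan equation and the twisted differential are manufactured, so everything is a formal consequence of $\rho$ being a morphism of dg ns operads. The only point requiring mild care is the bookkeeping of the Koszul signs in the $\ad_\mu$ term, which match automatically because $\rho$ is arity- and degree-preserving; I would phrase the argument so that the sign $(-1)^{|\nu|}=(-1)^{|\rho(\nu)|}$ is explicitly seen to carry over unchanged.
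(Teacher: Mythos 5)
Your proposal is correct and matches the paper's approach: the paper's own proof simply states that the result follows by straightforward computations, and your argument spells out exactly those computations (applying $\rho$ to the Maurer--Cartan equation using preservation of $\dd$ and the $\circ_i$, then checking compatibility with the twisted differentials term by term). The only point worth keeping explicit, as you do, is that arity-preservation of $\rho$ makes the sums defining $\star$ correspond term by term, so $\rho$ preserves the pre-Lie product and all Koszul signs carry over unchanged.
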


\begin{proof}
This is proved by straightforward  computations. 
\end{proof}

This proposition applied to the endomorphism operad $\calQ=\End_A$ will play a  key role in the following sections.

\begin{proposition}\label{prop:-MC}
Let $\mu$ be a Maurer--Cartan element of a dg ns operad $\calP$. 
\begin{enumerate}
\item  An element $\alpha$ is a Maurer--Cartan element of the twisted dg ns operad $\calP^\mu$ if and only if 
$\alpha-\mu$ is  a Maurer--Cartan element of the original  dg ns operad $\calP$. 

\item 
The element $-\mu$ is a Maurer--Cartan element in the twisted dg ns operad $\calP^\mu$ and twisting this latter one again by this Maurer--Cartan element produces the original operad:
$$ 
\left(\calP^\mu\right)^{-\mu}=\calP\ .
$$
\end{enumerate}
\end{proposition}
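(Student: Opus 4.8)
The plan is to verify both statements by direct computation using the definitions of operadic Maurer--Cartan element and twisted differential, exploiting that $\mu$ is supported in arity $1$ so that the pre-Lie product involving $\mu$ reduces to a single partial composition $\mu \circ_1 (-)$ or $(-) \circ_i \mu$ with appropriate signs.

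For part $(1)$, set $\alpha' \coloneqq \alpha - \mu$ and I would simply compare the two Maurer--Cartan equations. The element $\alpha$ is Maurer--Cartan in $\calP^\mu$ precisely when $\dd^\mu \alpha + \alpha \star \alpha = 0$, that is $\dd\alpha + \mu\star\alpha + \alpha\star\mu + \alpha\star\alpha = 0$ (the sign $-(-1)^{|\alpha|}$ becomes $+$ since $|\alpha|=-1$). On the other hand $\alpha-\mu$ is Maurer--Cartan in $\calP$ when $\dd(\alpha-\mu) + (\alpha-\mu)\star(\alpha-\mu)=0$. Expanding the latter and using the Maurer--Cartan equation $\dd\mu + \mu\star\mu = 0$ satisfied by $\mu$, the two conditions become literally the same equation. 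The one point requiring a line of care is the bilinearity and sign bookkeeping in $(\alpha-\mu)\star(\alpha-\mu) = \alpha\star\alpha - \alpha\star\mu - \mu\star\alpha + \mu\star\mu$, but since everything is in degree $-1$ no subtle Koszul signs intervene.

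For part $(2)$, I would first check that $-\mu$ is a Maurer--Cartan element of $\calP^\mu$ by applying part $(1)$: indeed $-\mu$ is Maurer--Cartan in $\calP^\mu$ if and only if $-\mu - \mu = -2\mu$ is Maurer--Cartan in $\calP$, but more cleanly one observes directly that $\dd^\mu(-\mu) + (-\mu)\star(-\mu) = -\dd\mu - \mu\star(-\mu) - (-\mu)\star\mu + \mu\star\mu = -\dd\mu - \mu\star\mu = 0$, using again that $\mu$ is concentrated in arity $1$ and degree $-1$. Then I would compute the double-twisted differential: $(\dd^\mu)^{-\mu}(\nu) = \dd^\mu\nu + \ad_{-\mu}(\nu) = \dd\nu + \ad_\mu(\nu) + \ad_{-\mu}(\nu) = \dd\nu$, since $\ad$ is linear in its subscript and $\ad_\mu + \ad_{-\mu} = 0$. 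The partial composition products are unchanged throughout, so $(\calP^\mu)^{-\mu} = (\calP, \dd, \{\circ_i\}) = \calP$ on the nose.

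The only genuine obstacle is cosmetic rather than conceptual: ensuring the signs in the adjoint action $\ad_\mu(\nu) = \mu\star\nu - (-1)^{|\nu|}\nu\star\mu$ are handled consistently when one of the arguments is the degree $-1$ element $\mu$ itself, and confirming that the derivation property already established for $\dd^\mu$ guarantees $(\dd^\mu)^{-\mu}$ is again a derivation so that the twisted data is a bona fide dg ns operad. Both of these are immediate from the earlier results, so the proof is essentially a two-line verification for each part, and I would present it as such.
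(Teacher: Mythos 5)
Your strategy---direct expansion of the Maurer--Cartan equations for part (1) and cancellation of adjoint actions for part (2)---is the same as the paper's, but your execution of part (1) asserts a cancellation that does not take place, and this is a genuine gap. Expanding honestly, bilinearity of $\star$ and the Maurer--Cartan equation $\dd\mu=-\mu\star\mu$ give
\begin{align*}
\dd(\alpha-\mu)+(\alpha-\mu)\star(\alpha-\mu)
&=\dd\alpha+\alpha\star\alpha-\alpha\star\mu-\mu\star\alpha+\mu\star\mu-\dd\mu\\
&=\dd\alpha+\alpha\star\alpha-\alpha\star\mu-\mu\star\alpha+2\,\mu\star\mu\ ,
\end{align*}
whereas the Maurer--Cartan equation for $\alpha$ in $\calP^\mu$ reads
\[
\dd^\mu\alpha+\alpha\star\alpha=\dd\alpha+\mu\star\alpha+\alpha\star\mu+\alpha\star\alpha=0\ .
\]
The cross terms carry opposite signs and the term $2\,\mu\star\mu$ survives, so the two conditions are \emph{not} ``literally the same equation''. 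What the computation actually proves is
\[
\alpha\in\MC\big(\calP^\mu\big)\iff\alpha+\mu\in\MC(\calP)\ ,
\]
equivalently $\beta\in\MC(\calP)\iff\beta-\mu\in\MC\big(\calP^\mu\big)$, in accordance with Lemma~\ref{lem:DefTwist} and Corollary~\ref{cor:twa+b}. In other words, a correct computation is forced to discover that the roles of $\calP$ and $\calP^\mu$ in the printed statement must be interchanged (or the sign of $\mu$ flipped); a proof that simply claims the cancellation papers over exactly the point at issue.

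Your part (2) shows the symptom of the same problem: your first attempt, applying the literal part (1) to $\alpha=-\mu$, requires $-2\mu$ to be Maurer--Cartan in $\calP$, which fails in general since $\dd(-2\mu)+(-2\mu)\star(-2\mu)=6\,\mu\star\mu$; this should have been treated as a red flag rather than something to sidestep. Your fallback direct verification reaches the right conclusion, but the intermediate signs are off: one has $\ad_\mu(-\mu)=\mu\star(-\mu)+(-\mu)\star\mu=-2\,\mu\star\mu$, so that
\[
\dd^\mu(-\mu)+(-\mu)\star(-\mu)=-\dd\mu-2\,\mu\star\mu+\mu\star\mu=-(\dd\mu+\mu\star\mu)=0\ ,
\]
not the expression you wrote. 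The final cancellation $(\dd^\mu)^{-\mu}=\dd^\mu+\ad_{-\mu}=\dd+\ad_\mu-\ad_\mu=\dd$ is correct. Note that with the corrected part (1), the first half of part (2) falls out cleanly as the special case $\alpha=0$ of the assignment $\alpha\mapsto\alpha-\mu$ from $\MC(\calP)$ to $\MC\big(\calP^\mu\big)$, which is precisely how the paper argues.
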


\begin{proof}
The first statement is proved by straightforward  computations. The second one is a special case for $\alpha=0$.
\end{proof}

All these results hold as well for complete dg ns operads; we will treat a particular case in the next section. 

\section{Twisted \texorpdfstring{$\calA_\infty$}{Ai}-operad}\label{subsec:TwAInftyOp}

\begin{proposition}\label{prop:MCAi}
The data of a complete $\Ai$-algebra structure together with a Maurer--Cartan element is encoded by the complete dg ns operad 
\index{operad!$\MC\Ai$}
$$\MC\Ai\coloneqq\left(
\widehat\calT\big(\alpha, \mu_2, \mu_3, \ldots\big), \dd
\right) \ , $$
where $\alpha$ has arity $0$ and degree $-1$ and $\mu_n$ has arity $n$ and degree $n-2$, for $n\ge 2$, where the filtration on the space $M=\big(\k \alpha, 0, \k \mu_2, \k \mu_3, \dots\big)$ of generators is given by 
\[
\alpha \in \F_1 M(0), \ \F_2 M(0)=\{0\} \quad \text{and} \quad \mu_n\in \F_0 M(n),\  \F_1 M(n)=\{0\}\ , \ \text{for} \ n\ge 2
\ , \]
 and where the differential is defined by 
\begin{eqnarray*} 
&&\dd\mu_n\coloneqq\sum_{\substack{p+q+r=n\\ p+1+r, q\geq 2}}(-1)^{pq+r+1} \mu_{p+1+r}\circ_{p+1} \mu_q\ ,
\\
&&\dd\alpha\coloneqq-\sum_{n\ge 2}\mu_n(\alpha, \ldots, \alpha) \ .
\end{eqnarray*}
\end{proposition}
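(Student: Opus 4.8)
The plan is to describe morphisms out of $\MC\Ai$ via the universal property of the completed free ns operad, and then to read off the two defining families of relations from the requirement that such a morphism commute with differentials. First I would regard $\dd$ as a degree $-1$ derivation of $\widehat{\calT}(\alpha,\mu_2,\mu_3,\ldots)$, defined by the stated formulas on the generators and extended by the Leibniz rule. Two preliminary points need checking: that $\dd$ preserves the filtration, and that the infinite series defining $\dd\alpha$ converges. Both follow from the filtration conventions on $M$: since $\alpha\in\F_1 M(0)$, each summand $\mu_n(\alpha,\ldots,\alpha)$ lands in $\F_n$ of the free operad, so $\sum_{n\ge 2}\mu_n(\alpha,\ldots,\alpha)$ converges by Lemma~\ref{lem:Conv} and $\dd$ is filtration-preserving; hence $\MC\Ai$ is a complete dg ns operad as soon as $\dd^2=0$.

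For the square-zero property I would use that $\dd^2=\tfrac12[\dd,\dd]$ is again a (degree $-2$) derivation, being the graded commutator of odd derivations, and that a continuous derivation of a completed free operad is determined by its values on the generators; so it suffices to verify $\dd^2=0$ on the $\mu_n$ and on $\alpha$. On the $\mu_n$ this is exactly the classical computation that the differential of the (shifted) $\Ai$-operad squares to zero, i.e. the Koszulity of $\As$, and $\alpha$ plays no role there. The genuinely new verification is $\dd^2\alpha=0$, which I expect to be the main obstacle. Expanding $\dd^2\alpha=-\sum_{n\ge 2}\dd\big(\mu_n(\alpha,\ldots,\alpha)\big)$ by the Leibniz rule produces two families: one from $(\dd\mu_n)(\alpha,\ldots,\alpha)$, i.e. the quadratic $\Ai$-relations evaluated on $\alpha$'s, and one from substituting $\dd\alpha=-\sum_k\mu_k(\alpha,\ldots,\alpha)$ into a single slot of $\mu_n$. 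I would match these term by term — the contribution $\mu_{p+1+r}(\alpha,\ldots,\mu_q(\alpha,\ldots,\alpha),\ldots,\alpha)$ of the first family, with $\mu_q$ in the $(p+1)$-st slot, corresponding to inserting $\dd\alpha$ in that same slot — and check that the Koszul signs $(-1)^{pq+r+1}$ combined with the signs from commuting the degree $-1$ element $\alpha$ past the operations cause them to cancel in pairs. This is pure sign bookkeeping; conceptually it is the operadic incarnation of the consistency of the Maurer--Cartan equation, the universal analogue of the identity $d^a(\theta^a)=0$.

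Finally, for the stated correspondence I would invoke freeness: a morphism of complete graded ns operads $\rho\colon\MC\Ai\to\eend_A$ is the same datum as a filtration-preserving map of generating collections, hence amounts to an element $a:=\rho(\alpha)\in\F_1 A_{-1}$ together with filtered maps $m_n:=\rho(\mu_n)\in\eend_A(n)_{n-2}$ for $n\ge 2$. Writing $m_1:=d$ for the internal differential of $A$, the map $\rho$ is a morphism of \emph{dg} operads precisely when it intertwines $\dd$ with the mapping-space differential $\partial$, which need only be tested on generators. On the $\mu_n$ this reproduces verbatim the defining relations of a complete $\Ai$-algebra recalled in Section~\ref{sec:CurvedAiAlg} (in the uncurved case $\theta=0$), whereas on $\alpha$ it reads $\partial(a)=d(a)=-\sum_{n\ge 2}m_n(a,\ldots,a)$, that is the Maurer--Cartan equation~\eqref{eqn:MC} with $\theta=0$. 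Thus morphisms $\MC\Ai\to\eend_A$ are in natural bijection with pairs consisting of a complete $\Ai$-algebra structure on $A$ and a Maurer--Cartan element for it, which is the assertion.
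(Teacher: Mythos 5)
Your proof is correct, but it reaches the key fact by a different route than the paper. The paper's own proof is essentially a one-liner: it obtains the square-zero property of $\dd=d_{\calP}+\D_\mu$ by (forward-)citing Proposition~\ref{prop:DGAction} applied to the identity map $\As^{\ac}\to\As^{\ac}$ — that is, it uses the fact that $\rho\mapsto\D_\rho$ is a morphism of dg Lie algebras (Lemma~\ref{lem:MorphdgLie}), so the Maurer--Cartan element $\mu$ of the convolution algebra is sent to a Maurer--Cartan element of the Lie algebra of derivations, which is exactly a square-zero perturbation of the differential on $\widehat{\calT}(\alpha,\mu_2,\mu_3,\ldots)$. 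You instead verify $\dd^2=0$ by hand on generators: on the $\mu_n$ via the classical cobar computation, and on $\alpha$ via the term-by-term sign cancellation between $(\dd\mu_n)(\alpha,\ldots,\alpha)$ and the Leibniz terms $\mu_n(\alpha,\ldots,\dd\alpha,\ldots,\alpha)$; your cancellation does work out (the composite $(\mu_{p+1+r}\circ_{p+1}\mu_q)(\alpha^n)$ carries a Koszul sign $(-1)^{pq}$, which turns the coefficient $(-1)^{pq+r+1}$ into $(-1)^{r+1}$, exactly opposite to the $(-1)^r$ produced by inserting $\dd\alpha$ into slot $p+1$, with matching ranges $q\geq 2$, $p+1+r\geq 2$). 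The remaining steps — identifying complete algebra structures with morphisms of filtered dg ns operads into $\eend_A$ (Theorem~\ref{thm:CompleteAlg} in the paper, which you use implicitly), invoking freeness, and testing the intertwining of differentials on generators — coincide with the paper's. What each approach buys: the paper's argument avoids all sign bookkeeping at this point and embeds the construction into the general framework that immediately yields Proposition~\ref{prop:MCP} for arbitrary multiplicative ns operads, at the cost of a forward reference to later machinery; your argument is self-contained and elementary, and makes visible the operadic avatar of the identity $d^a(\theta^a)=0$, at the cost of redoing the computation that the paper performs once, in greater generality, inside Lemma~\ref{lem:MorphdgLie}. One small quibble: the vanishing of $\dd^2$ on the $\mu_n$ is not ``the Koszulity of $\As$'' — it is the coassociativity of $\As^{\ac}$, i.e.\ the well-definedness of the cobar differential; Koszulity is what makes $\Ai\to\As$ a quasi-isomorphism, which is not needed here.
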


\begin{proof}
The fact that the map $\dd$ extends to a unique square-zero  derivation is direct corollary of Proposition~\ref{prop:DGAction} applied to the identity map $\As^{\ac}\to \As^{\ac}$. 
So we get a well-defined complete dg ns operad. 
A complete $\MC\Ai$-algebra structure on a complete dg module $(A, \F , d)$ amounts to a morphism of filtered dg ns operads $\MC\Ai \to \eend_A$ according to Theorem~\ref{thm:CompleteAlg}. Such an assignment $\alpha \mapsto a$ and $\mu_n\mapsto m_n$ is equivalent to the datum of an element $a\in F_1 A_{-1}$ and a sequence of maps $m_n\colon  A^{\otimes n}\to A$ degree $n-2$ satisfying the relations of a Maurer--Cartan elements in an $\Ai$-algebra by the form of the differential~$\dd$. 
\end{proof}

\begin{remark}
Under the convention of Section~\ref{subsec:TwistGrp}, we have $\mu_n=s^{-1}\nu_n$~. 
\end{remark}

Inside the dg ns operad $\MC \Ai$, we consider the following elements 
$$\mu_n^\alpha\coloneqq\sum_{r_0, \ldots, r_n\ge 0} (-1)^{\sum_{k=0}^n kr_k} \mu_{n+r_0+\cdots+r_n}\big(\alpha^{r_0}, -, \alpha^{r_1}, -, \ldots,  - , \alpha^{r_{n-1}}, -,\alpha^{r_n}  \big)
\ , $$
for $n\ge 0$, that is for example: 
$$\mu_0^\alpha \coloneqq\sum_{n \ge 2} \mu_n(\alpha, \ldots, \alpha)\quad \text{and} \quad \mu_1^\alpha\coloneqq
\sum_{\substack{n\ge 2 \\ 1\leq i \leq n}}(-1)^{n-i} \, \mu_n\big(\alpha^{i-1}, -, \alpha^{n-i}\big)\ .
$$

\begin{lemma}\label{Lem:Diffmu}
The elements $\mu_n^\alpha$ satisfy the relations 
 \[
\dd \mu_n^\alpha=\sum_{\substack{p+q+r=n\\ p+1+r, q\geq 1}}(-1)^{pq+r+1} \mu^\alpha_{p+1+r}\circ_{p+1} \mu^\alpha_q\  \ \text{for}\  n\ge 2\ .
 \]
\end{lemma}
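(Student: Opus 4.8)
The plan is to recognise the elements $\mu_n^\alpha$ as the components of the image of the generators $\mu_n$ under the twisting by the operadic Maurer--Cartan element $\alpha$, and then to invoke the functoriality of operadic twisting already established. First I would observe that $\alpha$, being of arity $0$ and degree $-1$, \emph{is not} an operadic Maurer--Cartan element in the sense of the previous section (those are supported in arity $1$). However, the formulas defining $\mu_n^\alpha$ are precisely the result of applying the arity $0$ gauge action of Proposition~\ref{prop:TwCurvedGauge} inside the operad itself: the element $\mu_n^\alpha$ is the $n$-ary component of $\alpha \circledcirc (1+\alpha)$ in the appropriate sense, obtained by inserting copies of $\alpha$ at all but $n$ of the inputs of the $\mu_k$. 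So the cleanest route is to set up the correct target: the twisted operad and the universal computation that already appears in the proof of Theorem~\ref{thm:TwProcGp}.

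The key computational step is to differentiate the defining sum for $\mu_n^\alpha$ using the two pieces of $\dd$ from Proposition~\ref{prop:MCAi}, namely $\dd \mu_k=\sum (-1)^{pq+r+1}\mu_{p+1+r}\circ_{p+1}\mu_q$ and $\dd\alpha=-\sum_{k\ge 2}\mu_k(\alpha,\ldots,\alpha)=-\mu_0^\alpha$. Applying $\dd$ to $\mu_n^\alpha=\sum (-1)^{\sum k r_k}\mu_{n+\sum r_k}(\alpha^{r_0},-,\ldots,-,\alpha^{r_n})$ by the Leibniz rule produces two families of terms: those where $\dd$ hits one of the internal $\mu$'s, and those where $\dd$ hits one of the inserted copies of $\alpha$. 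I would then show that after reindexing these exactly reassemble into $\sum_{p+q+r=n,\ p+1+r,q\ge 1}(-1)^{pq+r+1}\mu^\alpha_{p+1+r}\circ_{p+1}\mu^\alpha_q$. The crucial point making this work is that the $q=0$ and $q=1$ boundary contributions on the right-hand side (which are absent from the original $\dd\mu_k$ relation because there $q\ge 2$) are supplied precisely by the terms coming from $\dd\alpha$: the insertion of $\mu_0^\alpha$ at a slot is $\dd\alpha$ up to sign, and $\mu_1^\alpha\circ_i(-)$ combinatorially accounts for the redistribution of the $\alpha$'s among the two factors. The bookkeeping of signs $(-1)^{pq+r+1}$ against $(-1)^{\sum k r_k}$ under reindexing is where I expect the only genuine difficulty to lie.

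The conceptual shortcut that avoids grinding through the signs by hand is to appeal to Proposition~\ref{prop:-MC} together with Proposition~\ref{prop:MorphMCOperadic}. Here is the structural argument I would prefer to write: the sub-ns-operad generated by $\mu_2,\mu_3,\ldots$ inside $\MC\Ai$ is the $\Ai$-operad, and the universal curved $\Ai$-structure twisted by $\alpha$ yields, via Proposition~\ref{prop:TwCurvedGauge}, operations $\mu_n^\alpha$ whose structure relations are by definition the relations satisfied by the generators of the (curved) $\Ai$-operad. Since twisting commutes with the differential and the relations $\dd\mu_n^\alpha=\sum(-1)^{pq+r+1}\mu^\alpha_{p+1+r}\circ_{p+1}\mu^\alpha_q$ are exactly the image under the universal morphism of the defining relations of $\MC\Ai$, the identity follows from the fact that $\dd$ is a derivation and that these relations hold for the universal example. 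In other words, $(\mu_n^\alpha)_{n\ge 0}$ satisfies the same shape of equations as $(\mu_n)_{n\ge 2}$ does, now including arity $0$ and $1$, because inserting $\alpha$ is formally the same operation as the gauge twist, whose compatibility with $\dd$ was already verified in Proposition~\ref{prop:TwCurvedGauge} and Theorem~\ref{thm:TwProcGp}.

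I would therefore present the proof in two movements: a short conceptual paragraph identifying $\mu_n^\alpha$ as the twist of $\mu_n$ by $\alpha$ and deducing the claimed relations from the already-proven compatibility of the twisting formula with the differential, followed by a brief indication that a direct verification proceeds by applying $\dd$ term by term and matching the $\dd\alpha$-contributions to the $q\le 1$ terms on the right-hand side. The main obstacle, as noted, is purely the sign reconciliation under reindexing; I would either carry it out explicitly in a displayed computation or, preferably, bypass it entirely by the functoriality argument, since the signs were fixed once and for all in the definition of $\MC\Ai$ precisely so that the twisting formula of Proposition~\ref{prop:TwCurvedGauge} reproduces them.
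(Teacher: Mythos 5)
Your preferred conceptual route is exactly the paper's proof: the paper notes that the claimed relations are precisely the $\Ai$-algebra identities for the twisted differential $d+\mu_1^\alpha$ and deduces them as an immediate consequence of Theorem~\ref{thm:TwProcGp}, which is valid for every complete algebra --- in particular for free ones --- and therefore holds at the operad level, just as you argue. One small caveat on your computational aside (which you rightly bypass): the terms coming from $\dd\alpha$ do not ``supply'' the $q=1$ terms of the right-hand side --- those arise from $\dd$ hitting the internal $\mu$'s --- rather, since $\dd\alpha=-\sum_{k\ge 2}\mu_k(\alpha,\ldots,\alpha)$, they \emph{cancel} the would-be $q=0$ composites whose upper vertex has all inputs filled with $\alpha$'s, which is the operadic shadow of the vanishing of the twisted curvature.
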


\begin{proof}
In plain words, the relations we should check  that the elements $\mu_k^\alpha$~, for $k\ge 2$~, satisfy the identities of operations of an $\Ai$-algebra for the differential $d+\mu_1^\alpha$~. This is an immediate consequence of Theorem \ref{thm:TwProcGp}. That result is valid for every complete algebra, in particular in any free algebra, proving that the required formula for the twisted operations holds on the operad level. 
\end{proof}

In particular, for $n=1$~,  we obtain the relation
 \[
\dd\mu_1^\alpha=-\mu_1^\alpha\circ_1\mu_1^\alpha\ ,
 \]
so the element $\mu_1^\alpha$ is a Maurer--Cartan element of the complete dg ns operad $\MC \Ai$. So, the operadic twisting procedure produces the following new complete dg ns operad. 

\begin{definition}[Twisted $\Ai$-operad]\index{twisted $\Ai$-operad}
The complete dg ns operad obtained by twisting the complete operad $\MC \Ai$ by the Maurer--Cartan $\mu_1^\alpha$ is called the 
 \emph{twisted $\Ai$-operad} and denoted by 
 \index{operad!$\Tw\Ai$}
$$\Tw\Ai\coloneqq \left(\MC \Ai\right)^{\mu_1^\alpha}
=
\left(
\widehat\calT(\alpha, \mu_2, \mu_3, \ldots), \dd^{\mu_1^\alpha}
\right) \ . $$
\end{definition}

The twisted differential is actually equal to
\begin{eqnarray*}
\mathrm{d}^{\mu_1^\alpha}(\alpha)&=&\dd(\alpha)+\mu_1^\alpha(\alpha)= -\sum_{n\ge 2}\mu_n(\alpha, \ldots, \alpha)
+\sum_{n\ge 2}n\mu_n(\alpha, \ldots, \alpha)\\&=&\sum_{n\ge 2}(n-1)\mu_n(\alpha, \ldots, \alpha)\ ,\\
\mathrm{d}^{\mu_1^\alpha}(\mu_n)&=&\dd(\mu_n) 
+\mu_1^\alpha \star \mu_n - (-1)^{n}
\mu_n\star \mu_1^\alpha\\
&=&\sum_{\substack{p+q+r=n\\ p+1+r, q\geq 2}}(-1)^{pq+r+1} \mu_{p+1+r}\circ_{p+1} \mu_q+\sum_{\substack{k\ge 2\\ 1\leq i \leq k}} 
(-1)^{(k-i)(n+1)}
\mu_k\big(\alpha^{i-1}, \mu_n, \alpha^{k-i}\big)\\
&&
-
\sum_{j=1}^n\sum_{\substack{k\ge 2 \\ 1\leq i \leq k}} (-1)^{n+k-i}\, \mu_n \circ_j \mu_k\big(\alpha^{i-1}, -, \alpha^{k-i}\big)
\ .
\end{eqnarray*}

Note that there is a morphism of complete dg ns operads 
$\Tw \Ai \to \Ai$ which sends $\alpha$ to $0$ and $\mu_n$ to $\mu_n$. On the algebra level, this corresponds to twisting an $\Ai$-algebra with the trivial Maurer--Cartan element. It turns out that there is a deeper morphism in the opposite direction.

\begin{proposition}\label{prop:MorhpAiMCAi}
The assignment $\mu_n\mapsto \mu_n^\alpha$ defines a morphism of complete dg ns operads 
$$\Ai \to \Tw \Ai \ . $$
\end{proposition}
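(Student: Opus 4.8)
The plan is to exhibit the assignment $\mu_n \mapsto \mu_n^\alpha$ as a composite of two morphisms of complete dg ns operads, both of which are already available to us. First I would recall from \cref{prop:MC} that $-\mu_1^\alpha$ is a Maurer--Cartan element of the twisted operad $\Tw\Ai = (\MC\Ai)^{\mu_1^\alpha}$, and that twisting $\Tw\Ai$ by $-\mu_1^\alpha$ returns the original operad $\MC\Ai$. The key observation is that the operadic twisting construction is compatible with the twisting of algebras in the following sense: the elements $\mu_n^\alpha$ are, by their very definition, the twisted operations of $\MC\Ai$ viewed through the gauge action of $\alpha$, exactly as in \cref{prop:TwCurvedGauge}.

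The heart of the argument is therefore to verify that $\mu_n \mapsto \mu_n^\alpha$ commutes with the differentials. On the source $\Ai = (\widehat\calT(\mu_2, \mu_3, \ldots), \dd)$, the differential is
\[
\dd\mu_n = \sum_{\substack{p+q+r=n\\ p+1+r,\, q\geq 2}}(-1)^{pq+r+1} \mu_{p+1+r}\circ_{p+1} \mu_q\ ,
\]
and on the target $\Tw\Ai$ the differential is $\dd^{\mu_1^\alpha}$. What must be shown is that the image of $\dd\mu_n$ under the assignment equals $\dd^{\mu_1^\alpha}(\mu_n^\alpha)$. I would establish this by invoking \cref{Lem:Diffmu}, which states precisely that the elements $\mu_n^\alpha$ satisfy, for $n \ge 2$,
\[
\dd \mu_n^\alpha=\sum_{\substack{p+q+r=n\\ p+1+r,\, q\geq 1}}(-1)^{pq+r+1} \mu^\alpha_{p+1+r}\circ_{p+1} \mu^\alpha_q\ .
\]
The right-hand side here, run over the indices with $p+1+r, q \ge 1$, is exactly the $\Ai$-relation but now including the unary term $\mu_1^\alpha$; separating off the summands in which one of the two factors is $\mu_1^\alpha$ recovers precisely the adjoint contribution $\ad_{\mu_1^\alpha}(\mu_n^\alpha)$, while the remaining summands with both factors of arity $\ge 2$ reproduce the image of $\dd\mu_n$. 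Thus $\dd^{\mu_1^\alpha}(\mu_n^\alpha) = \dd\mu_n^\alpha - \ad_{\mu_1^\alpha}(\mu_n^\alpha)$ coincides with the image of $\dd\mu_n$, which is what compatibility requires.

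The fact that the assignment extends to a well-defined morphism of operads (as opposed to merely a chain map on generators) is automatic: since $\Ai = \widehat\calT(\mu_2, \mu_3, \ldots)$ is a free complete ns operad, any assignment of its generators to arity-matching and degree-matching elements of a target operad extends uniquely to an operad morphism, and the $\mu_n^\alpha$ indeed have arity $n$ and degree $n-2$. The only genuine content is the differential compatibility, and I expect the main obstacle to be the careful bookkeeping of signs when splitting the sum in \cref{Lem:Diffmu} into the arity-one and arity-$\ge 2$ contributions, matching them against the explicit expansion of $\dd^{\mu_1^\alpha}(\mu_n)$ displayed just before the statement. A clean way to sidestep much of that bookkeeping is to note that $\mu_1^\alpha$ being a Maurer--Cartan element means the right-hand side of \cref{Lem:Diffmu} is manifestly the structure equation of an $\Ai$-algebra with differential $\dd + \mu_1^\alpha$, so the collection $\{\mu_n^\alpha\}$ defines an $\Ai$-structure internally in $\MC\Ai$; transporting this along the defining universal property of the free operad $\Ai$ yields the desired morphism directly.
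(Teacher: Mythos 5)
Your proposal takes essentially the same route as the paper's proof: reduce to the generators using quasi-freeness of $\Ai$, then combine Lemma~\ref{Lem:Diffmu} with the cancellation between the arity-one summands and the adjoint action of $\mu_1^\alpha$ (your closing remark, that Lemma~\ref{Lem:Diffmu} exhibits the $\mu_n^\alpha$ as an $\Ai$-structure for the differential $\dd+\mu_1^\alpha$, is also exactly how the paper proves that lemma). The only blemish is a pair of self-cancelling sign slips: the twisted differential is $\dd^{\mu_1^\alpha}=\dd+\ad_{\mu_1^\alpha}$, not $\dd-\ad_{\mu_1^\alpha}$, and correspondingly the arity-one summands in Lemma~\ref{Lem:Diffmu} assemble to $-\ad_{\mu_1^\alpha}\left(\mu_n^\alpha\right)$ rather than $+\ad_{\mu_1^\alpha}\left(\mu_n^\alpha\right)$, so the two errors compensate and your conclusion stands.
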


\begin{remark}
By contrast, the morphism of complete ns operads from $\Ai$ to $\MC \Ai$ which sends the generators $\mu_n$ to $\mu_n^\alpha$ \emph{does not} commute with the differentials. There is a conceptual  reason for this: the twisted operations form an $\Ai$-algebra only with the twisted differential and not with the underlying differential. 
\end{remark}

\begin{proof}
The only point to check is the commutativity with the differentials on the generators $\mu_n$ of the quasi-free dg ns operad $\Ai$, that is:
\begin{align*}
\dd^{\mu_1^\alpha}\left(\mu_n^\alpha \right)&=
\dd\left(\mu_n^\alpha \right)+\mu_1^\alpha \star \mu^\alpha_n - (-1)^{n}
\mu^\alpha_n\star \mu_1^\alpha\\ &=
\sum_{\substack{p+q+r=n\\ p+1+r, q\geq 1}}(-1)^{pq+r+1} \mu_{p+1+r}^\alpha\circ_{p+1} \mu_q^\alpha
+\sum_{\substack{p=r=0\\ q=n}} \mu_{p+1+r}^\alpha\circ_{p+1} \mu_q^\alpha\\&\ \ \ \ \ 
-(-1)^n\sum_{\substack{p+1+r=n \\ q= 1}} \mu_{p+1+r}^\alpha\circ_{p+1} \mu_q^\alpha\\&=
\sum_{\substack{p+q+r=n\\ p+1+r, q\geq 2}}(-1)^{pq+r+1} \mu_{p+1+r}^\alpha\circ_{p+1} \mu_q^\alpha \ ,
\end{align*}
thanks to Lemma~\ref{Lem:Diffmu}. 
\end{proof}

Alternatively, if one checks the identities of Lemma \ref{Lem:Diffmu} on the operadic level directly, it leads to an alternative proof of Proposition~\ref{prop:TwCurvedGauge} as follows. The data of a complete $\Ai$-algebra structure  together with a Maurer--Cartan element $a$ on a complete dg module  $(A, \F, d)$ is equivalent to a morphism of complete dg ns operads $\rho\colon \MC\Ai \to \End_{(A,d)}$, where $\rho(\mu_n)=m_n$ and $\rho(\alpha)=a$ under the previous notations. By Proposition~\ref{prop:MorphMCOperadic}, the image of the Maurer--Cartan element $\mu_1^\alpha$ of $\MC\Ai$ under the morphism~$\rho$ gives a Maurer--Cartan element on the complete endomorphism ns operad $\eend_A$.
As emphasised above, the complete endomorphism operad twisted by this Maurer--Cartan element is equal to the complete endomorphism operad of the twisted chain complex $(A, d^a)$. 
 Therefore, the second point of Proposition~\ref{prop:MorphMCOperadic} shows that 
$$\widetilde\rho\colon \Tw\Ai \to \eend_{(A, d^a)}$$
is a morphism of complete dg ns operads. Pulling back that morphism along the morphism of complete dg ns operads $\Ai \to \Tw \Ai$, one gets  that the twisted operations $m_n^a$ do form an $\Ai$-algebra structure. 

\section{Twisting of multiplicative nonsymmetric operads}\label{subsec=TwMultiOp}
The following notion arose from the study of the Deligne conjecture on the Hochschild cochain complex, see \cite{McClureSmith02} as well as the discussion of twisting in the context of the Deligne conjecture in Section~\ref{sec:Deligne}. 

\begin{definition}[Multiplicative ns operad]\index{multiplicative ns operad}
A \emph{multiplicative ns operad} is a complete dg ns operad $\calP$ equip\-ped with a morphism of complete dg ns operads $\calA_\infty \to \calP$. We still  denote by $\mu_2$, $\mu_3$, etc. the images in $\calP$ of the generating operations of $\calA_\infty$. 
\end{definition}

Therefore any complete dg algebra over a multiplicative ns operad acquires a natural complete $\calA_\infty$-algebra structure. \\

The categorical \emph{coproduct of two ns operads} \index{coproduct of operads} is denoted by $\calP\vee \calQ$ (respectively by $\calP\hat{\vee} \calQ$ in the complete case). It is given by the free ns operad on the underlying $\mathbb{N}$-modules of $\calP$ and $\calQ$ modulo 
the relations that equate planar trees with two vertices labelled both by elements of $\calP$ or  by elements of $\calQ$ to their operadic composition. Thus planar trees with vertices labelled alternatively by elements from $\calP$ and $\calQ$ are representatives for this coproduct. Its operadic composition is given by the grafting of planar trees, followed possibly by the composite of two adjacent vertices labelled by elements from the same ns operad. \\

By a slight abuse of notation, we simply denote by $\alpha$ the (trivial) complete ns operad $(\k \alpha, 0,\ldots)$ spanned by the arity $0$ element $\alpha$. Thus  $\calP\hat{\vee} \alpha$ denotes  the complete coproduct of a complete ns operad $\calP$ with it. As a consequence of the above description, the elements of this coproduct are series, indexed by~$n\in \mathbb{N}$, of linear combinations of operations from $\calP$ with $n$ copies of $\alpha$  plugged at their inputs. 

\begin{proposition}\label{prop:MCP}
Let $\calA_\infty \to \calP$ be a multiplicative ns operad. 
The data of a complete $\calP$-algebra structure together with a Maurer--Cartan element is encoded by the complete dg ns operad 
$$\MC\calP\coloneqq\left(
\calP\hat{\vee} \alpha, \dd
\right) \ , $$
where $\alpha$ is a degree $-1$ element of arity $0$ placed in $F_1$ and where $\hat{\vee}$ stands for the coproduct of complete ns operads, and where the differential $\mathrm{d}$ is characterized by 
\begin{eqnarray*} 
&&\dd\alpha\coloneqq-\sum_{n\ge 2}\mu_n(\alpha, \ldots, \alpha) \ ,\\
&&\dd\nu\coloneqq d_\calP(\nu)\ , \ \text{for} \ \nu\in \calP\ .
\end{eqnarray*}
\end{proposition}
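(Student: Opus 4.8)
The plan is to follow the strategy of the proof of Proposition~\ref{prop:MCAi}, of which this is the natural generalisation: there the underlying graded operad was the free operad $\widehat\calT(\mu_2,\mu_3,\dots)=\Ai$, and here it is replaced by an arbitrary multiplicative ns operad $\calP$, with the copy of $\Ai$ sitting inside $\calP$ via the structure morphism $\Ai\to\calP$.

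First I would check that $\MC\calP$ is a well-defined complete dg ns operad. The operator $\dd$ is specified on the two factors of the coproduct $\calP\hat\vee\alpha$: it restricts to $d_\calP$ on $\calP$ and takes the prescribed value on the free generator $\alpha$. By the universal property of the coproduct, a degree $-1$ derivation of $\calP\hat\vee\alpha$ is exactly the datum of a derivation of $\calP$ together with an arbitrary choice of image for $\alpha$ (there being no relations constraining the single arity-$0$ generator), so these data do extend to a unique derivation. The series $\sum_{n\ge2}\mu_n(\alpha,\dots,\alpha)$ defining $\dd\alpha$ converges since $\alpha\in\F_1$ forces $\mu_n(\alpha,\dots,\alpha)\in\F_n$, whence Lemma~\ref{lem:Conv} applies; both sides have degree $-2$ and arity $0$. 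It then remains to prove $\dd^2=0$, and as the square of the odd derivation $\dd$ is again a derivation, it suffices to test on generators. On $\nu\in\calP$ one has $\dd^2\nu=d_\calP^2\nu=0$. For $\alpha$, I would use that the dg operad morphism $\Ai\to\calP$ induces a morphism of graded ns operads $\MC\Ai\to\MC\calP$ fixing $\alpha$ and sending each $\mu_n$ to its image, and that this morphism intertwines the two derivations $\dd$; consequently $\dd^2\alpha$ computed in $\MC\calP$ is the image of $\dd^2\alpha$ computed in $\MC\Ai$, which vanishes by Proposition~\ref{prop:MCAi}.

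Next I would identify the complete $\MC\calP$-algebra structures. By Theorem~\ref{thm:CompleteAlg}, such a structure on $(A,\F,d)$ is a morphism of filtered dg ns operads $\MC\calP\to\eend_A$. Forgetting differentials, the universal property of the coproduct splits a morphism $\calP\hat\vee\alpha\to\eend_A$ into a morphism $\calP\to\eend_A$, that is a complete $\calP$-algebra structure on $A$, together with the image of $\alpha$, that is an element $a\in\eend_A(0)_{-1}\cong A_{-1}$ lying in $\F_1A_{-1}$ because $\alpha\in\F_1$. Compatibility with the differentials is again checked on generators: on $\calP$ it says precisely that the $\calP$-structure is a \emph{dg} $\calP$-structure, whereas on $\alpha$ it reads $d(a)=-\sum_{n\ge2}m_n(a,\dots,a)$, where $m_n$ is the image of $\mu_n$, i.e. the Maurer--Cartan equation $da+\sum_{n\ge2}m_n(a,\dots,a)=0$ of the underlying complete $\Ai$-algebra of $A$. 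Hence the morphism encodes exactly a complete $\calP$-algebra structure together with a Maurer--Cartan element, as claimed.

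The only genuinely delicate point is the square-zero verification for $\dd$, namely that the prescribed values truly \emph{characterise} a differential rather than merely constrain one. This rests on the description of derivations of a coproduct together with the fact that, $\calP$ being multiplicative, the defining relations of $\MC\Ai$ push forward verbatim along $\Ai\to\calP$; I expect this to be where the argument has to be made carefully, while the remainder is a transparent application of the universal property of $\hat\vee$ and of Theorem~\ref{thm:CompleteAlg}, exactly as in the free case $\calP=\Ai$.
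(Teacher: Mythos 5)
Your proposal is correct; the difference with the paper lies in how $\dd^2=0$ is obtained. The paper disposes of this in one line by invoking Proposition~\ref{prop:DGAction} (a forward reference to the section on the action of the deformation complex): the multiplicative structure is a Maurer--Cartan element $\mu$ of the convolution pre-Lie algebra $\hom\big(\As^{\ac},\calP\big)$, and since $\rho\mapsto\D_\rho$ is a morphism of dg Lie algebras into the derivations of $\calP\hat{\vee}\alpha$ (Lemma~\ref{lem:MorphdgLie}), the image $\D_\mu$ of $\mu$ is again Maurer--Cartan, which says precisely that $\dd=d_\calP+\D_\mu$ is a square-zero derivation; the identification of $\MC\calP$-algebras is then dismissed as straightforward from the coproduct, which is exactly your last step. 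You argue differently: you extend $\dd$ to a unique derivation by the universal property of $\hat{\vee}$, then test $\dd^2=0$ on generators, handling $\alpha$ by pushing forward along the induced morphism $\MC\Ai\to\MC\calP$ --- which intertwines the differentials because $\MC\Ai$ is free and $\Ai\to\calP$ is a morphism of \emph{dg} operads --- so that Proposition~\ref{prop:MCAi} finishes the job. Both arguments are sound. Your route uses only backward references and makes the functoriality of $\MC$ along the structure map explicit (the paper needs the map $\MC\Ai\to\MC\calP$ anyway, immediately afterwards, to see that $\mu_1^\alpha$ is an operadic Maurer--Cartan element of $\MC\calP$); but it is not more elementary at bottom, since its base case, Proposition~\ref{prop:MCAi}, is itself proved in the paper by the very same appeal to Proposition~\ref{prop:DGAction}. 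The paper's route buys uniformity: the dg Lie action of the whole deformation complex on $\calP\hat{\vee}\alpha$ is needed later for the twisted operad $\Tw\calP$ in any case, so both Propositions~\ref{prop:MCAi} and~\ref{prop:MCP} come for free from it.
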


\begin{proof}The fact that the map $\dd$ extends to a unique square-zero  derivation is direct corollary of Proposition~\ref{prop:DGAction} applied to the map $\As^{\ac}\to \calP$ induced by the multiplicative ns operad structure.
The rest of the proof is straightforward from the definition of the coproduct of complete ns operads. 
\end{proof}

\begin{lemma}
The element $\mu_1^\alpha$ is a Maurer--Cartan element of the complete dg ns operad $\MC\calP$. 
\end{lemma}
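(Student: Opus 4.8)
The plan is to avoid any direct operadic computation and instead deduce the statement functorially from the already-treated case $\calP=\Ai$ (Proposition~\ref{prop:MCAi}), where the relation $\dd\mu_1^\alpha=-\mu_1^\alpha\circ_1\mu_1^\alpha$ was obtained from Lemma~\ref{Lem:Diffmu} specialised to $n=1$. The element $\mu_1^\alpha\in\MC\calP$ is by construction of degree $-1$ and arity $1$, so being a Maurer--Cartan element means precisely that $\dd\mu_1^\alpha+\mu_1^\alpha\circ_1\mu_1^\alpha=0$; I would establish this by transporting the corresponding identity from $\MC\Ai$ along the multiplicative structure.

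First I would build a morphism of complete dg ns operads $\Phi\colon\MC\Ai\to\MC\calP$ induced by the structure map $\phi\colon\Ai\to\calP$. Using the universal property of the coproduct $\Ai\,\hat{\vee}\,\alpha$, the composite $\Ai\xrightarrow{\phi}\calP\hookrightarrow\calP\,\hat{\vee}\,\alpha$ together with the canonical inclusion of the arity $0$ generator $\alpha$ determines a morphism of complete ns operads $\Phi$ with $\Phi(\mu_n)=\phi(\mu_n)$ and $\Phi(\alpha)=\alpha$. Then I would check that $\Phi$ commutes with the differentials on generators: on the image of $\Ai$ this holds because $\phi$ is a dg morphism and $\dd$ restricts to $d_\calP$ there, while on $\alpha$ both differentials are given by the same formula $\dd\alpha=-\sum_{n\ge2}\mu_n(\alpha,\ldots,\alpha)$, which $\Phi$ respects since it is multiplicative and sends $\mu_n\mapsto\phi(\mu_n)$.

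Since the defining series $\mu_1^\alpha=\sum_{n\ge2,\,1\le i\le n}(-1)^{n-i}\mu_n(\alpha^{i-1},-,\alpha^{n-i})$ is assembled out of the $\mu_n$ and $\alpha$ alone, it is preserved by $\Phi$, that is $\Phi(\mu_1^\alpha)=\mu_1^\alpha$. As $\mu_1^\alpha$ is a Maurer--Cartan element of $\MC\Ai$ and $\Phi$ is a morphism of complete dg ns operads, Proposition~\ref{prop:MorphMCOperadic} immediately yields that its image $\mu_1^\alpha$ is a Maurer--Cartan element of $\MC\calP$, which is the claim.

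The only genuinely non-formal point, and hence the step I expect to require a little care, is the well-definedness of $\Phi$ as a morphism of \emph{complete} dg ns operads: one must confirm that the infinite series involved live in the completed coproduct $\calP\,\hat{\vee}\,\alpha$ (with $\alpha$ in the filtration layer $\F_1$, so that insertions of $\alpha$ raise the filtration and the sums converge by Lemma~\ref{lem:Conv}) and that the compatibility check on $\alpha$ is legitimate term by term. A purely direct alternative would be to rerun the computation of Lemma~\ref{Lem:Diffmu} inside $\calP\,\hat{\vee}\,\alpha$, using that the images $\phi(\mu_n)$ satisfy the $\Ai$-relations; the obstacle there is the sign bookkeeping for the arity $0$ insertions, which the functorial route sidesteps entirely.
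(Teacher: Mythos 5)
Your proof is correct and takes essentially the same route as the paper: the paper's own proof likewise deduces the statement from the morphism $\MC\Ai \to \MC\calP$ induced by the multiplicative structure $\Ai \to \calP$, together with Proposition~\ref{prop:MorphMCOperadic} applied to the operadic Maurer--Cartan element $\mu_1^\alpha$ of $\MC\Ai$. The only difference is that you spell out the construction of the induced morphism (universal property of the coproduct, compatibility with differentials, convergence in the complete setting) and the fact that it sends $\mu_1^\alpha$ to $\mu_1^\alpha$, all of which the paper leaves implicit.
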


\begin{proof}
The morphism of complete dg ns operads $\calA_\infty \to \calP$ induces a morphism of complete dg ns operads $\MC\calA_\infty \to \MC\calP$ and we know, by Proposition~\ref{prop:MorphMCOperadic}, that the image of an operadic Maurer--Cartan element is again an operadic Maurer--Cartan element. 
\end{proof}

\begin{definition}[Twisted multiplicative ns operad]\index{twisted operad!multiplicative ns operad}
Let $\calA_\infty \to \calP$ be a multiplicative ns operad. 
The complete dg ns operad obtained by twisting the operad~$\MC \calP$ by the Maurer--Cartan element $\mu_1^\alpha$ is called the 
 \emph{twisted complete ns operad} and denoted by 
$$\Tw\calP\coloneqq \left(\MC \calP\right)^{\mu_1^\alpha}=
\left(
\calP\hat{\vee} \alpha, \dd^{\mu_1^\alpha}
\right) \ . $$
\end{definition}

The twisted differential is actually equal to
\begin{eqnarray*}
\mathrm{d}^{\mu_1^\alpha}(\alpha)&=&\dd(\alpha)+\mu_1^\alpha(\alpha)= -\sum_{n\ge 2}\mu_n(\alpha, \ldots, \alpha)
+\sum_{n\ge 2}n\mu_n(\alpha, \ldots, \alpha)\\&=&\sum_{n\ge 2}(n-1)\mu_n(\alpha, \ldots, \alpha)\ ,\\ \mathrm{d}^{\mu_1^\alpha}(\nu)&=&d_\calP(\nu) 
+\mu_1^\alpha \star \nu - (-1)^{|\nu|}
 \nu\star \mu_1^\alpha\\
 &=&d_\calP(\nu) 
+
\sum_{\substack{n\ge 2\\ 1\leq i \leq n}} 
(-1)^{(n-i)(|\nu|+1)}
\mu_n\big(\alpha^{i-1}, \nu, \alpha^{n-i}\big)\\
&&
-
\sum_{j=1}^k\sum_{\substack{n\ge 2 \\ 1\leq i \leq n}} (-1)^{|\nu|+n-i}\, \nu \circ_j \mu_n\big(\alpha^{i-1}, -, \alpha^{n-i}\big)
\ ,
\end{eqnarray*}
for $\nu \in \calP(k)$. 

\begin{proposition}
Any complete dg $\calP$-algebra $(A, d)$ with a given Maurer--Cartan element ``$a$'' gives a complete $\Tw \calP$-algebra with underlying twisted differential $(A, d^a)$~. This assignment 
defines a functor $\MC \calP\emph{\textsf{-alg}} \to \Tw \calP\emph{\textsf{-alg}}$, 
which is an isomorphism of categories.
\end{proposition}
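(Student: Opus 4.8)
The plan is to exploit the functoriality and the involutivity of operadic twisting established in this section, so as to exhibit an explicit inverse functor rather than argue an abstract equivalence. First I would unwind the source category: by Theorem~\ref{thm:CompleteAlg} a complete $\MC\calP$-algebra on a complete dg module $(A,d)$ is a morphism of complete dg ns operads $\rho\colon\MC\calP\to\End_{(A,d)}$, and by Proposition~\ref{prop:MCP} this datum is the same as a complete $\calP$-algebra structure on $(A,d)$ together with the Maurer--Cartan element $a=\rho(\alpha)$. To define the functor $F$ on objects I would apply Proposition~\ref{prop:MorphMCOperadic} to $\rho$ and the operadic Maurer--Cartan element $\mu_1^\alpha$ of $\MC\calP$: its image $m=\rho(\mu_1^\alpha)$ is a Maurer--Cartan element of $\End_{(A,d)}$ supported in arity $1$, so by the endomorphism-operad identity $\End_{(A,d)}^m=\End_{(A,d+m)}$, together with the check $d+m=d^a$, the induced morphism $\widetilde\rho\colon\Tw\calP=(\MC\calP)^{\mu_1^\alpha}\to\End_{(A,d^a)}$ is exactly a complete $\Tw\calP$-algebra on $(A,d^a)$. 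This is the object part of $F$, and this step is essentially the argument already run for $\Tw\Ai$ at the end of Section~\ref{subsec:TwAInftyOp}, now carried out over an arbitrary multiplicative ns operad.

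Next I would check functoriality on morphisms. A strict morphism of $\MC\calP$-algebras is a filtered chain map intertwining all structure operations and fixing $\alpha$, hence sending $a$ to $a'$. Since the twisted differential $d^a$ and the twisted operations are universal expressions in $d$, the $\calP$-operations, and $a$ alone, the very same underlying map automatically intertwines the twisted data; thus $F$ is a well-defined functor that is the identity both on underlying graded modules and on the underlying morphisms of graded ns operads $\calP\,\hat\vee\,\alpha\to\End_A$, only reinterpreting which differential is carried by the source and the target.

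To upgrade $F$ to an isomorphism of categories, I would construct an explicit inverse $G$ using Proposition~\ref{prop:-MC}: there $-\mu_1^\alpha$ is a Maurer--Cartan element of $\Tw\calP=(\MC\calP)^{\mu_1^\alpha}$ and twisting back gives $(\Tw\calP)^{-\mu_1^\alpha}=\MC\calP$. Applying Proposition~\ref{prop:MorphMCOperadic} to a $\Tw\calP$-algebra $\sigma\colon\Tw\calP\to\End_{(B,\delta)}$ and to $-\mu_1^\alpha$ produces $\widetilde\sigma\colon\MC\calP\to\End_{(B,\,\delta-\sigma(\mu_1^\alpha))}$, which I would take as $G(\sigma)$, extended to morphisms as for $F$. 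The crux is then to verify $GF=\id$ and $FG=\id$ on the nose: since Proposition~\ref{prop:MorphMCOperadic} guarantees that $\widetilde\rho$ and $\rho$ agree as maps of the underlying graded ns operads, one has $\widetilde\rho(\mu_1^\alpha)=\rho(\mu_1^\alpha)=m$, so $G$ twists the target differential from $d^a=d+m$ back to $d^a-m=d$ and returns $\rho$ verbatim; the reverse composite is symmetric.

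The main obstacle I anticipate is purely the bookkeeping of differentials: one must verify carefully that $d^a=d+\rho(\mu_1^\alpha)$, so that the endomorphism twist is genuinely a perturbation of the underlying differential, and that the twist-back subtracts exactly the same operator, so that the composite returns to $d$ rather than to some other perturbation. This hinges on two facts that must be invoked explicitly — that $\mu_1^\alpha$ lives in arity $1$ (making the endomorphism twist a perturbation of the differential, via $\End_{(A,d)}^m=\End_{(A,d+m)}$) and that operadic twisting leaves the underlying graded-operad morphism unchanged — and it is precisely these two facts that force $F$ and $G$ to be strictly inverse, yielding an isomorphism of categories rather than a mere equivalence.
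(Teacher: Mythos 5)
Your proposal is correct and takes essentially the same route as the paper's proof: the forward functor is obtained by applying Proposition~\ref{prop:MorphMCOperadic} to the operadic Maurer--Cartan element $\mu_1^\alpha$ (using that an arity-$1$ twist of the endomorphism operad is the endomorphism operad of the perturbed complex), and the inverse by applying it to $-\mu_1^\alpha$ together with the identity $\left(\Tw\calP\right)^{-\mu_1^\alpha}=\MC\calP$ from Proposition~\ref{prop:-MC}. Your explicit check that the two functors are strictly inverse (via $\widetilde\rho(\mu_1^\alpha)=\rho(\mu_1^\alpha)$, since twisting leaves the underlying graded-operad morphism unchanged) merely spells out a detail the paper leaves implicit.
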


\begin{proof}
Let us reproduce here the argument given above in the $\Ai$ case. The data of a complete $\calP$-algebra structure on  $(A,\F, d)$ with a Maurer--Cartan element $a$ is equivalent to a morphism of complete dg ns operads~$\MC \calP\to \eend_{(A,d)}$. Since the element $\mu_1^\alpha$ is a Maurer--Cartan element in the complete ns operad $\calP$, we get a morphism between the twisted complete dg ns operads $$\Tw \calP \to \End_{(A,d+m_1^a)}.$$ The other way round, one uses the exact same arguments but starting with the Maurer--Cartan element $-\mu_1^\alpha$ of the complete dg ns operad $\Tw \calP$, see Proposition~\ref{prop:-MC}. This shows that a complete algebra structure over the complete dg ns operad $\Tw \calP \to \End_{(A,d)}$
induces a complete algebra structure over the complete dg ns operad 
$$\MC \calP=\left(\Tw\calP\right)^{-\mu_1^\alpha} \to \End_{(A,d-m_1^a )}\ , $$
by Proposition~\ref{prop:-MC}. These two functors $\MC \calP\textsf{-alg} \to \Tw \calP\textsf{-alg}$ and 
$\Tw \calP\textsf{-alg} \to \MC \calP\textsf{-alg}$ are  inverse to each other.
\end{proof}

\begin{proposition}\label{prop:MorhpAiMCP}
The assignment $\mu_n\mapsto \mu_n^\alpha$ defines a morphism of complete dg ns operads 
$$\Ai \to \Tw \calP \ . $$
\end{proposition}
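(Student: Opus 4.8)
The plan is to obtain this morphism by composing two maps already available, in keeping with the universal-property philosophy of this chapter, rather than by a bare hands computation. The multiplicative structure $\Ai \to \calP$ induces a morphism of complete dg ns operads $\phi \colon \MC\Ai \to \MC\calP$ sending $\alpha$ to $\alpha$ and each generator $\mu_n$ to its image $\mu_n \in \calP$; this is exactly the morphism already used to show that $\mu_1^\alpha$ is a Maurer--Cartan element of $\MC\calP$. By Proposition~\ref{prop:MorphMCOperadic}, $\phi$ carries the Maurer--Cartan element $\mu_1^\alpha$ of $\MC\Ai$ to the Maurer--Cartan element $\mu_1^\alpha$ of $\MC\calP$ (the defining formula for $\mu_1^\alpha$ is preserved verbatim, since $\phi$ fixes $\alpha$ and the generators), and therefore yields a morphism of the twisted operads
$$\widetilde\phi \colon \Tw\Ai = \left(\MC\Ai\right)^{\mu_1^\alpha} \to \left(\MC\calP\right)^{\mu_1^\alpha} = \Tw\calP \ .$$

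Next I would invoke Proposition~\ref{prop:MorhpAiMCAi}, which provides a morphism of complete dg ns operads $\psi \colon \Ai \to \Tw\Ai$ defined on generators by $\mu_n \mapsto \mu_n^\alpha$. The desired morphism is then the composite $\widetilde\phi \circ \psi \colon \Ai \to \Tw\calP$, and it remains only to identify it on generators. Since $\mu_n^\alpha$ is the universal expression $\sum_{r_0, \ldots, r_n \ge 0} (-1)^{\sum_{k} k r_k} \mu_{n+r_0+\cdots+r_n}(\alpha^{r_0}, -, \ldots, -, \alpha^{r_n})$ built from the $\mu_k$ and $\alpha$ using only partial compositions, and since $\phi$ preserves $\alpha$, preserves each generator $\mu_k$, and commutes with all operadic compositions, we get $\widetilde\phi(\mu_n^\alpha) = \mu_n^\alpha$, now read inside $\calP \hat{\vee} \alpha$. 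Hence $(\widetilde\phi \circ \psi)(\mu_n) = \mu_n^\alpha$, which is precisely the claimed assignment; being a composite of morphisms of complete dg ns operads, it is itself one.

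The only point demanding care, and the place where the actual work sits should a self-contained argument be preferred, is the compatibility of $\mu_n \mapsto \mu_n^\alpha$ with the differentials on the quasi-free generators $\mu_n$ of $\Ai$, namely the identity $\dd^{\mu_1^\alpha}(\mu_n^\alpha) = \sum_{p+q+r=n,\, p+1+r, q \ge 2} (-1)^{pq+r+1} \mu_{p+1+r}^\alpha \circ_{p+1} \mu_q^\alpha$. In the composition route this is subsumed into Proposition~\ref{prop:MorhpAiMCAi}; alternatively it may be checked directly, exactly as in the proof of that proposition, the input being the analogue of Lemma~\ref{Lem:Diffmu} for $\MC\calP$, that is $\dd\mu_n^\alpha = \sum_{p+q+r=n,\, p+1+r, q \ge 1} (-1)^{pq+r+1} \mu_{p+1+r}^\alpha \circ_{p+1} \mu_q^\alpha$. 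This analogue holds because all the relevant relations are images under $\phi$ of the corresponding relations in $\MC\Ai$, where they are established in Lemma~\ref{Lem:Diffmu}, and the convergence of the series involved is ensured by completeness, just as for $\Tw\Ai$.
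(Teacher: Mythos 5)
Your proposal is correct and follows essentially the same route as the paper's proof: the multiplicative structure induces $\MC\Ai \to \MC\calP$, Proposition~\ref{prop:MorphMCOperadic} upgrades it to a morphism of twisted operads $\Tw\Ai \to \Tw\calP$, and one composes with the morphism $\Ai \to \Tw\Ai$ of Proposition~\ref{prop:MorhpAiMCAi}. Your explicit check that the composite sends $\mu_n$ to $\mu_n^\alpha$ (because the induced map fixes $\alpha$, fixes the generators, and commutes with operadic compositions) is a useful detail that the paper leaves implicit, but it does not change the argument.
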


\begin{proof}
The morphism $\MC\calA_\infty \to \MC\calP$ of complete dg ns operads induces a morphism between the twisted complete dg ns operads $\Tw\calA_\infty \allowbreak \to \Tw\calP$ by Proposition~\ref{prop:MorphMCOperadic}. It just remains to pull back by the map 
$\Ai \to \Tw\Ai$ of Proposition~\ref{prop:MorhpAiMCAi}.
\end{proof}

This proposition shows that the upshot of the operadic twisting procedure gives again a multiplicative ns operad. From now on, we will call the category of complete
dg ns operads under $\Ai$ the \emph{category of multiplicative ns operads}.

\begin{lemma}
The twisting procedure defines an endofunctor on the category of multiplicative ns operads.
\end{lemma}

\begin{proof}
Proposition~\ref{prop:MorhpAiMCP} shows that the result of the operadic twisting lives in the category of multiplicative ns operads. 
Given a morphism$$\xymatrix{&\Ai\ar[dl]\ar[dr]&\\
\calP\ar[rr]^f &&\calQ}$$
of multiplicative ns operads, we define a morphism of complete dg ns operads $\Tw f :  \Tw\calP \to \Tw\calQ$ by $
\alpha \mapsto \alpha$ and 
$\nu \mapsto f(\nu)$, for $\nu \in \calP$.
Then, the compatibility relations $\Tw \id_\calP=  \id_{\Tw\calP}$ and  $\Tw (f\circ g)=\Tw f \circ \Tw g$ are automatic. 
\end{proof}

We recall an important result on homotopy invariance of the endofunctor~$\Tw$. 

\begin{proposition}[{\cite[Theorem~$5.1$]{DolgushevWillwacher15}}]\label{prop:TwQI}
The endofunctor $\Tw$ preserves quasi-isomorphisms. 
\end{proposition}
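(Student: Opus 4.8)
The plan is to prove the statement by a spectral sequence comparison, filtering the twisted operads by the number of adjoined arity-$0$ generators $\alpha$. Recall from the description of $\Tw\calP=\left(\calP\,\hat{\vee}\,\alpha,\dd^{\mu_1^\alpha}\right)$ that, in each arity $m$, the underlying complete graded module decomposes as a product over $n\ge 0$ of copies of $\calP(m+n)$, one for each way of designating $n$ of the $m+n$ inputs of a $\calP$-operation as slots carrying the generator $\alpha$; since $\alpha\in\F_1$, deeper terms of this product lie deeper in the filtration, so the product topology agrees with the complete topology on $\calP\,\hat{\vee}\,\alpha$ coming from Chapter~\ref{sec:OptheoyFilMod}. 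First I would record this normal form precisely, together with the resulting integer \emph{weight} $n$ counting the number of $\alpha$'s occurring in a normal-form element.

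Next I would analyse the twisted differential $\dd^{\mu_1^\alpha}$ with respect to this weight. Using the explicit formulas for $\dd^{\mu_1^\alpha}(\alpha)$ and $\dd^{\mu_1^\alpha}(\nu)$ recalled above, every summand of the differential is of one of two types: the internal term $d_\calP(\nu)$, which preserves both the arity $m+n$ of the underlying $\calP$-operation and the set of $\alpha$-slots; and the $\alpha$-insertion terms coming from $\dd\alpha=-\sum_{n\ge2}\mu_n(\alpha,\dots,\alpha)$ and from $\ad_{\mu_1^\alpha}$, each of which replaces a single $\alpha$ (or a free input) by some $\mu_k(\ldots)$ with $k\ge 2$ and therefore strictly increases the weight by at least one. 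Crucially, no term decreases the weight. Consequently the decreasing filtration $F^p$ spanned by normal-form elements of weight $\ge p$ is preserved by $\dd^{\mu_1^\alpha}$, is exhaustive, Hausdorff and complete, and on the associated graded the induced differential is exactly $d_\calP$ acting slot by slot.

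I would then run the associated spectral sequences for $\Tw\calP$ and $\Tw\calQ$. Their $E_0$-pages are, arity by arity and placement by placement, copies of $(\calP,d_\calP)$ and $(\calQ,d_\calQ)$, so their $E_1$-pages are the corresponding products of copies of $H_\bullet(\calP)$ and $H_\bullet(\calQ)$. Since $\Tw f$ sends $\alpha$ to $\alpha$ and $\nu$ to $f(\nu)$, it is a filtered chain map inducing on $E_1$ precisely the product of copies of $H_\bullet(f)$; as $f$ is a quasi-isomorphism this induced map is an isomorphism. Because both filtrations are complete, exhaustive and Hausdorff, with weights bounded below by $0$, the comparison theorem for complete filtered complexes applies and lets me conclude that $\Tw f$ is itself a quasi-isomorphism. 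As $\Tw\calP$ and $\Tw\calQ$ are products over arities and $\Tw f$ is arity-preserving, it suffices to argue one arity $m$ at a time.

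The main obstacle I anticipate is justifying convergence of the spectral sequence in the completed setting: the weight filtration is unbounded above, so rather than relying on a naive first-quadrant argument I must invoke completeness of $\calP\,\hat{\vee}\,\alpha$ to rule out $\lim^1$-obstructions and to guarantee strong convergence of both spectral sequences to the homologies of $\Tw\calP$ and $\Tw\calQ$. A secondary point requiring care is checking that the internal differential $d_\calP$ genuinely fixes the combinatorial placement of the $\alpha$-slots on the associated graded, so that $E_1$ splits as claimed into a product indexed by the weight $n$ and by the choices of slots, with $\Tw f$ acting diagonally.
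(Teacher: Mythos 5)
Your proof is correct. The paper gives no argument of its own for this proposition — it defers entirely to the citation \cite[Theorem~5.1]{DolgushevWillwacher15} — and your argument (filtering $\calP\,\hat{\vee}\,\alpha$ by the number of adjoined generators $\alpha$, observing that the twisted differential never decreases this weight so that the associated graded differential is just $d_\calP$ acting placement by placement, and then concluding via completeness of the filtration, which rules out $\lim^1$-obstructions) is precisely the standard proof given in that reference.
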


The following result is an operadic version  of the fact that $\alpha+\beta$ is a Maurer--Cartan in an algebra if and only if $\beta$ is a Maurer--Cartan element in the algebra twisted by $\alpha$. Then the algebra twisted first by $\alpha$ and then by $\beta$ is equal to the (dg Lie) algebra twisted by $\alpha+\beta$, see Proposition~\ref{lem:subGaugeGroup}. 

\begin{lemma}\label{Lemma=TwTw}
Let $\calP$ be a multiplicative ns operad. The complete dg ns operad $\Tw (\Tw \calP)$ is isomorphic to 
$$\Tw \big(\Tw \calP\big)\cong  \big(\calP\hat{\vee} \, \alpha\, \hat{\vee}\,  \beta,  \dd+\ad_{\mu_1^{\alpha+\beta}},  \left\{ \circ_i \right\}\big)\ .$$
\end{lemma}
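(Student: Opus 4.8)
The plan is to exploit that twisting only alters the differential, so the entire statement reduces to a comparison of two differentials on one and the same underlying complete graded ns operad. Since $\Tw\calP$ has underlying operad $\calP\hat{\vee}\alpha$ and each application of $\Tw$ adjoins a single arity-$0$ generator, associativity of the coproduct gives that $\Tw(\Tw\calP)$ has underlying operad $(\calP\hat{\vee}\alpha)\hat{\vee}\beta=\calP\hat{\vee}\alpha\hat{\vee}\beta$, matching the right-hand side. A differential on a quasi-free operad is a derivation, hence is determined by its values on the generators $\nu\in\calP$, $\alpha$ and $\beta$; so it suffices to compute the total twisted differential $D$ of $\Tw(\Tw\calP)$ on these. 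The point to keep in mind (Proposition~\ref{prop:MorhpAiMCP}) is that $\Tw\calP$ is multiplicative through $\mu_n\mapsto\mu_n^\alpha$, so forming $\MC(\Tw\calP)$ forces $\dd\beta=-\sum_{n\ge2}\mu_n^\alpha(\beta,\dots,\beta)$ and the \emph{second} twist to be by $\mathbf{m}:=\sum_{n\ge2}\sum_{i=1}^n(-1)^{n-i}\mu_n^\alpha(\beta^{i-1},-,\beta^{n-i})$, the operadic Maurer--Cartan element $\mu_1^\beta$ built from the operations of $\Tw\calP$. Thus $D=\dd'+\ad_{\mathbf{m}}$, where $\dd'$ is the differential of $\MC(\Tw\calP)$.

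The key input is the operadic additivity of the twisting procedure, $(\mu_n^\alpha)^\beta=\mu_n^{\alpha+\beta}$. This is the operad-level avatar of Corollary~\ref{cor:twa+b} (twisting by $\alpha$ then by $\beta$ equals twisting by $\alpha+\beta$); exactly as in Lemma~\ref{Lem:Diffmu}, it holds on the operad because the twisting formulas are valid in every complete algebra, in particular in the free one. In arity $1$ it yields the crucial identity $\mu_1^\alpha+\mathbf{m}=\mu_1^{\alpha+\beta}$, and in arity $0$ it yields $\mu_0^\alpha+\mu_1^\alpha\circ_1\beta+\sum_{n\ge2}\mu_n^\alpha(\beta^{\otimes n})=\mu_0^{\alpha+\beta}$, where I write $\mu_0^\gamma:=\sum_{n\ge2}\mu_n(\gamma,\dots,\gamma)$.

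With these identities in hand I would compute $D$ on generators. On $\nu\in\calP$ one finds $D\nu=d_\calP\nu+\ad_{\mu_1^\alpha}\nu+\ad_{\mathbf{m}}\nu=d_\calP\nu+\ad_{\mu_1^{\alpha+\beta}}\nu$; on $\alpha$, since $\ad$ of an arity-$1$ element acts on an arity-$0$ generator by $(-)\circ_1\alpha$, one gets $D\alpha=-\mu_0^\alpha+\ad_{\mu_1^{\alpha+\beta}}\alpha$; and on $\beta$, combining the arity-$0$ and arity-$1$ identities gives $D\beta=(\mu_0^\alpha-\mu_0^{\alpha+\beta})+\ad_{\mu_1^{\alpha+\beta}}\beta$. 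Hence $D=\dd+\ad_{\mu_1^{\alpha+\beta}}$, where $\dd$ is the derivation with $\dd\nu=d_\calP\nu$, $\dd\alpha=-\mu_0^\alpha$ and $\dd\beta=\mu_0^\alpha-\mu_0^{\alpha+\beta}$. Finally, $\dd(\alpha+\beta)=-\mu_0^{\alpha+\beta}$, so after the change of variable $\gamma:=\alpha+\beta$ the base differential becomes the standard double Maurer--Cartan differential $\dd\alpha=-\mu_0^\alpha$, $\dd\gamma=-\mu_0^\gamma$; this identifies $(\calP\hat{\vee}\alpha\hat{\vee}\beta,\dd)$ with $\MC(\MC\calP)$ and pins down the base differential asserted in the statement (one also checks $\dd^2=0$ and that $\mu_1^{\alpha+\beta}$ is Maurer--Cartan for $\dd$ for free, since the twist $\ad_{\mu_1^{\alpha+\beta}}$ is inherited from a genuine operadic twisting, cf. Proposition~\ref{prop:-MC}).

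The main obstacle is the operadic additivity identity $\mu_1^\alpha+\mathbf{m}=\mu_1^{\alpha+\beta}$ and its arity-$0$ companion: everything else is formal, but these encode the content of Corollary~\ref{cor:twa+b} and must be transported from algebras to the free ns operad via the free-algebra principle. A secondary subtlety, easy to overlook, is the bookkeeping of the base differential on the generator $\beta$: the naive symmetric guess $\dd\beta=-\mu_0^\beta$ is \emph{wrong}, because $\mu_0^{\alpha+\beta}$ carries mixed $\alpha\beta$-terms, and the asymmetry disappears only after passing to the combined variable $\gamma=\alpha+\beta$ — which is precisely the algebraic shadow of the fact that $\beta$ is a Maurer--Cartan element in the structure already twisted by $\alpha$.
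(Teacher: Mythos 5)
Your proof is correct, and its skeleton coincides with the paper's: everything hinges on the single identity $\mu_1^\alpha+\widetilde{\mu}_1^{\,\beta}=\mu_1^{\alpha+\beta}$, where $\widetilde{\mu}_n\coloneqq\mu_n^\alpha$ denote the twisted operations giving the multiplicative structure of $\Tw\calP$ (your $\mathbf{m}$ is $\widetilde{\mu}_1^{\,\beta}$), after which one concludes with $\ad_{\mu_1^\alpha}+\ad_{\widetilde{\mu}_1^{\,\beta}}=\ad_{\mu_1^{\alpha+\beta}}$. Where you genuinely differ is in how this identity is established: the paper proves it by a direct re-indexing computation, expanding each $\mu_n^\alpha\big(\beta^{i-1},-,\beta^{n-i}\big)$ into monomials $\mu_k(\alpha^{r_0},\beta,\alpha^{r_1},\dots)$ and regrouping them into $\mu_k\big((\alpha+\beta)^{j-1},-,(\alpha+\beta)^{k-j}\big)$, whereas you transport the algebra-level additivity of twisting (Corollary~\ref{cor:twa+b}) to the operad level by the free-algebra principle. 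That principle is exactly the paper's own argument for Lemma~\ref{Lem:Diffmu}, so your route is legitimate and more conceptual, and it adapts verbatim to the symmetric case; the paper's computation buys self-containedness (no appeal to faithfulness of the operad action on complete free algebras) and exhibits the regrouping explicitly.

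A second difference is to your credit: you handle the generator $\beta$, which the paper's proof passes over in silence. The paper's decomposition $D=\dd+\ad_{\mu_1^\alpha}+\ad_{\widetilde{\mu}_1^{\,\beta}}$ is only checked on $\calP\hat{\vee}\,\alpha$, while on $\beta$ the differential of $\Tw(\Tw\calP)$ is $-\widetilde{\mu}_0^{\,\beta}+\ad_{\widetilde{\mu}_1^{\,\beta}}(\beta)$, with $\widetilde{\mu}_0^{\,\beta}\coloneqq\sum_{n\ge2}\mu_n^\alpha(\beta,\dots,\beta)$, and identifying its untwisted part requires precisely your arity-zero companion identity $\mu_0^{\alpha+\beta}=\mu_0^\alpha+\mu_1^\alpha(\beta)+\widetilde{\mu}_0^{\,\beta}$. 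Your conclusion $\dd\beta=\mu_0^\alpha-\mu_0^{\alpha+\beta}$ (rather than the naive $-\mu_0^\beta$), together with the observation that the substitution $\gamma=\alpha+\beta$ identifies $\big(\calP\hat{\vee}\,\alpha\,\hat{\vee}\,\beta,\dd\big)$ with $\MC(\MC\calP)$, is the correct reading of the otherwise undefined symbol $\dd$ in the statement, and it is exactly what Corollary~\ref{cor:ComStr} needs, since there only $\dd(\alpha+\beta)=-\mu_0^{\alpha+\beta}$ is used.
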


\begin{proof}
The only point to check is the  twisted differential.
First, one shows that the 
$\mu_1^{\alpha+\beta}$ 
is equal to  $\mu_1^\alpha+\widetilde{\mu}_1^{\, \beta}$, where we denote here the twisted operations by $\widetilde{\mu}_n\coloneqq\mu_n^\alpha$:
\begin{eqnarray*}
\mu_1^\alpha+\widetilde{\mu}_1^{\, \beta}&=&
\sum_{\substack{k\ge 2 \\ 1\leq j \leq k}}(-1)^{k-j} \, \mu_k\big(\alpha^{j-1}, -, \alpha^{k-j}\big)+
\sum_{\substack{n\ge 2 \\ 1\leq i \leq n}}(-1)^{n-i} \, \mu_n^\alpha\big(\beta^{i-1}, -, \beta^{n-i}\big)\\
&=&
\sum_{\substack{k\ge 2 \\ 1\leq j \leq k}}(-1)^{k-j} \, \mu_k\big(\alpha^{j-1}, -, \alpha^{k-j}\big)\\
&& +\sum_{\substack{n\ge 2 \\ 1\leq i \leq n}}(-1)^{n-i} \,\sum_{r_0, \ldots, r_n\ge 0} (-1)^{r_i+\cdots +r_n} \mu_{n+r_0+\cdots+r_n}\big(\alpha^{r_0}, \beta, \alpha^{r_1}, \beta, \ldots, \alpha^{r_{i-1}}, - , \\
&&\qquad\qquad\qquad\qquad\qquad\qquad\qquad\qquad\quad  \alpha^{r_{i_1}}, \beta, \ldots, \alpha^{r_{n-1}}, \beta,\alpha^{r_n}  \big)\\
&=&
\sum_{\substack{k\ge 2 \\ 1\leq j \leq k}}(-1)^{k-j} \, \mu_k\big(\alpha^{j-1}, -, \alpha^{k-j}\big)\\
&&+\sum_{\substack{n\ge 2 \\ 1\leq i \leq n}}\,\sum_{r_0, \ldots, r_n\ge 0} (-1)^{n-i+r_i+\cdots +r_n} \mu_{n+r_0+\cdots+r_n}\big(\alpha^{r_0}, \beta, \alpha^{r_1}, \beta, \ldots, \alpha^{r_{i-1}}, - , \\
&&\qquad\qquad\qquad\qquad\qquad\qquad\qquad\qquad\quad 
\alpha^{r_{i_1}}, \beta, \ldots, \alpha^{r_{n-1}}, \beta,\alpha^{r_n}  \big)\\
&=& \sum_{\substack{k\ge 2 \\ 1\leq j \leq k}}(-1)^{k-j} \, \mu_k\big((\alpha+\beta)^{j-1}, -, (\alpha+\beta)^{k-j}\big)\\
&=&\mu_1^{\alpha+\beta}\ .
\end{eqnarray*}
Then, one concludes the proof with 
$$\ad_{\mu_1^\alpha}+\ad_{\widetilde{\mu}_1^{\, \beta}}=\ad_{\mu_1^\alpha+\widetilde{\mu}_1^{\, \beta}}=\ad_{\mu_1^{\alpha+\beta}} \ .$$
\end{proof}

\begin{corollary}\label{cor:ComStr}
Given any multiplicative ns operad $\calP$, the assignment 
\begin{eqnarray*}
\begin{array}{rrcl}
\Delta(\calP)\ \ \  : &\Tw \calP\cong \calP\hat{\vee} \alpha&\to &\Tw\, \big(\Tw \calP\big)\cong \calP\hat{\vee} \alpha \hat{\vee} \beta\\
&\alpha &\mapsto & \alpha+\beta\\
&\nu&\mapsto&\nu\ ,
\end{array}
\end{eqnarray*}
for $\nu\in\calP$, 
defines a morphism of multiplicative ns operads. 
\end{corollary}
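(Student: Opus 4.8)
The plan is to verify that the map $\Delta(\calP)$ is a well-defined morphism of complete dg ns operads and that it commutes with the structural morphisms from $\Ai$. First I would confirm that $\Delta(\calP)$ respects the operadic structure: since $\Tw\calP = \calP\hat{\vee}\alpha$ and $\Tw(\Tw\calP) = \calP\hat{\vee}\alpha\hat{\vee}\beta$ are coproducts of complete ns operads, a morphism out of $\Tw\calP$ is determined freely by its values on the generators $\calP$ and $\alpha$, subject to compatibility with the defining relations of the coproduct. The assignment sends $\nu\mapsto\nu$ for $\nu\in\calP$ and $\alpha\mapsto\alpha+\beta$; because both $\alpha$ and $\beta$ sit in arity $0$ and the relations in a coproduct only equate composites of same-operad labels, this assignment manifestly respects the operadic composition. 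The only genuine content is the compatibility with the differentials.

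The crux is therefore to check that $\Delta(\calP)$ is a chain map, namely $\Delta(\calP)\circ \dd^{\mu_1^\alpha} = \dd^{\mu_1^{\alpha+\beta}}\circ \Delta(\calP)$ on generators. This is exactly where Lemma~\ref{Lemma=TwTw} does the heavy lifting: that lemma already established the key algebraic identity $\mu_1^{\alpha+\beta}=\mu_1^\alpha+\widetilde{\mu}_1^{\,\beta}$ and identifies the twisted differential of $\Tw(\Tw\calP)$ as $\dd+\ad_{\mu_1^{\alpha+\beta}}$. So I would compute $\dd^{\mu_1^{\alpha+\beta}}(\alpha+\beta)$ in $\Tw(\Tw\calP)$ and compare it to the image under $\Delta(\calP)$ of $\dd^{\mu_1^\alpha}(\alpha)=\sum_{n\ge 2}(n-1)\mu_n(\alpha,\ldots,\alpha)$. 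The point is that $\Delta(\calP)$ substitutes $\alpha\mapsto\alpha+\beta$ uniformly, so the twisted differential formula for $\alpha$ in the source, once we replace $\alpha$ by $\alpha+\beta$, should match the twisted differential of $\alpha+\beta$ in the target precisely because the formula $\dd^{\mu_1^\gamma}(\gamma)=\sum_{n\ge2}(n-1)\mu_n(\gamma,\ldots,\gamma)$ is natural in the Maurer--Cartan element $\gamma$. A similar naturality check on each generator $\nu\in\calP$ closes the verification, again using that $\ad_{\mu_1^{\alpha+\beta}}$ applied to $\nu$ with the substitution $\alpha\mapsto\alpha+\beta$ reproduces the correct twisted action.

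The conceptual reason this works, which I would state explicitly to avoid grinding through the signs, is the following. The Maurer--Cartan philosophy from Proposition~\ref{lem:subGaugeGroup} and Corollary~\ref{cor:twa+b} tells us that twisting by $\alpha$ and then by $\beta$ coincides with twisting by $\alpha+\beta$; Lemma~\ref{Lemma=TwTw} is the operadic incarnation of this fact. The morphism $\Delta(\calP)$ is simply the operadic realization of the statement ``the single twisting parameter $\alpha$ of $\Tw\calP$ becomes the combined parameter $\alpha+\beta$ after a second twist.'' Thus the chain-map property is not an independent calculation but a direct consequence of the identity already proved.

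I expect the main obstacle to be the sign bookkeeping in matching $\dd^{\mu_1^{\alpha+\beta}}(\nu)$ with $\Delta(\calP)(\dd^{\mu_1^\alpha}(\nu))$ for a general $\nu\in\calP(k)$, where the twisted differential involves both the insertions $\mu_n(\alpha^{i-1},\nu,\alpha^{n-i})$ and the compositions $\nu\circ_j\mu_n(\alpha^{i-1},-,\alpha^{n-i})$. However, since $\alpha$ and $\beta$ both have degree $-1$ and arity $0$, and the substitution $\alpha\mapsto\alpha+\beta$ is degree-preserving and compatible with the binomial expansion of $(\alpha+\beta)^r$ that already appears implicitly in Lemma~\ref{Lemma=TwTw}, these signs are forced to agree. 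I would therefore reduce the general case to the already-verified identity for $\mu_1^{\alpha+\beta}$ rather than re-deriving it, concluding that $\Delta(\calP)$ is a morphism of complete dg ns operads which, composing with $\Ai\to\Tw\calP$, yields a morphism of multiplicative ns operads.
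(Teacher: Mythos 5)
Your proposal is correct and follows essentially the same route as the paper: observe that operadic compatibility is automatic for a map out of a coproduct, reduce everything to the chain-map property on the generators $\alpha$ and $\nu\in\calP$, and settle both via the substitution $\alpha\mapsto\alpha+\beta$ together with Lemma~\ref{Lemma=TwTw}, which identifies the differential of $\Tw\big(\Tw\calP\big)$ as $\dd+\ad_{\mu_1^{\alpha+\beta}}$. The paper's proof is exactly this computation written out on the two types of generators, so your reduction to the already-proved identity is the intended argument rather than a shortcut.
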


\begin{proof}
Again, the only point to check is the compatibility with the differential. Let us denote the above given morphism by $f : \Tw \calP \to \Tw \big(\Tw \calP\big)$. For any element $\nu \in \calP$, we have
\begin{eqnarray*}
f\left(
d_\calP(\nu)+\ad_{\mu_1^\alpha}(\nu)
\right)
=d_\calP(f(\nu))+\ad_{\mu_1^{\alpha+\beta}}(f(\nu))\ ,
\end{eqnarray*}
and
\begin{eqnarray*}
f\circ\big(
\dd+\ad_{\mu_1^\alpha}
\big)(\alpha)&=&
f\left(\sum_{n\ge 2}(n-1)\mu_n(\alpha, \ldots, \alpha)\right)\\
&=&
\sum_{n\ge 2}(n-1)\mu_n(\alpha+\beta, \ldots, \alpha+\beta)\\
&=&\left(\dd +\ad_{\mu_1^{\alpha+\beta}}\right)\circ (\alpha+\beta)
= \left(\dd +\ad_{\mu_1^{\alpha+\beta}}\right)\circ f(\alpha)\ . 
\end{eqnarray*}
We conclude with Lemma~\ref{Lemma=TwTw}.
\end{proof}

Similarly to the case of the twisted $\Ai$ operad, there is a morphism 
of multiplicative ns operads 
$\varepsilon(\calP)\colon \Tw \calP \to \calP$ defined by sending $\alpha$ to $0$ and $\nu\in \calP$ to $\nu$. 

\begin{theorem}
The two morphisms $$\Delta(\calP) \colon  \Tw \calP \to \Tw(\Tw\calP)$$ and $$\varepsilon(\calP)\colon \Tw \calP \to \calP$$ of multiplicative ns operads provide the endofunctor $\Tw$ with a comonad structure. 
\end{theorem}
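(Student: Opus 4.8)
The plan is to verify the comonad axioms by unwinding the definitions of the two structure maps $\Delta(\calP)$ and $\varepsilon(\calP)$ and checking, on the level of generators, that the coassociativity and counit diagrams commute. Since both morphisms are morphisms of multiplicative ns operads that are the identity on $\calP\subset\Tw\calP$ and only act nontrivially on the formal Maurer--Cartan generator $\alpha$, every diagram reduces to a bookkeeping computation on how $\alpha$ is transported. The key simplifying observation, already established in Lemma~\ref{Lemma=TwTw} and Corollary~\ref{cor:ComStr}, is that iterated twisting is controlled additively: $\Tw(\Tw\calP)\cong\big(\calP\hat\vee\,\alpha\,\hat\vee\,\beta,\,\dd+\ad_{\mu_1^{\alpha+\beta}}\big)$, and $\Delta(\calP)$ is essentially the ``diagonal'' $\alpha\mapsto\alpha+\beta$. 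This reduces the whole verification to the combinatorics of adding formal variables.

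First I would record that both $\Delta(\calP)$ and $\varepsilon(\calP)$ are genuine morphisms of multiplicative ns operads: this is exactly the content of Corollary~\ref{cor:ComStr} for $\Delta(\calP)$, and for $\varepsilon(\calP)$ one checks compatibility with the differential in the same short manner, noting that setting $\alpha=0$ kills $\mu_1^\alpha$ and recovers $d_\calP$. Next I would verify the \textbf{counit axioms}, namely that the two composites
\[
\Tw\calP \xrightarrow{\Delta(\calP)} \Tw(\Tw\calP) \xrightarrow{\varepsilon(\Tw\calP)} \Tw\calP
\qquad\text{and}\qquad
\Tw\calP \xrightarrow{\Delta(\calP)} \Tw(\Tw\calP) \xrightarrow{\Tw\varepsilon(\calP)} \Tw\calP
\]
both equal the identity. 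For the first, $\varepsilon(\Tw\calP)$ sends the \emph{outer} twisting variable $\beta$ to $0$, so the composite sends $\alpha\mapsto\alpha+\beta\mapsto\alpha$ and fixes $\calP$; for the second, $\Tw\varepsilon(\calP)$ applies $\varepsilon(\calP)$ inside, killing the \emph{inner} variable $\alpha$ while renaming the remaining formal generator, so $\alpha\mapsto\alpha+\beta\mapsto\beta\mapsto\alpha$. In each case the composite is the identity on generators, hence the identity morphism.

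Then I would verify \textbf{coassociativity}: that
\[
\Tw\Delta(\calP)\circ\Delta(\calP)
=
\Delta(\Tw\calP)\circ\Delta(\calP)
\]
as morphisms $\Tw\calP\to\Tw(\Tw(\Tw\calP))$. Writing the three successive formal Maurer--Cartan variables as $\alpha,\beta,\gamma$ and using the iterated form of Lemma~\ref{Lemma=TwTw}, both composites send $\alpha$ to the total sum $\alpha+\beta+\gamma$ and fix $\calP$. The left-hand composite introduces $\alpha\mapsto\alpha+\beta$ and then applies $\Tw$ to $\Delta$, which sends the inner $\alpha\mapsto\alpha+\gamma$, yielding $\alpha\mapsto(\alpha+\gamma)+\beta$; the right-hand composite introduces $\alpha\mapsto\alpha+\beta$ outermost and then $\beta\mapsto\beta+\gamma$, yielding $\alpha\mapsto\alpha+(\beta+\gamma)$. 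These agree by associativity of addition of the formal generators, and both are morphisms of dg ns operads by the (iterated) computation underlying Corollary~\ref{cor:ComStr}.

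The main obstacle I anticipate is purely \textbf{notational bookkeeping}: one must be careful to distinguish which formal variable is the ``new'' outer one and which is the ``old'' inner one at each stage, since $\Tw\calP\cong\calP\hat\vee\alpha$ and $\Tw(\Tw\calP)\cong\calP\hat\vee\alpha\hat\vee\beta$ use overlapping names, and the maps $\Tw\Delta$, $\Delta(\Tw\calP)$, $\Tw\varepsilon$, $\varepsilon(\Tw\calP)$ act on different copies. The genuinely mathematical input — that the twisted differential depends on the twisting variables only through their \emph{sum} $\mu_1^{\alpha+\beta+\cdots}$ — has already been isolated in Lemma~\ref{Lemma=TwTw}, so no new analytic or combinatorial difficulty arises; the proof should amount to checking equality on the finite set of generators $\{\alpha\}\cup\calP$ and invoking that all maps in sight are morphisms of complete dg ns operads, hence determined by their values there.
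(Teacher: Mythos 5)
Your proposal is correct and follows essentially the same route as the paper's proof: both reduce the comonad axioms to a check on the generators $\{\alpha\}\cup\calP$, with the counit relations given by $\alpha\mapsto\alpha+\beta\mapsto\alpha$ and coassociativity by $\alpha\mapsto\alpha+\beta\mapsto\alpha+\beta+\gamma$ on both sides, the well-definedness of the structure maps being supplied by Lemma~\ref{Lemma=TwTw} and Corollary~\ref{cor:ComStr}. Your extra remarks on distinguishing inner and outer formal variables only make explicit the bookkeeping the paper leaves implicit.
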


\begin{proof}
Let  $\Ai\to\calP$ be a  multiplicative ns operad, we have to check the counit relations 
\begin{eqnarray*}
\big(\varepsilon(\calP)\circ(\id_{\Tw \calP})\big)
(\Delta(\calP))=\id_{\Tw \calP}= 
\big((\id_{\Tw \calP})\circ \varepsilon(\calP)\big)
\end{eqnarray*}
and the coassociativity relation 
\begin{eqnarray*}
\Delta_{\Tw \calP}(\Delta(\calP))
=\Tw(\Delta_{\calP})(\Delta(\calP))\ .
\end{eqnarray*}
In each cases, the image of any element $\nu \in \calP$ is send to itself. The left counit relation is given by 
$$\alpha \mapsto \alpha +\beta \mapsto \alpha$$
since the second morphism sends $\alpha$ to $\alpha$ and $\beta$ to $0$. The right counit relation is proved similarly since the second morphism sends $\alpha$ to $0$ and $\beta$ to $\alpha$.
Both sides for the coassociativity relation give 
$$\alpha \mapsto \alpha +\beta \mapsto \alpha+\beta+\gamma \ , $$
which concludes the proof. 
\end{proof}

\begin{definition}[$\Tw$-stable operad]\index{$\Tw$-stable operad!non-symmetric}
A multiplicative ns operad is called \emph{$\Tw$-stable operad}, pronounced ``twist-stable'',  if it admits a $\Tw$-coalgebra structure. 
\end{definition}

This definition means that, given a multiplicative ns operad $\Ai\to \calP$, there exists a morphism $\Delta_\calP$ of multiplicative ns operads 
$$\xymatrix@C=20pt@R=20pt{&\Ai\ar[dl]\ar[dr]&\\
\calP\ar[rr]_{\Delta_\calP} &&\Tw \, \calP}$$
 satisfying 
\begin{eqnarray}
&&\xymatrix@C=30pt{\calP \ar[r]^(0.42){\Delta_\calP} \ar@/_1pc/[rr]_{\id_\calP}& \Tw\, \calP \ar[r]^(0.56){\varepsilon(\calP)}& \calP \ ,} \label{Rel1}\\
&&\xymatrix@C=30pt@R=30pt{\calP \ar[r]^{\Delta_\calP} \ar[d]^{\Delta_\calP} &   \Tw\, \calP \ar[d]^{\Tw(\Delta_\calP)} \\ 
\Tw\, \calP \ar[r]^(0.45){\Delta(\calP)}& \Tw(\Tw\, \calP)\ .}\label{Rel2}
\end{eqnarray}

In terms of type of algebras, the map $\Delta_\calP$ gives a concrete way to produce functorial complete $\calP$-algebra structures on any complete $\calP$-algebra endowed with a Maurer--Cartan element coming from the internal $\Ai$-algebra structure but with twisted differential (hence the terminology ``$\Tw$-stable'': $\calP$-algebra structures are stable under twisting). Indeed, as explained above, the data of a complete $\calP$-algebra structure with a Maurer--Cartan element $a$ is faithfully encoded in 
a morphism of dg ns operads $\MC\calP \to \eend_{(A,d)}$, which gives rise to a morphism of twisted complete dg ns operads 
$\Tw\calP \to \End_{(A,d^a)}$ by Proposition~\ref{prop:MorphMCOperadic}. Pulling back with the morphism of complete dg ns operads $\Delta(\calP) : \calP \to \Tw\calP$ produces the twisted $\calP$-algebra structure. \\

The fact that the structure map $\Delta_\calP$ is a morphism of multiplicative ns operads says that one has to twist the $\Ai$-operations in $\calP$ as usual, that is according to the formulas given in Proposition~\ref{prop:TwCurvedGauge}.
Relation~(\ref{Rel1}) expresses the fact that the twisted operation associated to any  $\nu\in \calP$ is the sum of two terms: the first one being equal to $\nu$ itself and the second one begin the sum of perturbation terms which all contain at least one Maurer--Cartan element. Relation~(\ref{Rel2}) amounts to say that the operations twisted twice under the same formulas, first by a Maurer--Cartan element $a$ and then by a second Maurer--Cartan element $b$, are equal to the operations twisted once by the Maurer--Cartan element $a+b$, thanks to Lemma~\ref{Lemma=TwTw} and Corollary~\ref{cor:ComStr}. These are the constrains of $\Tw$-stable multiplicative ns operad. 

\begin{example}
The ns operad $\Ai$ is the prototypical example of a $\Tw$-stable ns operad. Its $\Tw$-coalgebra structure map $\Ai \to \Tw\, \Ai$ is given by Proposition~\ref{prop:MorhpAiMCAi}.  
\end{example}

\begin{proposition}\label{prop:Twistable}
Let $\calP$ be a multiplicative ns operad with zero differential. 
If $\calP$ is $\Tw$-stable, then any element $\nu\in \calP$ satisfies 
 \[
\ad_{\mu_2(\alpha, -)-\mu_2(-, \alpha)}(\nu)=0
 \]
 in $\Tw\calP$.  
When the multiplicative structure of $\calP$ factors through the canonical resolution 
$\Ai \twoheadrightarrow \As \to \calP$, the  reverse statement  holds true.
\end{proposition}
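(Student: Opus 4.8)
—

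The plan is to analyze the $\Tw$-stability condition by unpacking what the structure map $\Delta_\calP\colon \calP\to \Tw\calP$ must do to a generic element $\nu\in\calP$, and to read off the forced constraint from the counit relation~\eqref{Rel1}. Since $\calP$ has zero differential, the differential on $\Tw\calP$ reduces to $\dd^{\mu_1^\alpha}=\ad_{\mu_1^\alpha}$ on the image of $\calP$ (the $d_\calP$ term vanishes). The morphism $\Delta_\calP$ is a morphism of \emph{complete dg} ns operads, hence must commute with differentials: on $\nu\in\calP$ (where the source differential is zero) this gives $\dd^{\mu_1^\alpha}\big(\Delta_\calP(\nu)\big)=0$, i.e. $\ad_{\mu_1^\alpha}\big(\Delta_\calP(\nu)\big)=0$ in $\Tw\calP$.

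First I would use relation~\eqref{Rel1}, $\varepsilon(\calP)\circ\Delta_\calP=\id_\calP$, together with the fact that $\Delta_\calP$ is a morphism of multiplicative ns operads, to pin down the leading term of $\Delta_\calP(\nu)$. Writing $\Delta_\calP(\nu)=\nu+(\text{terms containing at least one }\alpha)$—which is exactly the content that relation~\eqref{Rel1} expresses, since $\varepsilon(\calP)$ kills $\alpha$—I would then expand $\ad_{\mu_1^\alpha}\big(\Delta_\calP(\nu)\big)=0$ and isolate the component that is linear in $\alpha$ and involves $\nu$ without any further $\alpha$-insertions inside $\nu$ itself. The operator $\mu_1^\alpha=\sum_{k\ge 2,\,1\le i\le k}(-1)^{k-i}\mu_k(\alpha^{i-1},-,\alpha^{k-i})$ begins, in lowest arity, with $k=2$, giving the term $\mu_2(\alpha,-)-\mu_2(-,\alpha)$; the bracket $\ad_{\mu_1^\alpha}(\nu)=\mu_1^\alpha\star\nu-(-1)^{|\nu|}\nu\star\mu_1^\alpha$ in the part with exactly one $\alpha$ and no $\alpha$ plugged into $\nu$ is precisely $\ad_{\mu_2(\alpha,-)-\mu_2(-,\alpha)}(\nu)$. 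Matching the $\alpha$-linear, $\nu$-clean component of $\dd^{\mu_1^\alpha}\Delta_\calP(\nu)=0$ against this term forces $\ad_{\mu_2(\alpha,-)-\mu_2(-,\alpha)}(\nu)=0$, which is the asserted identity.

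For the converse, under the hypothesis that the multiplicative structure factors as $\Ai\twoheadrightarrow\As\to\calP$, I would construct the desired $\Tw$-coalgebra structure map explicitly. The factorization through $\As$ means all higher operations $\mu_n$ for $n\ge 3$ act trivially in $\calP$, so the twisting formula of Proposition~\ref{prop:TwCurvedGauge} truncates drastically: for $\nu\in\calP$ the only surviving perturbation terms come from inserting $\alpha$'s via $\mu_2$. I would then define $\Delta_\calP(\nu)$ by the natural twisting formula, namely $\nu$ plus the $\mu_2$-generated $\alpha$-insertion terms, and verify that the hypothesis $\ad_{\mu_2(\alpha,-)-\mu_2(-,\alpha)}(\nu)=0$ is exactly what is needed to make $\Delta_\calP$ commute with $\dd^{\mu_1^\alpha}$—the vanishing of the bracket cancels the obstruction to $\Delta_\calP$ being a chain map. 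Relations~\eqref{Rel1} and~\eqref{Rel2} then follow formally: \eqref{Rel1} from $\varepsilon$ killing $\alpha$, and \eqref{Rel2} from the additivity $\mu_1^{\alpha+\beta}=\mu_1^\alpha+\widetilde{\mu}_1^{\,\beta}$ established in Lemma~\ref{Lemma=TwTw} together with Corollary~\ref{cor:ComStr}.

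The main obstacle I anticipate is the bookkeeping in the forward direction: carefully separating, in the expansion of $\ad_{\mu_1^\alpha}\big(\Delta_\calP(\nu)\big)$, the specific component that is genuinely linear in $\alpha$ and does not absorb $\alpha$ into the internal structure of $\nu$ or into the higher $\mu_k$-corrections that $\Delta_\calP(\nu)$ itself may contain. One must argue that no cancellation can occur between this component and the contributions coming from the $\alpha$-dependent tail of $\Delta_\calP(\nu)$, since those tail terms carry a strictly higher $\alpha$-count and therefore live in a different graded piece under the weight grading $w(\alpha)=1$ used throughout. Invoking this weight grading to guarantee that the $\alpha$-linear part is detected cleanly is, I expect, the delicate technical point; the rest is a direct, if lengthy, computation that I would not carry out in full here.
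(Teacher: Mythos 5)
Your forward direction is essentially identical to the paper's proof: relation~\eqref{Rel1} forces $\Delta_\calP(\nu)=\nu+(\text{terms containing at least one }\alpha)$, the chain-map condition together with $d_\calP=0$ gives $\dd^{\mu_1^\alpha}\big(\Delta_\calP(\nu)\big)=0$, and the component with exactly one $\alpha$, which is precisely $\ad_{\mu_2(\alpha,-)-\mu_2(-,\alpha)}(\nu)$, must vanish on its own because every other contribution (both from the higher terms of $\mu_1^\alpha$ applied to $\nu$ and from the differential of the $\alpha$-tail of $\Delta_\calP(\nu)$) carries at least two $\alpha$'s. The ``delicate technical point'' you anticipate is exactly this $\alpha$-count separation, and it is all the paper invokes; there is nothing more to it.

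One correction on the converse. The paper's structure map is the \emph{bare} canonical inclusion $\calP\hookrightarrow\Tw\calP$, with no correction terms whatsoever. Your description of $\Delta_\calP(\nu)$ as ``$\nu$ plus the $\mu_2$-generated $\alpha$-insertion terms'' is off: in the twisting formula of Proposition~\ref{prop:TwCurvedGauge} every perturbation term involves an operation of strictly larger arity, i.e.\ some $\mu_k$ with $k\ge 3$, so under the factorization hypothesis \emph{all} perturbation terms vanish and the ``natural twisting formula'' is the identity --- in particular $\mu_2^\alpha=\mu_2$ in $\Tw\calP$. This is not merely cosmetic: since a $\Tw$-coalgebra structure map must be a morphism of multiplicative ns operads, its composite with $\Ai\to\calP$ has to send $\mu_2$ to $\mu_2^\alpha=\mu_2$, so adding any nonzero $\alpha$-corrections on the image of $\As$ would break multiplicativity. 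With the plain inclusion as candidate, the hypothesis $\ad_{\mu_2(\alpha,-)-\mu_2(-,\alpha)}(\nu)=0$ is literally the chain-map condition, because $\dd^{\mu_1^\alpha}=\ad_{\mu_2(\alpha,-)-\mu_2(-,\alpha)}$ on $\calP$ when $\mu_3,\mu_4,\dots$ vanish; relations~\eqref{Rel1} and \eqref{Rel2} are then immediate, as you correctly indicate. Had you carried out your proposed construction, you would have discovered the correction terms are forced to be zero, so your argument repairs itself, but as written it suggests verifying a cancellation that never needs to happen.
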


\begin{proof}
If the operad $\calP$ is $\Tw$-stable, it  admits a morphism of multiplicative ns operads 
$\Delta_\calP: \calP \to \Tw\calP$ satisfying the commutative diagrams \eqref{Rel1} and \eqref{Rel2}. 
So the image of any element $\nu\in \calP(n)$ has the form 
\[\Delta_\calP(\nu)=\nu+\sum_{k\ge 1} \omega_k\ ,\]
where $\omega_k$ is a finite sum of elements of $\calP(n+k)$ composed with $k$ elements $\alpha$. The compatibility with respect to the differentials shows that 
\[
\dd^{\mu_1^\alpha}\big(\Delta_\calP(\nu)\big)=
\ad_{\mu_2(\alpha, -)-\mu_2(-, \alpha)}(\nu)+\sum_{k\ge 2} \widetilde{\omega}_k=0\ , 
\]
where $\widetilde{\omega}_k$ is a finite sum of elements of $\calP(n+k)$ composed with $k$ elements $\alpha$. Therefore, $\ad_{\mu_2(\alpha, -)-\mu_2(-, \alpha)}(\nu)$ vanishes since it involves only one element $\alpha$.

In the other way round, the multiplicative structure of $\calP$ factors through the canonical resolution 
$\Ai \twoheadrightarrow \As \to \calP$ if and only if the elements $\mu_3, \mu_4, \ldots$ vanish in $\calP$. In this case, the twisted differential is equal to $\dd^{\mu_1^\alpha}\allowbreak=\allowbreak\ad_{\mu_2(\alpha, -)-\mu_2(-, \alpha)}$.
The condition $\ad_{\mu_2(\alpha, -)-\mu_2(-, \alpha)}(\nu)=0$,  for all $\nu\in \calP$, is equivalent to the fact  that the canonical morphism of ns operads $\calP \hookrightarrow \Tw\calP$ is a chain map. The commutative diagrams \eqref{Rel1} and \eqref{Rel2} are then straightforward to check. 
\end{proof}

In Chapter \ref{sec:Computations}, we shall consider both examples of operads that are $\Tw$-stable and examples of operads that are not.

\section{Action of the deformation complex}
Let $\calP$ be a complete dg ns operad. We consider the  total space 
$$\hom\left(\As^{\ac}, \calP\right)\coloneqq\prod_{n\ge 1} \hom\left(\As^{\ac}(n), \calP(n)\right)\cong \prod_{n\ge 1} s^{1-n}\calP(n)\ , $$ 
where we identify any map $\rho(n) : \As^{\ac}(n)\cong \End_{\k s}(n)^* \to \calP(n)$ with its images $\rho_n \coloneqq \rho\left(\nu_n\right)$. 
Let us recall from Section~\ref{subsec:CompConvAlg} that this space forms a  complete left-unital dg pre-Lie algebra and thus a complete dg Lie algebra by anti-symmetrization. \\

Any  element $\rho=(\rho_1, \rho_2, \ldots)\in \hom\left(\As^{\ac}, \calP\right)$ induces a derivation $\D_\rho$ of the complete ns operad $\calP\hat{\vee} \{\alpha\}$ by the following action on generators: 
\[
\D_\rho\colon
\begin{cases}
\alpha \mapsto -\sum_{n\ge 1 } \rho_n\big(\alpha^n\big)\ ,\\
\nu \mapsto 0, \quad \text{for} \ \nu \in \calP \ .
\end{cases}
\]
We denote by $\Der\big(\calP\hat{\vee} \alpha \big)$ the set of operadic derivations and,  
by a slight abuse of notation, we still denote by $d_\calP$ the differential on $\calP\hat{\vee} \alpha$ induced by that of~$\calP$.

\begin{lemma}\label{lem:MorphdgLie}
The assignment 
\begin{eqnarray*}
\left(\hom\left(\As^{\ac}, \calP\right), \partial, [\; , \,] \right)&\to& 
\left(\Der\big(\calP\hat{\vee} \alpha\big), [d_\calP, -], [\; ,\,]\right)\\
\rho &\mapsto& \D_\rho
\end{eqnarray*}
is a morphism of dg Lie algebras.
\end{lemma}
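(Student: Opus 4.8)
The plan is to verify directly that the assignment $\rho \mapsto \D_\rho$ is a morphism of dg Lie algebras, which means checking three things: that it is linear (immediate from the formula), that it intertwines the differentials, and that it is compatible with the Lie brackets. The key observation throughout is that a derivation of the free-type complete operad $\calP \hat{\vee} \alpha$ is entirely determined by its values on the generators $\alpha$ and $\nu \in \calP$, so all identities between derivations may be checked on generators alone. Since $\D_\rho$ annihilates every $\nu \in \calP$ by definition, the only generator on which anything nontrivial happens is $\alpha$, and this is what makes the computation tractable.

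First I would unwind the bracket structures. On the source side, the bracket is the anti-symmetrization of the convolution pre-Lie product $\star$ described in Section~\ref{subsec:CompConvAlg}, and the differential $\partial$ is the one induced on $\hom(\As^{\ac}, \calP)$; note that because $\As^{\ac}$ is coaugmented with the relevant internal element $\delta$, the differential $\partial$ will have a part coming from $d_\calP$ and a part coming from the internal differential of the cooperad (equivalently, from the brace with $\mu_1$-type terms). On the target side, the bracket $[\,,]$ on $\Der(\calP\hat{\vee}\alpha)$ is the commutator of derivations, and the differential is $[d_\calP, -]$. The compatibility with brackets then reduces to the identity $\D_{[\rho,\sigma]} = [\D_\rho, \D_\sigma]$, which I would check by evaluating both sides on $\alpha$: the left side is $-\sum_n [\rho,\sigma]_n(\alpha^n)$, while the right side $[\D_\rho,\D_\sigma](\alpha) = \D_\rho(\D_\sigma(\alpha)) - \D_\sigma(\D_\rho(\alpha))$ expands, using that each $\D$ acts only by substituting into the $\alpha$-slots, into a sum that matches the anti-symmetrized convolution product precisely because the pre-Lie product $f \star g = \sum_i f \circ_i g$ corresponds exactly to inserting $g(\alpha^{\bullet})$ into the various inputs of $f$ that are filled by $\alpha$. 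The sign bookkeeping coming from the Koszul convention and the desuspension $\As^{\ac} \cong \End_{\k s}^*$ is where care is needed.

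For the compatibility with differentials, I would check $\D_{\partial \rho} = [d_\calP, \D_\rho] = d_\calP \circ \D_\rho - (-1)^{|\rho|} \D_\rho \circ d_\calP$ again by evaluation on generators. On $\nu \in \calP$ both sides vanish (the right side because $\D_\rho(\nu)=0$ and $\D_\rho(d_\calP \nu) = 0$). On $\alpha$, the term $\D_\rho(d_\calP \alpha)$ must be interpreted via the differential $\dd \alpha = -\sum_{n\ge 2}\mu_n(\alpha^n)$ of $\MC\calP$, and here the interplay between $d_\calP$ and the $\mu_n$-substitution produces exactly the internal-differential part of $\partial \rho$. This is the analogue, at the level of the operad $\calP \hat{\vee}\alpha$, of the Maurer--Cartan calculus already established, and I would expect to invoke Proposition~\ref{prop:MCOmegaC} and the surrounding framework to identify the two contributions.

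The main obstacle will be the sign computation, not the conceptual structure: matching the signs in the anti-symmetrized convolution bracket on $\hom(\As^{\ac},\calP)$ with the signs produced by the commutator of derivations acting on the $\alpha$-slots, all under the degree shifts $s^{1-n}$ coming from $\As^{\ac}(n) \cong \End_{\k s}(n)^*$. The cleanest route, which I would take to avoid a brute-force sign chase, is to observe that everything is natural in $\calP$ and to reduce to the universal case $\calP = \MC\calP$ or even to transport the identity from the already-proven algebra-level statements: the formulas for $\D_\rho$ on $\alpha$ are literally the generating-series formulas for the twisted operations and the Maurer--Cartan curvature, so the bracket and differential identities are the operadic shadows of the pre-Lie identities that hold in any complete convolution algebra, and hence hold here by the free/universal property. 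This lets me assert the two compatibilities by evaluating in a free operad and citing the corresponding pre-Lie computations rather than recomputing them.
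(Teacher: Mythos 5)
Your overall skeleton is the right one and matches the paper's: a derivation of $\calP\hat{\vee}\alpha$ is determined by its values on generators, $\D_\rho$ kills $\calP$, so everything reduces to a computation on $\alpha$, where insertion into $\alpha$-slots reproduces the convolution product $\star$. For the bracket this is exactly what the paper does, except that it proves the finer product-level identity $\D_{\rho\star\xi}=\D_\rho\circ^{\mathrm{op}}\D_\xi$ (with $\D\circ^{\mathrm{op}}\D'=-(-1)^{|\D||\D'|}\D'\circ\D$) and then skew-symmetrizes; your direct check of $\D_{[\rho,\xi]}=[\D_\rho,\D_\xi]$ is an equivalent variant. The genuine gap is in your identification of the two differentials, which derails the differential-compatibility step. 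This lemma is the \emph{untwisted} statement: the cooperad $\As^{\ac}$ has zero internal differential, so $\partial\rho=d_\calP\circ\rho$ with no extra ``$\mu_1$-type'' part; and $d_\calP$ on $\calP\hat{\vee}\alpha$ denotes, by the paper's explicitly stated convention, the derivation induced by the differential of $\calP$ alone, so that $d_\calP(\alpha)=0$ --- it is \emph{not} the differential $\dd$ of $\MC\calP$, and the multiplicative structure plays no role whatsoever in this lemma, which holds for an arbitrary complete dg ns operad. With the correct reading the check is immediate: $\D_{\partial\rho}(\alpha)=-\sum_n d_\calP(\rho_n)(\alpha^n)=d_\calP\left(\D_\rho(\alpha)\right)-(-1)^{|\rho|}\D_\rho\left(d_\calP(\alpha)\right)=[d_\calP,\D_\rho](\alpha)$, the middle term vanishing because $d_\calP(\alpha)=0$.

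Your planned cancellation --- ``the interplay between $d_\calP$ and the $\mu_n$-substitution produces exactly the internal-differential part of $\partial\rho$'' --- therefore matches two families of terms that are both absent from the statement. Taken literally (source differential with a $[\mu,-]$-part, target differential $[d_\calP,-]$ with the lemma's $d_\calP$), the identity you would be verifying is false: $\D_\rho(\dd\alpha)$ produces only the $\mu\star\rho$-type half of $\D_{[\mu,\rho]}(\alpha)$. Reinterpreted consistently, with $\dd=d_\calP+\D_\mu$ on the target and $\partial^\mu$ on the source, you would instead be proving Proposition~\ref{prop:DGAction} --- a true statement, but not the one posed, and one which the paper deliberately deduces \emph{from} the present lemma by twisting both dg Lie algebras by the Maurer--Cartan elements $\mu$ and $\D_\mu$. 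Finally, your closing shortcut is not substantiated: the identity $\D_{\rho\star\xi}=\D_\rho\circ^{\mathrm{op}}\D_\xi$ concerns operadic derivations of $\calP\hat{\vee}\alpha$ for \emph{arbitrary} elements $\rho,\xi$ of the convolution algebra, not Maurer--Cartan elements, so neither Proposition~\ref{prop:MCOmegaC} (which identifies Maurer--Cartan elements with algebra structures) nor the algebra-level twisting formulas specialize to it; the sign verification you propose to outsource is precisely the content of the lemma, and the direct computation is shorter than any such reduction.
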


\begin{proof}
Notice first that the Lie bracket on the right-hand side is given by the skew-symmetrization of the following binary product $\D\circ^{\mathrm{op}}\D'\coloneqq -(-1)^{|\D||\D'|} \allowbreak \D'\circ \D$ (which individually does not produce a derivation). Since the Lie bracket on the left-hand side is given by the skew-symmetrization of the pre-Lie product $\star$, we prove that the assignment $\rho\mapsto\D_\rho$ preserves these two products. Let us consider two elements $\rho, \xi \in \hom\left(\As^{\ac}, \calP\right)$. It is enough to check the relation 
$\D_{\rho\star\xi} = \D_\rho\circ^{\mathrm{op}} \D_\xi$ 
on the generators of $\calP\hat{\vee} \alpha$: this is trivial for $\nu \in \calP$ and for $\alpha$ this is given by 
\begin{align*}
\D_{\rho\star \xi}(\alpha)&=-
\sum_{n\ge 1 } (\rho\star \xi)_n\big(\alpha^n\big)=-\sum_{\substack{n\ge 1 \\ p+q+r=n}}  
(-1)^{p(q+1)+|\xi|(p+r)}
\rho_{p+1+r}\circ_{p+1} \xi_{q} \big(\alpha^n\big)=
\\&=
-(-1)^{|\rho||\xi|}\D_\xi\circ \D_\rho(\alpha) 
=
\D_\rho\circ^{\mathrm{op}}\D_\xi(\alpha)\ .
\end{align*}
We also check the commutativity of the differentials $\D_{\partial(\rho)}=[d_\calP, \D_\rho]$ on the generators of $\calP\hat{\vee} \alpha$: this is again trivial for $\nu \in \calP$ and for $\alpha$ this is given by 
\begin{align*}
\D_{\partial(\rho)}(\alpha)&=\D_{d_\calP \circ \rho}(\alpha)=-\sum_{n\ge 1} d_\calP(\rho_n)(\alpha^n)= d_\calP\left(
\D_\rho(\alpha)\right)-(-1)^{|\rho|}\D_\rho(d_\calP(\alpha))\\&=[d_\calP, \D_\rho](\alpha)\ .
\end{align*}
\end{proof}

A morphism $\Ai \to \calP$  of  complete dg ns operads is equivalent to a degree~$-1$ element 
$\mu
 \coloneqq(0, \mu_2, \mu_3, \ldots)$, notation which agrees with that of Section~\ref{subsec=TwMultiOp}, 
satisfying the Maurer--Cartan equation 
$$ \partial \mu+\mu \star \mu=0\ ,$$ 
 in this dg pre-Lie algebra. Therefore one can twist the associated dg Lie algebra  with this Maurer--Cartan element, that is consider the twisted differential 
 $$\partial^\mu\coloneqq\partial + \ad_\mu\ . $$
 (One cannot twist the dg pre-Lie algebra, unless $\mu$ satisfies Equation~\eqref{Eq:MCspecial}, which imposes unrealistic constraints on an algebra.)
 
\begin{definition}[Deformation complex of morphisms of complete dg ns operads \cite{MerkulovVallette09I, MerkulovVallette09II}]\index{deformation complex!morphisms of complete dg ns operads}
The \emph{deformation complex} of the morphism $ \Ai\to \calP$ of complete dg ns operads is the complete  twisted dg Lie algebra 
$$\Def\big(\Ai\to \calP\big)\coloneqq\left(\hom\left(\As^{\ac}, \calP\right), \partial^\mu, [\; , \,] \right)
\ . $$
\end{definition}

\begin{proposition}\label{prop:DGAction}
The assignment 
\begin{eqnarray*}
\left(\hom\left(\As^{\ac}, \calP\right), \partial^\mu, [\; , \,] \right)&\to&
\left(\Der\big(\calP\hat{\vee} \alpha\big), [d_\calP+\D_\mu, -], [\; ,\,]
\right)\\
\rho &\mapsto& \D_\rho
\end{eqnarray*}
is a morphism of dg Lie algebras.
 In plain words, this 
defines a dg Lie action by derivation of the deformation complex $\Def\big(\Ai\to \calP\big)$ on the Maurer--Cartan operad 
$\MC\calP$. 
\end{proposition}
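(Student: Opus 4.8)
The plan is to reduce this statement to the untwisted case already established in Lemma~\ref{lem:MorphdgLie}, using the compatibility of twisting with morphisms of dg Lie algebras. First I would recall that by Lemma~\ref{lem:MorphdgLie} the assignment $\rho\mapsto \D_\rho$ is a morphism of dg Lie algebras from $\left(\hom\left(\As^{\ac},\calP\right),\partial,[\;,\,]\right)$ to $\left(\Der\big(\calP\hat\vee\alpha\big),[d_\calP,-],[\;,\,]\right)$. Since this is a morphism of dg Lie algebras, it automatically preserves Maurer--Cartan elements: the Maurer--Cartan element $\mu=(0,\mu_2,\mu_3,\ldots)$ of the source, encoding the multiplicative structure $\Ai\to\calP$, is sent to the Maurer--Cartan element $\D_\mu$ of the target. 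The key observation is then that a morphism of dg Lie algebras sending a Maurer--Cartan element $\mu$ to a Maurer--Cartan element $\D_\mu$ induces a morphism between the corresponding \emph{twisted} dg Lie algebras. This is precisely the content of Proposition~\ref{prop:MorphMCOperadic} in its dg Lie algebra incarnation, or equivalently follows directly from the definition of the twisted differential.

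Concretely, the second step is to verify that $\rho\mapsto\D_\rho$ intertwines the twisted differentials. On the source, the twisted differential is $\partial^\mu=\partial+\ad_\mu$; on the target it is $[d_\calP+\D_\mu,-]$. Since the bracket is preserved by Lemma~\ref{lem:MorphdgLie}, we have $\D_{\ad_\mu(\rho)}=\D_{[\mu,\rho]}=[\D_\mu,\D_\rho]$, and since the underlying differentials are already intertwined, $\D_{\partial(\rho)}=[d_\calP,\D_\rho]$. Adding these two identities gives
\[
\D_{\partial^\mu(\rho)}=\D_{\partial(\rho)}+\D_{\ad_\mu(\rho)}=[d_\calP,\D_\rho]+[\D_\mu,\D_\rho]=[d_\calP+\D_\mu,\D_\rho]\ ,
\]
which is exactly the required compatibility. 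The bracket is preserved because it is unchanged by twisting (twisting only alters the differential), so this part is immediate from Lemma~\ref{lem:MorphdgLie}.

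The only genuinely nontrivial point to confirm is that $d_\calP+\D_\mu$ is indeed the differential of $\MC\calP$, so that $[d_\calP+\D_\mu,-]$ is the correct twisted differential on $\Der\big(\calP\hat\vee\alpha\big)$; this matches the formula for $\dd$ in Proposition~\ref{prop:MCP}, since $\D_\mu(\alpha)=-\sum_{n\ge 2}\mu_n(\alpha,\ldots,\alpha)$ and $\D_\mu$ vanishes on $\calP$. The final sentence of the statement, that this gives a dg Lie action by derivations of $\Def\big(\Ai\to\calP\big)$ on $\MC\calP$, is then just a reformulation: a morphism of dg Lie algebras into $\Der\big(\calP\hat\vee\alpha\big)$ with its commutator bracket is the same datum as a dg Lie action by derivations on the operad $\MC\calP=\calP\hat\vee\alpha$. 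I expect the main obstacle, if any, to be purely bookkeeping: checking that the twisted differential $[d_\calP+\D_\mu,-]$ on derivations is compatible with signs and agrees with the differential of $\MC\calP$ as a derivation, but since all the structural identities were already established in the untwisted Lemma~\ref{lem:MorphdgLie}, no new computation on generators is required.
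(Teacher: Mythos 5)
Your proposal is correct and follows essentially the same route as the paper's proof: reduce to the untwisted morphism of dg Lie algebras of Lemma~\ref{lem:MorphdgLie}, observe that the Maurer--Cartan element $\mu$ is sent to the Maurer--Cartan element $\D_\mu$ (so that $d_\calP+\D_\mu$ squares to zero), and conclude that the map passes to the respective twisted dg Lie algebras. Your explicit verification $\D_{\partial^\mu(\rho)}=[d_\calP+\D_\mu,\D_\rho]$ merely spells out what the paper leaves implicit.
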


\begin{proof}
This is a direct corollary of the morphism of dg Lie algebras established in Lemma~\ref{lem:MorphdgLie}: the Maurer--Cartan element $\mu$ on the left-hand side is sent to the Maurer--Cartan $\D_\mu$ on the right-hand side. This proves that $d_\calP+\D_\mu$ is a square-zero derivation on the complete ns operad $\calP\hat{\vee} \alpha$. In the end, we get a morphism between the respectively twisted dg Lie algebras. 
\end{proof}

This result provides us with an alternative proof of Proposition~\ref{prop:MCP} defining the complete dg ns operad 
\[
\MC\calP\coloneqq\left(
\calP\hat{\vee} \alpha, \dd\coloneqq d_\calP+\D_\mu\right)\ .
\]

\begin{remark}
We remark that, due to the twisting, this does  not define a pre-Lie action, but just a Lie action. A conceptual reason for that is revealed in Section~\ref{sec:natural}.
\end{remark}

In order to reach the same kind of results for the complete dg ns operad $\Tw \calP$, whose differential contains one more term then that of  $\MC \calP$, 
we need to consider the following extension of  the deformation complex. 
Notice first that the dg Lie algebra action of Proposition~\ref{prop:DGAction} on the complete dg ns operad $\calP \hat{\vee} \alpha$ reduces to a dg Lie algebra action on the dg Lie algebra consisting of the arity $1$ elements 
$\big(\calP \hat{\vee} \alpha\big)(1)$. This gives rise to the following semi-direct product of dg Lie algebras: 
\begin{eqnarray*}
\left(\hom\left(\calA s^{\ac}, \calP\right), \partial, [\; , \,] \right) \ltimes 
\left(\big(\calP\hat{\vee} \alpha\big)(1), d_\calP, [\; ,\,]\right)\ .
\end{eqnarray*}

\begin{lemma}\label{lem:semidirectPreLie}
The semi-direct product dg Lie algebra above comes from the skew-sym\-me\-tri\-za\-tion of the 
semi-direct product of dg pre-Lie algebras
 \[
\left(\hom\left(\As^{\ac}, \calP\right), \partial, \star \right) \ltimes 
\left(\big(\calP\hat{\vee} \alpha\big)(1), d_\calP, \circ_1\right)
 \]
defined by the formula 
 \[
\left(\hom\left(\As^{\ac}, \calP \right)\oplus \left(\calP\hat{\vee} \alpha\right)(1), 
\partial+d_\calP, \bigstar
\right)\ ,
 \]
where 
\[
(\rho, \nu)\bigstar (\xi, \omega)\coloneqq 
\left(\rho\star \xi, \nu \circ_1 \omega - (-1)^{|\xi||\nu|}\D_\xi(\nu)
\right)\ .\]
\end{lemma}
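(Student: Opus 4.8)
The plan is to verify directly that $\bigstar$ is a right pre-Lie product for which $\partial+d_\calP$ is a derivation, and that its skew-symmetrisation reproduces the semi-direct product bracket of the statement. The three structural inputs are: $\star$ is pre-Lie on $\hom(\As^{\ac},\calP)$; the product $\circ_1$ is \emph{associative} on the arity $1$ part $\big(\calP\hat{\vee}\alpha\big)(1)$ (it is the composition of arity $1$ operations); and $\rho\mapsto\D_\rho$ is the morphism of Lemma~\ref{lem:MorphdgLie}, that is $\D_{\rho\star\xi}=\D_\rho\circ^{\mathrm{op}}\D_\xi=-(-1)^{|\rho||\xi|}\D_\xi\circ\D_\rho$ and $\D_{\partial\rho}=[d_\calP,\D_\rho]$. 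First I would record the skew-symmetrisation: for homogeneous $X=(\rho,\nu)$ and $Y=(\xi,\omega)$ one computes
\[
X\bigstar Y-(-1)^{|X||Y|}Y\bigstar X=\big([\rho,\xi],\ [\nu,\omega]+\D_\rho(\omega)-(-1)^{|\nu||\xi|}\D_\xi(\nu)\big),
\]
where the second bracket is the one coming from $\circ_1$; this is exactly the bracket of the semi-direct product $\hom(\As^{\ac},\calP)\ltimes\big(\calP\hat{\vee}\alpha\big)(1)$ for the action $\rho\cdot\omega=\D_\rho(\omega)$, so once $\bigstar$ is shown to be dg pre-Lie the identification of Lie algebras is immediate.

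Next I would check the derivation property of $\partial+d_\calP$ for $\bigstar$. On the first slot this is the derivation property of $\partial$ for $\star$; on the $\circ_1$-slot it follows from $d_\calP$ being a derivation of $\circ_1$ together with the identity $d_\calP(\D_\xi(\nu))=\D_{\partial\xi}(\nu)+(-1)^{|\xi|}\D_\xi(d_\calP\nu)$, which is precisely the relation $\D_{\partial\xi}=[d_\calP,\D_\xi]$ established in Lemma~\ref{lem:MorphdgLie}. Matching the two sides is then a short piece of sign bookkeeping on the term $-(-1)^{|\xi||\nu|}\D_\xi(\nu)$.

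The hard part will be the pre-Lie identity, namely that the associator $\mathrm{as}_\bigstar(X,Y,Z)=(X\bigstar Y)\bigstar Z-X\bigstar(Y\bigstar Z)$ satisfies $\mathrm{as}_\bigstar(X,Y,Z)=(-1)^{|Y||Z|}\mathrm{as}_\bigstar(X,Z,Y)$. On the first slot this is the pre-Lie identity for $\star$. On the $\circ_1$-slot I would expand both triple products for $X=(\rho,\nu)$, $Y=(\xi,\omega)$, $Z=(\zeta,\pi)$: the purely $\circ_1$ contribution $(\nu\circ_1\omega)\circ_1\pi-\nu\circ_1(\omega\circ_1\pi)$ vanishes by associativity of $\circ_1$; the two terms quadratic in $\D$ cancel because $\D_{\xi\star\zeta}(\nu)=-(-1)^{|\xi||\zeta|}\D_\zeta\D_\xi(\nu)$ and the accumulated signs agree; and the term $\nu\circ_1\D_\zeta(\omega)$ cancels once $\D_\zeta$ is expanded as a $\circ_1$-derivation in $\D_\zeta(\nu\circ_1\omega)$.

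What survives is
\[
\mathrm{as}_\bigstar(X,Y,Z)=\big(\mathrm{as}_\star(\rho,\xi,\zeta),\ -(-1)^{|\xi||\nu|}\D_\xi(\nu)\circ_1\pi-(-1)^{|\zeta|(|\nu|+|\xi|)}\D_\zeta(\nu)\circ_1\omega\big),
\]
and exchanging $Y$ and $Z$ and inserting the Koszul sign $(-1)^{|Y||Z|}$ returns the very same expression, which proves right-symmetry of the associator. I expect essentially all the difficulty to be concentrated in tracking these Koszul signs correctly; once the three cancellations above are in place the pre-Lie identity follows, and the skew-symmetrisation computed in the first step then identifies this dg pre-Lie algebra with the semi-direct product dg Lie algebra, as claimed.
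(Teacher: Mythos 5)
Your proposal is correct and follows essentially the same route as the paper's proof: you compute the associator of $\bigstar$, use the associativity of $\circ_1$, the morphism property $\D_{\xi\star\zeta}=\D_\xi\circ^{\mathrm{op}}\D_\zeta$ and the fact that each $\D_\zeta$ is an operadic derivation to reduce it to exactly the two surviving terms $-(-1)^{|\xi||\nu|}\D_\xi(\nu)\circ_1\pi-(-1)^{|\zeta|(|\nu|+|\omega|)}\D_\zeta(\nu)\circ_1\omega$, whose right-symmetry gives the pre-Lie identity, with the differential compatibility and the skew-symmetrisation handled exactly as in the paper via Lemma~\ref{lem:MorphdgLie}. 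The only difference is presentational: you spell out the cancellations that the paper delegates to the displayed associator formula and the reference to Manchon--Sa\"idi.
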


\begin{proof}
The proof of Lemma~\ref{lem:MorphdgLie} shows that the assignement $\rho\mapsto \D_\rho$ defines a right dg pre-Lie action of $\left(\hom\left(\As^{\ac}, \calP\right), \partial, \star \right)$ on $\left(\big(\calP\hat{\vee} \alpha\big)(1), d_\calP, \circ_1\right)$. It is however not always true that dg pre-Lie actions give rise to semi-direct product dg pre-Lie algebras under  formulas like that of $\bigstar$. It is the case here since the action is by derivation, see  \cite{ManchonSaidi08} for another occurrence of this construction. If we denote the associator of a binary product $\star$ by $\mathrm{Assoc_\star}$, we have  
\begin{multline*}
\mathrm{Assoc_\bigstar\big( (\rho, \nu), (\xi, \omega), (\theta, \lambda) \big)}=
\left(
\mathrm{Assoc}_\star( \rho, \xi, \theta), \mathrm{Assoc}_{\circ_1}( \nu, \omega, \lambda)\right.\\ \left.-(-1)^{|\xi||\nu|}\D_\xi(\nu)\circ_1 \lambda -
(-1)^{|\theta|(|\nu|+|\omega|)} \D_\theta(\nu)\circ_1 \omega
\right)\ , 
\end{multline*}
which is right symmetric, see also \cite{ManchonSaidi08}. The compatibility of the differentials follows from Lemma~\ref{lem:MorphdgLie}.

Finally, the skew-symmetrization of this semi-direct product pre-Lie algebra gives
\[
\left[(\rho, \nu),  (\xi, \omega)\right]=
\left(\left[\rho, \xi\right], \left[\nu,  \omega\right] +\D_\rho(\omega)- (-1)^{|\xi||\nu|}\D_\xi(\nu)
\right)\ ,\]
which is  the formula for the Lie bracket of the semi-direct product Lie algebra. 
\end{proof}

\begin{lemma}\label{lem:MorphdgLieBIS}
 The assignment 
\begin{eqnarray*}
 \left(\hom\left(\As^{\ac}, \calP\right), \partial, [\; , \,] \right) &\to& \left(\hom\left(\As^{\ac}, \calP\right), \partial, [\; , \,] \right)
\ltimes 
\left(\big(\calP\hat{\vee} \alpha\big)(1), d_\calP, [\; ,\,]\right)
\\
\rho &\mapsto& \left(\rho, \rho_1^\alpha\right)\ ,
\end{eqnarray*}
with $\rho_1^\alpha\coloneqq
\displaystyle  \sum_{\substack{n\ge 1 \\ 1\leq i \leq n}}(-1)^{n-i} \, \rho_n(\alpha^{i-1}, -, \alpha^{n-i})$, defines a morphism of dg Lie algebras.
\end{lemma}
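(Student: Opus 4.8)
The plan is to upgrade the assignment $\phi\colon \rho\mapsto(\rho,\rho_1^\alpha)$ to a morphism of dg \emph{pre-Lie} algebras into the semi-direct product pre-Lie algebra of Lemma~\ref{lem:semidirectPreLie}; since skew-symmetrisation is functorial, this immediately yields the desired morphism of dg Lie algebras. Writing $\bigstar$ for the semi-direct product pre-Lie product, it suffices to check the two identities
\begin{equation*}
(\partial\rho)_1^\alpha=d_\calP\big(\rho_1^\alpha\big)
\qquad\text{and}\qquad
(\rho\star\xi)_1^\alpha=\rho_1^\alpha\circ_1\xi_1^\alpha-(-1)^{|\xi||\rho|}\D_\xi\big(\rho_1^\alpha\big),
\tag{$\ast$}
\end{equation*}
because $\phi(\rho)\bigstar\phi(\xi)=\big(\rho\star\xi,\ \rho_1^\alpha\circ_1\xi_1^\alpha-(-1)^{|\xi||\rho_1^\alpha|}\D_\xi(\rho_1^\alpha)\big)$ and $|\rho_1^\alpha|=|\rho|$, the latter because the $n-1$ insertions of the degree $-1$ element $\alpha$ into $\rho_n$ exactly compensate the desuspension in $\hom(\As^{\ac},\calP)\cong\prod_n s^{1-n}\calP(n)$.

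For the differential identity I would use that $\partial$ on $\hom(\As^{\ac},\calP)$ is post-composition with $d_\calP$ (as in the proof of Lemma~\ref{lem:MorphdgLie}), so $(\partial\rho)_n=d_\calP(\rho_n)$, together with $d_\calP(\alpha)=0$: since $d_\calP$ is a derivation of $\calP\hat{\vee}\alpha$ annihilating $\alpha$, applying it to $\rho_1^\alpha=\sum_{n,i}(-1)^{n-i}\rho_n(\alpha^{i-1},-,\alpha^{n-i})$ only hits the $\rho_n$ factor and reproduces $(\partial\rho)_1^\alpha$. This step is routine.

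The heart of the argument, and the step I expect to be the main obstacle, is the pre-Lie identity $(\ast)$, which is a sign-sensitive combinatorial bookkeeping. Expanding $(\rho\star\xi)_1^\alpha$ via the explicit infinitesimal decomposition of $\As^{\ac}$ recorded in the proof of Lemma~\ref{lem:MorphdgLie}, namely $(\rho\star\xi)_N=\sum_{p+q+r=N}(-1)^{p(q+1)+|\xi|(p+r)}\rho_{p+1+r}\circ_{p+1}\xi_q$, and then saturating all but one input with copies of $\alpha$, I would classify each resulting term by the location of the single free input slot $-$. When that slot lies inside the $\xi$-factor, the $\xi$-factor contributes $\xi_1^\alpha$ sitting in one slot of an otherwise $\alpha$-saturated $\rho$, which reassembles precisely as $\rho_1^\alpha\circ_1\xi_1^\alpha$. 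When the free slot lies inside the $\rho$-factor, the $\xi$-factor is entirely saturated by $\alpha$'s and becomes the arity $0$ element $\xi_q(\alpha^q)$; summing these terms is exactly $-(-1)^{|\xi||\rho|}\D_\xi(\rho_1^\alpha)$, since $\D_\xi$ replaces one $\alpha$ in $\rho_1^\alpha$ by $-\sum_q\xi_q(\alpha^q)$ as a derivation.

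Conceptually, $(\ast)$ expresses that $\rho\mapsto\rho_1^\alpha$ is the arity $1$ component of the twisting of $\rho$ by the operadic Maurer--Cartan element $\alpha$, and that this twisting is compatible with the pre-Lie product; this is the operadic shadow of Theorem~\ref{thm:GaugeActionBis}. The only genuine work is to confirm that the Koszul signs $(-1)^{p(q+1)+|\xi|(p+r)}$ coming from $\star$, the signs $(-1)^{n-i}$ in the definition of $(-)_1^\alpha$, and the derivation signs produced by $\D_\xi$ combine into the single prefactor $(-1)^{|\xi||\rho|}$; I would organise this by fixing the global tree shape and reindexing the $\alpha$-insertions, tracking signs slot by slot. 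Finally, granting $(\ast)$ and the differential identity, the morphism of dg Lie algebras follows by antisymmetrising $(\ast)$: the computation $\phi([\rho,\xi])=[\phi(\rho),\phi(\xi)]$ then reproduces exactly the semi-direct product bracket $\big([\rho_1^\alpha,\xi_1^\alpha]+\D_\rho(\xi_1^\alpha)-(-1)^{|\xi||\rho|}\D_\xi(\rho_1^\alpha)\big)$ of Lemma~\ref{lem:semidirectPreLie}.
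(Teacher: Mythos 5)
Your proposal follows essentially the same route as the paper's proof: reduce to a morphism of dg pre-Lie algebras via Lemma~\ref{lem:semidirectPreLie}, establish the key identity $(\rho\star\xi)_1^\alpha=\rho_1^\alpha\circ_1\xi_1^\alpha-(-1)^{|\rho||\xi|}\D_\xi\big(\rho_1^\alpha\big)$ by splitting the expansion of $(\rho\star\xi)_1^\alpha$ according to whether the free input slot sits in the $\xi$-factor (giving $\rho_1^\alpha\circ_1\xi_1^\alpha$) or in the $\rho$-factor on either side (giving the $\D_\xi$ term), and finish with the straightforward relation $(d_\calP(\rho))_1^\alpha=d_\calP\big(\rho_1^\alpha\big)$. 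The only difference is that you leave the slot-by-slot Koszul sign verification as a stated plan rather than carrying it out, which is precisely the bookkeeping the paper's proof executes in detail.
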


\begin{proof}
By Lemma~\ref{lem:semidirectPreLie}, it is enough to prove that such an assignment defines a morphism of dg pre-Lie algebras. 
To this extend, we  first prove 
\begin{equation}\label{eqn:PreLieAction}
(\rho \star \xi)^\alpha_1=\rho_1^\alpha \circ_1 \xi_1^\alpha -(-1)^{|\rho||\xi|}\D_\xi \left(\rho_1^\alpha\right)\ ,
\end{equation}
for any $\rho, \xi\in \hom\left(\calA s^{\ac}, \calP\right)$. The left-hand side is equal to 
\[
(\rho \star \xi)^\alpha_1=\sum_{\substack{ p+q+r=n\geq 1 \\ 1\leq i \leq n}}
(-1)^{p(q-1)+|\xi|(p+r)+n-i}\rho_{p+1+r}\circ_{p+1}\xi_q\left(\alpha^{i-1}, -, \alpha^{n-i}\right)\ ,
\]
which splits into three components according to the value of $i$: 

(i) when $1\leq i\leq p$, 

(ii) when $p+1\leq i\leq p+q$, and 

(iii) when $p+q+1\leq i\leq n=p+q+r$. 

\noindent
The first term on the right-hand side of \eqref{eqn:PreLieAction} corresponds to the component (ii) and the second term on the right-hand side of \eqref{eqn:PreLieAction} corresponds to the sum of the  two components (i) and (iii). Explicitly, we first have 
\begin{align*}
\rho_1^\alpha \circ_1\xi_1^\alpha&=
\sum_{\substack{p+q+r=n\geq 1\\ 1\leq j \leq q}} (-1)^{r+q-j} \rho_{p+1+r}(\alpha^p, -, \alpha^r)\circ_1\xi_q\left(\alpha^{j-1}, -, \alpha^{q-j}\right)\\
&=\sum_{\substack{p+q+r=n\geq 1\\ 1\leq j \leq q}} 
(-1)^{p(q-1)+|\xi|(p+r)+r+q-j} 
\rho_{p+1+r}\circ_{p+1} \xi_q \left(\alpha^{p+j-1}, -, \alpha^{r+q-j}\right).
\end{align*}
This gives (ii) with $i=j+p$, since then $n-i=r+q-j$. Regarding the second term on the right-hand side of Equation~\eqref{eqn:PreLieAction}, since 
$$\rho_1^\alpha=\sum_{\substack{n\ge 2 \\ 1\leq i \leq n}}(-1)^{n-i} \, \rho_n(\alpha^{i-1}, -, \alpha^{n-i})$$ and since $\D_\xi$ vanishes on $\rho_n$, for $n\geq 1$, we get two terms: the first one when $\D_\xi$ applies to the $\alpha$'s on the left-hand side of the input slot and the second one when $\D_\xi$ applies to the $\alpha$'s on the right-hand side of the input slot. The former term gives component (iii) and the latter term gives component (i). The last point of the proof  amounts to check the various signs.  Under the notation 
$\rho_1^\alpha=
\sum_{\substack{p+1+r=n\geq 1\\ 1\leq j\leq r}} 
(-1)^{r-j}
\rho_{p+1+r}\allowbreak\left(\alpha^{p+1+j-1}, \allowbreak -, \allowbreak \alpha^{r-j}\right)$, the former term becomes 
\begin{align*}
&\sum_{\substack{p+1+r=n\geq 1\\ 1\leq j\leq r}} 
(-1)^{r-j+|\xi|r}
\rho_{p+1+r}\left(\alpha^{p},\xi_q\left(\alpha^q\right),\alpha^{j-1}, -, \alpha^{r-j}\right)\\
&=
\sum_{\substack{p+1+r=n\geq 1\\ 1\leq j\leq r}} 
(-1)^{p(q-1)+|\xi|(p+r)+r-j}
\rho_{p+1+r}\circ_{p+1}\xi_q\left(\alpha^{p+q+j-1}, -, \alpha^{r-j}\right)\ , 
\end{align*}
which  is equal to (iii) with $i=j+p+q$.
Under the notation 
$$\rho_1^\alpha \allowbreak = \allowbreak 
\sum_{\substack{p+1+r=n\geq 1\\ 1\leq j\leq p}}\allowbreak 
(-1)^{p+1+r-j}\allowbreak
\rho_{p+1+r}\allowbreak \left(\alpha^{j-1},-, \alpha^{p+1+r-j}\right),$$ 
the latter term becomes 
\begin{align*}
&\sum_{\substack{p+1+r=n\geq 1\\ 1\leq j\leq r}} 
(-1)^{p+1+r-j+|\xi|(r-1)}
\rho_{p+1+r} \left(\alpha^{j-1},-, \alpha^{p}, \xi_q\left(\alpha^q\right),\alpha^{r-j}\right)\\
&=
\sum_{\substack{p+1+r=n\geq 1\\ 1\leq j\leq r}} 
(-1)^{p(q-1)+|\xi|(p+r)+p+q+r-j}
\rho_{p+1+r}\circ_{p+1}\xi_q\left(\alpha^{j-1},-, \alpha^{p+q+r-j}\right)\ , 
\end{align*}
which  is equal to (i) with $i=j$.

The commutativity of the differentials comes from the relation 
$$\left(d_\calP (\rho)\right)_1^\alpha=d_\calP\left(\rho_1^\alpha\right),$$ which is straightforward. 
\end{proof}

\begin{lemma}\label{lem:MorphdgLieTER}
The assignment 
\begin{eqnarray*}
\left(\hom\left(\As^{\ac}, \calP\right), \partial, [\; , \,] \right)
\ltimes 
\left(\big(\calP\hat{\vee} \alpha\big)(1), d_\calP, [\; ,\,]\right)
&\to& 
\left(\Der\big(\calP\hat{\vee} \alpha\big), [d_\calP, -], [\; ,\,]\right)\\
(\rho, \nu) &\mapsto& \D_\rho+\ad_\nu
\end{eqnarray*}
defines a morphism of dg Lie algebras.
\end{lemma}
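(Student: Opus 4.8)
The plan is to verify that $(\rho,\nu)\mapsto \D_\rho+\ad_\nu$ respects both the Lie bracket and the differential on the semi-direct product, reducing everything to the bracket computations already performed in Lemmas~\ref{lem:MorphdgLie} and~\ref{lem:semidirectPreLie}. First I would record the bracket on the source: by Lemma~\ref{lem:semidirectPreLie}, the Lie bracket of the semi-direct product is
\[
\left[(\rho, \nu),  (\xi, \omega)\right]=
\left(\left[\rho, \xi\right], \left[\nu,  \omega\right] +\D_\rho(\omega)- (-1)^{|\xi||\nu|}\D_\xi(\nu)
\right)\ .
\]
Applying the candidate assignment to this and expanding $\D_{[\rho,\xi]}+\ad_{[\nu,\omega]+\D_\rho(\omega)-(-1)^{|\xi||\nu|}\D_\xi(\nu)}$, I want to match it against the bracket of the images $\bigl[\D_\rho+\ad_\nu,\, \D_\xi+\ad_\omega\bigr]$ taken in $\Der\big(\calP\hat{\vee} \alpha\big)$.

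The heart of the argument is to expand that target bracket into four pieces: $[\D_\rho,\D_\xi]$, $[\D_\rho,\ad_\omega]$, $[\ad_\nu,\D_\xi]$, and $[\ad_\nu,\ad_\omega]$. The first piece equals $\D_{[\rho,\xi]}$ because $\rho\mapsto\D_\rho$ is already a morphism of dg Lie algebras by Lemma~\ref{lem:MorphdgLie}. The last piece equals $\ad_{[\nu,\omega]}$ since the adjoint map is a morphism of Lie algebras into derivations (this is the standard identity $[\ad_\nu,\ad_\omega]=\ad_{[\nu,\omega]}$, valid in the dg Lie algebra $\big(\calP\hat{\vee} \alpha\big)(1)$). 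The two mixed terms are where the derivation action surfaces: I expect $[\D_\rho,\ad_\omega]=\ad_{\D_\rho(\omega)}$ and $[\ad_\nu,\D_\xi]=-(-1)^{|\xi||\nu|}\ad_{\D_\xi(\nu)}$, precisely because $\D_\rho$ acts on $\big(\calP\hat{\vee} \alpha\big)(1)$ by derivations and commutes with the inner derivation $\ad_\omega$ up to the value $\D_\rho(\omega)$. Summing the four pieces reproduces the image of the source bracket, establishing compatibility with the brackets. The signs must be tracked with care but are dictated entirely by the Koszul rule and by the signs already fixed in Lemma~\ref{lem:semidirectPreLie}.

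For the differential compatibility, I would check that the assignment intertwines $\partial+d_\calP$ on the source with $[d_\calP,-]$ on the target. Since $\D_{\partial\rho}=[d_\calP,\D_\rho]$ was shown in Lemma~\ref{lem:MorphdgLie}, and since $\ad_{d_\calP\nu}=[d_\calP,\ad_\nu]$ holds because $d_\calP$ is a derivation of the operad (hence commutes as a graded derivation with inner derivations, with the defect being exactly $\ad_{d_\calP\nu}$), one obtains $\D_{\partial\rho}+\ad_{d_\calP\nu}=[d_\calP,\D_\rho+\ad_\nu]$, which is the required relation. The identity $[d_\calP,\ad_\nu]=\ad_{d_\calP\nu}$ is the operadic analogue of the familiar formula that the Lie derivative of an inner derivation is inner, and it follows from the Leibniz rule for $d_\calP$ together with the definition of $\ad_\nu$.

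The main obstacle I anticipate is bookkeeping the two mixed bracket terms $[\D_\rho,\ad_\omega]$ and $[\ad_\nu,\D_\xi]$ with the correct Koszul signs, and confirming that $\D_\rho$ genuinely acts by a derivation on $\big(\calP\hat{\vee} \alpha\big)(1)$ so that the commutator with an inner derivation collapses to an inner derivation of the value $\D_\rho(\omega)$. This is not conceptually hard — it is the content of the action being \emph{by derivations}, already emphasised in Lemma~\ref{lem:semidirectPreLie} — but it is the place where a sign slip would invalidate the claimed morphism property, so I would verify it by evaluating both sides on the two types of generators, $\nu\in\calP$ (where everything is immediate) and $\alpha$ (where the action $\D_\rho(\alpha)=-\sum_{n\ge 1}\rho_n(\alpha^n)$ interacts with $\ad_\omega$).
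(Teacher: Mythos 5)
Your proposal is correct and follows essentially the same route as the paper's proof: decompose the target bracket into the four pieces $[\D_\rho,\D_\xi]$, $[\ad_\nu,\ad_\omega]$, and the two mixed terms, identify the first via Lemma~\ref{lem:MorphdgLie}, the second via the standard fact that the adjoint map is a Lie morphism, and the mixed terms via $[\D_\rho,\ad_\omega]=\ad_{\D_\rho(\omega)}$ (the semidirect-product/derivation identity), then handle the differential by combining $\D_{\partial\rho}=[d_\calP,\D_\rho]$ with $\ad_{d_\calP(\nu)}=[d_\calP,\ad_\nu]$, the latter following from $d_\calP$ being an operadic derivation. The sign $-(-1)^{|\xi||\nu|}$ you track on the term $\ad_{\D_\xi(\nu)}$ matches the bracket formula of Lemma~\ref{lem:semidirectPreLie} exactly as in the paper.
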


\begin{proof}
The compatibility with respect to the Lie brackets amounts to proving that 
\begin{align*}
\D_{[\rho,\xi]}+\ad_{[\nu, \omega]+\D_\rho(\omega)-(-1)^{|\xi||\nu|}\D_\xi(\nu)}
=
[\D_\rho, \D_\xi]+[\ad_\nu, \ad_\omega]+[\D_\rho, \ad_\omega]+[\ad_\nu, \D_\xi]\ .
\end{align*}
The first two terms are equal by Lemma~\ref{lem:MorphdgLie}. The second two terms are equal since the adjoint action is always a morphism of Lie algebras. The relation $[\D_\rho, \ad_\omega]=\ad_{\D_\rho(\omega)}$ is also
 a general
 property of semidirect products. 

After Lemma~\ref{lem:MorphdgLie}, in order to prove the compatibility with respect to the differentials, it remains to show that 
$\ad_{d_\calP(\nu)}=[d_\calP, \ad_\nu]$, which come from the fact that $d_\calP$ is an operadic derivation. 
\end{proof}

\begin{theorem}\label{thm:DGActionOnTw}
The assignment 
\begin{eqnarray*}
\left(\hom\left(\As^{\ac}, \calP\right), \partial^\mu, [\; , \,] \right)&\to&
\left(\Der\big(\calP\hat{\vee} \alpha\big), [d_\calP+\D_\mu+\ad_{\mu_1^\alpha}, -], [\; ,\,]
\right)\\
\rho &\mapsto& \D_\rho+\ad_{\rho_1^\alpha}
\end{eqnarray*}
is a morphism of dg Lie algebras.
 In plain words, this 
defines a dg Lie action by derivation of the deformation complex $\Def\big(\Ai\to \calP\big)$ on the twisted operad 
$\Tw\calP$. 
\end{theorem}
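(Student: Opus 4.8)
The plan is to assemble the desired morphism of dg Lie algebras from the three pieces already established, combining Lemma~\ref{lem:MorphdgLie}, Lemma~\ref{lem:MorphdgLieBIS}, and Lemma~\ref{lem:MorphdgLieTER} by composition, and then to twist the resulting morphism by the Maurer--Cartan element $\mu$. The key observation is that the assignment $\rho\mapsto \D_\rho+\ad_{\rho_1^\alpha}$ factors as the composite of the morphism $\rho\mapsto \left(\rho, \rho_1^\alpha\right)$ of Lemma~\ref{lem:MorphdgLieBIS} followed by the morphism $(\rho,\nu)\mapsto \D_\rho+\ad_\nu$ of Lemma~\ref{lem:MorphdgLieTER}. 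Since a composite of morphisms of dg Lie algebras is again a morphism of dg Lie algebras, this already gives the \emph{untwisted} statement, namely that
\[
\rho \mapsto \D_\rho+\ad_{\rho_1^\alpha}
\]
is a morphism from $\left(\hom\left(\As^{\ac}, \calP\right), \partial, [\; , \,] \right)$ to $\left(\Der\big(\calP\hat{\vee} \alpha\big), [d_\calP+\D_\mu+\ad_{\mu_1^\alpha},-], [\; ,\,]\right)$ after we identify the target differential correctly.

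First I would record that the morphism of Lemma~\ref{lem:MorphdgLieTER} sends the image $\left(\mu,\mu_1^\alpha\right)$ of $\mu$ under Lemma~\ref{lem:MorphdgLieBIS} to the derivation $\D_\mu+\ad_{\mu_1^\alpha}$. Since $\mu$ is a Maurer--Cartan element of the source dg Lie algebra (this is exactly what it means to have a morphism $\Ai\to\calP$, by the discussion preceding the definition of the deformation complex), and since a morphism of dg Lie algebras preserves Maurer--Cartan elements, the element $\D_\mu+\ad_{\mu_1^\alpha}$ is a Maurer--Cartan element of $\left(\Der\big(\calP\hat{\vee} \alpha\big), [d_\calP, -], [\; ,\,]\right)$. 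Equivalently, $d_\calP+\D_\mu+\ad_{\mu_1^\alpha}$ is a square-zero derivation of $\calP\hat{\vee}\alpha$, which is precisely the twisted differential $\dd^{\mu_1^\alpha}$ of the operad $\Tw\calP=\big(\MC\calP\big)^{\mu_1^\alpha}$; here I would invoke the explicit form of $\dd^{\mu_1^\alpha}$ computed just after the definition of the twisted multiplicative ns operad, together with the identity $\D_\mu(\alpha)=-\sum_{n\ge 2}\mu_n(\alpha,\ldots,\alpha)$ from Proposition~\ref{prop:MCP}.

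Second, I would invoke the standard compatibility of twisting with morphisms: given a morphism of dg Lie algebras $F\colon \g\to\mathfrak{h}$ and a Maurer--Cartan element $\varphi\in\MC(\g)$, the same underlying map $F$ is automatically a morphism of the twisted dg Lie algebras $F\colon \g^\varphi\to\mathfrak{h}^{F(\varphi)}$, because $F$ commutes with brackets and hence with the adjoint operators, so $F\circ\partial^\varphi=F\circ(\partial+\ad_\varphi)=\big(\partial_{\mathfrak{h}}+\ad_{F(\varphi)}\big)\circ F=\partial^{F(\varphi)}_{\mathfrak{h}}\circ F$. Applying this with $\g=\hom\left(\As^{\ac}, \calP\right)$, $\mathfrak{h}=\Der\big(\calP\hat{\vee}\alpha\big)$ (with its differential $[d_\calP,-]$), $\varphi=\mu$, and $F$ the untwisted morphism above, I obtain exactly that $\rho\mapsto \D_\rho+\ad_{\rho_1^\alpha}$ is a morphism from the twisted source $\left(\hom\left(\As^{\ac}, \calP\right), \partial^\mu, [\; , \,] \right)=\Def\big(\Ai\to\calP\big)$ to the twisted target $\left(\Der\big(\calP\hat{\vee} \alpha\big), [d_\calP+\D_\mu+\ad_{\mu_1^\alpha}, -], [\; ,\,]\right)$, since $F(\mu)=\D_\mu+\ad_{\mu_1^\alpha}$ and twisting a dg Lie algebra of derivations by a derivation $\D$ replaces the differential $[d_\calP,-]$ by $[d_\calP+\D,-]$. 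This is the claimed statement.

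The only genuinely substantive ingredient is the verification that the three intermediate maps really are morphisms of dg Lie algebras, but all of this is done in the preceding lemmas; the main obstacle in the present argument is purely bookkeeping, namely matching the twisted differential $\dd^{\mu_1^\alpha}$ on $\Tw\calP$ with the bracket $[d_\calP+\D_\mu+\ad_{\mu_1^\alpha},-]$ on derivations and confirming that the Maurer--Cartan element $\mu$ is indeed sent to $\D_\mu+\ad_{\mu_1^\alpha}$ under the composite. Once these identifications are in place, the result follows formally from the general principle that morphisms of dg Lie algebras commute with the twisting procedure, exactly as stated in the last sentence of the proof of Proposition~\ref{prop:DGAction}.
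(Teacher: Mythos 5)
Your proposal is correct and follows essentially the same route as the paper: the paper's proof also composes the morphisms of Lemma~\ref{lem:MorphdgLieBIS} and Lemma~\ref{lem:MorphdgLieTER} and then twists by the Maurer--Cartan element $\mu$, exactly as in Proposition~\ref{prop:DGAction}. Your write-up merely makes explicit the bookkeeping the paper leaves implicit, namely that $\mu\mapsto\D_\mu+\ad_{\mu_1^\alpha}$ and that twisting a morphism of dg Lie algebras by corresponding Maurer--Cartan elements again yields a morphism.
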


\begin{proof}
The arguments are the same as in the proof of Proposition~\ref{prop:DGAction}, using 
the composite of dg Lie algebra morphisms given respectively  in Lemma~\ref{lem:MorphdgLieBIS} and in Lemma~\ref{lem:MorphdgLieTER}, and twisting in the end by the Maurer--Cartan element~$\mu$.
\end{proof}

\begin{remark}\label{rem:}
Theorem~\ref{thm:DGActionOnTw} gives another way, actually the original one from \cite[Appendix~I]{Willwacher15}, to define the twisted complete dg ns operad 
\[
\Tw\calP\coloneqq\left(
\calP\hat{\vee} \alpha, \dd^{\mu_1^\alpha}\coloneqq d_\calP+\D_\mu+\ad_{\mu_1^\alpha}\right)\ .
\]
\end{remark}

The relationship between the deformation complex and the twisted operad is actually a bit more rich. 

\begin{proposition}\label{prop:DefTwP0}
Let $\Ai\to \calP$ be a multiplicative ns operad satisfying  $\calP(0)=0$. Up to a degree shift, the deformation complex is isomorphic to the chain complex equal to the arity $0$ component of the twisted operad: 
$$ 
\left(\left(\Tw\, \calP\right) (0), \dd^{\mu_1^\alpha}\right)\cong \left(s^{-1}\hom\left(\As^{\ac}, \calP\right), \partial^\mu\right)
\ .$$
\end{proposition}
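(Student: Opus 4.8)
The plan is to construct an explicit isomorphism of chain complexes and verify it respects the two differentials. First I would identify the underlying graded modules on both sides. By definition $\Tw\calP = (\calP\hat{\vee}\alpha, \dd^{\mu_1^\alpha})$, and its arity $0$ component $(\Tw\calP)(0)$ consists of series of operations from $\calP$ with copies of the arity $0$ generator $\alpha$ plugged into \emph{all} their inputs. Since $\calP(0)=0$, an element of $\calP(n)$ with $n$ copies of $\alpha$ inserted produces an arity $0$ element, and these span $(\Tw\calP)(0)\cong\prod_{n\ge 1}\calP(n)$ (one $\alpha$ per input slot, no free slots left). On the other side, $\hom(\As^{\ac},\calP)\cong\prod_{n\ge 1}s^{1-n}\calP(n)$ via $\rho\mapsto(\rho_n)_{n\ge 1}$ with $\rho_n=\rho(\nu_n)$. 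So up to the global degree shift $s^{-1}$, both graded modules are $\prod_{n\ge 1}\calP(n)$ with compatible internal degree shifts, and the linear isomorphism is the evident one sending $\rho=(\rho_n)$ to $\sum_{n\ge 1}\rho_n(\alpha^n)\in(\Tw\calP)(0)$.

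Next I would check that this linear isomorphism intertwines the differentials $\partial^\mu$ on the deformation complex and $\dd^{\mu_1^\alpha}$ restricted to arity $0$. Recall $\partial^\mu=\partial+\ad_\mu$ where $\mu=(0,\mu_2,\mu_3,\ldots)$ is the Maurer--Cartan element encoding the multiplicative structure, and $\ad_\mu(\rho)=\mu\star\rho-(-1)^{|\rho|}\rho\star\mu$ in the convolution pre-Lie algebra. On the operadic side, the twisted differential on $\alpha$ is $\dd^{\mu_1^\alpha}(\alpha)=\sum_{n\ge 2}(n-1)\mu_n(\alpha,\ldots,\alpha)$, and more generally the derivation $\dd^{\mu_1^\alpha}=d_\calP+\D_\mu+\ad_{\mu_1^\alpha}$ acts on the arity $0$ elements $\rho_n(\alpha^n)$ by the Leibniz rule, differentiating both the $\calP$-part (via $d_\calP$) and each inserted $\alpha$ (via $\dd^{\mu_1^\alpha}(\alpha)$). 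The task is to match these term by term. I expect the $\partial$ part to correspond cleanly to $d_\calP$ composed with $\rho$, and the $\ad_\mu$ part to correspond to the combined contribution of $\D_\mu$ and the insertion of the twisted differential of $\alpha$ into the various slots.

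The main obstacle will be the bookkeeping of signs and the combinatorial matching between the bracket $\ad_\mu(\rho)$ in the convolution pre-Lie algebra and the sum over slots produced when the operadic derivation differentiates each of the $n$ copies of $\alpha$ in $\rho_n(\alpha^n)$. Concretely, $\ad_\mu(\rho)$ involves $\sum_i \mu\circ_i\rho$ and $\sum_j\rho\circ_j\mu$, while differentiating $\rho_n(\alpha^n)$ via $\D_\mu$ replaces one $\alpha$ by $-\sum_{k\ge 2}\mu_k(\alpha^k)$ in each slot, and $\ad_{\mu_1^\alpha}$ contributes the remaining pre- and post-composition terms. The structure of these formulas is exactly the one already appearing in Lemma~\ref{Lem:Diffmu} and in the proof of Lemma~\ref{lem:MorphdgLieBIS}, so I would lean heavily on those computations rather than redo them. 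The cleanest route is probably to invoke Theorem~\ref{thm:DGActionOnTw}: the morphism $\rho\mapsto\D_\rho+\ad_{\rho_1^\alpha}$ is a morphism of dg Lie algebras from $\Def(\Ai\to\calP)$ to derivations of $\calP\hat{\vee}\alpha$ with the twisted differential, and evaluating the derivation $\D_\rho$ on $\alpha$ yields precisely $-\sum_n\rho_n(\alpha^n)$, which is (up to sign) our candidate image in arity $0$. I would therefore deduce the commutation with differentials from the already-established dg Lie morphism property, reducing the verification to evaluating the known derivation identity on the single generator $\alpha$ and reading off the arity $0$ component, which closes the argument.
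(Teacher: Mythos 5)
Your identification of the underlying graded modules and your candidate map agree with the start of the paper's proof (which uses $\rho \mapsto (-1)^{|\rho|}s\,\D_\rho(\alpha) = -\sum_{n\ge 1}(-1)^{|\rho|}\rho_n(\alpha^{\otimes n})$; the sign $(-1)^{|\rho|}$ you dropped is needed for the suspension to intertwine the differentials, but that is a normalisation issue). The genuine gap is your final step: the claim that the chain-map property follows from Theorem~\ref{thm:DGActionOnTw} by evaluating the derivation identity on the generator $\alpha$. The dg Lie morphism of that theorem is $\rho\mapsto\D_\rho+\ad_{\rho_1^\alpha}$, \emph{not} $\rho\mapsto\D_\rho$, and the correction term does not vanish on $\alpha$: since the Koszul signs of the insertions cancel the prefactors $(-1)^{n-i}$ (exactly as in the computation $\mu_1^\alpha(\alpha)=\sum_{n}n\,\mu_n(\alpha,\ldots,\alpha)$ of Section~\ref{subsec:TwAInftyOp}), one gets $\ad_{\rho_1^\alpha}(\alpha)=\rho_1^\alpha(\alpha)=\sum_{n\ge 1}n\,\rho_n(\alpha^{\otimes n})$, whence
\[
\left(\D_\rho+\ad_{\rho_1^\alpha}\right)(\alpha)=\sum_{n\ge 1}(n-1)\,\rho_n\big(\alpha^{\otimes n}\big)~.
\]
This is not your candidate map: the weights are wrong and the arity-one component $\rho_1$ is annihilated outright, so evaluation at $\alpha$ of the theorem's morphism is not even injective. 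Consequently, evaluating its differential-compatibility identity at $\alpha$ produces a relation for the wrong assignment, polluted by the extra terms $\ad_{(\partial^\mu\rho)_1^\alpha}(\alpha)$ and $\ad_{\rho_1^\alpha}\big(\dd^{\mu_1^\alpha}(\alpha)\big)$, and disentangling it requires a further identity that the theorem does not supply.

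What is actually needed, and what the paper does, is the following. Lemma~\ref{lem:MorphdgLie} (the untwisted statement, equivalently your Proposition~\ref{prop:DGAction} route) handles two of the three pieces of $\dd^{\mu_1^\alpha}=d_\calP+\D_\mu+\ad_{\mu_1^\alpha}$, namely $\D_{\partial(\rho)}(\alpha)=d_\calP\big(\D_\rho(\alpha)\big)$ and $\D_{[\mu,\rho]}(\alpha)=\D_\mu\big(\D_\rho(\alpha)\big)-(-1)^{|\rho|}\D_\rho\big(\D_\mu(\alpha)\big)$. The entire remaining content of the proposition is then the residual identity
\[
\ad_{\mu_1^\alpha}\big(\D_\rho(\alpha)\big)=-(-1)^{|\rho|}\D_\rho\big(\D_\mu(\alpha)\big)~,
\]
which the paper proves by showing both sides equal the explicit sum $-\sum(-1)^{|\rho|r}\mu_{p+1+r}\big(\alpha^p,\rho_q(\alpha^q),\alpha^r\big)$. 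Your appeals to Lemma~\ref{Lem:Diffmu} and Lemma~\ref{lem:MorphdgLieBIS} do not cover this either: the former concerns the elements $\mu_n^\alpha$ satisfying $\Ai$-relations, the latter the semi-direct product morphism $\rho\mapsto(\rho,\rho_1^\alpha)$; neither computes the action of $\ad_{\mu_1^\alpha}$ on arity-zero elements of the form $\D_\rho(\alpha)$. Until this residual identity is established by a direct computation, your argument has a hole exactly where the proof requires genuine work.
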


\begin{proof}
On the level of the  underlying spaces, these two chain complexes satisfy 
\begin{eqnarray*}
\left(\Tw\, \calP\right) (0)=
\prod_{n\ge 0} \calP(n)\otimes \alpha^{\otimes n}\cong 
\prod_{n\ge 0} s^{-n}\calP(n)
\cong
 s^{-1}\hom\left(\As^{\ac}, \calP\right)\bigoplus \calP(0)\ .
\end{eqnarray*}
When $\calP(0)=0$, one way to realise the  isomorphism $\hom\left(\As^{\ac}, \calP\right) \stackrel{\cong}{\to} s\left(\Tw\, \calP\right) (0)$ is given   by 
$$\rho \mapsto (-1)^{|\rho|}s \D_\rho(\alpha)=-\sum_{n\ge 1} (-1)^{|\rho|}\rho_n\big(\alpha^{\otimes n}\big)\ .$$ 
It remains to show that it commutes with the respective differentials, that is to prove the following relation:
\[\D_{\partial(\rho)}(\alpha)+\D_{[\mu, \rho]}(\alpha)=d_\calP\left( \D_\rho(\alpha) \right)+\D_\mu\left(\D_\rho(\alpha)\right)+\ad_{\mu_1^\alpha}\left( \D_\rho(\alpha)\right)\ .\]
We have already seen in Lemma~\ref{lem:MorphdgLie} that $\D_{\partial(\rho)}(\alpha)=d_\calP\left( \D_\rho(\alpha) \right)$ and that 
\[\D_{[\mu, \rho]}(\alpha)=\D_\mu\left(\D_\rho(\alpha)\right)-(-1)^{|\rho|}\D_\rho\left(\D_\mu(\alpha)\right)~.\] So it remains to show that $\ad_{\mu_1^\alpha}\left( \D_\rho(\alpha)\right)=\allowbreak-(-1)^{|\rho|}\allowbreak\D_\rho\left(\D_\mu(\alpha)\right)$, which comes from the fact that both are explicitly equal to 
\[-\sum_{\substack{p, r\geq 0\\ q\leq 1}} (-1)^{|\rho|r}\mu_{p+1+r}\left(\alpha^p, \rho_q\left(\alpha^q\right), \alpha^r\right)\ .\]
\end{proof}

\section{Twisting of symmetric operads}\label{sec:SymTw}

Section~\ref{subsec=TwMultiOp} deals with the twisting procedure for ns operads where we used in a crucial way the dg ns operad 
$\Ai$. The  entire same theory holds as well with the dg ns operad $\sAi\coloneqq {\End}_{\k s}\otimes \Ai$ encoding shifted $\Ai$-algebras; in this case, the signs are nearly all  trivial. In order to get the twisting procedure for (symmetric) operads, one would similarly start with the dg operad $\Li$ encoding homotopy Lie algebras or the dg operad $\sLi\coloneqq {\End}_{\k s}\otimes\Li$ encoding shifted homotopy Lie algebras. The various proofs are performed with similar computations, and thus are left to the reader. In this way, one gets the theory developed by T. Willwacher but with a presentation different from \cite{Willwacher15, DolgushevRogers12, DolgushevWillwacher15}. The present approach was first proposed by J. Chuang and A. Lazarev in \cite{MR3004818}.  \\

Let us now give a summary of key results in this case. We shall use the operad $\sLi$ of shifted homotopy Lie algebras, since it has the ``nicest'' homological degrees of generators: all generators of degree $-1$. The proofs are completely analogous to the corresponding proofs given in the previous three sections and are omitted. For the reader who wishes to apply the formulas of this section, we note, however, that shifted Lie brackets arising in applications usually have homological degree $1$, as they correspond to the circle action on the level of homology. This means that our formulas have all the correct parities of homological degrees (and therefore the correct signs), but the degrees themselves have to be adjusted: the relevant operad for many applications is the operad $\calS^{-1}\Li$ whose generator $\lambda_n$ is of degree $2n-3$ for each $n\ge 2$~. This will be used in most of examples treated in Chapters \ref{sec:Computations}  and \ref{sec:Applications}. 

\begin{proposition}
The complete dg operad encoding the data of a shifted homotopy Lie algebra together with a Maurer--Cartan element is 
\index{operad!$\MC\sLi$}
$$\MC\sLi\coloneqq\left(
\widehat\calT\big(\alpha, \lambda_2, \lambda_3, \ldots\big), \dd
\right) \ , $$
where the generator $\alpha$ has arity $0$ and degree $0$ and 
where the generator $\lambda_n$ has arity $n$, degree $-1$, and trivial $\Sy_n$-action, for $n\ge 2$, 
where the filtration on the space $M=\big(\k \alpha, 0, \k \lambda_2, \k \lambda_3, \dots\big)$ of generators is given by 
\[
\alpha \in \F_1 M(0), \ \F_2 M(0)=\{0\} \quad \text{and} \quad \lambda_n\in \F_0 M(n),\  \F_1 M(n)=\{0\}\ , \ \text{for} \ n\ge 2
\ , \]
 and where the differential is defined by 
\begin{eqnarray*} 
&&\dd \lambda_n\coloneqq
-\sum_{\substack{p+q=n+1\\ 2\leq p,q \leq n}}
\sum_{\sigma\in \mathrm{Sh}_{p,q}^{-1}}
 (\lambda_{p+1}\circ_{1} \lambda_q)^{\sigma}\ ,
\\
&&\dd \alpha \coloneqq-\sum_{n\ge 2}\frac{1}{n!}\lambda_n(\alpha, \ldots, \alpha) \ .
\end{eqnarray*}

\end{proposition}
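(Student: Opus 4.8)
The plan is to mirror, \emph{mutatis mutandis}, the proof of Proposition~\ref{prop:MCAi}, replacing the free associative structure with the free symmetric one and the cooperad $\As^{\ac}$ with $\sLi^{\ac}\cong\uCom^*$ (up to the appropriate shift). The statement asserts two things: first, that $\dd$ as defined on generators extends to a well-defined square-zero derivation of the complete free operad $\widehat{\calT}(\alpha,\lambda_2,\lambda_3,\ldots)$, so that $\MC\sLi$ is a genuine complete dg operad; and second, implicitly, that its algebras over a complete dg module encode exactly the data of a shifted homotopy Lie algebra together with a Maurer--Cartan element. I would treat these in turn.

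First I would establish the square-zero property. The cleanest route is to invoke Proposition~\ref{prop:DGAction} applied to the identity morphism $\sLi^{\ac}\to\sLi^{\ac}$, exactly as in the $\Ai$ case: the Maurer--Cartan element $\lambda=(0,\lambda_2,\lambda_3,\ldots)$ of the convolution pre-Lie algebra $\hom_\Sy(\sLi^{\ac},\sLi)$ induces a square-zero derivation $\dd=\D_\lambda$ on $\sLi\,\widehat{\vee}\,\alpha$. Since $\sLi$ is itself quasi-free on the generators $\lambda_n$ with the differential coming from the cobar construction, the combined object $\widehat{\calT}(\alpha,\lambda_2,\ldots)$ is free and the action of $\D_\lambda$ on $\alpha$ reproduces precisely $\dd\alpha=-\sum_{n\ge2}\tfrac{1}{n!}\lambda_n(\alpha,\ldots,\alpha)$, while the internal differential of $\sLi$ gives the stated formula for $\dd\lambda_n$. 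The factor $\tfrac{1}{n!}$ and the sum over inverse shuffles $\mathrm{Sh}^{-1}_{p,q}$ are the symmetric-operad shadows of the unordered insertions; I would check that these are exactly the structure constants of $\uCom^*$ under its decomposition coproduct, so that the symmetric Koszul signs $\mathrm{sgn}(\sigma)$ in the shifted case collapse to the displayed signs.

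Next I would verify the algebra-level interpretation. By Theorem~\ref{thm:CompleteAlg}, a complete $\MC\sLi$-algebra structure on $(A,\F,d)$ is a morphism of filtered dg operads $\MC\sLi\to\eend_A$, which by freeness is the assignment $\alpha\mapsto a$, $\lambda_n\mapsto\ell_n$ of an element $a\in\F_1 A_0$ and a sequence of symmetric degree $-1$ maps $\ell_n\colon A^{\odot n}\to A$. Compatibility with the differential on the generators $\lambda_n$ reproduces the defining relations of a complete shifted curved $\Li$-algebra with vanishing curvature $\theta=0$, i.e.\ an honest shifted homotopy Lie algebra, while compatibility on $\alpha$ yields the equation $\dd\alpha\mapsto0$, which unwinds to the Maurer--Cartan equation $\sum_{n\ge2}\tfrac{1}{n!}\ell_n(a,\ldots,a)=0$; this matches the condition $\ell_0^a=0$ from Section~\ref{subsec:TwLii}.

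The main obstacle I anticipate is bookkeeping the signs and symmetrisation factors correctly: passing from the ns formulas of Proposition~\ref{prop:MCAi} to the symmetric ones requires replacing ordered partial-composition sums by sums over inverse shuffles weighted by $\tfrac{1}{n!}$, and one must confirm that the square-zero identity $\dd^2=0$ holds with these factors rather than merely up to combinatorial discrepancies. I would handle this not by direct computation but by appealing to the fact that $\sLi^{\ac}$ is a genuine cooperad and that the pre-Lie Maurer--Cartan formalism of Chapter~\ref{sec:TopoDefTh} is insensitive to the underlying symmetric monoidal category (Proposition~\ref{prop:PreLieToLie}), so that $\dd^2=0$ is automatic from $\lambda\star\lambda$ being a Maurer--Cartan element. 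This reduces the sign verification to the already-established coherence of the convolution pre-Lie algebra, leaving only the routine check that the explicit formulas on generators are the images of $\D_\lambda$.
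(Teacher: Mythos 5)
Your proposal is correct and follows essentially the same approach as the paper, which omits the proof of this symmetric statement as being completely analogous to the nonsymmetric Proposition~\ref{prop:MCAi}, whose proof consists precisely of your two steps: the square-zero property via Proposition~\ref{prop:DGAction} (in its symmetric guise) applied to the identity, and the algebra-level unpacking via Theorem~\ref{thm:CompleteAlg} together with freeness of $\widehat\calT\big(\alpha, \lambda_2, \lambda_3, \ldots\big)$. The only slip is the identification $\sLi^{\ac}\cong\uCom^*$: the relevant Koszul dual cooperad is $\Com^*$, since the arity-zero part of $\uCom^*$ encodes \emph{curved} shifted $\Li$-algebras; this is harmless here because your Maurer--Cartan element $(0,\lambda_2,\lambda_3,\ldots)$ has vanishing arity-zero component, so every formula in your argument is unchanged.
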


The operad $\MC\sLi$ admits the following Maurer--Cartan element 
\[
\lambda_1^\alpha\coloneqq\sum_{n\ge 2} {\textstyle  \frac{1}{(n-1)!}}\lambda_n\left(\alpha^{n-1}, -\right)\ .
\]

\begin{definition}[Twisted $\sLi$-operad]
The twisted $\sLi$-operad is 
\index{operad!$\Tw\sLi$}
\[
\Tw\sLi\coloneqq \left(\MC\sLi\right)^{\lambda_1^\alpha}\cong 
\left(
\widehat\calT\big(\alpha, \lambda_2, \lambda_3, \ldots\big), \dd^{\lambda_1^\alpha}
\right)\ ,
\]
with 
$\dd^{\lambda_1^\alpha}(\alpha)=\sum_{n\ge 2} {\textstyle  \frac{n-1}{n!}}\lambda_n\left(\alpha^{n}\right)$.
\end{definition}

\begin{proposition}
The assignment $\lambda_n\mapsto \lambda_n^\alpha$, where 
\[
\lambda_n^\alpha\coloneqq\sum_{r\ge 0} {\textstyle  \frac{1}{r!}}\lambda_{n+r}\left(\alpha^{r}, -, \ldots, -\right)\ .
\]
 defines a morphism of complete dg  operads 
$$\sLi \to \Tw \sLi \ . $$
\end{proposition}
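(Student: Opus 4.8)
The plan is to establish this as a direct corollary of the operadic machinery already developed for nonsymmetric operads, invoking the principle stated at the opening of Section~\ref{sec:SymTw} that ``the various proofs are performed with similar computations.'' Concretely, the statement $\sLi \to \Tw\sLi$, $\lambda_n \mapsto \lambda_n^\alpha$, is the exact symmetric-operad analogue of Proposition~\ref{prop:MorhpAiMCAi} (the map $\Ai \to \Tw\Ai$, $\mu_n \mapsto \mu_n^\alpha$). First I would recall that $\Tw\sLi = (\MC\sLi)^{\lambda_1^\alpha}$ is a quasi-free complete dg operad on the generators $\alpha, \lambda_2, \lambda_3, \ldots$, so that defining a morphism out of the quasi-free operad $\sLi$ amounts to specifying the images of its generators $\lambda_n$ and checking compatibility with the differentials on these generators. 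Since $\lambda_n \mapsto \lambda_n^\alpha$ is prescribed and the underlying operad map on free operads is automatic, the only substantive point is the commutativity with the differentials.

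The key step is therefore to verify, on each generator $\lambda_n$, the relation
\[
\dd^{\lambda_1^\alpha}\big(\lambda_n^\alpha\big) = -\sum_{\substack{p+q=n+1\\ 2\leq p,q \leq n}}\sum_{\sigma\in \mathrm{Sh}_{p,q}^{-1}} \big(\lambda_{p+1}^\alpha\circ_{1} \lambda_q^\alpha\big)^{\sigma}\ ,
\]
which is the shifted-$\Li$ counterpart of the computation carried out in the proof of Proposition~\ref{prop:MorhpAiMCAi}. The engine behind this is the analogue of Lemma~\ref{Lem:Diffmu}: the twisted operations $\lambda_n^\alpha$ satisfy the structure relations of a shifted homotopy Lie algebra for the twisted differential. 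I would prove this exactly as in the nonsymmetric case, namely by observing that the required identity is an instance of the twisting formula of Proposition~\ref{prop:TwCurvedGaugeLie} (the symmetric version of Proposition~\ref{prop:TwCurvedGauge}), which holds for \emph{every} complete shifted curved $\Li$-algebra, in particular for the universal one sitting inside the free operad. This ``universal'' argument transports the algebra-level identity to the operad level, so that no fresh computation is required beyond matching signs and the symmetric-group/shuffle bookkeeping inherited from the operad $\sLi$.

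Having secured the analogue of Lemma~\ref{Lem:Diffmu}, the final verification mirrors the three-line display in the proof of Proposition~\ref{prop:MorhpAiMCAi}: one expands $\dd^{\lambda_1^\alpha}(\lambda_n^\alpha) = \dd(\lambda_n^\alpha) + \lambda_1^\alpha \star \lambda_n^\alpha - (-1)^{|\lambda_n^\alpha|}\lambda_n^\alpha \star \lambda_1^\alpha$, regroups the terms coming from the adjoint action together with those from the untwisted differential, and recognizes the boundary cases ($q=1$, i.e. the contribution of $\lambda_1^\alpha$ itself, and the degenerate $p=r=0$ term) as precisely the terms that extend the summation range from $p,q \geq 2$ to $p+1,q \geq 1$ and back, so that after cancellation one lands on the right-hand side above. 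The main obstacle, and the only place demanding genuine care, will be the sign and symmetrization bookkeeping: the shuffle sums $\mathrm{Sh}_{p,q}^{-1}$ and the factorials $\tfrac{1}{r!}$, $\tfrac{1}{(n-1)!}$ appearing in $\lambda_n^\alpha$ and $\lambda_1^\alpha$ must be tracked so that the symmetric-group actions match across the $\circ_1$ compositions; I expect these to work out cleanly precisely because the generators of $\sLi$ all carry degree $-1$ and trivial $\Sy_n$-action, which is exactly why this operad (rather than $\calS^{-1}\Li$) was chosen, and because Proposition~\ref{prop:TwCurvedGaugeLie} already absorbs the combinatorics into a single clean formula. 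Thus the proof reduces to a routine, sign-careful transcription, which is why it may legitimately be omitted as stated in the text.
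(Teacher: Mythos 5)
Your proposal is correct and takes essentially the same approach the paper intends: the paper omits this proof, declaring it ``completely analogous'' to the nonsymmetric case, and your argument is precisely that analogue --- reduction to the generators of the quasi-free operad $\sLi$, the symmetric counterpart of Lemma~\ref{Lem:Diffmu} obtained by the universal-algebra argument (the twisting formula of Proposition~\ref{prop:TwCurvedGaugeLie} holds in every complete algebra, in particular free ones), and the same boundary-term regrouping and cancellation as in the proof of Proposition~\ref{prop:MorhpAiMCAi}.
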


The following proposition, as well as the definition that follows, goes back to the work of J. Chuang and A. Lazarev~\cite{MR3004818} who first noticed that the theory of operadic twisting can be presented in this way.

\begin{proposition}\label{prop:MCPBIS}
Let $\sLi \to \calP$ be a morphism of complete dg operads. 
The data of a complete $\calP$-algebra structure together with a Maurer--Cartan element is encoded by the complete dg operad 
$$\MC\calP\coloneqq\left(
\calP\hat{\vee} \alpha, \dd
\right) \ , $$
where $\alpha$ is a degree $0$ element of arity $0$ placed in $F_1$ and where $\hat{\vee}$ stands for the coproduct of complete operads, and where the differential $\mathrm{d}$ is characterized by 
\begin{eqnarray*} 
&&\dd\alpha\coloneqq-\sum_{n\ge 2}{\textstyle  \frac{1}{n!}}\lambda_n(\alpha, \ldots, \alpha) \ ,\\
&&\dd\nu\coloneqq d_\calP(\nu)\ , \ \text{for} \ \nu\in \calP\ .
\end{eqnarray*}
\end{proposition}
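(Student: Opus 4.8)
The plan is to mirror the proof strategy already used for the nonsymmetric case, specifically the proof of Proposition~\ref{prop:MCP}, since the statement is its symmetric analogue for $\sLi$-algebras. The goal is twofold: first to show that the prescribed map $\dd$ on generators extends uniquely to a square-zero derivation of the complete operad $\calP\hat{\vee}\alpha$, and second to verify that complete $\MC\calP$-algebra structures correspond exactly to pairs (complete $\calP$-algebra structure, Maurer--Cartan element).

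First I would establish that $\dd$ extends to a well-defined differential. The clean way to do this is to invoke Proposition~\ref{prop:DGAction} (its $\sLi$-analogue), applied to the map $\Lie^{\ac}\to\calP$ induced by the morphism of operads $\sLi\to\calP$, exactly as in the nonsymmetric proof. Here the morphism $\sLi\to\calP$ corresponds to a Maurer--Cartan element $\mu=(0,\lambda_2,\lambda_3,\ldots)$ in the convolution pre-Lie algebra $\hom_\Sy\left(\Lie^{\ac},\calP\right)$, and the induced operadic derivation $\D_\mu$ acts on the generator $\alpha$ precisely by $\alpha\mapsto-\sum_{n\ge 2}\frac{1}{n!}\lambda_n(\alpha,\ldots,\alpha)$ and trivially on $\calP$. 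The dg Lie action of Proposition~\ref{prop:DGAction} then guarantees that $d_\calP+\D_\mu$ is a square-zero derivation, which is exactly the operator $\dd$ described in the statement. Thus no separate hand computation of $\dd^2=0$ is needed; it follows formally from the fact that $\mu$ satisfies the Maurer--Cartan equation in the convolution algebra.

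Next I would identify algebra structures. By Theorem~\ref{thm:CompleteAlg}, a complete $\MC\calP$-algebra structure on a complete dg module $(A,\F,d)$ is the same as a morphism of filtered dg operads $\MC\calP\to\eend_A$. Since $\MC\calP=\calP\hat{\vee}\alpha$ is, as a graded operad, the coproduct of $\calP$ with the free operad on the arity-$0$ degree-$0$ generator $\alpha$, such a morphism is equivalent to the data of a morphism $\calP\to\eend_A$ together with the image of $\alpha$, namely an element $a\in\F_1 A_0$. The compatibility with the differential $\dd$ unwinds into two conditions: the restriction to $\calP$ being a chain map expresses that we have a genuine complete $\calP$-algebra structure, while the equation coming from $\dd\alpha$ reads $da+\sum_{n\ge 2}\frac{1}{n!}\ell_n(a,\ldots,a)=0$, which is precisely the condition that $a$ be a Maurer--Cartan element of the associated complete shifted curved $\Li$-algebra. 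This matches the discussion preceding Proposition~\ref{prop:TwCurvedGaugeLie}.

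The main obstacle, as usual in the symmetric setting, is bookkeeping of signs and symmetric-group coinvariants: one must check that the factorials $\frac{1}{n!}$ and the shuffle sums appearing in $\dd\lambda_n$ are exactly compatible with the symmetric braces governing the convolution pre-Lie product on $\hom_\Sy\left(\Lie^{\ac},\calP\right)$, so that the Maurer--Cartan equation $\partial\mu+\mu\star\mu=0$ really does reproduce the stated formula for $\dd$. However, these verifications are entirely parallel to the nonsymmetric computations of Section~\ref{subsec=TwMultiOp} and, as the excerpt itself notes, go through \emph{mutatis mutandis}; the conceptual input (Proposition~\ref{prop:DGAction} and Theorem~\ref{thm:CompleteAlg}) is identical. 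I would therefore state the proof concisely by reducing to those two results and leaving the sign checks to the reader, exactly as is done elsewhere in the symmetric section.
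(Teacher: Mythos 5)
Your proof is correct and follows essentially the same route as the paper: the paper proves the nonsymmetric analogue (Proposition~\ref{prop:MCP}) precisely by invoking Proposition~\ref{prop:DGAction} for the square-zero property of $\dd$ and the universal property of the coproduct together with Theorem~\ref{thm:CompleteAlg} for the identification of algebra structures, and it explicitly omits the symmetric proof as ``completely analogous''. The only quibble is notational: the relevant convolution algebra in the symmetric setting is $\hom_\Sy\big(\Com^*,\calP\big)\cong\hom_\Sy\big((\calS\Lie)^{\ac},\calP\big)$, as in the paper's definition of the deformation complex in this section, rather than $\hom_\Sy\big(\Lie^{\ac},\calP\big)$.
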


\begin{definition}[Twisted operads under $\sLi$]\index{twisted operad!under $\sLi$}\label{def:TwistOp}
Let $\sLi \to \calP$ be a morphism of complete dg operads. 
The complete dg  operad obtained by twisting the operad $\MC \calP$ by the Maurer--Cartan $\lambda_1^\alpha$ is called the 
 \emph{twisted complete  operad} and denoted by 
$$\Tw\calP\coloneqq \left(\MC \calP\right)^{\lambda_1^\alpha}=
\left(
\calP\hat{\vee} \alpha, \dd^{\lambda_1^\alpha}
\right) \ . $$
\end{definition}

The differential of the operad $\Tw\calP$ is actually equal to
\begin{eqnarray*}
\mathrm{d}^{\lambda_1^\alpha}(\alpha)&=&\sum_{n\ge 2}{\textstyle  \frac{n-1}{n!}}\lambda_n(\alpha, \ldots, \alpha)\ ,\\ \mathrm{d}^{\lambda_1^\alpha}(\nu)&=&d_\calP(\nu) 
+
\sum_{n\ge 2} {\textstyle  \frac{1}{(n-1)!}}\lambda_n\left(\alpha^{n-1}, \nu\right)
-
(-1)^{|\nu|}\sum_{j=1}^k {\textstyle  \frac{1}{(n-1)!}}\nu \circ_j \lambda_n(\alpha^{n-1}, -)\ ,
\end{eqnarray*}
for $\nu \in \calP(k)$. 

\begin{proposition}\label{prop:MorhpAiMCPBIS}
The assignment $\lambda_n\mapsto \lambda_n^\alpha$ defines a morphism of complete dg operads 
$$\sLi \to \Tw \calP \ . $$
\end{proposition}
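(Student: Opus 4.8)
The plan is to reproduce the argument of Proposition~\ref{prop:MorhpAiMCP} \emph{mutatis mutandis} in the symmetric setting, so that the claim follows purely by functoriality of the twisting construction, without any direct check of compatibility with differentials. First I would observe that the given morphism of complete dg operads $\sLi \to \calP$ induces a morphism of complete dg operads
$$\MC\sLi \longrightarrow \MC\calP$$
which sends $\alpha \mapsto \alpha$ and carries each generator $\lambda_n$ of $\MC\sLi = \widehat\calT(\alpha,\lambda_2,\lambda_3,\ldots)$ to its image in $\calP$. This is the symmetric analogue of the functoriality of the $\MC(-)$ construction used in the nonsymmetric case (the lemma preceding the twisted multiplicative ns operad in Section~\ref{subsec=TwMultiOp}): since the differential of $\MC\calP$ is characterised on $\alpha$ by $\dd\alpha = -\sum_{n\ge 2}\tfrac{1}{n!}\lambda_n(\alpha,\ldots,\alpha)$ and on $\calP$ by $d_\calP$ (Proposition~\ref{prop:MCPBIS}), and since $\sLi \to \calP$ is itself a chain map, this assignment manifestly commutes with the two differentials.

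Next I would invoke the symmetric analogue of Proposition~\ref{prop:MorphMCOperadic}: a morphism of complete dg operads carries operadic Maurer--Cartan elements to operadic Maurer--Cartan elements and commutes with the associated twisted differentials. The point here is that the Maurer--Cartan element $\lambda_1^\alpha = \sum_{n\ge 2}\tfrac{1}{(n-1)!}\lambda_n(\alpha^{n-1},-)$ of $\MC\sLi$ is sent, under the morphism above, to the element given by the very same formula in $\MC\calP$, which is precisely the Maurer--Cartan element used in Definition~\ref{def:TwistOp} to build $\Tw\calP$. Applying the cited result therefore yields an induced morphism of complete dg operads
$$\Tw\sLi = (\MC\sLi)^{\lambda_1^\alpha} \longrightarrow (\MC\calP)^{\lambda_1^\alpha} = \Tw\calP\ ,$$
which still sends $\alpha \mapsto \alpha$ and each $\lambda_n \mapsto \lambda_n$.

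Finally I would precompose this morphism with the morphism $\sLi \to \Tw\sLi$, $\lambda_n \mapsto \lambda_n^\alpha$, recorded above (the proposition preceding Proposition~\ref{prop:MCPBIS}). Tracking the generators through the composite, $\lambda_n$ is first sent to $\lambda_n^\alpha = \sum_{r\ge 0}\tfrac{1}{r!}\lambda_{n+r}(\alpha^r,-,\ldots,-)$ in $\Tw\sLi$, and then, since the second morphism fixes $\alpha$ and sends each $\lambda_{n+r}$ to its image in $\calP$, this convergent series is carried term by term to the element of the same shape in $\Tw\calP$. Hence the composite is a morphism of complete dg operads $\sLi \to \Tw\calP$ realising the assignment $\lambda_n \mapsto \lambda_n^\alpha$, as claimed.

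The argument is essentially formal, so there is no serious obstacle; the only genuine content is bundled into the symmetric analogues of Proposition~\ref{prop:MorphMCOperadic} and of the differential computations of Section~\ref{subsec=TwMultiOp}, which the exposition in Section~\ref{sec:SymTw} grants hold by the same computations as in the nonsymmetric case, with the signs already fixed by Definition~\ref{def:TwistOp}. The one point deserving a moment's care is to confirm that the induced map on twisted operads respects the \emph{under-}$\sLi$ structure, i.e.\ that the composite is a morphism of operads \emph{under} $\sLi$ and not merely of complete dg operads; this is exactly what the explicit computation on generators in the last step verifies.
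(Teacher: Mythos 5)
Your proposal is correct and takes essentially the same route as the paper: the paper omits the proof in the symmetric case, declaring it analogous to the nonsymmetric Proposition~\ref{prop:MorhpAiMCP}, whose proof is exactly your argument --- the morphism $\MC\sLi\to\MC\calP$ induces $\Tw\sLi\to\Tw\calP$ by (the symmetric analogue of) Proposition~\ref{prop:MorphMCOperadic}, and one precomposes with the morphism $\sLi\to\Tw\sLi$ sending $\lambda_n$ to $\lambda_n^\alpha$.
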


We consider the following morphisms of complete dg operads
\begin{eqnarray*}
\begin{array}{rrcl}
\Delta(\calP)\ \ \  : &\Tw \calP\cong \calP\hat{\vee} \alpha&\to &\Tw\, \big(\Tw \calP\big)\cong \calP\hat{\vee} \alpha \hat{\vee} \beta\\
&\alpha &\mapsto & \alpha+\beta\ ,\\
&\nu&\mapsto&\nu\ , \ \text{for}\ \nu\in\calP\ ,
\end{array}
\end{eqnarray*}
and 
\begin{eqnarray*}
\begin{array}{rrcl}
\varepsilon(\calP)\ \ \  : &\Tw \calP&\to & \calP \\
&\alpha &\mapsto & 0\ ,\\
&\nu&\mapsto&\nu\ , \ \text{for}\ \nu\in\calP\ .
\end{array}
\end{eqnarray*}

\begin{theorem}
The two morphisms 
 \[
\Delta(\calP) \colon  \Tw \calP \to \Tw(\Tw\calP) \text{  and } \varepsilon(\calP)\colon \Tw \calP \to \calP
 \]
 of operads under $\sLi$ provide the endofunctor $\Tw$ with a comonad structure. 
\end{theorem}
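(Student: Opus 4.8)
The plan is to reduce the symmetric statement to the nonsymmetric one already proved. The theorem asserts that $\Delta(\calP)$ and $\varepsilon(\calP)$ furnish $\Tw$ with a comonad structure, which means verifying the counit axioms and the coassociativity axiom; these are precisely the relations checked in the nonsymmetric case in the theorem preceding \cref{def:TwistOp}. First I would note that the entire machinery of \cref{subsec=TwMultiOp} has been transported verbatim to the symmetric setting through \cref{prop:MCPBIS}, \cref{def:TwistOp}, \cref{prop:MorhpAiMCPBIS}, and the two structure morphisms $\Delta(\calP)$, $\varepsilon(\calP)$ displayed above, with $\sLi$ playing the role of $\Ai$ and the generator $\lambda_n$ of degree $-1$ replacing $\mu_n$. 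The symmetric analogue of \cref{Lemma=TwTw} holds by the same computation, giving
$$\Tw(\Tw\calP)\cong\big(\calP\hat{\vee}\,\alpha\,\hat{\vee}\,\beta,\ \dd+\ad_{\lambda_1^{\alpha+\beta}}\big)\ ,$$
where the key combinatorial identity $\lambda_1^{\alpha+\beta}=\lambda_1^\alpha+\widetilde{\lambda}_1^{\,\beta}$ is obtained exactly as before, now with symmetric brace bookkeeping and the factorials $\tfrac{1}{r!}$ built into the twisting formula of \cref{prop:MorhpAiMCPBIS}.

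Next I would carry out the three verifications. Since both $\Delta(\calP)$ and $\varepsilon(\calP)$ act as the identity on every $\nu\in\calP$ and only move the formal curvature generator, it suffices to track the images of $\alpha$ (and, for coassociativity, of $\beta$). The left counit relation follows from
$$\alpha\ \mapsto\ \alpha+\beta\ \mapsto\ \alpha\ ,$$
because $\varepsilon$ applied on the first tensor factor sends $\alpha\mapsto\alpha$ and $\beta\mapsto 0$; the right counit relation is symmetric, with $\alpha\mapsto 0$ and $\beta\mapsto\alpha$. The coassociativity relation is settled by observing that both composites produce
$$\alpha\ \mapsto\ \alpha+\beta\ \mapsto\ \alpha+\beta+\gamma\ ,$$
so that $\Delta(\Tw\calP)\circ\Delta(\calP)=\Tw(\Delta(\calP))\circ\Delta(\calP)$. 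The only genuine content beyond this symbolic chase is that each of these assignments is a morphism of complete dg operads, i.e.\ that it commutes with the twisted differentials; this is the symmetric counterpart of \cref{cor:ComStr}, and its proof repeats the computation given there, replacing the sum $\sum_{n\ge 2}(n-1)\mu_n(\alpha,\ldots,\alpha)$ with $\sum_{n\ge 2}\tfrac{n-1}{n!}\lambda_n(\alpha,\ldots,\alpha)$ and invoking the symmetric \cref{Lemma=TwTw}.

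The main obstacle, such as it is, lies not in the categorical bookkeeping but in confirming that the symmetric sign and combinatorial coefficients cooperate: one must check that the factorial-weighted brace expansions satisfy $\lambda_1^{\alpha+\beta}=\lambda_1^\alpha+\widetilde{\lambda}_1^{\,\beta}$ and that $\ad_{\lambda_1^\alpha}+\ad_{\widetilde{\lambda}_1^{\,\beta}}=\ad_{\lambda_1^{\alpha+\beta}}$, exactly paralleling the nonsymmetric computation in the proof of \cref{Lemma=TwTw}. Because the symmetric group acts trivially on the generators $\lambda_n$ and all generators sit in a single homological degree, the parities are uniform and no new sign subtleties arise beyond those already handled in the ns case; as the authors remark, the proofs are \emph{completely analogous} to those of the preceding three sections. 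I would therefore present the verification concisely, emphasizing the two displayed symbolic chases for the counit and coassociativity axioms and delegating the differential-compatibility checks to the symmetric analogues of \cref{Lemma=TwTw} and \cref{cor:ComStr}.
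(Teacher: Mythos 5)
Your proposal is correct and coincides with the paper's own treatment: the paper omits the proof in the symmetric case precisely because, as it states, the arguments are \emph{completely analogous} to those of the nonsymmetric sections, namely the symbolic chases $\alpha\mapsto\alpha+\beta\mapsto\alpha$ for the counit relations and $\alpha\mapsto\alpha+\beta\mapsto\alpha+\beta+\gamma$ for coassociativity, with differential compatibility delegated to the symmetric analogues of Lemma~\ref{Lemma=TwTw} and Corollary~\ref{cor:ComStr}. Your verification of the factorial-weighted identity $\lambda_1^{\alpha+\beta}=\lambda_1^\alpha+\widetilde{\lambda}_1^{\,\beta}$ is exactly the point the paper leaves implicit, so your write-up is a faithful unfolding of the intended argument.
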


\begin{definition}[$\Tw$-stable operad]\index{$\Tw$-stable operad!symmetric}
A complete symmetric operad under $\sLi$ is called \emph{$\Tw$-stable} if it admits a $\Tw$-coalgebra structure. 
\end{definition}

\begin{example}
Let us recall from \cite{DolgushevWillwacher15} that the operads for Lie algebras and Gerstenhaber algebras, as well as their shifted versions and their minimal models, are $\Tw$-stable, see Proposition~\ref{prop:GerstTwistable}. One example of an operad that is not $\Tw$-stable is the operad of pre-Lie algebras. 
\end{example}

\begin{proposition}\label{prop:TwQIBIS}
The endofunctor $\Tw$ preserves quasi-isomorphisms. 
\end{proposition}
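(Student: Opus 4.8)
The plan is to deduce the statement from an $\alpha$-weight filtration argument, exactly parallel to the nonsymmetric case recorded in Proposition~\ref{prop:TwQI}. First I would note that, just as in the nonsymmetric setting, the construction of Definition~\ref{def:TwistOp} is functorial: a morphism $f\colon\calP\to\calQ$ of complete dg operads under $\sLi$ induces a morphism $\Tw f\colon\Tw\calP\to\Tw\calQ$ given by $\alpha\mapsto\alpha$ and $\nu\mapsto f(\nu)$ for $\nu\in\calP$. The underlying graded $\Sy$-module of $\Tw\calP$ is $\calP\hat{\vee}\alpha$, whose arity $m$ component is the product $\prod_{n\ge0}\big(\calP(m+n)\big)_{\Sy_n}$ of coinvariants under the symmetric group $\Sy_n$ permuting the $n$ inputs occupied by copies of $\alpha$.

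The key step is to introduce the \emph{$\alpha$-weight}, namely the number of copies of $\alpha$ inserted, and the associated complete decreasing filtration $\mathrm{F}^p\Tw\calP$ consisting of those series all of whose terms carry at least $p$ copies of $\alpha$. Inspecting the three contributions to $\dd^{\lambda_1^\alpha}$ computed right after Definition~\ref{def:TwistOp}, the internal differential $d_\calP$ preserves the $\alpha$-weight, whereas $\D_\mu$ (encoding $\dd\alpha=-\sum_{n\ge2}\tfrac{1}{n!}\lambda_n(\alpha,\ldots,\alpha)$) and $\ad_{\lambda_1^\alpha}$ both strictly raise it, since $\lambda_1^\alpha$ already carries at least one $\alpha$. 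Hence $\dd^{\lambda_1^\alpha}$ preserves $\mathrm{F}^\bullet$ and the differential it induces on the associated graded is precisely $d_\calP$. The morphism $\Tw f$ is visibly filtered, and on the piece of $\alpha$-weight $p$ it is the map $\big(\calP(m+p)\big)_{\Sy_p}\to\big(\calQ(m+p)\big)_{\Sy_p}$ induced by $f$.

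Next I would use that we work over a field of characteristic zero, so that taking $\Sy_p$-coinvariants is an exact functor that commutes with homology; since $f$ is an arity-wise quasi-isomorphism of $\Sy$-equivariant complexes, the induced map on the homology of each graded piece is again a quasi-isomorphism. In the language of the associated spectral sequences, this says that $\Tw f$ induces an isomorphism on the first page $E^1$.

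The hard part is the passage from an isomorphism on $E^1$ to a genuine quasi-isomorphism, because in a fixed arity and homological degree the $\alpha$-weight filtration is infinite (the products over $n$ are genuinely infinite), so the spectral sequence does not degenerate for trivial reasons. This is exactly where the complete setting of Chapter~\ref{sec:OptheoyFilMod} is indispensable: the filtration $\mathrm{F}^\bullet$ is exhaustive ($\mathrm{F}^0=\Tw\calP$), bounded below (the $\alpha$-weight is non-negative), Hausdorff ($\bigcap_p\mathrm{F}^p=0$), and complete ($\Tw\calP=\varprojlim_p\Tw\calP/\mathrm{F}^p$ by the very construction of the coproduct of complete operads). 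For such a complete, exhaustive, bounded-below filtration the comparison theorem for spectral sequences applies and forces $\Tw f$ to be a quasi-isomorphism. The remaining verifications — that $\dd^{\lambda_1^\alpha}$ respects $\mathrm{F}^\bullet$ with the claimed associated graded, and the bookkeeping of signs — are routine and identical to those of the nonsymmetric case and of \cite{DolgushevWillwacher15}, so I would only indicate them.
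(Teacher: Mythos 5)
Your proof is correct, and it is essentially the argument that the paper relies on: the paper itself omits the proof of this symmetric-case statement, referring to the nonsymmetric analogue (Proposition~\ref{prop:TwQI}), which is quoted from \cite[Theorem~5.1]{DolgushevWillwacher15}, and the proof there is exactly your filtration by the number of copies of $\alpha$, with $d_\calP$ as the associated graded differential, exactness of $\Sy_p$-coinvariants in characteristic zero, and the comparison theorem for complete exhaustive filtrations. The only cosmetic point is terminology: the $\alpha$-weight filtration is not bounded (as you yourself note, infinitely many weights contribute in a fixed arity and degree, since $\alpha$ has degree $0$), so the convergence really rests on completeness and Hausdorffness alone, which you correctly identify as the crucial input.
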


Let $\calP$ be a complete dg operad. We consider the  total space 
$$\hom_\Sy\left(\big(\calS\Lie\big)^{\ac}, \calP\right)\coloneqq\prod_{n\ge 1} \hom_\Sy\left(\Com(n)^*, \calP(n)\right)\cong \prod_{n\ge 1} \calP(n)^{\Sy_n}\ .$$ 

\begin{definition}[Deformation complex of morphisms of complete dg operads \cite{MerkulovVallette09I, MerkulovVallette09II}]\index{deformation complex!morphisms of complete dg operads}
The \emph{deformation complex} of the morphism $ \sLi\to \calP$ of complete dg  operads is the complete  twisted dg Lie algebra 
$$\Def\big(\sLi\to \calP\big)\coloneqq\left(\hom_\Sy\left(\Com^*, \calP\right), \partial^\lambda, [\; , \,] \right)
\ . $$
\end{definition}

For any  element $\rho=(\rho_1, \rho_2, \ldots)\in \hom_\Sy\left(\Com^*, \calP\right)$, we define a derivation $\D_\rho$ of the complete operad $\calP\hat{\vee} \{\alpha\}$ by the following action on generators:
\[
\D_\rho\colon
\begin{cases}
\alpha \mapsto -\sum_{n\ge 1 } \rho_n\big(\alpha^n\big)\ ,\\
\nu \mapsto 0, \quad \text{for} \ \nu \in \calP \ .
\end{cases}
\]

\begin{theorem}\label{thm:DGActionOnTwBIS}
The assignment 
\begin{eqnarray*}
\left(\hom_\Sy\left(\Com^*, \calP\right), \partial^\lambda, [\; , \,] \right)&\to&
\left(\Der\big(\calP\hat{\vee} \alpha\big), [d_\calP+\D_\lambda+\ad_{\lambda_1^\alpha}, -], [\; ,\,]
\right)\\
\rho &\mapsto& \D_\rho+\ad_{\rho_1^\alpha}
\end{eqnarray*}
is a morphism of dg Lie algebras, that is it  
defines a dg Lie action by derivation of the deformation complex $\Def\big(\sLi\to \calP\big)$ on the twisted operad 
$\Tw\calP$. 
\end{theorem}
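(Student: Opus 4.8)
The plan is to transport, \emph{mutatis mutandis}, the chain of dg Lie algebra morphisms established in Lemmas~\ref{lem:MorphdgLie}, \ref{lem:semidirectPreLie}, \ref{lem:MorphdgLieBIS} and~\ref{lem:MorphdgLieTER} from the nonsymmetric setting to the symmetric one, replacing everywhere the cooperad $\As^{\ac}$ by $\Com^*=(\calS\Lie)^{\ac}$, the generators $\mu_n$ by $\lambda_n$, and the normalised twisted slot $\mu_1^\alpha$ by $\lambda_1^\alpha=\sum_{n\ge 2}\tfrac{1}{(n-1)!}\lambda_n(\alpha^{n-1},-)$. Throughout, the distinguished slot of $\rho\in\hom_\Sy(\Com^*,\calP)$ is recorded as $\rho_1^\alpha=\sum_{n\ge1}\tfrac{1}{(n-1)!}\rho_n(\alpha^{n-1},-)$, the $\Sy_n$-equivariance of $\rho_n$ absorbing the signed sum over insertion positions that appeared in the ns formula.

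First I would prove the symmetric analogue of Lemma~\ref{lem:MorphdgLie}: that $\rho\mapsto\D_\rho$ is a morphism of dg Lie algebras from the untwisted deformation complex $(\hom_\Sy(\Com^*,\calP),\partial,[\,,])$ to $(\Der(\calP\hat{\vee}\alpha),[d_\calP,-],[\,,])$. As in the ns case this reduces to checking $\D_{\rho\star\xi}=\D_\rho\circ^{\mathrm{op}}\D_\xi$ and $\D_{\partial\rho}=[d_\calP,\D_\rho]$ on generators, both being trivial on $\nu\in\calP$ and, on $\alpha$, following from the explicit form of the convolution pre-Lie product attached to the cooperad structure on $\Com^*$. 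The only new feature is that the infinitesimal decomposition of $\Com^*$ now carries shuffle combinatorics and the factorials $\tfrac{1}{n!}$, but these are matched exactly by the definition of $\D_\rho$ on $\alpha$.

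Next I would set up the symmetric semi-direct product dg pre-Lie algebra (analogue of Lemma~\ref{lem:semidirectPreLie}), using that the pre-Lie action $\rho\mapsto\D_\rho$ is by derivation, so that the product $\bigstar$ is right-symmetric and its skew-symmetrisation yields the semi-direct product dg Lie algebra on $\hom_\Sy(\Com^*,\calP)\oplus(\calP\hat{\vee}\alpha)(1)$. Then I would prove the symmetric analogue of Lemma~\ref{lem:MorphdgLieBIS}, namely that $\rho\mapsto(\rho,\rho_1^\alpha)$ is a morphism of dg pre-Lie algebras; the core identity is
\[
(\rho\star\xi)_1^\alpha=\rho_1^\alpha\circ_1\xi_1^\alpha-(-1)^{|\rho||\xi|}\D_\xi(\rho_1^\alpha),
\]
together with $(\partial\rho)_1^\alpha=d_\calP(\rho_1^\alpha)$. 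Finally, the symmetric analogue of Lemma~\ref{lem:MorphdgLieTER} gives that $(\rho,\nu)\mapsto\D_\rho+\ad_\nu$ is a morphism of dg Lie algebras into $\Der(\calP\hat{\vee}\alpha)$, via the generic semi-direct product identities $[\D_\rho,\ad_\omega]=\ad_{\D_\rho(\omega)}$ and $\ad_{d_\calP\nu}=[d_\calP,\ad_\nu]$. Composing these two morphisms produces $\rho\mapsto\D_\rho+\ad_{\rho_1^\alpha}$ as a morphism of dg Lie algebras with \emph{untwisted} differentials.

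To conclude, I would twist by the Maurer--Cartan element $\lambda=(0,\lambda_2,\lambda_3,\ldots)$ encoding the structure morphism $\sLi\to\calP$: since the composite sends $\lambda$ to $\D_\lambda+\ad_{\lambda_1^\alpha}$ and a morphism of dg Lie algebras carries Maurer--Cartan elements to Maurer--Cartan elements while intertwining the associated twisted differentials, the same assignment $\rho\mapsto\D_\rho+\ad_{\rho_1^\alpha}$ is a morphism of the twisted dg Lie algebras, whose differentials are $\partial^\lambda=\partial+\ad_\lambda$ on the source and $[d_\calP+\D_\lambda+\ad_{\lambda_1^\alpha},-]$ on the target. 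The latter operator is precisely $[\dd^{\lambda_1^\alpha},-]$ defining $\Tw\calP$ by Remark~\ref{rem:}, which identifies the target with $\Der(\Tw\calP)$ and finishes the proof. The hard part will be the third step: tracking the shuffle sums and factorial normalisations in the pre-Lie action identity, where the single distinguished slot must be split according to whether it lands in the lower factor, the inserted factor, or is acted on by $\D_\xi$, and one must check that the symmetric-brace coefficients recombine correctly—this is where the ``mutatis mutandis'' passage from the ns computation demands genuine care rather than mere transcription.
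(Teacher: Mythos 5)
Your proposal follows exactly the paper's route: the paper omits this proof, declaring it completely analogous to the nonsymmetric case, which is precisely the composite of (the symmetric analogues of) Lemmas~\ref{lem:MorphdgLie}, \ref{lem:semidirectPreLie}, \ref{lem:MorphdgLieBIS} and~\ref{lem:MorphdgLieTER} followed by twisting with the Maurer--Cartan element encoding $\sLi\to\calP$, exactly as in the proof of Theorem~\ref{thm:DGActionOnTw}. Your closing remark about tracking the shuffle sums and factorial normalisations in the symmetric analogue of Lemma~\ref{lem:MorphdgLieBIS} correctly identifies the only place where the paper's \emph{mutatis mutandis} requires genuine verification rather than transcription.
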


\begin{proposition}\label{prop:DefTwP0BIS}
Let $\sLi\to \calP$ be a morphism of complete dg operads with  $\calP(0)=0$. The deformation complex is isomorphic to the chain complex equal to the arity $0$ component of the twisted operad: 
$$ 
\left(\left(\Tw\, \calP\right) (0), \dd^{\lambda_1^\alpha}\right)\cong \left(\hom_\Sy\left(\Com^*, \calP\right), \partial^\lambda\right)
\ .$$
\end{proposition}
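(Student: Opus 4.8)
The plan is to mirror the proof of Proposition~\ref{prop:DefTwP0}, replacing the nonsymmetric combinatorics governed by $\Ai$ with the symmetric combinatorics governed by $\sLi$; the statement here is in fact slightly cleaner, because the generator $\alpha$ now has degree $0$ rather than degree $-1$, so no overall degree shift is needed.

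First I would identify the two underlying graded modules. An element of arity $0$ in $\Tw\calP=\calP\hat{\vee}\alpha$ is a series of operations of $\calP$ with copies of $\alpha$ plugged into all of their inputs; since $\alpha$ is a single symmetric element of arity $0$ and degree $0$, inserting $n$ copies into a fixed $\nu\in\calP(n)$ produces an element invariant under the $\Sy_n$-action on $\nu$, so it factors through the coinvariants. Working over a field of characteristic $0$, where coinvariants and invariants agree, this yields
\[
\left(\Tw\,\calP\right)(0)\cong \prod_{n\ge 0}\calP(n)^{\Sy_n}\cong \calP(0)\bigoplus\prod_{n\ge 1}\calP(n)^{\Sy_n}.
\]
Under the hypothesis $\calP(0)=0$ the first summand vanishes, and the remaining product is exactly $\hom_\Sy\left(\Com^*,\calP\right)$, since $\Com^*(n)$ is one-dimensional, concentrated in degree $0$, with trivial $\Sy_n$-action.

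Next I would write down the comparison map explicitly, namely
\[
\rho\longmapsto (-1)^{|\rho|}\D_\rho(\alpha)=-\sum_{n\ge 1}(-1)^{|\rho|}\rho_n\bigl(\alpha^{\otimes n}\bigr),
\]
using the derivation $\D_\rho$ introduced before Theorem~\ref{thm:DGActionOnTwBIS}. By the previous paragraph this is a degree-preserving isomorphism of graded modules. It then remains to check that it intertwines the twisted differential $\partial^\lambda=\partial+\ad_\lambda$ of $\Def(\sLi\to\calP)$ with the differential $\dd^{\lambda_1^\alpha}=d_\calP+\D_\lambda+\ad_{\lambda_1^\alpha}$ on $\left(\Tw\,\calP\right)(0)$, where $\lambda$ is the Maurer--Cartan element encoding the morphism $\sLi\to\calP$. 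Concretely this amounts to the identity
\[
\D_{\partial(\rho)}(\alpha)+\D_{[\lambda,\rho]}(\alpha)=d_\calP\bigl(\D_\rho(\alpha)\bigr)+\D_\lambda\bigl(\D_\rho(\alpha)\bigr)+\ad_{\lambda_1^\alpha}\bigl(\D_\rho(\alpha)\bigr).
\]
The morphism-of-dg-Lie-algebras statement of Theorem~\ref{thm:DGActionOnTwBIS} already supplies $\D_{\partial(\rho)}(\alpha)=d_\calP\bigl(\D_\rho(\alpha)\bigr)$ together with $\D_{[\lambda,\rho]}(\alpha)=\D_\lambda\bigl(\D_\rho(\alpha)\bigr)-(-1)^{|\rho|}\D_\rho\bigl(\D_\lambda(\alpha)\bigr)$, so the whole verification collapses to the single relation
\[
\ad_{\lambda_1^\alpha}\bigl(\D_\rho(\alpha)\bigr)=-(-1)^{|\rho|}\D_\rho\bigl(\D_\lambda(\alpha)\bigr).
\]

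I expect this last relation to be the only genuine work and the main obstacle, exactly as in the nonsymmetric case. The plan is to expand both sides directly, using $\lambda_1^\alpha=\sum_{n\ge 2}\tfrac{1}{(n-1)!}\lambda_n(\alpha^{n-1},-)$ and $\D_\lambda(\alpha)=-\sum_{n\ge 2}\tfrac{1}{n!}\lambda_n(\alpha,\ldots,\alpha)$, and to show that both sides equal a common explicit expression of the form
\[
-\sum_{\substack{p,r\ge 0\\ q\le 1}}c_{p,q,r}\,(-1)^{|\rho| r}\,\lambda_{p+1+r}\bigl(\alpha^{p},\rho_q(\alpha^q),\alpha^{r}\bigr),
\]
the delicate points being the bookkeeping of the Koszul signs and the matching of the factorial coefficients $c_{p,q,r}$ arising from the symmetric (shuffle) combinatorics of $\sLi$. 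Since every one of these computations is the symmetric transcription of the ones carried out for Proposition~\ref{prop:DefTwP0}, and the homological degrees and parities of the generators of $\sLi$ are arranged precisely so that the signs agree, no new phenomenon occurs and the verification goes through \emph{mutatis mutandis}.
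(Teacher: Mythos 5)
Your proposal is correct and takes essentially the same approach as the paper: the paper in fact omits the proof of this proposition, declaring the symmetric statements ``completely analogous'' to the nonsymmetric ones, and your argument is precisely the intended transcription of the proof of Proposition~\ref{prop:DefTwP0} --- the same identification of underlying spaces (with the degree-$0$ generator $\alpha$ explaining the absence of a suspension and characteristic zero identifying invariants with coinvariants), the same comparison map $\rho\mapsto(-1)^{|\rho|}\D_\rho(\alpha)$, and the same reduction, via the symmetric analogue of Lemma~\ref{lem:MorphdgLie}, to the single identity $\ad_{\lambda_1^\alpha}\left(\D_\rho(\alpha)\right)=-(-1)^{|\rho|}\D_\rho\left(\D_\lambda(\alpha)\right)$ checked by expanding both sides. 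The only caveats are cosmetic: the two auxiliary identities you quote come from the (implicit) symmetric analogue of Lemma~\ref{lem:MorphdgLie} rather than from the statement of Theorem~\ref{thm:DGActionOnTwBIS} itself, and in the symmetric setting the Koszul signs you carry actually trivialise since $|\alpha|=0$, leaving only the factorial bookkeeping you correctly flag.
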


\section{Generalisations}\label{Sec:Gen}

Let us mention briefly several possible generalisations and variations of the theory presented in this chapter. \\

Perhaps the easiest generalisation of the formalism developed here concerns replacing (ns) operads by coloured (ns) operads. For example, if one considers a ns coloured operad $\calP$ into which the operad $\Ai$ (with all inputs and the output of the same colour) maps, the theory of operadic twisting effortlessly adapts in this case, allowing one to recover some classical constructions. For example, if one considers the cofibrant replacement of the coloured operad encoding the pairs $(A,M)$ where $A$ is an associative algebra and $M$ is a left $A$-module, the arising twisted differentials have, for example, been studied by T. Kadeishvili~\cite{Kadeishvili80} in the context of $\infty$-twisted tensor products.  \\

Another possible direction in which one can generalise the twisting procedure is the case of a quadratic Koszul operad $\calP=\calP(E,R)$ whose associated category of $\calP_\infty$-algebras is twistable, according to Definition~\ref{def:TwHoAlg}, that happens when the Koszul dual operad $\calP^{\ac}$ is extendable. It turns out that this more general situation is much more subtle. The various proofs given above, like the one of Lemma~\ref{lem:MorphdgLie}, rely on the crucial fact that the ns cooperad $\As^{\ac}$ is one-dimensional in any arity and that its partial coproduct is the sum of all the possible way to compose operations in an ns operad. In other words, this amounts to the universal property satisfied by the ns operad $\As$ (respectively, the operad $\Lie$): it is the unit for the Manin's black product of (finitely generated) binary quadratic ns operads (respectively, binary quadratic operads) \cite{GinzburgKapranov94, GinzburgKapranov95, Vallette08}. The analogous universal property satisfied by the dg ns operad $\Ai$ (respectively, the dg operad~$\Li$) can be found in \cite{Wierstra16, Robert-Nicoud17}. This gives a hint on how to extend the twisting procedure to other kinds of algebraic structures like cyclic operads, modular operads or properads. Notice that the latter case was treated by S. Merkulov  in the very recent preprint \cite{Merkulov22} (which appeared a few days before we completed the final version of this monograph).  \\

There is also a way to ``extend'' the formalism of this chapter with the following ``mise en abyme'' of the operadic twisting theory. Given a $\Tw$-stable complete operad $\calG$ with the $\Tw$-comonad structure $\Delta_\calG : \calG\to\Tw\, \calG$ and a complete operad $f : \calG\to \calP$ under $\calG$, the twisted complete operad $\Tw\, \calP$ is naturally an operad under $\calG$ by 
$$\xymatrix@C=30pt{\calG \ar[r]^(0.43){\Delta_\calG}& \Tw\,\calG  \ar[r]^(0.47){\Tw(f)} &\Tw\, \calP\ .}$$
Therefore, the twisting construction induces a comonad in the category of operads under the operad $\calG$. 

\begin{definition}[$\calG$-$\Tw$-stable operad]\index{$\Tw$-stable operad!under an operad $\calG$}
Let $\calG$ be a $\Tw$-stable operad. A complete symmetric operad under $\calG$ is called \emph{$\calG$-$\Tw$-stable} if it admits a $\Tw$-co\-al\-ge\-bra structure. 
\end{definition}

The interpretation in terms of types of algebras is the same as above except that one should twist the operation of $\calP$ coming from $\calG$ as they are twisted in~$\calG$. 

\begin{example}
One obvious example is given by the $\Tw$-stable operad $\calG=\Gerst$ encoding Gerstenhaber algebras and the operad $\calP=\BV$ under it which encodes Batalin--Vilkovisky algebras. One can also consider their Koszul resolution $\Gerst_\infty\to \BV_\infty$, where the latter one is given in \cite{GCTV12}. 
\end{example}

\chapter{Operadic twisting and graph homology}\footnotetext{\hrule\smallskip\noindent This material will be published by Cambridge University Press \& Assessment as ‘Maurer-Cartan Methods in Deformation Theory: the twisting procedure’ by Vladimir Dotsenko, Sergey Shadrin and Bruno Vallette. This version is free to view and download for personal use only. Not for re-distribution, re-sale or use in derivative works. \copyright Cambridge University Press \& Assessment}\label{sec:Computations} 

In this chapter, we discuss instances of the operadic twisting leading to various graph complexes, and we outline the corresponding homology computations and their applications. The purpose here of this chapter is two-fold. First, it illustrates important examples of the operadic twisting, including those which motivated the very invention of this procedure by T. Willwacher \cite{Willwacher15}. Next, we show that the homology of twisted operads behaves quite unpredictably, and every new calculation has potential to surprise its author; we hope that this will motivate further research of this exciting topic. \\

The examples given here involve both symmetric and nonsymmetric operads, starting with the latter. The ns operads we chose as illustration of the operadic twisting procedure are those of noncommutative Gerstenhaber algebras and noncommutative Batalin--Vilkovisky algebras. Both of those operads were introduced recently in \cite[Section~3]{DotsenkoShadrinVallette19}. The two principal examples in the symmetric case are those of classical Gerstenhaber algebras and classical Batalin--Vilkovisky algebras. Those results are not new, but we derive them using a different method which is \emph{ad hoc} and thus shorter. We shall see that the classical operads and their ns analogues behave in a different way with respect to the twisting procedure.\\

 We then discuss the way operadic twisting fits into the computation of the homology of Kontsevich's graph complexes where the celebrated Gro\-then\-dieck--Teichm\"uller Lie algebra emerges \cite{Willwacher15}, and into the research related to the so called Deligne conjecture (now theorem by many people, see, for example, \cite{MR3261598,BergerFresse04, MR1321701,KontsevichSoibelman00,McClureSmith02, Tamarkin07, MR1328534}). The latter work has a natural version involving the operadic twisting of the operad of pre-Lie algebras \cite{DotKhPreLie}, which we outline in the end of the chapter. In the view of existence of nonsymmetric counterparts of the operads $\Gerst$ and $\BV$, it is interesting to ask what is a meaningful nonsymmetric counterpart of the operad pre-Lie, and how it fits into the theory of operadic twisting.

\section{Twisting the nonsymmetric operad \texorpdfstring{$\ncGerst$}{ncGerst}} \label{subsec:ncGerst}

\index{operad!$\ncGerst$} Recall after \cite[Section~3.1]{DotsenkoShadrinVallette19} that the nonsymmetric operad $\ncGerst$ can be defined as follows. As a graded $\k$-module the space $\ncGerst(n)$, for $n\ge 1$, is freely spanned by possibly disconnected linear graphs, called \emph{bamboos}\index{bamboo}, where the vertices are ordered from left to right, for instance,
\begin{equation*}
\vcenter{\xymatrix@M=5pt@R=10pt@C=10pt{
		*+[o][F-]{1}\ar@{-}[r] &*+[o][F-]{2}\ar@{-}[r]&  *+[o][F-]{3} & *+[o][F-]{4} &*+[o][F-]{5}\ar@{-}[r]&  *+[o][F-]{6} 
}
}
\quad \in \ncGerst(6)
\end{equation*}
The arity $0$ space $\ncGerst(n)=0$ is trivial. Each edge carries homological degree $1$ and the total homological degree is thus equal to the number of edges. 
(These elements corresponds to right-combs of binary generators with the presentation given in  \cite[Section~3.1]{DotsenkoShadrinVallette19}.)
It is convenient to think that the edges are ordered and reordering generates the sign; for a given bamboo, we assume,  by default, the ordering of the edges from left to right. The operadic composition $\circ_i$ of two bamboos, $\Gamma_1\in \ncGerst(n)$ and $\Gamma_2\in \ncGerst(k)$ amounts to placing the bamboo $\Gamma_2$ at the place of the vertex $i$ of the bamboo $\Gamma_1$, globally relabelling the vertices from left to right by $1,\dots,n+k-1$. 
The edge $(i-1,i)$ (resp. $(i+k-1,i+k)$) belongs to the resulting graph if and only if the edge $(i-1,i)$ (resp., $(i,i+1)$) belongs to the graph $\Gamma_1$. A Koszul sign is generated by reordering the edges; it is given by the parity of the product between the number of edges of $\Gamma_1$ on the right of vertex $i$ and the
 number of edges of $\Gamma_2$. For instance,
\begin{align*}
& \vcenter{\xymatrix@M=5pt@R=10pt@C=10pt{
		*+[o][F-]{1}\ar@{-}[r] &*+[o][F-]{2}\ar@{-}[r]&  *+[o][F-]{3} & *+[o][F-]{4} &*+[o][F-]{5}\ar@{-}[r]&  *+[o][F-]{6} 
	}
}
\quad \circ_5 \quad
\vcenter{\xymatrix@M=5pt@R=10pt@C=10pt{
		*+[o][F-]{1}\ar@{-}[r] &*+[o][F-]{2}&  *+[o][F-]{3} 
	}
}
\\
& = (-1)\cdot\ \vcenter{\xymatrix@M=5pt@R=10pt@C=10pt{
		*+[o][F-]{1}\ar@{-}[r] &*+[o][F-]{2}\ar@{-}[r]&  *+[o][F-]{3} & *+[o][F-]{4} &*+[o][F-]{5}\ar@{-}[r] &*+[o][F-]{6}&  *+[o][F-]{7} \ar@{-}[r]&  *+[o][F-]{8} 
	}
}
\
.
\end{align*}\\

A natural ns multiplicative structure $\Ai\twoheadrightarrow \As \to \ncGerst$ is given by the assignment 
\[
\mu_2\mapsto \quad
\vcenter{\xymatrix@M=5pt@R=10pt@C=10pt{
		*+[o][F-]{1} &*+[o][F-]{2} 
	}
}
\
,
\] 
or, alternatively, a natural shifted ns multiplicative structure \[\calS^{-1} \Ai \twoheadrightarrow \calS^{-1} \As\to \ncGerst\] is given by the assignment  
\[
\mu_2\mapsto \quad
\vcenter{\xymatrix@M=5pt@R=10pt@C=10pt{
		*+[o][F-]{1}\ar@{-}[r] &*+[o][F-]{2} 
	}
}
\
.
\]
So, one can twist the ns operad $\ncGerst$ in the way described above or by using a  shifted version of the twisting procedure, where we work with the dg ns operad  $\calS^{-1} \Ai$ of shifted $\Ai$-algebras.
The subsequent computations are equivalent for these two structures, but the shifted one is slightly more convenient for the presentation. So, we perform all computations for the shifted one, and we denote by $\Tw\ncGerst$ the ns operad twisted with respect to the shifted ns multiplicative structure.\\

The elements of the underlying  $\k$-module of  $\Tw\ncGerst(n)_{-d}$ are series indexed by $k\geq 0$ of finite sums of 
the bamboos with $n$ white vertices (labelled by $1,\dots,n$ from left to right), $k$ black vertices, and $d$ edges. Each black vertex carries degree $-2$ (the degree of the Maurer--Cartan elements for $\calS^{-1} \Ai$-algebras) and each edge still carries degree $1$. 
The twisted differential $\dd^{\mu_1^\alpha}$ is equal to the sum of the following five types of summands:
\begin{enumerate}
\item we attach a black vertex from the left to the leftmost vertex of the bamboo: 
$\vcenter{\xymatrix@M=5pt@R=10pt@C=10pt{
		*+[o][F**]{} \ar@{-}[r]& *+[o][F.]{} \ar@{..}[r] &   }}$\ , 
\item we attach a black vertex from the right to the rightmost vertex of the bamboo: 
$\vcenter{\xymatrix@M=5pt@R=10pt@C=10pt{
		 \ar@{..}[r] & *+[o][F.]{} \ar@{-}[r] &*+[o][F**]{}  }}$\ , 

\item we replace a white vertex by a black vertex connected by an edge to the white vertex on the left: 
$\vcenter{\xymatrix@M=5pt@R=10pt@C=10pt{
	\ar@{..}[r]&	*+[o][F**]{} \ar@{-}[r]& *+[o][F-]{\, i\, } \ar@{..}[r] &   }}$\ , 

\item we replace a white vertex by a black vertex connected by an edge to the white vertex on the right: 
$\vcenter{\xymatrix@M=5pt@R=10pt@C=10pt{
	\ar@{..}[r]&*+[o][F-]{\, i\, }  \ar@{-}[r]&	*+[o][F**]{}  \ar@{..}[r] &   }}$\ ,

\item we replace a black vertex by two black vertices connected by an edge: 
$\vcenter{\xymatrix@M=5pt@R=10pt@C=10pt{
		 \ar@{..}[r] & *+[o][F**]{} \ar@{-}[r] &*+[o][F**]{}  \ar@{..}[r]& }}$\ . 
\end{enumerate}
The Koszul type sign is given by counting how many edges from left to right that the new edge has to jump over, with an extra $-1$ sign for the terms $(3)$, $(4)$, and $(5)$. 

For instance,
\begin{align*}
& \mathrm{d}^{\mu_1^\alpha}\left( 
\vcenter{\xymatrix@M=5pt@R=10pt@C=10pt{
		*+[o][F-]{1}\ar@{-}[r] &*+[o][F-]{2}&  *+[o][F**]{} 
	}
}\right)
\ =
\vcenter{\xymatrix@M=5pt@R=10pt@C=10pt{
		*+[o][F-]{1}\ar@{-}[r] &*+[o][F-]{2}&*+[o][F**]{}\ar@{-}[l]&  *+[o][F**]{}}}
\ 
,
\end{align*}
and the terms 
\begin{align*}
&
 \vcenter{\xymatrix@M=5pt@R=10pt@C=10pt{
		*+[o][F**]{}\ar@{-}[r]&*+[o][F-]{1}\ar@{-}[r] &*+[o][F-]{2}&  *+[o][F**]{} 
	}}\ , \quad 
	\vcenter{\xymatrix@M=5pt@R=10pt@C=10pt{
		*+[o][F-]{1}\ar@{-}[r] &  *+[o][F-]{2}&  *+[o][F**]{}  &*+[o][F**]{}\ar@{-}[l] }}
		\ 
\quad \text{and}\quad
\vcenter{\xymatrix@M=5pt@R=10pt@C=10pt{
		*+[o][F-]{1}\ar@{-}[r] & *+[o][F**]{}\ar@{-}[r]& *+[o][F-]{2}&  *+[o][F**]{} }}
\end{align*}
appear in the expression for the differential twice, but with the opposite signs. 

\begin{proposition} \label{prop:ncGerstnotTwistable}
The ns operad $\ncGerst$ is not $\Tw$-stable. 
\end{proposition}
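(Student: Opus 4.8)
The plan is to apply the criterion established in Proposition~\ref{prop:Twistable}, which gives a concrete necessary condition for a multiplicative ns operad with zero differential to be $\Tw$-stable. Since $\ncGerst$ has zero differential and its multiplicative structure factors through the canonical resolution $\calS^{-1}\Ai \twoheadrightarrow \calS^{-1}\As \to \ncGerst$ (indeed, only $\mu_2$ maps to a nonzero bamboo, namely the single edge, while $\mu_3, \mu_4, \ldots$ map to zero), the ``if and only if'' form of Proposition~\ref{prop:Twistable} applies. Thus $\Tw$-stability is equivalent to the condition
\[
\ad_{\mu_2(\alpha, -)-\mu_2(-, \alpha)}(\nu)=0
\]
in $\Tw\ncGerst$, for every element $\nu\in \ncGerst$. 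To disprove $\Tw$-stability, it therefore suffices to exhibit a single bamboo $\nu$ for which this bracket is nonzero.

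First I would unravel what the operator $\ad_{\mu_2(\alpha,-)-\mu_2(-,\alpha)}$ does combinatorially. Under the shifted multiplicative structure, $\mu_2$ is the single-edge bamboo
$\vcenter{\xymatrix@M=3pt@R=5pt@C=5pt{*+[o][F-]{1}\ar@{-}[r] &*+[o][F-]{2}}}$, so $\mu_2(\alpha,-)$ attaches a black vertex with an edge to the left of $\nu$ and $\mu_2(-,\alpha)$ attaches a black vertex with an edge to the right; the bracket also produces the terms where $\nu$ is inserted into a slot of $\mu_2$, that is, where $\nu$ is pre-composed so that a black vertex gets attached by an edge to the leftmost or rightmost white vertex of $\nu$. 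The key structural point, already visible in the description of the five summands of $\dd^{\mu_1^\alpha}$ above, is that this operator records whether adjacent vertices of $\nu$ are connected by an edge: the summands that \emph{replace} a white vertex by an adjacent black vertex (types $(3)$ and $(4)$) carry an extra sign and, crucially, are only partially cancelled by the summands that \emph{attach} an external black vertex (types $(1)$ and $(2)$).

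The concrete computation I would carry out is the one already displayed: take $\nu$ to be a bamboo with a ``gap'', the simplest candidate being the arity $2$ bamboo with no edge,
$\vcenter{\xymatrix@M=3pt@R=5pt@C=5pt{*+[o][F-]{1} &*+[o][F-]{2}}}$, or the three-vertex bamboo appearing in the example above. The displayed computation shows that several terms appear twice with opposite signs and cancel, but at least one term survives: attaching a black vertex between the two disconnected white vertices (as in the surviving graph of type $(4)$ linking the black vertex to $\nu$) does not cancel, because a disconnected pair of white vertices has no edge for the competing boundary term to act upon. I would verify that, after collecting all five types of contributions with their Koszul signs, the resulting element of $\Tw\ncGerst$ is nonzero — the surviving bamboo being one with a black vertex edge-connected to exactly one of the two white vertices.

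The main obstacle is the sign bookkeeping: one must track the Koszul signs generated by reordering edges (the parity of the number of edges a new edge jumps over, with the extra $-1$ for types $(3)$, $(4)$, $(5)$) carefully enough to be certain that the surviving term genuinely does not cancel against another. The pairs that cancel do so precisely because they carry opposite signs, so an error in the sign convention could spuriously cancel the witness term; I would therefore fix the default left-to-right edge ordering and compute each of the (at most a handful of) contributing graphs explicitly, confirming that the coefficient of the witness bamboo is $\pm 2$ rather than $0$. Once a single $\nu$ with $\ad_{\mu_2(\alpha,-)-\mu_2(-,\alpha)}(\nu)\neq 0$ is produced, Proposition~\ref{prop:Twistable} immediately yields that $\ncGerst$ is not $\Tw$-stable, completing the proof.
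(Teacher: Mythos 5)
Your proposal is correct and follows essentially the same route as the paper: invoke Proposition~\ref{prop:Twistable} and exhibit the edgeless arity~$2$ bamboo as a witness, whose image under the adjoint action consists of the two non-cancelling graphs with a black vertex edge-connected to exactly one white vertex. The only cosmetic discrepancy is the sign in the operator (you write $\mu_2(\alpha,-)-\mu_2(-,\alpha)$ as in the unshifted statement of Proposition~\ref{prop:Twistable}, while the paper, working with the shifted multiplicative structure, uses $\mu_2(\alpha,-)+\mu_2(-,\alpha)$), which does not affect the conclusion since the two surviving terms are distinct basis elements.
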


\begin{proof}
We apply Proposition~\ref{prop:Twistable} with the following computation: 
\begin{align*}
\ad_{\mu_2(\alpha, -)+\mu_2(-, \alpha)}\left(
\vcenter{\xymatrix@M=5pt@R=10pt@C=10pt{
		*+[o][F-]{1} &*+[o][F-]{2}}}
		\right)& =
(-1)\cdot\  \vcenter{\xymatrix@M=5pt@R=10pt@C=10pt{
		*+[o][F-]{1}\ar@{-}[r] & *+[o][F**]{} & 
		 *+[o][F-]{2}}}
\\
& \phantom{ =\ } + (-1)\cdot\  \vcenter{\xymatrix@M=5pt@R=10pt@C=10pt{
		*+[o][F-]{1} & *+[o][F**]{}\ar@{-}[r] & 
		 *+[o][F-]{2}}}
\\
& \neq 0\ .
\end{align*}
\end{proof}

The ns operad $\calS^{-1}\As$ of shifted associative algebras is isomorphic to the ns suboperad of $\ncGerst$ generated by the element $b\coloneqq 
\vcenter{\xymatrix@M=5pt@R=10pt@C=10pt{
		*+[o][F-]{1}\ar@{-}[r] &*+[o][F-]{2} }}$\ .
We consider the complete ns operad 
\[\calS^{-1}\As^+\coloneqq \frac{\calS^{-1} \As\hat{\vee} \, \gamma}{\big( b(\gamma, -), \ b(-,\gamma) \big)}\ ,
\]
where $\gamma$ is an arity $0$ degree $-4$ element placed in $\F_1$. 

\begin{theorem}  \label{thm:HTwncGerst} 
The map of complete dg ns operads 
\[\calS^{-1}\As\hat{\vee}\, \gamma \to \Tw\ncGerst\]  
defined by 
\[ b\mapsto \vcenter{\xymatrix@M=5pt@R=10pt@C=10pt{
		*+[o][F-]{1}\ar@{-}[r] &*+[o][F-]{2} }}\ , \ \ 
\gamma \mapsto \vcenter{\xymatrix@M=5pt@R=10pt@C=10pt{
			 *+[o][F**]{}  &*+[o][F**]{} }}\]
induces the isomorphism of complete ns operads 
\[  {H}_\bullet\left(\Tw\ncGerst\right)\cong \calS^{-1}\As^+\ .\]
\end{theorem}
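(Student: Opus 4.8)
The plan is to compute the homology of the complete dg ns operad $\Tw\ncGerst$ directly, by setting up a convergent spectral sequence (or equivalently a filtration argument) that identifies the homology with the stated operad $\calS^{-1}\As^+$. Since the twisted differential $\dd^{\mu_1^\alpha}$ only ever adds or splits black vertices and edges while leaving the white vertices fixed, the complex $\Tw\ncGerst(n)$ decomposes arity by arity, and within each arity the problem reduces to understanding, for each arrangement of the $n$ white vertices along a bamboo, the chain complex generated by inserting black vertices and connecting edges. First I would reduce to a purely combinatorial ``black vertex'' complex: fix the positions of the white vertices (which split the bamboo into white segments separated by gaps, plus two external ends), and observe that the differential acts independently on each maximal string of consecutive black vertices. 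This locality is the key structural input, and it reduces the computation to a single model complex $K_m$ on strings of black vertices with the differential given by types (1)--(5) above.

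The heart of the argument is then to show that this local black-vertex complex is acyclic except in the one situation that survives to give the generator $\gamma = \vcenter{\xymatrix@M=5pt@R=10pt@C=6pt{*+[o][F**]{} & *+[o][F**]{}}}$ (two disconnected black vertices) together with the white generator $b$. The plan is to exhibit an explicit contracting homotopy on $K_m$, in the spirit of the contracting homotopy $h$ used in the proof of Proposition~\ref{prop:KPphenom}: the relevant map should attach a fresh black vertex (or merge a black vertex into an edge) at a canonical position, and one checks $\dd^{\mu_1^\alpha} h + h \dd^{\mu_1^\alpha} = \id$ away from the surviving classes. Concretely, I would single out the ``connect to the left'' operations of types (1) and (3) as the differential whose homotopy inverse is ``detach the leftmost black vertex'', so that the black strings behave like an acyclic cone except for the minimal configurations. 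This will show that the only surviving homology classes in each arity are spanned by bamboos whose components are white segments joined by edges, i.e.\ precisely the images of $b$ and of the disconnected double black vertex $\gamma$.

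Once acyclicity of the local complexes is established, I would assemble the surviving classes into an operad and identify it. The map in the statement is manifestly a morphism of complete dg ns operads (one checks directly that $b$ and $\gamma$ are cycles: $\dd^{\mu_1^\alpha}$ of the single edge $b$ vanishes because the attaching terms cancel in pairs with opposite signs as illustrated in the excerpt, and $\dd^{\mu_1^\alpha}(\gamma)=0$ since a disconnected pair of black vertices has no edges to create by the local differential), and the relations $b(\gamma,-)=b(-,\gamma)=0$ hold on homology because composing the two-edge configuration forces a connecting edge that is a boundary. I would then verify that on the level of the underlying graded $\Sy$-modules the induced map is an isomorphism, using the homology computation of the previous paragraph to match dimensions arity by arity, and invoke the fact that $\calS^{-1}\As$ is the suboperad generated by $b$ together with the freely adjoined central element $\gamma$ killed by the stated relations.

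The main obstacle I anticipate is the sign bookkeeping in the contracting homotopy, since the Koszul signs governing the five types of differential summands (with the extra $-1$ for types (3),(4),(5)) must be tracked precisely for the identity $\dd^{\mu_1^\alpha}h+h\dd^{\mu_1^\alpha}=\id$ to hold on the nose; the cancellations that make $b$ and $\gamma$ cycles (visible in the example where three boundary terms appear twice with opposite signs) suggest the signs work out, but establishing a clean global homotopy rather than a term-by-term miracle is the delicate part. A secondary subtlety is the completeness/convergence issue: because elements of $\Tw\ncGerst$ are infinite series in the number of black vertices, I would need to confirm that the spectral sequence or homotopy argument converges, which follows from the weight grading by number of black vertices (each of positive weight) exactly as in the convergence arguments of Lemma~\ref{lem:DefGaugeConv} and Proposition~\ref{prop:Extension}.
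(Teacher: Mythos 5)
Your proposal is correct and follows essentially the same route as the paper's proof: reduce to finite sums using the fact that the twisted differential raises the black-vertex count by one, decompose each arity component according to the arrangement of white vertices and connected components, factor the resulting complex as a tensor product over the maximal black strings, compute the local homology, and identify the survivors (connected all-white bamboos generated by $b$, plus the disjoint pair of black vertices $\gamma$) with $\calS^{-1}\As^+$ by checking the relations and matching dimensions. The one caveat is that your ``single model complex with a contracting homotopy'' understates the crux: the paper computes the local differential coefficient $c_n$ in six distinct positional cases (black string between two whites, attached to a white on one side only, at or away from an end of the whole bamboo, etc.), and these local complexes are neither mutually isomorphic nor all acyclic — two of them have homology at length $0$ and one at length $1$ — so executing your homotopy argument would force exactly this case analysis (and note that $\dd^{\mu_1^\alpha}(\gamma)=0$ holds by cancellation of terms, not because no terms are created).
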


\begin{proof} Since the elements of $\Tw \ncGerst$ are series indexed by $k\geq 0$ of finite sums of 
the bamboos with  $k$ black vertices and since the  differential $\dd^{\mu_1^\alpha}$ increases the number of black vertices by one, it is enough to consider the case of finite series, i.e. sums. 

Note that the differential preserves the number of the connected components~$K\geq 1$ and the number of the white vertices $N_1,\dots,N_K\geq 0$ on these components. 
The subgraph that consists of all black vertices and edges connecting them has $K+\sum_{i=1}^K N_i$
 disjoint connected components (some of them can be empty in a particular graph). The chain complex of all graphs with fixed~$K$ and fixed $N_1,\dots,N_K$ is isomorphic to the tensor product of the $K+\sum_{i=1}^K N_i$ chain complexes disjoint black components described below. 

Consider a connected black component of length $n$, for $n\ge 0$. Under the  action of the differential, it is replaced with a connected black component of length~$n+1$, with a coefficient $c_n$ that depends on its position within the ambient graph. Here is a full list of the possible cases:
\begin{enumerate}
	\item The black component is connected to white vertices both on the left and on the right: $c_n=0$ for even $n$ and $c_n=\pm 1$ for odd $n$. 
	\item The black component is connected to a white vertex only on the left and it is not the rightmost component of the ambient bamboo: $c_n=\pm 1$, for even $n$, and $c_n=0$, for odd $n$. The same for the interchanged left and right. 
	\item The black component is connected to a white vertex only on the left and it is the rightmost component of the ambient bamboo: $c_n=0$, for even $n$, and $c_n=\pm 1$, for odd $n$. The same for the interchanged left and right. 
	\item The black component is not connected to white vertices and is neither the leftmost nor the rightmost component of the ambient bamboo: $c_n=0$, for even $n$, and $c_n=\pm 1$ for odd $n$. Note that in this case $n\geq 1$. 
	\item The black component is not connected to white vertices and it is the rightmost but not the leftmost component of the ambient bamboo: $c_n=\pm 1$, for even $n$, and $c_n=0$ for odd $n$. The same for the interchanged left and right. Note that in this case $n\geq 1$. 
	\item The black component is not connected to white vertices and it is simultaneously the rightmost and the leftmost component of the ambient bamboo, that is this black component is the whole ambient bamboo graph satisfying $K=1$, $N_1=0$: $c_n=0$, for even $n$, and $c_n=\pm 1$, for odd $n$. Note that in this case $n\geq 1$. 
\end{enumerate}
In the cases (2), (4), and (6), the corresponding chain complex is acyclic, thus the homology is  equal to zero. In the cases (1) and (3), the homology is one-dimensional represented by a black component of length $0$. In the case (5), the homology is one-dimensional represented by a black component of length $1$. 

Thus, for the ambient graph, we either have the one-dimensional homology group for $K=1$, $N_1\geq 1$, represented by the connected bamboos  with only white vertices or 
we have the one-dimensional homology group for $K=2$, $N_1=N_2=0$, represented by two disjoint black vertices. These graphs generate the complete ns operad $\calS^{-1} \As \hat{\vee} \, \gamma
$, and it is clear that the substitution of the latter graph into a connected bamboo of white vertices of length at least two gives a boundary of the differential, cf. the acyclic case (2) above.
\end{proof}

 Theorem~\ref{thm:HTwncGerst} shows  that the homology of the deformation complex 
  \[
 \Def\big(\allowbreak \calS^{-1}\Ai\to \allowbreak\ncGerst\big)
  \]
 is the one-di\-men\-sion\-al Lie algebra concentrated in degree $-2$ with the trivial Lie bracket, since it is isomorphic to the double suspension of the homology of the arity $0$ component of $\Tw\ncGerst$ by Proposition~\ref{prop:DefTwP0} (we have to apply the double suspension here since the degree of the Maurer--Cartan elements of $\calS^{-1}\Ai$ is $-2$). 
 As usual in deformation theory, see Theorem~\ref{thm:InfDefGeneral}, this result can be interpreted as a strong rigidity statement about the class of ns operad morphisms $\calS^{-1}\Ai\to \ncGerst$: there is no non-trivial  infinitesimal deformation of the map considered here.
 The above computations show that the action 
of the  homology of the deformation Lie algebra $\Def\big(\allowbreak \calS^{-1}\Ai\to \allowbreak\ncGerst\big)$
 on ${H}_\bullet\left(\Tw\ncGerst\right)$ is trivial.

\section{Twisting the nonsymmetric operad \texorpdfstring{$\mathrm{ncBV}$}{ncBV}} \label{subsec:ncBV}

\index{operad!$\mathrm{ncBV}$}
The nonsymmetric operad $\ncBV$ from \cite[Section~3.1]{DotsenkoShadrinVallette19} can be defined as follows. As a graded $\k$-module the space $\ncBV(n)$, for $n\ge 1$, is freely spanned by possibly disconnected linear graphs (bamboos) with at most one tadpole edge at each vertex, where the vertices are ordered from left to right. For instance, we have 
\[
\ncBV(1) = \left\langle\ \
\vcenter{
	\xymatrix @M=5pt@R=5pt@C=10pt{
		*+[o][F-]{1}
	}
}\ , \  \vcenter{
	\xymatrix @M=5pt@R=5pt@C=10pt{
		*+[o][F-]{1} \ar@{-}@(ru,u) 
	}
}\ \ \
\right\rangle \ ,\]
and
\[ 
\vcenter{
	\xymatrix @M=5pt@R=5pt@C=10pt{
		*+[o][F-]{1}\ar@{-}[r] &*+[o][F-]{2}\ar@{-}[r] \ar@{-}@(ru,u) &  *+[o][F-]{3} & *+[o][F-]{4} \ar@{-}@(ru,u)&*+[o][F-]{5}\ar@{-}[r] \ar@{-}@(ru,u)&  *+[o][F-]{6} 
	}
}
\quad \in \ncBV(6)\ .
\]
Each edge, including the tadpoles, has homological degree equal to $1$ and the total homological degree is equal to the number of edges. 
(To match with the presentation given in \cite[Section~3.2]{DotsenkoShadrinVallette19}, these elements are in one-to-one correspondence with right-combs of binary generators labelled, at the very top of them, with nothing or one copy of the generator $\Delta$ at each leaf.)
 It is convenient to think that the edges are ordered and reordering generates the sign, like in the above $\ncGerst$ case. For a given bamboo with tadpoles, by default, we order first the edges from left to right first and then the tadpoles from left to right. \\

 The operadic composition $\circ_i$ of two  bamboos with tadpoles, $\Gamma_1\in \ncBV(n)$ and $\Gamma_2\in \ncBV(k)$, amounts to placing the graph $\Gamma_2$ at the place of the vertex $i$ of the graph $\Gamma_1$, globally relabelling the vertices from left to right by the integers $1,\dots,n+k-1$. The edge $(i-1,i)$ (resp. $(i+k-1,i+k)$) belongs to the resulting graph if and only if the edge $(i-1,i)$ (resp., $(i,i+1)$) belongs to the graph $\Gamma_1$. If there is a tadpole at the vertex $i$ of $\Gamma_1$, then it becomes either a new tadpole at one of the vertices $i,i+1,\dots,i+k-1$ of $\Gamma_1\circ_i\Gamma_2$, when no tadpole was yet present, or a new edge connecting two consecutive vertices from this set. A Koszul sign is generated by reordering the edges and tadpoles. For instance,
\begin{align*}
\rule{0pt}{22pt}
 \vcenter{\xymatrix@M=5pt@R=10pt@C=10pt{
		*+[o][F-]{1}\ar@{-}[r] &*+[o][F-]{2}\ar@{-}@(ru,u)
	}
}
\ \ \circ_2 \ \
\vcenter{\xymatrix@M=5pt@R=10pt@C=10pt{
		*+[o][F-]{1}\ar@{-}[r] &*+[o][F-]{2} \ar@{-}@(ru,u) &  *+[o][F-]{3} \ar@{-}@(ru,u) 
	}
}\ \
 = \ \ & (-1)\cdot\ \vcenter{\xymatrix@M=5pt@R=10pt@C=10pt{
 		*+[o][F-]{1}\ar@{-}[r] & *+[o][F-]{2}\ar@{-}[r] \ar@{-}@(ru,u) &*+[o][F-]{3} \ar@{-}@(ru,u) &  *+[o][F-]{4} \ar@{-}@(ru,u)
 	}
 }
\\ & +\rule{0pt}{30pt}
(-1)\cdot\ \vcenter{\xymatrix@M=5pt@R=10pt@C=10pt{
		*+[o][F-]{1}\ar@{-}[r] & *+[o][F-]{2}\ar@{-}[r]  &*+[o][F-]{3} \ar@{-}@(ru,u) \ar@{-}[r]&  *+[o][F-]{4} \ar@{-}@(ru,u)
	}
}
\
.
\end{align*}

As is the previous example, we consider a shifted ns multiplicative structure $\calS^{-1}\Ai \twoheadrightarrow \calS^{-1}\As\to \ncBV$ given by the assignment 
\[
\mu_2\mapsto \quad
\vcenter{\xymatrix@M=5pt@R=10pt@C=10pt{
		*+[o][F-]{1}\ar@{-}[r] &*+[o][F-]{2} 
	}
}
\
.
\]
The elements of the underlying  $\k$-module of  $\Tw\ncBV(n)_{-d}$ are series indexed by $k\geq 0$ of finite sums of 
the bamboos with tadpoles, with $n$ white vertices (labelled by $1,\dots,n$ from left to right), $k$ black vertices, and a total of $d$ edges and tadpoles. Each black vertex carries degree $-2$ and each edge carries degree $1$. 
The components of the twisted differential $\mathrm{d}^{\mu_1^\alpha}$ are the same as in the previous case of the dg ns operad $\Tw \ncGerst$. The only new case comes with vertices (white or black) having tadpoles: they are again replaced by two vertices connected by an edge (black-white plus white-black, or black-black) where the tadpole distributes over the two vertices. The signs remain the same as in the $\Tw \ncGerst$ case: 
the Koszul type sign is given by counting how many edges from left to right that the new edge has to jump over. With the  order considered on edges and tadpoles, this means that tadpoles will never be taken into account when computing this sign. There is an extra $-1$ sign for the terms which replace a vertex by two vertices. 
 For instance,
\begin{align}\label{Eqn:RelationHTwncBV}
\rule{0pt}{22pt} 
\mathrm{d}^{\mu_1^\alpha}\left(\ 
\vcenter{\xymatrix@M=5pt@R=10pt@C=10pt{
		*+[o][F-]{1} \ar@{-}@(ru,u) 
	}
}\ \right)
\ \
=
\ \ (-1)\cdot \
\vcenter{\xymatrix@M=5pt@R=10pt@C=10pt{
		*+[o][F-]{1} \ar@{-}[r] &*+[o][F**]{} \ar@{-}@(ru,u) 
	}
}
\ \
+(-1)\cdot\ 
\vcenter{\xymatrix@M=5pt@R=10pt@C=10pt{
		*+[o][F**]{} \ar@{-}@(ru,u) & *+[o][F-]{1} \ar@{-}[l] 
	}
}
\ 
,
\end{align}
and the graphs 
\begin{align*}
\rule{0pt}{22pt}
\vcenter{\xymatrix@M=5pt@R=10pt@C=10pt{
		*+[o][F-]{1} \ar@{-}[r] \ar@{-}@(ru,u) &*+[o][F**]{}  
	}
}
\ \ \
\text{ and }
\ \ \
\vcenter{\xymatrix@M=5pt@R=10pt@C=10pt{
		*+[o][F**]{} & *+[o][F-]{1} \ar@{-}[l] \ar@{-}@(ru,u) 
	}
}
\end{align*}
appear in the expression for the differential twice, with the opposite signs. 

\begin{proposition} \label{prop:ncBVnotTwistable}
The ns operad $\ncBV$ is not $\Tw$-stable. 
\end{proposition}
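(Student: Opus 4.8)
The plan is to invoke the criterion established in Proposition~\ref{prop:Twistable}, exactly as was done for the companion statement in Proposition~\ref{prop:ncGerstnotTwistable}. That criterion states that if a multiplicative ns operad $\calP$ with zero differential is $\Tw$-stable, then every element $\nu\in\calP$ must satisfy $\ad_{\mu_2(\alpha,-)-\mu_2(-,\alpha)}(\nu)=0$ in $\Tw\calP$. (Here the multiplicative structure is the shifted one, so the operation $\mu_2$ is the single-edge bamboo $b$, and the sign conventions match those displayed for $\Tw\ncBV$ above.) Since $\ncBV$ has trivial differential, it suffices to exhibit a single generator on which this adjoint action fails to vanish.

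First I would recall that the operad $\ncGerst$ embeds into $\ncBV$ as the ns suboperad spanned by the tadpole-free bamboos, so the computation already performed in the proof of Proposition~\ref{prop:ncGerstnotTwistable} applies verbatim: taking $\nu$ to be the arity-$2$ generator $\vcenter{\xymatrix@M=3pt@R=5pt@C=5pt{*+[o][F-]{\scriptstyle 1} &*+[o][F-]{\scriptstyle 2}}}$, one finds
\[
\ad_{\mu_2(\alpha,-)-\mu_2(-,\alpha)}\left(\vcenter{\xymatrix@M=3pt@R=5pt@C=5pt{*+[o][F-]{\scriptstyle 1} &*+[o][F-]{\scriptstyle 2}}}\right)=
-\ \vcenter{\xymatrix@M=3pt@R=5pt@C=5pt{*+[o][F-]{\scriptstyle 1}\ar@{-}[r]&*+[o][F**]{}&*+[o][F-]{\scriptstyle 2}}}
\ -\ \vcenter{\xymatrix@M=3pt@R=5pt@C=5pt{*+[o][F-]{\scriptstyle 1}&*+[o][F**]{}\ar@{-}[r]&*+[o][F-]{\scriptstyle 2}}}\ \neq 0\ ,
\]
where the two resulting graphs are linearly independent bamboos (one with a black vertex edge-connected to the left white vertex, one connected to the right), hence their sum is nonzero in $\Tw\ncBV$. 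Because these bamboos carry no tadpoles, no cancellation with tadpole-bearing terms can occur, and the nonvanishing is immediate.

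The only point that genuinely requires care — and the step I would expect to be the main (if modest) obstacle — is confirming that the two displayed graphs remain linearly independent inside $\Tw\ncBV$ rather than in $\Tw\ncGerst$, i.e.\ that passing to the larger operad introduces no new relations among tadpole-free bamboos. This follows from the fact that $\ncGerst\hookrightarrow\ncBV$ is an inclusion of ns operads with the tadpole-free bamboos forming a direct summand of the underlying graded module, so the twisted complexes respect this splitting and the nonvanishing witnessed in $\Tw\ncGerst$ persists. I would state this observation explicitly, then conclude by Proposition~\ref{prop:Twistable} that $\ncBV$ is not $\Tw$-stable, since the necessary condition for $\Tw$-stability is violated.
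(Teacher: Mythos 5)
Your proposal is correct and follows exactly the paper's route: the paper proves this proposition by citing Proposition~\ref{prop:Twistable} and observing that the computation from Proposition~\ref{prop:ncGerstnotTwistable} carries over verbatim, which is precisely what you do. Your extra remark that the tadpole-free bamboos span a direct summand of $\Tw\ncBV$, so no new cancellations can arise, is a detail the paper leaves implicit but is a welcome clarification rather than a deviation.
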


\begin{proof}
The same argument and computation as in the case of the ns operad $\ncGerst$ hold here and prove the result, see Proposition~\ref{prop:ncGerstnotTwistable} and Proposition \ref{prop:Twistable}. 
\end{proof}

In order to describe the homology ns operad ${H}_\bullet(\Tw\ncBV)$, we consider the following extension of the complete ns operad introduced in the previous $\ncGerst$ case:
\[\calS^{-1}\As^{++}\coloneqq \frac{\calS^{-1}\As\hat{\vee}\, \gamma\, \hat{\vee}\, \zeta}
{\big( b(\gamma, -),\ b(-,\gamma), b(\zeta, -)+b(-,\zeta)\big)}\ ,
\]
where $\zeta$ is an arity $0$ degree $-1$ element placed in $\F_1$. 

\begin{theorem} \label{thm:HTwncBV} 
The map of complete dg ns operads 
\[
\calS^{-1}\As\hat{\vee}\, \gamma\, \hat{\vee}\, \zeta \to \Tw\ncBV
\] 
defined by 
\[ 
b\mapsto \vcenter{\xymatrix@M=5pt@R=10pt@C=10pt{
		*+[o][F-]{1}\ar@{-}[r] &*+[o][F-]{2} }}\ , \ \ 
\gamma \mapsto \vcenter{\xymatrix@M=5pt@R=10pt@C=10pt{
			 *+[o][F**]{}  &*+[o][F**]{} }}
			 \ , \ \ 
\zeta \mapsto 
\vcenter{\xymatrix@M=5pt@R=10pt@C=10pt{
		*+[o][F**]{} \ar@{-}@(ru,u)  }}
			 \]
induces the isomorphism of complete ns operads 
\[ {H}_\bullet\left(\Tw\ncBV\right)\cong \calS^{-1}\As^{++}\ .\]
\end{theorem}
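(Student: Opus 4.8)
The plan is to mirror the proof of Theorem~\ref{thm:HTwncGerst}, treating the tadpoles as an extra decoration on top of the bamboo combinatorics already analysed there. First I would record that the twisted differential $\dd^{\mu_1^\alpha}$ raises the number of black vertices by exactly one, so that each $\Tw\ncBV(n)_{-d}$ is a finite-dimensional complex and it suffices to compute the homology of finite sums of graphs. As in the $\ncGerst$ case, the differential preserves both the number $K$ of connected components and the numbers $N_1,\dots,N_K$ of white vertices on them: the positional moves $(1)$--$(5)$ only insert black vertices inside an existing component, while the tadpole-splitting move expands a single vertex in place and redistributes its tadpole. Fixing $K$ and $N_1,\dots,N_K$, I would then decompose the resulting subcomplex as a tensor product indexed by the $K+\sum_i N_i$ gaps around the white vertices, each factor being the complex of black sub-bamboos carrying tadpole decorations that sit in that gap.

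The heart of the argument is the homology of a single tadpole-decorated black gap complex, organised by the same positional cases as in Theorem~\ref{thm:HTwncGerst} (attached to a white vertex on the left and/or on the right, leftmost and/or rightmost among the black-only components). The tadpole decorations double the number of basis graphs of each black length, and I expect the tadpole-free part to reproduce verbatim the $\ncGerst$ answer while the tadpoles contribute an acyclic factor in every case but one: the isolated black vertex bearing a single tadpole. This produces the new homology class $\zeta$ of degree $-1$, alongside the classes $\gamma$ (two disjoint black vertices, degree $-4$) and the tadpole-free all-white bamboos already surviving in the $\ncGerst$ computation. Assembling the surviving tensor factors shows that $H_\bullet(\Tw\ncBV)$ is generated, as a complete ns operad, by the images of $b$, $\gamma$ and $\zeta$, each of which is readily checked to be a cycle by cancellations analogous to those of the $\ncGerst$ case.

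It then remains to pin down the relations, that is, to show that the stated map factors through an isomorphism onto $\calS^{-1}\As^{++}$. The relations $b(\gamma,-)$ and $b(-,\gamma)$ are boundaries by exactly the acyclicity argument used for $\calS^{-1}\As^{+}$: substituting the two disjoint black vertices into a white bamboo of length at least two lands in the acyclic positional case. The new relation $b(\zeta,-)+b(-,\zeta)=0$ is precisely the content of Equation~\eqref{Eqn:RelationHTwncBV}: the differential of a white vertex carrying a tadpole equals, up to an overall sign, $b(\zeta,-)+b(-,\zeta)$, so this combination is a boundary while $b(\zeta,-)$ and $b(-,\zeta)$ are individually nonzero. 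A degree count and an arity-by-arity dimension comparison then upgrade the resulting surjection $\calS^{-1}\As^{++}\twoheadrightarrow H_\bullet(\Tw\ncBV)$ to an isomorphism.

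The step I expect to be the main obstacle is the tadpole bookkeeping in the differential and the verification that the tensor decomposition genuinely isolates the tadpole contributions. The delicate point is that the tadpole-splitting move couples a white (or black) vertex to the black sub-bamboo in the adjacent gap instead of acting within a single tensor factor; controlling the ensuing cancellations --- such as the one noted after Equation~\eqref{Eqn:RelationHTwncBV}, where the terms leaving the tadpole on the white vertex cancel in pairs with opposite signs --- together with all the Koszul signs is where the real work lies. It is exactly this coupling between a tadpole and its neighbouring gap that is responsible for the extra relation $b(\zeta,-)+b(-,\zeta)=0$, absent in the tadpole-free operad $\ncGerst$.
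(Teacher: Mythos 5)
Your opening reductions (finite sums, preservation of the number of connected components and of white vertices per component) agree with the paper, but the heart of your argument --- the tensor decomposition of the fixed-$(K,N_1,\dots,N_K)$ subcomplex into per-gap factors of ``black sub-bamboos carrying tadpole decorations'' --- is not a decomposition of \emph{chain complexes}, and this is a genuine gap rather than deferred bookkeeping. The tadpole-splitting move takes a \emph{white} vertex carrying a tadpole, deletes that tadpole, and creates a black vertex with a tadpole in an adjacent gap; so the differential does not act factor-by-factor, and the white-vertex tadpole data (which lives in no gap factor at all) is exchanged with the gaps. One can see that no sign-level repair exists because your predicted answer is numerically wrong: if each gap contributed only ``the $\ncGerst$ answer plus one isolated black vertex with a tadpole,'' then in arity $2$ you would find three independent classes with a single black tadpole vertex (one per gap), whereas $\calS^{-1}\As^{++}(2)$ contains exactly one such class, since the relation $b(\zeta,-)=-b(-,\zeta)$ lets every $\zeta$ be pushed to the left; moreover you would miss entirely the surviving classes represented by \emph{chains} of $j\geq 2$ black vertices with tadpoles attached to the left end of a white bamboo, which represent $b^{\,j+n-1}(\zeta^j,-,\dots,-)$ and are nonzero, pairwise independent classes in ${H}_\bullet(\Tw\ncBV)$. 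So your concluding ``arity-by-arity dimension comparison'' would fail rather than upgrade the map to an isomorphism.

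The missing idea, which is what the paper actually does, is to \emph{filter before decomposing}: one filters $\Tw\ncBV(n)$ by the number of black vertices carrying tadpoles. On the associated graded, the differential only creates black vertices \emph{without} tadpoles, so every tadpole-bearing vertex (black or white) is inert and plays exactly the role of a white vertex; only then does the gap-by-gap tensor argument of Theorem~\ref{thm:HTwncGerst} apply verbatim, computing the first page. The residual differential --- the one responsible for your ``coupling,'' which replaces a white vertex with a tadpole by that vertex with a black tadpole vertex attached on its left or right, cf.\ Equation~\eqref{Eqn:RelationHTwncBV} --- is then handled by a second filtration (by the number of sub-bamboos consisting of a black tadpole vertex attached to the left of a white vertex), whose spectral sequence kills all configurations except $\gamma$ and the bamboos with a left-justified chain of black tadpole vertices followed by white vertices. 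These normal forms match the monomial basis of $\calS^{-1}\As^{++}$, which is what closes the dimension count. Your identification of the relation $b(\zeta,-)+b(-,\zeta)=0$ with Equation~\eqref{Eqn:RelationHTwncBV}, and your treatment of the $\gamma$-relations, are correct and coincide with the paper; it is only the homology computation itself that needs the two-step spectral sequence in place of the direct tensor splitting.
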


\begin{proof} As in the previous case, since the elements of $\Tw \ncBV$ are series indexed by $k\geq 0$ of finite sums of 
the bamboos with tadpoles with  $k$ black vertices and since the  differential $\dd^{\mu_1^\alpha}$ increases the number of black vertices by one, it is enough to consider the case of finite series, i.e. sums. 

At fixed arity $n\ge 0$, we consider the increasing filtration $F^p \Tw\ncBV(n)$ span\-ned by bamboos containing  at least $-p$ black vertices with tadpoles. Note that this filtration is exhaustive and bounded below since a black vertex with a tadpole carries degree $-1$, and
the twisted differential $\dd^{\mu_1^\alpha}$ preserves this filtration. So the associated spectral sequence converges to the homology of $\Tw\ncBV(n)$. The differential $d_0$ of the first page involves the components of 
 $d^{\mu_1^\alpha}$ which do not increase the number of black vertices with tadpoles, that is the ones which increase only the number of black vertices without tadpoles. In order to compute its homology groups, we apply the same arguments as in the proof of Theorem~\ref{thm:HTwncGerst}: here the black and white vertices with tadpoles play the same role as the white vertices without tadpoles. Therefore, the second page $E^1$ is spanned by the homology class $\gamma$ represented by $\vcenter{\xymatrix@M=5pt@R=10pt@C=10pt{
		*+[o][F**]{}  &*+[o][F**]{}}}$ and by the connected bamboos of white vertices with or without tadpoles and black vertices with tadpoles. The differential $d_1$ creates a black vertex with tadpole and an edge on the left and on the right of any white vertex with tadpole, that is 
\[
\rule{0pt}{22pt} d_1\left(\ 
\vcenter{\xymatrix@M=5pt@R=10pt@C=10pt{
		*+[o][F-]{\phantom{1}} \ar@{-}@(ru,u) }}
\ \right)
=
\ 
(-1)\cdot \
\vcenter{\xymatrix@M=5pt@R=10pt@C=10pt{
		*+[o][F**]{} \ar@{-}@(ru,u) & *+[o][F-]{\phantom{1}} \ar@{-}[l] 
	}
}
\ + \ 
(-1)\cdot \
\vcenter{\xymatrix@M=5pt@R=10pt@C=10pt{
		*+[o][F-]{\phantom{1}} \ar@{-}[r] &*+[o][F**]{} \ar@{-}@(ru,u) 
	}}
	\]
and	
\[
d_1\left( \ 
\vcenter{\xymatrix@M=5pt@R=10pt@C=10pt{
		*+[o][F**]{} \ar@{-}@(ru,u) 
	}
}
\ \right)
= 
d_1\left( \ 
\vcenter{\xymatrix@M=5pt@R=10pt@C=10pt{
		*+[o][F-]{\phantom{1}} 
	}
}
\ \right)
= 
d_1\left( \ 
\vcenter{\xymatrix@M=5pt@R=10pt@C=10pt{
		*+[o][F**]{} 
	}
}
\ \right)
= 0 \ .
\]
We consider a filtration on the chain complex $(E^1,d_1)$ where $\F^p E^1$ is spanned by the above mentioned bamboos with at least $-p$ sub-bamboos of the form 
\begin{align*}
\rule{0pt}{22pt} \vcenter{\xymatrix@M=5pt@R=10pt@C=10pt{
		*+[o][F**]{} \ar@{-}@(ru,u) & *+[o][F-]{\phantom{1}} \ar@{-}[l] }}\ .
\end{align*}
This filtration is again exhaustive and bounded below, so 
its associated spectral sequence converges to the homology $E^2={H}_\bullet(E^1, d_1)$. 
The differential $\dd_0$ of the first page ${E}^0$ is the second component of $d_1$ mentioned above, that is the one which produces a black vertex with a tadpole on the right-hand side of a white vertex with tadpole. The subcomplexes of ${E}^0$ consisting of bamboos containing $k$ sub-bamboos of the form 
\begin{align*}
\rule{0pt}{22pt} \vcenter{\xymatrix@M=5pt@R=10pt@C=10pt{
		*+[o][F-]{\phantom{1}} \ar@{-}@(ru,u) }}
\qquad \text{or}\qquad 
\vcenter{\xymatrix@M=5pt@R=10pt@C=10pt{
		*+[o][F-]{\phantom{1}} \ar@{-}[r] &*+[o][F**]{} \ar@{-}@(ru,u) 
	}
}
\end{align*}
are acyclic, for $k\ge 1$, since they are isomorphic to the tensor product of $k$ acyclic chain complexes. 
The homology groups ${E}^1$ is thus spanned by the following bamboos: 
\begin{align*}
\rule{0pt}{22pt}
\vcenter{\xymatrix@M=5pt@R=10pt@C=10pt{
		*+[o][F**]{}  &*+[o][F**]{}}} 
		\qquad \text{and} \qquad	
		\vcenter{\xymatrix@M=5pt@R=10pt@C=10pt{
		*+[o][F**]{} \ar@{-}@(ru,u)  \ar@{-}[r] &
		*+[o][F**]{} \ar@{-}@(ru,u)  \ar@{--}[rr] & &
		*+[o][F**]{} \ar@{-}@(ru,u)  \ar@{-}[r] & *+[o][F-]{1} \ar@{-}[r] & *+[o][F-]{2} \ar@{--}[rr] & & *+[o][F-]{n}
	}
}
\ ,
\end{align*} 
with $j\ge 0$ black vertices with tadpoles and $n\ge 0$ white vertices without tadpoles. On such bamboos, the differential $\dd_1$ vanishes, since it is induced by the 
first component of $d_1$ as presented above, the one which produces a black vertex with a tadpole on the left-hand side from a white vertex with tadpole. So the second spectral sequence collapses at ${E}^1$ and the first spectral sequence collapses at $E^2$ with basis given by these latter bamboos. 

The three elements 
\[
\rule{0pt}{22pt}
b\longleftrightarrow \vcenter{\xymatrix@M=5pt@R=10pt@C=10pt{
		*+[o][F-]{1}\ar@{-}[r] &*+[o][F-]{2} }}\ , \ \ 
\gamma \longleftrightarrow \vcenter{\xymatrix@M=5pt@R=10pt@C=10pt{
			 *+[o][F**]{}  &*+[o][F**]{} }}
			 \ , \ \quad \text{and} \quad 
\zeta \longleftrightarrow
\vcenter{\xymatrix@M=5pt@R=10pt@C=10pt{
		*+[o][F**]{} \ar@{-}@(ru,u)  }}\]
		are clearly generators of the homology ns operad ${H}_\bullet(\Tw\ncBV)$. Formula~\ref{Eqn:RelationHTwncBV} gives the relation between $b$ and $\zeta$ introduced in the definition of 
$\calS^{-1} \As^{++}$. So the above assignement induces a morphism 
$\calS^{-1} \As^{++} \to {H}_\bullet\left(\Tw\ncBV\right)$
of complete ns operads, which turns out to be an isomorphism since the dimensions of the underlying graded $\mathbb{N}$-modules of $\calS^{-1} \As^{++}$ coincide with the number of bamboos spanning ${H}_\bullet(\Tw\ncBV)$. 
\end{proof}

Theorem~\ref{thm:HTwncBV} with Proposition~\ref{prop:DefTwP0} show that the homology of 
 \[
 \Def\big(\calS^{-1}\Ai\to \ncBV\big)
 \] 
 is isomorphic to 
$\k \bar\gamma \oplus \k[\hbar]\zeta$, where the degrees are given by $|\bar\gamma|=-2$, $|\zeta|=1$, $|\hbar|=0$, with the trivial Lie bracket. 
Theorem~\ref{thm:InfDefGeneral} implies that the morphism of dg ns operads $\calS^{-1}\Ai\to \ncBV$ considered here admits gauge  independent infinitesimal deformations parametrized by $\NN$. 
The  previous computations show that the action of the homology of the deformation Lie algebra $\Def\big(\calS^{-1}\Ai\to \ncBV\big)$  on ${H}_\bullet\left(\Tw\ncBV\right)$ is trivial. 

\section{Twisting the operad \texorpdfstring{$\Gerst$}{Gerst}} \label{subsec:TwGerst}

\index{operad!$\Gerst$}
The structure operations of a Gerstenhaber algebra are a commutative  product and a degree $1$ Lie bracket  satisfying together the Leibniz relation, see \cite[Section~13.3.10]{LodayVallette12} for more details. The associated operad $\Gerst$ is thus generated by an arity $2$ degree $0$ element $\mu$ with trivial $\Sy_2$ action and by an arity $2$ degree $1$ element $\lambda$ also with trivial $\Sy_2$ action. This latter one induces a  multiplicative structure 
$\dsLi \twoheadrightarrow \calS^{-1}\Lie \to \Gerst$. So, one can twist the operad $\Gerst$.
The various properties of the corresponding twisted operad $\Tw\Gerst\coloneqq \left(
 \Gerst\hat{\vee} \alpha , \dd^{\lambda_1^\alpha}
 \right)$ follow from the general statements developed in~\cite[Section 5]{DolgushevWillwacher15} for operads under distributive law.  We shall present shorter proofs of these properties. 
 
\begin{proposition}[{\cite[Corollary~5.12]{DolgushevWillwacher15}}] \label{prop:GerstTwistable}
The canonical morphism of operads $\Gerst \hookrightarrow \Tw\Gerst$ defines a $\Tw$-coalgebra structure. 
\end{proposition}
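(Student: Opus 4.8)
The plan is to verify that the canonical inclusion $\iota\colon\Gerst\hookrightarrow\Tw\Gerst$, which sends $\mu\mapsto\mu$ and $\lambda\mapsto\lambda$, is a morphism of complete dg operads under $\dsLi$, and that it satisfies the two $\Tw$-coalgebra axioms~\eqref{Rel1} and~\eqref{Rel2} (in their symmetric incarnations). First I would invoke Proposition~\ref{prop:Twistable} in its symmetric form: since the multiplicative structure on $\Gerst$ factors through $\dsLi\twoheadrightarrow\calS^{-1}\Lie\to\Gerst$, and since $\Gerst$ carries the zero differential, the $\Tw$-stability is equivalent to the vanishing of $\ad_{\lambda_1^\alpha}(\nu)$ for every generator $\nu\in\{\mu,\lambda\}$, where $\lambda_1^\alpha=\lambda_2(\alpha,-)$ is the arity-$1$ Maurer--Cartan element (here only the $n=2$ term survives because $\calS^{-1}\Lie$ has a single binary generator). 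Concretely, the inclusion $\Gerst\hookrightarrow\Tw\Gerst$ is a chain map precisely when the twisted differential $\dd^{\lambda_1^\alpha}$ annihilates each generator, i.e. when $\ad_{\lambda_1^\alpha}(\mu)=0$ and $\ad_{\lambda_1^\alpha}(\lambda)=0$.

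The core computation is therefore to show these two brackets vanish. For $\lambda$ this is immediate: $\ad_{\lambda_1^\alpha}(\lambda)=\lambda_2(\alpha,-)\star\lambda-\lambda\star\lambda_2(\alpha,-)$ reduces, after expanding the pre-Lie products $\circ_i$, to the graded Jacobi identity for the degree-shifted bracket $\lambda$ together with the definition of $\lambda_1^\alpha$ as the twist of the $\calS^{-1}\Lie$-structure; this is exactly the statement that $\lambda_1^\alpha$ is the operadic Maurer--Cartan element whose associated twisted differential, restricted to the $\calS^{-1}\Lie$-part, is the internal differential that already vanishes on $\lambda$. For $\mu$ one uses the Leibniz (distributive law) relation tying $\mu$ and $\lambda$: inserting $\alpha$ via $\lambda$ into the two slots of $\mu$ and comparing with the insertion of $\mu$ into $\lambda$ produces a cancellation governed precisely by the compatibility relation $\lambda\circ(\mu\otimes\id)=\mu\circ(\id\otimes\lambda)+\mu\circ(\lambda\otimes\id)$ (up to signs), so that $\ad_{\lambda_1^\alpha}(\mu)=0$. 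Having established both vanishings, Proposition~\ref{prop:Twistable} yields that $\iota$ is a morphism of complete dg operads.

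It then remains to check that $\iota=\Delta_{\Gerst}$ satisfies the counit relation~\eqref{Rel1} and the coassociativity relation~\eqref{Rel2}. Both are essentially formal: $\varepsilon(\Gerst)\circ\iota$ sends $\alpha\mapsto0$ and fixes $\mu,\lambda$, hence equals $\id_\Gerst$, giving~\eqref{Rel1}; and for~\eqref{Rel2} both composites $\Delta(\Gerst)\circ\iota$ and $\Tw(\iota)\circ\iota$ send $\mu,\lambda$ to themselves inside $\Gerst\hat\vee\alpha\hat\vee\beta$ while introducing no $\alpha$ or $\beta$ (since $\iota$ has trivial perturbation terms), so the two sides agree trivially. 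The main obstacle, and the only step requiring genuine calculation, is the vanishing $\ad_{\lambda_1^\alpha}(\mu)=0$: here one must carefully track the Koszul signs coming from the degree-$1$ bracket $\lambda$ and confirm that the Leibniz relation of $\Gerst$ is exactly what makes the two perturbation terms cancel. Once that sign bookkeeping is done, the remaining verifications are purely structural consequences of the comonad axioms already established for $\Tw$ in the symmetric setting.
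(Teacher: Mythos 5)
Your proposal is correct and follows essentially the same route as the paper: both invoke the symmetric analogue of Proposition~\ref{prop:Twistable} (using that the multiplicative structure factors through $\calS^{-1}\Lie$, so that $\lambda_1^\alpha=\lambda(\alpha,-)$ and the twisted differential reduces to $\ad_{\lambda(\alpha,-)}$), and both reduce the claim to the two vanishings $\ad_{\lambda(\alpha,-)}(\lambda)=0$ (Jacobi) and $\ad_{\lambda(\alpha,-)}(\mu)=0$ (Leibniz). The only cosmetic difference is that you re-verify the counit and coassociativity diagrams explicitly, whereas the paper leaves them inside the proof of Proposition~\ref{prop:Twistable}.
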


\begin{proof}
This a direct corollary of the symmetric operad analog of Proposition~\ref{prop:Twistable}. 
The Jacobi relation implies $\dd^{\lambda_1^\alpha}(\lambda)=\ad_{\lambda(\alpha, -)}(\lambda)=0$ and  
the Leibniz relation implies $\dd^{\lambda_1^\alpha}(\mu)=\ad_{\lambda(\alpha, -)}(\mu)=0$~.
\end{proof}

\begin{remark}
Such a result says, in an operadic way, that the commutative product and the degree $1$ Lie bracket of any dg Gerstenhaber algebra form again a dg  Gerstenhaber algebra structure with the twisted differential produced by any Maurer--Cartan element. 
\end{remark}

\begin{theorem}[{\cite[Corollary~5.12 and Corollary~5.13]{DolgushevWillwacher15}}] \label{thm:TwGerst} 
The canonical morphisms of complete dg operads 
\[\Tw\Gerst \xrightarrow{\sim} \Gerst\qquad \text{and} \qquad 
\Tw\Gerst_\infty \xrightarrow{\sim} \Gerst_\infty\]
are quasi-isomorphisms. 
\end{theorem}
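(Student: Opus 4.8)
The plan is to prove both quasi-isomorphisms using the same strategy, namely by identifying an explicit filtration whose associated graded complex has computable homology, and then invoking the comparison with the simpler objects $\Gerst$ and $\Gerst_\infty$. First I would focus on the map $\Tw\Gerst \to \Gerst$ obtained by sending $\alpha \mapsto 0$ and fixing the generators $\mu,\lambda$; this is the counit $\varepsilon(\Gerst)$ from the comonad structure, which is a morphism of complete dg operads by the symmetric analogue of the constructions in Section~\ref{sec:SymTw}. By Proposition~\ref{prop:GerstTwistable}, the canonical inclusion $\Gerst \hookrightarrow \Tw\Gerst$ is a section of $\varepsilon(\Gerst)$, so it suffices to show that this inclusion is a quasi-isomorphism, equivalently that the ``extra'' part of $\Tw\Gerst$ generated by configurations involving the Maurer--Cartan generator $\alpha$ is acyclic.

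The key step is to set up a filtration on $\Tw\Gerst(n)$ by the number of vertices decorated by $\alpha$ (the ``black'' vertices in the graphical description), analogous to the filtrations used in the proofs of Theorem~\ref{thm:HTwncGerst} and Theorem~\ref{thm:HTwncBV} in the nonsymmetric cases. This filtration is exhaustive and bounded below since $\alpha$ carries a definite nonzero homological degree, and the twisted differential $\dd^{\lambda_1^\alpha}$ respects it. On the associated graded, the part of the differential that survives is the component of $\dd^{\lambda_1^\alpha}$ that attaches a fresh $\alpha$-vertex via the bracket $\lambda$, namely the piece coming from $\lambda_1^\alpha = \lambda(\alpha,-)$; one then recognises this as a Koszul-type differential on the space of $\alpha$-decorations whose homology concentrates precisely on configurations with no $\alpha$-vertices at all. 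Here I would use the distributive law structure of $\Gerst$ (commutative product over shifted Lie bracket) to reduce the acyclicity computation to a statement about the cofree/free pieces, exactly as the general machinery of \cite[Section~5]{DolgushevWillwacher15} does, but carried out by hand on the graded pieces so that the spectral sequence collapses after the first differential.

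For the second quasi-isomorphism $\Tw\Gerst_\infty \to \Gerst_\infty$, I would invoke homotopy invariance of the twisting endofunctor, which is Proposition~\ref{prop:TwQIBIS}: the endofunctor $\Tw$ preserves quasi-isomorphisms. Since the minimal model $\Gerst_\infty \xrightarrow{\sim} \Gerst$ is a quasi-isomorphism of complete dg operads under $\sLi$, applying $\Tw$ yields a quasi-isomorphism $\Tw\Gerst_\infty \xrightarrow{\sim} \Tw\Gerst$. Composing with the first quasi-isomorphism $\Tw\Gerst \xrightarrow{\sim} \Gerst$ and using the two-out-of-three property against the canonical map $\Tw\Gerst_\infty \to \Gerst_\infty$ (which covers $\Gerst_\infty \to \Gerst$) gives the claim. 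The main obstacle I expect is the careful bookkeeping of signs and the verification that the associated graded differential is genuinely the Koszul differential whose homology is one-dimensional in each relevant piece; the symmetric group actions and the degree $+1$ bracket make the sign analysis more delicate than in the nonsymmetric bamboo computations, and one must confirm that the spectral sequence degenerates rather than producing spurious higher differentials. This is precisely where the rigidity afforded by the distributive law between $\Com$ and the shifted $\Lie$ operad does the essential work, ensuring no surviving higher-page contributions.
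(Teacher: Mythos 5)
Your overall architecture is sound, and two of its three ingredients coincide with the paper's proof: the reduction of the first statement to the acyclicity of the part of $\Tw\Gerst$ involving $\alpha$ (via the section $\Gerst\hookrightarrow\Tw\Gerst$ of Proposition~\ref{prop:GerstTwistable}), and the deduction of the $\Gerst_\infty$ statement from the $\Gerst$ statement by applying Proposition~\ref{prop:TwQIBIS} to $\Gerst_\infty\xrightarrow{\sim}\Gerst$ and running two-out-of-three on the naturality square of the counit. Both of these match the paper exactly.

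The gap lies in the middle step, the actual acyclicity computation, where your proposed mechanism cannot do the work you assign to it. By Proposition~\ref{prop:GerstTwistable}, $\ad_{\lambda_1^\alpha}$ annihilates $\mu$ and $\lambda$, so the entire twisted differential of $\Tw\Gerst$ is the derivation determined by the single rule $\alpha\mapsto\tfrac12\lambda(\alpha,\alpha)$; in particular it is homogeneous of weight exactly $+1$ with respect to the number of $\alpha$-vertices. Consequently the associated graded differential of your filtration is not a simplification of $\dd^{\lambda_1^\alpha}$: it \emph{is} all of $\dd^{\lambda_1^\alpha}$, so your first-page computation is verbatim the original problem (and, for the same weight reason, there are no higher differentials, so your worry about degeneration is empty --- but so is the benefit of the spectral sequence). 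Your description of the surviving piece as ``attaching a fresh $\alpha$-vertex via $\lambda(\alpha,-)$'' is also inaccurate: nothing gets attached to $\mu$'s or $\lambda$'s; only existing $\alpha$'s are replaced. What is actually needed --- and what the paper supplies --- is the explicit computation on the distributive-law basis: every element of $\Tw\Gerst(n)$ is a combination of elements $\mu^{m+p+q-1}\big(\alpha^m,\lambda(\alpha,\alpha)^p,\lambda^{(\sigma_1,\overline{k_1})},\ldots,\lambda^{(\sigma_q,\overline{k_q})}\big)$; the identity $\tfrac12\lambda(-,\lambda(\alpha,\alpha))=-\lambda(\lambda(-,\alpha),\alpha)$ shows the differential preserves this basis, so each arity splits as a direct sum, over partitions of $\{1,\dots,n\}$ and permutations, of tensor products of elementary complexes; the arity-zero factor is acyclic because $\mu^{k-1}(\alpha^k)\mapsto\tfrac k2\,\mu^{k-1}\big(\alpha^{k-1},\lambda(\alpha,\alpha)\big)$, and each $\lambda$-factor has one-dimensional homology because $\dd^{\lambda_1^\alpha}\big(\lambda^k(-,\alpha^k)\big)=-\lambda^{k+1}(-,\alpha^{k+1})$ for odd $k$ and $0$ for even $k$; the K\"unneth formula then leaves exactly a basis of $\Gerst$. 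Your proposal defers precisely this computation to ``recognising a Koszul-type differential'', which is where the entire content of the proof lies.
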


\begin{proof} 
Let us begin with the first statement about the operad $\Gerst$. The arguments given in the above proof of Proposition~\ref{prop:GerstTwistable} show that 
 the only non-trivial part of the twisted differential $\dd^{\lambda_1^\alpha}$ is on $\alpha$, where it is equal to 
$\dd^{\lambda_1^\alpha}(\alpha)=\frac12 \lambda(\alpha, \alpha)$~.

First, we consider  arity $0$ part of $\Tw\Gerst$. We recall that the operad $\Gerst$ is obtained from the operads $\Com$ and $\calS^{-1}\Lie$ by a distributive law. The  Jacobi relation ensures that 
$\widehat{\calS^{-1}\Lie}(\alpha)\cong \k \alpha \oplus \k \lambda(\alpha, \alpha)$. Since this latter term has degree $-3$, it can  appear only once in $\widehat{\Gerst}(\alpha)$; therefore we get $\Tw\Gerst(0)\cong \widehat{\Com}(\alpha) \oplus \widehat{\Com}\big(\alpha\big) \lambda(\alpha, \alpha)$. We denote by $\mu^{k}$ any composite of $k$ times $\mu$ in the operad $\Gerst$. 
Since 
$\dd^{\lambda_1^\alpha}\left(\mu^{k-1}\left(\alpha^k\right)\right)=\frac{k}{2}\mu^{k-1}\left(\alpha^{k-1},\lambda(\alpha, \alpha) \right)$ and since
$\dd^{\lambda_1^\alpha}\big(\lambda(\alpha, \alpha)\big)=0$, this chain complex is acyclic. 

In higher arity, we use the notation $\lambda^k\coloneqq (\cdots( \lambda\circ_1 \lambda)\cdots )\circ_1 \lambda$ for the composite of $k$ operations $\lambda$ at the first input. Recall that a basis of the operad $\calS^{-1}\Lie$ is given by the elements $\left(\lambda^{n-1}\right)^\sigma$, for $n\ge 1$, where $\sigma$ runs over the permuations of $\Sy_n$ which fix $1$. 
For any finite set $J$, we denote by 
\[\lambda^{\left(\sigma, \bar{k}\right)}\coloneqq \lambda^{|\bar{k}|+|J|-1}
\left(-, \alpha^{k_1},-, \alpha^{k_2}, -, \cdots, -, \alpha^{k_{|J|}}\right)^\sigma\ ,
\]
where $\sigma$ is a permutation of $J$ fixing its least element and where 
$\bar{k}=\left( 
k_1, \ldots, k_{|J|}
\right)$ is  a $|J|$-tuple of non-negative integers; such elements form a $\k$-linear basis of the operad $\calS^{-1}\Lie\vee \alpha$\ . 
As a consequence, the $\k$-module $\Tw\Gerst(n)_{d}$ is generated by the linearly independent elements of the form 
\[
\mu^{m+p+q-1}\left(
\alpha^m, \lambda(\alpha, \alpha)^p, \lambda^{\left(\sigma_1, \overline{k_1}\right)}, \ldots, \lambda^{\left(\sigma_q, \overline{k_q}\right)}
\right)\ ,
\]
where $m\ge 0$, $p=0$ or $p=1$, where the permutations $\sigma_1, \ldots, \sigma_q$ are associated to a partition $\sqcup_{i=1}^q J_i=\{1,\dots,n\}$\ , and where the total number of  $\lambda$'s minus twice the total number of $\alpha$'s is equal to $d$.  
Since $\frac12 \lambda(-, \lambda(\alpha, \alpha)) =-\lambda(\lambda(-,\alpha), \allowbreak \alpha)$, the differential $\mathrm{d}^{\lambda_1^\alpha}$ preserves such basis elements. 

So the chain complex $\Tw\Gerst(n)$ splits into the direct sum of chain complexes indexed by the decompositions $\sqcup_{i=1}^q J_i=\{1,\dots,n\}$ and the permutations $\sigma_1, \ldots, \sigma_q$. The form of the differential $\mathrm{d}^{\lambda_1^\alpha}$ implies that  each of these direct summand is isomorphic to the tensor product of $1+q$ chain complexes, where the first one is isomorphic to $\k \oplus \Tw\Gerst(0)$ and where 
the $q$ other ones are spanned by the elements $\lambda^{\left(\sigma_i, \overline{k_i}\right)}$, for any possible $|J_i|$-tuples $\overline{k_i}$.
The above result about the  arity zero case implies that the homology of  the first factor is  one-dimensional.
It is straightforward to see that $$\mathrm{d}^{\lambda_1^\alpha} \left(
\lambda^k\left(-, \alpha^k\right)
\right) = - \lambda^{k+1}\left(-, \alpha^{k+1}\right)$$ for odd $k$, and $\mathrm{d}^{\lambda_1^\alpha} \left(
\lambda^k\left(-, \alpha^k\right)
\right)  = 0 $ for even $k$. 
Thus each of the other $q$ tensor factors spanned by the elements $\lambda^{\left(\sigma_i, \overline{k_i}\right)}$ has one-dimensional homology represented by  $\lambda^{\left(\sigma_i, \bar{0}\right)}$ 
In the end, the only non-trivial class for each of these direct summand is represented by the basis element 
\[
\mu^{q}\left(\lambda^{\left(\sigma_1, \bar{0}\right)}, \ldots, \lambda^{\left(\sigma_q, \bar{0}\right)}
\right)
\]
that has no $\alpha$'s at all. These representatives form a natural basis for the operad $\Gerst$, which concludes the proof. 

The second statement about the operadic resolution  $\Gerst_\infty \xrightarrow{\sim} \Gerst$ follows directly from the result about the operad $\Gerst$ and Proposition~\ref{prop:TwQIBIS}. 
\end{proof}

Theorem~\ref{thm:TwGerst} and Proposition~\ref{prop:DefTwP0} show  that the homology of the deformation complex  $\Def\big(\allowbreak \dsLi\allowbreak \to \allowbreak \Gerst \allowbreak \big)$ is trivial in this case.
This result can be 
 interpreted as a strong rigidity statement after Theorem~\ref{thm:InfDefGeneral}: there is no non-trivial infinitesimal of the morphism of dg operads $\dsLi\allowbreak \to \allowbreak \Gerst$ considered here.
The action of the deformation Lie algebra on the operad $\Tw \Gerst$ is homologically trivial. 
 
\section{Twisting the operad \texorpdfstring{$\BV$}{BV}} \label{subsec:TwBV}

\index{operad!$\BV$}
Recall that a Batalin--Vilkovisky (BV) algebra is a Gerstenhaber algebra endowed with a compatible degree $1$ square-zero linear operator, see \cite[Section~13.7]{LodayVallette12} for more details. 
The associated operad, denoted $\BV$, is thus generated by the same kind of elements  $\mu$ and $\lambda$ as above plus an arity $1$ and degree $1$ element~$\Delta$. 
By the same argument, it acquires a multiplicative structure 
$\dsLi \twoheadrightarrow \calS^{-1}\Lie \to \BV$. 
 We consider the corresponding twisted operad $\Tw\BV$.

\begin{proposition}\label{prop:BVnotTwistable}
The operad $\BV$ is not $\Tw$-stable. 
\end{proposition}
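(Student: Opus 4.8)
The plan is to invoke the symmetric-operad analogue of Proposition~\ref{prop:Twistable}, whose forward implication holds unconditionally: if a multiplicative operad $\calP$ with zero differential is $\Tw$-stable, then $\ad_{\lambda(\alpha,-)}(\nu)=0$ in $\Tw\calP$ for every $\nu\in\calP$. Since the multiplicative structure $\dsLi\twoheadrightarrow\calS^{-1}\Lie\to\BV$ sends all higher brackets $\lambda_n$, $n\ge 3$, to zero, the twisted differential restricted to $\BV$ is exactly $\dd^{\lambda_1^\alpha}=\ad_{\lambda(\alpha,-)}$, just as in the proof of Proposition~\ref{prop:GerstTwistable}. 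A $\Tw$-coalgebra structure $\Delta_\BV\colon\BV\to\Tw\BV$ would have the form $\Delta_\BV(\nu)=\nu+(\text{terms with at least one }\alpha)$ by the symmetric analogue of the counit relation~\eqref{Rel1}; extracting from $\dd^{\lambda_1^\alpha}\big(\Delta_\BV(\nu)\big)=0$ the component carrying a single $\alpha$ then forces $\ad_{\lambda(\alpha,-)}(\nu)=0$. Thus it suffices to exhibit a single generator on which $\ad_{\lambda(\alpha,-)}$ does not vanish.

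The natural witness is the BV operator $\Delta$. I would compute, using that $\lambda(\alpha,-)$ and $\Delta$ are both of arity $1$ so that the pre-Lie product reduces to $\circ_1$,
\[
\ad_{\lambda(\alpha,-)}(\Delta)=\lambda(\alpha,-)\circ_1\Delta-(-1)^{|\Delta|}\,\Delta\circ_1\lambda(\alpha,-)
=\lambda\big(\alpha,\Delta(-)\big)-(-1)^{|\Delta|}\,\Delta\big(\lambda(\alpha,-)\big).
\]
The defining compatibility of a Batalin--Vilkovisky algebra makes $\Delta$ a derivation of the bracket, which reads in the operad as
\[
\Delta\big(\lambda(\alpha,-)\big)=\lambda\big(\Delta\alpha,-\big)\pm\lambda\big(\alpha,\Delta(-)\big).
\]
Substituting this, the two copies of $\lambda(\alpha,\Delta(-))$ combine into a single scalar multiple, while a term $\pm\lambda(\Delta\alpha,-)$ remains with coefficient $\pm(-1)^{|\Delta|}\neq 0$.

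The key observation is then that $\lambda(\Delta\alpha,-)$ is a genuinely nonzero element of the coproduct operad $\BV\hat{\vee}\,\alpha$ and is linearly independent from $\lambda(\alpha,\Delta(-))$: the former brackets the arity-$0$ element $\Delta(\alpha)=\Delta\circ_1\alpha$ with the input, whereas the latter brackets $\alpha$ with $\Delta$ of the input, and these correspond to distinct reduced trees in $\BV\hat{\vee}\,\alpha$. Consequently no cancellation can annihilate the $\lambda(\Delta\alpha,-)$ summand, so $\ad_{\lambda(\alpha,-)}(\Delta)\neq 0$, and $\BV$ is therefore not $\Tw$-stable.

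I expect the only delicate points to be the bookkeeping of Koszul signs in the BV derivation relation and in the definition of $\ad$, together with a clean justification that $\lambda(\Delta\alpha,-)$ is a surviving basis element of $\BV\hat{\vee}\,\alpha$. The argument is robust precisely because it relies only on the presence of this one linearly independent term, rather than on the exact value of any sign.
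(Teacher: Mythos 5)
Your proposal is correct and follows essentially the same route as the paper: invoke the forward direction of the symmetric analogue of Proposition~\ref{prop:Twistable} and exhibit the generator $\Delta$ as the witness, for which the paper states directly that $\ad_{\lambda(\alpha,-)}(\Delta)=-\lambda(\Delta(\alpha),-)\neq 0$. Your extra steps (deriving this identity from the fact that $\Delta$ is a derivation of the bracket in the operad $\BV$, and noting that $\lambda(\Delta(\alpha),-)$ arises from a single source so no cancellation can occur) simply make explicit the computation the paper leaves to the reader.
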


\begin{proof}
This a direct corollary of the symmetric operad analog of the first statement of Proposition~\ref{prop:Twistable}. 
In the operad $\Tw\BV$, the differential is given by 
\[\dd^{\lambda_1^\alpha}(\Delta)=\ad_{\lambda(\alpha, -)}(\Delta)=-\lambda(\Delta(\alpha),-)\neq 0\ ,\] which concludes the proof. 
\end{proof}

We consider the complete operad 
\[\Gerst^+\coloneqq \frac{\Gerst\hat{\vee}\, \eta}{\big( \lambda(\eta, -)\big)}\ ,
\]
where $\eta$ is an arity $0$ degree $-1$ element placed in $\F_1$. For  degree reasons 
$\mu(\eta, \eta)=\lambda(\eta, \eta)=0$, so $\Gerst^+(0)\cong \k \eta$ is one-dimensional and $$\Gerst^+(1)\cong \k \id \oplus\, \k \mu(\eta, -)$$ is two-dimensional. 

\begin{theorem} \label{thm:HTwBV}
The map of complete dg operads 
\[\Gerst\hat{\vee}\, \eta \to \Tw\BV\]
defined by 
\[
\mu\mapsto \mu\ , \ \lambda\mapsto \lambda\ , \ 
\eta \mapsto \Delta(\alpha)\]
induces the isomorphism of complete operads 
\[  {H}_\bullet\left(\Tw\BV\right)\cong \Gerst^+\ .\]
\end{theorem}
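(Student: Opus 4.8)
\textbf{Proof strategy for Theorem~\ref{thm:HTwBV}.} The plan is to follow exactly the same structural blueprint as the proof of Theorem~\ref{thm:TwGerst} for the operad $\Gerst$, but now incorporating the extra generator $\Delta$ and keeping track of how it interacts with the twisting. First I would observe that, since $\BV$ is built from $\Gerst$ and the operator $\Delta$ via a distributive law, a $\k$-linear basis of $\Tw\BV$ can be described by monomials in $\mu$, $\lambda$, $\Delta$, and the twisting variable $\alpha$, analogous to the basis $\mu^{m+p+q-1}(\alpha^m,\lambda(\alpha,\alpha)^p,\lambda^{(\sigma_1,\overline{k_1})},\ldots)$ used in the $\Gerst$ case, but with the additional data recording where copies of $\Delta$ sit. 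The twisted differential acts by $\dd^{\lambda_1^\alpha}(\alpha)=\tfrac12\lambda(\alpha,\alpha)$, $\dd^{\lambda_1^\alpha}(\mu)=\dd^{\lambda_1^\alpha}(\lambda)=0$, and crucially $\dd^{\lambda_1^\alpha}(\Delta)=-\lambda(\Delta(\alpha),-)$ as computed in Proposition~\ref{prop:BVnotTwistable}; this last term is what distinguishes the analysis from the $\Gerst$ case and is responsible for $\Delta$ not surviving to homology on its own, while the combination $\Delta(\alpha)$ does.

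Next I would split the chain complex $\Tw\BV(n)$ into a direct sum of subcomplexes indexed by the combinatorial type of the underlying $\calS^{-1}\Lie$-part (partitions of $\{1,\dots,n\}$ together with the associated permutations), exactly as in Theorem~\ref{thm:TwGerst}, and reduce each summand to a tensor product of simpler complexes. The $\Delta$ operator produces new elementary factors: wherever a $\Delta$ appears it can either be ``free'' or, after twisting, get absorbed into a $\lambda(\Delta(\alpha),-)$ term. I would identify the acyclic elementary complexes (those where a free $\Delta$ or an iterated $\lambda^k(-,\alpha^k)$ with odd $k$ admits a contracting partner) and the surviving one-dimensional factors, precisely mirroring the computation $\dd^{\lambda_1^\alpha}(\lambda^k(-,\alpha^k))=-\lambda^{k+1}(-,\alpha^{k+1})$ for odd $k$ and $0$ for even $k$. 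The upshot should be that every $\alpha$ either pairs off or survives only in the fixed combinations $\lambda(\alpha,\alpha)$ (killed against $\mu^{k-1}(\alpha^k)$ as in the $\Gerst$ arity-zero computation) and $\Delta(\alpha)$, the latter surviving because $\dd^{\lambda_1^\alpha}(\Delta(\alpha))=\Delta(\tfrac12\lambda(\alpha,\alpha))-\lambda(\Delta(\alpha),\alpha)$ vanishes by the compatibility relations in $\BV$ (the operator $\Delta$ being a derivation-type obstruction whose contributions cancel), so $\Delta(\alpha)$ becomes a genuine arity-$0$, degree-$-1$ homology class.

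Having computed the homology as a graded $\Sy$-module, I would then verify that the three classes $\mu$, $\lambda$, and $\eta:=\Delta(\alpha)$ generate it, and that the only new relation forced upon them is $\lambda(\eta,-)=\lambda(\Delta(\alpha),-)=0$. This relation is exactly the one built into the definition of $\Gerst^+$, and it holds in homology because $\lambda(\Delta(\alpha),-)=-\dd^{\lambda_1^\alpha}(\Delta)$ is a boundary. The degree and arity bookkeeping then shows $\Gerst^+(0)\cong\k\eta$ and $\Gerst^+(1)\cong\k\id\oplus\k\mu(\eta,-)$, matching the surviving basis elements in those arities. Finally, to upgrade the surjection $\Gerst^+\to H_\bullet(\Tw\BV)$ to an isomorphism, I would compare dimensions (or $\Sy_n$-character) of the underlying graded modules arity by arity, exactly as in the concluding step of Theorem~\ref{thm:HTwncBV}, confirming that the spanning set produced by the spectral-sequence/tensor-factor analysis is linearly independent and of the right size. \textbf{The main obstacle} I anticipate is correctly handling the interaction of $\Delta$ with the twisting in the splitting of the complex: unlike the $\Gerst$ case, the nonzero differential $\dd^{\lambda_1^\alpha}(\Delta)=-\lambda(\Delta(\alpha),-)$ means the naive tensor-factorization no longer cleanly separates the $\Delta$-decorations from the $\lambda$-decorations, so one likely needs an auxiliary filtration (by the number of $\Delta$'s, or by the number of free $\alpha$'s) and a small spectral-sequence argument in the spirit of the proof of Theorem~\ref{thm:HTwncBV} to disentangle the contributions and confirm that $\Delta(\alpha)$ is the unique surviving $\Delta$-decorated class.
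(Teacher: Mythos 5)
Your overall skeleton is indeed the same as the paper's: both arguments start from the distributive-law decomposition of $\BV$, record that the twisted differential satisfies $\dd^{\lambda_1^\alpha}(\mu)=\dd^{\lambda_1^\alpha}(\lambda)=\dd^{\lambda_1^\alpha}\left(\Delta(\alpha)\right)=0$ and $\dd^{\lambda_1^\alpha}(\Delta)=-\lambda(\Delta(\alpha),-)$, split $\Tw\BV(n)$ along partitions of $\{1,\dots,n\}$ into tensor products of an arity-zero factor and ``Lie factors'', and filter by occurrences of $\Delta(\alpha)$. Your cancellation argument for $\dd^{\lambda_1^\alpha}\left(\Delta(\alpha)\right)=0$ (using that $\Delta$ is a derivation of the bracket) and your identification of the relation $\lambda(\eta,-)=0$ as the boundary of $\Delta$ are both correct and match the paper.

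However, there is a genuine gap precisely at the point you flag as the main obstacle and then defer to ``a small spectral-sequence argument'': computing the homology of a single Lie factor $\Lambda(J)$, that is the complex spanned by the words $\lambda^{\left(\sigma,\bar{k},\bar{*}\right)}$ whose inputs are decorated by $-$, $\Delta$, or $\Delta(\alpha)$, under the differential sending $\Delta\mapsto-\lambda(\Delta(\alpha),-)$. This is the heart of the proof and it is not small. After the first filtration (by the number of $\Delta(\alpha)$'s) reduces to words $\lambda^{\left(\sigma,\bar{0},\bar{*}\right)}$, one must still show that the only surviving classes carry no $\Delta$'s and no $\Delta(\alpha)$'s at all, so that $\Delta(\alpha)$ never survives \emph{inside} a Lie word but only as a separate arity-zero factor (with at most one copy, for degree reasons). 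The paper achieves this with a second filtration, by the number of $\Delta$'s at the first input, followed by a non-obvious identification: the resulting complex is the cobar construction of the four-dimensional coalgebra $C=\k 1\oplus\k x\oplus\k y\oplus\k z$ (under $x\leftrightarrow -$, $y\leftrightarrow\Delta$, $z\leftrightarrow\Delta(\alpha)$), whose Koszul dual algebra is $T(X,Z)/(X\otimes Z-Z\otimes X)$; the explicit differential on $\left(\k x\oplus\k y\right)\otimes C^{\ac}$, namely $d^1\left(yZ^kX^l\right)=(-1)^{l+1}xZ^{k+1}X^l$, then shows that only the classes $xX^l$ survive. Without this (or an equivalent) concrete computation, your plan does not pin down the homology of the Lie factors, and your concluding dimension comparison has nothing to compare against. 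Note also that in arity zero the paper bypasses the decoration analysis entirely, using the operadic K\"unneth formula to reduce to the already-computed homology of $\calS^{-1}\Lie\hat{\vee}\alpha$ composed with $\k[\Delta(\alpha)]$; that is a cleaner route you may want to adopt there.
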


\begin{proof}We use the same notations and arguments as in the proof of Theorem~\ref{thm:TwGerst}. For instance, the distributive law method gives an isomorphism of $\mathbb{S}$-modules $$\BV\cong \Com\circ \calS^{-1}\Lie \circ \k[\Delta].$$ The differential in the twisted operad $\Tw\BV$ is given by  
\[\dd^{\lambda_1^\alpha}(\lambda)=\allowbreak \dd^{\lambda_1^\alpha}(\mu)=\allowbreak\dd^{\lambda_1^\alpha}\left(\Delta(\alpha)\right)=\allowbreak0\] and by $\dd^{\lambda_1^\alpha}(\Delta)=-\lambda(\Delta(\alpha),-)$\ .

We first consider the part of arity zero of the operad $\Tw\BV$. Let us denote by $\Delta\Lie$ the sub-operad generated by $\Delta$ and $\lambda$ in the operad $\BV$; its underlying graded $\Sy$-module is isomorphic to 
$\Delta\Lie\cong \calS^{-1}\Lie \circ \k[\Delta]$. The complete sub-operad $\Delta\Lie\hat{\vee} \alpha$ of $\Tw \BV$ is stable under the differential $\dd^{\lambda_1^\alpha}$. 
We denote by $M\coloneqq \big(\Delta\Lie\hat{\vee} \alpha(0), 0,  \ldots \big)$ the dg $\Sy$-module concentrated in arity $0$; it satifies the following isomorphism of dg $\Sy$-modules 
\[
\big( 
\Tw\BV(0), 0, \ldots\big)
\cong 
\Com \circ M\ .
\]
The operadic K\"unneth formula \cite[Proposition~6.2.3]{LodayVallette12} implies that the homology of $\Tw\BV(0)$
is isomorphic to $\big(\Com\circ \mathrm{H}(M)\big)(0)$. Then, we consider the isomorphism of dg modules 
\[\big(\Delta\Lie\hat{\vee}\alpha\big)(0) \cong 
\big(\calS^{-1}\Lie \hat{\vee}\alpha\circ \k[\Delta(\alpha)]\big)(0)~.\]
The arguments and computations given in the proof of Theorem~\ref{thm:TwGerst} show that 
${H}_\bullet\left(\calS^{-1}\Lie \hat{\vee}\alpha, \dd^{\lambda_1^\alpha}
\right)\cong \calS^{-1}\Lie$. Applying again the operadic K\"unneth formula, we get 
${H}_\bullet\left(\big(\calS^{-1}\Lie \hat{\vee}\alpha\circ \k[\Delta(\alpha)]\big)(0)\right)\cong 
\big(\calS^{-1}\Lie \circ \k[\Delta(\alpha)]\big)(0)$. So the homology of $\Tw\BV(0)$ is isomorphic to 
$\left(\Com\circ\big(\calS^{-1}\Lie \circ \k[\Delta(\alpha)]\big)\right)(0)$. By degree reasons, since $|\Delta(\alpha)|=-1$, we get in the end 
${H}_\bullet(\Tw\BV(0))\cong \k\Delta(\alpha)$\ .

To treat the case of arity $n\geq 1$, we consider, for any finite set $J$, the elements of the form 
\[\lambda^{\left(\sigma, \bar{k}, \bar{*}\right)}\coloneqq \lambda^{|\bar{k}|+|J|+d-1}
\left(*^1, \alpha^{k^1},*^2, \alpha^{k^2}, \cdots, *^{|J|+d}, \alpha^{k^{|J|+d}}\right)^\sigma\ ,
\]
where $\sigma$ and $\bar{k}$ are as in the proof of Theorem~\ref{thm:TwGerst} and where $\bar{*}=\left(*^1, \ldots, *^{|J|+d}\right)$ is  a $|J|+d$-tuple with $*^1$ equals to $\Delta$ or $-$ and with $*^i$ equals to 
$\Delta$, $-$, or $\Delta(\alpha)$, such that the number of $\Delta(\alpha)$'s is equal to $d$. The $\k$-module $\Tw\BV(n)$ is generated as above by the elements of the form 
\[
\mu^{m+r-1}\left(\rho, 
\lambda^{\left(\sigma_1, \overline{l_1}, \overline{*_1}\right)}, \ldots, \lambda^{\left(\sigma_r, \overline{l_r}, \overline{*_r} \right)}
\right)\ ,
\]
where $\rho$ is an element of the symmetric $m$-tensor power of $\big(\Delta\Lie\hat{\vee}\alpha\big)(0)$ and the permutations $\sigma_1, \ldots, \sigma_r$ are associated to a partition 
$$\{1,\dots,n\}=\bigsqcup_{i=1}^r J_i.$$

Let us compute the homology of the chain complex $\Lambda(J)$ generated by the elements of the form 
$\lambda^{\left(\sigma, \bar{k}, \bar{*}\right)}$ for a fixed finite set $J$. We consider the increasing filtration $\F^p \Lambda(J)$ consisting of the elements $\lambda^{\left(\sigma, \bar{k}, \bar{*}\right)}$ containing  at least $-p$ elements $\Delta(\alpha)$. The differential $\dd^{\lambda_1^\alpha}$ preserves this filtration. It is exhaustive and bounded below, so its associated spectral sequence converges to the homology of $\Lambda(J)$. The differential of the first page of the associated spectral sequence is equal to the sole term $\dd^0(\alpha)=\frac12 \lambda(\alpha, \alpha)$. 

The same argument as in the proof of Theorem~\ref{thm:TwGerst} shows that the second page $\mathrm{E}^1$ is generated by the elements of the form 
$\lambda^{\left(\sigma, \bar{0}, \bar{*}\right)}$. The differential of this second page is given by the sole term   
$\dd^1(\Delta)=-\lambda(\Delta(\alpha),-)$, thus 
$
\dd^1\big(\lambda(\nu, \Delta)\big)=-\lambda(\lambda(\nu, \Delta(\alpha)),-)-\lambda(\lambda(\nu, -),\Delta(\alpha))-
\lambda\big(\dd^1(\nu), \Delta\big)$, for $\nu$ involving at least one $\lambda$,  and 
$\dd^1\big(\lambda(\Delta, *)\big)= \lambda(\lambda(-, \Delta(\alpha),*)+\lambda\left(\Delta, \dd^1(*)\right) $ \ .
We consider the filtration of $\mathrm{E}^1$ defined by counting the numbers of $\Delta$'s at the first input: for $p\leq 0$, $F^p \mathrm{E}^1$ is generated by the elements of the form $\lambda^{\left(\sigma, \bar{0}, \bar{*}\right)}$ with $*^1=-$, and $F^p\mathrm{E}^1=\mathrm{E}^1$, for $p\ge 1$. This filtration is exhaustive and bounded below, so it converges to the homology of $\mathrm{E}^1$. The differential $d^0$ of the first page $E^0$ of the associated spectral sequence is given by $\dd^1$, except when $\Delta$ labels the first input: in this case $d^0$ vanishes on it. Therefore, the chain complex $(E^0, d^0)$ is isomorphic to two copies, labeled respectively by the input $\Delta$ or $-$  of the first leaf, of the same chain complex. 
We consider the coaugmented coalgebra 
$C\coloneqq \k 1\oplus \k x \oplus \k y \oplus \k z$, with $|x|=0$, $|y|=1$, $|z|=-1$, where $x$ and $z$ are primitive elements, and where the (reduced) coproduct of $y$ is equal to $x\otimes z -z \otimes x$.
Under the correspondence $x\leftrightarrow -$, $y\leftrightarrow \Delta$, and $z\leftrightarrow \Delta(\alpha)$, the chain complex $(E^0, d^0)$ is isomorphic to the cobar construction of $C$ (with a bit unusual homological degree convention, cf.~\cite[Section~2.2.5]{LodayVallette12}). Since the Koszul dual algebra of $C$ is the Koszul algebra 
$C^{\ac}\cong T(X,Z)/(X\otimes Z-Z\otimes X)$, with $|X|=1$ and $|Z|=0$, the second page $E^1$ is isomorphic to $\left(\k x \oplus \k y \right)\otimes C^{\ac}$, which admits for basis $xZ^kX^l$ and $yZ^kX^l$, for $k,l\ge 0$. The differential $d^1$ is given by $d^1\left(xZ^kX^l\right)=0$ and $d^1\left(yZ^kX^l\right)=(-1)^{l+1}xZ^{k+1}X^l$. So this  spectral sequence collapses at $E^2$, where is it spanned by the elements $xX^l$, for $l\ge 0$. In other words, the first spectral sequence collapses at ${E}^2\cong {H}_\bullet\left(\Lambda(J)\right)$, which is spanned by the elements of the form $\lambda^{\left(\sigma, \bar{0}, \bar{*}\right)}$ with $\bar{*}=(-, \ldots, -)$. 

Combining this computation with the earlier computation of the homology of $\big(\Delta\Lie\hat{\vee}\alpha\big)(0)$, we see that the only non-trivial homology classes of $\big(\Tw\BV(n), \dd^{\lambda_1^\alpha}\big)$  
are represented by the basis elements
\[
\mu^{r+p-1}\left(\Delta(\alpha)^p, \lambda^{\left(\sigma_1, \bar{0}\right)}, \ldots, \lambda^{\left(\sigma_r, \bar{0}\right)}
\right)\ ,
\]
where $p=0,1$ and where the permutations $\sigma_1, \ldots, \sigma_r$ are associated to a partition $\sqcup_{i=1}^r J_i=\{1,\dots,n\}$. These elements corresponds to a basis of the operad $\Gerst^+$ under the correspondence $\eta\leftrightarrow \Delta(\alpha)$, which concludes the proof. 
\end{proof}

Theorem~\ref{thm:HTwBV} and Proposition~\ref{prop:DefTwP0} show that the homology of the deformation complex $\Def\big(\calS^{-1}\Li\to \BV\big)$ is a one-di\-men\-sio\-nal Lie algebra concentrated in degree $1$ and with the abelian Lie bracket. 
This result provides us with the following 
 rigidity statement after Theorem~\ref{thm:InfDefGeneral}: there exists no non-trivial infinitesimal deformations of the morphism 
 of dg  operads   $\calS^{-1}\Lie_\infty\to \BV$ considered here. 
The computations performed in the above proof show that 
the action of the homology of the deformation Lie algebra $\Def\big(\calS^{-1}\Li\to \BV\big)$
 on the homology ${H}_\bullet\left(\Tw\BV\right)\allowbreak\cong \Gerst^+$ is equal to the degree $1$ differential of $\Gerst^+$ which assigns $\mu \mapsto \lambda$~. 

\section{Grothendieck--Teichm\"uller Lie algebra}\label{sec:GTGra}

The concept of operadic twisting was introduced by T.~Willwacher in \cite{Willwacher15} and the goal of this section is to recall the main concepts and results of \emph{op.~cit.} in order to highlight its original motivation. Note that since we use the homological grading convention, some of the gradings in our presentation differ from \emph{op.~cit.}\\

	A convenient formalism to work with a graph $\Gamma$ is to consider its set of half-edges $\mathrm{H}$ and its set of vertices $V$, with an involution $e\colon \mathrm{H}\to \mathrm{H}$ that acts without fixed points, and a map $v\colon \mathrm{H}\to \mathrm{V}$ that indicates for each half-edge the vertex to which it is attached. The set of edges is the orbit set $\mathrm{H}/e$, and we   choose an orientation $\omega$ on $\RR^{\mathrm{H}/e}$, with the convention that for the opposite orientation $\omega^{\mathrm{op}}$, we have $(\mathrm{H},\mathrm{V},e,v,\omega^{\mathrm{op}}) = - (\mathrm{H}, \mathrm{V},e,v,\omega)$~.  

\begin{definition}[The graph operad $\Gra$] \label{def:DefGra}\index{operad!$\Gra$}\index{graph operad}
The \emph{graph operad}  $\Gra$ is defined as follows. As a graded $\k$-module, the space $\Gra(n)$, for $n\ge 1$, is freely spanned by possibly disconnected graphs with $n$ vertices labelled from $1$ to $n$, with edges of degree $1$. The symmetric group $\Sy_n$ acts by permutations of the labels on the vertices. 
	
	The operadic composition 
	\[\circ_i\ \colon\ \Gra(n)\otimes \Gra(m)\to \Gra(n+m-1)~,\] for $n,m \geq 1$ and $1 \leq i \leq n$, applied to two graphs, $\Gamma_1 = (\mathrm{H}_1,\mathrm{V}_1,e_1,v_1,\omega_1)\in \Gra(n)$ and $\Gamma_2=(\mathrm{H}_2,\mathrm{V}_2,e_2,v_2,\omega_2)\in \Gra(m)$, is equal to the sum over all possible maps $f\colon e_1^{-1}(i)\to \mathrm{V}_2$ of all graphs obtained by replacing the $i$-th vertex of $\Gamma_1$ by $\Gamma_2$ and attaching the half-edges previously attached to the $i$-th vertex of $\Gamma_1$ to the vertices of $\Gamma_2$ according to the map $f$. Thus, the partial composite $\Gamma_1\circ_i\Gamma_2$ is the sum of graphs $\Gamma = (\mathrm{H}, \mathrm{V}, e, v, \omega)$, where $\mathrm{H}\coloneqq\mathrm{H}_1\sqcup \mathrm{H}_2$, $\mathrm{V}\coloneqq \mathrm{V}_1\setminus\{i\}\sqcup \mathrm{V}_2$, $e|_{\mathrm{H}_1} = e_1$,  $e|_{\mathrm{H}_2} = e_2$, $v|_{\mathrm{H}_1\setminus e_1^{-1}(i)} = v_1$, $v|_{e_1^{-1}(i)} = f$, $v|_{\mathrm{H}_2}=v_2$, and the orientation $\omega$ is the standard one for the direct sum $\RR^{(\mathrm{H}_1\sqcup \mathrm{H}_2)/e} = \RR^{\mathrm{H}_1/e_1}\oplus \RR^{\mathrm{H}_2/e_2}$.
\end{definition}

\[
\vcenter{\hbox{\begin{tikzpicture}[scale=0.7]
	 \draw[thick] (0,0.5)--(0,2);

	\draw[thick]  (-0.4,2.7) arc [radius=0.4, start angle=180, end angle= 0];
	\draw[thick] (0, 2) to [out=135,in=270] (-0.4,2.7);
	\draw[thick] (0, 2) to [out=45,in=270] (0.4,2.7);

	 \draw[fill=white, thick] (0,0.5) circle [radius=10pt];
 	 \draw[fill=white, thick] (0,2) circle [radius=10pt];

 	\node at (0,0.5) {\scalebox{1}{$1$}};
 	\node at (0,2) {\scalebox{1}{$2$}};

	\node (n) at (0,-0.5) {};
	\end{tikzpicture}}}
\ \circ_1 \
	\vcenter{\hbox{\begin{tikzpicture}[scale=0.7]
	 \draw[thick] (-1,0.5)--(0,2)--(1, 0.5);

	 \draw[fill=white, thick] (-1,0.5) circle [radius=10pt];
	 \draw[fill=white, thick] (1,0.5) circle [radius=10pt];
 	 \draw[fill=white, thick] (0,2) circle [radius=10pt];

 	\node at (-1,0.5) {\scalebox{1}{$2$}};
 	\node at (1,0.5) {\scalebox{1}{$3$}};
 	\node at (0,2) {\scalebox{1}{$1$}};
	\end{tikzpicture}}}
\ = \
\vcenter{\hbox{\begin{tikzpicture}[scale=0.7]
	 \draw[thick] (-1,-1)--(0,0.5)--(1, -1);
 	 \draw[thick] (0,0.5)--(0,2);

	\draw[thick]  (-0.4,2.7) arc [radius=0.4, start angle=180, end angle= 0];
	\draw[thick] (0, 2) to [out=135,in=270] (-0.4,2.7);
	\draw[thick] (0, 2) to [out=45,in=270] (0.4,2.7);

	 \draw[fill=white, thick] (-1,-1) circle [radius=10pt];
	 \draw[fill=white, thick] (1,-1) circle [radius=10pt];
 	 \draw[fill=white, thick] (0,2) circle [radius=10pt];
  	 \draw[fill=white, thick] (0,0.5) circle [radius=10pt];

 	\node at (-1,-1) {\scalebox{1}{$2$}};
 	\node at (1,-1) {\scalebox{1}{$3$}};
 	\node at (0,2) {\scalebox{1}{$4$}};
 	\node at (0,0.5) {\scalebox{1}{$1$}};

	\node (n) at (0,-2) {};
	\end{tikzpicture}}}
\ + \
\vcenter{\hbox{\begin{tikzpicture}[scale=0.7]
	 \draw[thick] (0,2)--(0,0.5)--(1.5,2)--(1.5,0.5);

	\draw[thick]  (-0.4,2.7) arc [radius=0.4, start angle=180, end angle= 0];
	\draw[thick] (0, 2) to [out=135,in=270] (-0.4,2.7);
	\draw[thick] (0, 2) to [out=45,in=270] (0.4,2.7);

	 \draw[fill=white, thick] (0,0.5) circle [radius=10pt];
 	 \draw[fill=white, thick] (0,2) circle [radius=10pt];
	 \draw[fill=white, thick] (1.5,0.5) circle [radius=10pt];
  	 \draw[fill=white, thick] (1.5,2) circle [radius=10pt];

 	\node at (0,0.5) {\scalebox{1}{$2$}};
 	\node at (0,2) {\scalebox{1}{$4$}};
 	\node at (1.5,0.5) {\scalebox{1}{$3$}};
 	\node at (1.5,2) {\scalebox{1}{$1$}};

	\node (n) at (0,-0.5) {};
	\end{tikzpicture}}}
\ + \
\vcenter{\hbox{\begin{tikzpicture}[scale=0.7]
	 \draw[thick] (0,2)--(0,0.5)--(-1.5,2)--(-1.5,0.5);

	\draw[thick]  (-0.4,2.7) arc [radius=0.4, start angle=180, end angle= 0];
	\draw[thick] (0, 2) to [out=135,in=270] (-0.4,2.7);
	\draw[thick] (0, 2) to [out=45,in=270] (0.4,2.7);

	 \draw[fill=white, thick] (0,0.5) circle [radius=10pt];
 	 \draw[fill=white, thick] (0,2) circle [radius=10pt];
	 \draw[fill=white, thick] (-1.5,0.5) circle [radius=10pt];
  	 \draw[fill=white, thick] (-1.5,2) circle [radius=10pt];

 	\node at (0,0.5) {\scalebox{1}{$3$}};
 	\node at (0,2) {\scalebox{1}{$4$}};
 	\node at (-1.5,0.5) {\scalebox{1}{$2$}};
 	\node at (-1.5,2) {\scalebox{1}{$1$}};

	\node (n) at (0,-0.5) {};
\end{tikzpicture}}} \]

\begin{remark}
	Note that this definition implies that any graph with multiple edges connecting the same vertices 
	vanishes. 
\end{remark}

M.~Kontsevich considered the graph operad $\Gra$ in \cite{Kontsevich97} as an operad of natural operations acting on the sheaf of polyvector fields. We consider a graded commutative associative algebra $A = \k[x^1,\dots,x^n,\theta_1,\dots,\theta_n]$, where $\deg x^i=0$, $\deg \theta_i=-1$, for $i=1,\dots,n$, and we think of $\theta_i$'s as partial derivatives with respect to the variables $x^i$, that is, $[\theta_i,x^j]=\delta_i^j$, for $i,j=1,\dots,n$~. 
In order to define the action $A^{\otimes n}\to A$ of a graph $\Gamma\in\Gra(n)$, we associate to each edge the operator $A\otimes A\to \k$ given by 
\[
\sum_{i=1}^n \Bigg( \frac{\partial}{\partial x^i} \otimes \frac{\partial}{\partial \theta_i}  +
\frac{\partial}{\partial \theta_i} \otimes \frac{\partial}{\partial x^i}\Bigg)~.
\]
Then we associate the $i$-th factor in $A^{\otimes n}$ to the vertex labelled by $i$ in $\Gamma$ and apply the operators on the edges to the corresponding factors in $A^{\otimes n}$. Finally, we take the product over all factors in $A^{\otimes n}$ of the resulting expressions according to the algebra structure of $A$. For instance, the graph 
\[
\vcenter{
	\xymatrix@M=5pt@R=10pt@C=10pt{
		*+[o][F-]{1} &*+[o][F-]{2} 
	}
}
\] 
gives the algebra structure map $A\otimes A\to A$, and the graph 
\[
\vcenter{\xymatrix@M=5pt@R=10pt@C=10pt{
		*+[o][F-]{1}\ar@{-}[r] &*+[o][F-]{2} 
	}
}
\]
acts as the Schouten bracket. \\

The graph operad $\Gra$ includes the operad $\Gerst$ of Gerstenhaber algebras, see Section~\ref{subsec:TwGerst}. The inclusion map is given on the arity $2$ generators $\mu$ and $\lambda$ of degrees $0$ and $1$, respectively, by 
\[
\mu\mapsto\vcenter{
	\xymatrix@M=5pt@R=10pt@C=10pt{
		*+[o][F-]{1} &*+[o][F-]{2} 
	}
}\qquad \text{and}
\qquad
\lambda\mapsto
\vcenter{\xymatrix@M=5pt@R=10pt@C=10pt{
		*+[o][F-]{1}\ar@{-}[r] &*+[o][F-]{2} 
	}
}~.
\]
The map $\dsLi \twoheadrightarrow \calS^{-1}\Lie \to \Gerst$ discussed in Section~\ref{subsec:TwGerst},  induces a multiplicative structure on the graph operad $\Gra$, which allows one to twist it.  The theory of operadic twisting ensures that there is a natural action of the deformation dg Lie algebra $\Def\big(\dsLi \allowbreak \to \Gra\big)$ on the twisted operad $\Tw\Gra$ by derivation, see Theorem~\ref{thm:DGActionOnTw}. Let us now discuss the structure of $\Def\big(\dsLi \allowbreak \to \Gra\big)$ and $\Tw\Gra$\index{operad!$\Tw\Gra$} as well as  the main statements about them in more detail. \\

Unfolding the definitions given in Section~\ref{Sec:Gen}, we see that the elements of $\Tw\Gra(n)$ are series indexed by $k\geq 0$ of linear combinations of  graphs with $n$ white vertices labelled by $1,\dots,n$ and $k$ unlabelled black vertices of degree $-2$, with edges of degree $1$. We identify the Maurer--Cartan element 
\[\alpha = \vcenter{\xymatrix@M=5pt@R=10pt@C=10pt{
		*+[o][F**]{}}} \]
 with the graph with no edges and one black vertex. In  general, we think of graphs with $n$ white vertices labelled by $1,\dots,n$ and $k$ unlabelled black vertices as coming from an element of $\Gra(n+k)$ with the last $k$ inputs filled in with $\alpha$'s.
 \[
\vcenter{\xymatrix@M=5pt@R=10pt@C=10pt{
 & & *+[o][F-]{1}\ar@{-}[d]\\
*+[o][F-]{3}\ar@{-}[dr] &*+[o][F**]{\cdot}\ar@{-}[d]&  *+[o][F-]{2}\ar@{-}[dl] \\
& *+[o][F**]{\cdot}  & 
}}\] 
 
  The operadic composition for these series of graphs is defined in exactly the same way as in the case of the graph operad $\Gra$, see Definition~\ref{def:DefGra} above. 

\begin{remark}
	Note that now these graphs  might have automorphisms permuting the black vertices. Any graph with an automorphism that induces an odd permutation of edges 
	vanishes. 
\end{remark}

The twisted differential $\dd^{\lambda_1^\alpha}$ can be computed on each graph $\Gamma$ as the sum of the following three types of terms.
\begin{enumerate}
	\item Replace a black vertex with the graph $\vcenter{\xymatrix@M=5pt@R=10pt@C=10pt{
			*+[o][F**]{}\ar@{-}[r] &*+[o][F**]{} 
		}
	}$~, connect all half-edged previously attached to the original black vertex to the two new ones in all possible ways (at that point we temporarily distinguish the two black vertices of the inserted graph), and multiply the resulting sum of  graphs by $-\frac 12$. We consider the sum over all black vertices of $\Gamma$.
	
	\item Replace the white vertex labelled by $i$ with the graph $\vcenter{\xymatrix@M=4pt@R=10pt@C=10pt{
			*+[o][F-]{\, i\, }\ar@{-}[r] &*+[o][F**]{} 
		}
	}$~, connect all half-edged previously attached to the white vertex to the two new vertices in all possible ways, and multiply the resulting sum of  graphs by $-1$.  We take the sum over all white vertices of $\Gamma$.
	\item Add one extra black vertex to $\Gamma$ with an edge that connects it to an existing vertex of $\Gamma$. We consider the sum over all white and black vertices of $\Gamma$. 
\end{enumerate}
In all these cases, the orientation of the space of edges of the new graphs obtained from $\Gamma$ descends from $\Gamma$, with the new edge added at the first place. 
Notice that many terms cancel due to the sign issue. For trees with at least two vertices (either white or black), it will remain in the image under the twisted differential only the trees from
\begin{enumerate}
\item with at least three half-edges attached to each of the two black vertices, 
\item with at least two half-edges attached to the black vertex, 
\end{enumerate}
since the other terms of (i) and (ii) get cancelled by themselves and by the terms of (iii). 
\\

Let us now consider the deformation complex $\Def\big(\dsLi \to \Gra\big)$. It is a dg Lie algebra identified with $s^2\Tw\Gra(0)$, that is with graphs that have only black vertices. In particular, the differential is the special case of the one described above applied to graphs with no white vertices. It has a natural dg Lie subalgebra $\mathfrak{gc}\subset \Def\big(\dsLi \to \Gra\big)$ that consists of finite linear combinations of connected graphs all of whose vertices are at lease trivalent. 
The latter dg Lie algebra coincides with Kontsevich's graph complex~\cite{Kontsevich93,Kontsevich-FeynDiagLowDim,Kontsevich97}.  

\begin{theorem}[\cite{Willwacher15}] The $0^{\text{th}}$ homology group of the dg Lie algebra $\mathfrak{gc}$ is isomorphic to the Grothendieck--Tei\-chm\"uller Lie algebra $\mathfrak{grt}_1$. 
\end{theorem}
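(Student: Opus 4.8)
The plan is to realise $H_0(\mathfrak{gc})$ as the essential part of the homotopy derivations of the little $2$-disks operad, and then to identify these derivations with the Lie algebra acting on the torsor of Drinfeld associators, which is $\mathfrak{grt}_1$ by definition. So the proof naturally splits into an operadic reduction, a bridge to associators, and an isomorphism check.

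First I would use the operadic-twisting machinery developed above. By Theorem~\ref{thm:DGActionOnTw}, the deformation complex $\Def\big(\dsLi \to \Gra\big)$ acts by derivations on the twisted operad $\Tw\Gra$, and $\mathfrak{gc}$ is the dg Lie subalgebra of connected, at-least-trivalent graphs inside it. The key geometric input is Kontsevich's formality \cite{Kontsevich97}: a suitable completion of $\Tw\Gra$, the operad $\mathsf{Graphs}_2$, is a cofibrant resolution quasi-isomorphic to the chains on the little $2$-disks operad, with $H_\bullet(\mathsf{Graphs}_2)\cong\Gerst$. Consequently $\Def\big(\dsLi \to \Gra\big)$ computes the homotopy derivations of $\Gerst$ (equivalently, of $\mathsf{e}_2$), and one checks that restricting to connected graphs of valence $\ge 3$ discards only a controlled collection of ``extra'' classes (the single-vertex and loop/wheel classes), so that in homological degree $0$ one obtains an identification of $H_0(\mathfrak{gc})$ with the relevant part of $H_0\big(\Def(\dsLi\to\Gra)\big)$.

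Second, I would bring in the theory of Drinfeld associators. The set of associators is a torsor under the graded prounipotent Grothendieck--Teichm\"uller group $GRT_1$, whose Lie algebra is $\mathfrak{grt}_1$. Every associator yields a formality morphism for the little disks operad, and $GRT_1$ acts on the set of such formality morphisms. The bridge between the two pictures is that degree-zero homotopy derivations of $\mathsf{e}_2$ are precisely the infinitesimal symmetries of this associator torsor: a graph cocycle $\gamma\in\mathfrak{gc}$ deforms a formality morphism, and this deformation matches the infinitesimal $\mathfrak{grt}_1$-action. This construction produces a Lie algebra homomorphism $\mathfrak{grt}_1 \to H_0(\mathfrak{gc})$, whose compatibility with brackets is part of the verification.

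The main obstacle will be proving that this map is an isomorphism. Injectivity follows from the (near) freeness of the $GRT_1$-action on associators, so that no nonzero element of $\mathfrak{grt}_1$ can act homotopically trivially. Surjectivity is the deep part: one must show that every degree-zero graph-cohomology class is realised by an element of $\mathfrak{grt}_1$. This is exactly the heart of \cite{Willwacher15}; it requires comparing Kontsevich's and Tamarkin's constructions of formality, using that both are governed by associators, together with a careful spectral-sequence analysis of the graph complex to account for the univalent and loop classes excluded from $\mathfrak{gc}$ and thereby confirm that the remaining degree-zero cohomology is exhausted precisely by the associator directions.
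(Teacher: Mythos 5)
The monograph does not actually prove this statement: it is quoted from Willwacher's work as part of a survey section, so the only meaningful comparison is with the argument of \cite{Willwacher15} itself. Your proposal is a broadly accurate roadmap of that argument --- use the twisting machinery and the quasi-isomorphism $\Gerst \xrightarrow{\sim} \Tw\Gra^{\mathrm{c}}$ to convert graph classes into homotopy derivations of the little disks operad, then use Tamarkin's associator-dependent formality to connect such derivations with the $\mathrm{GRT}_1$-torsor of Drinfeld associators, producing a map $\mathfrak{grt}_1 \to H_0(\mathfrak{gc})$. But as written it is not a proof, because both halves of the isomorphism check are deferred to the very result being proved. For surjectivity you say explicitly that it ``is exactly the heart of \cite{Willwacher15}''; this is circular, since the spectral-sequence analysis of the graph complex, the treatment of the wheel and univalent classes, and the comparison of the Kontsevich and Tamarkin formality morphisms are precisely the content one would have to supply, and the proposal contains none of it.

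Injectivity is also not settled by the argument you give. Freeness of the $\mathrm{GRT}_1$-action on the associator torsor only shows that distinct elements of $\mathfrak{grt}_1$ move a fixed associator to distinct associators; to conclude that the induced classes in $H_0(\mathfrak{gc})$ are distinct, you must additionally show that formality morphisms built from distinct associators are not homotopic as maps of operads --- equivalently, that the resulting graph cocycles are not cohomologous --- and this step, a genuinely nontrivial argument in Willwacher's paper, is missing. A smaller but real inaccuracy: the identification of graph cohomology with degree-zero homotopy derivations is off by the one-dimensional Euler (grading) class, which is exactly why the paper's subsequent theorem reads $\mathfrak{grt} \coloneqq \mathfrak{grt}_1 \rtimes \k \cong H_0\left(\Der\left(\Gerst_\infty\right)\right)$ rather than $\mathfrak{grt}_1$; your phrase ``a controlled collection of extra classes'' must be made precise at exactly this point, or the two sides of your claimed isomorphism differ by a one-dimensional summand.
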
\index{Grothendieck--Teichm\"uller Lie algebra}

Let us now go back to the dg operad $\Tw\Gra$. It has a natural dg suboperad $\Tw\Gra^{\mathrm{c}}$ where we retain only those graphs whose all connected components contain at lease one white vertex. The natural map $\Gerst\to \Gra$ descends to a natural map $\Gerst\to \Tw\Gra^{\mathrm{c}}$~.

\begin{theorem}[\cite{Kontsevich99}, \cite{LambrechtsVolic14}, see also an exposition in \cite{Willwacher15}] The natural map  $\Gerst\to \Tw\Gra^{\mathrm{c}}$ is a quasi-isomorphism of dg operads.
\end{theorem}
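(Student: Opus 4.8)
The plan is to verify that $\Gerst\to \Tw\Gra^{\mathrm{c}}$ is a quasi-isomorphism arity by arity, by computing the homology of each complex $\Tw\Gra^{\mathrm{c}}(n)$ and identifying it with $\Gerst(n)$; since the map is a morphism of dg operads, an isomorphism on homology in every arity is exactly what is required. The first observation is that the twisted differential $\dd^{\lambda_1^\alpha}$ (whose three constituent terms — splitting a black vertex, splitting a white vertex, and attaching a new black vertex by a single edge — are all \emph{local}) neither merges nor splits connected components, and never removes a white vertex. Hence, in $\Tw\Gra^{\mathrm{c}}(n)$, where every component carries at least one white vertex, the partition of the labels $\{1,\dots,n\}$ into components is preserved, and the complex splits as a direct sum, indexed by set partitions of $\{1,\dots,n\}$, of tensor products of its \emph{connected} subcomplexes. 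Because $\Gerst\cong\Com\circ \calS^{-1}\Lie$ admits the same decomposition (the commutative product $\mu$ being the edgeless two-vertex graph, i.e. juxtaposition of components), it suffices to prove that each connected subcomplex has homology $\calS^{-1}\Lie(n)$, carried by the graphs with white vertices only.

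First I would remove the black vertices of small valence by an auxiliary spectral sequence, in the spirit of the proofs of Theorem~\ref{thm:TwGerst} and Theorem~\ref{thm:HTwBV}. Filtering by the first Betti number (loop order) of the graph, the differential on the associated graded page only creates a univalent black vertex (an ``antenna'') or a bivalent black vertex inserted along an edge. On that page one writes explicit contracting homotopies: one stripping off antennae and one smoothing out bivalent black vertices. This shows that, up to quasi-isomorphism, one may restrict to the subcomplex of graphs all of whose black vertices have valence at least three.

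The decisive step is then to show that, even after this reduction, every homology class is represented by a graph carrying \emph{no} black vertex at all. This is precisely Kontsevich's vanishing theorem for the internal part of his graph complex \cite{Kontsevich97, Kontsevich99}: the connected complex built from external (white) and internal (black $\geq 3$-valent) vertices computes $H^\bullet\big(\mathrm{Conf}_n(\RR^2)\big)$, and the contribution of graphs with at least one internal vertex is acyclic. I expect this to be the \textbf{main obstacle}: unlike the antenna and bivalent reductions, which are elementary homotopies, the acyclicity of the internal part is a genuine homological computation. I would carry it out either by comparison with the de Rham/semi-algebraic model of $\mathrm{Conf}_n(\RR^2)$ as in \cite{LambrechtsVolic14}, or by filtering once more by the number of internal vertices and identifying the associated graded with a Koszul-type complex whose acyclicity is established directly, following the exposition in \cite{Willwacher15}.

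Finally I would assemble the pieces. The surviving homology is $H^\bullet\big(\mathrm{Conf}_n(\RR^2)\big)$, the Orlik--Solomon/Arnold algebra, which is isomorphic as an operad to $\Gerst(n)$; its connected part is $\calS^{-1}\Lie(n)$, matching the decomposition above. Under the natural map $\mu\mapsto$ (edgeless pair) and $\lambda\mapsto$ (single edge) of Definition~\ref{def:DefGra}, the generators of $\Gerst$ are sent exactly to the black-vertex-free classes representing these generators, so $\Gerst\to\Tw\Gra^{\mathrm{c}}$ induces an isomorphism on homology in each arity, hence is a quasi-isomorphism of dg operads. As a consistency check, combined with Proposition~\ref{prop:TwQIBIS} and the $\Tw$-stability of $\Gerst$ (Proposition~\ref{prop:GerstTwistable}), this agrees with the self-quasi-isomorphism $\Tw\Gerst\xrightarrow{\sim}\Gerst$ of Theorem~\ref{thm:TwGerst}.
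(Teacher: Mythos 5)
First, a point of calibration: the paper does not prove this statement at all --- it is imported as a black box from \cite{Kontsevich99,LambrechtsVolic14}, with \cite{Willwacher15} as exposition --- so your proposal can only be measured against those proofs. Your overall architecture (splitting over connected components against $\Gerst\cong\Com\circ\calS^{-1}\Lie$, reducing to graphs whose black vertices are at least trivalent, then identifying the remaining homology with $H^{\bullet}\big(\mathrm{Conf}_n(\RR^2)\big)$) is indeed the architecture of those proofs, and you correctly isolate the last step as the real content. But since you ultimately outsource exactly that step to the references the paper itself cites, the proposal is an annotated outline of the known proof rather than an independent argument; it establishes nothing beyond what the citation already asserts.

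There is moreover a concrete error in your reduction step. Every term of $\dd^{\lambda_1^\alpha}$ --- splitting a black vertex, splitting a white vertex, or attaching a new black vertex by a single edge --- adds exactly one vertex and one edge and preserves connected components, so it preserves the first Betti number $E-V+C$ on the nose. Your filtration by loop order is therefore vacuous: the associated graded differential is the \emph{entire} differential, and it certainly does not ``only create univalent or bivalent black vertices'' (splitting a $6$-valent black vertex produces two at-least-trivalent ones). In addition, the at-least-trivalent graphs do not form a subcomplex, since splitting a white vertex creates bivalent black vertices (cf.\ the cancellation pattern spelled out after the description of the twisted differential in Section~\ref{sec:GTGra}); rather, the graphs containing a black vertex of valence $\leqslant 2$ form a subcomplex and the trivalent ones appear as a quotient. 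The correct mechanism for this step is a filtration by the size of the \emph{core} (the graph with its antennas stripped), exactly as the paper runs it in the proof of Theorem~\ref{th:TwPL}, or equivalently the argument of \cite[Proposition~3.4]{Willwacher15}; only with such a filtration do your two contracting homotopies assemble into a convergent spectral sequence argument. With that repair, and with the acyclicity of the internal part taken from \cite{LambrechtsVolic14} or \cite{Willwacher15} as you indicate, the outline does go through, and your consistency check against Theorem~\ref{thm:TwGerst} via Propositions~\ref{prop:GerstTwistable} and~\ref{prop:TwQIBIS} is sound.
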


The theory of operadic twisting ensures that there is a natural action of the deformation dg Lie algebra $\Def\big(\dsLi \allowbreak \to \Gra\big)$ on the twisted operad $\Tw \Gra$ by derivation (\cref{thm:DGActionOnTw}). This action naturally descends to an action of the dg Lie algebra $\mathfrak{gc}$ on $\Tw\Gra^{\mathrm{c}}$~. 
Since this latter one is quasi-isomorphic to the operad $\Gerst$, one gets a natural map of the Grothendieck--Teichm\"uller Lie algebra $\mathfrak{grt}_1$ to ${H}_0\big(\Der(\Gerst_\infty)\big)$~.  \\

Actually T. Willwacher proved that the Grothendieck--Teichm\"uller Lie algebra encaptures all the homotopy derivations. 

\begin{theorem}[\cite{Willwacher15}] There is an isormophism of Lie algebras:
	$$\mathfrak{grt}\coloneqq \mathfrak{grt}_1 \rtimes \k \cong {H}_0\left(\Der\left(\Gerst_\infty\right)\right)\ ,$$
where $\k$ is considered as an abelian Lie algebra that acts on the elements of $\mathfrak{grt}_1$ of degree $k$ by multiplication by $k$.  
\end{theorem}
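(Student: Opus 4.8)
The plan is to realise both sides as homology of explicit graph complexes and to compare them through the web of quasi-isomorphisms and dg Lie actions already assembled above. First I would identify the target ${H}_0(\Der(\Gerst_\infty))$ with a graph complex. Since $\Tw$ preserves quasi-isomorphisms (Proposition~\ref{prop:TwQIBIS}) and $\Gerst_\infty \xrightarrow{\sim} \Gerst \xrightarrow{\sim} \Tw\Gra^{\mathrm{c}}$, the complex of homotopy derivations $\Der(\Gerst_\infty)$ is quasi-isomorphic to the corresponding derivation complex built from $\Tw\Gra^{\mathrm{c}}$; here I would invoke the homotopy invariance of the derivation complex of a cofibrant operad, so that the two Koszul-dual presentations compute the same homology. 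The dg Lie action of Theorem~\ref{thm:DGActionOnTwBIS}, restricted as explained above to an action of $\mathfrak{gc}$ on $\Tw\Gra^{\mathrm{c}}$, then furnishes the comparison map $\mathfrak{gc} \to \Der(\Gerst_\infty)$ whose behaviour on ${H}_0$ is to be analysed.

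Next I would make the source fully explicit. By Proposition~\ref{prop:DefTwP0BIS}, applied to the multiplicative structure $\dsLi \to \Gra$ (which satisfies $\Gra(0)=0$), the deformation complex $\Def(\dsLi \to \Gra)$ is identified, up to the shift $s^2$, with the arity-zero component $(\Tw\Gra)(0)$, that is with the complex of graphs having only black vertices. Inside it sits Kontsevich's graph complex $\mathfrak{gc}$ of connected graphs all of whose vertices are at least trivalent, with ${H}_0(\mathfrak{gc}) \cong \mathfrak{grt}_1$ by the quoted theorem. The crux is then to compare the homology of $\mathfrak{gc}$ with that of the full deformation complex in degree $0$.

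The decisive step is a two-stage reduction on $\Def(\dsLi \to \Gra)$. The first stage is a connectedness argument: the Lie-theoretic differential respects the splitting into connected components, and a standard symmetric-algebra filtration shows that ${H}_0$ is detected by connected graphs. The second stage filters the connected complex by loop order (equivalently, by the number of univalent and bivalent vertices) and shows that the subquotients supported on graphs with vertices of valence $\leq 2$ are acyclic except for the classes of the odd ``loop'' graphs; in degree $0$ exactly one such class survives, spanning the extra $\k$. Its Lie bracket with a trivalent graph of degree $k$ is computed, in the spirit of the $\Tw\Gerst$ and $\Tw\BV$ calculations above, to rescale by $k$ (the surviving loop acts as the Euler/weight derivation, the operadic avatar of the assignment $\mu \mapsto \lambda$), which is precisely the semidirect structure $\mathfrak{grt}_1 \rtimes \k$. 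Combining the two stages yields ${H}_0(\Der(\Gerst_\infty)) \cong {H}_0(\mathfrak{gc}) \rtimes \k \cong \mathfrak{grt}_1 \rtimes \k = \mathfrak{grt}$.

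The hard part will be the valence-reduction spectral sequence of the second stage: controlling the contribution of the low-valence (hairy and loop) graphs and proving that, in the degree relevant for ${H}_0$, their total contribution is a single one-dimensional class carrying the asserted bracket. This is the genuinely computational heart of Willwacher's original argument, and it is also where one must check carefully that the homotopy-invariance identification of $\Der(\Gerst_\infty)$ with the $\Tw\Gra^{\mathrm{c}}$-derivation complex is compatible with the $\mathfrak{gc}$-action, so that the abstract comparison map of the first paragraph is indeed the one whose homology is being computed.
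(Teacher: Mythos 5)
The first thing to say is that the monograph does not prove this statement at all: it is quoted from \cite{Willwacher15}, and the only construction the text supplies near it is the natural map from $\mathfrak{grt}_1$ to ${H}_0\big(\Der(\Gerst_\infty)\big)$ induced by the $\mathfrak{gc}$-action on $\Tw\Gra^{\mathrm{c}}$ via Theorem~\ref{thm:DGActionOnTwBIS} — which is exactly what your first paragraph reproduces. So your proposal has to be judged as a reconstruction of Willwacher's argument, and as such it has two concrete problems.

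The main gap is the silent identification of $\Der(\Gerst_\infty)$ with the Lie-type deformation complex $\Def\big(\dsLi\to\Gra\big)$. These two complexes are built from maps out of \emph{different} cooperads. Homotopy invariance, correctly applied, identifies $\Der(\Gerst_\infty)$ (up to a shift and a one-dimensional discrepancy discussed below) with a deformation complex of the form $\hom_\Sy\big(\Gerst^{\ac},\Tw\Gra^{\mathrm{c}}\big)$, whose source is the \emph{full} Koszul dual cooperad of $\Gerst$; by contrast, Proposition~\ref{prop:DefTwP0BIS} identifies $\Def\big(\dsLi\to\Gra\big)$ with $\hom_\Sy\big(\Com^{*},\Gra\big)$, whose source records only the Lie part. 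The assertion that the latter computes the degree-zero homology of the former — i.e.\ that every homotopy derivation of $\Gerst_\infty$ is detected in degree $0$ by purely Lie-type graph classes — \emph{is} the content of Willwacher's theorem, and it requires a genuinely nontrivial spectral-sequence argument whose essential input is the formality quasi-isomorphism $\Gerst\xrightarrow{\sim}\Tw\Gra^{\mathrm{c}}$ recalled just before the statement. Your ``two-stage reduction'' (connectedness, then valence) only compares $\mathfrak{gc}$ with the full graph complex $\Def\big(\dsLi\to\Gra\big)$; that is one of the comparatively elementary preparatory steps of \cite{Willwacher15}, and it never bridges the two sides of the theorem. In short, you have located the difficulty in the wrong place.

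Second, your mechanism for producing the extra summand $\k$ fails on a degree count. In the paper's conventions ($\Def\big(\dsLi\to\Gra\big)\cong s^{2}\big(\Tw\Gra\big)(0)$, black vertices of degree $-2$, edges of degree $1$), a graph with $v$ vertices and $e$ edges sits in degree $e-2v+2$, so a loop with $k$ vertices has degree $2-k$; the loops surviving the dihedral symmetry constraints are those with $k\equiv 1\pmod 4$, hence they live in degrees $1,-3,-7,\dots$ and never in degree $0$ (the benchmark degree-$0$ class is $K_4$, with $6-8+2=0$). The class spanning $\k$ in ${H}_0\big(\Der(\Gerst_\infty)\big)$ is not a graph class at all: it is the Euler (weight-grading) derivation of $\Gerst_\infty$, which rescales each generator by its weight — this is precisely why it acts on degree-$k$ elements of $\mathfrak{grt}_1$ by multiplication by $k$, as the statement asserts. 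That class lives exactly in the discrepancy between the derivation complex and the deformation complex, i.e.\ on the side of the comparison that your outline omits, so your argument as written cannot produce the semidirect factor at all.
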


This gives, among other things, a proof of the fact that the group of homotopy automorphisms of the rational completion of the little disks operad is isomorphic to the pro-unipotent Grothendieck--Teichm\"uller group\index{Grothendieck--Teichm\"uller group}, see also the unstable approach of B. Fresse \cite{Fresse17II} using the rational homotopy theory for operads.

\section{Deligne conjecture}\label{sec:Deligne}

The celebrated Deligne conjecture states that there should exist a dg operad, quasi-isomorphic to the singular chain operad of the little disks, that acts on the Hochschild cochain complex of an $\Ai$-algebra, lifting the Gerstenhaber algebra structure on cohomology, see \cite{MR3261598,BergerFresse04, MR1321701,KontsevichSoibelman00,McClureSmith02, Tamarkin07, MR1328534}. 
In this section, we follow the exposition given in~\cite{DolgushevWillwacher15} and we present a solution to this conjecture based on two  operads, the \emph{brace trees operad} and the \emph{braces operad},  constructed via the operadic twisting procedure. 
The definitions are similar to the ones given in the previous section dealing with the graph operad: we consider first an operad spanned by a certain kind of graphs (planar rooted trees) with the composition map given by the insertion at a vertex and a sum over all the possible ways to graft the incoming sub-graphs, then we twist it, and finally we consider a sub-operad of this latter one. \\

Let $E=E(\Gamma)$ be the set of edges of a planar rooted tree  $\Gamma$. 
We will actually work with the set linear spans of pairs $(\Gamma, \omega)$, where $\omega$ is an orientation of $\mathbb{R}^E$, under the identification that the opposite choice of orientation produces the opposite pair:  $(\Gamma,\omega) = - (\Gamma, \omega^{\mathrm{op}})$. 
For the $i$-th vertex of $\Gamma$, we denote by $L_i(\Gamma)$ the ordered set, from left to right with respect to the planar structure, of all incoming edges attached above the $i$-th vertex.

\begin{definition}[The brace trees operad $\mathrm{BT}$]\label{def:BT} \index{brace trees operad}\index{operad!$\mathrm{BT}$}
The \emph{brace trees operad} $\mathrm{BT}$ is defined as follows. As a graded $\k$-module, the space $\mathrm{BT}(n)$, for $n\ge 1$, has a basis of planar rooted trees with $n$ vertices labelled from $1$ to $n$, where one is designated as the root, with $n-1$ edges of degree $1$, and one particular vertex for the root. 
The symmetric group $\Sy_n$ acts by permutations of the labels on the vertices. 
 \[
\vcenter{\xymatrix@M=5pt@R=10pt@C=10pt{
 & & *+[o][F-]{1}\ar@{-}[d]\\
*+[o][F-]{3}\ar@{-}[dr] &*+[o][F-]{5}\ar@{-}[d]&  *+[o][F-]{2}\ar@{-}[dl] \\
& *+[o][F]{4}  & 
}}\] 	
The operadic composition 
\[\circ_i\ \colon\  \mathrm{BT}(n)\otimes \mathrm{BT}(m)\to \mathrm{BT}(n+m-1)\ ,\] 
for $m,n\geq 1$, and$1\leq i \leq n$, applied to two planar rooted trees $\Gamma_1$ and $\Gamma_2$ is equal to the sum over all possible planar rooted trees obtained by the following procedure.
	\begin{enumerate}
		\item Replace the vertex $i$ in $\Gamma_1$ with the planar rooted tree $\Gamma_2$. 
		\item When the vertex $i$ of $\Gamma_1$  is not its root vertex, then attach its output edge to the root vertex of $\Gamma_2$. 
		\item Attach all edges in $L_i(\Gamma_1)$ to the vertices of $\Gamma_2$ preserving their order imposed by the planar structure of $\Gamma_1$.
		\item Relabel accordingly the new overall set of vertices. 
		\item Consider the orientation on the resulting planar rooted tree induced by the standard one for the direct sum $\RR^{E(\Gamma_1)}\oplus \RR^{E(\Gamma_2)}$.
	\end{enumerate}
\end{definition}

Here is an example of a composition in the brace trees operad $\mathrm{BT}$:
\begin{align*}
\vcenter{\hbox{
	\xymatrix@M=5pt@C=10pt@R=10pt{
		*+[o][F-]{1}\ar@{-}[dr] &&  *+[o][F-]{3}\ar@{-}[dl] \\
		& *+[o][F-]{2} & 
}}}
\ \circ_2\ 
\vcenter{\hbox{
	\xymatrix@M=5pt@R=10pt@C=10pt{
		*+[o][F-]{2}\ar@{-}[d]  \\
		*+[o][F-]{1}
}}}\ = \ &
\vcenter{\hbox{\xymatrix@M=5pt@C=10pt@R=10pt{
	*+[o][F-]{1}\ar@{-}[dr] & *+[o][F-]{4}\ar@{-}[d] &  *+[o][F-]{3}\ar@{-}[dl] \\
	& *+[o][F-]{2}
}}}
-\vcenter{\hbox{\xymatrix@M=5pt@C=10pt@R=10pt{
	*+[o][F-]{1}\ar@{-}[dr] & *+[o][F-]{3}\ar@{-}[d] &  *+[o][F-]{4}\ar@{-}[dl] \\
	& *+[o][F-]{2}
}}}\\ & 
+\vcenter{\hbox{\xymatrix@M=5pt@C=10pt@R=10pt{
	*+[o][F-]{3}\ar@{-}[dr] & *+[o][F-]{1}\ar@{-}[d] &  *+[o][F-]{4}\ar@{-}[dl] \\
	& *+[o][F-]{2}
}}}
+\vcenter{\hbox{\xymatrix@M=5pt@C=10pt@R=10pt{
	*+[o][F-]{1}\ar@{-}[dr] &  &  *+[o][F-]{4}\ar@{-}[dl] \\
	& *+[o][F-]{3}\ar@{-}[d] & \\
	&*+[o][F-]{2}
}}}\\
&
-\ \vcenter{\hbox{\xymatrix@M=5pt@C=10pt@R=10pt{
	 &  &  *+[o][F-]{4}\ar@{-}[d] \\
	*+[o][F-]{1}\ar@{-}[dr] && *+[o][F-]{3}\ar@{-}[dl]  \\
	&*+[o][F-]{2}
}}}
\ -\ \vcenter{\hbox{\xymatrix@M=5pt@C=10pt@R=10pt{
	 *+[o][F-]{1}\ar@{-}[d]&  &  \\
	 *+[o][F-]{3}\ar@{-}[dr]&&  *+[o][F-]{4}\ar@{-}[dl]  \\
	&*+[o][F-]{2}
}}}\ .
\end{align*}
The signs here assume a natural order on edges of the rooted planar trees listed from left to right and from bottom to top.

\begin{remark}
The brace trees operad is a planar and shifted version of the rooted trees operad $\mathrm{RT}$ introduced in Section~\ref{subsec:TwistableAlg}. 
That latter operad plays a similar part in Section~\ref{sec:natural} below. 
\end{remark}

One can check that there is a map of operads $\dsLi \twoheadrightarrow \calS^{-1}\Lie \to \mathrm{BT}$ that sends the binary generator of $\dsLi$ to
\[
\vcenter{\hbox{
	\xymatrix@M=5pt@R=10pt@C=10pt{
		*+[o][F-]{2}\ar@{-}[d]  \\
		*+[o][F-]{1}
}}}\ \ + \ 
\vcenter{\hbox{
	\xymatrix@M=5pt@R=10pt@C=10pt{
		*+[o][F-]{1}\ar@{-}[d]  \\
		*+[o][F-]{2}
}}}\ \  .
\]
Thus the operad $\mathrm{BT}$ is an operad under $\dsLi$, and we can twist it. Unfolding the definitions given in Section~\ref{Sec:Gen}, one can see that the elements of $\Tw\, \mathrm{BT}(n)$ are series, \index{operad!$\Tw\, \mathrm{BT}$} indexed by $k\geq 0$, of linear combinations of planar rooted trees with $n$ white vertices labelled by $1,\dots,n$ and $k$ unlabelled black vertices of degree $-2$, with edges of degree $1$. We identify the Maurer--Cartan element $\alpha$ with the planar rooted tree with no edges and one black vertex:
\[
\alpha = \vcenter{\xymatrix@M=5pt@R=10pt@C=10pt{
		*+[o][F**]{}}}\  . 
\] 
In general, we think of a planar rooted tree with $n$ white vertices labelled by $1,\dots,n$ and $k$ unlabelled black vertices as an element of $\mathrm{BT}(n+k)$ with the last $k$ inputs filled in with $\alpha$'s. The operadic composition for these series of planar rooted trees is defined in exactly the same way as in the case of the operad $\mathrm{BT}$, see Definition~\ref{def:BT} above. \\

The twisted differential $\dd^{\lambda_1^\alpha}$ can be computed on each planar rooted tree $\Gamma$ as the sum of four types of terms.
\begin{enumerate}
	\item Replace a black vertex $v$ by the planar rooted tree 
	\[\vcenter{\xymatrix@M=5pt@R=10pt@C=10pt{
			*+[o][F**]{}\ar@{-}[d] \\ *+[o][F**]{}
		}
	}\] with two black vertices and connect all  edges from $L_v(\Gamma)$ to these two new black vertices in all possible ways preserving their order imposed by the planar structure. 
	When the black vertex $\nu$ is not the root vertex, then attach its output edge to the root vertex of the above two-vertices tree. 
	Multiply the resulting sum of graphs by $-1$. We consider the sum over all black vertices~$v$ of $\Gamma$.
	
	\item Replace the white vertex labelled by $i$ with the sum of the two planar rooted trees 
	\[
	\vcenter{\xymatrix@M=5pt@R=10pt@C=10pt{
			*+[o][F-]{\, i\,}\ar@{-}[d] \\ *+[o][F**]{}
		} 
	} \,\,+\,\,
	\vcenter{\xymatrix@M=5pt@R=10pt@C=10pt{
			*+[o][F**]{}\ar@{-}[d] \\*+[o][F-]{\, i\,}
		}
	}\,\,,
	\]  and connect all edges from $L_i(\Gamma)$ to these two new vertices in all possible ways preserving their order imposed by the planar structure. 
		When the white vertex $i$ is not the root vertex, then attach its output edge to the root vertex of the above two-vertices trees. 
	Multiply the resulting sum of planar rooted trees by $-1$.  We consider the sum over all white vertices of $\Gamma$.
	
	\item Add one extra black vertex to $\Gamma$ with an edge that connects it to an existing vertex of $\Gamma$. We consider the sum over all white and black vertices of $\Gamma$ and over all different ways to connect a new edge to these vertices with respect to the planar structure.
	 
	\item Attach a new root black vertex
	\[
	\vcenter{\xymatrix@M=5pt@R=10pt@C=10pt{
			{} \\ *+[o][F**]{}\ar@{-}[u]}}
	\]
	 below the root.
\end{enumerate}

In all these cases, the orientation of the space of edges of the new planar rooted trees obtained from $\Gamma$ descends from $\Gamma$, with the new edge added at the first place. 
Notice that many terms cancel due to the sign issue. 
For planar rooted trees with at least two vertices (either white or black), only some planar rooted trees from (i) and (ii) will remain in the image of the twisted differential since all the terms coming from (iii) and  (iv) will get cancelled by corresponding terms coming from (i) or (ii). 

\begin{proposition}[{\cite[Section~5.4]{KontsevichSoibelman00}}]\label{prop:TwBTactsonCH}
There is a natural action of the twisted brace trees operad $\Tw\,\mathrm{BT}$ on the Hochschild cochain complex of an $\Ai$-al\-ge\-bras such that the actions of 
\[ \vcenter{\hbox{
	\xymatrix@M=5pt@R=10pt@C=10pt{
		*+[o][F-]{2}\ar@{-}[d]  \\
		*+[o][F-]{1}\
}}}\ \ + \ 
\vcenter{\hbox{
	\xymatrix@M=5pt@R=10pt@C=10pt{
		*+[o][F-]{1}\ar@{-}[d]  \\
		*+[o][F-]{2}
}}} \quad \text{and} \quad  \vcenter{\hbox{
	\xymatrix@M=5pt@C=10pt@R=10pt{
		*+[o][F-]{1}\ar@{-}[dr] &&  *+[o][F-]{2}\ar@{-}[dl] \\
		& *+[o][F**]{}
}}}
\]
give respectively the Gerstenhaber bracket and the cup product on cohomology.
\end{proposition}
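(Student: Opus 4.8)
The plan is to realise the desired action as a morphism of complete dg operads $\Tw\,\mathrm{BT} \to \End_{C^\bullet(A,A)}$ obtained by twisting a purely combinatorial morphism, so that all the conceptual content is reduced to the classical theory of braces together with the functoriality of operadic twisting established in \cref{prop:MorphMCOperadic}. Throughout I write $C^\bullet(A,A)=\prod_{n}\Hom(A^{\otimes n},A)$ and work in the complete setting of \cref{sec:OptheoyFilMod}, so that the series indexed by the black vertices of elements of $\Tw\,\mathrm{BT}$ give convergent operations once the arguments are fixed.

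First I would recall the \emph{brace operations} on $C^\bullet(A,A)$: for $f$ and $g_1,\dots,g_k$, the brace $f\{g_1,\dots,g_k\}$ inserts the $g_i$ into $f$ in all order-preserving ways, with the usual Koszul signs. The structural fact, going back to Gerstenhaber--Voronov and to McClure--Smith, is that these operations are governed exactly by the composition rule of the brace trees operad of \cref{def:BT}: inserting a planar rooted tree at a vertex and grafting the incoming edges in all planar-order-preserving ways is precisely the recursion satisfied by iterated braces. Thus I would construct a morphism of graded symmetric operads $\rho \colon \mathrm{BT} \to \End_{(C^\bullet(A,A),\,0)}$, where for the moment $C^\bullet(A,A)$ carries the zero differential. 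Checking that $\rho$ intertwines $\circ_i$ on both sides — and in particular matching the signs coming from the chosen orientations of the edge spaces $\RR^{E}$ with the Koszul signs produced by iterated braces — is the step I expect to be the main obstacle, being the only genuinely computational ingredient of the proof.

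Next I would feed the $\Ai$-structure into the picture through the arity $0$ generator. An $\Ai$-algebra structure $m=(m_1,m_2,\dots)$ on $A$ is a Maurer--Cartan element of the brace pre-Lie product $\star$ on $C^\bullet(A,A)$, and the map $\dsLi \to \mathrm{BT}$ of this section sends the shifted $\Li$-operations to the corresponding symmetric combinations of brace trees. I would therefore extend $\rho$ to a morphism of graded operads $\MC\,\mathrm{BT}=\big(\mathrm{BT}\,\hat{\vee}\,\alpha\big)\to \End_{(C^\bullet(A,A),\,0)}$ by setting $\alpha\mapsto m$. Compatibility with the differential $\dd$ of \cref{prop:MCPBIS} reduces to the vanishing of $\rho(\dd\alpha)$; but $\dd\alpha=-\sum_{n\ge 2}\tfrac{1}{n!}\lambda_n(\alpha,\dots,\alpha)$ maps under $\rho$ precisely to the left-hand side of the $\Ai$ Maurer--Cartan equation evaluated on $m$, which is zero by hypothesis, and $\dd\nu=0$ for $\nu\in\mathrm{BT}$. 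Hence $\rho$ is a morphism of dg operads.

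Finally I would twist. By \cref{prop:MorphMCOperadic}, the image $\rho(\lambda_1^\alpha)$ of the operadic Maurer--Cartan element $\lambda_1^\alpha$ is a Maurer--Cartan element of $\End_{C^\bullet(A,A)}$, and by the identification $\End_{(V,d)}^{\,\varepsilon}\cong\End_{(V,d+\varepsilon)}$ recorded earlier this twisted endomorphism operad is $\End_{(C^\bullet(A,A),\,\partial)}$, where $\partial=\rho(\lambda_1^\alpha)=[m,-]$ is exactly the Hochschild differential. Twisting $\rho$ by $\lambda_1^\alpha$ then produces the sought morphism of complete dg operads $\Tw\,\mathrm{BT}=(\MC\,\mathrm{BT})^{\lambda_1^\alpha}\to \End_{(C^\bullet(A,A),\,\partial)}$, i.e. the natural action. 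To conclude I would unwind $\rho$ on the two displayed generators: the symmetrised binary tree is the image of the generator of $\dsLi$, so after twisting it acts as the (twisted) bracket, which descends on cohomology to the Gerstenhaber bracket; the tree with two white vertices grafted onto a black root acts, after the substitution $\alpha\mapsto m$, through $m_2$, giving the cup product $f\smile g$, whose class computes the Gerstenhaber product. Verifying that these two induced operations generate the Gerstenhaber algebra structure on the cohomology of $C^\bullet(A,A)$ is then the classical computation of Gerstenhaber, which I would only sketch.
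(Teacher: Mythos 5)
Your proposal is correct, and it produces the same action as the paper, but by a genuinely different organisation of the argument. The paper's proof is a direct construction: it defines at once the operation $F_\Gamma$ of every black-and-white planar rooted tree, by decorating white vertices with the cochains $f_i$, black vertices with the $\Ai$-operations $m_{|L_v(\Gamma')|}$, attaching the arguments in all planar-order-preserving ways, and summing; the compatibility with the two differentials is then left to the reader, with a pointer to \cite[Appendix~B]{DolgushevWillwacher15}. You instead factor the construction through the monograph's own twisting machinery: first the purely combinatorial, non-differential statement that iterated braces define a morphism of graded operads $\rho\colon\mathrm{BT}\to\End_{(C^\bullet(A,A),\,0)}$; then the observation that the $\Ai$-structure $m$ is a Maurer--Cartan element for the induced bracket, so that $\rho$ extends to $\MC\,\mathrm{BT}$ (the only differential check being $\rho(\dd\alpha)=-\tfrac12[m,m]=0$, where indeed only $\lambda_2$ contributes because the multiplicative structure of $\mathrm{BT}$ factors through $\calS^{-1}\Lie$); and finally Proposition~\ref{prop:MorphMCOperadic} together with the identification $\End_{(V,d)}^{\,\varepsilon}\cong\End_{(V,d+\varepsilon)}$, which make the dg compatibility of the twisted morphism $\Tw\,\mathrm{BT}\to\End_{(C^\bullet(A,A),\,\partial)}$ a formal consequence. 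Since substituting $m$ into the black vertices of the brace action reproduces exactly the paper's formula for $F_\Gamma$, the two actions coincide; what your route buys is that the verification the paper defers becomes automatic, at the cost of the sign and orientation check that braces model the composition of $\mathrm{BT}$ --- a check of comparable combinatorial weight, but involving no differential, which is in the spirit of the ``change of paradigm'' advocated throughout the book. Two small corrections: convergence is actually a non-issue, since in $\Tw\,\mathrm{BT}(n)$ the homological degree pins down the number of black vertices (a tree with $n$ white and $k$ black vertices has degree $n-k-1$), so every homogeneous element is a finite sum of trees; and at the chain level the black-rooted tree acts by the full brace $m\{f_1,f_2\}$, that is the cup product plus corrections involving $m_{\geqslant 3}$ with extra arguments at the root, not literally through $m_2$ alone --- but this induces the cup product on cohomology, which is all the statement requires.
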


\begin{proof}
Let us recall that the Hochschild cochain complex of an $\Ai$-algebra  $(A, d, m_2, m_3, \ldots)$ is a graded module with components of the form  $\Hom\big((sA)^{\otimes l}, \allowbreak A\big)$ for $l\geq 1$, see Example~\ref{ex:HochschildCochainCx} or Section~\ref{sec:CurvedAiAlg}.  Here we view the structural operations $m_l$ as degree $-2$ elements of them. The action 
\[
F_\Gamma\colon \Hom\big((sA)^{\otimes l_1},A\big)\otimes \cdots \otimes \Hom\big((sA)^{\otimes l_n},A\big) \mapsto \Hom\big((sA)^{\otimes p},A\big) 
\]
of a planar rooted tree $\Gamma$ with $n$ labelled white vertices and $k$ black vertices is defined as follows. 
Let $f_i\in \Hom\big((sA)^{\otimes l_i},A\big)$, for $1\leq i\leq n$, and let $a_j\in A$, for $1\leq j \leq p$.
We attach $m$ additional vertices decorated respectively by $a_j$, for $1\leq j \leq m$, to the planar rooted tree $\Gamma$, each connected by a single edge to one of the already existing vertices (black or white) in $\Gamma$, so that their order agrees with the one imposed by the planar structure of $\Gamma$. We denote the new planar rooted tree by $\Gamma'$. 
If $|L_i(\Gamma')|\not= m_i$, for at least one index $1\leq i \leq n$, we set the upshot value to be trivial, that is 
$F_\Gamma(f_1, \ldots, f_n)(sa_1, \ldots, sa_p)\coloneqq 0$. 
Otherwise, we decorate each edge by the suspension $s$, each white vertex with the corresponding map $f_i$, and each black vertex $v$ with $m_{|L_v(\Gamma')|}$. 
The sum over all the possible evaluations of such operations on the respective elements according the composition scheme provided by the planar rooted trees 
$\Gamma'$ gives an element of $A$ which defines the requested action 
$F_\Gamma(f_1, \ldots, f_n)(sa_1, \ldots, sa_p)$~. 
We leave the interested reader to check that this definition is compatible with the respective differentials, see \cite[Appendix~B]{DolgushevWillwacher15} if needed. 
\end{proof}

We are now going to see that the dg operad $\Tw\, \mathrm{BT}$ provides us with a solution to the Deligne conjecture. However, this is better seen with the following homotopy equivalent sub-operad, introduced originally by M. Kontsevich and Y. Soibelman in \cite{KontsevichSoibelman00}. 

\begin{definition}[The braces operad $\mathrm{Br}$]\index{braces operad}\index{operad!$\mathrm{Br}$}
The \emph{brace operad} $\mathrm{Br}$ is the dg sub-operad of $\Tw\mathrm{BT}$ spanned by planar rooted trees whose black vertices have at least two incoming edges. 
\end{definition}

The following statement shows that there is no loss of operadic homotopical data. 

\begin{theorem}[{\cite[Theorem~9.3]{DolgushevWillwacher15}}] \label{thm:Br-TwBT}
	The natural inclusion $\mathrm{Br}\to \Tw\,\mathrm{BT}$ is a quasi-iso\-mor\-phi\-sm. 
\end{theorem}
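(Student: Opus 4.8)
The plan is to exploit the fact that $\mathrm{Br}$ is a dg suboperad of $\Tw\,\mathrm{BT}$ and to prove that the quotient cochain complex $Q\coloneqq \Tw\,\mathrm{BT}/\mathrm{Br}$ is acyclic; by the long exact sequence associated with the short exact sequence $0\to \mathrm{Br}\to \Tw\,\mathrm{BT}\to Q\to 0$, this is equivalent to the inclusion being a quasi-isomorphism. In each arity $n$, the module $Q(n)$ is spanned by series of planar rooted trees carrying at least one \emph{defect} black vertex, that is a black vertex with fewer than two incoming edges; such a vertex is either univalent (a black leaf, or the extra root vertex produced by component~(iv) of $\dd^{\lambda_1^\alpha}$) or bivalent (one incoming and one outgoing edge). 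As in the proofs of Theorem~\ref{thm:HTwncGerst} and Theorem~\ref{thm:HTwBV}, the twisted differential raises the number of black vertices by exactly one, so I may work with finite sums and all the homological algebra will take place in each arity separately.

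First I would set up an exhaustive, bounded-below increasing filtration of $Q(n)$ by (minus) the number of \emph{essential} black vertices, those with at least two incoming edges. Since components~(iii) and~(iv) of $\dd^{\lambda_1^\alpha}$ only attach a new univalent or root black vertex and the vertex-splitting components~(i) and~(ii) create at most one new defect vertex, the differential $\dd_0$ on the first page of the associated spectral sequence is precisely the part that acts on the defect black vertices, leaving the essential ones untouched. The convergence of this spectral sequence follows, as in Theorem~\ref{thm:HTwBV}, from the filtration being exhaustive and bounded below once restricted to finite sums.

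The core of the argument is then to show that $(Q(n),\dd_0)$ is acyclic. I would contract the defect vertices in two stages, mirroring the nested spectral sequences used for $\Tw\BV$. For the univalent black vertices I expect an explicit contracting homotopy: attaching a black leaf (or a black root) at a canonically chosen position defines a degree $+1$ operator $h$ with $\dd_0 h+h\dd_0=\pm\id$ on every tree carrying such a leaf, exactly as the acyclic tensor factors were discarded in the earlier computations. The remaining bivalent black vertices organise into chains sitting on the edges of the tree, and their local contribution is governed by the same mechanism as in the proof of Theorem~\ref{thm:HTwBV}: the corresponding complex is isomorphic to the cobar construction of a small explicit Koszul coalgebra, whose homology is concentrated in the configurations containing no bivalent black vertex. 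Combining the two contractions leaves exactly the planar rooted trees all of whose black vertices carry at least two incoming edges, i.e.\ the spanning set of $\mathrm{Br}$, so $Q$ is acyclic and the inclusion is a quasi-isomorphism.

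The main obstacle will be the bivalent chains together with the careful bookkeeping of the planar orientations and Koszul signs: one must check that the cancellations already noted after the description of $\dd^{\lambda_1^\alpha}$---the terms coming from~(iii) and~(iv) being killed by terms coming from~(i) and~(ii)---are precisely what guarantees that $\dd_0$ is acyclic on the defect part, and that the homotopy $h$ respects both the planar structure and the chosen orientation of the edge space. A clean alternative, which I would keep in reserve, is to invoke the homotopy invariance of the twisting endofunctor (Proposition~\ref{prop:TwQIBIS}) together with an \emph{ad hoc} deformation retraction of $\Tw\,\mathrm{BT}$ onto $\mathrm{Br}$, although producing that retraction explicitly requires the same valence-reduction analysis.
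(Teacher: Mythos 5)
The monograph does not actually prove Theorem~\ref{thm:Br-TwBT}: it is quoted from Dolgushev--Willwacher \cite{DolgushevWillwacher15} without proof, so your proposal has to stand entirely on its own. Its outer frame is fine: the long-exact-sequence reduction to acyclicity of $Q=\Tw\,\mathrm{BT}/\mathrm{Br}$, the reduction to finite sums because $\dd^{\lambda_1^\alpha}$ raises the number of black vertices by exactly one, and even the filtration by the number of essential black vertices is legitimate --- though for a reason you never state: one must check that \emph{no} component of the differential ever decreases that number (it holds), and boundedness in each arity and degree follows since the number of black vertices is determined by the homological degree.

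The genuine gap is your identification of the associated graded differential. It is false that $\dd_0$ ``acts on the defect black vertices, leaving the essential ones untouched''. With your filtration, $\dd_0$ consists of \emph{all} terms preserving the essential count, and these include: component (i) applied to an \emph{essential} vertex whenever the redistribution leaves one of the two halves with at most one child (e.g.\ all children go to the upper half and the lower half becomes bivalent); component (ii) applied to a \emph{white} vertex whenever the new black vertex receives at most one child; component (iii) grafting a black leaf onto a white vertex, onto an essential vertex, or onto a childless black vertex (note also that (iii) raises the valence of its target, so grafting onto a one-child black vertex lands in $\dd_1$, contrary to your reading of (iii) as merely ``attaching'' a vertex); and component (iv). Thus $\dd_0$ constantly buds new defect vertices off white and essential vertices, $\mathrm{gr}\,Q$ is not a complex of frozen essential skeleta decorated by defect chains, and your two-stage contraction (kill univalent, then bivalent, black vertices) is not a chain homotopy for the actual $\dd_0$: it has nothing to cancel the budding summands, and the assertion $\dd_0 h+h\dd_0=\pm\id$ ``on every tree carrying such a leaf'' is not a statement about a homotopy on a complex. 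What a correct proof along these lines needs is the finer observation that $\dd_0$ preserves the \emph{core} of a tree (the subtree of white and essential vertices, with defect chains contracted and defect whiskers deleted); only after decomposing $\mathrm{gr}\,Q$ over cores can one contract, and the local complexes then mix chain-growing with whisker-budding terms, so the argument is of antenna type, as in the proof of Theorem~\ref{th:TwPL}, rather than the tensor-factor discard of Theorems~\ref{thm:HTwncGerst} and~\ref{thm:HTwBV}. Finally, your fallback is circular: Proposition~\ref{prop:TwQIBIS} only transports a quasi-isomorphism you already possess, and ``an ad hoc deformation retraction of $\Tw\,\mathrm{BT}$ onto $\mathrm{Br}$'' is precisely the content of the theorem being proved.
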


In the end, Proposition~\ref{prop:TwBTactsonCH} and the following result give a proof to the Deligne conjecture.  

\begin{theorem}[{\cite[Theorem~3]{KontsevichSoibelman00}}]
\label{thm:DeligneConjecture} 
The dg operad $\mathrm{Br}$ is quasi-isomorphic to the dg operad of the singular chains of the little disks operad. 
\end{theorem}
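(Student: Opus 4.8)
The plan is to establish the quasi-isomorphism between $\mathrm{Br}$ and the singular chains of the little disks operad by constructing an explicit chain of quasi-isomorphisms, following the strategy of Kontsevich and Soibelman. The cornerstone is that the little disks operad $\mathcal{D}_2$ is formal, and that the braces operad $\mathrm{Br}$ is a model for the chains on the little disks at the $E_2$-level. First I would recall that there is a well-known cellular model for the little disks operad in terms of the \emph{cacti} or \emph{spineless cacti} operad, and that the braces operad arises naturally as an algebraic incarnation of the combinatorics of configurations of points on the circle. The essential input is the theorem of Kontsevich--Soibelman establishing a homotopy equivalence between $\mathrm{Br}$ and the (normalized) singular chain operad $C_\bullet(\mathcal{D}_2)$.

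The key steps, in order, would be the following. First, I would identify the homology of $\mathrm{Br}$ with the Gerstenhaber operad $\Gerst$: this follows from the identification ${H}_\bullet(\Tw\,\mathrm{BT})$ via the quasi-isomorphism $\mathrm{Br}\to\Tw\,\mathrm{BT}$ of Theorem~\ref{thm:Br-TwBT}, combined with a homology computation of $\Tw\,\mathrm{BT}$ paralleling the computations of Section~\ref{subsec:TwGerst}. This matches the known fact that ${H}_\bullet(\mathcal{D}_2)\cong\Gerst$. Second, I would invoke the formality of the little disks operad, so that $C_\bullet(\mathcal{D}_2)$ is quasi-isomorphic to its homology operad $\Gerst$ over a field of characteristic zero. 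Third, I would construct, or recall from \cite{KontsevichSoibelman00}, an explicit operad map relating the cellular chains of the cacti model to $\mathrm{Br}$, checking that it induces an isomorphism on homology; this is where the combinatorial description of brace trees in terms of configurations of arcs enters.

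The hard part will be the geometric comparison in the third step: matching the combinatorics of planar rooted trees with black and white vertices appearing in $\mathrm{Br}$ to an explicit CW model of the topological little disks operad, and verifying that the comparison map is compatible with the operadic compositions on the nose (or up to coherent homotopy). The subtlety lies in the orientations and signs—one must ensure that the orientation conventions on $\mathbb{R}^{E(\Gamma)}$ used in Definition~\ref{def:BT} correspond correctly to the orientations of the cells in the cacti model, since a sign discrepancy would obstruct the chain-map property. The sign bookkeeping here is genuinely delicate and is precisely the technical heart of the Kontsevich--Soibelman argument.

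Given the depth of the topological input required, I would not reprove this from scratch. Instead, the cleanest route is to cite the original theorem \cite[Theorem~3]{KontsevichSoibelman00}, and use our operadic framework only to provide the identification of the relevant algebraic model. That is, our contribution here is conceptual packaging: we have exhibited $\mathrm{Br}$ as a natural sub-operad of the twisted operad $\Tw\,\mathrm{BT}$, shown via Theorem~\ref{thm:Br-TwBT} that nothing is lost homotopically, and shown via Proposition~\ref{prop:TwBTactsonCH} that it acts on Hochschild cochains lifting the Gerstenhaber structure; the deep topological statement itself is then imported from \emph{loc.\ cit.} This keeps the exposition honest about which parts are formal consequences of the twisting machinery and which part is the genuinely hard geometric theorem.
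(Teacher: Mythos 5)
The paper offers no proof of this statement at all: it is imported verbatim as \cite[Theorem~3]{KontsevichSoibelman00}, and your final decision --- to cite the original rather than redo the geometric comparison, using the surrounding framework (Theorem~\ref{thm:Br-TwBT}, Proposition~\ref{prop:TwBTactsonCH}) only to situate $\mathrm{Br}$ --- is exactly what the paper does. Your preliminary three-step sketch is extra material not present in the paper (and note that steps 1--2 alone would not suffice, since knowing ${H}_\bullet(\mathrm{Br})\cong\Gerst$ does not by itself give formality of $\mathrm{Br}$), but since you explicitly discard it in favour of the citation, the proposal coincides with the paper's treatment.
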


One can actually go a bit further and show that this solution of the Deligne conjecture shares some universal properties. First, one can see, by a direct computation, that the canonical $\Tw$-coalgebra structure on the twisted operad $\Tw\, \mathrm{BT}$ restricts to the brace suboperad $\mathrm{Br}$. 
Recall that the dg operad of the singular chains of the little disks is formal \cite{Kontsevich99, Tamarkin03, LambrechtsVolic14} and that its homology is isomorphic to the Gerstenhaber operad \cite{Cohen76}. 
These above-mentioned results with 
Proposition~\ref{prop:TwQIBIS} 
and
Theorem~\ref{thm:TwGerst}, 
show that the respective counits of these two $\Tw$-stable dg operads $\Tw\, \mathrm{BT}$ and $\mathrm{Br}$  are quasi-isomorphisms: 
\[
\Tw\, \big(\Tw\,  \mathrm{BT}\big) \xrightarrow{\sim} \Tw\, \mathrm{BT}
\qquad \text{and} \qquad 
\Tw\, \mathrm{Br} \xrightarrow{\sim} \mathrm{Br}
\ .\]
Using all these facts, Dolgushev--Willwacher  \cite[Theorem~1.1]{DolgushevWillwacher15} show that ``every solution to the Deligne conjecture is homotopic to one compatible with the operadic twisting''. 

\section{Lie version of the Deligne conjecture}\label{sec:natural}

One of the motivations for studying the twisted operad $\Tw\,\calP$ of an operad $\calP$, into which the operad $\Li$, maps comes from the search for operations that are naturally defined on twisted $\Li$-algebras coming from $\calP$-algebras. Since the deformation complexes 
$\a_{\, \calC, A}$
 of morphisms of operads carry canonical pre-Lie algebra structures, see Section~\ref{subsec:CompConvAlg}, the twisted dg Lie algebra structure on each of them is in fact a part of a $\Tw \,\PreLie$-algebra structure. It is thus natural to ask whether the dg Lie algebra structure is the only homotopy meaningful structure on those complexes. \\
 
A general framework to studying some universal operations on the deformation complexes $\a_{\, \calP^{\ac}, A}^\alpha$ of 
$\calP$-algebra structures $\alpha \colon \calP \to \End_A$, for a Koszul operad $\calP$, was proposed by M. Markl in \cite{MR2309979}. 
Inspired by the construction of the brace operad $\mathrm{Br}$ of Kontsevich--Soibelman \cite{KontsevichSoibelman00} in the case of the Koszul operad $\calP=\Ass$, see Proposition~\ref{prop:TwBTactsonCH}, he defined in \emph{op. cit.} a dg operad $\calB_\calP$ acting on such deformation complexes. 
(Recall the usual operadic convention for the category of associative algebras: they can be either encoded by an ns operad denoted $\As$ or by an operad denoted $\Ass$.) That operad is not very manageable in general, as it is spanned by operations indexed by certain decorated trees labelled additionally by maps 
 \[
\Phi_{l_1,\ldots,l_n,p}\ \colon \ \calP^{\ac}(p) \to \calP^{\ac}(l_1)\otimes\cdots\otimes\calP^{\ac}(l_n)
 \]
for appropriate values of arities $p, l_1,\ldots, l_n,$. \\

There are exactly two situations where the definition of the operad $\calB_\calP$ simplifies rather drastically: the cases $\calP=\Lie$ and $\calP=\Ass$. In each of these cases, the components of $\calP^{\ac}$ are, in a sense, one-dimensional: for the Lie operad, that is literally true, while for the associative operad, it is the Koszul dual ns cooperad which is one-dimensional in each arity. 
An exhaustive study of the operad $\calB_{\Ass}$ was undertaken by M. Batanin and M. Markl in \cite{MR3261598}: they established that this operad has the homotopy type of the operad of singular chains on the little disks, thus establishing a strong  form of the Deligne conjecture. The operad $\calB_{\Lie}$ has not been thoroughly investigated until recently: M. Markl used it in~\cite{MR2325698} to describe Lie elements in free pre-Lie algebras, but otherwise it only received due attention in a recent article of the first author and A. Khoroshkin in \cite{DotKhPreLie}. \\

From the previous section~\ref{sec:Deligne}, we already know that the operadic twisting procedure plays a key role in understanding the Deligne conjecture in a conceptual way. We shall now explain that the operad $\calB_{\Lie}$ also admits an interpretation in the context of the operadic twisting, that leads to a Lie version of the Deligne conjecture. 
The presentation is aligned with that of the two previous sections \ref{sec:GTGra} and \ref{sec:Deligne}. \\

Recall from Section~\ref{subsec:TwistableAlg}, that the definition of the shifted version $\calS^{-1}\mathrm{RT}$ of the rooted trees operad is similar to the graph operad $\Gra$ (Section~\ref{sec:GTGra}) and to the brace trees operad $\mathrm{BT}$ (Section~\ref{sec:Deligne}) but where one is using non-planar rooted trees this time.  
Recall also that the rooted trees operad has been proved by Chapoton--Livernet~\cite[Theorem~1.9]{ChapotonLivernet01} to be isomorphic to the pre-Lie operad:
\[\mathrm{RT}\cong \PreLie\ .\]

There is a map of operads $\dsLi \twoheadrightarrow \calS^{-1}\Lie \to \calS^{-1}\mathrm{RT}$ that sends the binary generator of $\dsLi$ to
\[
\vcenter{\hbox{
	\xymatrix@M=5pt@R=10pt@C=10pt{
		*+[o][F-]{2}\ar@{-}[d]  \\
		*+[o][F-]{1}
}}}\ \ + \ 
\vcenter{\hbox{
	\xymatrix@M=5pt@R=10pt@C=10pt{
		*+[o][F-]{1}\ar@{-}[d]  \\
		*+[o][F-]{2}
}}}\ \  .
\]
Thus the operad $\calS^{-1}\mathrm{RT}$ is an operad under $\dsLi$, and we can twist it. Unfolding the definitions given in Section~\ref{Sec:Gen}, we see that the elements of $\Tw\, \allowbreak\calS^{-1}\mathrm{RT}(n)$\index{operad!$\Tw\, \calS^{-1}\mathrm{RT}(n)$} are series, indexed by $k\geq 0$, of linear combinations of rooted trees with $n$ white vertices labelled by $1,\dots,n$ and $k$ unlabelled black vertices of degree $-2$, with edges of degree $1$. We identify the Maurer--Cartan element $\alpha$ with the rooted tree with no edges 
\[
\alpha = \vcenter{\xymatrix@M=5pt@R=10pt@C=10pt{
		*+[o][F**]{}}}
\] 
and in general we think of a rooted tree with $n$ white vertices labelled by $1,\dots,n$ and $k$ unlabelled black vertices as an element of $\calS^{-1}\mathrm{RT}(n+k)$ with the last $k$ inputs filled in with $\alpha$'s. The operadic composition for these series of planar rooted trees is defined in exactly the same way as in the case of the rooted trees operad $\mathrm{RT}$, see Section~\ref{subsec:TwistableAlg}.\\

The twisted differential $\dd^{\lambda_1^\alpha}$ can be computed on each rooted tree $\Gamma$ as the sum of four types of terms.
\begin{enumerate}
	\item Replace a black vertex $v$ by the rooted tree 
	\[\vcenter{\xymatrix@M=5pt@R=10pt@C=10pt{
			*+[o][F**]{}\ar@{-}[d] \\ *+[o][F**]{}
		}
	}\] with two black vertices and connect all  edges from $L_v(\Gamma)$ to these two new black vertices in all possible ways. 
	When the black vertex $\nu$ is not the root vertex, then attach its output edge to the root vertex of the above two-vertices tree. 
	Multiply the resulting sum of graphs by $-1$. We consider the sum over all black vertices~$v$ of $\Gamma$.
	
		\item Replace the white vertex labelled by $i$ with the sum of the two rooted trees 
	\[
	\vcenter{\xymatrix@M=5pt@R=10pt@C=10pt{
			*+[o][F-]{\, i\,}\ar@{-}[d] \\ *+[o][F**]{}
		} 
	} \,\,+\,\,
	\vcenter{\xymatrix@M=5pt@R=10pt@C=10pt{
			*+[o][F**]{}\ar@{-}[d] \\*+[o][F-]{\, i\,}
		}
	}\,\,,
	\]  and connect all edges from $L_i(\Gamma)$ to these two new vertices in all possible ways. 
	When the white vertex $i$ is not the root vertex, then attach its output edge to the root vertex of the above two-vertices trees.
	Multiply the resulting sum of planar rooted trees by $-1$.  We consider the sum over all white vertices of $\Gamma$.

	\item Add one extra black vertex to $\Gamma$ with an edge that connects it to an existing vertex of $\Gamma$. We consider the sum over all white and black vertices of $\Gamma$ and over all different ways to connect a new edge to these vertices with respect to the planar structure.

	\item Attach a new root black vertex
	\[
	\vcenter{\xymatrix@M=5pt@R=10pt@C=10pt{
			{} \\ *+[o][F**]{}\ar@{-}[u]}}
	\]
	 below the root.
\end{enumerate}
In all these cases, the orientation of the space of edges of the new rooted trees obtained from $\Gamma$ descends from $\Gamma$, with the new edge added at the first place. Notice that many terms cancel due to the sign issue. 
For  rooted trees with at least two vertices (either white or black), only some planar rooted trees from (i) and (ii) will remain in the image of the twisted differential since all the terms coming from (iii) and  (iv) will get cancelled by corresponding terms coming from (i) or (ii). 
 \\

It turns out that the homology of this twisted dg operad is  simpler than that of the operad $\Tw\, \mathrm{BT}$, which is given by the Gerstenhaber operad. We outline the corresponding proof, and we refer the reader to~\cite{DotKhPreLie} for further details on the other results of this section.

\begin{theorem}[{\cite[Theorem~5.1]{DotKhPreLie}}]\label{th:TwPL}
The inclusion of dg operads $$\calS^{-1}\Lie\rightarrow\Tw \, \calS^{-1}\mathrm{RT}$$ induces a homology isomorphism.
\end{theorem}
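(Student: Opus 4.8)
The plan is to prove that the map $\calS^{-1}\Lie \to \Tw\, \calS^{-1}\mathrm{RT}$ is a quasi-isomorphism by a spectral sequence argument that mirrors the computation of $H_\bullet(\Tw\Gerst)$ in the proof of Theorem~\ref{thm:TwGerst}, exploiting the fact that $\calS^{-1}\mathrm{RT}\cong\calS^{-1}\PreLie$ and that $\PreLie$ has an explicit rooted-tree basis. First I would fix the arity $n$ and set up a filtration on $\Tw\, \calS^{-1}\mathrm{RT}(n)$ by the number of black vertices (equivalently, by the number of insertions of the Maurer--Cartan element $\alpha$); since black vertices carry degree $-2$ and the twisted differential $\dd^{\lambda_1^\alpha}$ raises the black-vertex count, this filtration is exhaustive and bounded below, so the associated spectral sequence converges to $H_\bullet\big(\Tw\, \calS^{-1}\mathrm{RT}(n)\big)$. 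The claim to establish is that the $E^\infty$-page is concentrated on the classes with no black vertices at all, which form precisely a basis of $\calS^{-1}\Lie(n)$, and that the inclusion realises this identification.

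\textbf{Key steps.} I would organise the computation around the observation, as in Theorem~\ref{thm:TwGerst}, that the twisted differential decouples over the combinatorial pieces of the rooted tree. The essential local model is a ``chain'' of black vertices grafted together: under the first differential $\dd^0$ of the spectral sequence, coming from the component of $\dd^{\lambda_1^\alpha}$ that replaces one black vertex by two adjacent black vertices, each connected configuration of black vertices sitting between white vertices (or at the root) forms an acyclic two-term complex unless it is empty, exactly as in cases (1)--(6) of the $\ncGerst$ computation (Theorem~\ref{thm:HTwncGerst}). The bookkeeping of signs and the precise pairing of the cancelling terms coming from the four types of summands of $\dd^{\lambda_1^\alpha}$ listed above is where I would invest care: the terms of type (iii) and (iv) that merely attach a new pendant black vertex get cancelled against the splitting terms of types (i) and (ii) on trees with at least two vertices, leaving only genuinely new contributions. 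After taking homology of $\dd^0$, I expect the surviving generators to be rooted trees whose only black vertices are forced, and a second filtration (by, say, the number of black vertices attached at leaves versus internal positions) should collapse the remaining differential, leaving exactly the black-vertex-free trees. Identifying these with a basis of $\calS^{-1}\Lie$ uses Chapoton--Livernet's isomorphism $\mathrm{RT}\cong\PreLie$ together with the standard fact that $\calS^{-1}\Lie$ sits inside $\calS^{-1}\PreLie$ as the Lie elements.

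\textbf{Main obstacle.} The hardest part will be controlling the signs and the exact cancellation pattern in the non-planar setting. Unlike the brace trees operad $\mathrm{BT}$, where the planar structure rigidifies the grafting and makes the cancellations of types (iii) and (iv) transparent, here the grafting of incoming edges onto the two new vertices happens ``in all possible ways'' without a planar order, so orbits under tree automorphisms permuting black vertices must be tracked, and any tree carrying an automorphism inducing an odd permutation of edges vanishes. Verifying that the local complexes are genuinely acyclic \emph{after} passing to these automorphism orbits, and that no spurious classes survive from symmetric configurations of black vertices, is the delicate point; I would handle it by reducing to the same local ``black component'' complexes as in Theorem~\ref{thm:HTwncGerst} and invoking the operadic K\"unneth formula \cite[Proposition~6.2.3]{LodayVallette12} to assemble the homology of the full arity-$n$ complex from these pieces, exactly as was done for $\Tw\Gerst$ and $\Tw\BV$. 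The convergence and the final identification of $E^\infty$ with $\calS^{-1}\Lie$ then follow formally, and Proposition~\ref{prop:TwQIBIS} propagates the statement to any cofibrant replacement if needed.
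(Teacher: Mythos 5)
Your proposal has a genuine gap, and it starts with the choice of filtration. Every single term of the twisted differential $\dd^{\lambda_1^\alpha}$ on $\Tw\,\calS^{-1}\mathrm{RT}$ --- splitting a black vertex into two, splitting a white vertex into a white--black pair, adding a pendant black vertex, adding a black root --- increases the number of black vertices by exactly one. Consequently the filtration by black-vertex count is preserved with associated graded differential identically zero: the spectral sequence has $E^0=E^1$ equal to the associated graded module, with $d_1$ equal to the full twisted differential, so it reproduces the original complex verbatim and computes nothing. (This is why, in the computations you cite, the black-vertex count is only ever used to pass from series to finite sums; the filtrations that do the work are by other quantities --- number of edges, number of black vertices with tadpoles, size of the ``core''.) Your expected answer is also misstated: the black-vertex-free trees span $\calS^{-1}\mathrm{RT}(n)\cong\calS^{-1}\PreLie(n)$, of dimension $n^{n-1}$, not $\calS^{-1}\Lie(n)$, of dimension $(n-1)!$. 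Since $\PreLie$ is \emph{not} $\Tw$-stable, the twisted differential does not vanish on black-free trees --- unlike the situation of Theorem~\ref{thm:TwGerst}, where $\Gerst$ is $\Tw$-stable and survives wholesale --- so only the Lie combinations among black-free trees are cycles, and the theorem asserts that these exhaust the homology. The strategy of ``mirroring'' the $\Tw\Gerst$ computation therefore misses the essential structural difference between the two cases.

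The second gap is the local analysis. In the bamboo case of Theorem~\ref{thm:HTwncGerst}, black components are linear chains, the complex splits as a tensor product of chain complexes with alternating $0,\pm 1$ differentials, and the two-term cancellation you describe is exactly what happens. In the non-planar rooted-tree case, a connected black component is an \emph{arbitrary subtree} with an arbitrary number of attachment points, and the restriction of the differential to it (splitting vertices and redistributing children in all possible ways, modulo automorphisms) is a full rooted-tree graph-complex differential; its acyclicity is precisely the hard content, which the paper establishes for the arity-zero part via Willwacher's core/antenna filtration together with a Maschke argument, not by any two-term cancellation. Moreover, the K\"unneth-type assembly used for $\Tw\Gerst$ and $\Tw\BV$ rests on the distributive-law decompositions $\Gerst\cong\Com\circ\calS^{-1}\Lie$ and $\BV\cong\Com\circ\calS^{-1}\Lie\circ\k[\Delta]$, which have no analogue for $\mathrm{RT}\cong\PreLie$. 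The paper's proof in positive arity proceeds quite differently: it splits $\Tw\,\calS^{-1}\mathrm{RT}(n)\cong V(n)\oplus W(n)$ according to whether the white vertex labelled $1$ has an input edge, filters by edge count (with an offset between the two summands) so that the leading differential is the injective map $W(n)\to V(n)$ that blackens vertex $1$ and hangs a new bare white vertex $1$ onto it, identifies the cokernel with $n-1$ copies of $\Tw\,\calS^{-1}\mathrm{RT}(n-1)$, and concludes by induction, a dimension count, and the injectivity of $\calS^{-1}\Lie\to\calS^{-1}\PreLie$ on homology. To salvage your outline you would have to replace the black-count filtration by a genuinely decreasing one of this kind and supply an independent acyclicity argument for the black subtree complexes.
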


\begin{proof}
We shall first establish that the arity zero part of $\Tw\, \calS^{-1}\mathrm{RT}$ is acyclic. For that, we note that for such rooted trees the differential is the ``usual'' graph complex differential of M. Kontsevich \cite{MR1247289}, and its acyclicity can be proved by the following version of an argument of T. Willwacher \cite[Proposition~3.4]{Willwacher15}. Let us call an \emph{antenna} of a rooted tree $\Gamma$ a maximal connected subtree consisting of vertices of valences $1$ and $2$ (in particular, each leaf of $\Gamma$ is included in its own antenna); a tree can be viewed as a ``core'' without vertices of valence $1$ with several antennas attached to it. We consider the filtration of the chain complex $\Tw\, \calS^{-1}\mathrm{RT}(0)$ by the size of the core: 
$F^p \Tw\, \calS^{-1}\mathrm{RT}(0)$ is spanned by rooted trees whose all vertices are black and whose core has at least $-p$ vertices.  The differential of the first page of the corresponding spectral sequence is the summand that preserves the size of the core and increases the length of the antennas; it can be easily seen acyclic as follows. We may first consider planar rooted trees, fixing a total order on the children of each vertex; this forces the graph automorphisms to disappear, and the first page of the spectral sequence is the tensor product of acyclic complexes for (the non-empty set of) individual antennas; dealing with trees without a fixed planar structure amounts to taking the subcomplex of invariants complex with respect to graph automorphisms, which is acyclic since it splits as a direct summand due to the Maschke theorem. Since this filtration is  exhaustive and bounded below, we conclude with the convergence of the spectral sequence. 

Let us now consider the case of positive arities $n>0$. We consider the decomposition 
 $
\Tw\, \calS^{-1}\mathrm{RT}\cong V(n)\oplus W(n),
 $ 
 where $V(n)$ is spanned by the rooted trees where the vertex with label~$1$ has no incident edge, and $W(n)$ is spanned by the trees where the white vertex with label~$1$ has at least one incident edge. 
 The above-mentioned analysis of the twisted differential $d^{\lambda_1^\alpha}$ shows that it has  components mapping $V(n)$ to $V(n)$, mapping $W(n)$ to $V(n)$, and mapping $W(n)$ to $W(n)$. We consider the increasing  filtration $F^p\Tw\, \calS^{-1}\mathrm{RT}(n)$ for which $F^pV(n)$ is spanned by rooted trees from $V(n)$ with at least $-p$ edges and $F^pW(n)$ is spanned by rooted trees from $W(n)$ with at least $-p-1$ edges.
Notice already that this filtration is exhaustive and bounded below, so its associated spectral sequence converges. 
 On the first page $E^0$ of the associated spectral sequence, the first differential $d_0$ is given by the part of the full differential that maps $W(n)$ to $V(n)$; it takes the white vertex labelled by $1$ in~$\Gamma$, turns it into a vertex black, and attach to it a new white vertex without inputs and labelled by~$1$:
 \[d_0 \ \  \colon \ 
\vcenter{\hbox{
\xymatrix@M=3pt@R=8pt@C=8pt{
\ar@{-}[dr]&\cdots\ar@{-}[d]&\ar@{-}[dl]\\
&*++[o][F-]{1}\ar@{-}[d]&&\\
&&
}}}
\mapsto 
\vcenter{\hbox{
\xymatrix@M=3pt@R=8pt@C=8pt{
&*++[o][F-]{1}\ar@{-}[drr]&\ar@{-}[dr]&\cdots\ar@{-}[d]&\ar@{-}[dl]&\\
&&&*++[o][F**]{}\ar@{-}[d]&&.&&\\
&&&
}}} 
 \]
Such an assignment is injective. For $n=1$, the cokernel of $d_0$ is spanned by the single one-vertex tree with its white vertex labelled by~$1$, proving that the map $\calS^{-1}\Lie(1)\to\Tw\, \calS^{-1}\mathrm{RT}(1)$ is a quasi-isomorphism.
For $n>1$, the cokernel of $d_0$ is spanned by the rooted trees $\Gamma$ for which the white vertex labelled by$1$ has no input edge and such that its output is connected to another white vertex.  
It splits into a direct sum of subcomplexes according to the number $i$ labelling that latter white vertex; the number of such subcomplexes in arity $n$ is equal to $n-1$. 
One can see that each of these subcomplexes are isomorphic to $\Tw\, \calS^{-1}\mathrm{RT}(n-1)$, by erasing the white vertex labelled by $1$. We may proceed and conclude by induction: each of these subcomplexes is assumed to have its homology isomorphic to $\calS^{-1}\Lie(n-1)$ of dimension $(n-2)!$\ .  So the total dimension of the homology of $\Tw\, \calS^{-1}\mathrm{RT}(n)$ is $(n-1)!$ which is the same as the dimension of $\calS^{-1}\Lie(n)$. It remains to note that the map of operads $\calS^{-1}\Lie\to\calS^{-1}\mathrm{RT}$ is injective, since even the composite 
 \[
\calS^{-1}\Lie\to\calS^{-1}\mathrm{RT}\cong\calS^{-1}\mathrm{PreLie}\to\calS^{-1}\Ass
 \] 
is injective. Since all the elements in the image of the twisted differential $d^{\lambda_1^\alpha}$ contain at least one occurrence of $\alpha$ (i.e. black vertex), this inclusion of dg operads $\calS^{-1}\Lie\rightarrow\Tw\, \calS^{-1}\mathrm{RT}$ yields an injective on the level of homology. The equality of dimensions implies that this inclusion is an isomorphism. 
\end{proof}


Examining the operad $\calB_{\Lie}$ of universal operations of deformation complexes of $\Li$-algebra structures, one discovers that it is very close to the operad $\Tw\, \calS^{-1}\mathrm{RT}$, so very similar methods apply for computing its homology. This way one can prove the following result.

\begin{theorem}[Lie version of the Deligne conjecture, {\cite[Theorem~6.3]{DotKhPreLie}}]
The operad $\calB_{\Lie}$\index{operad!$\calB_{\Lie}$} of universal operations on deformation complexes of $\Li$-algebra structures has the homotopy type of the operad $\Lie$. 
\end{theorem}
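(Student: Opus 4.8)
The plan is to mirror the proof strategy of Theorem~\ref{th:TwPL} as closely as possible, exploiting the fact that the operad $\calB_{\Lie}$ of universal operations on deformation complexes of $\Li$-algebra structures is, as M.~Markl's construction reveals, very close to $\Tw\,\calS^{-1}\mathrm{RT}$. First I would recall the explicit description of $\calB_{\Lie}$ from \cite{MR2309979, MR2325698, DotKhPreLie}: because the components of the Lie cooperad are literally one-dimensional in each arity, the decorating maps $\Phi_{l_1,\ldots,l_n,p}$ degenerate to scalars, so $\calB_{\Lie}$ is spanned by certain decorated (non-planar) rooted trees with a differential of ``graph-complex'' type. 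The key structural observation to establish is that there is a canonical map of dg operads $\calS^{-1}\Lie \to \calB_{\Lie}$ (coming from the natural Lie bracket on any deformation complex, which is the antisymmetrisation of the convolution pre-Lie product of Section~\ref{subsec:CompConvAlg}), together with a close comparison between $\calB_{\Lie}$ and $\Tw\,\calS^{-1}\mathrm{RT}$ on the level of underlying trees and differentials.

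The heart of the argument is a homology computation identical in spirit to the one carried out for $\Tw\,\calS^{-1}\mathrm{RT}$ in the proof of Theorem~\ref{th:TwPL}. I would run the same two-stage spectral sequence argument. In arity zero, one checks acyclicity of the relevant complex by the antenna-versus-core filtration of T.~Willwacher \cite[Proposition~3.4]{Willwacher15}: fix a total order on children to kill graph automorphisms, observe that the first page splits as a tensor product of acyclic antenna complexes, then pass to automorphism invariants using the Maschke theorem, which is legitimate since $\k$ has characteristic zero and invariants split off as a direct summand. In positive arity $n$, I would use the decomposition of the complex into the part $V(n)$ where the vertex labelled $1$ has no incident edge and the part $W(n)$ where it has at least one, filter by edge count so that the leading differential $d_0$ maps $W(n)$ injectively into $V(n)$ by turning the vertex $1$ black and re-attaching a fresh white vertex $1$ with no inputs, and then identify the cokernel. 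For $n=1$ the cokernel is one-dimensional, matching $\calS^{-1}\Lie(1)$; for $n>1$ the cokernel splits into $n-1$ subcomplexes each isomorphic to the arity-$(n-1)$ complex, giving the inductive dimension count $\dim H_\bullet = (n-1)! = \dim \calS^{-1}\Lie(n)$.

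Having matched dimensions, I would close the argument exactly as in Theorem~\ref{th:TwPL}: the composite $\calS^{-1}\Lie \to \calB_{\Lie}$ is injective because the elements in the image of the twisted differential all contain at least one occurrence of the Maurer--Cartan generator $\alpha$ (a black vertex), so the canonical inclusion is injective on homology; an injection between graded vector spaces of equal finite dimension in each arity is an isomorphism. This would establish that $\calB_{\Lie}$ has the homotopy type of $\Lie$ (after the relevant shift), yielding the Lie version of the Deligne conjecture.

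The main obstacle I anticipate is \emph{not} the spectral sequence bookkeeping, which is essentially a transcription of the $\Tw\,\calS^{-1}\mathrm{RT}$ computation, but rather establishing rigorously that $\calB_{\Lie}$ genuinely coincides with (or is quasi-isomorphic to, via an explicit comparison of complexes) the twisted operad $\Tw\,\calS^{-1}\mathrm{RT}$, or at least that its differential admits the same antenna/edge-count filtrations with the same leading terms. Markl's operad $\calB_{\calP}$ is defined through the combinatorics of the decorated trees and the maps $\Phi_{l_1,\ldots,l_n,p}$, and the precise dictionary translating these decorations into the black-and-white rooted trees of $\Tw\,\calS^{-1}\mathrm{RT}$ — including the matching of signs, orientations of edge spaces, and the action of graph automorphisms — is the delicate point where one must be careful. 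Once this identification is in place, the homological computation and the dimension-counting conclusion follow \emph{mutatis mutandis} from Theorem~\ref{th:TwPL}, and I would refer to \cite[Theorem~6.3]{DotKhPreLie} for the full verification of these combinatorial details.
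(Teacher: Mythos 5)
Your proposal matches the paper's approach exactly: the monograph itself treats this theorem by observing that $\calB_{\Lie}$ is very close to $\Tw\,\calS^{-1}\mathrm{RT}$, so that the antenna-filtration and $V(n)\oplus W(n)$ spectral-sequence argument of Theorem~\ref{th:TwPL} carries over \emph{mutatis mutandis}, with the combinatorial dictionary between Markl's decorated trees and the black-and-white rooted trees deferred to \cite[Theorem~6.3]{DotKhPreLie}. Your identification of the comparison between $\calB_{\Lie}$ and the twisted operad as the genuinely delicate point, rather than the homological bookkeeping, is precisely where the cited reference does its work.
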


We conclude this section with another guise of operadic twisting arising in this research area. In \cite{MR2325698}, M. Markl considers the pre-Lie algebra $\mathrm{rPL}(V)$, where ``r'' stands for ``reduced'',  defined by the formula
 \[
\mathrm{rPL}(V):=\frac{\PreLie(V\oplus\k\circ)}{(\circ\star\circ)},
 \]
where $\circ$ is an additional generator of degree $-1$. According to \cite[Proposition~3.2]{MR2325698}, there is a well defined map $d\colon\mathrm{rPL}(V)\to\mathrm{rPL}(V)$ of degree~$-1$ that annihilates all generators ($V$ and $\circ$) and satisfies $d^2=0$. But, 
it fails to make $\mathrm{rPL}(V)$ a dg pre-Lie algebra, since the ``derivation'' relation required here takes the following form: 
 \[
d(a\star b)=d(a)\star b+(-1)^{|a|} a\star d(b)+Q(a,b) \ ,
 \]
where $Q(a,b)\coloneqq(\circ\star a)\star b-\circ\star (a\star b)$. The bright point of this definition lies in the following result. 

\begin{theorem}[{\cite[Theorem~3.3]{MR2325698}}]
The subspace of Lie elements in $\PreLie(V)$ equals the kernel of $d$ on the space of degree~$0$ elements $\mathrm{rPL}(V)_0\cong\PreLie(V)$:
 \[
\Lie(V)\cong \ker\big(d\colon\mathrm{rPL}(V)_0\to\mathrm{rPL}(V)_{-1}\big)\ .
 \]
\end{theorem}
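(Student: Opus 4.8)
The plan is to recast the identity $\ker d=\Lie(V)$ as a homology computation. First I would observe that $\mathrm{rPL}(V)$ is concentrated in non-positive degrees: a pre-Lie monomial has degree $-k$, where $k$ is the number of occurrences of the odd generator $\circ$, so $\mathrm{rPL}(V)_0\cong\PreLie(V)$ and $\mathrm{rPL}(V)_{+1}=0$. Hence there are no boundaries landing in degree $0$, and
\[
\ker\big(d\colon \mathrm{rPL}(V)_0\to\mathrm{rPL}(V)_{-1}\big)=H_0\big(\mathrm{rPL}(V),d\big),
\]
so the theorem is the assertion $H_0=\Lie(V)$. I would also record that the defining relation $\circ\star\circ=0$ is precisely the (pre-Lie) Maurer--Cartan equation for $\circ$, which is what underlies $d^2=0$, and that a one-line computation with the right-symmetry relation gives $Q(a,b)=(-1)^{|a||b|}Q(b,a)$; this symmetry makes $d$ a derivation of the associated Lie bracket, so $\big(\mathrm{rPL}(V),[\,,\,],d\big)$ is a dg Lie algebra whose differential is governed by the Maurer--Cartan element $\circ$.

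For the inclusion $\Lie(V)\subseteq\ker d$ I would induct on bracket length. The base case $d(v)=0$ for $v\in V$ holds by definition. If $a,b$ are $d$-closed elements of degree $0$, then the modified Leibniz rule together with the symmetry of $Q$ on even elements gives
\[
d([a,b])=d(a\star b)-d(b\star a)=Q(a,b)-Q(b,a)=0.
\]
Since $\Lie(V)$ is generated by $V$ under the bracket, it follows that $d$ vanishes on all of $\Lie(V)$.

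The heart of the matter is the opposite inclusion, i.e. the computation of $H_0$. The guiding picture is that $\big(\mathrm{rPL}(V),d\big)$ is a free-algebra avatar of the twisted operad $\Tw\,\mathrm{RT}$, the unshifted analogue (with respect to the map $\Lie\to\PreLie\cong\mathrm{RT}$) of the operad $\Tw\,\calS^{-1}\mathrm{RT}$ treated in Theorem~\ref{th:TwPL}, with $\circ$ playing the role of the universal Maurer--Cartan black vertex $\alpha$. Since the homology of $\Tw\,\mathrm{RT}$ is $\Lie$ concentrated in the black-vertex-free part, and since over a field of characteristic $0$ the Schur (free-algebra) functor preserves quasi-isomorphisms, one expects $H_\bullet\big(\mathrm{rPL}(V),d\big)\cong\Lie(V)$ supported entirely in the $\circ$-free degree, that is in degree $0$; combined with the degree concentration above this would give $\ker d=\Lie(V)$.

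The main obstacle is that this identification is not literal and must be executed with care, since the operadic twisting builds $\Tw\,\mathrm{RT}$ on the free product $\mathrm{RT}\,\hat{\vee}\,\alpha$ \emph{without} the relation $\alpha\star\alpha=0$, whereas $\mathrm{rPL}(V)$ is the quotient by the Maurer--Cartan relation $\circ\star\circ=0$, and this quotient genuinely alters the homology in arity $0$. I therefore expect the robust route to bypass the literal identification and to rerun, directly on $\mathrm{rPL}(V)$, the spectral-sequence argument from the proof of Theorem~\ref{th:TwPL} (and of \cite{DotKhPreLie}): filter by the size of the ``core'' obtained after shrinking antennas, reduce via the Maschke theorem to graph-automorphism invariants, and then treat the positive-arity part inductively through the splitting according to whether the vertex carrying the distinguished label has an incident edge. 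The expected output is that the positive-arity homology is concentrated in degree $0$ with the same dimensions as $\Lie(V)$; together with the elementary inclusion $\Lie(V)\subseteq\ker d$ this forces $\ker d=\Lie(V)$, with the isomorphism being the canonical inclusion.
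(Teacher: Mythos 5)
Your proposal should first be measured against the fact that the monograph itself does not prove this statement: it is quoted from Markl's article \cite{MR2325698}, and the surrounding text of Section~\ref{sec:natural} only explains how it fits into the operadic twisting picture, deferring the homological input to \cite[Theorem~6.2]{DotKhPreLie} (the final theorem of the chapter, asserting that $\Lie\to\mathrm{rPL}$ is a homology isomorphism, proved there by an argument ``similar to that of the proof of \cref{th:TwPL}''). Your strategy is exactly that route, and your two preliminary steps are correct and supply precisely the glue needed to deduce Markl's theorem from the homology computation: since $\mathrm{rPL}(V)$ is concentrated in non-positive degrees, $\ker\big(d\colon\mathrm{rPL}(V)_0\to\mathrm{rPL}(V)_{-1}\big)=H_0\big(\mathrm{rPL}(V)\big)$; and the symmetry $Q(a,b)=(-1)^{|a||b|}Q(b,a)$, which does follow from right-symmetry of the pre-Lie associator, makes $d$ a derivation of the commutator bracket and yields $\Lie(V)\subseteq\ker d$.

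The gap is that the hard inclusion $\ker d\subseteq\Lie(V)$ is only a plan, and the plan's deferral hides the entire content of the theorem. Two concrete points. First, your intermediate expectation that $H_\bullet\big(\mathrm{rPL}(V),d\big)\cong\Lie(V)$ is ``supported entirely in degree $0$'' is false as stated: the element $\circ$ is a degree $-1$ cycle which can never be a boundary, because every term of $d(x)$ has exactly one more occurrence of $\circ$ than $x$ and the same word length in $V$, and the pure-$\circ$ part of $\mathrm{rPL}(V)$ reduces to $\k\circ$; only the positive-arity concentration statement (the one you in fact invoke at the end) is true, and it does suffice, since arity zero contributes nothing to degree $0$. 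Second, ``rerunning'' the proof of \cref{th:TwPL} inside $\mathrm{rPL}(V)$ is not a routine transcription. That proof computes the homology of $\Tw\,\calS^{-1}\mathrm{RT}$, where \emph{no} relation is imposed on $\alpha$; in the quotient by the operadic ideal generated by the black--black edge, the key component $d_0\colon W(n)\to V(n)$ changes (the tree produced when the labelled vertex has a black parent lands in the ideal), so its injectivity and the identification of its cokernel — which drive the induction on arity — must be re-established, and even deciding which linear combinations of trees with adjacent black vertices lie in that ideal is a nontrivial combinatorial question. This is exactly the work done in \cite[Theorem~6.2]{DotKhPreLie} via the filtration of $\Tw\,\PreLie$ by powers of the ideal and its associated graded complex; until that analysis (or an equivalent one) is carried out, your argument establishes only $\Lie(V)\subseteq\ker d$, not the equality.
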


In fact, as explained in \emph{loc. cit.}, one can define a differential graded operad $(\mathrm{rPL}, \mathrm{d})$ and view the chain complex $(\mathrm{rPL}(V),d)$ as the result of evaluating the Schur functor corresponding to differential graded $\mathbb{S}$-module $\mathrm{rPL}$ on the vector space~$V$. This operad is given by the coproduct of the rooted tree operad with an arity $0$ operation 
\[\vcenter{
\xymatrix@M=5pt@R=10pt@C=10pt{
*+[o][F-]{}
}
}\]
placed in degree $-1$ modulo the operadic ideal generated by 
\[\vcenter{
\xymatrix@M=5pt@R=10pt@C=10pt{
*+[o][F-]{}\ar@{-}[d]  \\
*+[o][F-]{}
}}\ .\]
Its differential is defined by 
\begin{align*}
\mathrm{d}\left(
\vcenter{\hbox{
\xymatrix@M=5pt@R=10pt@C=10pt{
*+[o][F-]{}
}}}\;\right)=0 \quad \text{and} \quad 
\mathrm{d}
\left(\,\vcenter{\hbox{\xymatrix@M=5pt@R=10pt@C=10pt{
*+[o][F-]{1}\ar@{-}[d]  \\
*+[o][F-]{2}
}}}\,\right)=
\vcenter{\hbox{
\xymatrix@M=5pt@C=10pt@R=10pt{
*+[o][F-]{1}\ar@{-}[dr] &&  *+[o][F-]{2}\ar@{-}[dl] \\
& *+[o][F-]{} & 
}}}\ .
\end{align*}

This brings us very close to the key observation: examining the formulas for the differential in the operad $\Tw\, \calS^{-1}\mathrm{RT}$ and performing the operadic suspension, we find that 
 \[
d^{\lambda_1^\alpha}\  \colon\ 
\vcenter{
\xymatrix@M=5pt@R=10pt@C=10pt{
*+[o][F-]{1}\ar@{-}[d]  \\
*+[o][F-]{2}
}
}\ 
\mapsto\ 
\vcenter{
\xymatrix@M=5pt@C=10pt@R=10pt{
*+[o][F-]{1}\ar@{-}[dr] &&  *+[o][F-]{2}\ar@{-}[dl] \\
& *+[o][F**]{}
}} 
 \]
 in the operad $\Tw\, \PreLie\cong\calS\Tw\, \calS^{-1}\mathrm{RT}$.
So one must think of the element $\circ$ as a shadow of the element $\alpha\in\Tw\, \PreLie$. Incidentally, this calculation also shows why the brace tree corresponding to the Gerstenhaber product in the case of the operad $\Tw\, \mathrm{BT}$, see Proposition~\ref{prop:TwBTactsonCH}, does not survive in the homology of the operad $\Tw\, \calS^{-1}\mathrm{RT}$.  Thus, the operadic twisting emerges naturally in the algebraic context of Lie elements in pre-Lie algebras, from the study of universal operations on deformation complexes.\\

An argument similar to that of the proof of Theorem~\ref{th:TwPL} can now be used to establish a result conjectured by M. Markl about fifteen years ago. We note that the operadic ideal of $\Tw\, \PreLie$ generated by 
\[\vcenter{\hbox{\xymatrix@M=5pt@R=10pt@C=10pt{
*+[o][F**]{}\ar@{-}[d]  \\
*+[o][F**]{}
}}}\]
is closed under differential, and so one may consider the filtration by powers of that ideal and the associated graded chain complex. It turns out that the homology of the associated graded complex is already isomorphic to the operad $\Lie$. 

\begin{theorem}[{\cite[Theorem~6.2]{DotKhPreLie}}]
The quotient dg operad 
\[
\frac{\Tw \, \PreLie}{\left(\,\vcenter{\hbox{\xymatrix@M=5pt@R=10pt@C=10pt{
*+[o][F**]{}\ar@{-}[d]  \\
*+[o][F**]{}
}}}\,\right)}
\] is isomorphic to the dg operad $\mathrm{rPL}$, and the morphism
$\Lie\to \mathrm{rPL}$
induces a homology isomorphism.
\end{theorem}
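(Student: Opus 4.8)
We must show that the quotient dg operad $\Tw\,\PreLie/\big(\,\vcenter{\hbox{\scalebox{0.6}{\xymatrix@M=2pt@R=5pt@C=5pt{*+[o][F**]{}\ar@{-}[d]\\ *+[o][F**]{}}}}}\,\big)$ is isomorphic, as a dg operad, to $\mathrm{rPL}$, and that the resulting map $\Lie\to\mathrm{rPL}$ is a homology isomorphism.

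The plan is to proceed in two stages: first an isomorphism of dg operads, then a homology computation. For the isomorphism, I would start from the explicit presentation of $\mathrm{rPL}$ recalled just above: it is the coproduct of the rooted trees operad $\mathrm{RT}\cong\PreLie$ with a single arity-$0$ degree-$-1$ generator $\circ$, modulo the ideal generated by $\circ\star\circ$ (the ladder of two black vertices), equipped with the Markl differential $\mathrm{d}$ that sends the edge-generator to the corolla with an extra bottom vertex. On the other side, by Definition~\ref{def:TwistOp} and the discussion of Section~\ref{sec:natural}, $\Tw\,\PreLie\cong\calS\,\Tw\,\calS^{-1}\mathrm{RT}$ has underlying operad $\PreLie\,\hat\vee\,\alpha$, where $\alpha$ is the arity-$0$ black vertex; quotienting by the ideal generated by the black-black ladder $\alpha\star\alpha$ identifies the underlying graded operads via $\circ\leftrightarrow\alpha$. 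The key point is then to match the differentials: the computation displayed at the end of Section~\ref{sec:natural} shows precisely that $\dd^{\lambda_1^\alpha}$ on the edge generator reproduces Markl's $\mathrm{d}$ under operadic suspension, and I would check that the extra summands (iii) and (iv) of $\dd^{\lambda_1^\alpha}$, which attach or split black vertices, are exactly the terms that vanish or land in the ideal once we pass to the quotient (they all either contain a black-black ladder or cancel by the sign argument recalled in the text). This gives the isomorphism $\Tw\,\PreLie/(\alpha\star\alpha)\cong\mathrm{rPL}$ of dg operads.

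For the homology statement I would reuse the filtration-and-induction machinery from the proof of Theorem~\ref{th:TwPL} almost verbatim. Concretely, I would filter $\mathrm{rPL}$ (equivalently, the associated graded of $\Tw\,\PreLie$ by powers of the closed ideal generated by the black-black ladder) and run the same two-step argument: first treat the arity-zero component, whose differential is Kontsevich's graph-complex differential, and establish acyclicity by the ``core plus antennas'' filtration together with the Maschke averaging over graph automorphisms; then handle arities $n>0$ by splitting into $V(n)\oplus W(n)$ according to whether the vertex labelled $1$ is isolated, identifying the relevant component $d_0$ of the differential as an injection, and computing the cokernel by induction on arity to get homology of total dimension $(n-1)!=\dim\calS^{-1}\Lie(n)$. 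Finally injectivity of $\Lie\to\mathrm{rPL}$ follows as before from the injectivity of the composite $\Lie\to\PreLie\to\Ass$, and the matching of dimensions upgrades the injection on homology to an isomorphism.

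The main obstacle I expect is the careful bookkeeping in the first stage, where one must verify that the quotient by the black-black ladder ideal kills \emph{exactly} the summands of $\dd^{\lambda_1^\alpha}$ of types (i), (iii), (iv) that are not present in Markl's differential, leaving only the single surviving term; the sign conventions (the operadic suspension $\calS$, the orientation of the edge set, and the Koszul signs of Section~\ref{Sec:Gen}) all have to line up so that the isomorphism is strictly compatible with differentials and not merely up to sign. Once that compatibility is nailed down, the homology computation is a direct transcription of Theorem~\ref{th:TwPL}, the only genuinely new input being that one works with the \emph{associated graded}, where the differential is slightly simpler because the ladder-creating terms have been suppressed; I would emphasise that this simplification is what allows the homology to already be $\Lie$ at the associated-graded level, so no further spectral sequence comparison beyond the one in Theorem~\ref{th:TwPL} is needed.
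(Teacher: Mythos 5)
Your first stage is essentially the route the text takes: identify the underlying graded operads $\big(\PreLie\,\hat{\vee}\,\alpha\big)/(\alpha\star\alpha)$ and $\mathrm{rPL}$, and match the differentials using the displayed computation of $\dd^{\lambda_1^\alpha}$ on the pre-Lie generator, together with the fact (stated just before the theorem) that the ideal is closed under the differential, so the quotient dg operad makes sense. One correction of bookkeeping, which matters below: the type (iii) and (iv) summands do \emph{not} ``land in the ideal'' --- they cancel against parts of type (ii); what lies in the ideal are the \emph{full} type (i) sums, since replacing a black vertex $v$ of $\Gamma$ by the ladder and distributing its inputs is exactly an element of the form $\Gamma'\circ_v(\alpha\star\alpha)$.

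The genuine gap is in your second stage. First, the quotient $\Tw\,\PreLie/(\alpha\star\alpha)$ is \emph{not} ``equivalently'' the associated graded of $\Tw\,\PreLie$ by powers of the ideal; it is only its piece of weight zero, and the logic sketched in the text (following \cite[Theorem~6.2]{DotKhPreLie}) is that the homology of the \emph{whole} associated graded is $\Lie$, from which the statement about the quotient is then extracted. Second, and fatally, your arity-zero step collapses: in arity zero every tree is all-black, so every vertex-splitting sum in $\dd^{\lambda_1^\alpha}$ is of the form $\Gamma'\circ_v(\alpha\star\alpha)$ and hence dies in the quotient (and raises the weight in the associated graded). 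The induced arity-zero differential is therefore \emph{not} Kontsevich's graph differential, and the arity-zero part is \emph{not} acyclic: the class of $\circ$ is a cycle of degree $-1$ (its differential $\alpha\star\alpha$ lies in the ideal) which cannot be a boundary since $\mathrm{rPL}(0)$ has no elements of degree $0$. So the core-plus-antennas/Maschke argument has nothing to act on; incidentally, this shows the homology isomorphism in the statement must be read in positive arities, where $\Lie$ is supported. Note also the internal inconsistency in your plan: your closing remark that ``the ladder-creating terms have been suppressed'' is precisely what guts the arity-zero differential you propose to use. Consequently the transfer of the proof of \cref{th:TwPL} is not ``almost verbatim'': in positive arities the component $d_0$ (which creates a white--black edge, so survives the quotient) can still anchor the induction, but its injectivity \emph{modulo the ideal} (i.e.\ that $d_0(w)$ in the ideal forces $w$ in the ideal), the identification of the cokernel pieces with $\mathrm{rPL}(n-1)$, and the convergence of the filtration all have to be re-verified for the truncated differential --- and this re-verification, not a verbatim repetition, is where the actual content of \cite[Theorem~6.2]{DotKhPreLie} lies.
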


\chapter{Applications}\label{sec:Applications}\footnotetext{\hrule\smallskip\noindent This material will be published by Cambridge University Press \& Assessment as ‘Maurer-Cartan Methods in Deformation Theory: the twisting procedure’ by Vladimir Dotsenko, Sergey Shadrin and Bruno Vallette. This version is free to view and download for personal use only. Not for re-distribution, re-sale or use in derivative works. \copyright Cambridge University Press \& Assessment}

In this final chapter, we provide details on seminal applications of the twisting procedure in several domains of mathematics mentioned throughout the book: deformation theory, higher Lie theory, rational homotopy theory, higher category theory, and symplectic topology. Other occurrences of the twisting procedure kept emerging as we were preparing this book, and it has been clear from the beginning that it would be impossible to give an exhaustive overview of applications. Yet we attempted to cover several crucial works in many different domains, both to confirm the omnipresence of the twisting procedure and to encourage the reader to make the list of related areas even longer. 

\section{Fundamental theorem of deformation theory}\label{sec:Lurie}

Let us go back to the fundamental theorem of deformation theory. In order to formulate it, one first needs to provide a proper definition of a ``deformation problem''. In this direction, considering simply functors from local Artinian rings to sets is too restrictive. \\

Given any complete dg Lie algebra $\g$, its gauge group $\Gamma$ is well-defined and it acts on the set of Maurer--Cartan elements, as explained in Section~\ref{sec:MCdgLie}. 
One can consider 
the moduli space of Maurer--Cartan elements (Definition~\ref{def:ModuliSpacedgLie})
\[ \mathscr{MC}(\g)\coloneqq \mathrm{MC}(\g)/\Gamma~, \]
given by the set of orbits. 
This way, one looses the data of the gauge group elements. 
To circumvent this, we introduced in Definition~\ref{def:DeligneGroupoid} the Deligne gr\-ou\-poid \index{Deligne!groupoid} whose objects are Maurer--Cartan elements and whose isomorphisms are given by the action of the gauge group elements. From there, one might ask whether this is a way to compare these actions among them, that is to coin a suitable notion of a \emph{Deligne $2$-groupoid}, and, why not, higher up to a 
\emph{Deligne $\infty$-groupoid}\index{Deligne!$\infty$-groupoid}. This can be achieved as follows. 

\begin{definition}[Sullivan simplicial algebra \cite{Sullivan77}]\index{Sullivan simplicial algebra}\label{def:SullivanAlgebra}
The \emph{Sullivan simplicial algebra} is the simplicial commutative dg algebra $\Omega_\bullet$ defined by the piecewise polynomial differential forms on the geometric $n$-simplicies: 
\[
\Omega_n\coloneqq \frac{\k[t_0, \ldots, t_n, d t_0, \ldots, d t_n]}{(t_0+\cdots+t_n-1, d t_0+\cdots+ d t_n)}~,
\]
equipped with their simplicial maps. 
\end{definition}

\begin{theorem}[Hinich~{\cite{Hinich97}}]\label{thm:HinichInt}
For any complete dg Lie algebra $\g$, the simplicial set 
\[\MC_\bullet(\g)\coloneqq \MC\left(\g \,\widehat{\otimes}\, \Omega_\bullet\right)\]
is a Kan complex, that is an $\infty$-groupoid, satisfying 
\[\pi_0(\MC_\bullet(\g))\cong \mathscr{MC}(\g)~. \]
\end{theorem}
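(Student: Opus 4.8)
The plan is to establish the two assertions of \cref{thm:HinichInt} separately: first that $\MC_\bullet(\g)$ is a Kan complex, and then that its set of connected components recovers the moduli space $\mathscr{MC}(\g)$.

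For the Kan condition, I would begin by making precise the meaning of $\g\,\widehat{\otimes}\,\Omega_\bullet$. Since $\Omega_n$ is a commutative dg algebra, the complete tensor product $\g\,\widehat{\otimes}\,\Omega_n$ inherits a complete dg Lie algebra structure, with bracket $[x\otimes\alpha,y\otimes\beta]\coloneqq (-1)^{|\alpha||y|}[x,y]\otimes(\alpha\beta)$ and differential built from those of $\g$ and $\Omega_n$; this is the same tensor-product construction used in \cref{sec:MCDiffGeo} for $\Omega^\bullet(P,\g)$. The simplicial structure on $\Omega_\bullet$ then makes $\MC_\bullet(\g)$ a simplicial set, and completeness guarantees that all Maurer--Cartan series converge by \cref{lem:Conv}. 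The heart of the Kan property is the \emph{horn-filling} statement: every map $\Lambda^n_k\to\MC_\bullet(\g)$ extends to $\Delta^n$. The key reduction is that filling horns for $\MC_\bullet(\g)$ amounts to an extension problem for piecewise polynomial forms with values in the complete dg Lie algebra $\g$, which is solved by an explicit contracting homotopy (an integration/Poincar\'e-lemma operator) on the simplicial de Rham algebra $\Omega_\bullet$. Concretely, I would use the standard fact that the restriction map $\Omega_n\to\Omega_{\Lambda^n_k}$ admits a dg algebra section up to a contraction, and that the completeness of $\g$ lets one solve the resulting MC equation degree by degree along the filtration, the obstruction at each stage being killed by the contracting homotopy. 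This is precisely Hinich's argument, and I would cite \cite{Hinich97} for the detailed verification.

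I expect the horn-filling step to be the main obstacle, since it is where the analytic input (convergence in the complete topology, controlled by \cref{lem:Conv} and the filtration arguments of \cref{sec:OptheoyFilMod}) interacts with the combinatorics of simplicial extension. The delicate point is to perform the inductive filling so that the correction terms added at each filtration level $\F_k$ lie in the next level $\F_{k+1}$, ensuring convergence of the whole series; this requires the contracting homotopy on $\Omega_\bullet$ to be compatible with the grading and the MC equation. I would organize this as an induction on $k$, at each step using that the twisted differential $d^\omega$ (in the sense of \cref{prop:dtwSq0}) is acyclic in the relevant range after applying the simplicial contraction.

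For the second assertion, $\pi_0(\MC_\bullet(\g))\cong\mathscr{MC}(\g)$, I would unwind the definition of $\pi_0$ of a Kan complex: two $0$-simplices $\alpha,\beta\in\MC_0(\g)=\MC(\g)$ are connected if and only if there is a $1$-simplex in $\MC_1(\g)=\MC(\g\,\widehat{\otimes}\,\Omega_1)$ restricting to $\alpha$ and $\beta$ at the two endpoints $t=0,1$ of the interval $\Omega_1$. The plan is to show this homotopy relation coincides with the gauge equivalence of \cref{prop:GaugeActionMC}: a path in $\MC(\g\,\widehat{\otimes}\,\Omega_1)$ is a solution of a flow equation whose integration, via the explicit exponential formula of \cref{prop:GaugeGroupAction}, produces exactly a gauge transformation $\lambda.\alpha=\beta$ with $\lambda\in\F_1 A_0$. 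Conversely, any gauge equivalence yields such a path by running the gauge flow. The one subtlety here is symmetry and transitivity of the path relation, which follow automatically once the Kan property is established, so this direction reduces cleanly to matching the two explicit integration formulas.
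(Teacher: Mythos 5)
On the Kan property there is little to compare: the monograph gives no proof of \cref{thm:HinichInt} and simply refers to \cite{Hinich97}, and your sketch (a contracting homotopy on $\Omega_\bullet$ combined with an obstruction argument along the filtration of $\g$) is the standard Hinich--Getzler argument, which you also ultimately delegate to \cite{Hinich97}. That part is fine.

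The genuine gap is in your proof of $\pi_0(\MC_\bullet(\g))\cong\mathscr{MC}(\g)$, and it sits exactly where the monograph warns that this statement is \emph{not} trivial (see the discussion opening \cref{sec:HigherLieTh}). A $1$-simplex of $\MC_\bullet(\g)$ is a Maurer--Cartan element of $\g\,\widehat{\otimes}\,\k[t,dt]$, that is a pair $\omega(t)+\lambda(t)\,dt$ in which the degree-$0$ component $\lambda(t)$ is \emph{time-dependent}, the Maurer--Cartan equation amounting to the flow equation $\dot\omega(t)=d\lambda(t)+[\omega(t),\lambda(t)]$. The exponential formula of \cref{prop:GaugeGroupAction} (or \cref{prop:GaugeActionMC}) integrates this flow only when $\lambda$ is constant in $t$; for a general $1$-simplex the solution operator is a time-ordered exponential, and your claim that it ``produces exactly a gauge transformation $\lambda.\alpha=\beta$ with $\lambda\in\F_1 A_0$'' is precisely the assertion that needs proof. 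One must show that this holonomy still lies in the gauge group, for instance via the Magnus expansion, or by subdividing $[0,1]$ and realising the holonomy as a limit of $\BCH$ products, convergence being guaranteed by completeness. Indeed, as the monograph points out, $\MC_1(\g)$ \emph{strictly} contains the set of gauges; it is only for Getzler's sub-Kan complex $\mathrm{R}(\g)\subset\MC_\bullet(\g)$ that $1$-simplices biject with gauges (\cite[Proposition~4.3]{Robert-Nicoud19}), and the route suggested in \cref{sec:HigherLieTh} passes through the nontrivial homotopy equivalence $\mathrm{R}(\g)\simeq\MC_\bullet(\g)$ of \cref{thm:Getzler} rather than through a simplex-by-simplex matching, so the two relations cannot be identified the way you propose. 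A secondary point buried in the same step: nothing in the Maurer--Cartan equation for $\g\,\widehat{\otimes}\,\Omega_1$ forces $\lambda(t)$ to take values in $\F_1 A_0$, while the gauge group of this monograph consists only of such elements; your argument therefore also needs to invoke explicitly a pro-nilpotence convention of the type $\F_1 A=A$, without which the claimed isomorphism can even fail. Your converse direction, and the appeal to the Kan property for symmetry and transitivity, are correct.
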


This construction shows that one would rather consider functors 
\[\R\cong \k \oplus \m \mapsto \MC_\bullet(\g\otimes \m)\]
from the category of Artinian local rings to the category $\mathsf{sSet}$ of simplicial sets. The source of deformation functors has also to be upgraded to the derived world, i.e., to differential graded objects. 

\begin{definition}[Differential graded Artin algebra]\index{differential graded Artin algebra}
A \emph{differential graded (dg) Artin algebra} is an augmented commutative differential graded algebra $A$ satisfying: 
\begin{enumerate}
\item each $H_i(A)$ is finite dimensional, for any $i\in \NN$, 

\item the homology groups $H_i(A)\cong 0$ vanish, for $i<0$ and for $i$ big enough, 

\item the commutative algebra $H_0(A)$ is a local Artinian algebra. 
\end{enumerate}
We denote their category by $\mathsf{dgArt}$. 
\end{definition}

The idea behind this definition is that any dg Artin algebra can be obtained, up to a quasi-isomorphism, by successive square-zero extensions from $\k$, see \cite{Toen17}.

\begin{definition}[Formal moduli problem]\index{formal moduli problem}
	\label{def:FormalModuliProblem}
A \emph{formal moduli problem} is a functor 
\[
F \ : \ \mathsf{dgArt} \to \mathsf{sSet}
\] such that:
\begin{enumerate}
\item it sends quasi-isomorphisms of dg commutative algebras to weak equivalences of simplicial sets, 

\item the image $F(\k)\sim *$ of the initial algebra  is contractible, 

\item the image under $F$ of any homotopy cartesian square 
\[
\xymatrix@C=30pt@R=30pt{
D \ar[r] \ar[d] &   B  \ar[d] \\ 
C\ar[r] & A  }
\]
of augmented dg commutative algebras, such that $H_0(B) \twoheadrightarrow H_0(A)$ and 
$H_0(C) \twoheadrightarrow H_0(A)$, 
 is a homotopy cartesian square of simplicial sets. 
\end{enumerate}
\end{definition}

\begin{theorem}[Fundamental theorem of deformation theory {\cite{Lurie10, Pridham10}}]\index{fundamental theorem of deformation theory}
Over a field $\k$ of characteristic $0$, the $\infty$-categories of dg Lie algebras and of formal moduli problems are equivalent. 
\end{theorem}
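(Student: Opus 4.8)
The plan is to establish the equivalence of $\infty$-categories by constructing an explicit pair of adjoint functors and showing they are mutually inverse up to natural equivalence. The cornerstone is the integration functor supplied by Theorem~\ref{thm:HinichInt}: to any complete dg Lie algebra $\g$ one associates the formal moduli problem
\[
F_\g \colon \mathsf{dgArt} \to \mathsf{sSet}, \qquad F_\g(\R) \coloneqq \MC_\bullet(\g \,\widehat{\otimes}\, \m_\R)~,
\]
where $\m_\R$ denotes the augmentation ideal. First I would verify that $F_\g$ is indeed a formal moduli problem in the sense of Definition~\ref{def:FormalModuliProblem}. Condition~(ii) is immediate since $\m_\k = 0$ forces $F_\g(\k) = \MC_\bullet(0) \sim *$. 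Condition~(i), invariance under quasi-isomorphisms, follows from the fact that $\Omega_\bullet$ is flat and that the Maurer--Cartan space functor $\MC_\bullet$ sends filtered quasi-isomorphisms of complete dg Lie algebras to weak equivalences of Kan complexes; this is the homotopy invariance property of the Deligne--Hinich $\infty$-groupoid. The genuinely structural requirement is condition~(iii), the preservation of homotopy cartesian squares, which rests on the observation that $\g \,\widehat{\otimes}\, (-)$ preserves the relevant homotopy pullbacks of augmentation ideals and that $\MC_\bullet$ commutes with such homotopy limits when the maps of $H_0$ are surjective.

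The next step is to produce the inverse functor, the \emph{tangent Lie algebra} construction, sending a formal moduli problem $F$ to a dg Lie algebra $\mathfrak{g}_F$ whose underlying chain complex encodes the values of $F$ on the square-zero extensions $\k \oplus \k[n]$. Concretely, one extracts $\mathfrak{g}_F$ by a recognition principle: the homotopy groups $\pi_i(F(\k \oplus \k[n]))$ assemble into the homology of a chain complex, and the simplicial structure together with the higher cartesian squares endows this complex with an $\Li$-algebra structure, which one then rectifies to a genuine dg Lie algebra using the formality-type arguments available over a characteristic zero field. I would carry this out by first defining $\mathfrak{g}_F$ on the level of the homotopy category and then promoting the construction to an $\infty$-functor $\mathsf{FMP} \to \mathsf{dgLie}_\infty$. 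The compatibility with the Sullivan simplicial algebra $\Omega_\bullet$ from Definition~\ref{def:SullivanAlgebra} is what guarantees that the two constructions are inverse: the point-set model $\MC_\bullet(\g \,\widehat{\otimes}\, \Omega_\bullet)$ is precisely the derived mapping space computing the tangent complex, so that $\mathfrak{g}_{F_\g} \simeq \g$.

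The hardest part, and the one I expect to consume most of the technical effort, is establishing that the unit and counit of this adjunction are equivalences — in other words, that every formal moduli problem is \emph{pro-representable} by its tangent Lie algebra. The difficulty is that neither functor is manifestly essentially surjective or fully faithful at the point-set level; one must work in the homotopy theory of the two $\infty$-categories. The standard route, which I would follow, is to reduce to the case of the elementary square-zero extensions $\k \oplus \k[n]$ by a Postnikov/deformation-theoretic dévissage: every dg Artin algebra is built from $\k$ by a finite tower of principal square-zero extensions (as recorded in the remark following the definition of $\mathsf{dgArt}$), so a natural transformation that is an equivalence on these generators and compatible with the cartesian squares defining condition~(iii) is an equivalence everywhere. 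Verifying the base case amounts to matching $\pi_i \MC_\bullet(\g \,\widehat{\otimes}\, \k[n])$ with $H_{n-i}(\g)$, which is a direct homology computation in the Sullivan model, while the inductive step uses the preservation of homotopy cartesian squares on both sides to propagate the equivalence up the tower.

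I would conclude by assembling these pieces: the functors $F_\g$ and $\mathfrak{g}_F$ are adjoint, the counit $F_{\mathfrak{g}_F} \to F$ and unit $\g \to \mathfrak{g}_{F_\g}$ are both equivalences by the dévissage argument, and hence they constitute an equivalence of $\infty$-categories. Throughout, the characteristic zero hypothesis is essential and enters at two junctures: in the rectification of $\Li$-algebras to dg Lie algebras (where one needs the transfer theorem and the contractibility of the relevant obstruction spaces), and in the identification of the Maurer--Cartan space as a genuine homotopy invariant via the completeness framework of Chapter~\ref{sec:OptheoyFilMod}. For the detailed verification I would refer to the original proofs of J.~Lurie~\cite{Lurie10} and J.~Pridham~\cite{Pridham10}, as well as to the Bourbaki exposition of B.~To\"en~\cite{Toen17}.
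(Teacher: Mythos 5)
First, a point of comparison: the paper does not actually prove this theorem. Immediately after the statement it says that the proof ``goes far beyond the scope of the present manuscript'', cites Lurie and Pridham, and confines itself to describing the two relevant functors --- the tangent functor $\mathrm{T}\colon F \mapsto F\big(\k[t]/\big(t^2\big)\big)$ and the deformation functor $\mathrm{Def}_\g\colon A \mapsto \MC_\bullet\big(\g\otimes \bar{A}\big)$ --- together with the remark that axiom (i) of Definition~\ref{def:FormalModuliProblem} for $\mathrm{Def}_\g$ rests on the homotopy invariance of Hinich's construction \cite{Hinich97}, going back to \cite{GoldmanMillson88}. Your functor $F_\g$ and your verification of axioms (i)--(ii) reproduce exactly this heuristic discussion, so up to that point you and the paper agree.

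The genuine gap lies in your construction of the inverse functor, which is precisely the part of the argument that constitutes the theorem. You propose to extract a dg Lie algebra $\mathfrak{g}_F$ from the spaces $F(\k\oplus\k[n])$ by a ``recognition principle'': the homotopy groups assemble into a chain complex, the simplicial structure together with the higher cartesian squares endows it with an $\Li$-algebra structure, and one then rectifies over characteristic zero. This step cannot be carried out as stated: the values of $F$ on square-zero extensions determine the tangent complex only as a chain complex (up to quasi-isomorphism), and no Lie bracket is visible from the simplicial structure and the cartesian squares alone --- if it were, the theorem would be an exercise. Producing that bracket is exactly why Lurie's proof runs through Koszul duality (the Chevalley--Eilenberg functor and its adjoint $\mathfrak{D}$, combined with an abstract representability criterion for deformation theories, of which the d\'evissage over square-zero extensions that you describe is only one ingredient), and why Pridham's proof instead establishes a Quillen equivalence with pro-Artinian objects. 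Relatedly, the adjunction between $F_{(-)}$ and $\mathfrak{g}_{(-)}$ that your d\'evissage argument presupposes is asserted but never constructed. Since your proposal ends by deferring these verifications to \cite{Lurie10, Pridham10, Toen17}, it is, like the paper's own text, an annotated outline of the citation rather than a proof; the difference is that the paper says so explicitly, whereas your outline presents the missing construction as if it were routine.
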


The proof of such a conceptual result goes far beyond the scope of the present manuscript. However, one can notice that this equivalence heuristically comes from the following  two functors. From formal moduli problems to dg Lie algebras, one  can consider the tangent space functor 
\[\mathrm{T} \ : F \mapsto F\left(\k[t]/\big(t^2\big)\right)~.\]
In the other way round, one can consider the functor given above
\[\mathrm{Def}_\g \ :  A \mapsto \MC_\bullet\big(\g\otimes \bar{A}\big)~,\]
where $\bar{A}$ stands for the augmentation ideal. 
The fact that $\mathrm{Def}_\g$ forms a formal moduli problem is not completely trivial: Point (i) of Definition~\ref{def:FormalModuliProblem} comes from  the homotopy invariance property of Hinich's construction \cite{Hinich97}, which ultimately goes back to Goldman--Millson \cite{GoldmanMillson88}. 

\begin{example}
There is a general case where one can make  the deformation dg Lie algebra explicit: the deformation problem of $\calP_\infty$-algebras when $\calP$ is a Koszul operad or properad. The operadic calculus shows that the deformation dg Lie algebra is given by the convolution dg Lie algebra 
\[\g\coloneqq\left(
\Hom_\Sy\left(
\calP^{\ac}, \End_V
\right),  \partial, [\, , ]
\right)~,\]
see \cite[Section~12.2]{LodayVallette12} and \cite{MerkulovVallette09I, HLV20} for more details. 
\end{example}

\begin{remark}
The above treatment actually deals with \emph{pointed} deformation problems:  formal moduli problems are defined by augmented dg commutative algebras and they are modelled by dg Lie algebras which admit $0$ for canonical Maurer--Cartan element. It is natural to expect that \emph{non-necessarily pointed} deformation problems should be encoded by curved Lie algebras. 
\end{remark}

\section{Higher Lie theory}\label{sec:HigherLieTh}

Hinich's construction of a Deligne $\infty$-groupoid associated to a complete dg Lie algebra introduced in the previous section prompts the following two questions. 
\begin{enumerate}

\item Does the set $\MC_1(\g)$ of 1-simplicies  coincide with the set of gauges? 

\item Does Hinich's construction $\MC_\bullet$ extend to complete $\Li$-algebras?
\end{enumerate}

The answer to the second question is actually straightforward as the same formula 
\[\MC_\bullet(\g)\coloneqq \MC\big(\g \,\widehat{\otimes}\, \Omega_\bullet\big)\]
applies \emph{mutatis mutandis} to complete $\Li$-algebras $\g$~.\\ 

The answer to the first question is negative: 
the set $\MC_1(\g)$ of 1-simplicies is \emph{slightly bigger} than the set of gauges. As a consequence, the last point of \cref{thm:HinichInt} is not trivial: the set of 1-simplicies of $\MC_\bullet(\g)$ is the set of Maurer--Cartan elements of the complete dg Lie algebra $\g \,\widehat{\otimes}\, \k[t,dt]$, as such it contains strictly the set of gauges but they define equivalent equivalence relation on the Maurer--Cartan elements of $\g$~. The issue raised by Point (i) was solved by E. Getzler  \cite{Getzler09} who coined 
a homotopy equivalent  $\infty$-subgroupoid of $\MC_\bullet(\g)$, whose set of 1-simplicies coincides with the set of gauges. \\

Actually, Getzler's construction solves both questions. In this section, we give a presentation of it along the lines of \cite{Robert-NicoudVallette20}. 
Let us first recall that the notion of a gauge between Maurer--Cartan elements in complete $\Li$-algebras is defined in a way similar to dg Lie algebras (Section~\ref{sec:MCdgLie}): for any degree $0$ element $\lambda$, the twisted differential defines a vector field $\Upsilon_\lambda\in \Gamma(T(\mathrm{MC}(\g)))$ by the formula
 \[
\Upsilon_\lambda(\omega) := d^\omega(\lambda)= 
\sum_{k\ge 0} {\textstyle \frac{1}{k!}} \ell_{k+1}\big(\omega^k, \lambda \big)~.
 \]
We refer the reader to \cite[Proposition~1.10]{Robert-NicoudVallette20} for a closed formula for the integration of a Maurer--Cartan element along the associated flow. \\

The key idea of the integration theory of complete $\Li$-algebras amounts to using the linear dual of the cellular chain complexes of the geometric $n$-simplices 
\[\mathrm{C}_\bullet\coloneqq C_{\mathrm{cell}}(\Delta^{\bullet})^\vee\ ,\] instead of the Sullivan simplicial algebra,  
under Dupont's simplicial contraction \cite{Dupont76}: 
\[
\hbox{
	\begin{tikzpicture}
	\def\upshift{0.075}
	\def\downshift{0.075}
	\pgfmathsetmacro{\midshift}{0.005}
	
	\node[left] (x) at (0, 0) {$\Omega_\bullet$};
	\node[right=1.5 cm of x] (y) {$\mathrm{C}_\bullet$\quad .};
	
	\draw[->] ($(x.east) + (0.1, \upshift)$) -- node[above]{\mbox{\small{$p_\bullet$}}} ($(y.west) + (-0.1, \upshift)$);
	\draw[->] ($(y.west) + (-0.1, -\downshift)$) -- node[below]{\mbox{\small{$i_\bullet$}}} ($(x.east) + (0.1, -\downshift)$);
	
	\draw[->] ($(x.south west) + (0, 0.1)$) to [out=-160,in=160,looseness=5] node[left]{\mbox{\small{$h_\bullet$}}} ($(x.north west) - (0, 0.1)$);
	
	\end{tikzpicture}}
\]
Pulling back along the bar-cobar resolution $\Omega \mathrm{B} \mathrm{Com}\to  \mathrm{Com}$, one endows $\Omega_\bullet$ with a simplicial $\Omega \mathrm{B} \mathrm{Com}$-algebra structure, which is then transferred to 
$\mathrm{C}_\bullet$ under the homotopy transfer theorem \cite[Section~10.3]{LodayVallette12}. Since this latter one is degree-wise finite dimensional, its linear dual 
$\mathrm{C}^\bullet\coloneqq C_{\mathrm{cell}}(\Delta^{\bullet})$ carries a cosimplicial 
$ \Omega \mathrm{B}  \mathrm{Com}$-coalgebra structure. Finally, we consider its image under the complete cobar construction $\widehat{\Omega}_\pi$ associated to the 
 operadic twisting morphism 
$\pi \colon \mathrm{B} \Omega \mathrm{Com}^\vee \to \Omega \mathrm{Com}^\vee\cong \sLi$~.

\begin{proposition}
The collection 
	\[
	\mathfrak{mc}^\bullet\coloneqq \widehat{\Omega}_\pi\, \mathrm{C}^\bullet
	\]
	 forms a cosimplicial complete shifted $\Li$-algebra, 
	 called the \emph{universal Maurer--Cartan algebra}\index{universal Maurer--Cartan algebra}.
\end{proposition}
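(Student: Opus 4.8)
The plan is to establish that $\mathfrak{mc}^\bullet \coloneqq \widehat{\Omega}_\pi\, \mathrm{C}^\bullet$ carries the claimed structure by unwinding the functoriality of the three constructions that assemble it: the cosimplicial $\Omega\mathrm{B}\mathrm{Com}$-coalgebra structure on $\mathrm{C}^\bullet$, the operadic twisting morphism $\pi$, and the complete cobar functor $\widehat{\Omega}_\pi$. First I would verify that each of these is functorial (in fact, simplicial/cosimplicial-functorial) so that applying them levelwise produces a \emph{cosimplicial} object; the cosimplicial face and degeneracy maps on $\mathfrak{mc}^\bullet$ are simply the images under $\widehat{\Omega}_\pi$ of the cosimplicial structure maps of $\mathrm{C}^\bullet$, which are themselves morphisms of $\Omega\mathrm{B}\mathrm{Com}$-coalgebras because the homotopy transfer theorem is applied levelwise and Dupont's contraction $(i_\bullet, p_\bullet, h_\bullet)$ is a simplicial contraction.

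Second I would explain why $\widehat{\Omega}_\pi$ applied to a $\Omega\mathrm{B}\mathrm{Com}$-coalgebra (equivalently, a coalgebra over the cooperad $\mathrm{B}\Omega\mathrm{Com}^\vee$) yields a complete shifted $\Li$-algebra. This is the content of the complete cobar construction associated to the operadic twisting morphism $\pi\colon \mathrm{B}\Omega\mathrm{Com}^\vee \to \Omega\mathrm{Com}^\vee \cong \sLi$: by the general formalism of Chapter~\ref{sec:OptheoyFilMod}, which guarantees that the operadic calculus transfers \emph{mutatis mutandis} to the complete setting, the cobar construction of a coalgebra over $\mathrm{B}\Omega\mathrm{Com}^\vee$ along $\pi$ is an algebra over $\Omega\mathrm{Com}^\vee \cong \sLi$, and the completeness is ensured by the weight/arity filtration on the generators, exactly as in the discussion of the free complete $\calP$-algebra. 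The shifted $\Li$-structure is read off from the identification $\Omega\mathrm{Com}^\vee\cong\sLi$ recalled in Section~\ref{sec:SymTw}.

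Third, I would check that the resulting $\sLi$-structures are compatible across cosimplicial degrees, i.e. that the cosimplicial operators are genuine morphisms of complete shifted $\Li$-algebras. This follows because $\widehat{\Omega}_\pi$ is a functor from $\mathrm{B}\Omega\mathrm{Com}^\vee$-coalgebras to complete $\sLi$-algebras, so it sends coalgebra morphisms to $\sLi$-algebra morphisms; hence the cosimplicial identities are inherited automatically from those of $\mathrm{C}^\bullet$. The bulk of the argument is therefore bookkeeping with functors rather than any genuinely new computation, and most of it can be cited from \cite{Robert-NicoudVallette20} and \cite[Section~10.3]{LodayVallette12}.

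The main obstacle I anticipate is the completeness and convergence issue: one must confirm that the infinite series defining the transferred $\Omega\mathrm{B}\mathrm{Com}$-coalgebra structure on $\mathrm{C}^\bullet$ (via the homotopy transfer theorem) and its dualisation interact correctly with the filtration so that $\widehat{\Omega}_\pi\,\mathrm{C}^\bullet$ is a genuinely \emph{complete} object and all the relevant sums converge. Here the finite-dimensionality of $\mathrm{C}_\bullet$ in each simplicial/homological degree is crucial, as it ensures the linear dual $\mathrm{C}^\bullet$ is well behaved and that the complete cobar construction carries the canonical filtration from Chapter~\ref{sec:OptheoyFilMod} for which the structure maps are filtered. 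Once convergence and completeness are secured, the cosimplicial $\sLi$-structure assembles formally.
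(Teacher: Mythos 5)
Your proposal is correct and takes essentially the same route as the paper: the paper's own justification is exactly the construction recalled before the statement (the simplicial nature of Dupont's contraction makes the transferred $\Omega \mathrm{B} \mathrm{Com}$-algebra structure on $\mathrm{C}_\bullet$ simplicial, degree-wise finite dimensionality allows dualisation to a cosimplicial coalgebra, and the functorial complete cobar construction $\widehat{\Omega}_\pi$ then outputs a cosimplicial complete shifted $\Li$-algebra), with the details deferred to \cite{Robert-NicoudVallette20}. Your bookkeeping of functoriality, of completeness via the weight filtration on the quasi-free algebra, and of convergence of the infinite series matches that implicit argument.
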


For instance, the complete shifted $\Li$-algebra $\mathfrak{mc}^0$ is quasi-free on one Mau\-rer--Cartan element
\[
\mathfrak{mc}^0\cong\widehat{\sLi}(a)
\]
with differential given by
\[
\mathrm{d}a=-\sum_{m\geqslant 2} {\textstyle \frac{1}{m!}}\ell_m(a, \ldots, a)\ .
\]

\begin{definition}[Integration functor]\index{integration functor}
The  \emph{integration functor} of complete shifted $\Li$-algebras is defined by 
\begin{align*}
\mathrm{R}\ :\ \mathsf{complete}\ \sLi\textsf{-}\mathsf{alg}&\ \longrightarrow\ \mathsf{sSet}\\
\g{}&\ \longmapsto\ \Hom_{\mathsf{complete}\ \sLi\mathsf{-alg}}\left(\mathfrak{mc}^\bullet, \g\right)~.
\end{align*}
\end{definition}

The category of complete shifted $\Li$-algebras is considered here with respect to strict morphisms. The functoriality of $\mathrm{R}$ with respect to $\infty$-morphisms is more subtle, so  we refer the reader to \cite[Section~3]{Robert-NicoudVallette20} for more details. Applied to chain complexes viewed as trivial complete shifted $\Li$-algebras, the integration functor gives the Dold--Kan functor. The form of $\mathfrak{mc}^0$ mentioned above shows that the 0-simplicies of the integration functor are the Maurer--Cartan elements: 
\[\mathrm{R}(\g)_0\cong \MC(\g)~.\]

\begin{proposition}[{\cite[Proposition~4.3]{Robert-Nicoud19}}]
The set $\mathrm{R}(\g)_1$ of 1-simplicies is in canonical bijection with the set of triples consisting of a pair of Maurer--Cartan elements related by a gauge. 
\end{proposition}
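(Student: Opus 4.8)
The plan is to make the cosimplicial object $\mathfrak{mc}^\bullet$ explicit in cosimplicial degree one and then to exploit that $\mathfrak{mc}^1$ is quasi-free. Since $\mathrm{C}^1 = C_{\mathrm{cell}}(\Delta^1)$ has a basis given by the two vertices $v_0, v_1$ and the edge $e$ of the $1$-simplex, the complete cobar construction $\mathfrak{mc}^1 = \widehat{\Omega}_\pi\, \mathrm{C}^1$ is, as a graded $\sLi$-algebra, freely generated by three elements: two generators $a_0, a_1$ associated to the vertices, in the same degree as the Maurer--Cartan generator $a$ of $\mathfrak{mc}^0$, and one generator $h$ associated to the edge, in the degree of a gauge parameter. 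First I would record the two coface maps $\mathfrak{mc}^0 \to \mathfrak{mc}^1$, namely $a \mapsto a_0$ and $a \mapsto a_1$, and note that, after applying $\Hom_{\mathsf{complete}\ \sLi\mathsf{-alg}}(-,\g)$, they induce the two face maps of the simplicial set $\mathrm{R}(\g)$.

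Next, by freeness of the underlying graded $\sLi$-algebra, a strict morphism $f\colon \mathfrak{mc}^1 \to \g$ amounts to the unconstrained choice of the images of the three generators, that is a triple
\[
(\alpha, \beta, \lambda) \coloneqq \big(f(a_0), f(a_1), f(h)\big),
\]
with $\alpha, \beta$ in the degree of Maurer--Cartan elements and $\lambda$ in the degree of a gauge; the only remaining requirement is that $f$ commutes with the differentials. I would then unpack this compatibility generator by generator. On $a_0$ and $a_1$, the differential of $\mathfrak{mc}^1$ restricts to the differential $\dd a_i = -\sum_{m\ge 2}\tfrac{1}{m!}\ell_m(a_i,\dots,a_i)$ of the universal Maurer--Cartan algebra $\mathfrak{mc}^0$, so compatibility is exactly the two assertions $\alpha \in \MC(\g)$ and $\beta \in \MC(\g)$, consistently with the identification of the face maps above.

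The heart of the matter is the compatibility on the edge generator $h$, and this is where I expect the main obstacle to lie. One must first compute $\dd h$ inside $\mathfrak{mc}^1$: its linear part is dictated by the cellular boundary $\partial e = v_1 - v_0$, giving a leading term $a_1 - a_0$, while the higher terms are produced by the transferred $\Omega\mathrm{B}\mathrm{Com}$-coalgebra structure on $\mathrm{C}^1$ coming from Dupont's simplicial contraction, iterated through the cobar differential. I would then apply $f$ and read the resulting equation as a relation between $\alpha$, $\beta$ and $\lambda$. The hard part will be to identify this relation with the statement that $\lambda$ is a gauge from $\alpha$ to $\beta$, i.e. that $\beta$ is the value at time one of the flow of the vector field $\Upsilon_\lambda(\omega) = \sum_{k\ge 0}\tfrac{1}{k!}\ell_{k+1}(\omega^k,\lambda)$ started at $\alpha$. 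Concretely, the higher brackets appearing in $\dd h$ should encode exactly the integration of this flow, so that the differential condition on $h$ reproduces the closed-form gauge formula recalled before the statement (\cite[Proposition~1.10]{Robert-NicoudVallette20}); matching the two series term by term is the genuine computation. This is precisely the point where the smaller model $\mathrm{C}^\bullet$ outperforms the Sullivan model $\Omega_\bullet$: the single edge generator $h$ forces the $1$-simplices to be honest gauges, rather than the larger set $\MC(\g \,\widehat{\otimes}\, \k[t,dt])$ of Hinich's construction.

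Finally, assembling the three equivalences yields the announced bijection: the assignment $f \mapsto (\alpha, \beta, \lambda)$ is well defined onto the set of triples with $\alpha, \beta \in \MC(\g)$ and $\lambda$ a gauge relating them, and it is injective because $f$ is determined by the images of the generators.
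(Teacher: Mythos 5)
There is no in-paper proof to compare against here: the monograph states this result purely as a citation of Robert--Nicoud's paper and proves nothing itself. Your framework is nevertheless the correct one, and it is how the cited proof begins: identify $\mathfrak{mc}^1$ with the quasi-free complete algebra $\widehat{\sLi}(a_0,a_1,h)$ on two degree-$0$ vertex generators and one edge generator, use freeness (plus the fact that the cofaces are dg morphisms, which is what forces $\dd a_i$ to involve only $a_i$) to reduce a strict dg morphism $f\colon\mathfrak{mc}^1\to\g$ to a triple $\left(\alpha,\beta,\lambda\right)=\left(f(a_0),f(a_1),f(h)\right)$, and observe that compatibility on the vertex generators is exactly the two Maurer--Cartan equations.

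The genuine gap is that the one step carrying all of the mathematical content of the proposition is not carried out: you never compute $\dd h$, and you never verify that the remaining equation $\dd_\g\lambda=f(\dd h)$ is equivalent to the statement that $\beta$ is the time-one value of the flow of $\Upsilon_\lambda$ started at $\alpha$. Saying the higher terms ``should encode exactly the integration of this flow'' and calling the matching ``the genuine computation'' is a statement of hope; with it missing, you have only shown that $1$-simplices are triples $(\alpha,\beta,\lambda)$ satisfying \emph{some} universal equation, which is strictly weaker than the proposition. Closing the gap requires two nontrivial inputs, neither of which is formal. First, the explicit transferred $\Omega\mathrm{B}\Com$-coalgebra structure on $\mathrm{C}^1$: Dupont's contraction involves integrating Whitney forms over $\Delta^1$, and the coefficients appearing in $\dd h$ assemble (up to signs and normalisation) into the Bernoulli numbers; this computation goes back to Bandiera and is the technical core of Robert--Nicoud's proof. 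Second, the comparison with the gauge relation: solving the flow ODE for $\Upsilon_\lambda$ in closed form produces an expression governed by the power series $(e^x-1)/x$ (this is the formula recalled from \cite[Proposition~1.10]{Robert-NicoudVallette20}), whereas the equation coming from $\dd h$ is governed by the inverse series $x/(e^x-1)$ — whence the Bernoulli numbers — and the equivalence of the two conditions is precisely the statement that these series are mutually inverse, applied to the relevant twisted adjoint-type operator. Until both of these are supplied, the argument is an outline of the cited proof rather than a proof.
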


This provides us with a direct computation of the connected components:
	\[\pi_0(\mathrm{R}(\g))\cong \mathscr{MC}(\g)\ .\]

\begin{remark}
Getzler's original construction was given in terms of the  following ``gauge condition'': 
\[\gamma_\bullet(\g)\coloneqq \MC_\bullet(\g)\,\cap\, \ker \left(\id\widehat{\otimes} \, h_\bullet\right)\ .\]
It turns out that both higher integration functors are canonically isomorphic: 
\[\mathrm{R}_\bullet(\g)\cong \gamma_\bullet(\g)~.\]
\end{remark}

The definition given here in term of the universal Maurer--Cartan algebra allows one to introduce the following  left adjoint functor. 

\begin{proposition}[{\cite[Theorem~2.12]{Robert-NicoudVallette20}}]\label{prop:AdjHigherLie}
The functor
\[\mathfrak{L}\coloneqq \mathrm{Lan}_\mathrm{Y} \mathfrak{mc}_\bullet\]
 of \emph{$\sLi$-algebra of a simplicial set}\index{shifted $\Li$-algebra of a simplicial set},  
defined as the left Kan extension along the 
 Yoneda embedding $\mathrm{Y}$, is left adjoint to the integration functor
\[\vcenter{\hbox{
\begin{tikzcd}[column sep=1.2cm]
\mathfrak{L}  \ \colon \ 
\mathsf{sSet} 
\arrow[r, harpoon, shift left=1ex, "\perp"']
&
\arrow[l, harpoon,  shift left=1ex]
\mathsf{complete}\ \sLi\textsf{-}\mathsf{alg}
\ : \ \mathrm{R}\quad .
\end{tikzcd}
}}\]
\end{proposition}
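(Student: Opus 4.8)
The plan is to establish the adjunction by exhibiting the functor $\mathfrak{L}$ as the left Kan extension of the cosimplicial object $\mathfrak{mc}_\bullet$ along the Yoneda embedding, and then to invoke the standard ``nerve--realization'' paradigm (sometimes called the Kan extension adjunction, or the theory of nerve functors). The key structural input is that the target category $\mathsf{complete}\ \sLi\textsf{-}\mathsf{alg}$ is cocomplete with respect to strict morphisms, which is needed for the pointwise left Kan extension along $\mathrm{Y}$ to exist; this follows from the bicompleteness of complete dg modules established in \cref{sec:OptheoyFilMod} (see the lemma following \cref{prop:strict-laxmonoidal}), together with the fact that $\sLi$-algebra structures, being encoded by an operad, are preserved under the relevant colimits. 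Once cocompleteness is in hand, the existence of $\mathfrak{L} = \mathrm{Lan}_\mathrm{Y}\mathfrak{mc}_\bullet$ is automatic.

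First I would recall the general nerve--realization theorem: given a small category $\mathcal{C}$ (here the simplex category $\Delta$), a cocomplete category $\mathcal{D}$ (here $\mathsf{complete}\ \sLi\textsf{-}\mathsf{alg}$), and a functor $F\colon \mathcal{C}\to\mathcal{D}$ (here $\mathfrak{mc}_\bullet$), the left Kan extension $\mathrm{Lan}_\mathrm{Y}F\colon\widehat{\mathcal{C}}\to\mathcal{D}$ along the Yoneda embedding is left adjoint to the nerve functor $N_F\colon\mathcal{D}\to\widehat{\mathcal{C}}$ defined by $N_F(d)_n = \Hom_{\mathcal{D}}(F([n]), d)$. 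The content of the proposition is then precisely the identification of this nerve functor $N_{\mathfrak{mc}_\bullet}$ with the integration functor $\mathrm{R}$. This identification is immediate by unwinding the definitions: $\mathrm{R}(\g)_n = \Hom_{\mathsf{complete}\ \sLi\textsf{-}\mathsf{alg}}(\mathfrak{mc}^n, \g)$, which is exactly $N_{\mathfrak{mc}_\bullet}(\g)_n$ once one notes that $\mathfrak{mc}^\bullet$ is cosimplicial (contravariant in $\Delta$ would give a simplicial object; here the contravariance of $\Hom(-,\g)$ in its first slot converts the cosimplicial $\mathfrak{mc}^\bullet$ into a simplicial set, matching the simplicial structure on $\mathrm{R}(\g)$).

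The concrete construction of $\mathfrak{L}(X)$ for a simplicial set $X$ is then given by the usual coend (colimit) formula
\[
\mathfrak{L}(X) \cong \int^{[n]\in\Delta} X_n \cdot \mathfrak{mc}^n
\cong \wcolim_{\Delta^n \to X} \mathfrak{mc}^n \ ,
\]
where the copower $X_n\cdot\mathfrak{mc}^n$ denotes the coproduct of copies of $\mathfrak{mc}^n$ indexed by $X_n$, and the colimit is taken in the category of complete shifted $\Li$-algebras (using the completed colimit $\wcolim$ from \cref{sec:OptheoyFilMod}). The adjunction isomorphism
\[
\Hom_{\mathsf{complete}\ \sLi\textsf{-}\mathsf{alg}}(\mathfrak{L}(X), \g) \cong \Hom_{\mathsf{sSet}}(X, \mathrm{R}(\g))
\]
then follows formally: the left-hand side is computed by turning the colimit defining $\mathfrak{L}(X)$ into a limit of hom-sets, which reassembles into $\Hom_{\mathsf{sSet}}(X, N_{\mathfrak{mc}_\bullet}(\g)) = \Hom_{\mathsf{sSet}}(X, \mathrm{R}(\g))$ by the co-Yoneda lemma (the density of representables).

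The main obstacle I anticipate is not the formal adjunction, which is essentially a citation of the nerve--realization machinery, but rather the verification that the pointwise left Kan extension genuinely lands in, and the colimits are genuinely computed within, the category of \emph{complete} shifted $\Li$-algebras with \emph{strict} morphisms. One must check that the completed colimit $\wcolim$ of a diagram of complete $\sLi$-algebras carries a canonical $\sLi$-structure compatible with the structure maps, so that $\mathfrak{L}(X)$ is a well-defined object of the target; this is where the symmetric monoidal properties of $(\mathsf{CompMod}, \wo)$ and the fact that the free-complete-algebra functor is a left adjoint (hence colimit-preserving) are essential. A secondary subtlety, which I would flag but relegate to the cited reference \cite{Robert-NicoudVallette20}, is the careful handling of the distinction between strict morphisms (for which the adjunction is stated) and $\infty$-morphisms; the functoriality of $\mathrm{R}$ with respect to the latter is more delicate and lies outside the scope of this particular adjunction statement.
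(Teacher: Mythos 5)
Your overall strategy---identifying $\mathrm{R}$ with the nerve functor $\g\mapsto\Hom_{\mathsf{complete}\ \sLi\textsf{-}\mathsf{alg}}(\mathfrak{mc}^\bullet,\g)$ associated to the cosimplicial object $\mathfrak{mc}^\bullet$, and then invoking the nerve--realisation adjunction for the left Kan extension along the Yoneda embedding---is precisely how the cited result \cite[Theorem~2.12]{Robert-NicoudVallette20} is proved; the present monograph offers no independent argument beyond the citation and the colimit formula over the category of elements stated immediately after the proposition. So the formal skeleton of your proof is the intended one, including the reduction of the whole statement to (a) cocompleteness of the target category and (b) the definitional identification of the nerve with $\mathrm{R}$.

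There is, however, one genuine flaw in your justification of (a). You propose to ``check that the completed colimit $\wcolim$ of a diagram of complete $\sLi$-algebras carries a canonical $\sLi$-structure compatible with the structure maps.'' This step fails as stated: colimits in a category of algebras over an operad are \emph{not} computed on underlying modules. Already for ordinary Lie algebras the coproduct is the free product, not the direct sum; in the present setting the coproduct of complete $\sLi$-algebras is a completed free product (compare the operadic coproduct $\hat{\vee}$ used throughout Chapter~\ref{sec:TwNsOp}), whose underlying complete module is much bigger than the module-level coproduct. Endowing the $\wcolim$ of underlying modules with an induced structure would produce an object failing the universal property, and your coend formula for $\mathfrak{L}(X_\bullet)$ would then compute the wrong algebra---this is exactly what the paper's phrase ``by gluing, \emph{in a suitable way}'' is warning about. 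The correct route to cocompleteness is monadic: the forgetful functor to complete dg modules is monadic for the free complete algebra monad $\sLi\,\widehat{\circ}\,(-)$ of Chapter~\ref{sec:OptheoyFilMod}, the base category is bicomplete with monoidal product preserving colimits, and a category of algebras over such a monad is cocomplete because it admits reflexive coequalizers (constructed from free algebras); only filtered colimits and reflexive coequalizers are created at the level of underlying modules, not general colimits. With cocompleteness established this way, the rest of your argument---pointwise Kan extension, density of representables, identification of the nerve with $\mathrm{R}$, and the remark deferring $\infty$-morphism functoriality to the reference---goes through verbatim.
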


As usual, such left Kan extensions can be computed as the colimit 
\[\mathfrak{L}(X_\bullet)
 \cong 
\mathop{\mathrm{colim}}_{\mathsf{E}(X_\bullet)} \mathfrak{mc}^\bullet\]
over the category of elements of a simplicial set, that is by gluing, in a suitable way, 
quasi-free complete shifted $\Li$-algebras generated by the cells of standard simplicies. 

\begin{theorem}[\cite{Getzler09}]\label{thm:Getzler}
The simplicial set $\mathrm{R}(\g)$ integrating any complete shifted $\Li$-algebra $\g$ is a Kan complex with a canonical algebraic $\infty$-groupoid structure. It is a homotopy equivalent sub-Kan complex of $\MC_\bullet(\g)$~.
\end{theorem}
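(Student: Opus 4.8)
The plan is to realise $\mathrm{R}(\g)$ concretely inside Hinich's Kan complex $\MC_\bullet(\g)$ and then transport the Kan-type properties across this realisation. First I would make precise the isomorphism $\mathrm{R}_\bullet(\g)\cong\gamma_\bullet(\g)$ announced in the Remark. By the universal property of the complete cobar construction $\widehat{\Omega}_\pi$, a strict morphism $\mathfrak{mc}^\bullet=\widehat{\Omega}_\pi\,\mathrm{C}^\bullet\to\g$ is the same datum as a twisting morphism $\mathrm{C}^\bullet\to\g$, that is a Maurer--Cartan element of the convolution $\sLi$-algebra $\hom(\mathrm{C}^\bullet,\g)$. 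Since $\mathrm{C}^\bullet$ carries the $\Omega\mathrm{B}\mathrm{Com}$-coalgebra structure transferred from $\Omega_\bullet$ along Dupont's contraction $(i_\bullet,p_\bullet,h_\bullet)$, I would use the homotopy transfer theorem to compare Maurer--Cartan elements on $\mathrm{C}^\bullet$ with those on $\Omega_\bullet$; the transferred side-conditions produced by the contraction are exactly the vanishing $\id\widehat{\otimes}\,h_\bullet=0$ defining $\gamma_\bullet(\g)$. This identifies $\mathrm{R}(\g)$ with the gauge-condition sub-simplicial set $\gamma_\bullet(\g)\subseteq\MC_\bullet(\g)$ and simultaneously records it as a candidate \emph{sub}-Kan-complex.

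Next I would establish the Kan condition, which is the technical heart. Using the adjunction $\mathfrak{L}\dashv\mathrm{R}$ of Proposition~\ref{prop:AdjHigherLie}, a horn filling $\mathrm{R}(\g)(\Lambda^n_k)\to\mathrm{R}(\g)(\Delta^n)$ is equivalent to an extension problem for strict $\sLi$-morphisms along $\mathfrak{L}(\Lambda^n_k)\hookrightarrow\mathfrak{L}(\Delta^n)$. Computing this inclusion from $\mathfrak{L}(X)\cong\colim_{\mathsf{E}(X)}\mathfrak{mc}^\bullet$, the relative object adjoins precisely two generators to $\mathfrak{L}(\Lambda^n_k)$: the top cell $c_{[n]}$ and the missing $k$-th face $c_{[n]\setminus k}$. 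The decisive structural fact, which I would extract from Dupont's transferred differential on $\mathrm{C}^\bullet$, is that the face generator appears \emph{linearly and with invertible coefficient} in the leading (arity one) term of the differential of the top-cell generator. Consequently the Maurer--Cartan extension equation, whose higher terms converge by completeness (Lemma~\ref{lem:Conv}), can always be solved: one prescribes the value of $c_{[n]}$ compatibly and then solves for $c_{[n]\setminus k}$. This yields existence of fillers for both inner and outer horns, the latter supplying the invertibility that underlies the groupoid structure.

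I would then upgrade ``Kan complex'' to ``canonical algebraic $\infty$-groupoid'' by observing that the gauge condition $\ker(\id\widehat{\otimes}\,h_\bullet)$ pins down a distinguished filler rather than a merely existing one: the contracting homotopy $h_\bullet$ provides an explicit, natural, and coherent solution of the extension equation above, and these chosen fillers satisfy the compatibilities required of an algebraic Kan complex. Finally, for the homotopy equivalence with $\MC_\bullet(\g)$, I would promote Dupont's contraction to the level of Maurer--Cartan simplicial sets: the simplicial maps induced by $i_\bullet$ and $p_\bullet$, together with the homotopy built from $h_\bullet$, exhibit $\mathrm{R}(\g)\cong\gamma_\bullet(\g)$ as a deformation retract of $\MC_\bullet(\g)$, invoking the homotopy-invariance of Hinich's construction (\cref{thm:HinichInt}, ultimately Goldman--Millson~\cite{GoldmanMillson88}) to conclude that the retraction is a weak equivalence.

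The hard part will be Step two: making rigorous and uniform, across inner and outer horns and with full sign and combinatorial control of Dupont's transferred $\sLi$-structure, the claim that the missing-face generator occurs solvably in the differential of the top cell, while guaranteeing convergence of the resulting infinite Maurer--Cartan series in the complete setting.
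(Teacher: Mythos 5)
Your proposal is correct, and for the heart of the theorem---the Kan property---it takes essentially the same route as the paper: convert horn filling, via the adjunction $\mathfrak{L}\dashv\mathrm{R}$ of Proposition~\ref{prop:AdjHigherLie}, into an extension problem along $\mathfrak{L}\big(\Lambda^n_k\big)\hookrightarrow\mathfrak{L}\big(\Delta^{n}\big)=\mathfrak{mc}^n$, settled by a structural fact about how the top cell and the missing face sit inside $\mathfrak{mc}^n$. The difference is in the packaging of that fact. The paper quotes \cite[Lemma~5.1]{Robert-NicoudVallette20}: there is an isomorphism of complete shifted $\Li$-algebras $\mathfrak{mc}^n\cong \mathfrak{L}\big(\Lambda_{k}^{n}\big)\,\widehat{\vee}\, \widehat{\sLi}(u, du)$ with $\mathrm{d}(u)=du$; this is precisely the change-of-generators form of your claim that the missing face occurs linearly and invertibly in the differential of the top cell, and it yields at once that horn fillers are in bijection with $\g_n$, with $0\in\g_n$ as the canonical algebraic filler. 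Your unpacked version---prescribe $c_{[n]}$, then solve for $c_{[n]\setminus k}$---is workable but needs two supplements that the coproduct decomposition renders invisible: the missing face can also occur inside the higher brackets of the differential of $c_{[n]}$, so you are solving a fixed-point equation (completeness gives convergence, as you note), and you must check that the solved value of the image of $c_{[n]\setminus k}$ is automatically compatible with the differential of $c_{[n]\setminus k}$ itself, which requires a separate $\mathrm{d}^2=0$ argument. Moreover, your distinguished filler, built from Dupont's contraction $h_\bullet$, is a priori different from the paper's $0$-filler and is closer to Getzler's original construction; either choice produces an algebraic Kan structure. Finally, note that the printed proof establishes only the Kan/algebraic part and leaves both the identification $\mathrm{R}_\bullet(\g)\cong\gamma_\bullet(\g)$ and the homotopy equivalence with $\MC_\bullet(\g)$ entirely to \cite{Getzler09} and \cite{Robert-NicoudVallette20}; your Steps 1 and 4 (comparison of Maurer--Cartan elements along Dupont's contraction, then a retraction of $\MC_\bullet(\g)$ onto the gauge-condition subcomplex) follow the strategy of those references, so your proposal covers strictly more ground than the paper's own argument, at the price of the technical work you rightly flag as the hard part.
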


\begin{proof}
For $n\geqslant 2$, one can show \cite[Lemma~5.1]{Robert-NicoudVallette20} that there is an isomorphism of complete shifted $\Li$-algebras 
\[\mathfrak{mc}^n=\mathfrak{L}\big(\Delta^{n}\big)\cong \mathfrak{L}\big(\Lambda_{k}^{n}\big)\,\widehat{\vee}\, \widehat{\sLi}(u, du)\ ,\]
	where $u$ is a generator of degree $n$ and where the differential on the right-hand side is given by $d(u)=du$~.
Using the adjunction of Proposition~\ref{prop:AdjHigherLie}, this shows that the set of $\Lambda^n_k$-horn fillers is in one-to-one correspondence with the component $\g_n$ of degree $n$ of the complete shifted $\Li$-algebra. Therefore any horn can be canonically filled by $0\in \g_n$~. 
\end{proof}

The first horn filler 
\[
\begin{tikzpicture}

\coordinate (v0) at (210:1.5);
\coordinate (v1) at (90:1.5);
\coordinate (v2) at (-30:1.5);

\draw[line width=1] (v0)--(v1)--(v2);
\draw[dashed, line width=1]  (v0)--(v1)--(v2)--cycle;

\begin{scope}[decoration={
	markings,
	mark=at position 0.55 with {\arrow{>}}},
line width=1
]

\path[postaction={decorate}] (v0) -- (v1);
\path[postaction={decorate}] (v1) -- (v2);
\path[postaction={decorate}] (v0) -- (v2);

\end{scope}


\node at ($(v0) + (30:-0.3)$) {$0$};
\node at ($(v1) + (0,0.3)$) {$0$};
\node at ($(v2) + (-30:0.3)$) {$0$};

\node at ($(v0)!0.5!(v1) + (-30:-0.4)$) {$x$};
\node at ($(v1)!0.5!(v2) + (30:0.4)$) {$y$};
\node at ($(v0)!0.5!(v2) + (0,-0.4)$) {$\BCH(x, y)$};

\node at (0,0) {$0$};

\end{tikzpicture}
\]
produces the \emph{Baker--Campbell--Hausdorff formula}\index{Baker--Campbell--Hausdorff formula} \cite[Pro\-po\-si\-tion~5.2.36]{Bandiera14}, wh\-ich is the most conceptual formula underlying Lie theory. 
The higher horn fillers produce  \emph{higher Baker--Campbell--Hausdorff formulae}, whose promising extensive stu\-dy is yet to be achieved. \\

The algebraic $\infty$-groupoid $\R(\g)$, which integrates complete shifted $\Li$-al\-ge\-bras, behaves as follows with respect to the twisting procedure. 

\begin{lemma}[{\cite[Theorem~3.17]{Robert-NicoudVallette20}}]\label{lem:TwIntegr}
For any complete shifted $\Li$-algebra $\g$ and any Maurer--Cartan element $\alpha \in \MC(\g)$~, 
the translation by $\alpha$ induces an isomorphism of pointed Kan complexes 
\[
\left(\mathrm{R}(\g), \alpha\right) \cong \left(\mathrm{R}\left(\g^\alpha\right), 0\right)~.
\]
\end{lemma}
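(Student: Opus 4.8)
The plan is to realise the claimed isomorphism as the \emph{translation} $\beta\mapsto\beta+\alpha$, promoted from the Maurer--Cartan sets to the full simplicial sets. The starting point is the $\Li$-analogue of Lemma~\ref{lem:DefTwist}, which follows from the identity $\big(\theta^\alpha\big)^\beta=\theta^{\alpha+\beta}$ established for $\Li$-algebras (the arity $0$ incarnation of Corollary~\ref{cor:twa+b}): an element $\beta$ is a Maurer--Cartan element of $\g^\alpha$ if and only if $\alpha+\beta$ is a Maurer--Cartan element of $\g$. Hence $\beta\mapsto\alpha+\beta$ is a bijection $\MC(\g^\alpha)\xrightarrow{\cong}\MC(\g)$ sending $0$ to $\alpha$, which is exactly the desired isomorphism on $0$-simplices under $\mathrm{R}(\g)_0\cong\MC(\g)$.

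To reach all simplicial degrees, I would first record the compatibility of twisting with the completed tensor product: exactly as in the proof that $(\g\otimes\R)^\varphi\cong\g^\varphi\otimes\R$, there is a canonical isomorphism of complete shifted $\Li$-algebras
\[
\big(\g\,\widehat{\otimes}\,\Omega_n\big)^{\alpha\otimes 1}\cong \g^\alpha\,\widehat{\otimes}\,\Omega_n,
\]
where $\alpha\otimes 1$ is a Maurer--Cartan element of $\g\,\widehat{\otimes}\,\Omega_n$ because $\alpha\in\MC(\g)$ and $1\in\Omega_n$ is a closed idempotent, so that $\ell_k(\alpha\otimes 1,\ldots,\alpha\otimes 1)=\ell_k(\alpha,\ldots,\alpha)\otimes 1$. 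Applying the bijection of the previous paragraph to $\g\,\widehat{\otimes}\,\Omega_n$ with Maurer--Cartan element $\alpha\otimes 1$ gives, in each degree, a bijection $\MC_\bullet(\g^\alpha)\cong\MC_\bullet(\g)$. These bijections are simplicial: the coface and codegeneracy maps of $\Omega_\bullet$ are morphisms of unital commutative dg algebras, hence fix $\alpha\otimes 1$, so the constant shift commutes with all simplicial structure maps. This yields a pointed simplicial isomorphism $\big(\MC_\bullet(\g^\alpha),0\big)\cong\big(\MC_\bullet(\g),\alpha\big)$.

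The remaining step, which I expect to be the main obstacle, is to check that this translation restricts to the Getzler sub-Kan-complex $\mathrm{R}(\g)\cong\gamma_\bullet(\g)=\MC_\bullet(\g)\cap\ker\big(\id\,\widehat{\otimes}\, h_\bullet\big)$. Since the shift is $\k$-affine, one has $\big(\id\,\widehat{\otimes}\, h_n\big)(\beta+\alpha\otimes 1)=\big(\id\,\widehat{\otimes}\, h_n\big)(\beta)+\alpha\otimes h_n(1)$, so it suffices to verify $h_n(1)=0$. This holds because the constant form $1=t_0+\cdots+t_n$ is a Whitney $0$-form, i.e.\ lies in the image of the inclusion $i_n$ of the Dupont contraction, and the contraction satisfies the side condition $h_n i_n=0$. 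Consequently the translation preserves the gauge kernel and induces a pointed isomorphism $\big(\gamma_\bullet(\g^\alpha),0\big)\cong\big(\gamma_\bullet(\g),\alpha\big)$, equivalently $\big(\mathrm{R}(\g^\alpha),0\big)\cong\big(\mathrm{R}(\g),\alpha\big)$.

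Finally, both $\mathrm{R}(\g)$ and $\mathrm{R}(\g^\alpha)$ are Kan complexes by Theorem~\ref{thm:Getzler}, and an isomorphism of simplicial sets between Kan complexes is automatically an isomorphism of pointed Kan complexes. Its inverse is the translation by $-\alpha$, which amounts to twisting $\g^\alpha$ by $-\alpha$ and recovers $\g$ by the additivity $\big(\theta^\alpha\big)^{-\alpha}=\theta$ of Corollary~\ref{cor:twa+b}. This completes the proposed argument.
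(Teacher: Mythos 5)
Your proof is correct. Note that the monograph itself contains no argument for this lemma---it is stated as a citation of \cite[Theorem~3.17]{Robert-NicoudVallette20}---and your translation argument is precisely the route of the cited proof: the additivity of twisting giving the bijection $\MC(\g^\alpha)\cong\MC(\g)$, the identification $\big(\g\,\widehat{\otimes}\,\Omega_n\big)^{\alpha\otimes 1}\cong \g^\alpha\,\widehat{\otimes}\,\Omega_n$ level-wise, compatibility with the simplicial structure maps (which fix $\alpha\otimes 1$ since they are morphisms of unital algebras), and preservation of Dupont's gauge condition, so that the isomorphism of Hinich spaces restricts to $\gamma_\bullet\cong\mathrm{R}$. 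One small simplification is available at your ``main obstacle'': the vanishing $h_n(1)=0$ needs no appeal to the side condition $h_n i_n=0$, because $h_n$ lowers the cohomological form degree by one and $\Omega_n$ has no forms of negative degree, so $h_n$ vanishes identically on $0$-forms; your argument via the Whitney forms and the annihilation conditions is nevertheless valid.
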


Since $\mathrm{R}(\g)$ is a Kan complex, one might try to compute its homotopy groups. This can be done using the following beautiful Hurewicz type theorem due to A. Berglund.

\begin{theorem}[{\cite{Berglund15}}]\label{thm:Berglund}
	For any complete shifted $\Li$-algebra $\g$ and any Maurer--Cartan element $\alpha \in \MC(\g)$~, there is a canonical group isomorphism
	\[
	\pi_n(\mathrm{R}(\g), \alpha)\cong {H}_n\left(\g^\alpha\right)\ , \quad \text{for}\  \ n\geqslant 1\ ,
	\]
	where the group structure on the right-hand side is given by the Baker--Campbell--Hausdorff formula  for $n=1$  and by the sum  for $n\geqslant 2$~. 
\end{theorem}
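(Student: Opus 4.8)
The plan is to reduce everything to the canonical basepoint $0$ and then to read off the simplicial homotopy groups directly from the quasi-free cellular description of $\mathfrak{mc}^\bullet$. First I would invoke Lemma~\ref{lem:TwIntegr}: translation by $\alpha$ gives an isomorphism of pointed Kan complexes $(\mathrm{R}(\g), \alpha) \cong (\mathrm{R}(\g^\alpha), 0)$. Since $H_n(\g^\alpha)$ is precisely the object on the right-hand side of the statement, it suffices to treat the case $\alpha = 0$, i.e. to produce a natural group isomorphism $\pi_n(\mathrm{R}(\g), 0) \cong H_n(\g)$ for every complete shifted $\Li$-algebra $\g$, with group law $\BCH$ for $n = 1$ and the sum for $n \geq 2$. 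Here the underlying differential of $\g$ is $\ell_1 = d$, so $H_n(\g) = Z_n(\g)/B_n(\g)$ with $Z_n = \ker(d)$ and $B_n = \Im(d)$.

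Next, since $\mathrm{R}(\g)$ is a Kan complex (Theorem~\ref{thm:Getzler}), the group $\pi_n(\mathrm{R}(\g), 0)$ is computed combinatorially as the set of $n$-simplices all of whose faces equal the degenerate basepoint, modulo simplicial homotopy. By the adjunction $\mathfrak{L} \dashv \mathrm{R}$ (Proposition~\ref{prop:AdjHigherLie}), such a based $n$-simplex is the same as a strict morphism $\phi$ out of the relative quasi-free algebra attached to $\Delta^n/\partial\Delta^n$; using the decomposition $\mathfrak{mc}^n \cong \mathfrak{L}(\Lambda^n_k)\,\widehat{\vee}\,\widehat{\sLi}(u, du)$ from the proof of Theorem~\ref{thm:Getzler}, the morphism $\phi$ is determined by the single element $a := \phi(u) \in \g_n$. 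The crucial point is a degree count: apart from the cellular boundary $\partial u$, every summand of $\ell_1^{\mathfrak{mc}}(u)$ is a bracket $\ell_k(c_1,\dots,c_k)$ of cells with $\sum |c_i| = n$ and $k \geq 2$, hence involves at least one proper face of $\Delta^n$ (two full top cells would already force total degree $\geq 2n > n$). As $\phi$ annihilates the boundary subalgebra, i.e. all proper cells, we get $\phi(\ell_1^{\mathfrak{mc}} u) = \phi(\partial u) = 0$, and strictness forces $d a = 0$. Conversely, any cycle $a \in Z_n(\g)$ extends to such a $\phi$, the compatibility with $\ell_1$ being exactly the cycle condition and that with the higher $\ell_k$ being automatic for a morphism out of a free algebra. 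Thus $\phi \mapsto \phi(u)$ identifies the based $n$-simplices with $Z_n(\g)$.

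I would then run the identical argument one dimension up: a based $(n+1)$-simplex witnessing a homotopy between two based $n$-spheres $a, a'$ is, after the same vanishing of the higher-bracket corrections, governed only by the linear (cellular) part of the differential, which yields $a - a' = \pm\, d h$ for $h$ the image of the top cell of $\Delta^{n+1}$. Hence $a \sim a'$ if and only if $a - a' \in B_n(\g)$, and $\pi_n(\mathrm{R}(\g), 0) \cong Z_n(\g)/B_n(\g) = H_n(\g)$ as sets; conceptually this expresses that, on relative cells, the functor $\mathrm{R}$ collapses to the Dold--Kan functor, for which $\pi_n = H_n$ is classical. It then remains to match the group structures. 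The composition law is computed by the algebraic horn fillers of Theorem~\ref{thm:Getzler}, which fill every horn by $0$: for $n = 1$ the displayed $\Lambda^2_1$-filler evaluates the composite of $x$ and $y$ to $\BCH(x,y)$, giving the $\BCH$ group structure on $\pi_1 \cong H_1(\g)$, while for $n \geq 2$ an Eckmann--Hilton argument with the same fillers shows the composite is the sum and the group is abelian.

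The main obstacle, as I see it, lies in the two places where the higher brackets must be controlled in the complete setting. First, one must prove rigorously that the higher-bracket corrections in $\ell_1^{\mathfrak{mc}}$, and in the simplices witnessing homotopies, genuinely vanish under $\phi$ and do not perturb the homotopy class; this is where the degree bookkeeping on the cells of $\Delta^\bullet$ and the convergence of the series defining $\mathfrak{mc}^\bullet$ must be handled carefully. Second, one must verify that the horn-filling composition on $\pi_\bullet$ reproduces $\BCH$ (respectively the sum) and is natural in $\g$: the case $n = 1$ is pinned down by the explicit filler, but the passage to $n \geq 2$ requires checking that all bracket corrections land in boundaries, so that they become invisible in homology.
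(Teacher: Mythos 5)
Your proposal is correct and follows essentially the same route as the paper's proof, which is itself a two-line sketch deferring to \cite[Section~6.1]{Robert-NicoudVallette20}: reduction to the basepoint $0$ via Lemma~\ref{lem:TwIntegr}, then identification of $\pi_n(\mathrm{R}(\g),0)$ with $H_n(\g)$ using the quasi-free cellular structure of $\mathfrak{mc}^\bullet$, with the canonical horn fillers (the higher Baker--Campbell--Hausdorff formulae) supplying the group structure. One detail in your write-up is stated backwards: the bracket corrections in the differential of the top cell vanish identically for degree reasons exactly when $n\geqslant 2$ (two cells of degree at least $n$ already exceed total degree $n+1$), whereas for $n=1$ they survive (two degree-$1$ edges of $\Delta^2$ contribute an $\ell_2$-term), so it is the case $n=1$ --- not $n\geqslant 2$ --- that requires checking that bracket corrections become invisible in homology (i.e.\ that $\BCH$ descends to $H_1$, using $\ell_2(Z_\bullet,B_\bullet)\subseteq B_\bullet$); this surviving term is precisely what turns the group law into $\BCH$ there.
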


\begin{proof}
The idea of the proof given in \cite[Section~6.1]{Robert-NicoudVallette20} 
amounts first to using Lemma~\ref{lem:TwIntegr} to reduce the problem to the trivial Maurer--Cartan element
	\[
	\pi_n(\mathrm{R}(\g), \alpha)\cong\pi_n\left(\mathrm{R}\left(\g^\alpha\right), 0\right)
	\]
and then, since everything is based at $0$, to using  the properties of the higher Baker--Campbell--Hausdorff formulae: 
	\[
\pi_n\left(\mathrm{R}\left(\g^\alpha\right), 0\right)\cong  {H}_n\left(\g^\alpha\right)~.
	\]
\end{proof}

\begin{remark}
The present higher Lie theory extends to complete shifted \emph{curved} $\Li$-algebras; this is the subject of the  Ph.D. Thesis of Victor Roca i Lucio \cite{RocaLucio21}. 
This generalisation requires however to improve the operadic calculus to the curved level and to introduce new genuine model category structures. 
\end{remark}

\section{Rational homotopy theory}\label{sec:RHT}
The Maurer--Cartan methods alluded to in the title of this monograph are also behind the construction of models of spaces in rational homotopy theory. They are already omnipresent in the two foundational articles of D. Quillen \cite{Quillen69} and D. Sullivan \cite{Sullivan77}, see also the works of 
K.-T. Chen \cite{Chen73, Chen77} and 
 of M. Schlessinger and J. Stasheff \cite{SchlessingerStasheff}. We give here a short survey of recent progress made using the results presented in this monograph. \index{rational homotopy theory} \\

Let us quickly recall the basic framework of rational homotopy theory.

\begin{definition}[Nilpotent simplicial set]\index{simplicial set!nilpotent }
A connected pointed simplicial set $X_\bullet$ is \emph{nilpotent} when its fundamental group $\pi_1 X_\bullet$ is nilpotent and when its higher homotopy groups $\pi_n X_\bullet$ are nilpotent as $\pi_1 X_\bullet$-modules, for all $n\geqslant2$~.
\end{definition}

\begin{definition}[Rational simplicial set]\index{simplicial set!rational}
A nilpotent  simplicial set $X_\bullet$ is \emph{rational} when 
its homotopy groups $\pi_n X_\bullet$ are uniquely divisible, for any $n\geqslant1$, that is
the equation $x^m=g$ has a unique solution $x\in \pi_n X_\bullet$, for any $g\in \pi_n X_\bullet$ and any integer $m\geqslant1$~. 
\end{definition}

Since the homotopy groups $\pi_n X_\bullet$ are abelian for $n\geqslant 2$, this latter condition simply means that they are $\mathbb{Q}$-vector spaces. 

\begin{definition}[Rationalisation]\index{rationalisation}
A \emph{rationalisation} of a nilpotent simplicial set $X_\bullet$ is a rational simplicial set $X^\mathbb{Q}_\bullet$ equipped with a morphism $r : X_\bullet\to X^\mathbb{Q}_\bullet$ such that 
 \[\pi_n r : \pi_n X_\bullet \otimes \mathbb{Q} \xrightarrow{\cong} \pi_n X_\bullet^\mathbb{Q}~,\]
  for any $n\geqslant1$~, where the left-hand side stands for the Malcev completion \cite{Malcev} in the case $n=1$~.
\end{definition}

Any nilpotent simplicial set admits a rationalisation \cite[Theorem~5.3.2]{MayPonto12};  some classes of nilpotent simplicial sets admit functorial rationalisations, two of which are given below. \\

The first algebraic models for the rational homotopy type of topological spaces were given by D. Quillen \cite{Quillen69} in terms of dg Lie algebras. However, this original approach relies on the composite of many intermediate functors. 
The adjunction described in Proposition~\ref{prop:AdjHigherLie}, which arose in the higher Lie theory, provides us with direct functors between simplicial sets and complete shifted $\Li$-algebras. (We refer the reader to \cite{Chen73, Chen77, Neisendorfer78, Hain84} with emphasis on \cite{BFMT20} for the similar theory developed on the level of shifted Lie algebras.) We will show below that they faithfully encode the rational homotopy types.  
Let us first extend their definitions to pointed spaces. 
The integration functor extends naturally to a functor $\widetilde{\mathrm{R}}$ landing in pointed simplicial sets 
under 
\[
*\cong\mathrm{R}(0)\xrightarrow{\mathrm{R}(0)}\mathrm{R}(\g)~.
\]

\begin{definition}[$\Li$-algebra model]\index{model!$\Li$-algebra}
A complete shifted $\Li$-algebra $\g$ is a \emph{model} of a pointed simplicial set $X_\bullet$ when it is equipped with a 
homotopy equivalence 
\[
\widetilde{\mathrm{R}}(\g)\simeq X_\bullet^\mathbb{Q}~. 
\]
\end{definition}

\begin{proposition}
When $\g$ is a complete shifted $\Li$-algebra model of a pointed simplicial set $X_\bullet$, 
 there are group isomorphisms 
	\[
	\pi_n(X_\bullet)\otimes \mathbb{Q} \cong {H}_n(\g)\ , \quad \text{for}\  \ n\geqslant 1\ ,
	\]
where the right-hand side is equipped with the Baker--Campbell--Hausdorff product, for $n=1$, and with the sum, for $n\geqslant 2$~.
\end{proposition}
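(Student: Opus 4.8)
The plan is to assemble the claimed isomorphism as a short chain of group isomorphisms, each of which is either the definition of one of the functors involved or a result already established earlier. First I would unwind the hypothesis: saying that $\g$ is an $\Li$-algebra model of $X_\bullet$ means, by definition, that there is a homotopy equivalence of \emph{pointed} simplicial sets $\widetilde{\mathrm{R}}(\g)\simeq X_\bullet^{\mathbb{Q}}$. Since this equivalence is pointed, it induces for every $n\geqslant 1$ an isomorphism of homotopy groups based at the respective basepoints, $\pi_n\big(\widetilde{\mathrm{R}}(\g)\big)\cong \pi_n\big(X_\bullet^{\mathbb{Q}}\big)$, which is a group isomorphism (nonabelian for $n=1$, abelian for $n\geqslant 2$). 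Next I would identify the basepoint: by the construction of $\widetilde{\mathrm{R}}$ recalled above, it is the image of the unique $0$-simplex of $\mathrm{R}(0)\cong *$ under the map induced by the zero morphism of shifted $\Li$-algebras, and under the identification $\mathrm{R}(\g)_0\cong \MC(\g)$ this is precisely the trivial Maurer--Cartan element $0\in\MC(\g)$. Hence $\pi_n\big(\widetilde{\mathrm{R}}(\g)\big)=\pi_n\big(\mathrm{R}(\g),0\big)$.

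Then I would invoke Berglund's Hurewicz-type theorem (Theorem~\ref{thm:Berglund}) at the Maurer--Cartan element $\alpha=0$. Twisting by the trivial Maurer--Cartan element changes nothing: by the formula of Proposition~\ref{prop:TwCurvedGaugeLie}, only the $k=0$ summand survives, so $\ell_n^0=\ell_n$ and therefore $\g^0=\g$. Consequently Berglund's isomorphism reads $\pi_n\big(\mathrm{R}(\g),0\big)\cong H_n(\g)$ for $n\geqslant 1$, equipped with the Baker--Campbell--Hausdorff product when $n=1$ and with the sum when $n\geqslant 2$, which are exactly the group structures in the statement. Finally, the definition of a rationalisation provides, for each $n\geqslant 1$, a group isomorphism $\pi_n(X_\bullet)\otimes\mathbb{Q}\xrightarrow{\cong}\pi_n\big(X_\bullet^{\mathbb{Q}}\big)$, where the left-hand side is understood as the Malcev completion for $n=1$. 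Composing the four isomorphisms yields
\[
\pi_n(X_\bullet)\otimes\mathbb{Q}\cong \pi_n\big(X_\bullet^{\mathbb{Q}}\big)\cong \pi_n\big(\widetilde{\mathrm{R}}(\g)\big)=\pi_n\big(\mathrm{R}(\g),0\big)\cong H_n(\g),
\]
and the group law transported along the chain is exactly the one delivered by Berglund's theorem, completing the proof.

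The step I expect to demand the most care is the compatibility of the group structures along this chain, rather than any new geometric input. Concretely, one must verify that each of the four arrows is a homomorphism for the relevant group laws and, in the delicate case $n=1$, that the Malcev-completed fundamental group is matched with the BCH group $\big(H_1(\g),\mathrm{BCH}\big)$ and not merely placed in abstract set-theoretic bijection with it; this is where the pointedness of the model equivalence and the precise normalisation of the rationalisation isomorphism must be reconciled. Everything else is a direct citation of the stated results.
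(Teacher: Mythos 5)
Your proposal is correct and is essentially the paper's own argument: the paper proves this proposition in one line, as "a direct corollary of Berglund's Hurewicz type theorem," which is exactly the chain you assemble — unwind the definition of model and of rationalisation, identify the basepoint of $\widetilde{\mathrm{R}}(\g)$ with the trivial Maurer--Cartan element, and apply Theorem~\ref{thm:Berglund} at $\alpha=0$ (where $\g^0=\g$). Your write-up simply makes explicit the routine verifications the paper leaves implicit.
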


\begin{proof}
This is a direct corollary of Berglund's Hurewicz type theorem~\ref{thm:Berglund}. 
\end{proof}

In the pointed setting, the 
complete shifted $\Li$-algebra functor produces a new artificial base point provided by the trivial Maurer--Cartan element $0$, which thus needs to be killed.

\begin{lemma}
The \emph{reduced shifted $\Li$-algebra} of pointed simplicial sets given by the coequalizer
\begin{center}
	\begin{tikzpicture}
	\node (a) at (0,0) {$\mathfrak{L}(*) \cong \mathfrak{mc}^0$};
	\node (b) at (3.5,0) {$\mathfrak{L}(X_\bullet)$};
	\node (c) at (6.5,0) {$\widetilde{\mathfrak{L}}(X_\bullet)$};
	
	\path[->] ($(a.east)+(0,.07)$) edge node[above]{\small $\mathfrak{L}\left(*\to X_\bullet\right)$} ($(b.west)+(0,.07)$);
	\path[->] ($(a.east)-(0,.07)$) edge node[below]{\small $0$} ($(b.west)-(0,.07)$);
	
	\path[->] (b) edge (c);
	\end{tikzpicture}
\end{center}
defines a functor which is left adjoint to the pointed integration functor 
\[\vcenter{\hbox{
\begin{tikzcd}[column sep=1.2cm]
\widetilde{\mathfrak{L}}  \ \colon \ 
\mathsf{sSet}_* 
\arrow[r, harpoon, shift left=1ex, "\perp"']
&
\arrow[l, harpoon,  shift left=1ex]
\mathsf{complete}\ \sLi\textsf{-}\mathsf{alg}
\ : \ 
\widetilde{\mathrm{R}}\quad .
\end{tikzcd}
}}\]
\end{lemma}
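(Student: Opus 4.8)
The plan is to deduce this adjunction formally from the unpointed adjunction $\mathfrak{L} \dashv \mathrm{R}$ of Proposition~\ref{prop:AdjHigherLie} together with the universal property of the defining coequalizer. First I would check that $\widetilde{\mathfrak{L}}$ is well defined: the category of complete shifted $\Li$-algebras with strict morphisms is cocomplete, being the category of algebras over the operad $\sLi$ in the bicomplete closed symmetric monoidal category $(\mathsf{CompMod}, \wo)$ whose monoidal product preserves colimits, so the coequalizer defining $\widetilde{\mathfrak{L}}(X_\bullet)$ exists. Functoriality on $\mathsf{sSet}_\ast$ is then automatic, since a pointed map $X_\bullet \to Y_\bullet$ is compatible with the basepoint inclusions, hence with the two parallel arrows, and therefore descends to the coequalizers.

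Next I would pin down the basepoint of $\widetilde{\mathrm{R}}(\g)$ on the algebraic side. By construction $\widetilde{\mathrm{R}}(\g)$ is $\mathrm{R}(\g)$ pointed at the image of the trivial Maurer--Cartan element $0 \in \MC(\g) = \mathrm{R}(\g)_0$, that is at the map $\ast \cong \mathrm{R}(0) \to \mathrm{R}(\g)$. Since $\mathfrak{L}(\ast) \cong \mathfrak{mc}^0 \cong \widehat{\sLi}(a)$ is quasi-free on a single Maurer--Cartan generator $a$, the adjunction bijection $\Hom(\mathfrak{L}(\ast), \g) \cong \Hom(\ast, \mathrm{R}(\g)) = \MC(\g)$ identifies this basepoint map with the morphism $\mathfrak{mc}^0 \to \g$ sending $a \mapsto 0$. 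This is precisely the arrow labelled $0$ in the coequalizer diagram: for any strict morphism $g \colon \mathfrak{L}(X_\bullet) \to \g$, the composite $g \circ 0$ sends $a \mapsto g(0) = 0$, hence always selects the trivial Maurer--Cartan element of $\g$, whereas $g \circ \mathfrak{L}(\ast \to X_\bullet)$ is adjunct to the image of the basepoint of $X_\bullet$ inside $\mathrm{R}(\g)$.

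The core of the argument is then the chain of natural bijections
\[
\Hom_{\mathsf{complete}\ \sLi\textsf{-}\mathsf{alg}}\big(\widetilde{\mathfrak{L}}(X_\bullet), \g\big)
\cong
\big\{\, g \colon \mathfrak{L}(X_\bullet) \to \g \mid g \circ \mathfrak{L}(\ast \to X_\bullet) = g \circ 0 \,\big\}
\cong
\Hom_{\mathsf{sSet}_\ast}\big(X_\bullet, \widetilde{\mathrm{R}}(\g)\big),
\]
where the first bijection is the universal property of the coequalizer and the second is the restriction of the unpointed adjunction bijection $\Hom(\mathfrak{L}(X_\bullet), \g) \cong \Hom(X_\bullet, \mathrm{R}(\g))$ to the morphisms satisfying the coequalizing condition. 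By the previous paragraph, that condition translates exactly into the requirement that the adjunct map $X_\bullet \to \mathrm{R}(\g)$ send the basepoint of $X_\bullet$ to the basepoint of $\mathrm{R}(\g)$, i.e. that it be a morphism of \emph{pointed} simplicial sets. Naturality in both $X_\bullet$ and $\g$ is inherited from the naturality of $\mathfrak{L} \dashv \mathrm{R}$ and of the coequalizer construction.

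The hard part will be the bookkeeping of the second paragraph: one must verify, using the unit and counit of $\mathfrak{L} \dashv \mathrm{R}$ and the explicit identification $\mathfrak{L}(\ast) \cong \mathfrak{mc}^0$ recalled just before the lemma, that the arrow labelled $0$ really is the adjunct of the basepoint inclusion $\ast \to \mathrm{R}(\g)$, and that $g \circ 0$ selects the trivial Maurer--Cartan element independently of $g$. Once this identification is secured, everything else is a formal manipulation of universal properties and poses no further difficulty.
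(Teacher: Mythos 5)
Your proof is correct. Note that the paper itself states this lemma without proof (the surrounding material is imported from the Robert--Nicoud--Vallette framework), so there is no argument to compare against; but what you give is exactly the expected one: the pointed adjunction is formally induced from the unpointed adjunction $\mathfrak{L}\dashv\mathrm{R}$ of Proposition~\ref{prop:AdjHigherLie} via the universal property of the coequalizer, equivalently by composing the under-category adjunction $*/\mathsf{sSet}\rightleftarrows\mathfrak{mc}^0/(\mathsf{complete}\ \sLi\textsf{-}\mathsf{alg})$ with cobase change along $\mathfrak{mc}^0\to 0$. You also correctly isolate and settle the only non-formal point: since $\mathfrak{mc}^0\cong\widehat{\sLi}(a)$ is quasi-free on one Maurer--Cartan generator, morphisms out of it are exactly Maurer--Cartan elements, the arrow labelled $0$ corresponds to the trivial element $0\in\MC(\g)\cong\mathrm{R}(\g)_0$, which is precisely the basepoint $*\cong\mathrm{R}(0)\to\mathrm{R}(\g)$ defining $\widetilde{\mathrm{R}}$, and $g\circ 0=0$ for every strict morphism $g$ because strict morphisms preserve $0$.
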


\begin{theorem}[{\cite[Theorem~7.19]{Robert-NicoudVallette20}}]
For any  pointed connected finite type simplicial set $X_\bullet$, the unit 
	\[
	X_\bullet \longrightarrow \widetilde{\mathrm{R}}\, \widetilde{\mathfrak{L}}(X_\bullet)
	\]
	of this adjunction  is homotopy equivalent to the $\mathbb{Q}$-completion  of Bous\-field--Kan. 
	So it gives a functorial rationalisation for pointed connected finite simplicial sets that are nilpotent. 
\end{theorem}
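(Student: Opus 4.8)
The plan is to characterise the unit $X_\bullet \to \widetilde{\mathrm{R}}\,\widetilde{\mathfrak{L}}(X_\bullet)$ as a rationalisation by means of the universal property of the Bousfield--Kan $\mathbb{Q}$-completion. Recall that for a pointed connected nilpotent simplicial set of finite type the $\mathbb{Q}$-completion agrees with the rationalisation, and that it is characterised, among maps into $\mathbb{Q}$-local (that is, rational nilpotent) targets, by the property of inducing an isomorphism on rational homotopy groups $\pi_\bullet(-)\otimes \mathbb{Q}$. Thus I would split the proof into three tasks: first, that $\widetilde{\mathrm{R}}\,\widetilde{\mathfrak{L}}(X_\bullet)$ is a rational nilpotent space; second, that the unit induces an isomorphism $\pi_n(X_\bullet)\otimes \mathbb{Q} \xrightarrow{\cong} \pi_n\big(\widetilde{\mathrm{R}}\,\widetilde{\mathfrak{L}}(X_\bullet)\big)$; and third, the formal identification of the resulting map with the Bousfield--Kan completion.

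For the first task, I would use that $\widetilde{\mathrm{R}}(\g)$ is a Kan complex carrying a canonical $\infty$-groupoid structure by Theorem~\ref{thm:Getzler}, and compute its homotopy groups through Berglund's Hurewicz-type Theorem~\ref{thm:Berglund}, which gives $\pi_n\big(\widetilde{\mathrm{R}}(\g),0\big)\cong H_n(\g)$. Applied to $\g=\widetilde{\mathfrak{L}}(X_\bullet)$, and since the whole integration theory is developed over $\mathbb{Q}$, these groups are $\mathbb{Q}$-vector spaces for $n\geqslant 2$ and a uniquely divisible group (with its Baker--Campbell--Hausdorff product) for $n=1$; the completeness of $\widetilde{\mathfrak{L}}(X_\bullet)$, hence the pronilpotency of the shifted $\Li$-bracket it induces on homology, supplies the nilpotency of the target. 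This shows $\widetilde{\mathrm{R}}\,\widetilde{\mathfrak{L}}(X_\bullet)$ is $\mathbb{Q}$-local.

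The heart of the argument is the second task, namely that $\widetilde{\mathfrak{L}}(X_\bullet)$ is a correct complete shifted $\Li$-model, i.e.\ that $H_n\big(\widetilde{\mathfrak{L}}(X_\bullet)\big)\cong \pi_n(X_\bullet)\otimes \mathbb{Q}$ naturally and compatibly with the unit. I would first verify this on the generating cells: for the contractible standard simplices it is immediate, while for the spheres $S^n=\partial \Delta^{n+1}$ one computes $\widetilde{\mathfrak{L}}(S^n)$ explicitly from the left Kan extension formula underlying Proposition~\ref{prop:AdjHigherLie}, obtaining a quasi-free complete $\sLi$-algebra whose homology reproduces $\pi_\bullet(S^n)\otimes \mathbb{Q}$ and whose integration is the rational $n$-sphere. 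One then propagates the statement to all finite type nilpotent $X_\bullet$: since $\widetilde{\mathfrak{L}}$ is a left adjoint it preserves the colimits (skeletal filtration, cell attachments) presenting $X_\bullet$, and one checks that it sends the homotopy pushouts of a CW or Postnikov decomposition to homotopy pushouts of complete $\sLi$-algebras, so that the comparison of homotopy groups follows by induction together with the convergence afforded by the finite type hypothesis. Equivalently, one may compare $\widetilde{\mathfrak{L}}$ with the classical Quillen or Buijs--F\'elix--Murillo--Tanr\'e Lie models through a zig-zag of quasi-isomorphisms and import their rationalisation theorem.

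The third task is then formal: once the target is rational nilpotent and the unit is a $\pi_\bullet\otimes \mathbb{Q}$-isomorphism, the universal property pins it down, up to homotopy, as the Bousfield--Kan $\mathbb{Q}$-completion. The main obstacle I anticipate is precisely the propagation step in the second task: the composite $\widetilde{\mathrm{R}}\,\widetilde{\mathfrak{L}}$ pairs a colimit-preserving left adjoint with a limit-preserving right adjoint, so neither half respects the cellular colimits that build $X_\bullet$. Making the induction work therefore requires establishing that $\widetilde{\mathfrak{L}}$ carries homotopy pushouts of pointed spaces to homotopy cofibre sequences of complete shifted $\Li$-algebras, a statement whose proof rests on left properness in the model structure on complete $\sLi$-algebras and on an explicit control, via the defining filtration, of the spectral sequence computing the homology of $\widetilde{\mathfrak{L}}$ of a pushout, where the finite type assumption is exactly what guarantees convergence.
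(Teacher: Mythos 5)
Your strategy cannot prove the statement as it is formulated, because the statement is not confined to nilpotent spaces: the identification of the unit with the Bousfield--Kan $\mathbb{Q}$-completion is asserted for \emph{every} pointed connected finite type simplicial set, and nilpotency enters only in the last sentence, where that identification is specialised to deduce a rationalisation. Your argument characterises the map by the universal property ``a $\pi_\bullet(-)\otimes\mathbb{Q}$-isomorphism into a rational nilpotent target'', which characterises the $\mathbb{Q}$-completion only when $X_\bullet$ is nilpotent. Moreover, your first step --- that $\widetilde{\mathrm{R}}\,\widetilde{\mathfrak{L}}(X_\bullet)$ is always rational \emph{nilpotent} --- is false in general: for $X_\bullet=S^1\vee S^1$ the Bousfield--Kan $\mathbb{Q}$-completion has fundamental group the pro-unipotent (Malcev) completion of the free group on two generators, which is not nilpotent; correspondingly, $H_1$ of a complete shifted $\Li$-algebra equipped with the Baker--Campbell--Hausdorff product is in general only pro-unipotent, and completeness of $\widetilde{\mathfrak{L}}(X_\bullet)$ does not supply the nilpotency your universal-property argument needs.

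Second, the step you yourself flag as the main obstacle is indeed where the proposal breaks down, and it is not a repairable technicality within the tools of this book. Preservation of homotopy pushouts by $\widetilde{\mathfrak{L}}$ presupposes a model (or at least homotopical) structure on complete shifted $\Li$-algebras for which the adjunction of Proposition~\ref{prop:AdjHigherLie} is a Quillen adjunction; no such structure is established here. Even granting it, rational homotopy groups do not propagate through cell attachments by a degreewise induction: already $\pi_\bullet\left(S^2\vee S^2\right)\otimes\mathbb{Q}$ is an infinite-dimensional free Lie algebra, so no finite-stage comparison of homotopy groups of a pushout closes the induction --- carrying this out in general amounts to re-proving Quillen's theorem, not to a routine spectral-sequence argument. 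The proof of the cited theorem goes a genuinely different way, as indicated in the remark following Corollary~\ref{Cor:LastbutnotLeast}: under the finite type hypothesis one linearly dualises, combining Theorem~\ref{thm:Getzler} (the homotopy equivalence $\widetilde{\mathrm{R}}(\g)\simeq\MC_\bullet(\g)$) with Proposition~\ref{prop:MCvsSullivan} (the isomorphism $\MC_\bullet(\g)\cong\big\langle S\big(\g^\vee\big)\big\rangle$) to identify $\widetilde{\mathrm{R}}\,\widetilde{\mathfrak{L}}(X_\bullet)$ with the spatial realisation of a quasi-free, hence cofibrant, dg commutative model of $\mathrm{A}_{\mathrm{PL}}(X_\bullet)$, and then invokes the Bousfield--Gugenheim identification of the derived unit of the $\mathrm{A}_{\mathrm{PL}}\dashv\langle-\rangle$ adjunction with the Bousfield--Kan $\mathbb{Q}$-completion, whose nilpotent specialisation is Theorem~\ref{thm:SuillvanRat}. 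Your parenthetical fallback --- comparison with Quillen or Buijs--F\'elix--Murillo--Tanr\'e models --- is closer to a viable route, but it is not developed in the proposal and is not the argument the finiteness hypothesis is designed for.
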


Otherwise stated, the reduced shifted $\Li$-algebra $\widetilde{\mathfrak{L}}(X_\bullet)$ is a functorial model 
for pointed connected finite nilpotent simplicial sets. 

\begin{corollary}\label{Cor:LastbutnotLeast}
For any finite type nilpotent  simplicial set  $X_\bullet$, there are group isomorphisms 
	\[
	\pi_n(X_\bullet)\otimes \mathbb{Q} \cong {H}_n\left(\widetilde{\mathfrak{L}}(X_\bullet)\right)\ , \quad \text{for}\  \ n\geqslant 1\ ,
	\]
where the right-hand side is equipped with the Baker--Campbell--Hausdorff product, for $n=1$, and with the sum, for $n\geqslant 2$~.
\end{corollary}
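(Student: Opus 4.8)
The plan is to deduce this corollary directly from the two major theorems already established in this section and the previous one. The statement asserts that for a finite type nilpotent simplicial set $X_\bullet$, the rational homotopy groups are computed by the homology of the reduced shifted $\Li$-algebra $\widetilde{\mathfrak{L}}(X_\bullet)$, with the appropriate group structures. The key point is that this is a \emph{formal consequence} of combining Berglund's Hurewicz-type isomorphism (Theorem~\ref{thm:Berglund}) with the rationalisation property of the unit of the $\widetilde{\mathfrak{L}} \dashv \widetilde{\mathrm{R}}$ adjunction, so no new hard analysis is needed.

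First I would recall, from the theorem immediately preceding this corollary, that for a pointed connected finite type nilpotent simplicial set $X_\bullet$, the unit map
\[
X_\bullet \longrightarrow \widetilde{\mathrm{R}}\,\widetilde{\mathfrak{L}}(X_\bullet)
\]
is a rationalisation. By the very definition of rationalisation, applying $\pi_n$ yields isomorphisms
\[
\pi_n(X_\bullet) \otimes \mathbb{Q} \xrightarrow{\ \cong\ } \pi_n\left(\widetilde{\mathrm{R}}\,\widetilde{\mathfrak{L}}(X_\bullet)\right)\ , \quad \text{for}\ \ n\geqslant 1\ ,
\]
where for $n=1$ the left-hand side is interpreted via the Malcev completion. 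This handles the passage from the topological side to the integrated $\Li$-algebra side.

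Second I would apply Berglund's Theorem~\ref{thm:Berglund} to the complete shifted $\Li$-algebra $\g \coloneqq \widetilde{\mathfrak{L}}(X_\bullet)$ based at the canonical (trivial) Maurer--Cartan element $\alpha = 0$. Since $\widetilde{\mathrm{R}}$ is precisely the pointed integration functor with base point $0$, this gives canonical group isomorphisms
\[
\pi_n\left(\widetilde{\mathrm{R}}(\g), 0\right) \cong {H}_n\big(\g^0\big) = {H}_n\big(\widetilde{\mathfrak{L}}(X_\bullet)\big)\ , \quad \text{for}\ \ n\geqslant 1\ ,
\]
using that twisting by the zero element leaves the $\Li$-algebra unchanged, $\g^0 = \g$. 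Crucially, Berglund's theorem also records that the group structure on the right-hand side is the Baker--Campbell--Hausdorff product for $n=1$ and the ordinary sum for $n\geqslant 2$, which matches exactly the structures claimed in the corollary. Composing the two chains of isomorphisms gives the result.

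The only genuine subtlety — and thus the step I would treat most carefully — is the compatibility of the \emph{group} structures, not merely the underlying sets, under the rationalisation unit. For $n\geqslant 2$ this is automatic since all groups are abelian and the maps are group homomorphisms. For $n=1$, one must check that the Malcev completion of $\pi_1(X_\bullet)$ is identified, as a group, with $\pi_1(\widetilde{\mathrm{R}}(\g),0)$ equipped with its BCH product; this is precisely the content guaranteed by the rationalisation statement together with the fact that the BCH group on ${H}_1(\g)$ is the pro-unipotent group arising from the nilpotent (complete) Lie algebra structure, which is the standard target of Malcev completion. I expect this identification to follow from the finite type and nilpotency hypotheses, which ensure all the relevant completions agree and the BCH series converges, but it is the place where the hypotheses are actually used and so deserves explicit mention rather than being swept into ``\emph{a direct corollary}''.
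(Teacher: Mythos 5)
Your proof is correct and takes essentially the same route as the paper: the paper obtains this corollary by combining the preceding rationalisation theorem (so that $\widetilde{\mathfrak{L}}(X_\bullet)$ is an $\Li$-algebra model of $X_\bullet$) with its earlier proposition on models, which is itself stated as a direct corollary of Berglund's Theorem~\ref{thm:Berglund} applied at the trivial Maurer--Cartan element. Your added care about the $n=1$ group structures is consistent with the paper's conventions, since $\pi_1(X_\bullet)\otimes\mathbb{Q}$ denotes the Malcev completion by definition of rationalisation and every map in your chain is already a group isomorphism.
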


\begin{remark}
The proof given in \emph{loc. cit.} relies on the Sullivan approach, given below,  under linear dualisation. 
So we conjecture that the present finiteness hypothesis can be significantly weakened by a more intrinsic proof. 
\end{remark}

The second algebraic model for the rational homotopy type of topological spaces was given by D. Sullivan \cite{Sullivan77},  in terms of dg (augmented unital) commutative algebras. (Recall that the category of augmented unital commutative algebras is isomorphic to the category of non-necessarily unital commutative algebras).
The first step of his theory  amounts to defining a suitable generalisation of the de Rham algebra on the level of topological spaces, or equivalently simplicial sets. 

\begin{lemma}\label{lem:SullivanAdj}
The two functors
\[\vcenter{\hbox{
\begin{tikzcd}[column sep=1.2cm]
\mathrm{A}_\mathrm{PL}  \ \colon \ 
\mathsf{sSet}_* 
\arrow[r, harpoon, shift left=1ex, "\perp"']
&
\arrow[l, harpoon,  shift left=1ex]
\mathsf{dg}\ \mathrm{Com}\textsf{-}\mathsf{alg}^\mathsf{op}
\ : \ \langle-\rangle
\end{tikzcd}
}}\]
defined respectively by 
\[ \mathrm{A}_\mathrm{PL}(X_\bullet)\coloneqq \Hom_\mathsf{sSet}(X_\bullet, \Omega_\bullet)
\quad \text{and} \quad
\langle A \rangle \coloneqq \Hom_{\mathsf{dg}\ \mathrm{Com}\textsf{-}\mathsf{alg}}(A, \Omega_\bullet)
\]
are adjoint. 
\end{lemma}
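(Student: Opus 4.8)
The plan is to recognise this as the standard dual adjunction induced by the \emph{ambimorphic object} $\Omega_\bullet$, which lives simultaneously as a simplicial object in commutative dg algebras and, by forgetting the algebra structure levelwise, as a commutative-dg-algebra object in simplicial sets. Write $\mathcal{C}\coloneqq \mathsf{dg}\,\mathrm{Com}\textsf{-}\mathsf{alg}$ and recall from Definition~\ref{def:SullivanAlgebra} that $\Omega_\bullet$ is a simplicial object of $\mathcal{C}$ with $\Omega_0\cong\k$. The first step is to check that both functors are well defined on the pointed/augmented level. The category $\mathcal{C}$ is complete (being a category of algebras over an operad), so $\mathrm{A}_\mathrm{PL}(X_\bullet)=\Hom_\mathsf{sSet}(X_\bullet,\Omega_\bullet)$ is computed as the end $\int_{[n]\in\Delta}(\Omega_n)^{X_n}$ formed in $\mathcal{C}$, where $(\Omega_n)^{X_n}$ denotes the $X_n$-indexed power; the chosen basepoint $x_0\colon *\to X_\bullet$ induces, by evaluation into $\Omega_0\cong\k$, an augmentation, so that $\mathrm{A}_\mathrm{PL}(X_\bullet)$ is an object of $\mathcal{C}$. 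Dually, $\langle A\rangle_n=\Hom_\mathcal{C}(A,\Omega_n)$ defines a simplicial set, pointed by the element of $\langle A\rangle_0=\Hom_\mathcal{C}(A,\k)$ given by the augmentation of $A$.

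The core of the argument is then a single continuity computation. Since $\mathrm{A}_\mathrm{PL}$ is the putative left adjoint into $\mathcal{C}^\mathsf{op}$, the required bijection reads $\Hom_{\mathcal{C}^\mathsf{op}}(\mathrm{A}_\mathrm{PL}(X_\bullet),A)\cong\Hom_{\mathsf{sSet}_*}(X_\bullet,\langle A\rangle)$; unwinding the opposite category, the left-hand side is $\Hom_\mathcal{C}(A,\mathrm{A}_\mathrm{PL}(X_\bullet))$. Because $\Hom_\mathcal{C}(A,-)$ is representable, it preserves all small limits, in particular the end above and the powers; commuting it past the end gives
\[ \Hom_\mathcal{C}\!\Big(A,\textstyle\int_{[n]}(\Omega_n)^{X_n}\Big)\cong \int_{[n]}\Hom_\mathcal{C}\big(A,(\Omega_n)^{X_n}\big)\cong \int_{[n]}\Hom_\mathcal{C}(A,\Omega_n)^{X_n}=\int_{[n]}(\langle A\rangle_n)^{X_n}, \]
and this last end is exactly $\Hom_\mathsf{sSet}(X_\bullet,\langle A\rangle)$. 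I would then record that each of these isomorphisms is natural in both $A$ and $X_\bullet$, so that the composite bijection assembles into a natural isomorphism of hom-bifunctors, which is precisely the asserted adjunction.

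The remaining, and main, point to verify is compatibility with basepoints and augmentations, i.e.\ that the bijection restricts to the pointed/augmented hom-sets. Concretely I would trace the distinguished $0$-simplex and the augmentation through the four identifications above and check that the augmentation on $\mathrm{A}_\mathrm{PL}(X_\bullet)$ coming from $x_0$ corresponds, under the end computation, to the constraint that a simplicial map $X_\bullet\to\langle A\rangle$ send $x_0$ to the basepoint of $\langle A\rangle$; this is where the isomorphism $\Omega_0\cong\k$ and the identification of $\mathcal{C}$ with augmented unital commutative dg algebras enter. The only genuine subtlety is bookkeeping: one must ensure that the powers and the end are formed in the augmented (equivalently non-unital) category $\mathcal{C}$ rather than in unital algebras, since it is in $\mathcal{C}$ that $\Hom_\mathcal{C}(A,-)$ is continuous. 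Granting this, the verification is formal, and I expect no conceptual obstacle beyond the pointed bookkeeping, the whole statement being a formal consequence of the completeness of $\mathcal{C}$ and the representability of $\Hom_\mathcal{C}(A,-)$.
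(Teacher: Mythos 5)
The paper offers no proof of this lemma to compare against: it is quoted as the classical Sullivan/Bousfield--Gugenheim adjunction. Your argument is the standard one and it is essentially correct: realise $\mathrm{A}_\mathrm{PL}(X_\bullet)$ as the end $\int_{[n]\in\Delta}(\Omega_n)^{X_n}$, use that the representable functor $\Hom_{\mathcal{C}}(A,-)$ preserves ends and powers to identify $\Hom_{\mathcal{C}}\big(A,\mathrm{A}_\mathrm{PL}(X_\bullet)\big)$ with $\Hom_{\mathsf{sSet}}(X_\bullet,\langle A\rangle)$ naturally in both variables, and then check that the bijection restricts correctly to the pointed/augmented hom-sets, using $\Omega_0\cong\k$.

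One remark in your final paragraph is off, although it does not damage the proof. You write that the powers and the end must be formed in the augmented (equivalently non-unital) category $\mathcal{C}$ ``since it is in $\mathcal{C}$ that $\Hom_\mathcal{C}(A,-)$ is continuous''. Representable functors preserve limits in \emph{every} category, so continuity cannot be the issue; moreover the powers of $\Omega_n$ cannot be formed in the augmented category at all, since $\Omega_n$ carries no preferred augmentation, and products of augmented algebras are fibre products over $\k$ of the underlying unital algebras rather than plain products. The actual situation is simpler: forgetting units, the end computed in non-unital algebras agrees with the one computed in unital algebras (limits in both categories are created in chain complexes), so the formal computation first yields the \emph{unpointed, unaugmented} bijection; the pointed/augmented statement is then obtained by exactly the restriction you describe in the preceding sentences, namely that a morphism $A \to \mathrm{A}_\mathrm{PL}(X_\bullet)$ is augmentation-preserving (equivalently, lands in $\ker(\mathrm{ev}_{x_0})$) if and only if the corresponding simplicial map $X_\bullet \to \langle A\rangle$ sends $x_0$ to the basepoint of $\langle A\rangle_0 = \Hom_{\mathcal{C}}(A,\k)$. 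With that justification corrected, your proof is complete.
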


The functor $\mathrm{A}_\mathrm{PL}(X_\bullet)$ produces the 
\emph{algebra of polynomial differential forms}\index{algebra of polynomial differential forms}
of a simplicial set
and the 
functor $\langle A \rangle$ is the \emph{spatial realisation}\index{spatial realisation} of dg commutative algebras. 

\begin{remark}
Notice that this adjunction is ``Koszul dual'' to the main adjunction of higher Lie theory given in Proposition~\ref{prop:AdjHigherLie}. For instance, the algebra of polynomial differential forms can be obtained as the right Kan extension along the Yoneda embedding: 
\[\mathrm{A}_\mathrm{PL}(X_\bullet)\cong \mathrm{Ran}_{\mathrm{Y}^{\mathrm{op}}} {\Omega_\bullet} (X_\bullet) \ .\]
\end{remark}

The original rationalisation of spaces was given by the unit of the derived version of this adjunction. 

\begin{theorem}[{\cite{Sullivan77}, \cite[Theorem~11.2]{BousfieldGugenheim}}]\label{thm:SuillvanRat}
For finite type nilpotent simplicial sets $X_\bullet$~, the unit 
	\[
	X_\bullet\longrightarrow \mathbb{R}\langle\mathrm{A}_{\mathrm{PL}}(X_\bullet)\rangle
	\]
of the derived  $\mathrm{A}_{\mathrm{PL}} \dashv \mathbb{R}\langle -\rangle$-adjunction is a functorial rationalisation.
\end{theorem}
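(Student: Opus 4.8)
The plan is to follow the classical strategy of Sullivan, in the streamlined form due to Bousfield--Gugenheim, whose four ingredients I would assemble in order.

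First I would establish the piecewise-linear de Rham theorem, namely that $\mathrm{A}_\mathrm{PL}$ computes rational cohomology: for every simplicial set $X_\bullet$ there is a natural isomorphism $H^\bullet\big(\mathrm{A}_\mathrm{PL}(X_\bullet)\big)\cong H^\bullet(X_\bullet;\mathbb{Q})$. The property making this work is that the simplicial commutative dg algebra $\Omega_\bullet$ of Definition~\ref{def:SullivanAlgebra} is \emph{extendable} (every form on a horn or boundary extends to the full simplex) and that each $\Omega_n$ has the rational cohomology of a point. An acyclic-models comparison between the normalised simplicial cochain functor with $\mathbb{Q}$-coefficients and $\mathrm{A}_\mathrm{PL}$ then yields the isomorphism, compatibly with the multiplicative structures.

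Second, I would promote the adjunction of Lemma~\ref{lem:SullivanAdj} to a Quillen adjunction, equipping simplicial sets with the Kan--Quillen model structure and dg commutative $\mathbb{Q}$-algebras with the model structure whose weak equivalences are quasi-isomorphisms. This legitimises the derived functors and makes the derived unit $X_\bullet\to\mathbb{R}\langle\mathrm{A}_\mathrm{PL}(X_\bullet)\rangle$ a well-defined natural transformation. At this stage one also records that $\mathbb{R}\langle A\rangle$ is always a rational space, i.e. that its homotopy groups are uniquely divisible, by a direct inspection of the minimal Sullivan model of $A$.

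Third, and this is where the hypotheses enter, I would reduce the statement to Eilenberg--MacLane spaces through a Postnikov-tower induction. A finite-type nilpotent $X_\bullet$ is the limit of a principal refinement of its Postnikov tower, each stage a principal fibration with fibre $K(A,n)$ for finitely generated abelian $A$. I would verify the theorem directly on the building blocks $K(\mathbb{Q},n)$ (whose minimal model is free on one generator, realising to a space with the correct single rational homotopy group) and then propagate it up the tower, using that $\mathrm{A}_\mathrm{PL}$ sends these principal fibrations to the appropriate homotopy pushouts of cdga's (a relative-minimal-model / Eilenberg--Moore argument) and that $\mathbb{R}\langle-\rangle$ converts them back into fibration sequences. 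The conclusion then combines these: the derived unit induces an isomorphism on rational cohomology by the first step, both source and target are nilpotent with the target rational by the second step, and a rational-cohomology isomorphism between nilpotent spaces with rational target is precisely a rationalisation by the generalised (Dror) Whitehead theorem; functoriality is inherited from the adjunction. The hard part will be this third step: controlling $\mathrm{A}_\mathrm{PL}$ on fibrations requires the Eilenberg--Moore comparison, and the finite-type and nilpotency bookkeeping along the (possibly transfinite) principal Postnikov refinement is the only place where the hypotheses are genuinely used and where convergence of the induction must be argued carefully.
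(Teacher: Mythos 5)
Your proposal is correct, but note that the paper itself offers no proof of this statement: it is quoted as a known result, with the attribution to Sullivan and to Bousfield--Gugenheim (Theorem~11.2 of their monograph) doing all the work. Your four-step outline --- PL de Rham theorem via extendability and acyclicity of $\Omega_\bullet$, the Quillen adjunction, reduction to $K(\mathbb{Q},n)$ along a principal Postnikov refinement, and the conclusion via the rational-cohomology Whitehead theorem for nilpotent spaces --- is precisely the strategy of that cited source, so your reconstruction matches the proof the paper is implicitly invoking.
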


As any pointed simplicial set is already cofibrant, one does not need to consider any cofibrant replacement to derive the functor $\mathrm{A}_{\mathrm{PL}}$. On the other hand, 
the right derived functor  $\RR \langle-\rangle$ is defined by applying $\langle-\rangle$ to a cofibrant replacement of a dg (augmented unital) commutative algebra. Such cofibrant replacement are given by (retracts of some) quasi-free dg commutative algebras. 

\begin{definition}[Quasi-free dg unital commutative algebra]\index{quasi-free dg unital commutative algebra}
A \emph{quasi-free dg unital commutative algebra} is a dg unital commutative algebra $(S(V), \mathrm{d})$, whose underlying graded algebra is free. 
\end{definition}

In this case, $V$ is a graded module and $\dd$ is a square-zero derivation of homological degree $-1$~. 
As such it is completely characterised by the image $\dd|_V \colon V \to S(V)$ of the generators, which splits according to the length of monomials as 
\begin{align*}
&\dd_0 \colon V \to \mathbb{Q}~,\\
&\dd_1 \colon V \to V~,\\
&\dd_2 \colon V \to V^{\odot 2}~,\\
&\dd_3 \colon V \to V^{\odot 3}~,\\
&\qquad  \vdots\quad \ , 
\end{align*}
where $V^{\odot n}\coloneqq \left(V^{\otimes n}\right)_{\Sy_n}$ denotes the symmetric tensor product. 
Under suitable finiteness assumptions, the degree-wise linear dual of $V$ is thus equipped with the following dual maps 
\begin{align*}
&\ell_0 \colon \mathbb{Q}\to V^\vee~,\\
&\ell_1 \colon V^\vee \to V^\vee~,\\
&\ell_2 \colon  \big(V^\vee\big)^{\odot 2}\to V^\vee~,\\
&\ell_3 \colon  \big(V^\vee\big)^{\odot 3}\to V^\vee~,\\
&\qquad  \vdots\quad 
\end{align*}
of degree $-1$, where $\ell_n\coloneqq \dd_n^\vee$~. The relation $\dd^2=0$ is equivalent to the shifted curved $\Li$-relations for these operations. 

\begin{proposition}
Under suitable finiteness assumptions, the data of a quasi-free dg unital commutative algebra
$(S(V), \mathrm{d})$
 is equivalent to the data of a shifted curved $\Li$-algebra $\left(V^\vee, \ell_0, \ell_1, \ell_2, \ell_3, \ldots\right)$~.
\end{proposition}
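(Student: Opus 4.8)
The plan is to use the universal property of the free graded commutative algebra to reduce the datum of the differential $\dd$ to its restriction to generators, and then to extract the shifted curved $\Li$-relations of Definition~\ref{def:csLiAlg} from the single equation $\dd^2=0$ by dualisation. First I would recall that a degree $-1$ derivation of $S(V)$ is the same thing as a degree $-1$ linear map $V\to S(V)$: any derivation is determined by its values on generators, and conversely any such assignment extends uniquely by the Leibniz rule. Splitting this map $\dd|_V$ according to word length (the number of tensor factors in the target) produces exactly the components $\dd_n\colon V\to V^{\odot n}$ displayed above.

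Next I would perform the dualisation. Under the finiteness hypothesis that $V$ is degreewise finite-dimensional, and since we work over a field of characteristic zero, the symmetrisation map identifies invariants with coinvariants and yields a canonical isomorphism $(V^{\odot n})^\vee\cong (V^\vee)^{\odot n}$. This lets me set $\ell_n\coloneqq \dd_n^\vee$, a degree $-1$ map $(V^\vee)^{\odot n}\to V^\vee$, so that the two collections $(\dd_n)_{n\ge 0}$ and $(\ell_n)_{n\ge 0}$ determine each other bijectively, with $\theta=\ell_0(1)$ and $d=\ell_1$ in the notation of Definition~\ref{def:csLiAlg}.

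Then I would turn to the equation $\dd^2=0$. Since $\dd$ has odd degree, $\dd^2=\tfrac12[\dd,\dd]$ is again a derivation of $S(V)$, and hence $\dd^2=0$ if and only if $\dd^2$ vanishes on the generators $V$. It therefore suffices to compute $\dd^2|_V\colon V\to S(V)$ and to match it, arity by arity, with the relations of Definition~\ref{def:csLiAlg}. Expanding $\dd^2|_V=\dd\circ\dd|_V$ by the Leibniz rule, the summand of $\dd^2|_V$ landing in $V^{\odot n}$ is a signed sum over factorisations $p+q=n+1$ of terms in which $\dd_q$ is applied to one of the $p$ slots produced by $\dd_p$. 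Dualising, the slot-insertion of $\dd_q$ into $\dd_p$ becomes precisely the symmetrised composite $\sum_{\sigma\in\mathrm{Sh}_{p,q}^{-1}}(\ell_p\circ_1\ell_q)^\sigma$; the $p=1$ and $q=1$ contributions reassemble into the twisted differential term $\partial\ell_n$ on the left-hand side of the relation, while the $q=0$ contribution, where $\dd_0$ evaluates a slot of $\dd_{n+1}$, dualises to the curvature term $\ell_{n+1}(\theta,-,\ldots,-)$. This is exactly the shifted curved $\Li$-relation in arity $n$, and running the argument in reverse shows the correspondence is an equivalence compatible with morphisms.

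The routine but delicate point, and the main obstacle, is the bookkeeping of signs together with the symmetric-group combinatorics in the dualisation step: one must verify that the Koszul signs generated by moving a degree $-1$ derivation across graded commutative monomials, composed with the symmetrisation isomorphism $(V^{\odot n})^\vee\cong(V^\vee)^{\odot n}$, reproduce exactly the inverse-shuffle signs of Definition~\ref{def:csLiAlg} and not their antisymmetrised or unsigned variants. Once this matching of the quadratic part is checked, the equivalence of the two structures follows formally, as a square-zero derivation corresponds precisely to a collection $(\ell_n)$ satisfying the shifted curved $\Li$-relations.
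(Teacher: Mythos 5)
Your proposal is correct and follows essentially the same route as the paper, which treats this proposition as a summary of the discussion immediately preceding it: the derivation $\dd$ is determined by its restriction to generators, split by word length into the components $\dd_n$, dualised under the finiteness hypothesis via $\ell_n\coloneqq\dd_n^\vee$, with the single equation $\dd^2=0$ translating into the shifted curved $\Li$-relations. Your additional observations — that $\dd^2$ is itself a derivation of odd square type, so it suffices to check its vanishing on generators, and the arity-by-arity matching of the Leibniz expansion with the relations of Definition~\ref{def:csLiAlg} — simply make explicit the details the paper leaves implicit.
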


One can thus say that D. Sullivan was the first one to consider shifted curved $\Li$-algebras, though this notion is not clearly emphasised in \cite{Sullivan77}.

\begin{definition}[Sullivan model]\index{model!Sullivan }
A \emph{Sullivan model} of a pointed simplicial set $X_\bullet$ is a quasi-free dg unital commutative algebra 
$(S(V), \mathrm{d})$ 
generated by a graded module $V$ concentrated in  negative homological degrees and endowed with an exhaustive filtration 
\[
\{0\}=V^{(-1)} \subseteq
V^{(0)}\subseteq 
V^{(1)}\subseteq 
V^{(2)}\subseteq \cdots
\]
satisfying $\dd\left(V^{(k)}\right)\subset S\left(V^{(k-1)}\right)$, for any $k\geqslant 0$, and 
equipped with a quasi-isomorphism 
\[(S(V), \mathrm{d})   \xrightarrow{\sim} \mathrm{A}_\mathrm{PL}(X_\bullet)~.\]
\end{definition}

The degree condition on the space of generators implies that $\dd_0=0$, so Sullivan models are augmented dg unital commutative algebras, where $\dd_1$ squares to zero. This means that under the finite type condition the data of the differential of the space of generators of a Sullivan model is equivalent to the data of a shifted $\Li$-algebra structure. Recall that a chain complex has \emph{finite type} when each component is finite dimensional. 

\begin{proposition}[{\cite[Proposition~2.2]{Berglund15}}]
A finite type quasi-free dg unital commutative algebra
 $\big(S\big(\g^\vee\big), \dd\big)$  satisfies the underlying conditions of a Sullivan model if and only if $\g$ carries a finite type positively graded profinite shifted $\Li$-algebra structure. 
\end{proposition}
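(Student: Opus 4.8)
The plan is to reduce everything to the dictionary established just above, which under the finiteness hypotheses identifies the data of the differential $\dd$ of a quasi-free dg unital commutative algebra $(S(V),\dd)$ with the data of a shifted curved $\Li$-algebra structure $(\ell_0,\ell_1,\ell_2,\ldots)$ on $\g=V^\vee$, via $\ell_n=\dd_n^\vee$. Since this is already a bijection between the two kinds of structures, it only remains to match the \emph{extra} conditions in the definition of a Sullivan model (forgetting the quasi-isomorphism to $\mathrm{A}_\mathrm{PL}$) with those in the notion of a finite type positively graded profinite shifted $\Li$-algebra. Three of these matchings are immediate: finite type on $V$ is equivalent degreewise to finite type on $\g=V^\vee$; the requirement that $V$ be concentrated in negative homological degrees is, under the convention $(V^\vee)_n=(V_{-n})^\vee$, exactly the requirement that $\g$ be positively graded; and, as noted in the discussion preceding the statement, this degree constraint forces $\dd_0=0$, equivalently $\ell_0=0$ (the curvature has degree $-1$ and must land in $\g_{-1}=0$), so that the structure is a genuine, non-curved, shifted $\Li$-algebra.

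First I would treat the implication from Sullivan models to profinite $\Li$-algebras. Given the exhaustive filtration $\{0\}=V^{(-1)}\subseteq V^{(0)}\subseteq V^{(1)}\subseteq\cdots$ with $\dd\bigl(V^{(k)}\bigr)\subseteq S\bigl(V^{(k-1)}\bigr)$, I would dualise it to the tower of finite-dimensional quotients $\g^{(k)}\coloneqq\bigl(V^{(k)}\bigr)^\vee$, with surjections $\g^{(k)}\twoheadrightarrow\g^{(k-1)}$ dual to the inclusions and $\g\cong\lim_k\g^{(k)}$ since the filtration is exhaustive. The key computation is that the condition $\dd_n\bigl(V^{(k)}\bigr)\subseteq\bigl(V^{(k-1)}\bigr)^{\odot n}$ dualises to the statement that the operations $\ell_n$ are compatible with the tower — the restriction of $\ell_n(f_1,\ldots,f_n)$ to $V^{(k-1)}$ depends only on the images of the $f_i$ in $\g^{(k-2)}$ — so that iterated brackets of large length vanish in each fixed finite quotient (one eventually reaches $\g^{(-1)}=0$). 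This exhibits $\g$ as an inverse limit of finite-dimensional nilpotent shifted $\Li$-algebras, that is, as profinite.

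The converse runs the same dictionary backwards: starting from a finite type positively graded profinite shifted $\Li$-algebra $\g=\lim_k\g^{(k)}$ with finite-dimensional nilpotent quotients, I would set $V^{(k)}\coloneqq\bigl(\g^{(k)}\bigr)^\vee\subseteq V=\g^\vee$, obtaining an exhaustive filtration, and check that nilpotence of the quotients translates back into $\dd\bigl(V^{(k)}\bigr)\subseteq S\bigl(V^{(k-1)}\bigr)$ together with the convergence making $(S(V),\dd)$ a well-defined quasi-free cdga. Positivity of the grading again forces $\dd_0=0$, hence that we obtain an augmented algebra as demanded of a Sullivan model.

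The main obstacle will be the careful bookkeeping of the finiteness and completeness hypotheses across the duality. One must verify that the profinite structure on $\g$ is precisely what makes the a priori infinite family of operations $\ell_n$ assemble into convergent, well-defined structure maps, and dually that the Sullivan filtration is exactly what guarantees $(V^{\odot n})^\vee\cong\g^{\odot n}$ in each degree and that each $\dd(v)$ is a genuine finite polynomial. In other words, the real content is not any single algebraic identity but the fact that the two notions of controlled size — an exhaustive Sullivan filtration with the tower condition on one side, a profinite inverse limit of finite-dimensional nilpotent pieces on the other — are dual reformulations of one another, compatibly with the operadic dictionary $\dd_n\leftrightarrow\ell_n$.
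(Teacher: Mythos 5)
Your proposal is correct and follows essentially the same route as the paper: dualise the operations via the dictionary $\ell_n=\dd_n^\vee$ and dualise the Sullivan filtration, which the paper phrases as a decreasing complete filtration $\mathrm{F}_k\,\g$ with $V^{(k)}=(\mathrm{F}_k\,\g)^\perp$ and you phrase equivalently as the tower of quotients $\g^{(k)}=\big(V^{(k)}\big)^\vee\cong\g/\mathrm{F}_k\,\g$. The paper then outsources the profiniteness verification to Berglund's article, whereas you fill it in; the only point to watch is that $\big(V^{(k)}\big)^\vee$ is a priori only of finite type rather than finite-dimensional, so exhibiting $\g$ as an inverse limit of genuinely finite-dimensional nilpotent quotients requires the degree-truncation refinement that you subsume under ``bookkeeping''.
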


\begin{proof}
The chain complex $V\coloneqq \g^\vee$ is negatively graded and since it is of finite type, 
one can dualise the $\Li$-operations in order to form a differential $\dd$ on
the quasi-free dg unital commutative algebra 
$S\big(\g^\vee\big)$~.
The increasing exhaustive filtration $V^{(k)}$ corresponds to a decreasing complete filtration $\mathrm{F}_k \g$ under linear dualisation $V^{(k)}\coloneqq (\mathrm{F}_k \g)^\perp$~. For the definition of the profinite condition and the rest of the proof, we refer the reader to \emph{op. cit.}.
\end{proof}

One can go even further in the relationship between Sullivan's approach to rational homotopy theory and higher Lie theory. 

\begin{proposition}\label{prop:MCvsSullivan}
For any finite type non-negatively graded complete shifted $\Li$-algebra $\g$, there is a canonical isomorphism of simplicial sets 
\[
\MC_\bullet(\g)\cong \big\langle S\big(\g^\vee\big)\big\rangle
~.\]
\end{proposition}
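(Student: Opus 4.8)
The plan is to construct, for each $n$, a natural bijection between the sets of $n$-simplices on the two sides, and then to check that these bijections are compatible with the simplicial structure maps. On the left, an $n$-simplex is a Maurer--Cartan element of the complete shifted $\Li$-algebra $\g\wo\Omega_n$, that is a degree $0$ element $\alpha$ satisfying $\sum_{k\ge1}\tfrac{1}{k!}\ell_k(\alpha,\dots,\alpha)=0$, where $\ell_1$ incorporates both the internal differential of $\g$ and that of $\Omega_n$. On the right, an $n$-simplex is a morphism $f\colon S(\g^\vee)\to\Omega_n$ in the category of dg unital commutative algebras. First I would forget the differentials: since $S(\g^\vee)$ is free as a graded-commutative algebra, a morphism of the underlying graded algebras is the same datum as a degree $0$ linear map $\phi\colon\g^\vee\to\Omega_n$, obtained by restriction to the generators and extended multiplicatively.

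Second, I would use the finite type hypothesis to identify the set of such $\phi$ with the degree $0$ part of $\g\wo\Omega_n$. Because each $\g_k$ is finite dimensional and $(\g^\vee)_{-k}=(\g_k)^\vee$, a degree $0$ map $\phi$ amounts to an element of $\prod_k\g_k\otimes(\Omega_n)_{-k}$, and the completeness of the filtration on $\g$ identifies this product with the degree $0$ part of the completed tensor product $\g\wo\Omega_n=\widehat{\g\otimes\Omega_n}$. This is precisely where the finite type and completeness assumptions enter, and making this identification canonical and filtration-compatible is the first technical point to get right.

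Third, and this is the heart of the argument, I would show that $f$ is a chain map exactly when the corresponding $\alpha$ satisfies the Maurer--Cartan equation. By the proposition immediately preceding the statement, the differential of the quasi-free algebra $S(\g^\vee)$ splits as $\dd=\sum_{k\ge1}\dd_k$ with $\dd_k=\ell_k^\vee$ dual to the shifted $\Li$-operations (here $\ell_0=0$, so $S(\g^\vee)$ is augmented and $\langle S(\g^\vee)\rangle$ is defined). Since $f$ is an algebra morphism and $\dd$ a derivation, the relation $f\circ\dd=d_{\Omega_n}\circ f$ need only be checked on generators $x\in\g^\vee$, where it reads $d_{\Omega_n}(\phi(x))=\phi(\dd_1 x)+\sum_{k\ge2}f(\dd_k x)$ up to signs, and $f(\dd_k x)$ is the $k$-fold product in $\Omega_n$ of the images of $\phi$. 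Dualising, this is exactly the pairing of $x$ with the Maurer--Cartan equation for $\alpha$ in $\g\wo\Omega_n$. The main obstacle will be the bookkeeping here: tracking the factorials coming from the symmetric (co)products, the Koszul signs produced by the dualisation of the $\ell_k$, and checking that $\ell_1$ correctly assembles both the internal differential of $\g$ and that of $\Omega_n$.

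Finally, I would verify simpliciality. Both $\MC(\g\wo-)$ and $\Hom(S(\g^\vee),-)$ are functorial in morphisms of dg commutative algebras applied to $\Omega_\bullet$: each simplicial operator of $\Omega_\bullet$ is such a morphism, hence induces a morphism of shifted $\Li$-algebras on $\g\wo\Omega_n$ (thus preserving Maurer--Cartan elements) on one side and a postcomposition on the other. Because the level-wise bijection of the previous steps is built entirely from these two functorialities together with the fixed finite type duality datum, it automatically commutes with all faces and degeneracies, and the isomorphism of simplicial sets $\MC_\bullet(\g)\cong\langle S(\g^\vee)\rangle$ follows.
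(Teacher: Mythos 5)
Your proposal is correct and follows essentially the same route as the paper: the paper's proof is a one-line ``direct inspection'' that a dg commutative algebra morphism $S\big(\g^\vee\big)\to\Omega_\bullet$ is the same datum as a degree $0$ element of $\g\,\widehat{\otimes}\,\Omega_\bullet$ whose compatibility with the differentials is exactly the Maurer--Cartan equation, which is precisely the identification you spell out (freeness on generators, finite-type duality, derivation property reducing the chain-map condition to generators, functoriality for simpliciality). You have simply made explicit the bookkeeping the paper leaves to the reader.
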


\begin{proof}
The proof relies on the direct inspection 
\begin{align*}
\big\langle S\big(\g^\vee\big)\big\rangle
=\Hom_{\mathsf{dg}\ \mathrm{Com}\textsf{-}\mathsf{alg}}\big(S\big(\g^\vee\big), \Omega_\bullet\big)
\cong \MC\big(\g \,\widehat{\otimes}\, \Omega_\bullet\big)=\MC_\bullet(\g)
\end{align*}
that the Maurer--Cartan equation is equivalent to the commutativity with the respective differentials. 
\end{proof}

This result allows us to connect the two notions of algebraic models of the rational homotopy type. 

\begin{corollary}
Let $\g$ be a finite type positively graded profinite shifted $\Li$-algebra such that $\big(S\big(\g^\vee\big), \dd\big)$ 
is a Sullivan model of 
a  finite type nilpotent simplicial set $X_\bullet$~. Then $\g$  is an $\Li$-algebra model of $X_\bullet$~. 
\end{corollary}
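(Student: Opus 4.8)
The plan is to chain together three results already established in the excerpt: the identification of the Maurer--Cartan simplicial set of $\g$ with the spatial realisation of its Sullivan algebra, Sullivan's rationalisation theorem, and Getzler's integration theorem.

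First I would check that the hypotheses of the corollary are exactly those demanded by Proposition~\ref{prop:MCvsSullivan}: a finite type positively graded profinite shifted $\Li$-algebra is in particular finite type, non-negatively graded, and complete. That proposition then supplies a canonical isomorphism of simplicial sets
\[
\MC_\bullet(\g)\cong\big\langle S\big(\g^\vee\big)\big\rangle~.
\]
I would record that this isomorphism is pointed, matching the trivial Maurer--Cartan element $0\in\MC(\g)$ with the base point of $\langle S(\g^\vee)\rangle$ coming from the augmentation $S(\g^\vee)\to\mathbb{Q}$; this is immediate from the proof of that proposition, where the augmentation corresponds to the zero solution of the Maurer--Cartan equation.

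Next I would identify $\langle S(\g^\vee)\rangle$ with the derived spatial realisation. Since $\langle-\rangle$ is the right adjoint of the Quillen pair of Lemma~\ref{lem:SullivanAdj}, its total right derived functor $\mathbb{R}\langle-\rangle$ is computed by applying $\langle-\rangle$ to a cofibrant replacement of its argument in dg commutative algebras. A Sullivan model is precisely such a cofibrant object, and the hypothesis provides a quasi-isomorphism $(S(\g^\vee),\dd)\xrightarrow{\sim}\mathrm{A}_\mathrm{PL}(X_\bullet)$; hence $\langle S(\g^\vee)\rangle\simeq\mathbb{R}\langle\mathrm{A}_\mathrm{PL}(X_\bullet)\rangle$. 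Sullivan's Theorem~\ref{thm:SuillvanRat}, applicable because $X_\bullet$ is of finite type and nilpotent, then gives that the unit $X_\bullet\to\mathbb{R}\langle\mathrm{A}_\mathrm{PL}(X_\bullet)\rangle$ is a rationalisation, so $\mathbb{R}\langle\mathrm{A}_\mathrm{PL}(X_\bullet)\rangle\simeq X_\bullet^\mathbb{Q}$.

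Finally I would invoke Theorem~\ref{thm:Getzler}, which exhibits $\mathrm{R}(\g)$ as a homotopy equivalent sub-Kan complex of $\MC_\bullet(\g)$; since both contain the vertex $0$, this is a pointed equivalence and $\widetilde{\mathrm{R}}(\g)=(\mathrm{R}(\g),0)\simeq\MC_\bullet(\g)$. Stringing the pointed equivalences together yields $\widetilde{\mathrm{R}}(\g)\simeq X_\bullet^\mathbb{Q}$, which is exactly the defining property of an $\Li$-algebra model. The main obstacle I expect is the bookkeeping of base points across the three comparisons --- matching $0\in\MC(\g)$, the augmentation of $S(\g^\vee)$, and the base point of $X_\bullet^\mathbb{Q}$ --- together with the routine but essential observation that a Sullivan model is cofibrant, so that $\langle S(\g^\vee)\rangle$ genuinely computes the derived functor and not its underived version.
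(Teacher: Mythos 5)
Your proposal is correct and follows essentially the same route as the paper's proof: cofibrancy of the Sullivan model identifies $\big\langle S\big(\g^\vee\big)\big\rangle$ with the derived realisation $\mathbb{R}\langle\mathrm{A}_{\mathrm{PL}}(X_\bullet)\rangle$, hence with a rationalisation of $X_\bullet$ by Theorem~\ref{thm:SuillvanRat}; Proposition~\ref{prop:MCvsSullivan} transfers this to $\MC_\bullet(\g)$; and Theorem~\ref{thm:Getzler} passes to $\mathrm{R}(\g)$. Your additional bookkeeping of base points (matching $0\in\MC(\g)$ with the augmentation and the base point of $X_\bullet^\mathbb{Q}$) is a sound elaboration of a detail the paper leaves implicit.
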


\begin{proof}
Since a Sullivan model is a cofibrant dg commutative algebra, it can be used to compute the derived composite 
\[\mathbb{R}\langle\mathrm{A}_{\mathrm{PL}}(X_\bullet)\rangle\simeq 
\big\langle S\big(\g^\vee\big)\big\rangle~, \]
which is a rationalisation of $X_\bullet$ by \cref{thm:SuillvanRat}. 
Then Proposition~\ref{prop:MCvsSullivan} shows that $\MC_\bullet(\g)$ is also a rationalisation of $X_\bullet$~. 
We conclude by \cref{thm:Getzler}: Hinich's construction $\MC_\bullet(\g)$ is homotopy equivalent to the integration functor $\mathrm{R}(\g)$~.
\end{proof}

A deep application of the two types of algebraic models of the rational homotopy type lies in the following result dealing with the most ubiquitous but most complicated type of topological spaces:  mapping spaces. 

\begin{theorem}[{\cite{Lazarev13, Berglund15, BuijsMurillo11}}]
For any Sullivan model  $A$ of a connected pointed space $X_\bullet$ and 
any profinite shifted $\Li$-algebra model $\g$ of a nilpotent space $Y_\bullet$ of finite $\mathbb{Q}$-type, 
the complete tensor product $A\,\widehat{\otimes}\, \g$ is a model of the mapping space, that is 
\[ \mathrm{map}\left(X_\bullet, Y^\mathbb{Q}_\bullet\right)\simeq \mathrm{R}\big(A\,\widehat{\otimes}\, \g\big)~.  \]
\end{theorem}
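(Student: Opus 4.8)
The plan is to assemble a zigzag of natural weak equivalences connecting the two sides, using the two adjunctions recorded above together with the dictionary between Maurer--Cartan simplicial sets and Sullivan algebras. First I would replace the target by its Lie model: since $\g$ is a model of $Y_\bullet$ we have $Y^\mathbb{Q}_\bullet\simeq\widetilde{\mathrm{R}}(\g)$, whose underlying Kan complex is $\mathrm{R}(\g)$, and by Theorem~\ref{thm:Getzler} this is homotopy equivalent to $\MC_\bullet(\g)$, which by Proposition~\ref{prop:MCvsSullivan} is isomorphic to the spatial realisation $\langle S(\g^\vee)\rangle$. As all these simplicial sets are Kan complexes and every simplicial set is cofibrant, the functor $\mathrm{map}(X_\bullet,-)$ sends weak equivalences between them to weak equivalences, so it suffices to identify $\mathrm{map}(X_\bullet,\langle S(\g^\vee)\rangle)$ with $\mathrm{R}(A\,\widehat{\otimes}\,\g)$.

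The heart of the argument is a level-wise computation of this mapping space. Its $n$-simplices are $\Hom_{\mathsf{sSet}}(X_\bullet\times\Delta^n,\langle S(\g^\vee)\rangle)$, and the adjunction of Lemma~\ref{lem:SullivanAdj} rewrites this as $\Hom_{\mathsf{dg}\,\mathrm{Com}\textsf{-}\mathsf{alg}}(S(\g^\vee),\mathrm{A}_\mathrm{PL}(X_\bullet\times\Delta^n))$. Next I would invoke the Eilenberg--Zilber quasi-isomorphism $\mathrm{A}_\mathrm{PL}(X_\bullet\times\Delta^n)\simeq\mathrm{A}_\mathrm{PL}(X_\bullet)\otimes\Omega_n$ (using $\mathrm{A}_\mathrm{PL}(\Delta^n)\cong\Omega_n$ by the Yoneda lemma) together with the quasi-isomorphism $A\xrightarrow{\sim}\mathrm{A}_\mathrm{PL}(X_\bullet)$ defining the Sullivan model; since $S(\g^\vee)$ is cofibrant these identifications are compatible with homotopy, yielding a simplicial weak equivalence with $[n]\mapsto\Hom_{\mathsf{dg}\,\mathrm{Com}\textsf{-}\mathsf{alg}}(S(\g^\vee),A\otimes\Omega_n)$.

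It then remains to recognise this last simplicial set as an integration functor. Because $S(\g^\vee)$ is quasi-free, a morphism out of it into any dg commutative algebra $C$ is determined by its restriction to generators, and the profinite finite-$\mathbb{Q}$-type hypothesis on $\g$ gives $(\g^\vee)^\vee\cong\g$, so such a morphism corresponds to an element of $\g\,\widehat{\otimes}\,C$; exactly as in the proof of Proposition~\ref{prop:MCvsSullivan}, commutation with the differentials translates into the Maurer--Cartan equation, whence $\Hom_{\mathsf{dg}\,\mathrm{Com}\textsf{-}\mathsf{alg}}(S(\g^\vee),C)\cong\MC(\g\,\widehat{\otimes}\,C)$. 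Taking $C=A\otimes\Omega_n$ and using the symmetry and associativity of the complete tensor product from Chapter~\ref{sec:OptheoyFilMod} to identify $\g\,\widehat{\otimes}\,(A\otimes\Omega_n)\cong(A\,\widehat{\otimes}\,\g)\,\widehat{\otimes}\,\Omega_n$, this is precisely $\MC_\bullet(A\,\widehat{\otimes}\,\g)_n$. A final application of Theorem~\ref{thm:Getzler} replaces $\MC_\bullet(A\,\widehat{\otimes}\,\g)$ by the homotopy equivalent $\mathrm{R}(A\,\widehat{\otimes}\,\g)$, completing the zigzag.

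The main obstacle will be the homotopy-coherence bookkeeping rather than any single identity: one must verify that each of the isomorphisms and quasi-isomorphisms above is natural in $[n]$, so that they assemble into genuine maps of simplicial sets, and that homotopy invariance is legitimately applicable at every stage. This in turn rests on two points that need care. The first is that the integration functor $\mathrm{R}$ (equivalently $\MC_\bullet$) preserves quasi-isomorphisms of complete shifted $\Li$-algebras, which ultimately goes back to the Goldman--Millson/Hinich homotopy invariance invoked in Section~\ref{sec:Lurie}. The second is that the profinite finite-$\mathbb{Q}$-type condition on $\g$, together with the connectivity and finite type of $X_\bullet$, is exactly what makes the dualisation $S(\g^\vee)$ and the various complete tensor products behave correctly, so that the Eilenberg--Zilber comparison and the quasi-free duality survive the passage to completions.
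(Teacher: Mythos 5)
The monograph itself contains no proof of this statement: it is quoted as a survey result with a pointer to \cite{Lazarev13, Berglund15, BuijsMurillo11}, and the surrounding text only records consequences (the description of $\left[X_\bullet, Y_\bullet^\mathbb{Q}\right]$ and the corollary on homotopy groups of components of the mapping space). So your proposal can only be measured against the arguments in those references, and there it fares well: your zigzag is essentially Berglund's proof. You replace $Y^\mathbb{Q}_\bullet$ by $\MC_\bullet(\g)\cong\big\langle S\big(\g^\vee\big)\big\rangle$ using Theorem~\ref{thm:Getzler} and Proposition~\ref{prop:MCvsSullivan}, compute $\mathrm{map}\big(X_\bullet,\big\langle S\big(\g^\vee\big)\big\rangle\big)$ levelwise through the adjunction of Lemma~\ref{lem:SullivanAdj}, pass from $\mathrm{A}_\mathrm{PL}(X_\bullet\times\Delta^\bullet)$ to $A\otimes\Omega_\bullet$, reinterpret morphisms out of the quasi-free algebra $S\big(\g^\vee\big)$ as Maurer--Cartan elements of $(A\,\widehat{\otimes}\,\g)\,\widehat{\otimes}\,\Omega_\bullet$, and conclude with $\mathrm{R}\simeq\MC_\bullet$. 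This is the intended route.

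Two points deserve more than the ``bookkeeping'' label you give them. First, the passage from $\Hom\big(S\big(\g^\vee\big),\mathrm{A}_\mathrm{PL}(X_\bullet\times\Delta^\bullet)\big)$ to $\Hom\big(S\big(\g^\vee\big),A\otimes\Omega_\bullet\big)$ is not a formal consequence of cofibrancy: cofibrancy of $S\big(\g^\vee\big)$ by itself only gives bijections on \emph{homotopy classes} of maps, whereas you need weak equivalences of the strict simplicial hom-sets. What makes this work is the Bousfield--Gugenheim machinery \cite{BousfieldGugenheim}: every dg commutative algebra is fibrant, and both $C\otimes\Omega_\bullet$ and $\mathrm{A}_\mathrm{PL}(X_\bullet\times\Delta^\bullet)$ are simplicial frames (Reedy fibrancy follows from extendability of polynomial forms), so that $\Hom\big(S\big(\g^\vee\big),-\big)$ carries the natural levelwise quasi-isomorphisms $A\otimes\Omega_n\to\mathrm{A}_\mathrm{PL}(X_\bullet)\otimes\Omega_n\to\mathrm{A}_\mathrm{PL}(X_\bullet\times\Delta^n)$ to weak equivalences of Kan complexes. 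This is the actual crux of the proof and should be cited or proved, not subsumed under naturality checks. Second, your closing paragraph attributes part of the convergence to ``the connectivity and finite type of $X_\bullet$'': the theorem makes no finite-type assumption on $X_\bullet$ and none is needed, since the K\"unneth map against $\Omega_n$ is a quasi-isomorphism because $\Delta^n$ is contractible; all the finiteness hypotheses sit on the $\g$ side, where profiniteness of finite $\mathbb{Q}$-type is exactly what yields $\Hom\big(\g^\vee,C\big)\cong\g\,\widehat{\otimes}\,C$ and hence your key identification $\Hom\big(S\big(\g^\vee\big),C\big)\cong\MC\big(\g\,\widehat{\otimes}\,C\big)$. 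With these two clarifications, your outline is a complete and correct reconstruction of the cited proof.
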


In this case, the homotopy classes of maps $f \colon X_\bullet \to Y^\mathbb{Q}_\bullet$ are in one-to-one correspondence with the gauge classes of Maurer--Cartan elements $\alpha\in \MC\big(A\,\widehat{\otimes}\, \g\big)$~,  
\[\left[X_\bullet, Y_\bullet^\mathbb{Q}\right] \cong \mathscr{MC}\big(A\, \widehat{\otimes}\, \g\big)~. \] 

\begin{corollary}
Under the same hypotheses, the twisted shifted $\Li$-algebra 
$\big(A\,\widehat{\otimes}\, \g\big)^\alpha$ is a model for the connected component of $f$ of the mapping space: there  group isomorphisms 
	\[
	\pi_n\left( \mathrm{map}\left(X_\bullet, Y^\mathbb{Q}_\bullet\right), f\right)\otimes \mathbb{Q} \cong 
	{H}_n\left(\big(A\,\widehat{\otimes}\, \g\big)^\alpha\right)\ , \quad \text{for}\  \ n\geqslant 1\ ,
	\]
where the right-hand side is equipped with the Baker--Campbell--Hausdorff product, for $n=1$, and with the sum, for $n\geqslant 2$~.
\end{corollary}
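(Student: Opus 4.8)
The plan is to read off the homotopy groups of the based mapping space directly from those of the integration functor, and then to apply Berglund's Hurewicz-type theorem to the complete shifted $\Li$-algebra $A\,\widehat{\otimes}\,\g$. First I would invoke the preceding theorem, which provides the homotopy equivalence $\mathrm{map}\left(X_\bullet, Y^\mathbb{Q}_\bullet\right)\simeq \mathrm{R}\big(A\,\widehat{\otimes}\, \g\big)$ together with the dictionary $\left[X_\bullet, Y_\bullet^\mathbb{Q}\right] \cong \mathscr{MC}\big(A\, \widehat{\otimes}\, \g\big)$ under which the class of $f$ is sent to the gauge class of $\alpha$. The first task is then to promote this to a based statement: the connected component of $f$ in the mapping space must correspond to the connected component of $\alpha$ in $\mathrm{R}\big(A\,\widehat{\otimes}\,\g\big)$, so that
\[ \pi_n\left( \mathrm{map}\left(X_\bullet, Y^\mathbb{Q}_\bullet\right), f\right)\cong \pi_n\left(\mathrm{R}\big(A\,\widehat{\otimes}\, \g\big), \alpha\right)\ , \quad n\geqslant 1\ . \]

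Next I would apply \cref{thm:Berglund} to the complete shifted $\Li$-algebra $A\,\widehat{\otimes}\,\g$ at the Maurer--Cartan element $\alpha$, obtaining the group isomorphisms
\[ \pi_n\left(\mathrm{R}\big(A\,\widehat{\otimes}\, \g\big), \alpha\right)\cong {H}_n\left(\big(A\,\widehat{\otimes}\, \g\big)^\alpha\right)\ , \quad n\geqslant 1\ , \]
with the Baker--Campbell--Hausdorff product governing the group law for $n=1$ and the sum governing it for $n\geqslant 2$. I recall that the proof of that theorem proceeds by first using the translation isomorphism of pointed Kan complexes from \cref{lem:TwIntegr} to move the base point $\alpha$ to the trivial Maurer--Cartan element of the twisted algebra $\big(A\,\widehat{\otimes}\,\g\big)^\alpha$, and then reading the homotopy groups off the higher Baker--Campbell--Hausdorff fillers; no new argument is required here beyond checking that $A\,\widehat{\otimes}\,\g$ meets the finiteness hypotheses of that theorem. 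Composing the two displayed isomorphisms yields the claimed identification, and it only remains to observe that the left-hand side is already uniquely divisible, since $\mathrm{R}\big(A\,\widehat{\otimes}\,\g\big)$ is a rational space whose homotopy groups are the homology of a chain complex of $\mathbb{Q}$-modules; hence the rationalisation $\pi_n\otimes\mathbb{Q}$ in the statement changes nothing.

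The hard part will not be any of these applications, each of which is essentially formal once the ambient objects are in place, but rather the careful bookkeeping of base points in the first step: one has to verify that the equivalence of the preceding theorem upgrades to a pointed equivalence carrying $f$ to $\alpha$, rather than an unpointed equivalence merely matching $\pi_0$. I expect this to reduce to unwinding the bijection $\left[X_\bullet, Y_\bullet^\mathbb{Q}\right] \cong \mathscr{MC}\big(A\, \widehat{\otimes}\, \g\big)$ at the level of the integration functor, and to confirming that the complete tensor product $A\,\widehat{\otimes}\,\g$ inherits from the Sullivan model $A$ and the profinite $\Li$-algebra $\g$ precisely the finiteness and completeness properties required to invoke \cref{thm:Berglund}; both should follow from the standing hypotheses.
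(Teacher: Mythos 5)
Your proposal is correct and follows exactly the route the paper intends: the corollary is stated without proof precisely because it is the immediate combination of the preceding mapping-space theorem (identifying the component of $f$ with the component of $\alpha$ in $\mathrm{R}\big(A\,\widehat{\otimes}\,\g\big)$) with Berglund's Hurewicz-type Theorem~\ref{thm:Berglund}, whose proof indeed rests on the translation isomorphism of Lemma~\ref{lem:TwIntegr}. Your remarks on base-point bookkeeping and on the harmlessness of $-\otimes\mathbb{Q}$ are sound, though the former is lighter than you fear: once components are matched, homotopy groups of a Kan complex at any two base points in the same component are isomorphic, so nothing more is needed.
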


\begin{remark}
The same results hold \emph{mutatis mutandis} when one works with dg (homotopy) \emph{cocommuative} coalgebra models and convolution algebras under 
\[
\Hom(C,\g)\cong C^\vee\,\widehat{\otimes}\, \g~.
\]
We refer the reader to \cite[Section~9]{RNW19} for more details. 
\end{remark}

\section{Simplicial theory of homotopy algebras}\label{sec:SimplicialTheoryHoAlg}

Dolgushev--Hoffnung--Rogers settled in \cite{DolgushevHoffnungRogers14} a meaningful simplicial enrichment of the category of homotopy algebras with their $\infty$-morphisms. In this section, we give it a short presentation using the properties of the twisting procedure established in \cref{sec:GaugeTwist}. \\ \index{simplicial enrichment}

Let  $\calC$ be a filtered dg cooperad 
and let $A$ and $B$ be two  complete graded modules. 
Recall that the  direct sum provides us with the product in the category of complete dg Lie algebras. 
We consider here the complete dg Lie algebra 
\[\left(\hom_\Sy\big(\calC, {\eend}_B\big) \oplus \hom_\Sy\big(\calC, {\eend}_A\big), \partial, [\;,\,]\right) \]
with bracket defined by 
\[\left[\left(f,f'\right), \left(g,g'\right)\right]\coloneqq (-1)^{|g||f'|}\left(\left[f,g\right], \left[f', g'\right]\right)~. \]
Its  Maurer--Cartan elements coincide with 
 pairs of complete $\Omega \calC$-algebra structures on $B$ and $A$ respectively. 
We will need  the  complete $\Sy$-module 
\[\mathrm{end}^A_B\coloneqq\left\{\hom(A^{\otimes n}, B)\right\}_{n\in \NN}\]
equipped with its canonical left ${\eend_A}$-module  and 
right ${\eend_B}$-module structures. 
We denote by 
\[\calC \xrightarrow{\Delta_{(n)}}  \calC\; \widehat{\circ}_{(n)}\, \calC\]
the suitable $(n-1)$-fold iterations of the partial decomposition maps which produce 
all the 2-levelled trees with  $n$ top vertices. 

\begin{lemma}
The complete dg module 
\[
s\hom_\Sy\big(\calC, {\eend}_B\big)\oplus 
\hom_\Sy\left(\calC, {\eend}^A_B\right)\oplus 
s\hom_\Sy\big(\calC, {\eend}_A\big)
\]
equipped with 
\begin{align*}
&\ell_2\left(f_1, s\alpha\right)\, \colon 
\calC \xrightarrow{\Delta_{(1)}}  \calC\; \widehat{\circ}_{(1)}\, \calC 
\xrightarrow{(-1)^{|f_1|}f_1\; \widehat{\circ}_{(1)}\; \alpha}  
{\eend}^A_B\; \widehat{\circ}_{(1)}\, {\eend}_A  
\to {\eend}^A_B \ ,
\\
&\ell_{n+1}\left(s\beta, f_1, \ldots, f_n\right)\, \colon
\calC \xrightarrow{\Delta_{(n)}}  \calC\; \widehat{\circ}_{(n)}\, \calC 
\xrightarrow{-\beta \; \widehat{\circ}_{(n)}\; f_1\odot \cdots \odot f_n}  
{\eend}^A_B\; \widehat{\circ}_{(n)}\, {\eend}_A  
\to  {\eend}^A_B \ ,
\end{align*}
for any $n\geqslant 1$ and for any $\alpha \in \hom_\Sy\big(\calC, {\eend}_A\big)$, 
$\beta \in \hom_\Sy\big(\calC, {\eend}_B\big)$, 
$f_1, \ldots, f_n \in \hom_\Sy\left(\calC, \allowbreak {\eend}^A_B\right)$, 
forms a complete shifted $\Li$-algebra extension of the shifted dg Lie algebra 
$s\hom_\Sy\big(\calC, {\eend}_B\big)\oplus s\hom_\Sy\big(\calC, {\eend}_A\big)$~.
\end{lemma}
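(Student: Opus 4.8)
The plan is to exhibit the claimed structure as a square-zero extension of the base shifted dg Lie algebra and to read off all of its brackets from the operad structure of $\eend_{A\oplus B}$ together with the (co)operad structure of $\calC$. Write $\mathfrak{g}_A := \hom_\Sy(\calC,\eend_A)$, $\mathfrak{g}_B := \hom_\Sy(\calC,\eend_B)$ and $\mathfrak{n} := \hom_\Sy(\calC,\eend^A_B)$, so that the underlying module is $s\mathfrak{g}_B\oplus\mathfrak{n}\oplus s\mathfrak{g}_A$. By Proposition~\ref{prop:PreLieToLie} applied to the two convolution pre-Lie algebras $\a_{\calC,A}$ and $\a_{\calC,B}$, the summand $s\mathfrak{g}_B\oplus s\mathfrak{g}_A$ is precisely the shifted complete dg Lie algebra recalled just above, and it is the quotient of $L$ by $\mathfrak{n}$. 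First I would observe that $\mathfrak{n}$ is an abelian ideal: no bracket in the given list takes all of its inputs in $\mathfrak{n}$, while every bracket with at least one $\mathfrak{n}$-input lands in $\mathfrak{n}$. Hence $L$ is an $\Li$-extension of the shifted dg Lie algebra $s\mathfrak{g}_B\oplus s\mathfrak{g}_A$ by $\mathfrak{n}$, and it remains only to verify the generalized Jacobi relations involving at least one factor from $\mathfrak{n}$.

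The second step is to recognize each bracket operadically, which turns the $\Li$-relations into identities in $\eend_{A\oplus B}$ and $\calC$. The bimodule $\eend^A_B$ is a left $\eend_B$-module and a right $\eend_A$-module. Under this dictionary $\ell_2(f_1,s\alpha)$ is, up to sign, the right action $f_1\star\alpha$ assembled through $\Delta_{(1)}$, whereas $\ell_{n+1}(s\beta,f_1,\ldots,f_n)$ is the symmetric brace $\{\beta;f_1,\ldots,f_n\}$ of Definition~\ref{def:SymBrace}, built from the left $\eend_B$-action by plugging the $f_i$ into all $n$ branches produced by $\Delta_{(n)}$; filling every slot of the arity-$n$ part of $\beta$ forces the output into $\eend^A_B$, which is exactly why no ``mixed'' operations arise and the bracket closes on $L$. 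I would then check that $\ell_1=\partial$ is a derivation of these operations, which is immediate from $\partial$ being a derivation for $\star$ and from the compatibility of $\Delta_{(1)}$ and $\Delta_{(n)}$ with the differentials, and afterwards dispatch the relations with exactly one $\mathfrak{n}$-input: these assert that $\mathfrak{n}$ is a Lie module over $s\mathfrak{g}_B\oplus s\mathfrak{g}_A$, and they follow from the associativity of the partial compositions (the bimodule axioms for $\eend^A_B$) together with the coassociativity of $\Delta_{(1)}$.

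The main obstacle is the family of higher Jacobi identities carrying several $\mathfrak{n}$-inputs, typically on arguments $(s\beta,f_1,\ldots,f_k)$ or $(s\beta,f_1,\ldots,f_{k-1},s\alpha)$. The plan is to reduce each of them to three ingredients: the coassociativity of $\calC$, which identifies the two refinements of a two-level splitting and thereby matches the $\ell_p\circ\ell_q$-type composites with reorganized brace expressions; the relations satisfied by the symmetric braces, see \cite[Section~13.11.4]{LodayVallette12}, which are consequences of the pre-Lie identity and encode how $\{\beta;-\}$ interacts with further insertions; and the compatibility of the left $\eend_B$-action with the right $\eend_A$-action on $\eend^A_B$, which governs the relations where a single $\alpha$ is also present. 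Concretely, inserting morphisms into $\beta$ and then inserting $\alpha$ must equal the sum prescribed by the $\Li$-Jacobi rule, and this is forced by associativity of the partial compositions in $\eend_{A\oplus B}$. I expect essentially all of the genuine work, and the only real subtlety, to lie in the Koszul sign bookkeeping: the shift $s$ on $\mathfrak{g}_A$ and $\mathfrak{g}_B$, the graded symmetry of the braces in the $f_i$, and the reorderings of inputs each contribute signs that must conspire correctly, and this is where the present sign conventions for $\Delta_{(1)}$, $\Delta_{(n)}$ and $s$ get pinned down.

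Finally, I would record the graded symmetry of $\ell_{n+1}$ in the $f_i$ (from the symmetry of the symmetric braces together with $\Sy$-equivariance) and of $\ell_2$, and note that convergence of all series is guaranteed by completeness exactly as in Lemma~\ref{lem:DefGaugeConv}, since the insertions raise the arity and each graded component is a finite sum. As a consistency check that the brackets have been assembled correctly, one verifies that a degree-zero Maurer--Cartan element $(s\beta,f,s\alpha)$ of $L$ amounts to $\Omega\calC$-structures $\alpha$ on $A$ and $\beta$ on $B$ together with the relation $f\star\alpha=\beta\cc f$, that is, an $\infty$-morphism $\alpha\rightsquigarrow\beta$ in the sense of Definition~\ref{def:InftyMorph}, which is the intended meaning of this extension.
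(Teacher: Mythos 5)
Your proposal is correct and takes essentially the same route as the paper, whose entire proof of this lemma is the single sentence that ``the proof amounts to a straightforward computation'': your outline --- treating $\hom_\Sy\big(\calC,\eend^A_B\big)$ as an abelian ideal so that only the mixed Jacobi relations need checking, identifying $\ell_2(-,s\alpha)$ with the right convolution action and $\ell_{n+1}(s\beta,-,\dots,-)$ with the symmetric-brace operations assembled from $\Delta_{(n)}$, and reducing everything to coassociativity of $\calC$, the $\eend_B$--$\eend_A$-bimodule associativity of $\eend^A_B$, and Koszul sign bookkeeping --- is precisely the computation the authors leave to the reader. Your closing consistency check, that a Maurer--Cartan element $(s\beta,f,s\alpha)$ recovers the equation $f\star\alpha=\beta\cc f$ of an $\infty$-morphism, is exactly the content of the proposition immediately following the lemma in the paper, which confirms that your brackets are assembled with the intended normalisation.
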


\begin{proof}
The proof amounts to a straightforward computation.
\end{proof}

\begin{proposition}\label{prop:MCinfiniMorph}
Maurer--Cartan elements $(s\beta, f, s\alpha)$ of this complete shifted $\Li$-algebra are in one-to-one correspondence with the data of two complete $\Omega \calC$-algebra structures on $A$ and $B$ respectively related by an $\infty$-morphism. 
\end{proposition}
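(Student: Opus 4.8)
The plan is to expand the Maurer--Cartan equation of this complete shifted $\Li$-algebra and read off its three homogeneous components. Since all the brackets $\ell_n$ have degree $-1$ and the algebra is genuinely uncurved (there is no arity-zero operation $\ell_0$), a Maurer--Cartan element is a degree $0$ element $\omega$ solving
\[
\sum_{n\ge 1}\frac{1}{n!}\,\ell_n\big(\omega,\ldots,\omega\big)=0\ .
\]
Writing $\omega=(s\beta,f,s\alpha)$, the degree constraint forces $\beta\in\hom_\Sy\big(\calC,{\eend}_B\big)_{-1}$, $\alpha\in\hom_\Sy\big(\calC,{\eend}_A\big)_{-1}$, and $f\in\hom_\Sy\big(\calC,{\eend}^A_B\big)_0$, so that $\alpha$ and $\beta$ are the natural candidates for $\Omega\calC$-structures while $f$ is the natural candidate for an $\infty$-morphism. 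First I would project the Maurer--Cartan equation onto each of the three summands of the underlying module.

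The projections onto $s\hom_\Sy\big(\calC,{\eend}_A\big)$ and $s\hom_\Sy\big(\calC,{\eend}_B\big)$ decouple completely: the sub-structure carried by these two summands is the \emph{product} of the two shifted dg Lie algebras $s\hom_\Sy\big(\calC,{\eend}_A\big)$ and $s\hom_\Sy\big(\calC,{\eend}_B\big)$, and every one of the new brackets $\ell_2(f_1,s\alpha)$ and $\ell_{n+1}(s\beta,f_1,\ldots,f_n)$ takes values in the middle summand. Hence the $A$-component of the Maurer--Cartan equation involves only $s\alpha$ and reduces to $\partial\alpha+\alpha\star\alpha=0$, that is equation~\eqref{eq:MCeq} in $\a_{\,\calC,A}$; by Proposition~\ref{prop:MCOmegaC} this is exactly a complete $\Omega\calC$-algebra structure on $A$. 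Symmetrically, the $B$-component yields a complete $\Omega\calC$-algebra structure on $B$.

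The heart of the argument is the middle component, valued in $\hom_\Sy\big(\calC,{\eend}^A_B\big)$. Three families of terms contribute: the linear term $\ell_1(f)=\partial f$; the quadratic term, where the multinomial coefficient $\tfrac{1}{2!}$ meets the two ways of selecting the slots, producing $\ell_2(f,s\alpha)=f\star\alpha$; and, for each $n\ge 1$, the term $\tfrac{1}{(n+1)!}\ell_{n+1}(\omega^{n+1})$ whose $(s\beta,f^{\odot n})$-part carries the combinatorial factor $\tfrac{n+1}{(n+1)!}=\tfrac1{n!}$ and equals $-\tfrac1{n!}\,\beta\,\widehat{\circ}_{(n)}f^{\odot n}$. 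The key computation is that summing these last contributions over $n$ reconstitutes the full two-level decomposition map $\Delta=\sum_n\Delta_{(n)}$, so that $\sum_{n\ge1}\tfrac1{n!}\,\beta\,\widehat{\circ}_{(n)}f^{\odot n}=\beta\cc f$ is precisely the circle product of Definition~\ref{def:InftyMorph}. The middle component therefore reads $\partial f+f\star\alpha-\beta\cc f=0$, which is the defining equation $f\star\alpha=\beta\cc f$ of an $\infty$-morphism $\alpha\rightsquigarrow\beta$ once the term $\partial f$ is absorbed into the star and circle products through the internal differential element $\delta$ of the differential trick (the counit parts $f\star\delta_A$ and $\delta_B\cc f$ reassemble as $\partial f$, up to sign).

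Granting these identifications, the bijection is immediate and manifestly natural: the assignment $(s\beta,f,s\alpha)\mapsto(\alpha,\beta,f)$ sends a Maurer--Cartan element to a pair of $\Omega\calC$-structures together with an $\infty$-morphism between them, and the converse assembles the three data into a degree $0$ element whose Maurer--Cartan equation holds componentwise by the three identities just established. I expect the main obstacle to be the middle computation, namely the careful bookkeeping of the multinomial coefficients and Koszul signs showing that the infinite family $\big\{\ell_{n+1}(s\beta,f^{\odot n})\big\}_{n\ge1}$ reassembles exactly into the circle product $\beta\cc f$, together with verifying that the differential term $\partial f$ is accounted for by the internal differential; the decoupling of the $A$- and $B$-components and the two outer Maurer--Cartan equations are then routine.
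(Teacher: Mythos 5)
Your overall route is the same as the paper's: expand the Maurer--Cartan equation of the shifted $\Li$-algebra, observe that the two outer components decouple into the Maurer--Cartan equations of the convolution algebras $\hom_\Sy\big(\calC,\eend_A\big)$ and $\hom_\Sy\big(\calC,\eend_B\big)$, hence produce the two complete $\Omega\calC$-algebra structures by Proposition~\ref{prop:MCOmegaC}, and identify the middle component with the $\infty$-morphism equation. Your combinatorial bookkeeping is also correct: the coefficient $2/2!$ yields $\ell_2(f,s\alpha)=f\star\alpha$ with coefficient one, and $(n+1)/(n+1)!=1/n!$ turns the family $\ell_{n+1}\big(s\beta,f^{\odot n}\big)=-\beta\,\widehat{\circ}_{(n)}f^{\odot n}$ into the series defining $-\beta\cc f$, so the middle component indeed reads $f\star\alpha-\beta\cc f=\pm\partial f$.

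The genuine gap is your last step, where you claim that $\partial f$ gets ``absorbed'' through internal differential elements $\delta_A$ and $\delta_B$. This cannot work here, for two reasons. First, in this section $A$ and $B$ are complete \emph{graded} modules: they carry no differentials, so there are no internal differential elements attached to them; the ``differentials'' of the two algebras are the arity-one components of the Maurer--Cartan elements $\alpha$ and $\beta$ themselves, and their contributions already sit inside $f\star\alpha$ and $\beta\cc f$, so they cannot additionally account for $\partial f$. Second, $\partial f$ is (up to sign) the precomposition $f\circ d_\calC$ with the internal differential of the \emph{cooperad}; it is not of the form $f\star\delta_A-\delta_B\cc f$ for any elements of the convolution algebras, so the reassembly you invoke does not exist. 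The correct resolution is that no absorption is needed: in this dg-cooperad setting the term $\partial f$ is an honest summand of the equation characterising $\infty$-morphisms, and the middle component of the Maurer--Cartan equation, $\partial f$ included, \emph{is} that equation; it reduces to the literal form~\eqref{eqn=InftyMor} exactly when $\calC$ has zero internal differential, which is the case for the discrete cooperads governing curved $\Ai$- and $\Li$-algebras. The paper's own construction confirms this: the differential of the algebra $\mathfrak{hom}_{\alpha,\beta}$ introduced right after the proposition is $\ell_2(-,s\alpha)+\ell_2(s\beta,-)-\partial$, with the $\partial$ kept, so that its Maurer--Cartan elements satisfy $f\star\alpha-\beta\cc f=\partial f$. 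Your conclusion is therefore reachable, but by deleting the absorption argument, not by performing it.

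You also leave untouched the convergence point that the paper flags at the end of its proof: the Maurer--Cartan equation is an infinite series in $\ell_2,\ell_3,\dots$, while the element $(s\beta,f,s\alpha)$ is only required to lie in $\calF_0$ of the filtration (only $f_0$ lies in $\calF_1$), so the general completeness arguments do not apply directly, and one must justify that the evaluation of the series on each element of $\calC$ makes sense. This is precisely the issue that leads Dolgushev--Hoffnung--Rogers to assume $\calC(0)=0$, as the remark following the proposition explains; a complete proof cannot pass over it in silence.
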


\begin{proof}
It remains to check that the part of the Maurer--Cartan equation satisfied by $f$ on $\hom_\Sy\left(\calC, {\eend}^A_B\right)$
is equivalent to the equation~\eqref{eqn=InftyMor} defining the notion of an $\infty$-morphism. 
Notice that one has to consider here Maurer--Cartan elements which live in the first part $\mathcal{F}_0$ of the filtration. 
The Maurer--Cartan equation is an infinite sum of terms involving $\ell_2, \ell_3, \ldots$; it actually makes sense here in fully generality since their evaluation on elements of $\mathcal{C}$ only involve a first number of non-trivial terms. 
\end{proof}

Given any $\Omega \calC$-algebra structures $(A, \alpha)$ and $(B,\beta)$, we consider 
the following complete shifted $\Li$-sub-algebra of the above algebra twisted by $(s\beta, 0, s\alpha)$:
\begin{multline*}
\mathfrak{hom}_{\alpha, \beta}\coloneqq \\\left(\hom_\Sy\left(\calC, {\eend}^A_B\right),
\ell_2(-, s\alpha)+\ell_2(s\beta, -) -\partial, \ell_{n+1}(s\beta, -, \ldots, -), \  n\geqslant 2
\right)~.
\end{multline*}
\cref{prop:MCinfiniMorph} shows that its Maurer--Cartan elements $\MC\left(\mathfrak{hom}_{\alpha, \beta}\right)$ correspond bijectively to $\infty$-morphisms from $\alpha$ to $\beta$. 

\begin{remark}
In order to make this last claim precise and to solve the  issue raised at the end of the proof of \cref{prop:MCinfiniMorph}, Dolgushev--Hoffnung--Rogers  restricted themselves in \cite{DolgushevHoffnungRogers14} to
filtered dg cooperads $\calC$ with trivial component $\calC(0)=0$ of arity $0$. Then, they considered the complete filtration on $\mathfrak{hom}_{\alpha, \beta}$ given by $f\in \F_k \mathfrak{hom}_{\alpha, \beta}$ 
if $f(n) : \calC(n) \to \hom\big(A^{{\otimes} n}, B\big)$ is trivial for $n<k$~. 
 Finally, Maurer--Cartan elements concentrated in $\F_1 \mathfrak{hom}_{\alpha, \beta}$ are indeed in one-to-one correspondence with $\infty$-morphisms from $\alpha$ to $\beta$. 
As we mentioned above, the Maurer--Cartan equation in this shifted $\Li$-algebra always makes sense, no matter the filtration considered. As such it forms an \emph{absolute shifted $\Li$-algebra}\index{$\Li$-algebra!absolute shifted}, notion introduced and studied by V. Roca i Lucio in \cite{RocaLucio21}, where infinite series of operations make sense. 
\end{remark}

The first step toward a simplicial enrichment of complete $\Omega \calC$-algebras with their $\infty$-morphisms am\-oun\-ts to 
enrich them over  complete shifted (curved) $\Li$-algebras using the above construction. In this direction, one first needs to extend the cartesian  monoidal structure on dg Lie algebras.

\begin{lemma}
The category 
\[
\left(\infty\textsf{-}\mathsf{complete}\ \mathsf{curved}\ \sLi\textsf{-}\mathsf{alg}, \oplus, 0\right)
\] 
of complete shifted curved $\Li$-algebras with  $\infty$-morphisms equipped with the direct sum forms a cartesian  monoidal category.
\end{lemma}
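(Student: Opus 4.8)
The plan is to show that the direct sum $\oplus$ is the categorical product and that the zero algebra $0$ is the terminal object in the category of complete shifted curved $\Li$-algebras with $\infty$-morphisms; once these two facts are in place, the assertion follows from the standard categorical principle that any category with finite products carries a canonical symmetric cartesian monoidal structure whose tensor is the product and whose unit is the terminal object, all coherence isomorphisms being induced by the universal properties.

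First I would check that the zero complete module $0$, endowed with the trivial curved $\Li$-structure, is terminal. For any $(A,\alpha)$, an $\infty$-morphism $A \rightsquigarrow 0$ is an element of $\hom_\Sy(\calC, \eend^A_0)$; since $\eend^A_0(n)=\hom(A^{\otimes n},0)=0$ for every $n$, there is exactly one such element, the zero map, and it vacuously satisfies both the arity-$0$ condition $f_0\in\calF_1$ and the defining equation $f\star\alpha=\beta\cc f$ with $\beta=0$. Hence $0$ is terminal.

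The heart of the proof is the identification of $A\oplus B$, equipped with the componentwise structure $\alpha\oplus\beta$ whose operations are the diagonal maps $\ell_n^{A\oplus B}=\ell_n^A\oplus\ell_n^B$, as the binary product. I would start from the decomposition of complete $\Sy$-modules
\[
\eend^C_{A\oplus B}(n)=\hom\big(C^{\otimes n}, A\oplus B\big)\cong \eend^C_A(n)\oplus \eend^C_B(n),
\]
so that any $f\in\hom_\Sy(\calC,\eend^C_{A\oplus B})$ is a pair $f=(f^A,f^B)$. The projections $\pi_A,\pi_B$ are taken to be the strict $\infty$-morphisms concentrated in arity $1$ given by the linear projections $p_A,p_B$; these commute strictly with the diagonal operations $\ell^A_n\oplus\ell^B_n$, hence satisfy the $\infty$-morphism equation, and because they are supported in arity $1$ one computes directly $\pi_A\cc(f^A,f^B)=f^A$ and $\pi_B\cc(f^A,f^B)=f^B$. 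The main point I would verify is that both sides of the defining equation split along the above decomposition: since the target summand ($A$ versus $B$) of an output is determined by $f$ alone, one has $(f^A,f^B)\star\gamma=(f^A\star\gamma,\,f^B\star\gamma)$, while the diagonal nature of $\alpha\oplus\beta$ gives $(\alpha\oplus\beta)\cc(f^A,f^B)=(\alpha\cc f^A,\,\beta\cc f^B)$. Consequently $(f^A,f^B)$ is an $\infty$-morphism $C\rightsquigarrow A\oplus B$ if and only if $f^A\colon\gamma\rightsquigarrow\alpha$ and $f^B\colon\gamma\rightsquigarrow\beta$ are each $\infty$-morphisms, and the arity-$0$ condition passes to the pair since $(f^A_0,f^B_0)\in\calF_1(A\oplus B)$ precisely when $f^A_0\in\calF_1 A$ and $f^B_0\in\calF_1 B$. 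This yields, for every pair of $\infty$-morphisms $(C,\gamma)\rightsquigarrow(A,\alpha)$ and $(C,\gamma)\rightsquigarrow(B,\beta)$, a unique $\infty$-morphism $(f^A,f^B)$ whose composites with the projections are $f^A$ and $f^B$, which is exactly the universal property of the product.

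The step I expect to require the most care is the bookkeeping behind the equivalence
\[
(f^A,f^B)\star\gamma=(\alpha\oplus\beta)\cc(f^A,f^B)\iff \big(f^A\star\gamma=\alpha\cc f^A \ \text{and}\ f^B\star\gamma=\beta\cc f^B\big),
\]
where one must confirm that the iterated decomposition maps $\Delta_{(n)}$ of $\calC$ together with the circle-product series interact with the direct-sum splitting strictly diagonally, so that no cross terms mixing $A$- and $B$-components arise; this uses precisely that $\alpha\oplus\beta$ is the diagonal structure and that the outputs of $f^A$ and $f^B$ lie in complementary summands. Finally, functoriality of $\oplus$ on $\infty$-morphisms and all the monoidal coherence data (associator, unitors, and symmetry) come for free as the canonical comparison isomorphisms provided by the universal property, so $(\infty\textsf{-}\mathsf{complete}\ \mathsf{curved}\ \sLi\textsf{-}\mathsf{alg},\oplus,0)$ is cartesian monoidal.
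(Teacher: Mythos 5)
Your proposal is correct and takes essentially the same approach as the paper: the paper defines the identical componentwise direct-sum structure and the diagonal functoriality $(f\oplus g)_n$, and then simply asserts that ``one can check that this construction provides us with the product'' in the category. Your argument supplies exactly those omitted checks — the splitting $\eend^C_{A\oplus B}\cong \eend^C_A\oplus\eend^C_B$, the fact that both sides of the equation $f\star\gamma=(\alpha\oplus\beta)\cc f$ split without cross terms because the direct-sum operations vanish on mixed inputs, the strict arity-one projections, and the terminality of $0$ — so it is a faithful completion of the paper's sketch rather than a different route.
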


\begin{proof}
It is straightforward to check that the direct sum 
\begin{multline*}
(A,\F, \ell_0, \ell_1, \ell_2, \ell_3, \ldots)\oplus 
(B,\G, k_0, k_1, k_2, k_3, \ldots)
\coloneqq \\
(A\oplus B,\F\oplus \G, \ell_0+k_0, \ell_1+k_1, \ell_2+k_, \ell_3+k_3, \ldots)
\end{multline*}
of two complete shifted curved $\Li$-algebras is again a complete shifted curved $\Li$-algebra. The functoriality of this construction is given by 
\[(f\oplus g)_n\big(
(a_1, b_1), \ldots, (a_n, b_n)\big)\coloneqq 
f_n(a_1, \ldots, a_n)\oplus g_n(b_1, \ldots, b_n)\]
for any $\infty$-morphisms $f$ and $g$. 
One can check that this construction provides us with  the product in the category of 
complete shifted curved $\Li$-algebras. 
\end{proof}

\begin{proposition}[{\cite[Section~3]{DolgushevHoffnungRogers14}}]\label{prop:EnrichLinfin}
The composition 
\begin{align*}
\Phi^{\alpha, \beta, \gamma}\ \colon \ \mathfrak{hom}_{\beta, \gamma} \oplus \mathfrak{hom}_{\alpha, \beta} \rightsquigarrow
\mathfrak{hom}_{\alpha, \gamma}
\end{align*}
defined  by 
$\Phi^{\alpha, \beta, \gamma}_0\coloneqq 0$, 
$\Phi^{\alpha, \beta, \gamma}_1\coloneqq 0$, and, for $n\geqslant 2$, by 
\begin{align*}
\Phi^{\alpha, \beta, \gamma}_n(f_1, \ldots, f_n)\, : \, 
\calC \xrightarrow{\Delta_{(n-1)}}  \calC\; \widehat{\circ}_{(n-1)}\, \calC 
\to 
{\eend}^B_C\; \widehat{\circ}_{(n-1)}\, {\eend}^A_B
\to  {\eend}^A_C \ ,
\end{align*}
 where the second map is equal to 
 \[\sum_{i=1}^n f'_i\,  \widehat{\circ}_{(n-1)}\, f''_1 \odot \cdots \odot f''_{i-1} \odot f''_{i+1} \odot \cdots \odot f''_n~,\]
 under the notation $f_i=\left(f'_i, f''_i\right)\in\mathfrak{hom}_{\beta, \gamma} \oplus \mathfrak{hom}_{\alpha,\beta}$, 
and the  unit 
\begin{align*}
\Upsilon^{\alpha}\ \colon \  0 \rightsquigarrow
\mathfrak{hom}_{\alpha, \alpha}
\end{align*}
defined by 
\begin{align*}
\Upsilon^{\alpha}_0\coloneqq \id_A \quad \text{and}\quad \Upsilon^{\alpha}_n\coloneqq 0~, \  \ \text{for}\ 
n\geqslant 1~,
\end{align*}
endow $\Omega \calC$-algebras with a structure of a category enriched over the cartesian monoidal category of 
complete shifted curved $\Li$-algebras. 
\end{proposition}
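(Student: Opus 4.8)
The plan is to establish three things in turn: that each composition datum $\Phi^{\alpha,\beta,\gamma}$ is a genuine $\infty$-morphism of complete shifted curved $\Li$-algebras, that each $\Upsilon^\alpha$ is one as well, and finally that these data satisfy the associativity and unit coherence axioms of a category enriched over the cartesian monoidal category of complete shifted curved $\Li$-algebras (with $\oplus$ as monoidal product and $0$ as unit). The unifying observation I would exploit at every step is that all the maps involved --- the brackets $\ell_{n+1}$ defining $\mathfrak{hom}_{\alpha,\beta}$, the components $\Phi^{\alpha,\beta,\gamma}_n$, and the circle-product composition $\cc$ of $\infty$-morphisms --- are built from the same ingredients: the iterated decomposition maps $\Delta_{(n)}\colon \calC \to \calC\,\widehat{\circ}_{(n)}\,\calC$ of the cooperad together with the left and right module actions on the spaces ${\eend}^A_B$. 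Consequently every verification should reduce to the coassociativity and counitality of $\calC$ and to the associativity of these module structures.

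For the first step I would unfold the defining equation of Definition~\ref{def:InftyMorph} for $\Phi^{\alpha,\beta,\gamma}$, read between the source algebra $\mathfrak{hom}_{\beta,\gamma}\oplus\mathfrak{hom}_{\alpha,\beta}$ and the target $\mathfrak{hom}_{\alpha,\gamma}$, and match it term by term against the iterated coproduct. The most efficient route, however, is conceptual: by Proposition~\ref{prop:MCinfiniMorph} the Maurer--Cartan elements of $\mathfrak{hom}_{\alpha,\beta}$ are precisely the $\infty$-morphisms $\alpha\rightsquigarrow\beta$, and the Maurer--Cartan set of the source is the product $\MC(\mathfrak{hom}_{\beta,\gamma})\times\MC(\mathfrak{hom}_{\alpha,\beta})$. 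I would then show that the map on Maurer--Cartan sets induced by $\Phi^{\alpha,\beta,\gamma}$ is exactly the composition $(g,f)\mapsto g\cc f$ of $\infty$-morphisms. Since the circle product of two $\infty$-morphisms is again an $\infty$-morphism by the theory of Section~\ref{sec:ComOpDefTh}, this simultaneously identifies $\Phi^{\alpha,\beta,\gamma}$ with the composition law and forces its defining $\infty$-morphism equation, the latter being the multilinear shadow of the former identity recorded level by level by the $\Phi_n$.

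The map $\Upsilon^\alpha$ carries only the constant component $\id_A$, so its $\infty$-morphism equation collapses to the statement that $\id_A$ is the identity $\infty$-morphism $\alpha\rightsquigarrow\alpha$, which follows from the counit of $\calC$. For the coherence axioms I would compare the two bracketings of a triple composition; because the monoidal product $\oplus$ is strict and because both $\Phi$ and $\cc$ are grafting operations governed by the single coproduct of $\calC$, the associativity square commutes as an identity of $\infty$-morphisms by coassociativity of the $\Delta_{(n)}$, while the two unit triangles commute by the counit property. These are the very computations that already underlie the associativity and unitality of $\cc$ in Section~\ref{sec:ComOpDefTh} and Theorem~\ref{thm:DeligneGroupoidII}, so I would phrase the argument so as to deduce the axioms directly from that material rather than redoing the bookkeeping.

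The main obstacle will be the sign and convergence analysis in the first step. Unfolding the $\infty$-morphism equation for $\Phi$ produces infinite series of compositions weighted by Koszul signs coming from the iterated maps $\Delta_{(n)}$, and matching these against $g\cc f$ requires careful reordering. More delicate still is well-definedness: since $\calC$ may have nontrivial arity-zero part in the curved setting, the naive filtration argument is unavailable, and one must instead invoke the weight-grading convergence of the deformation gauge group (Lemma~\ref{lem:DefGaugeConv}) to guarantee that the series defining $\Phi$ and its induced composition converge and that the equation holds termwise. Handling this arity-zero subtlety uniformly, rather than case by case, is where the technical care is concentrated.
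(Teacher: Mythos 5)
Your ``most efficient route'' contains a genuine gap, and it is the load-bearing step of the proposal. Knowing that the evaluation $\big(g,f\big)\mapsto\sum_{n}\tfrac{1}{n!}\Phi^{\alpha,\beta,\gamma}_n\big((g,f)^{\odot n}\big)$ sends pairs of Maurer--Cartan elements to the Maurer--Cartan element $g\cc f$ does \emph{not} force $\Phi^{\alpha,\beta,\gamma}$ to be an $\infty$-morphism of complete shifted curved $\Li$-algebras: the implication runs in the opposite direction. The $\infty$-morphism property is an identity among \emph{all} the multilinear components (Definition~\ref{def:InftyMorph} applied to the cooperad governing shifted curved $\Li$-algebras), whereas the Maurer--Cartan locus is not a linear subspace of $\mathfrak{hom}_{\beta,\gamma}\oplus\mathfrak{hom}_{\alpha,\beta}$ and does not span it, so no polarisation (your ``multilinear shadow'') can recover those identities from their restriction to Maurer--Cartan elements. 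Worse, in the curved setting --- which is precisely the setting of the proposition --- the sets $\MC\left(\mathfrak{hom}_{\alpha,\beta}\right)$ can be \emph{empty}: take for $\beta$ a structure whose only nonzero operation is a nonzero curvature $\ell_0$ (all relations hold trivially, and the Maurer--Cartan equation reads $\theta=0$, so $\beta$ has no Maurer--Cartan element), and for $\alpha$ the trivial structure; by Proposition~\ref{prop:PropertyCurvInfMorph} any $\infty$-morphism $\alpha\rightsquigarrow\beta$ would push the Maurer--Cartan element $0$ of $\alpha$ to one of $\beta$, so none exists and $\MC\left(\mathfrak{hom}_{\alpha,\beta}\right)=\emptyset$. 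In such cases your identification on Maurer--Cartan sets is vacuous and forces nothing; the same objection applies to your plan of deducing the associativity and unit axioms of the enrichment from the associativity and unitality of $\cc$, since an equality of $\infty$-morphisms cannot be tested on (possibly empty) Maurer--Cartan sets. A repair via a functor-of-points argument --- testing Maurer--Cartan elements after extension of coefficients by arbitrary complete algebras and invoking a representability statement --- would need machinery that neither the paper nor your proposal sets up, and in the curved setting is not available in the text.

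The route that does work is the one you mention in passing and then discard: unfold the $\infty$-morphism equations for $\Phi^{\alpha,\beta,\gamma}$ and $\Upsilon^{\alpha}$ and check them term by term, using coassociativity and counitality of the iterated decomposition maps $\Delta_{(n)}$ of $\calC$ together with the module structures on ${\eend}^A_B$; the same bookkeeping yields the pentagon/triangle axioms of the enrichment. This direct verification is exactly the paper's proof, which consists of the single sentence that all the axioms are straightforward applications of the operadic calculus. Note moreover that the paper's subsequent theorem computes $\MC_0\big(\Phi^{\alpha,\beta,\gamma}\big)(g,f)=g\cc f$ as a \emph{consequence} of the present proposition, not as a means of proving it, so your proposal inverts the logical order. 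Finally, the convergence issue you raise is handled not by Lemma~\ref{lem:DefGaugeConv} but by the observation recorded after Proposition~\ref{prop:MCinfiniMorph}: the evaluation of all the series at hand on any fixed element of $\calC$ involves only finitely many nonzero terms, the filtration conditions on the components doing the rest.
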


\begin{proof}
All the axioms are  straightforward applications of the operadic calculus.
\end{proof}

It remains to integrate these complete shifted (curved) $\Li$-algebras in order to obtain a simplicial enrichment. 

\begin{lemma}\label{lem:HinichMonoidal}
Hinich's construction of a Deligne $\infty$-groupoid defines 
a symmetric monoidal functor
\[   
\MC_\bullet \ \colon \ 
\left(\infty\textsf{-}\mathsf{complete}\ \mathsf{curved}\ \sLi\textsf{-}\mathsf{alg}, \oplus, 0\right)
\to 
\left(\mathsf{sSet}, \times, *\right)~.
\]
\end{lemma}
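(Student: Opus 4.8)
The plan is to exhibit $\MC_\bullet$ as a finite-product-preserving functor: since $\oplus$ is the categorical product in the source (by the preceding lemma) and $\times$ is the categorical product in $\mathsf{sSet}$, a symmetric monoidal structure amounts to coherent natural isomorphisms $\MC_\bullet(\g\oplus\mathfrak{h})\cong\MC_\bullet(\g)\times\MC_\bullet(\mathfrak{h})$ together with a unit isomorphism $\MC_\bullet(0)\cong *$, after which the coherence axioms become automatic. So the substance of the argument is to produce these comparison maps and to check their naturality with respect to $\infty$-morphisms.

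First I would unwind the definition $\MC_\bullet(\g)=\MC\bigl(\g\,\widehat{\otimes}\,\Omega_\bullet\bigr)$ one simplicial level at a time. For each $n$, the complete tensor product distributes over the direct sum of complete shifted curved $\Li$-algebras, giving a canonical isomorphism $(\g\oplus\mathfrak{h})\,\widehat{\otimes}\,\Omega_n\cong(\g\,\widehat{\otimes}\,\Omega_n)\oplus(\mathfrak{h}\,\widehat{\otimes}\,\Omega_n)$; this relies on the fact that $\widehat{\otimes}$ preserves colimits, established in Chapter~\ref{sec:OptheoyFilMod}. Because the $\Li$-operations of a direct sum are evaluated componentwise, the operations $\ell_n^{\g\oplus\mathfrak{h}}$ send a tuple $((a_1,b_1),\dots)$ to $(\ell_n^{\g}(a_1,\dots),\ell_n^{\mathfrak{h}}(b_1,\dots))$. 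Hence a degree $-1$ element $(a,b)$ solves the Maurer--Cartan equation in $(\g\oplus\mathfrak{h})\,\widehat{\otimes}\,\Omega_n$ if and only if $a$ and $b$ solve it separately, which yields the bijection $\MC\bigl((\g\oplus\mathfrak{h})\,\widehat{\otimes}\,\Omega_n\bigr)\cong\MC\bigl(\g\,\widehat{\otimes}\,\Omega_n\bigr)\times\MC\bigl(\mathfrak{h}\,\widehat{\otimes}\,\Omega_n\bigr)$. These bijections are compatible with the cosimplicial structure maps of $\Omega_\bullet$, so they assemble into an isomorphism of simplicial sets. For the unit, $0\,\widehat{\otimes}\,\Omega_n=0$ carries a single trivial Maurer--Cartan element in every degree, whence $\MC_\bullet(0)\cong *$.

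Next I would verify naturality of these isomorphisms, which is the genuinely delicate point, since functoriality of $\MC_\bullet$ on $\infty$-morphisms is itself subtle (cf.\ the discussion of $\mathrm{R}$ in Section~\ref{sec:HigherLieTh}). An $\infty$-morphism $f\colon\g\rightsquigarrow\mathfrak{h}$ acts on $\MC_\bullet$ by pushforward of Maurer--Cartan elements, $\alpha\mapsto\sum_{k\geq 0}\tfrac{1}{k!}f_k(\alpha^{k})$, applied degreewise to $f\,\widehat{\otimes}\,\Omega_\bullet$; that this is a well-defined functorial assignment, respecting identities and sending composition to the circle product $\circledcirc$, is exactly the content of the operadic deformation theory of Section~\ref{sec:ComOpDefTh}. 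I would then invoke the cartesian structure from the preceding lemma: $\infty$-morphisms into and out of a direct sum decompose componentwise, so the pushforward along $f\oplus g$ is the product of the pushforwards along $f$ and $g$ under the comparison bijection, which is precisely the naturality square required.

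The main obstacle I anticipate is not the algebraic splitting of the Maurer--Cartan set, which is essentially formal, but the bookkeeping needed to show that the product comparison intertwines the $\infty$-morphism pushforwards, equivalently that the functor $\MC_\bullet$ on $\infty$-morphisms is well-behaved with respect to $\oplus$. Once naturality is in hand, the symmetry, associativity, and unit coherence diagrams commute, since each reduces, after applying the comparison isomorphisms, to the canonical reshuffling of components of a product of sets, and these satisfy the symmetric monoidal axioms tautologically.
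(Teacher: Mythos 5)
Your proposal is correct and follows essentially the same route as the paper: both factor $\MC_\bullet$ levelwise as the set-valued Maurer--Cartan functor applied after tensoring with $\Omega_n$, both rely on the componentwise splitting of Maurer--Cartan elements and of pushforwards $a\mapsto\sum_{k\geq 0}\frac{1}{k!}f_k(a^k)$ along $\infty$-morphisms (the paper cites Proposition~\ref{prop:PropertyCurvInfMorph} for exactly this), and both treat the remaining coherences as formal. Your extra observation that $\oplus$ and $\times$ are the categorical products, so that preservation of finite products alone already yields the symmetric monoidal structure, is a clean way of packaging what the paper leaves implicit when it asserts that the composite of its two symmetric monoidal functors (the endofunctor $-\,\widehat{\otimes}\,B$, whose action on an $\infty$-morphism $f$ it writes out explicitly as $f_0\otimes 1$ and $f_n\otimes\mu^{n-1}$, followed by $\MC$ valued in sets) is symmetric monoidal.
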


\begin{proof}
Given any complete unital dg commutative algebra $(B, \G, \dd, \mu, 1)$, the assignment 
\[
(A,\F, \ell_0, \ell_1, \ell_2, \ell_3, \ldots) \mapsto 
\left(A\,\widehat{\otimes}\,B, \F\otimes \G, \ell_0\,\widehat{\otimes}\,1, \ell_1\,\widehat{\otimes}\,\dd, \ell_2\,\widehat{\otimes}\,\mu, \ell_3\otimes \mu^2, \ldots\right)
\]
generates a new complete shifted curved $\Li$-algebra out of another one. It is functorial with respect to the following assignment of $\infty$-morphisms: given any $\infty$-morphism $f : (A,\alpha) \rightsquigarrow (A', \alpha')$, one considers the $\infty$-morphism defined by $f_0\otimes 1$ and by 
\[\left(a_1\,\widehat{\otimes}\, b_1, \ldots, a_n\,\widehat{\otimes}\, b_n\right)\mapsto f_n(a_1, \ldots, a_n)\,\widehat{\otimes}\,\mu^{n-1}(b_1, \ldots, b_n)~, \]
for $n\geqslant 1$. So this defines defines a symmetric monoidal endofunctor in the category $\infty\textsf{-}\mathsf{complete}\ \allowbreak \mathsf{curved}\ \allowbreak \sLi\textsf{-}\mathsf{alg}$. 
Proposition~\ref{prop:PropertyCurvInfMorph} shows that the Maurer--Cartan functor $\MC$ is a symmetric monoidal functor from this category to that of sets. In the end, since 
 Hinich's construction
 \[\MC_\bullet(\g)\coloneqq \MC\big(\g \,\widehat{\otimes}\, \Omega_\bullet\big)\]
  is the composite of all these symmetric monoidal functors, it is also a symmetric monoidal functor. 
\end{proof}

\begin{theorem}[{\cite[Theorem~3.6]{DolgushevHoffnungRogers14}}]
The category with objects the complete $\Omega \calC$-algebras and with mapping spaces  the Kan complexes 
\[\mathrm{Map}(\alpha, \beta)\coloneqq \MC_\bullet\left(\mathfrak{hom}_{\alpha, \beta}\right)\]
forms a simplicial category whose image under the $0$-sim\-pli\-cies functor gives the category of 
complete $\Omega \calC$-algebras with their $\infty$-morphisms.
\end{theorem}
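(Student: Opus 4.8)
The plan is to recognise the statement as an instance of \emph{change of enrichment} along a lax monoidal functor. Recall that a simplicial category is exactly a category enriched over the cartesian monoidal category $(\mathsf{sSet},\times,*)$. By \cref{prop:EnrichLinfin} the complete $\Omega\calC$-algebras already form a category $\calE$ enriched over the cartesian monoidal category of complete shifted curved $\Li$-algebras with $\infty$-morphisms (monoidal product $\oplus$, unit $0$), with hom-objects $\mathfrak{hom}_{\alpha,\beta}$, composition $\Phi^{\alpha,\beta,\gamma}$, and units $\Upsilon^\alpha$. The general principle of change of base asserts that any lax symmetric monoidal functor $F\colon\calV\to\calW$ carries $\calV$-enriched categories to $\calW$-enriched categories, by applying $F$ to hom-objects and combining $F$ of the composition and unit morphisms with the lax structure maps of $F$. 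I would apply this principle to the functor $\MC_\bullet$, which is symmetric monoidal by \cref{lem:HinichMonoidal}.

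Concretely, I would first record the lax structure maps of $\MC_\bullet$. Since the source monoidal product is the categorical product $\oplus$ and since the Maurer--Cartan equation splits over a direct sum, there are natural isomorphisms $\MC_\bullet(\g\oplus\h)\cong\MC_\bullet(\g)\times\MC_\bullet(\h)$ and $\MC_\bullet(0)\cong *$; thus $\MC_\bullet$ is in fact strong monoidal and product-preserving. Using these identifications, the simplicial composition is defined as the composite
\[
\mathrm{Map}(\beta,\gamma)\times\mathrm{Map}(\alpha,\beta)\cong\MC_\bullet\bigl(\mathfrak{hom}_{\beta,\gamma}\oplus\mathfrak{hom}_{\alpha,\beta}\bigr)\xrightarrow{\MC_\bullet(\Phi^{\alpha,\beta,\gamma})}\MC_\bullet(\mathfrak{hom}_{\alpha,\gamma})=\mathrm{Map}(\alpha,\gamma),
\]
and the unit as $*\cong\MC_\bullet(0)\xrightarrow{\MC_\bullet(\Upsilon^\alpha)}\MC_\bullet(\mathfrak{hom}_{\alpha,\alpha})=\mathrm{Map}(\alpha,\alpha)$. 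The associativity and unit coherence diagrams of this $\mathsf{sSet}$-enriched structure are obtained by applying the product-preserving functor $\MC_\bullet$ to the corresponding coherence diagrams of $\calE$ furnished by \cref{prop:EnrichLinfin}; they commute because $\MC_\bullet$ is monoidal and functorial. This establishes the simplicial category structure.

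Two points then remain. For the Kan property of each mapping space, I would observe that $\mathfrak{hom}_{\alpha,\beta}$ is the restriction to the middle summand of the big shifted $\Li$-algebra twisted by $(s\beta,0,s\alpha)$; this element is a genuine Maurer--Cartan element (its outer components satisfy the $\Omega\calC$-structure equations for $\beta$ and $\alpha$, while every bracket contributing to its middle component requires a middle input and hence vanishes), so the twist is an \emph{uncurved} complete shifted $\Li$-algebra, and likewise its sub-algebra $\mathfrak{hom}_{\alpha,\beta}$. Consequently the extension of Hinich's theorem (\cref{thm:HinichInt}, see also \cref{thm:Getzler}) to complete shifted $\Li$-algebras shows that $\mathrm{Map}(\alpha,\beta)=\MC_\bullet(\mathfrak{hom}_{\alpha,\beta})$ is a Kan complex. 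For the statement about $0$-simplices, I would use $\Omega_0\cong\k$ to get $\mathrm{Map}(\alpha,\beta)_0=\MC(\mathfrak{hom}_{\alpha,\beta})$, which by \cref{prop:MCinfiniMorph} is precisely the set of $\infty$-morphisms $\alpha\rightsquigarrow\beta$; since the $0$-simplices functor $X_\bullet\mapsto X_0$ preserves products, it carries the simplicial category to an ordinary category whose objects are the complete $\Omega\calC$-algebras and whose morphisms are the $\infty$-morphisms.

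The main obstacle I expect is the last verification: that the composition law induced on $0$-simplices agrees with the circle-product composition $g\cc f$ of $\infty$-morphisms from \cref{def:InftyMorph}. This requires unwinding the definition of $\Phi^{\alpha,\beta,\gamma}$ on Maurer--Cartan elements and matching the iterated decomposition maps $\Delta_{(n-1)}$ appearing there against the coassociative decomposition used to define $\cc$, keeping careful track of the symmetric products and the signs, so that $\MC(\Phi^{\alpha,\beta,\gamma})$ reproduces $g\cc f$ term by term. A secondary technical care point is the convergence issue flagged in the remark following \cref{prop:MCinfiniMorph}: one must ensure that the Maurer--Cartan elements representing $\infty$-morphisms lie in the appropriate layer of the filtration on $\mathfrak{hom}_{\alpha,\beta}$ so that all the infinite series defining the enriched composition converge. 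This is guaranteed once one works, as in \cite{DolgushevHoffnungRogers14}, with filtered dg cooperads satisfying $\calC(0)=0$, or, more generally, within the absolute shifted $\Li$-algebra framework where these series are automatically defined.
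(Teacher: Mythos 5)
Your proposal is correct and follows essentially the same route as the paper: the simplicial category structure is obtained by transporting the enrichment of Proposition~\ref{prop:EnrichLinfin} along the symmetric monoidal functor $\MC_\bullet$ of Lemma~\ref{lem:HinichMonoidal}, and the identification of the $0$-simplices category reduces to checking that $\MC_0\left(\Phi^{\alpha,\beta,\gamma}\right)(g,f)=g\circledcirc f$ for $\infty$-morphisms $f$ and $g$, which is exactly the verification the paper's proof singles out. Your additional remarks on the Kan property (via the uncurved twist by the Maurer--Cartan element $(s\beta,0,s\alpha)$ and Hinich's theorem) and on the filtration needed for convergence are consistent with, and slightly more explicit than, what the paper records.
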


\begin{proof}
The first part is a direct corollary of Proposition~\ref{prop:EnrichLinfin} and Lemma~\ref{lem:HinichMonoidal}. 
Regarding the second part, it is enough to check that 
\[
\MC_0\left(\Phi^{\alpha, \beta, \gamma}\right)(g,f)=\MC\left(\Phi^{\alpha, \beta, \gamma}\right)(g,f)=g\circledcirc f~,
\]
for any pair $f\in \MC\left(\mathfrak{hom}_{\alpha, \beta}\right)$ and 
$g\in \MC\left(\mathfrak{hom}_{\beta, \gamma}\right)$
of $\infty$-morphisms. 
\end{proof}

\begin{remark}
One might want instead to integrate ``more efficiently'' these complete shifted $\Li$-algebras with Getzler's functor $\mathrm{R}$. This is not directly possible as its functoriality with respect to $\infty$-morphisms is still unclear, see \cite[Section~3]{Robert-NicoudVallette20} for more details. 
\end{remark}

From now on, we suppose that the filtered dg cooperad $\calC$ is coaugmented. 
This hypothesis implies that the underlying space of any $\Omega\calC$-algebra is a chain complex and 
that the notion of an $\infty$-quasi-isomorphism makes sense. 
In this case, the above simplicial category $\infty\textsf{-}\Omega\calC\textsf{-}\mathsf{alg}^\Delta$ provides us with a suitable localisation of the 
category 
$\infty\textsf{-}\Omega\calC\textsf{-}\mathsf{alg}$ of complete $\Omega\calC$-algebras  with respect to $\infty$-quasi-isomorphisms. 

\begin{theorem}[{\cite[Theorem~4.1]{DolgushevHoffnungRogers14}}]
When the filtered dg cooperad $\calC$ is coaugmented, the canonical functor 
\[\infty\textsf{-}\Omega\calC\textsf{-}\mathsf{alg}\to 
\pi_0\left(\infty\textsf{-}\Omega\calC\textsf{-}\mathsf{alg}^\Delta\right)\]
sends $\infty$-quasi-isomorphisms to isomorphisms and is universal with respect to this property. 
\end{theorem}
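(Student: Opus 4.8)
The plan is to make the target category explicit and then recognise the claim as the statement that $F$ is a localisation, proved through a path-object argument. By the previous theorem the simplicial category $\infty\textsf{-}\Omega\calC\textsf{-}\mathsf{alg}^\Delta$ has the complete $\Omega\calC$-algebras as objects and the Kan complexes $\mathrm{Map}(\alpha,\beta)=\MC_\bullet(\mathfrak{hom}_{\alpha,\beta})$ as mapping spaces, so $\pi_0$ of it is the category with the same objects and with morphisms $\pi_0\MC_\bullet(\mathfrak{hom}_{\alpha,\beta})\cong\mathscr{MC}(\mathfrak{hom}_{\alpha,\beta})$, the gauge classes of $\infty$-morphisms, by Theorem~\ref{thm:HinichInt}; composition is induced by $\cc$. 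The canonical functor $F$ is the identity on objects and sends an $\infty$-morphism $f$ to its gauge class $[f]$, and it is full (each path-component of a Kan complex contains a vertex, and the vertices are exactly the $\infty$-morphisms). Since $\calC$ is coaugmented, the underlying module of each $\Omega\calC$-algebra is a chain complex, so $\infty$-quasi-isomorphisms (those $f$ whose first component $f_1$ is a quasi-isomorphism) are meaningful. Being bijective on objects and full, $F$ will be the desired localisation once I establish (i) that $F$ inverts $\infty$-quasi-isomorphisms and (ii) that any functor $G$ inverting $\infty$-quasi-isomorphisms sends gauge-equivalent $\infty$-morphisms to equal morphisms: the factorisation $\bar G([f]):=G(f)$ is then forced, well defined by (ii), functorial because composition in $\pi_0$ is induced by $\cc$ and $G$ is a functor, and unique because $F$ is full and bijective on objects.

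For (i) I would invoke the invertibility of $\infty$-quasi-isomorphisms, which holds in the complete setting by the general formalism of Chapter~\ref{sec:OptheoyFilMod} exactly as in the discrete case \cite{LodayVallette12}: an $\infty$-quasi-isomorphism $f\colon\alpha\rightsquigarrow\beta$ admits an $\infty$-quasi-isomorphism $g\colon\beta\rightsquigarrow\alpha$ with $g\cc f$ and $f\cc g$ gauge equivalent to the respective identities. Hence $[g]\circ[f]=\id_\alpha$ and $[f]\circ[g]=\id_\beta$ in $\pi_0$, so $[f]$ is invertible.

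The core is (ii), carried out with a path object. Tensoring with the unital commutative dg algebra $\Omega_1\cong\k[t,dt]$ produces, for every complete $\Omega\calC$-algebra $\beta$, a complete $\Omega\calC$-algebra $\beta^I\coloneqq\beta\,\widehat{\otimes}\,\Omega_1$ (tensoring a $\calP$-algebra with a unital cdga again yields a $\calP$-algebra), together with the two evaluation maps $\mathrm{ev}_0,\mathrm{ev}_1\colon\beta^I\to\beta$ and the inclusion $s\colon\beta\to\beta^I$ coming from the endpoint and degeneracy maps of $\Omega_\bullet$. Since $\Omega_1\to\k$ is a contraction of cochain complexes, $\mathrm{ev}_0$, $\mathrm{ev}_1$ and $s$ are strict quasi-isomorphisms, hence $\infty$-quasi-isomorphisms, and $\mathrm{ev}_0\cc s=\mathrm{ev}_1\cc s=\id_\beta$. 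Identifying a $1$-simplex of $\mathrm{Map}(\alpha,\beta)$, that is a Maurer--Cartan element of $\mathfrak{hom}_{\alpha,\beta}\,\widehat{\otimes}\,\Omega_1$, with an $\infty$-morphism $H\colon\alpha\rightsquigarrow\beta^I$ whose two faces are $\mathrm{ev}_0\cc H$ and $\mathrm{ev}_1\cc H$ shows that $f$ and $g$ are gauge equivalent exactly when such an $H$ exists with $\mathrm{ev}_0\cc H=f$ and $\mathrm{ev}_1\cc H=g$. Applying $G$ and using that $G(s)$ is invertible with $G(\mathrm{ev}_0)G(s)=G(\mathrm{ev}_1)G(s)=\id$ gives $G(\mathrm{ev}_0)=G(\mathrm{ev}_1)$, whence $G(f)=G(\mathrm{ev}_0)G(H)=G(\mathrm{ev}_1)G(H)=G(g)$. (Conceptually, this is the general fact that the homotopy category $\pi_0$ of a fibrant simplicial category is the localisation at the morphisms inverted by $\pi_0$, combined with the identification of those ``homotopy equivalences'' with $\infty$-quasi-isomorphisms via (i) and its converse.)

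The main obstacle is the path-object step of (ii): one must verify that tensoring with $\Omega_1$ respects the $\Omega\calC$-algebra structure (this is the tensoring underlying Hinich's functor, cf.\ the symmetric monoidal structure used in Lemma~\ref{lem:HinichMonoidal}), that the evaluation and degeneracy maps are $\infty$-quasi-isomorphisms, and, most delicately, that a $1$-simplex of $\MC_\bullet(\mathfrak{hom}_{\alpha,\beta})$ corresponds to an $\infty$-morphism into $\beta^I$ with the stated faces, i.e.\ that the simplicial face maps $d_0,d_1$ match post-composition by $\mathrm{ev}_0,\mathrm{ev}_1$. This rests on the comparison $\mathfrak{hom}_{\alpha,\beta}\,\widehat{\otimes}\,\Omega_1\cong\mathfrak{hom}_{\alpha,\beta^I}$, whose verification is exactly where the coaugmentation hypothesis on $\calC$ and the completeness of all objects are genuinely used and which requires the most careful bookkeeping.
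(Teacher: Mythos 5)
The paper does not actually prove this statement: it is quoted, with attribution, from Dolgushev--Hoffnung--Rogers \cite[Theorem~4.1]{DolgushevHoffnungRogers14} as part of the survey in Section~\ref{sec:SimplicialTheoryHoAlg}, so there is no internal proof to compare yours against. Measured against the cited proof, your strategy is the expected one and is correct in outline: identify the morphisms of $\pi_0\left(\infty\textsf{-}\Omega\calC\textsf{-}\mathsf{alg}^\Delta\right)$ with gauge classes of $\infty$-morphisms, observe that the canonical functor is bijective on objects and full, reduce universality to the two claims (i) and (ii), and run (ii) with the path object $\beta^I=\beta\,\widehat{\otimes}\,\k[t,dt]$ via the relation $G(\mathrm{ev}_0)=G(s)^{-1}=G(\mathrm{ev}_1)$.

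Two caveats. First, you overstate the technical lemma: the isomorphism $\mathfrak{hom}_{\alpha,\beta}\,\widehat{\otimes}\,\Omega_1\cong\mathfrak{hom}_{\alpha,\beta^I}$ is more than your argument needs and is in general false, since a map $\calC(n)\to\hom\big(A^{\otimes n},B\,\widehat{\otimes}\,\Omega_1\big)$ need not lie in the image of $\hom_\Sy\big(\calC,\eend^A_B\big)\,\widehat{\otimes}\,\Omega_1$ when $A$ or $\calC(n)$ is infinite dimensional. What step (ii) actually uses is only the natural morphism of complete shifted $\Li$-algebras from $\mathfrak{hom}_{\alpha,\beta}\,\widehat{\otimes}\,\Omega_1$ to $\mathfrak{hom}_{\alpha,\beta^I}$, which sends a $1$-simplex $H$ to an $\infty$-morphism $\alpha\rightsquigarrow\beta^I$ and intertwines the simplicial faces $d_0,d_1$ with post-composition by $\mathrm{ev}_0,\mathrm{ev}_1$; this one-directional statement does hold and suffices, so you should weaken the claim accordingly rather than attempt to prove the isomorphism. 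Second, step (i) is not free: what you need is a two-sided inverse up to gauge equivalence (composites gauge equivalent to identities), which is strictly stronger than the homological quasi-inverse of \cite[Theorem~10.4.4]{LodayVallette12}, and its extension to the complete setting is a genuine result rather than an automatic consequence of Chapter~\ref{sec:OptheoyFilMod}; within the present paper it is precisely supplied by the proposition quoted immediately after the theorem (pullback and pushforward by an $\infty$-quasi-isomorphism are homotopy equivalences of mapping spaces), and that is the correct reference to invoke. Relatedly, your parenthetical appeal to a ``general fact'' that $\pi_0$ of a fibrant simplicial category is the localisation at the morphisms it inverts is not a formal fact: it requires exactly the representable path objects you construct, so it cannot substitute for that construction.
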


One example of an appealing homotopical property that this setting allows to prove lies in the following characterisation of $\infty$-quasi-isomorphisms. 

\begin{proposition}[{\cite[]{DolgushevHoffnungRogers14}}]
When the filtered dg cooperad $\calC$ is coaugmented, 
the following assertions are equivalent:
\begin{enumerate}
\item an $\infty$-morphism $f \colon \alpha \rightsquigarrow \beta$ is an $\infty$-quasi-isomorphism, 
\item the pullback maps $\MC_\bullet\left(f^*\right)$:
\[\mathrm{Map}(\beta, \alpha)  \xrightarrow{\sim}\mathrm{Map}(\alpha, \alpha) \quad \text{and}\quad 
\mathrm{Map}(\beta, \beta)  \xrightarrow{\sim}\mathrm{Map}(\alpha, \beta)\]
are homotopy equivalences, 
\item the pushforward maps $\MC_\bullet\left(f_*\right)$:
\[\mathrm{Map}(\beta, \alpha)  \xrightarrow{\sim}\mathrm{Map}(\beta, \beta) \quad \text{and}\quad 
\mathrm{Map}(\alpha, \alpha)  \xrightarrow{\sim}\mathrm{Map}(\alpha, \beta)\]
are homotopy equivalences.
\end{enumerate}
\end{proposition}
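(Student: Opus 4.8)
The plan is to read the four maps appearing in (ii) and (iii) as the pre- and post-composition maps of the simplicial category of \cref{sec:SimplicialTheoryHoAlg}, and then to play the homotopy equivalences off against the elementary assignment $[h]\mapsto H_\bullet(h_1)$ on path components. Recall that the composition of $\infty$-morphisms is realised by $\MC_\bullet(\Phi^{\alpha,\beta,\gamma})$ restricted along the monoidal identification of \cref{lem:HinichMonoidal} (see \cref{prop:EnrichLinfin}), so that for a fixed $f\colon\alpha\rightsquigarrow\beta$ the pullback $f^\ast\colon\mathrm{Map}(\beta,\gamma)\to\mathrm{Map}(\alpha,\gamma)$, $h\mapsto h\circledcirc f$, and the pushforward $f_\ast\colon\mathrm{Map}(\gamma,\alpha)\to\mathrm{Map}(\gamma,\beta)$, $h\mapsto f\circledcirc h$, are genuine maps of simplicial sets. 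As a short preliminary lemma I would record that composition is a simplicial map in each variable, so that a $1$-simplex joining two $\infty$-morphisms $u\sim v$ induces simplicial homotopies $u^\ast\simeq v^\ast$ and $u_\ast\simeq v_\ast$ wherever these make sense: the homotopy is just the composite $\mathrm{Map}(\delta',\gamma)\times\Delta^1\to\mathrm{Map}(\delta',\gamma)\times\mathrm{Map}(\delta,\delta')\to\mathrm{Map}(\delta,\gamma)$ with the chosen path as second factor.

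I would prove (i)$\Rightarrow$(ii) and (i)$\Rightarrow$(iii) together. The key input is that an $\infty$-quasi-isomorphism over a coaugmented cooperad admits a homotopy inverse: there is an $\infty$-quasi-isomorphism $g\colon\beta\rightsquigarrow\alpha$ with $g\circledcirc f\sim\mathrm{id}_\alpha$ and $f\circledcirc g\sim\mathrm{id}_\beta$ in the relevant mapping spaces. This is the complete-setting avatar of the fundamental theorem of $\infty$-morphisms of \cite[Chapter~10]{LodayVallette12}, together with the identification of the gauge homotopy of $\infty$-morphisms with the path-component relation in $\MC_\bullet(\mathfrak{hom}_{-,-})$ coming from \cref{prop:MCinfiniMorph}. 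Granting this, the preliminary lemma gives, on $\mathrm{Map}(\alpha,\gamma)$, the identity $f^\ast g^\ast(h)=h\circledcirc(g\circledcirc f)$, which is homotopic to $h$ since $g\circledcirc f\sim\mathrm{id}_\alpha$; symmetrically $g^\ast f^\ast\simeq\mathrm{id}$ on $\mathrm{Map}(\beta,\gamma)$. Hence $f^\ast$ is a homotopy equivalence for every $\gamma$, and taking $\gamma=\alpha,\beta$ yields (ii). The same computation with $f_\ast,g_\ast$, using $f\circledcirc g\sim\mathrm{id}_\beta$ and $g\circledcirc f\sim\mathrm{id}_\alpha$, gives (iii).

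For the converses I would argue on $\pi_0$. Assume (ii). Surjectivity of $\pi_0(f^\ast)\colon\pi_0\mathrm{Map}(\beta,\alpha)\to\pi_0\mathrm{Map}(\alpha,\alpha)$ produces $g\colon\beta\rightsquigarrow\alpha$ with $g\circledcirc f\sim\mathrm{id}_\alpha$. Injectivity of the second equivalence $\pi_0(f^\ast)\colon\pi_0\mathrm{Map}(\beta,\beta)\to\pi_0\mathrm{Map}(\alpha,\beta)$, applied to $[\mathrm{id}_\beta]$ and $[f\circledcirc g]$ whose images $[\mathrm{id}_\beta\circledcirc f]=[f]$ and $[(f\circledcirc g)\circledcirc f]=[f\circledcirc(g\circledcirc f)]=[f]$ agree (by associativity and since post-composition preserves path components), then forces $f\circledcirc g\sim\mathrm{id}_\beta$. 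Thus $g$ is a two-sided homotopy inverse of $f$. I would finally invoke that $[h]\mapsto H_\bullet(h_1)$ is a well-defined multiplicative assignment from components of mapping spaces to morphisms of graded vector spaces — homotopic $\infty$-morphisms have chain-homotopic linear parts, and $(h'\circledcirc h)_1=h'_1\circ h_1$ — to conclude $H_\bullet(g_1)\circ H_\bullet(f_1)=\mathrm{id}=H_\bullet(f_1)\circ H_\bullet(g_1)$, whence $f_1$ is a quasi-isomorphism and $f$ an $\infty$-quasi-isomorphism. The implication (iii)$\Rightarrow$(i) is symmetric, extracting a one-sided homotopy inverse from surjectivity of the first equivalence and upgrading it to a two-sided one by injectivity of the second.

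The main obstacle is the homotopy-inverse input feeding (i)$\Rightarrow$(ii),(iii): one must both produce the inverse $\infty$-quasi-isomorphism $g$ and, more delicately, certify that the two a priori different notions of homotopy — gauge equivalence of $\infty$-morphisms and connectedness by a $1$-simplex in $\MC_\bullet(\mathfrak{hom}_{\alpha,\beta})=\MC\big(\mathfrak{hom}_{\alpha,\beta}\,\widehat{\otimes}\,\Omega_\bullet\big)$ — coincide. This is exactly where the coaugmentation hypothesis on $\calC$ is essential: it ensures that the underlying datum of an $\Omega\calC$-algebra is an honest chain complex (so that $f_1$ and $H_\bullet(f_1)$ make sense) and that the obstruction-theoretic construction of $g$ proceeds with no curvature corrections. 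Everything else is bookkeeping with the simplicial enrichment and with $\pi_0$.
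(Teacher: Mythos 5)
Your formal skeleton is largely in good shape. The preliminary lemma (composition is simplicial in each variable, so a $1$-simplex between $\infty$-morphisms induces simplicial homotopies between the corresponding pre- and post-composition maps) is correct and is exactly what Proposition~\ref{prop:EnrichLinfin} and Lemma~\ref{lem:HinichMonoidal} provide. The converse implications are correct and complete as you state them: extract a one-sided inverse from surjectivity on $\pi_0$, upgrade it to a two-sided one by injectivity, and conclude that $f_1$ is a quasi-isomorphism from $(h'\circledcirc h)_1=h'_1\circ h_1$ together with the fact that a $1$-simplex of $\MC_\bullet\left(\mathfrak{hom}_{\alpha,\beta}\right)$ yields a chain homotopy between linear parts (this is indeed where the coaugmentation enters). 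Note also that the monograph states this proposition without proof, as a quotation from \cite{DolgushevHoffnungRogers14}, so the relevant comparison is with the proof given there.

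The genuine gap is in (i)$\Rightarrow$(ii),(iii). You feed in the statement that an $\infty$-quasi-isomorphism of complete $\Omega\calC$-algebras admits a two-sided homotopy inverse, with homotopies realised as $1$-simplices of the mapping Kan complexes. This is not an off-the-shelf input: the fundamental theorem of $\infty$-morphisms of \cite[Chapter~10]{LodayVallette12} is proven for Koszul operads acting on discrete chain complexes, with its own notion of homotopy, whereas here $\calC$ is an arbitrary coaugmented filtered dg cooperad, the algebras are complete, and the homotopy relation is path-connectedness in $\MC_\bullet$. Worse, your own converse argument shows that this input is \emph{equivalent} to the implication you want: given the formal simplicial facts, assertion (ii) produces exactly such a two-sided inverse. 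So, as written, the forward direction is circular unless you supply an independent proof of the inverse theorem in this generality — and in \cite{DolgushevHoffnungRogers14} the logic runs the other way around. There, one first observes that the pullback $f^*\colon \mathfrak{hom}_{\beta,\gamma}\to\mathfrak{hom}_{\alpha,\gamma}$ is a \emph{strict} morphism of complete shifted $\Li$-algebras (the $\infty$-morphism equation $f\star\alpha=\beta\circledcirc f$ is precisely the compatibility with the twisted differentials), while the pushforward $f_*$ is an $\infty$-morphism of them; passing to the associated graded of the complete filtrations, both reduce to maps induced by $f_1$ alone, hence are filtered quasi-isomorphisms as soon as $f_1$ is a quasi-isomorphism. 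The Goldman--Millson-type theorem of \cite{DolgushevRogers15} then says that $\MC_\bullet$ sends such filtered quasi-isomorphisms to weak equivalences, and weak equivalences between Kan complexes are homotopy equivalences. The existence of homotopy inverses for $\infty$-quasi-isomorphisms is deduced afterwards as a corollary, by the very $\pi_0$ argument you use for the converse. To repair your proof, replace the invocation of the homotopy-inverse theorem by this Goldman--Millson route; everything else you wrote can stay.
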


\section{Floer cohomology of Lagrangian submanifolds}

In order to introduce a well-defined notion a Floer cohomology for (pairs of) Lagrangians submanifods, Fukaya--Oh--Ohta--Ono introduced and used in \cite{FOOO09I} the twisting procedure for curved $\Ai$-algebras in a crucial way. 
This provides symplectic geometry/topology with a conceptual and powerful tool. 
It also defines properly the sets of morphisms of Fukaya $\Ai$-categories which appear on the A-side of the homological mirror symmetry conjecture. 
\\

One version of the Arnold's conjecture \cite{Arnold65} states that the number of non-degenerate fixed points of any Hamiltonian diffeomorphism is bounded below by the total rank of the homology of the ambient (compact) symplectic manifold. 
In order to prove it, Floer introduced in \cite{Floer88} a (co)homology theory for Lagrangian submanifolds where the boundary operator is given by counting the number of pseudo-holomorphic discs lying between two Lagrangian submanifolds. Since he focused on the case of pairs of Lagrangian submanifolds where one is the image of the other under an Hamiltonian diffeomorphism, he was able to prove that this boundary operator  squares to zero. 
In the general case, such a property does not hold anymore and the authors of \cite{FOOO09I} had to study the associated obstruction problem. The problem of counting the number of pseudo-holomorphic discs is actually not well-defined as it depends on various choices like the perturbations to make the associated moduli spaces transversal. 
Fukaya--Oh--Ohta--Ono introduced some curved $\Ai$-algebras whose constant structures are based on the numbers of pseudo-holomorphic discs and they show that their homotopy type does not depend on the various choices involved. In the end, twisting these curved $\Ai$-algebras by Maurer--Cartan elements produces cochain complexes which define Floer cohomology for (pairs of) Lagrangian submanifolds. Let us now describe  their  work in more details.  \\

These authors initiate their theory with the following  $\Ai$-algebra that they call \emph{classical}.  

\begin{proposition}[{\cite[Theorem~3.1.2]{FOOO09I}}]\label{prop:ClassicalAinfini}
For any   smooth oriented manifold $L$, 
there exists a countable set $\chi_L$ of smooth singular simplicies on $L$ such that $\mathbb{Z}\chi_L$ admits an $\Ai$-algebra structure $\left(\overline{m}_1, \ldots, \overline{m}_n,\ldots\right)$ 
constructed from a Kuranishi structure on the  moduli space of constant pseudo-holomorphic discs
such that $\overline{m}_1=(-1)^n \mathrm{d}_{\text{sing}}$, where $\dim L=n$, where $\mathrm{d}_{\text{sing}}$ is  the singular boundary operator, and such that $H^\bullet\left(\mathbb{Z}\chi_L, \overline{m}_1\right)\cong H^\bullet_{\text{sing}}\left(L,\mathbb{Z}\right)$~.

This $\Ai$-algebra is uniquely determined by $L$ up to homotopy equivalence and its real extension 
$\left(\mathbb{R}\chi_L, \overline{m}_1, \ldots, \overline{m}_n,\ldots\right)$ is homotopy equivalent to the de Rham dg associative algebra of $L$.
\end{proposition}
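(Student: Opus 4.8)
The plan is to follow the geometric construction of Fukaya--Oh--Ohta--Ono, for which the algebraic formalism of this book supplies the target structure but not the input; the genuine content is symplectic-topological. First I would set up the relevant moduli spaces: for each $k\geq 1$, let $\mathcal{M}_{k+1}(L)$ denote the moduli space of constant pseudo-holomorphic discs with $k+1$ cyclically ordered boundary marked points and boundary on $L$. Since the discs are constant, this space is diffeomorphic to the product of $L$ with the moduli space of domains, a Stasheff associahedron of dimension $k-2$, and its codimension-one boundary strata correspond to two-disc degenerations, that is, to operadic compositions of lower moduli spaces. The operations $\overline{m}_k$ are then defined by the correspondence attached to the boundary evaluation maps $\mathrm{ev}_0,\ldots,\mathrm{ev}_k\colon\mathcal{M}_{k+1}(L)\to L$: one pulls back the input chains along $\mathrm{ev}_1,\ldots,\mathrm{ev}_k$, forms the fiber product over $\mathcal{M}_{k+1}(L)$, and pushes the result forward along $\mathrm{ev}_0$. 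The $\Ai$-relations are precisely the algebraic shadow of the Stasheff boundary decomposition, so they hold formally once the chain-level operations are well defined.

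The first and principal obstacle is transversality: the fiber products and pushforwards above are honest chains only when the evaluation maps meet the chosen simplices transversally, which in general fails. This is where the Kuranishi structure enters: one equips each $\mathcal{M}_{k+1}(L)$ with a Kuranishi structure (here essentially smooth, as the discs are constant) and chooses a compatible system of continuous-family multisection perturbations, so that the perturbed moduli spaces carry virtual fundamental chains whose boundary is governed by the associahedron strata. Simultaneously I would build the countable set $\chi_L$ by an inductive procedure: start from a countable collection of smooth singular simplices whose images generate $H_\bullet^{\mathrm{sing}}(L)$, then repeatedly adjoin the simplices occurring in the perturbed pushforwards $\overline{m}_k$ of tuples already in $\chi_L$, arranging at each stage that the new evaluation maps are transversal; countability is preserved because each step adds only countably many simplices. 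Closing $\chi_L$ under all the $\overline{m}_k$ makes $\mathbb{Z}\chi_L$ a well-defined $\Ai$-algebra over $\mathbb{Z}$.

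Next I would identify the low operations and compute the cohomology. Because the domain moduli space for $\overline{m}_1$ degenerates to a point and $\mathcal{M}_2(L)$ is thereby identified with $L$ itself, the operation $\overline{m}_1$ reduces to the correspondence of a single evaluation map, which up to the orientation sign $(-1)^n$, $n=\dim L$, is the singular boundary operator $\mathrm{d}_{\mathrm{sing}}$; the sign is fixed by the orientation convention on the fibers. Hence $(\mathbb{Z}\chi_L,\overline{m}_1)$ is a subcomplex of smooth singular chains, and since $\chi_L$ was built to contain generators of $H_\bullet^{\mathrm{sing}}(L)$ and is stable under the relevant operations, the standard comparison of a sufficiently large smooth singular subcomplex with the full singular complex yields $H^\bullet(\mathbb{Z}\chi_L,\overline{m}_1)\cong H^\bullet_{\mathrm{sing}}(L,\mathbb{Z})$.

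Finally, the uniqueness up to homotopy equivalence and the de Rham comparison are the points where this book's framework becomes directly relevant. Two choices of perturbation data are joined by a one-parameter family, and the associated parametrized moduli spaces produce an $\infty$-morphism in the sense of Definition~\ref{def:InftyMorph} which is moreover an $\infty$-isotopy, hence an $\Ai$-homotopy equivalence; the composition theory of $\infty$-morphisms developed in Chapter~\ref{sec:TopoDefTh} guarantees that these equivalences compose coherently. For the real extension, I would observe that $\mathbb{R}\chi_L$ and the de Rham algebra $(\Omega^\bullet(L),\wedge,d)$ both compute $H^\bullet(L,\mathbb{R})$ with the same induced product, and that a chain-level integration/pairing map induces an $\Ai$-quasi-isomorphism between them; invoking the uniqueness up to homotopy of minimal $\Ai$-models then identifies their homotopy types. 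The hardest step throughout remains the transversality package --- the construction of virtual fundamental chains with prescribed boundary behaviour together with the simultaneous inductive enlargement of $\chi_L$ --- since this is exactly the analytic input that the purely algebraic theory of this monograph cannot provide.
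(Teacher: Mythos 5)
The first thing to note is that the paper does not prove this statement at all: it appears in the final survey chapter and is quoted, with attribution, from Theorem~3.1.2 of \cite{FOOO09I}. There is therefore no proof of the paper's own to compare yours against; the comparison must be with the original argument of Fukaya--Oh--Ohta--Ono. Measured against that, your geometric outline is faithful in its architecture: the moduli spaces of constant discs are indeed products of $L$ with associahedra, the operations are pull--push correspondences along boundary evaluation maps, the $\Ai$-relations are the algebraic shadow of the codimension-one boundary decomposition, and the central difficulty is exactly the one you isolate --- transversality, handled by compatible multisection perturbations of Kuranishi structures together with the inductive, countable enlargement of $\chi_L$ closing it under the perturbed operations.

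Your final step, however, contains a genuine gap. For the de Rham comparison you argue that $\mathbb{R}\chi_L$ and $\left(\Omega^\bullet(L),\wedge,d\right)$ both compute $H^\bullet(L,\mathbb{R})$ with the same induced product, and then invoke uniqueness of minimal $\Ai$-models. That inference is invalid: two $\Ai$-algebras with isomorphic cohomology rings need not be $\Ai$-homotopy equivalent, since they can differ by Massey products; uniqueness of the minimal model of a \emph{given} algebra says nothing about two different algebras sharing a cohomology ring. The only correct route is the one you mention in passing but do not carry out, namely an explicit chain-level $\Ai$-quasi-isomorphism, which in \cite{FOOO09I} is constructed geometrically by running the same moduli-space machinery against a de Rham/smooth-correspondence model, not deduced from an abstract formality argument. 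A smaller inaccuracy of the same kind concerns uniqueness: different choices of perturbation data generally produce \emph{different} countable sets $\chi_L$, hence different underlying modules, so the comparison maps cannot be $\infty$-isotopies in the sense of this monograph (those relate two structures on one fixed module); what is needed, and what \cite{FOOO09I} develop, is their own homotopy theory of filtered $\Ai$-algebras, with homotopy equivalences produced from pseudo-isotopies of Kuranishi structures. The formalism of Definition~\ref{def:InftyMorph} can re-express the output of that construction, but it does not supply the parametrized-moduli input which is the actual content of the step.
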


They  introduce then a \emph{quantum} deformation of it  as a curved $\Ai$-algebra defined over the following extension of $\mathbb{Q}$. 

\begin{definition}[Universal Novikov ring]\index{universal Novikov ring}

\[\Lambda_{0, \mathrm{Nov}}\coloneqq
\left\{
\sum_{i=0}^\infty a_i T^{\lambda_i}e^{n_i}\ | \ a_i\in \mathbb{Q}, \, \lambda_i\in \RR_{\geqslant 0}, \, n_i\in \mathbb{Z}
, \, \lim_{i\to \infty} \lambda_i =\infty
\right\}
\]
where $T$ and $e$ are elements of degree $0$ and $2$ respectively. 
\end{definition}

This ring is filtered  and complete with respect to the \emph{energy} filtration
\[\F^\lambda \Lambda_{0, \mathrm{Nov}} \coloneqq T^\lambda \Lambda_{0, \mathrm{Nov}}\]
indexed by the monoid $\RR_{\geqslant 0}$, and not $\NN$ as in Chapter~\ref{sec:OptheoyFilMod}.  Geometrically, the parameters $\lambda_i$ correspond to the symplectic area of a pseudo-holomorphic disc and $2n_i$ correspond to its Maslov index. 

\begin{theorem}[{\cite[Theorem~3.1.5 $\&$ Theorem~3.1.9]{FOOO09I}}]
For any relatively spin Lagrangian submanifold $L$ and for any $\chi_L$ given in Proposition~\ref{prop:ClassicalAinfini}, 
there exists a countable set $\chi_1(L)\supset \chi_l$ of smooth singular simplicies on $L$ such that 
$\mathbb{Q}\, \chi_1(L)\, \widehat{\otimes}_\mathbb{Q}\,  \Lambda_{0, \mathrm{Nov}}$ admits a complete curved $\Ai$-algebra structure $\left({m}_0, \allowbreak m_1, \allowbreak \ldots, \allowbreak  {m}_n, \allowbreak \ldots\right)$
whose reduction to $\mathbb{Q}$ and to $\chi_L$ gives the $\Ai$-algebra of 
Proposition~\ref{prop:ClassicalAinfini}.
\end{theorem}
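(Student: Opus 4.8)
The plan is to construct the structure maps $m_k$ geometrically, by a pull–push (fiber product) construction over moduli spaces of pseudo-holomorphic discs; the content of the theorem is analytic, and the algebraic formalism of this book enters only to organize the resulting infinite series in the complete setting. Fix a compatible almost complex structure on the ambient symplectic manifold $X$, and for each relative homotopy class $\beta\in\pi_2(X,L)$ and each $k\geq 0$ consider the moduli space $\mathcal{M}_{k+1}(\beta)$ of stable maps from genus-zero bordered Riemann surfaces with $k+1$ cyclically ordered boundary marked points representing $\beta$, together with its boundary evaluation maps $\mathrm{ev}=(\mathrm{ev}_0,\ldots,\mathrm{ev}_k)\colon \mathcal{M}_{k+1}(\beta)\to L^{k+1}$.

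First I would equip each $\mathcal{M}_{k+1}(\beta)$ with a Kuranishi structure with corners, compatible with the boundary decomposition coming from disc bubbling and with the fiber products that govern composition. Using the associated virtual fundamental chains, I would define the $\beta$-component $m_{k,\beta}$ of the $k$-th operation by pulling back input simplices along $\mathrm{ev}_1,\ldots,\mathrm{ev}_k$, taking the fiber product with the virtual chain of $\mathcal{M}_{k+1}(\beta)$, and pushing forward along $\mathrm{ev}_0$. The full operation is then the weighted sum $m_k=\sum_\beta T^{\omega(\beta)}e^{\mu(\beta)/2}\,m_{k,\beta}$, where the Novikov exponents record the symplectic area $\omega(\beta)$ and half the Maslov index $\mu(\beta)$, and the curvature $\theta=m_0$ collects the contributions of discs with a single output marked point. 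Gromov compactness guarantees that, below any fixed energy bound, only finitely many $\beta$ contribute, so each series converges in $\Lambda_{0,\mathrm{Nov}}$ for the energy filtration; Lemma~\ref{lem:Conv} then shows that the $m_k$ are well-defined filtered maps on the complete module $\mathbb{Q}\,\chi_1(L)\,\widehat{\otimes}_\mathbb{Q}\,\Lambda_{0,\mathrm{Nov}}$.

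The curved $\Ai$-relations are extracted from the codimension-one boundary of the compactified moduli spaces: via the gluing theorem, the boundary strata of $\mathcal{M}_{k+1}(\beta)$ are identified with fiber products $\mathcal{M}_{k_1+1}(\beta_1)\times_L\mathcal{M}_{k_2+1}(\beta_2)$ over $L$, and the vanishing of the virtual chain of a boundary translates into the quadratic identities $\partial m_n=-\sum_{p+q+r=n} m_{p+1+r}\circ_{p+1}m_q$ (with the classical singular differential appearing as the leading part of $m_1$), that is, precisely the relations of a curved $\Ai$-structure. Finally, the reduction statement follows by specializing $T=0$: this annihilates every non-constant class $\beta\neq 0$, leaving only the moduli of constant discs, whose contribution, restricted to the sub-simplices $\chi_L\subset\chi_1(L)$, is by construction the classical $\Ai$-algebra $(\mathbb{Q}\chi_L,\bar m_1,\bar m_2,\ldots)$ of Proposition~\ref{prop:ClassicalAinfini}.

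The hard part will be the simultaneous transversality problem. The moduli spaces are generically not of expected dimension, so the operations cannot be defined by naive counts; one must choose a system of multisections (abstract perturbations) of the Kuranishi structures that is compatible with all fiber products and all boundary identifications at once, and do so while keeping the output chains inside a single countable set $\chi_1(L)\supset\chi_L$ of smooth singular simplices closed under every $m_k$. Arranging this coherently—so that the boundary identifications hold on the chain level, not merely up to homotopy—is the technical heart of the construction and is where essentially all of the analytic difficulty of \cite{FOOO09I} is concentrated.
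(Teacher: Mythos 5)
The paper gives no proof of this theorem: it is quoted from Fukaya--Oh--Ohta--Ono \cite{FOOO09I}, and the surrounding text only names the geometric ingredients (moduli spaces of pseudo-holomorphic discs with Lagrangian boundary, Kuranishi structures, and the transversality and orientation issues that force the relatively spin hypothesis and the passage to the countable set $\chi_1(L)$). Your outline is a faithful sketch of that original construction --- pull--push over $\mathcal{M}_{k+1}(\beta)$ with Novikov weights $T^{\omega(\beta)}e^{\mu(\beta)/2}$, Gromov compactness giving convergence in the energy filtration, codimension-one boundary strata yielding the curved $\Ai$-relations, reduction at $T=0$ to the classical algebra of constant discs, and coherent multisections as the technical core --- so it takes essentially the same (and the only known) approach as the proof the paper cites.
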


The construction of this complete curved $\Ai$-algebra associated to a relatively spin Lagrangian submanifold  encodes the contributions of all pseudo-holomorphic discs attached to it. 
There are non-trivial issues of  transversality and orientation of this moduli space of pseudo-holomorphic discs with Lagrangian boundary conditions in the choices of $\chi_1(L)$ and in the $\Ai$-algebra structure, which are discussed in \cite[Chapters~7-8]{FOOO09II} and which required 
the \emph{relatively spin} assumption, see \cite[Definition~3.1.1]{FOOO09I}.\\

The existence of pseudo-holomorphic discs bubbling off at the boundary of a pseudo-holomorphic strip prevents this  
curved $\Ai$-algebra to be strict, i.e., $(m_1)^2\neq 0$ in general.  
(The appearance of $\Ai$-algebra structure in the present context is not a surprise as all the configurations of bubbling off holomorphic discs can be described by planar trees.) 
A Maurer--Cartan element in the abovementioned complete curved $\Ai$-algebra is  an element $a$ in 
\[\F^{>0}\left(\mathbb{Q}\, \chi_1(L)\, \widehat{\otimes}_\mathbb{Q}\,  \Lambda_{0, \mathrm{Nov}}\right)^1~, \]
since the filtration is indexed by the monoid of non-negative integers and since the authors are working with cohomological degree convention. It is required to satisfy the Maurer--Cartan equation~\eqref{eqn:MC}: 
\begin{eqnarray}
m_0(1)+m_1(a)+\sum_{n\geqslant 2}m_n(a, \ldots, a)=0\ .
\end{eqnarray}
In this context, the set of Maurer--Cartan elements is denoted by ${{\MC}}(L)$.

\begin{definition}[Unobstructed Lagrangian submanifold]\index{unobstructed Lagrangian submanifold}
A relatively spin Lagrangian submanifold $L$ is called \emph{unobstructed} when the set 
${{\MC}}(L)$ of Mau\-rer--Car\-tan elements is not empty. 
\end{definition}

Cohomological Floer obstruction classes to the vanishing of ${{\MC}}(L)$ are given in \cite[Theorem~3.1.11]{FOOO09I}.

\begin{remark}
In \emph{loc. cit.}, the notion of a curved $\Ai$-algebra is called a \emph{filtered $\Ai$-algebra} and the notion of a Maurer--Cartan element is called a \emph{bounding cochain}. 
\end{remark}

\begin{definition}[Floer cohomology of a Lagrangian submanifold]\index{Floer cohomology}
The \emph{Floer cohomology} deformed by a Maurer--Cartan element $a\in \MC(L)$ is the underlying cohomology of the twisted $\Ai$-algebra of Theorem~\ref{thm:TwProcGp}:
\[HF^\bullet\left(L, a; \Lambda_{0, \mathrm{Nov}}\right)\coloneqq H^\bullet\left( \mathbb{Q}\, \chi_1(L)\, \widehat{\otimes}_\mathbb{Q}\,  \Lambda_{0, \mathrm{Nov}}, m_1^a\right)~.\]
\end{definition}

The fundamental class of $L$ can actually be represented by some linear combinations of the smooth singular simplicies of $\chi_L$; it provides the classical $\Ai$-algebra and the quantum curved $\Ai$-algebra with a homotopy unit. Though this notion is not the subject of the present monograph, let us just mention that it gives rise to another Maurer--Cartan equation where the right-hand side is not $0$ but equal to this homotopy unit. This induces  a weaker obstruction theory. \\

In the case of pairs $\left(L^{(0)},L^{(1)}  \right)$ of Lagrangian submanifolds which are either transversal or which intersect in a clean way, Fukaya--Oh--Ohta--Ono coin a notion of a Floer cohomology theory following the same method, but considering a curved $\Ai$-bimodule over 
$\mathbb{Q}\, \chi_1\left(L^{(0)}\right)\, \widehat{\otimes}_\mathbb{Q}\,  \Lambda_{0, \mathrm{Nov}}$ and 
$\mathbb{Q}\, \chi_1\left(L^{(1)}\right)\, \widehat{\otimes}_\mathbb{Q}\,  \Lambda_{0, \mathrm{Nov}}$
and by twisting it with Maurer--Cartan elements, see \cite[Section~3.7]{FOOO09I}.

\begin{definition}[Gauge equivalence]
Two Maurer--Cartan elements of a complete curved $\Ai$-algebra are \emph{gauge equivalent} if they are gauge equivalent in the associated complete curved $\Li$-algebras obtained by symmetrization, see Proposition~\ref{prop:SymmAiLi}.
\end{definition}

We refer the reader to the beginning of Section~\ref{sec:HigherLieTh} for this latter notion. 

\begin{proposition}[Gauge independence]
Two gauge equivalent Maurer--Car\-tan elements $a\sim a'$ induce two isomorphic deformed Floer cohomology 
\[HF^\bullet\left(L, a; \Lambda_{0, \mathrm{Nov}}\right)\cong HF^\bullet\left(L, a'; \Lambda_{0, \mathrm{Nov}}\right)~.\]
\end{proposition}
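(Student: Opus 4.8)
The plan is to transport the entire gauge-equivalence statement from the associative world to the Lie world, where the higher Lie theory developed in Section~\ref{sec:HigherLieTh} gives a clean geometric meaning to gauge equivalence, and then invoke the twisting-compatibility results already established for $\infty$-morphisms. Concretely, a Maurer--Cartan element $a$ of the complete curved $\Ai$-algebra $\mathfrak{a}\coloneqq\mathbb{Q}\,\chi_1(L)\,\widehat{\otimes}_\mathbb{Q}\,\Lambda_{0,\mathrm{Nov}}$ is, by definition of gauge equivalence here, a Maurer--Cartan element of the complete curved $\Li$-algebra $\mathfrak{g}$ obtained by symmetrisation (Proposition~\ref{prop:SymmAiLi}), and $a\sim a'$ means they lie in the same gauge orbit in $\mathfrak{g}$. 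First I would record that twisting commutes with symmetrisation: the symmetrised operations $\ell_n^a$ of the twisted $\Ai$-algebra $\mathfrak{a}^a$ coincide with the operations $\ell_n^a$ of the twisted $\Li$-algebra $\mathfrak{g}^a$, which follows from Proposition~\ref{prop:TwCurvedGauge} and Proposition~\ref{prop:TwCurvedGaugeLie} since both twisting formulas are $\alpha\circledcirc(1+a)$ and the symmetrisation morphism of pre-Lie algebras is compatible with the circle product.

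Next I would produce an explicit $\infty$-isotopy between the twisted structures associated to $a$ and $a'$. By Corollary~\ref{cor:twa+b} twisting is additive on arity-$0$ elements, so it suffices to treat the case where $a'=\lambda.a$ for a single gauge parameter $\lambda\in\F_1 A_0$ of the deformation gauge group $\widetilde{\Gamma}$. The key observation is that the gauge action on Maurer--Cartan elements and the twisting procedure both arise from the \emph{same} deformation gauge group $\widetilde{\Gamma}$ acting on the convolution algebra $\mathfrak{a}_{\calC,A}$ with $\calC=\mathrm{uCom}^*$ (Section~\ref{subsec:TwLii}). Using the functoriality of $\infty$-morphisms under gauge symmetries recorded in Proposition~\ref{prop:PropertyCurvInfMorph} with $f=\id$ (so $f(a)=a$ and $\beta=\alpha$), I would show that the element $\big(1-\lambda(a)\big)\cc(\id)\cc(1+a)$ gives an $\infty$-isotopy relating $\mathfrak{g}^a$ and $\mathfrak{g}^{a'}$; its linear part $\ell_1$-component is invertible, hence it induces an isomorphism on the underlying cohomology $H^\bullet(-,\ell_1^a)\cong H^\bullet(-,\ell_1^{a'})$.

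Finally I would descend this $\Li$-level statement back to the Floer cohomology, which is defined via $m_1^a$, the twisted $\Ai$-differential. Since $\ell_1^a=m_1^a$ on the nose (the arity-$1$ operation is unaffected by symmetrisation), an $\infty$-isotopy of the twisted $\Li$-algebras restricts to a chain homotopy equivalence of the two complexes $\big(\mathfrak{a},m_1^a\big)$ and $\big(\mathfrak{a},m_1^{a'}\big)$, giving the desired isomorphism
\[
HF^\bullet\left(L,a;\Lambda_{0,\mathrm{Nov}}\right)\cong HF^\bullet\left(L,a';\Lambda_{0,\mathrm{Nov}}\right).
\]
The main obstacle I anticipate is bookkeeping the energy filtration indexed by the monoid $\RR_{\geqslant 0}$ rather than $\NN$: one must check that the gauge parameter $\lambda$ lies in $\F^{>0}$ so that all the relevant series (the $\circledcirc$-product and the exponential defining the gauge action) converge in the complete setting of Chapter~\ref{sec:OptheoyFilMod}. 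This is precisely the point where the filtered/complete formalism, rather than mere nilpotence, is indispensable; once convergence is secured, every algebraic identity is furnished verbatim by the results cited above.
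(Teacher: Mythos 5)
The paper itself offers no proof of this proposition: it is a survey statement quoted from the work of Fukaya--Oh--Ohta--Ono, and the surrounding text (the definition of gauge equivalence through Proposition~\ref{prop:SymmAiLi} and the pointer to Section~\ref{sec:HigherLieTh}) only indicates the intended route. Your skeleton agrees with that route --- pass to the symmetrised curved $\Li$-algebra $\g$, check that twisting commutes with symmetrisation so that the Floer differential $m_1^a$ agrees with the twisted differential $\ell_1^a$ of $\g^a$, then compare $\g^a$ and $\g^{a'}$ --- and your closing remark on the $\RR_{\geqslant 0}$-indexed energy filtration is pertinent. The problem is that the step carrying all the weight is missing.

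Concretely, you conflate two different group actions. Gauge equivalence $a\sim a'$ takes place \emph{inside the fixed algebra} $\g$: it is witnessed by the flow of the vector field $\Upsilon_\lambda(\omega)=\sum_{k\geqslant 0}\tfrac{1}{k!}\ell_{k+1}\big(\omega^k,\lambda\big)$ for a degree-zero element $\lambda$ of $\g$ itself. The deformation gauge group $\widetilde{\Gamma}$ of Chapter~\ref{sec:TopoDefTh}, by contrast, acts on Maurer--Cartan elements of the \emph{convolution algebra}, that is on the set of curved $\Li$-algebra structures on the underlying module, and its arity-zero part produces the twisting procedure. These are different groups acting on different sets, so your ``key observation'' that both ``arise from the same deformation gauge group'' is false, and the reduction via Corollary~\ref{cor:twa+b} is spurious: gauge equivalence is by definition realised by a single parameter, and that corollary concerns iterated twisting, not gauge orbits. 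Moreover, your candidate $\infty$-isotopy is vacuous: Proposition~\ref{prop:PropertyCurvInfMorph} applied to $f=\id$ forces $f(a)=a$ and yields the identity $\infty$-morphism from $\alpha^a$ to itself, which says nothing about $a'$; the expression $\big(1-\lambda(a)\big)\cc\id\cc(1+a)$ does not even typecheck, since $\lambda$ is a gauge parameter, not an $\infty$-morphism, so $\lambda(a)$ is undefined. What is needed --- and what none of your citations supply --- is the genuinely hard step: integrating the gauge flow into an isomorphism between the twisted complexes, generalising the dg Lie conjugation $d^{a'}=e^{\ad_\lambda}\circ d^{a}\circ e^{-\ad_\lambda}$ to curved $\Li$-algebras. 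Within the paper's own toolbox one can argue instead as follows: translation by $a$ identifies the gauge flow in $\g$ based at $a$ with the gauge flow in the \emph{uncurved} algebra $\g^a$ based at $0$, so $0$ and $a'-a$ are gauge equivalent Maurer--Cartan elements of $\g^a$; they therefore lie in the same path component of the Kan complex $\mathrm{R}(\g^a)$, change of base point along a path gives $\pi_n\big(\mathrm{R}(\g^a),0\big)\cong\pi_n\big(\mathrm{R}(\g^a),a'-a\big)$, and Lemma~\ref{lem:TwIntegr} together with Berglund's Theorem~\ref{thm:Berglund} identifies these groups with $H_n(\g^a)$ and $H_n\big((\g^a)^{a'-a}\big)=H_n(\g^{a'})$, yielding the desired isomorphism of Floer cohomology (in positive degrees on the nose, and in all degrees after standard degree-shift bookkeeping).
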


Passing from one gauge class of the Maurer--Cartan elements to another one can lead to 
two different Floer cohomology groups. This is called the \emph{wall crossing phenomenon}.\index{wall crossing phenomenon} 

\begin{proposition}\label{prop:HoInvariance}
Given any symplectic diffeomorphism $\psi \colon M \to M'$ and two Lagrangian submanifolds $L\subset M$ and $L'\subset M'$ such that $L'=\psi(L)$, there exists an $\infty$-morphism of curved $\Ai$-algebras
\begin{align*}
\left(\mathbb{Q}\, \chi_1(L)\, \widehat{\otimes}_\mathbb{Q}\,  \Lambda_{0, \mathrm{Nov}}, {m}_0, m_1, \allowbreak \ldots, \right)
\to 
\left(\mathbb{Q}\, \chi_1(L')\, \widehat{\otimes}_\mathbb{Q}\,  \Lambda_{0, \mathrm{Nov}}, {m}'_0, m'_1,  \ldots\right)~, 
\end{align*}
which is a homotopy equivalence. 
\end{proposition}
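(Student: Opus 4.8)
The plan is to produce the required $\infty$-morphism in two stages: first an honest strict isomorphism obtained by transporting \emph{all} of the geometric data along $\psi$, and then a comparison $\infty$-morphism supplied by the independence-of-choices part of the Fukaya--Oh--Ohta--Ono construction. First I would observe that $\psi$ restricts to a diffeomorphism $\psi|_L \colon L \xrightarrow{\cong} L'$, which pushes the countable set $\chi_1(L)$ of smooth singular simplices forward to a countable set $\psi_*\chi_1(L)$ on $L'$ and induces a filtered isomorphism of the underlying complete modules $\mathbb{Q}\,\chi_1(L)\,\widehat{\otimes}_{\mathbb Q}\,\Lambda_{0,\mathrm{Nov}}\cong \mathbb{Q}\,(\psi_*\chi_1(L))\,\widehat{\otimes}_{\mathbb Q}\,\Lambda_{0,\mathrm{Nov}}$. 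The crucial geometric input is that, for an almost complex structure $J$ on $M$ compatible with the symplectic form, the diffeomorphism $\psi$ carries $J$-holomorphic discs with boundary on $L$ bijectively onto $\psi_*J$-holomorphic discs with boundary on $L'$, where $\psi_*J$ is again compatible with the symplectic form on $M'$ because $\psi$ is a symplectomorphism. Choosing the Kuranishi structures, multisections, and perturbation data on $M'$ to be the pushforwards of those chosen on $M$, one obtains a compatible identification of the corresponding moduli spaces together with their evaluation maps and orientations, the relatively spin hypothesis transported along $\psi$ guaranteeing coherent orientations. Since $\psi$ preserves both the symplectic area and the Maslov index of each disc, this identification respects the energy filtration indexed by $\RR_{\geqslant 0}$, so that $\psi_*$ is a genuine filtered map in the complete setting of Chapter~\ref{sec:OptheoyFilMod}. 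Consequently $\psi_*$ intertwines the structure operations $(m_0,m_1,\ldots)$ on $L$ with those defined on $L'$ using the pushforward data, including the curvature $m_0$, so it is a \emph{strict isomorphism} of complete curved $\Ai$-algebras, and in particular an invertible $\infty$-morphism in the sense of Definition~\ref{def:InftyMorph} with vanishing higher components.

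Next I would invoke the well-definedness statement built into the construction of the quantum curved $\Ai$-algebra recalled above (after Proposition~\ref{prop:ClassicalAinfini}): on a fixed relatively spin Lagrangian, any two admissible systems of auxiliary data yield complete curved $\Ai$-algebras related by an $\infty$-morphism that is a homotopy equivalence. Applying this on $M'$ to the pushforward data $\psi_*J$ and to the ``standard'' data defining $\left(\mathbb{Q}\,\chi_1(L')\,\widehat{\otimes}_{\mathbb Q}\,\Lambda_{0,\mathrm{Nov}}, m'_0,m'_1,\ldots\right)$ produces a homotopy-equivalence $\infty$-morphism $g$. The desired $\infty$-morphism from $L$ to $L'$ is then the composite $g\cc \psi_*$, formed via the circle-product composition of $\infty$-morphisms recalled in Definition~\ref{def:InftyMorph} (working, as in this monograph, inside the convolution algebra associated with the direct sum of the two underlying modules). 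It is a homotopy equivalence because it is the composite of a strict isomorphism with a homotopy equivalence, and homotopy equivalences are closed under this composition, as follows from the composition law for $\infty$-morphisms and the group structure of $\infty$-isotopies established in Theorem~\ref{thm:DeligneGroupoidII}.

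The hard part is entirely geometric and resides in the first stage: verifying that the transported Kuranishi structures, multisections, and coherent orientations genuinely match under $\psi$, so that $\psi_*$ really is a \emph{strict} morphism compatible with all operations and with the energy filtration. On the algebraic side developed in this book the remainder is formal, the composition of $\infty$-morphisms and the stability of homotopy equivalences under composition being immediate consequences of Chapters~\ref{sec:TopoDefTh} and~\ref{sec:GaugeTwist}; thus all the genuine content is the compatibility of the symplectic-topological data with $\psi$, which is carried out in the transversality and orientation analysis of \cite[Chapters~7--8]{FOOO09II}.
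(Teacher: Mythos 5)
The paper itself offers no proof of this proposition: it appears in the survey chapter on applications and is quoted directly from the work of Fukaya--Oh--Ohta--Ono, with all the content delegated to \cite{FOOO09I, FOOO09II}. Your two-stage decomposition --- a strict isomorphism obtained by pushing forward the simplices, the almost complex structure, the Kuranishi structures, multisections and orientations along $\psi$ (using that a symplectomorphism preserves symplectic area and Maslov index, hence the energy filtration), followed by the homotopy equivalence comparing two admissible systems of auxiliary data on $L'$ --- is precisely the structure of the actual argument in \emph{loc.~cit.}, so your reconstruction is correct in outline and faithful to the only proof in existence. Like the paper, you honestly concentrate the genuine mathematical content in the geometric independence-of-choices statement, which is where it belongs. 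One minor looseness: your appeal to Theorem~\ref{thm:DeligneGroupoidII} to close homotopy equivalences under composition is not quite on target, since that theorem concerns the group of $\infty$-isotopies of structures on a \emph{fixed} underlying module; what you actually need is the elementary fact that composing an $\infty$-morphism admitting a homotopy inverse with a strict isomorphism (which is invertible on the nose) again admits a homotopy inverse, which follows directly from the associativity and unitality of the composition $\cc$ of Definition~\ref{def:InftyMorph} and does not require the Deligne groupoid machinery.
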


A Goldman--Millson type theorem \cite{GoldmanMillson88} holds in the complete curved $\Ai$-algebra case: 
any homotopy equivalent complete curved $\Ai$-algebras have bijective moduli spaces of Maurer--Cartan elements \cite[Corollary~4.3.14]{FOOO09I}. As a direct corollary of this and Proposition~\ref{prop:HoInvariance}, one obtains that the Floer cohomology theory is independent of the various choices made.


\bibliography{bib}
\bibliographystyle{cambridgeauthordate}


\printindex

\end{document}